\def\thetitle{Planar Maps, Random Walks and Circle Packing}
\definecolor{CombinatoricaAqua}{HTML}{00698C}
\definecolor{CombinatoricaBlue}{HTML}{3A3293}
\definecolor{CombinatoricaBrown}{HTML}{66220C}
\definecolor{CombinatoricaRed}{HTML}{DF2A27}
\definecolor{HarvardCrimson}{rgb}{0.6471, 0.1098, 0.1882}
\let\reftagform@=\tagform@
\def\tagform@#1{\maketag@@@
  {(\ignorespaces\textcolor{CombinatoricaBrown}{#1}\unskip\@@italiccorr)}}
\renewcommand{\eqref}[1]{\textup{\reftagform@{\ref{#1}}}}
\newcommand{\defn}[1]{\textbf{#1}\index{#1}}
\newcommand*\Bell{\ensuremath{\boldsymbol\ell}}
\declaretheoremstyle[
  spaceabove=\topsep, spacebelow=\topsep,
  headfont=\color{CombinatoricaBrown}\normalfont\bfseries,
  bodyfont=\itshape,
]{thm}
\declaretheoremstyle[
  spaceabove=\topsep, spacebelow=\topsep,
  headfont=\color{CombinatoricaBrown}\normalfont\bfseries,
  bodyfont=\normalfont,
]{dfn}
\declaretheoremstyle[
  spaceabove=0.5\topsep, spacebelow=0.5\topsep,
  headfont=\color{CombinatoricaBrown}\normalfont\bfseries,
  bodyfont=\normalfont,
]{rmk}
\declaretheorem[style=thm,parent=chapter]{theorem}
\declaretheorem[style=thm,sibling=theorem]{lemma}
\declaretheorem[style=thm,sibling=theorem]{corollary}
\declaretheorem[style=thm,sibling=theorem]{claim}
\declaretheorem[style=thm,sibling=theorem]{observation}
\declaretheorem[style=thm,sibling=theorem]{proposition}
\declaretheorem[style=rmk,sibling=theorem]{remark}
\declaretheorem[style=definition,sibling=theorem]{definition}
\declaretheorem[style=definition,sibling=theorem]{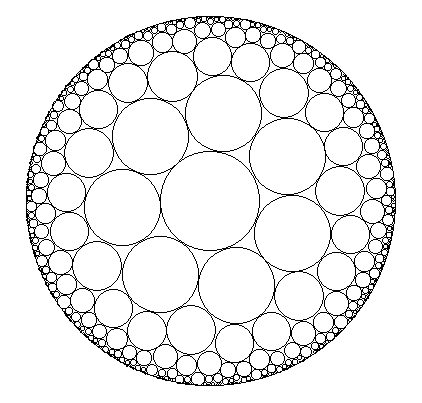}
\renewcommand{\PrintNames@a}[4]{%
  \PrintSeries{\name}
    {#1}
    {}{ and \set@othername}
    {,}{ \set@othername}
    {}{ and \set@othername}
    {#2}{#4}{#3}%
}
\definecolor{gray75}{gray}{0.75}
\newcommand{\hsp}{\hspace{20pt}}
\titleformat{\chapter}[hang]
  {\Huge\bfseries}
  {\thechapter\hsp\textcolor{gray75}{::}\hsp}{0pt}
  {\huge\bfseries}
\title{\thetitle}
\author{{\tiny Lecture notes by}\\Asaf Nachmias}
\newcommand{\ceil}[1]{\lceil{#1}\rceil}
\newcommand{\E}[0]{\mathbb{E}}
\newcommand{\pr}[0]{\mathbb{P}}
\newcommand{\PP}{\mathbf{P}}
\newcommand{\EE}{\mathbf{E}}
\DeclareMathOperator{\unif}{\mathsf{Unif}}
\newcommand{\CC}{\mathbb{C}}
\newcommand{\RR}{\mathbb{R}}
\renewcommand{\SS}{\mathbb{S}}
\newcommand{\ZZ}{\mathbb{Z}}
\newcommand{\NN}{\mathbb{N}}
\newcommand{\UU}{\mathbb{U}}
\newcommand{\eps}{\varepsilon}
\DeclareMathOperator{\diam}{diam}
\DeclareMathOperator{\carrier}{Carrier}
\newcommand{\reff}{\mathcal{R}_\mathrm{eff}}
\newcommand{\ceff}{\mathcal{C}_\mathrm{eff}}
\newcommand{\lr}{\leftrightarrow}
\newcommand{\energy}{\mathcal{E}}
\newcommand{\vect}[1]{\mathbf{#1}}
\newcommand{\ang}{\measuredangle}
\DeclareMathOperator{\gap}{gap}
\newcommand{\downto}{\searrow}
\DeclareMathOperator{\rad}{rad}
\DeclareMathOperator{\cent}{cent}
\DeclareMathOperator{\area}{area}
\newcommand{\origin}{\mathbf{0}}
\newcommand{\cev}[1]{\reflectbox{\ensuremath{\vec{\reflectbox{\ensuremath{#1}}}}}}
\newcommand{\Gdot}{\mathcal{G}_{\bullet}}
\newcommand{\Mdot}{\mathcal{M}_{\bullet}}
\newcommand{\dloc}{d_{\mathrm{loc}}}
\newcommand{\convd}{\stackrel{d}{\longrightarrow}}
\newcommand{\convl}{\xrightarrow{\mathrm{loc}}}
\newcommand{\convlpi}{\xrightarrow[\pi]{\mathrm{loc}}}
\DeclareMathOperator{\euc}{euc}
\newcommand{\T}{\mathcal{T}}
\newcommand{\A}{\mathcal{A}}
\newcommand{\sA}{\mathscr A}
\newcommand{\sB}{\mathscr B}
\newcommand{\cE}{\mathcal E}
\newcommand{\cF}{\mathcal F}
\newcommand{\cL}{\mathcal L}
\newcommand{\cR}{\mathcal R}
\tikzset{vx/.style=
  {circle, draw=white, fill=black, thick, inner sep=0pt, minimum width=5pt,
   label distance=0.1cm}}
\tikzset{lvxb/.style={vx, thin, draw=black, minimum width=10pt}}
\tikzset{lvxw/.style={lvxb,fill=white}}
\tikzset{lvxg/.style={lvxb,fill=black!25,draw=black!25}}
\newcommand\blfootnote[1]{%
  \begingroup
  \renewcommand\thefootnote{}\footnote{#1}%
  \addtocounter{footnote}{-1}%
  \endgroup
}
\renewcommand\@dotsep{10000}
\newlist{thmenum}{enumerate}{1} 
\setlist[thmenum]{label=\arabic*., ref=\thetheorem~(\arabic*)}
\begin{document}

\frontmatter
\thispagestyle{empty}
{\begingroup
\centering
\vfill
{\huge \thetitle}\\[0.1\textheight]
Lecture notes of the 48th Saint-Flour summer school, 2018

\centering
\begin{BVerbatim}
preliminary draft
\end{BVerbatim}

\vspace{2cm}
{\LARGE Asaf Nachmias}\\[0.05\textheight]
\vspace{-2cm}
{\includegraphics[scale=0.5]{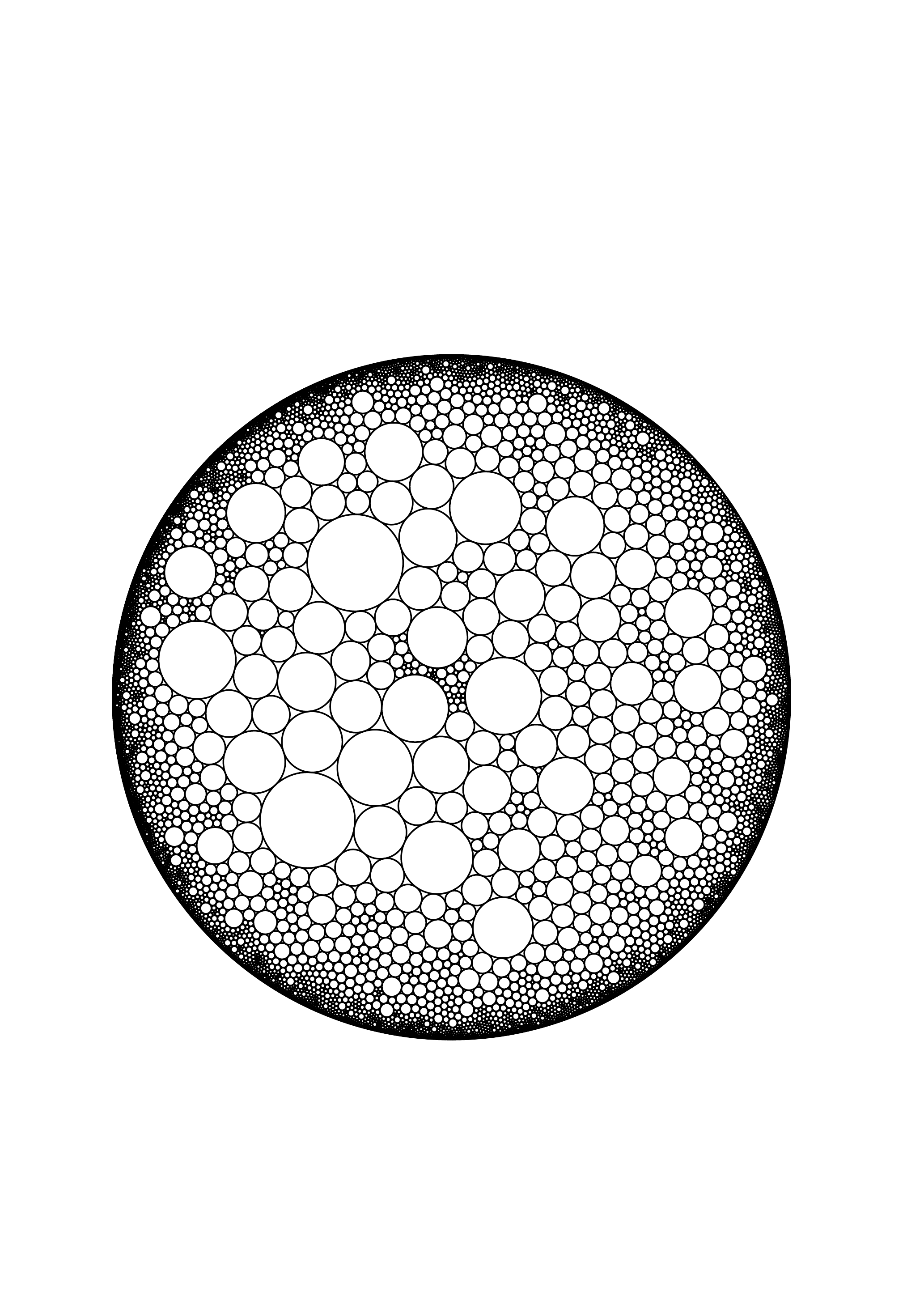}}\\
{\Large Tel Aviv University}\par
{\large\scshape 2018}\par
\vfill\null
\endgroup}



\chapter*{Preface}

These lecture notes are intended to accompany a single semester graduate course.
They are meant to be {\bf entirely self-contained}. All the theory required to prove the main results is presented and only basic knowledge in probability theory is assumed.

In \cref{chp:intro} we describe the main storyline of this text. It is meant to be light bedtime reading exposing the reader to the main results that will be presented and providing some background. \cref{chp:electric} introduces the theory of electric networks and discusses their highly useful relations to random walks. It is roughly based on Chapter 8 of Yuval Peres' excellent lecture notes \cite{PeresClimb}. We then discuss the circle packing theorem and present its proof in \cref{chp:cp}. \cref{chp:heschramm} discusses the beautiful theorem of He and Schramm \cite{HeSc} relating the circle packing type of a graph to recurrence and transience of the random walk on it. To the best of our knowledge, their work is the first to form connections between the circle packing theorem and probability theory. Next in \cref{chp:locallimit} we present the highly influential theorem of Benjamini and Schramm \cite{BeSc} about the almost sure recurrence of the simple random walk in planar graph limits of bounded degrees. The notion of a \emph{local limit} (also known as \emph{distributional limit} or \emph{Benjamini-Schramm limit}) of a sequence of finite graphs was introduced there for the first time to our knowledge (and also studied by Aldous-Steele \cite{AldousSteele} and Aldous-Lyons \cite{AldousLyons}); this notion is highly important in probability theory as well as other mathematical disciplines (see \cite{AldousLyons} and the references within). In \cref{chp:randommaps} we provide a theorem from which one can deduce the almost sure recurrence of the simple random walk on many models of random planar maps. This theorem was obtained by Ori Gurel-Gurevich and the author in \cite{GGN13}. \cref{chp:planarusf} discusses uniform spanning forests on planar maps and appeals to the circle packing theorem to show that the free uniform spanning forest on proper planar maps is almost surely connected, i.e., it is in fact a tree. This theorem was obtained by Tom Hutchcroft and the author in \cite{HutNach15b}. We close these notes in \cref{chp:related} with a description of some related contemporary developments in this field that are not presented in this text.

We have made an effort to add value beyond what is in the published papers. Our proof of the circle packing theorem in \cref{chp:cp} is inspired by Thurston's argument \cite{Th78} and Brightwell-Scheinerman \cite{BriSch93} but we have made what we think are some simplifications; the proof also employs a neat argument due to Ohad Feldheim and Ori Gurel-Gurevich (\cref{thm:howtodraw}) which makes the drawing part of the argument rather straightforward and avoids topological considerations that are used in the classical proofs. The original proof of the He-Schramm Theorem \cite{HeSc} is based on the notion of \emph{discrete extremal length} which is essentially a form of \emph{effective resistance} in electric networks (in fact, the \emph{edge} extremal length is precisely effective resistance, see \cite[Exercise 2.78]{LyonsPeres}). We find that our approach in \cref{chp:heschramm} using electric networks is somewhat more robust and intuitive to  probabilists. We obtain a quantitative version of the He-Schramm Theorem in \cref{chp:heschramm} as well as the Benjamini-Schramm Theorem \cite{BeSc} in \cref{chp:locallimit}, see \cref{theorem:loc:lim:recurrence}. These quantified versions are key to the proofs of \cref{chp:randommaps}. Lastly, some aspects of stationary random graphs are better explained here in \cref{chp:randommaps} than in the publication \cite{GGN13}.

\section*{Acknowledgments}

I would like to deeply thank Daniel Jerison, Peleg Michaeli and Matan Shalev for typing, editing and proofreading most of this text and for many comments, corrections and suggestions. I am  indebted to Tom Hutchcroft for his assistance in writing the introduction and for surveying related topics not included in these notes (\cref{chp:intro,chp:related}). I thank S\'ebastien Martineau, Pierre Petit, Dominik Schmid and Mateo Wirth for corrections and comments to this text. I am also grateful to the participants of the 48th Saint-Flour summer school and its organizers, Christophe Bahadoran, Arnaud Guillin and Hacene Djellout, for a very enjoyable summer school.

I am beholden to my collaborators on circle packing and planar maps related problems: Omer Angel, Martin Barlow, Itai Benjamini, Nicolas Curien, Ori Gurel-Gurevich, Tom Hutchcroft, Daniel Jerison, Gourab Ray, Steffen Rohde and Juan Souto. I have learned a lot from our work and conversations. Special thanks go to Ori Gurel-Gurevich for embarking together on this research endeavor beginning in 2011 at the University of British Columbia, Vancouver, Canada. Many of the ideas and methods presented in these notes were obtained in our joint work. 

I am also highly indebted to the late Oded Schramm whose mathematical work, originality and vision, especially in the topics studied in these notes, have been an enormous source of inspiration. It is no coincidence that his name appears on almost every other page here. It has become routine for my collaborators and I to ask ourselves \emph{``What would Oded do?''} hoping that reflecting on this question would cut right to the heart of matters. Steffen Rohde's wonderful survey \cite{RohdeSchramm} of Oded's work is very much recommended.

Lastly, I thank Shira Wilkof and baby Ada for muse and inspiration. %

\vspace{.5cm}
\hspace{.5cm} Asaf Nachmias\footnote{Department of Mathematical Sciences, Tel Aviv University, Tel Aviv, Israel.}, Tel Aviv, December 2018 \blfootnote{Email: \href{mailto:asafnach@post.tau.ac.il}{asafnach@post.tau.ac.il}}\blfootnote{This work is supported by ISF grant 1207/15 and ERC starting grant 676970.}

{\hypersetup{linkcolor=black}\tableofcontents}

\mainmatter

\chapter{Introduction}\label{chp:intro}
\section{The circle packing theorem}

A planar graph is a graph that can be drawn in the plane, with vertices represented by points and edges represented by non-crossing curves. There are many different ways of drawing any given planar graph and it is not clear what is a canonical method. One very useful and widely applicable method of drawing a planar graph is given by Koebe's 1936 \emph{circle packing theorem}\index{circle packing theorem} \cite{K36}, stated below. As we will see, various geometrical properties of the circle packing drawing (such as existence of accumulation points and their structure, bounds on the radii of circles and so on) encode important probabilistic information (such as the recurrence/transience of the simple random walk, connectivity of the uniform spanning forest and much more). This deep connection is especially fruitful to the study of random planar maps. Indeed, one of the main goals of these notes is to present a self-contained proof that the so-called \emph{uniform infinite planar triangulation} (UIPT) is almost surely recurrent \cite{GGN13}.


A \textbf{circle packing} is a collection of discs $P=\{C_v\}$ in the plane $\CC$ such that any two distinct discs in $P$ have disjoint interiors. That is, distinct discs in $P$ may be tangent, but may not overlap. 
Given a circle packing $P$, we define the \textbf{tangency graph} $G(P)$ of $P$ to be the graph with vertex set $P$ and with two vertices connected by an edge if and only if their corresponding circles are tangent. The tangency graph $G(P)$ can be drawn in the plane by drawing straight lines between the centers of tangent circles in $P$, and is therefore planar. It is also clear from the definition that $G(P)$ is \textbf{simple}, that is, any two vertices are connected by at most one edge and there are no edges beginning and ending at the same vertex. See \cref{fig:circlepacking}.

We call a circle packing $P$ a circle packing of a planar graph $G$ if $G(P)$ is isomorphic to $G$.

\begin{theorem}[Koebe '36]\label{thm:cp1}
Every finite simple planar graph $G$ has a circle packing. That is, there exists a circle packing $P$ such that $G(P)$ is isomorphic to $G$.
\end{theorem}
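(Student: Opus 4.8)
The plan is to reduce to packing triangulations and then to carry this out in two stages: first solve a system of equations for the radii of the circles, then draw the circles in the plane.

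\emph{Reduction to triangulations.} Dealing with at most three vertices directly, and packing the connected components separately and translating the resulting packings far apart from one another, we may assume $G$ is connected with at least four vertices. Fix an embedding of $G$ in the sphere $\SS^2$. Inside each face we insert finitely many new vertices and new edges so as to cut the face into triangles, arranging that every added edge is incident to an added vertex; when a face has a non-simple boundary walk this requires first surrounding the boundary with a ring of new vertices and then triangulating inward rather than using one central vertex, but it can always be done. The result is a finite simple planar triangulation $T$ containing $G$ as a subgraph, with $V(G)\subseteq V(T)$, in which no two vertices of $G$ are adjacent unless they are already adjacent in $G$. Hence deleting from any circle packing of $T$ the circles of the added vertices leaves a circle packing of $G$. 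So it suffices to pack a finite simple planar triangulation; removing one of its faces and regarding it as the outer face, $T$ becomes a triangulation of a closed topological disk whose boundary is a $3$-cycle, and this is the case I would treat.

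\emph{Solving for the radii.} Write $I$ for the set of interior vertices of $T$; each $v\in I$ has a well-defined cyclic list of neighbours $u_1,\dots,u_{k}$ (with $u_{k+1}=u_1$), the link of $v$. Three mutually tangent circles of radii $r>0$, $s>0$, $t>0$ have centres forming a Euclidean triangle with side lengths $r+s$, $s+t$, $t+r$; let $\alpha(r;s,t)\in(0,\pi)$ be its angle at the centre of the $r$-circle, an explicit function via the law of cosines. For $v\in I$ and a vector of radii $\rho=(\rho_v)_{v\in V(T)}$ put $\theta_v(\rho)=\sum_{i=1}^{k}\alpha(\rho_v;\rho_{u_i},\rho_{u_{i+1}})$. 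The key existence statement is: for any prescribed positive radii on the boundary $3$-cycle, there are positive radii on $I$ making $\theta_v(\rho)=2\pi$ for every $v\in I$. I would prove this variationally. Setting $\rho_v=e^{x_v}$, the derivatives of $\alpha$ show that the Jacobian of $(\theta_v)_{v\in I}$ in the variables $(x_v)_{v\in I}$ is symmetric and negative definite; therefore the vector field $x\mapsto\bigl(2\pi-\theta_v\bigr)_{v\in I}$ is the gradient of a strictly convex function $\Phi$ on $\RR^{I}$, whose unique critical point is the sought solution. The content is to show $\Phi$ attains its minimum in the open orthant: along any minimizing sequence, if some interior radius were to tend to $0$ or to $\infty$ relative to its neighbours, the angle sum at a suitable vertex would be forced a fixed distance away from $2\pi$, contradicting stationarity. (Thurston's relaxation scheme --- repeatedly rescaling each radius so that its own angle sum equals $2\pi$, and proving convergence --- is an alternative route to the same conclusion.) This is the heart of the proof.

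\emph{Drawing the packing.} With such radii fixed, assign to each triangle $uvw$ of $T$ the Euclidean triangle of side lengths $\rho_u+\rho_v$, $\rho_v+\rho_w$, $\rho_w+\rho_u$. Since the angles of these model triangles around every interior vertex sum to exactly $2\pi$ --- and the boundary angles then fit together by the discrete Gauss--Bonnet identity --- the triangles can be laid out in the plane one by one, each glued to a previously placed one along a common edge, the picture closing up consistently around each interior vertex. Promoting this consistent combinatorial layout to a genuine non-overlapping straight-line drawing is exactly what the drawing lemma \cref{thm:howtodraw} is designed to do, and this is what lets us avoid the topological bookkeeping of the classical proofs. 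Centring at the location of each vertex $v$ the disc of radius $\rho_v$ then gives a circle packing with tangency graph $T$; restricting it to $V(G)$ completes the proof.
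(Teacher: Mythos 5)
Your overall route is sound and, for the key existence step, genuinely different from the one in these notes. The reduction to triangulations and the drawing step coincide with ours: we also triangulate the faces (by coning, which you replace with a more careful subdivision that avoids simplicity issues -- a fair improvement), and we also invoke \cref{thm:howtodraw} to pass from a radius vector with interior angle sums $2\pi$ to an actual packing, exactly as you do. Where you diverge is in producing the radii: the notes run Thurston-style iterative corrections (choose the set $S$ across the maximal gap of $\delta$, rescale, and show the quadratic energy $\cE_{\vect r}$ contracts by $1-\frac{1}{2n^3}$ per step, \cref{leastsquares}), whereas you set $x_v=\log\rho_v$, use the symmetry and negative definiteness of the Jacobian of the angle sums to exhibit the equations $\theta_v=2\pi$ as the critical-point equations of a strictly convex functional $\Phi$, and minimize. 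That is the Colin de Verdi\`ere/Bobenko--Springborn variational proof; it buys uniqueness of the interior radii for free and avoids the convergence analysis of the iteration, at the price of the derivative computations establishing symmetry and definiteness, which you assert but which are standard.

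Two caveats. First, the step you yourself call the heart of the proof is the one place where your sketch is not yet an argument: to know that $\Phi$ attains its minimum on the open orthant you need coercivity (no direction of recession), not a ``contradiction with stationarity'' -- along a minimizing sequence there is no stationary point yet to contradict. Along a ray in which the radii of a subset $S$ of interior vertices shrink relative to the rest, the derivative of $\Phi$ tends to a limit of the form $\sum_{v\in S}\bigl(\theta_v-2\pi\bigr)$ evaluated in the degenerate configuration, and showing this limit has the right sign is a genuinely combinatorial fact: it is precisely the Euler-formula face count that occupies the proof of \cref{claim:C} here (the inequality $|\bar F_j|\pi<\sum_{v\in S_j}\theta(v)$, including the case analysis showing the boundary face cannot be swallowed by $S$). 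So the same counting lemma is needed on your route; it does not come for free from convexity. Second, after drawing the packing you should also check that non-adjacent vertices of $T$ do not acquire tangent circles (otherwise the tangency graph is strictly larger than $G$); this is the short argument with the stars $A_v$ given after \cref{thm:howtodraw}, and it is worth a sentence in your write-up as well.
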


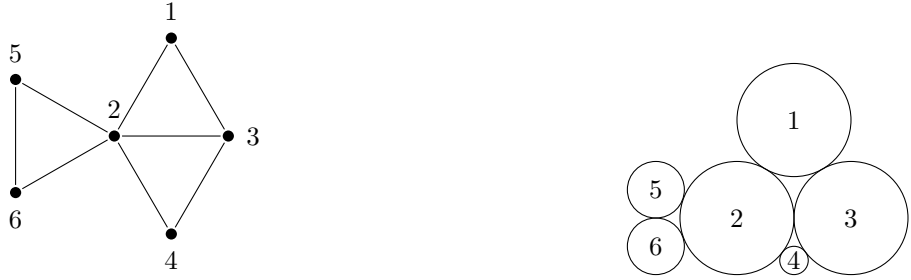
\begin{figure}[h]
  \centering
  \begin{subfigure}[b]{0.475\linewidth}
    \centering
    \begin{tikzpicture}[font=\small, scale=1.5]
      \node[vx,label=90:$1$]  (1) at (0.5,0.866) {};
      \node[vx,label=90:$2$]  (2) at (0,0) {};
      \node[vx,label=0:$3$]   (3) at (1,0) {};
      \node[vx,label=-90:$4$] (4) at (0.5,-0.866) {};
      \node[vx,label=90:$5$]  (5) at (-0.866,0.5) {};
      \node[vx,label=-90:$6$] (6) at (-0.866,-0.5) {};

      \draw (3) -- (1) -- (2) -- (3) -- (4) -- (2) -- (5) -- (6) -- (2);


    \end{tikzpicture}
  \end{subfigure}\hfill%
  \begin{subfigure}[b]{0.475\linewidth}
    \centering
    \begin{tikzpicture}[font=\small,scale=1.5]
      \node (1) at (0.5,0.866) {$1$};
      \node (2) at (0,0) {$2$};
      \node (3) at (1,0) {$3$};
      \node (4) at (0.5,-0.375) {$4$};
      \node (5) at (-0.7115,0.25) {$5$};
      \node (6) at (-0.7115,-0.25) {$6$};

      \draw (1) circle (0.5);
      \draw (2) circle (0.5);
      \draw (3) circle (0.5);
      \draw (4) circle (0.125);
      \draw (5) circle (0.25);
      \draw (6) circle (0.25);
    \end{tikzpicture}
  \end{subfigure}
  \caption{A planar graph and a circle packing of it.}
  \label{fig:circlepacking}
\end{figure}

One immediate consequence of the circle packing theorem is F\'ary's Theorem \cite{Fary}, which states that every finite simple planar graph can be drawn so that all the edges are represented by straight lines.

 The circle packing theorem was first discovered by Koebe \cite{K36}, who established it as a corollary to his work on the generalization of the Riemann mapping theorem to finitely connected domains; a brief sketch of Koebe's argument is given in \cref{fig:Koebesproof}. The theorem was rediscovered and popularized in the 70's by Thurston \cite{Th78}, who showed that it follows as a corollary to the work of Andreev on hyperbolic polyhedra (see also \cite{marden1990thurston}). 
  Thurston also initiated a popular program of understanding circle packing as a form of \emph{discrete complex analysis}, a viewpoint which has been highly influential in the subsequent development of the subject and which we discuss in more detail below  (see \cite{Smirnov} for a review of a different form of discrete complex analysis with many applications to probability). There are now many proofs of the circle packing theorem available including, remarkably, four distinct proofs discovered by Oded Schramm. In \cref{chp:cp} we will give an entirely combinatorial proof, which is adapted from the proof of Thurston \cites{Th78,marden1990thurston} and Brightwell and Scheinerman \cite{BriSch93}.

\begin{figure}
    \centering
    \begin{subfigure}[b]{0.2\textwidth}
        \includegraphics[width=\textwidth]{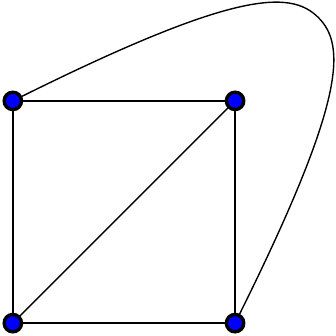}
        \caption{Step 1.}
    \end{subfigure}
    \qquad
    \begin{subfigure}[b]{0.2\textwidth}
        \includegraphics[width=\textwidth]{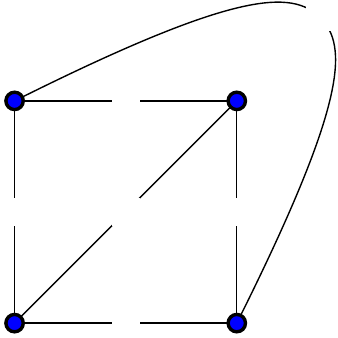}
        \caption{Step 2.}
    \end{subfigure}
    \qquad
    \begin{subfigure}[b]{0.2\textwidth}
        \includegraphics[width=\textwidth]{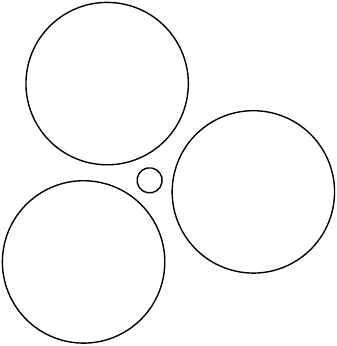}
        \caption{Step 3.}
    \end{subfigure}
    \qquad
        \begin{subfigure}[b]{0.2\textwidth}
        \includegraphics[width=\textwidth]{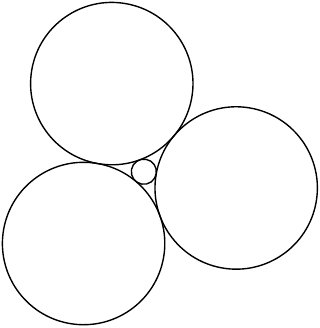}
        \caption{Step 4.}
    \end{subfigure}
    \caption{A sketch of how to obtain circle packings using Koebe's extension of the Riemann mapping theorem to finitely connected domains, which states that every domain $D \subseteq \CC \cup\{\infty\}$ with at most finitely many boundary components is conformally equivalent to a \emph{circle domain}, that is, a domain all of whose boundary components are circles or points. Step 1: We begin by drawing the finite simple planar graph $G$ in the plane in an arbitrary way. Step 2: If we remove the `middle $\eps$' of each edge, then the complement of the resulting drawing is a domain with finitely many boundary components. Step 3: Finding a conformal map from this domain to a circle domain gives an `approximate circle packing' of $G$. Step 4: Taking the limit as $\eps\downarrow 0$ can be proven to yield a circle packing of $G$. }\label{fig:Koebesproof}
\end{figure}

\subsection{Uniqueness} We cannot expect a uniqueness statement in \cref{thm:cp1} (see \cref{fig:circlepacking}; we may ``slide'' circles 5 and 6 along circle 2). However, when our graph is a \emph{finite triangulation}, circle packings enjoy uniqueness up to circle-preserving transformations.

\begin{definition}
  A planar \defn{triangulation} is a planar graph that can be drawn so that every face is incident to exactly three edges. In particular, when the graph is finite this property must hold for the outer face as well.
\end{definition}

\begin{claim}
\label{claim:rigidity}
If $G$ is a finite triangulation, then the circle packing whose tangency graph is isomorphic to $G$ is unique, up to M\"obius transformations and reflections in lines.
%
\end{claim}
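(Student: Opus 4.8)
Proof proposal for \cref{claim:rigidity}. The plan is to run the classical maximum‑principle argument of Thurston in the purely combinatorial setting. Let $P=\{C_v\}$ and $P'=\{C'_v\}$ be two circle packings of the finite triangulation $G$, and write $r_v,r'_v$ for the radii of $C_v,C'_v$. Since $G$ is a triangulation, its outer face is bounded by a triangle $v_1v_2v_3$, the circles $C_{v_1},C_{v_2},C_{v_3}$ are mutually tangent, and every other circle of $P$ lies inside the curvilinear triangle they bound; likewise for $P'$. The first step is a normalization: there is a M\"obius transformation, possibly post‑composed with a reflection in a line, carrying $C'_{v_i}$ onto $C_{v_i}$ for $i=1,2,3$ and the curvilinear triangle of $P'$ onto that of $P$. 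This rests on the fact that the group generated by M\"obius transformations and reflections acts transitively on triples of mutually tangent circles together with a choice of one of the two complementary curvilinear regions: sending one tangency point to $\infty$ turns two of the circles into parallel lines and the third into a circle inscribed between them, a configuration pinned down up to an affine map and finitely many symmetries, the residual ambiguity being absorbed by a reflection. After applying this transformation to $P'$ we may assume $C_{v_i}=C'_{v_i}$ for $i=1,2,3$.

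Next is the angle monotonicity lemma. If $\{u,v,w\}$ is a triangular face, then $C_u,C_v,C_w$ are mutually tangent, and the angle $\angle_v(u,w)$ subtended at the center of $C_v$ by the two relevant tangency points is the angle at $O_v$ in the triangle of centers, whose side lengths are $r_u+r_v$, $r_v+r_w$, $r_u+r_w$. The law of cosines (or the half‑angle formula) gives $\sin^{2}\bigl(\tfrac12\angle_v(u,w)\bigr)=\bigl((1+r_v/r_u)(1+r_v/r_w)\bigr)^{-1}$, so $\angle_v(u,w)$ depends only on the ratios $r_u/r_v$ and $r_w/r_v$, is strictly decreasing in $r_v$, and is strictly increasing in $r_u$ and in $r_w$. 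Moreover, for an interior vertex $v$ the faces around $v$ form a wheel, so the angular sectors $\angle_v(u,w)$ over consecutive neighbors $u,w$ in the flower of $v$ tile the circle around $O_v$, i.e. they sum to $2\pi$; the same holds in $P'$.

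Now run the maximum principle. Put $M=\max_v r'_v/r_v$; this is attained as $G$ is finite, and $M\ge 1$ since $r'_{v_i}=r_{v_i}$. Suppose $M>1$ and let $v$ attain it. Then $v\notin\{v_1,v_2,v_3\}$, so $v$ is interior. For every neighbor $u$ of $v$ we have $r'_u/r_u\le M=r'_v/r_v$, hence $r'_u/r'_v\le r_u/r_v$, and likewise $r'_w/r'_v\le r_w/r_v$ for the other vertex $w$ of each face at $v$; feeding this into the lemma, every angle at the center of $C'_v$ in the flower is at most the corresponding angle in $P$. But the two angle sums both equal $2\pi$, so all these angles are equal, which by strict monotonicity forces $r'_u/r'_v=r_u/r_v$, i.e. $r'_u/r_u=M$, for every neighbor $u$ of $v$. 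Thus the set of vertices attaining $M$ is nonempty and closed under adjacency; since $G$ is connected it is all of $G$, contradicting $r'_{v_1}/r_{v_1}=1<M$. Hence $M\le 1$; interchanging $P$ and $P'$ gives the reverse inequality, so $r_v=r'_v$ for every $v$.

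Finally, with all radii matched and the three outer circles already in the same position, the two packings coincide. One builds the packing up vertex by vertex in a shelling (canonical) order $v_1,\dots,v_n$, which exists for a finite triangulation: each new vertex $v_k$ ($k\ge 4$) is adjacent to a contiguous path of at least two already‑placed vertices on the current outer boundary, two consecutive ones $u,w$ of which form a face with $v_k$. Given the already‑placed tangent circles $C_u=C'_u$, $C_w=C'_w$ and the prescribed radius $r_{v_k}=r'_{v_k}$, there are at most two circles of that radius tangent to both, one on each side of the line through their centers, and the planar embedding (essentially unique, since a simple triangulation on at least four vertices is $3$‑connected) dictates which side; so $C'_{v_k}=C_{v_k}$, and inductively $P'=P$. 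The step I expect to demand the most care is making this normalization and this layout fully rigorous — in particular pinning down exactly when a reflection is needed, and verifying the existence of the shelling order and the uniqueness of the placement at each stage — whereas the monotonicity lemma and the maximum‑principle core are short once set up.
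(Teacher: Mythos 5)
Your core argument---the angle monotonicity lemma together with the maximum principle applied to the ratio $r'_v/r_v$, with the angle sum $2\pi$ at interior vertices forcing equality of the ratios along all edges at a maximizing vertex and hence, by connectivity, a contradiction with the normalized boundary---is exactly the paper's proof of uniqueness of the radii vector (\cref{thm:cp:uniqueness}, resting on \cref{obs:geometric}), so on the heart of the matter you and the paper coincide; your half-angle formula is also the same computation as the paper's cosine formula. Where you diverge is in the bookkeeping before and after. The paper sidesteps your M\"obius normalization by formulating uniqueness with three prescribed boundary radii (\cref{thm:cptriangulation}) and proving rigidity only up to translations and rotations; your normalization of the three outer circles is the natural way to get the ``up to M\"obius transformations and reflections'' statement, but you should check that the normalizing map carries discs to discs (equivalently, that the point sent to $\infty$ lies outside every disc of $P'$, which is what your interstice-to-interstice condition is for), since otherwise ``radius'' loses its meaning. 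For the final layout step the paper uses \cref{thm:howtodraw}: an induction that deletes an interior vertex, triangulates the resulting polygon, and invokes the induction hypothesis; this is self-contained and requires no ordering of the vertices. Your shelling argument reaches the same conclusion but leans on two facts you assert rather than prove: the existence of a canonical (shelling) order for planar triangulations, and that $3$-connectivity plus the initial normalization determines on which side of the line through the centers of $C_u$ and $C_w$ the new circle sits. Both are true (the first is the de Fraysseix--Pach--Pollack canonical ordering, the second is Whitney's theorem combined with your allowance of a reflection), but they carry genuine content, and you rightly flag them as the delicate steps; if you want a fully self-contained write-up, the paper's inductive drawing lemma is the cheaper route.
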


The uniqueness of circle packing was first proven by Thurston, who noted that it follows as a corollary to Mostow's rigidity theorem. Since then, many different proofs have been found. In \cref{chp:cp} we will give a very short and elementary proof of uniqueness due to Oded Schramm that is based on the maximum principle. 


\subsection{Infinite planar graphs}
So far, we have only discussed the existence and uniqueness of circle packings of \emph{finite} planar triangulations. What happens with infinite triangulations? To address this question, we will need to introduce some more definitions.

\begin{figure}[t]
  \centering
  \begin{subfigure}[b]{0.475\linewidth}
    \centering
    \includegraphics[width=0.9\linewidth]{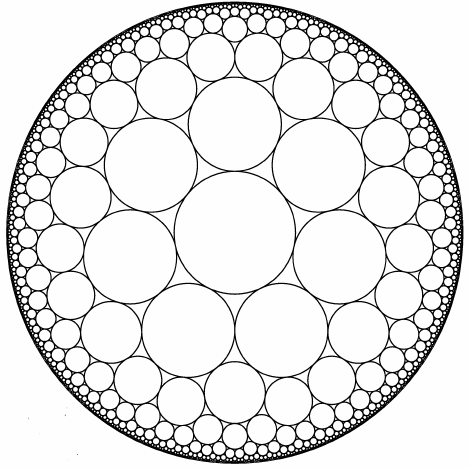}
  \end{subfigure}\hfill%
  \begin{subfigure}[b]{0.475\linewidth}
    \centering
    \includegraphics[width=0.9\linewidth]{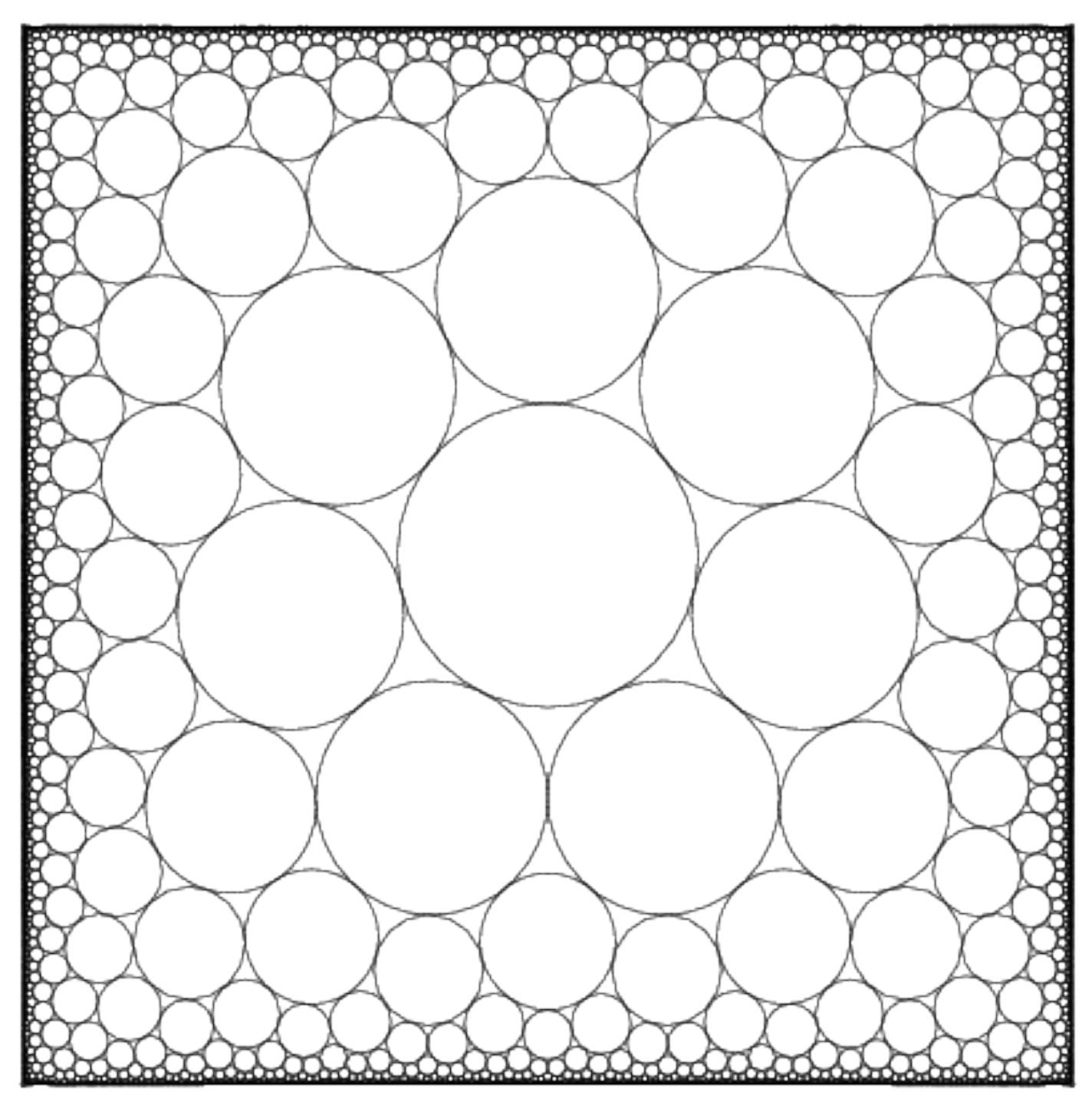}
  \end{subfigure}
  \caption{The 7-regular hyperbolic tessellation circle packed in a disc and in a square.}
  \label{fig:norigidity}
\end{figure}

\begin{definition}
We say that a graph $G$ is \defn{one-ended} 
 if the removal of any finite set of vertices leaves at most one infinite connected component.
\end{definition}

\begin{definition}
Let $P=\{C_v\}$ be a circle packing of a triangulation.
 We define the \defn{carrier} of $P$ to be the union of the closed discs bounded by the circles of $P$ together with the spaces bounded between any three circles that form a face (i.e., the interstices). We say that $P$ is \textbf{in} $D$ if its carrier is $D$.
\end{definition}

See \cref{fig:norigidity} for examples where the carrier is a disc or a square. The circle packing of the standard triangular lattice (see \cref{fig:CPtypes}) has the whole plane $\CC$ as its carrier. 
It is not too hard to see that if $G(P)$ is an infinite triangulation, then it is one-ended if and only if the carrier of $P$ is simply connected, see \cref{disk:one:ended}.

It can be shown via a compactness argument that any simple infinite planar triangulation can be circle packed in \emph{some} domain. Indeed, one can simply take subsequential limits of circle packings of finite subgraphs (the fact that such subsequential limits can be taken is a consequence of the ring lemma, \cref{lem:ring}). This is performed in \cref{cpt:inf}. However, this compactness argument does not give us any control of the domain we end up with as the carrier of our circle packing. The following theorems of He and Schramm \cite{HS93,HeSc} give us much better control; they can be thought of as discrete analogues of Poincar\'e-Koebe's uniformization theorem for Riemann surfaces.

\begin{theorem}[He and Schramm, '93] \label{thm:introheschramm}
  Any one-ended infinite triangulation can be circle packed such that the
  carrier is either the plane or the open unit disk, but not both. 
\end{theorem}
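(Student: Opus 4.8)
\emph{Strategy.} The plan is to construct the packing by exhausting $G$ with finite triangulated disks and passing to a limit, and then to dispatch the ``but not both'' separately. Fix a root vertex $o$ and write $G=\bigcup_n G_n$ as an increasing union of finite sub-triangulations, each $G_n$ triangulating a closed topological disk with a simple boundary cycle (possible because $G$ is a one-ended infinite triangulation). For each $n$ adjoin an apex vertex joined to every boundary vertex of $G_n$; the result $\widehat G_n$ is a finite simple triangulation of the sphere, so \cref{thm:cp1} gives it a circle packing. Viewing this packing on the Riemann sphere and applying a M\"obius transformation that sends the apex circle to the complement of the open unit disk $\mathbb D$, we obtain the \emph{maximal packing} $P_n$ of $G_n$: all of its circles lie in $\overline{\mathbb D}$, and those of the boundary vertices of $G_n$ are internally tangent to $\partial\mathbb D$. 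By \cref{claim:rigidity} (applied to $\widehat G_n$) this $P_n$ is unique up to M\"obius automorphisms of $\mathbb D$, and I would use that freedom to place the circle of $o$ at $\{z:|z|<s_n\}$ and the circle of a fixed neighbour of $o$ tangent to it at the point $s_n$.

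\emph{The limit.} Next I would pass to a subsequence along which $s_n\to s\in[0,1]$ and separate two cases. If $s>0$, then for each fixed vertex $v$, once $n$ is large enough that $v$ and all its neighbours lie in the interior of $G_n$, the ring lemma (\cref{lem:ring}) applied along a path from $o$ to $v$ squeezes the radius of the circle of $v$ in $P_n$ between two positive constants depending only on $v$, while the centres stay bounded; a diagonal extraction then yields a limiting circle packing $P$ of $G$ with all radii positive and carrier contained in $\overline{\mathbb D}$. If $s=0$, I would instead rescale, replacing $P_n$ by $s_n^{-1}P_n$ so that the circle of $o$ becomes the unit disk and the ambient region becomes $s_n^{-1}\mathbb D$; the same argument produces a limiting packing $P$ of $G$ with carrier contained in $\CC$. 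In either case the carrier of $P$ is simply connected because $G$ is one-ended (\cref{disk:one:ended}). Up to here this is just the compactness argument of \cref{cpt:inf}, refined by starting from the maximal packings $P_n$.

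\emph{Filling the domain --- the crux.} What remains, and where I expect the real work, is to show that the carrier of $P$ is \emph{all} of $\mathbb D$ when $s>0$ and \emph{all} of $\CC$ when $s=0$; a priori the circles of $P$ could accumulate at points interior to the carrier, and ruling this out is precisely where one-endedness is essential (without it the carrier could be an annulus, say). I would cast this as an extremality statement: the limiting packing $P$ is in fact the \emph{extremal} packing, maximizing the radius of the circle of $o$ among all circle packings of $G$ with carrier contained in $\mathbb D$ and that circle centred at $0$ --- the relevant supremum being $0$ exactly in the ``parabolic'' case $s=0$, where one works in $\CC$ instead. An interior hole in the carrier of an extremal packing ought to be usable to enlarge the circle of $o$, contradicting extremality; hence the carrier, being simply connected and hole-free, equals $\mathbb D$ (resp.\ $\CC$). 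Making this ``inflation'' step precise --- presumably through a length--area (discrete extremal length) estimate exploiting one-endedness together with the fact that the boundary circles of each $P_n$ reach $\partial\mathbb D$ --- is the main obstacle. (A discrete Perron/uniformization argument is a conceivable alternative to the whole exhaustion.)

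\emph{Not both.} Finally, suppose $G$ admitted both a packing $P$ with carrier $\mathbb D$ and a packing $Q$ with carrier $\CC$. By the rigidity of circle packings of one-ended infinite triangulations --- the infinite counterpart of \cref{claim:rigidity}, also due to Schramm --- there would be a M\"obius transformation or inversion $\mu$, realizing the identity on $G$, with $\mu(P)=Q$; but then $\mu$ would carry $\carrier(P)=\mathbb D$ onto $\carrier(Q)=\CC$, which is impossible, since a M\"obius transformation or an inversion sends an open round disk to an open round disk or an open half-plane, never to $\CC$. (Alternatively, once the results of \cref{chp:heschramm} are available this is immediate: a circle packing in $\mathbb D$ forces the simple random walk on $G$ to be transient, while one in $\CC$ forces it to be recurrent.)
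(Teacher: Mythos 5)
Your construction of the candidate packings is fine as far as it goes (it is essentially the paper's own use of \cref{cpt:inf} and \cref{cp:no:outer:pack}), but the step you yourself flag as ``the crux'' --- showing that the limiting carrier is \emph{all} of $\UU$ when $s>0$ and \emph{all} of $\CC$ when $s=0$ --- is the actual content of the theorem, and your proposal contains no proof of it. The ``inflation'' idea (an interior hole in the carrier of an extremal packing should let you grow the root circle) is only a hope: no monotonicity or extremality principle for infinite packings is established, and it is not even clear that your subsequential limit realizes the supremum you describe. Note also that in the $s=0$ case the rescaled limit could a priori have carrier a proper simply connected subdomain of $\CC$ (a disk or half-plane), so the dichotomy $s>0$ versus $s=0$ does not by itself separate ``disk'' from ``plane''; that, too, is part of the missing filling argument. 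This is precisely where the paper invests its work, and it does so under the added assumption of bounded degree (the only setting in which \cref{thm:introheschramm} is proved in these notes), via the random-walk dichotomy: if $G$ is recurrent, the compactness-limit packing must have carrier $\CC$, because a point omitted from the carrier yields a finite-energy flow and hence transience (proof of \cref{hs:rec:RR}); if $G$ is transient, the maximal packings $P_j$ have root radius bounded below and the limit fills $\UU$ by the quantitative resistance estimates of \cref{hs:lemma:diam:res} together with \cref{hs:1:lem:1} and Rayleigh monotonicity (proof of \cref{hs:trans:UU}). Your sketch offers nothing that replaces these estimates; the Ring Lemma alone does not rule out interior accumulation of the carrier.

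The ``not both'' step also has a flaw as written. The rigidity you invoke (\cref{thm:Schramm_Rigidity}, the infinite analogue of \cref{claim:rigidity}) gives uniqueness of packings \emph{in} $\CC$ up to similarities and of packings \emph{in} $\UU$ up to M\"obius maps fixing $\UU$; it does not assert that two packings of the same triangulation with different carriers are M\"obius-related. Indeed \cref{thm:HeSchramm_arbitrary_domain} shows a CP hyperbolic triangulation can be packed with carrier the disk and with carrier a square, and these packings are not M\"obius images of one another, so the inference ``by rigidity there is a M\"obius $\mu$ with $\mu(P)=Q$'' is unjustified. Your parenthetical fallback --- a packing in $\UU$ forces transience while a packing in $\CC$ forces recurrence --- is exactly how the paper obtains exclusivity (\cref{hs:RR:rec} and \cref{hs:UU:trans} of \cref{thm:hs}), but those statements are proved here under the bounded-degree assumption, so your argument needs that hypothesis as well.
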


This theorem will be proved in \cref{chp:heschramm} (with the added assumption of finite maximal degree). The proofs in \cite{HS93,HeSc} are based on the notion of \emph{discrete extremal length}. We will present our own approach to the proof in \cref{chp:heschramm} based on a very similar notion of electric resistance discussed in \cref{chp:electric}. This approach is somewhat more appealing to a probabilist and allows for quantitative versions of the He-Schramm Theorem that will be used later for the study of random planar maps in \cref{chp:randommaps}.   

In view of \cref{thm:introheschramm}, we call an infinite one-ended simple planar triangulation \textbf{CP parabolic} if it can be circle packed in $\CC$, and call it \textbf{CP hyperbolic} if it can be circle packed in the open unit disk $\UU$.

\begin{theorem}[He and Schramm, '95]
\label{thm:HeSchramm_arbitrary_domain}
Let $T$ be a CP hyperbolic one-ended infinite planar triangulation and let $D \subsetneq \CC$ be a simply connected domain. Then there exists a circle packing of $T$ with carrier $D$. 
\end{theorem}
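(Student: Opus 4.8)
The plan is to obtain the packing as a limit of circle packings of finite pieces of $T$, each one ``filling'' a domain that approximates $D$, and to control the limit using the ring lemma (\cref{lem:ring}) together with Schwarz-type comparison principles for circle packings. Observe first that the hypothesis $D\subsetneq\CC$ is forced: a triangulation cannot be simultaneously CP parabolic and CP hyperbolic, so a CP hyperbolic $T$ is never packed with carrier $\CC$; and being a proper simply connected subdomain of $\CC$, the domain $D$ carries a Riemann map, which is what the argument uses. Fix a root $o\in V(T)$. Since $T$ is an infinite one-ended triangulation, we may exhaust it by finite subtriangulations $T_1\subset T_2\subset\cdots$ with $\bigcup_n T_n=T$, each $T_n$ a triangulation of a closed topological disk having $o$ in its interior (one-endedness is used here). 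Fix a Riemann map $\varphi\colon\UU\to D$ and, for some $r_n\uparrow 1$, put $D_n:=\varphi(\{|z|<r_n\})$, an increasing exhaustion of $D$ by analytic Jordan domains.

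The finite building block is that each $T_n$ admits a circle packing $P_n$ inside $D_n$ all of whose boundary circles (those of the outer cycle of $T_n$) are internally tangent to $\partial D_n$; we say $P_n$ \emph{fills} $D_n$. This is the boundary-value form of Koebe's theorem for triangulations of a disk --- due to Schramm when the target boundary is smooth --- and it can also be derived from the compactness and rigidity arguments of \cref{chp:cp}. Now pass to a limit. Because $T$ is CP hyperbolic it has a circle packing with carrier $\UU$ (\cref{thm:introheschramm}); post-composing with a M\"obius map sending $\UU$ onto a round disc inside $D_1$, we obtain a fixed circle packing $R$ of $T$ whose carrier lies inside $D_1$. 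For every $n$, the restriction of $R$ to $T_n$ is a circle packing of $T_n$ inside $D_1\subseteq D_n$; since $P_n$ is a \emph{maximal} packing of $T_n$ in $D_n$, a discrete Schwarz lemma gives a positive lower bound, independent of $n$, for the radius of the root circle $C_o^{(n)}$ of $P_n$, and a matching upper bound together with control of its centre follows from the same considerations. The ring lemma then propagates two-sided radius bounds to the circle $C_v^{(n)}$ of every fixed $v\in V(T)$ (once $n$ is large enough that $v$ is interior to $T_n$), with centres confined to compact subsets of $D$. A diagonal extraction yields a subsequence along which $C_v^{(n)}\to C_v$, a circle of positive radius, for every $v$; the family $P:=\{C_v\}$ is a circle packing of $T$, and by \cref{disk:one:ended} and the one-endedness of $T$ its carrier is a simply connected subdomain of $D$.

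It remains to prove $\carrier(P)=D$. Fix $z^*\in D$ and choose $n$ large enough that $z^*\in D_n$ and $z^*$ is far from $\partial D_n$; since $P_n$ fills $D_n$, the point $z^*$ lies in a closed disc or interstice of $P_n$. If, along our subsequence, the circle of $P_n$ covering $z^*$ belongs to a fixed vertex for all large $n$, then $z^*\in\carrier(P)$; otherwise these circles belong to vertices escaping to infinity in $T$, and either their radii stay bounded below --- impossible, since infinitely many disjoint circles of definite size cannot accumulate in a bounded region --- or their radii tend to $0$, which is excluded by monotonicity of circle radii under the inclusions $T_n\hookrightarrow T_{n+1}$, made precise by a further diagonal argument over the exhaustion of $D$ in which the target domain is held fixed while the triangulation grows. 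Hence $\carrier(P)=D$. I expect the crux to be exactly this final step --- transferring the ``filling'' property through the limit so that the carrier is all of $D$ rather than a proper subdomain --- along with the non-degeneracy estimates for $C_o^{(n)}$; both rest on the Schwarz-type monotonicity principles for circle packings, whose careful formulation is the technical heart of the argument.
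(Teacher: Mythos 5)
These notes do not actually prove \cref{thm:HeSchramm_arbitrary_domain}: it is stated in the introduction and quoted from He and Schramm, so there is no internal proof to compare yours against; judging the proposal on its own, the exhaustion-and-limit scheme is the right general shape, but the three load-bearing steps are not supplied. First, the existence of a finite packing $P_n$ of $T_n$ whose boundary circles are internally tangent to $\partial D_n$ for a general (even analytic) Jordan domain is itself a substantial theorem, essentially Schramm's packing result for prescribed boundary shapes; it does not follow from the tools of \cref{chp:cp}, whose boundary control is limited to prescribing the three outer radii (\cref{thm:cptriangulation}) or, after an inversion, tangency to a round circle (\cref{cp:no:outer:pack}). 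Citing it would be legitimate, but your claim that it ``can also be derived from the compactness and rigidity arguments of \cref{chp:cp}'' is not justified.

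Second, the non-degeneracy step is a genuine gap. The discrete Schwarz--Pick comparison between $P_n$ and the restriction of the fixed packing $R$ controls \emph{hyperbolic} radii with respect to $D_n$; it gives a lower bound on the hyperbolic radius of the root circle of $P_n$, but this bounds neither its Euclidean radius from below nor keeps its center in a compact subset of $D$, and unlike the disc case a general domain $D_n$ has no M\"obius automorphisms, so there is no normalization available to recenter $P_n$. You must rule out the root circle drifting to $\partial D$ and degenerating, and ``follows from the same considerations'' is not an argument. The same problem reappears, as you yourself flag, in the final step: the scenario to exclude is precisely that the circles of $P_n$ covering a fixed $z^*\in D$ shrink to zero along vertices escaping to infinity, leaving $z^*$ as an accumulation point outside $\carrier(P)$; your dichotomy plus an appeal to an unproved ``monotonicity of circle radii under inclusions'' (which, if anything, points the other way: enlarging the complex inside a fixed domain shrinks circles) does not exclude it. Note also that the theorem carries no bounded-degree hypothesis, so the Ring Lemma (\cref{lem:ring}) yields only per-vertex constants and no uniform estimate near $z^*$; He and Schramm's actual argument replaces these steps with their rigidity and uniformization machinery rather than ring-lemma compactness, and some substitute of that strength is what is missing here.
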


What about uniqueness? \cref{thm:HeSchramm_arbitrary_domain} shows that, in general, we have much more flexibility when choosing a circle packing of an infinite planar triangulation than we have in the finite case, see \cref{fig:norigidity} again. Indeed, it implies that the circle packing of a CP hyperbolic triangulation is \emph{not} determined up to M\"obius transformations and reflections, since, for example, we can circle pack the same triangulation in both the unit disc and the unit square, and these two packings are clearly not related by a M\"obius transformation. Fortunately, the following theorem of Schramm \cite{Schramm91} shows that we recover M\"obius rigidity if we restrict the packing to be in $\CC$ or $\UU$.

\begin{theorem}[Schramm '91]
\label{thm:Schramm_Rigidity}
 Let $T$ be a one-ended infinite planar triangulation. 
 \begin{itemize}
\item If $T$ is CP parabolic, then its circle packing in $\CC$ is unique up to dilations, rotations, translations and reflections.
\item If $T$ is CP hyperbolic, then its circle packing in $\UU$ is unique up to M\"obius transformations or reflections fixing $\UU$.
 \end{itemize}
\end{theorem}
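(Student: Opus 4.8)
The plan is to reduce both parts to the single assertion that if $P$ and $P'$ are two circle packings of the same one-ended triangulation $T$ in the same canonical domain, with radii $r_v$ and $r'_v$, then $v\mapsto r'_v/r_v$ is constant. Two remarks make this reduction legitimate. First, since $T$ is one-ended and, by hypothesis, carries a packing whose carrier is $\CC$ (resp.\ $\UU$), \emph{every} vertex of $T$ is an interior vertex of that packing: its circle together with those of its neighbours closes up into a flower, so the angles subtended at its centre by consecutive pairs of neighbours sum to $2\pi$; the same holds for the second packing, which also has carrier $\CC$ (resp.\ $\UU$). Second, a circle packing of a connected triangulation is determined by the list of its radii up to an orientation--preserving or orientation--reversing similarity of the plane: fix a triangular face, realise the corresponding three mutually tangent circles (unique up to isometry and reflection), and build out one circle at a time, each new one tangent to two previously placed, mutually tangent circles, the combinatorial embedding dictating on which side it lies. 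Hence $r'_v/r_v\equiv\lambda$ forces $P'$ to be a similarity image of $P$; this settles the parabolic case outright, and reduces the hyperbolic case to proving constancy of the ratio \emph{after} precomposing $P$ with an appropriate M\"obius automorphism of $\UU$.

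The engine is a discrete maximum principle. For positive reals let $\alpha(a;b,c)$ be the angle at the centre of the radius-$a$ circle in the triangle of centres of three mutually tangent circles of radii $a,b,c$; the law of cosines shows $\alpha$ is continuous, strictly decreasing in $a$, strictly increasing in $b$ and $c$, and scale invariant. The flower identity at a vertex $v$ with cyclically ordered neighbours $u_1,\dots,u_k$ reads $\sum_i\alpha(r_v;r_{u_i},r_{u_{i+1}})=2\pi$, and likewise for $r'$. Writing $\rho_v=r'_v/r_v$ and using scale invariance, $\alpha(r'_v;r'_{u_i},r'_{u_{i+1}})=\alpha\bigl(r_v;\tfrac{\rho_{u_i}}{\rho_v}r_{u_i},\tfrac{\rho_{u_{i+1}}}{\rho_v}r_{u_{i+1}}\bigr)$, so if $\rho$ had a strict local maximum at $v$ the angle sum of $P'$ at $v$ would fall strictly below $2\pi$, contrary to the flower identity for $P'$. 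Thus $\rho$ has no strict local maximum and, symmetrically, no strict local minimum. Moreover, subtracting the two flower identities and applying the mean value theorem to the angle sum as a function of the logarithms of the radii exhibits $h:=\log\rho$ as a harmonic function on $T$ with respect to a family of positive edge conductances $c_{vw}$ (averages of the relevant partial derivatives along the segment joining the radius vectors of $P$ and $P'$); by the ring lemma (\cref{lem:ring}) adjacent radii have bounded ratios in each packing, so the $c_{vw}$, and the increments of $h$ across edges, are bounded above and below. Finally, a compactness argument (root $T$ at vertices where $\rho$ approaches its supremum, rescale, and extract subsequential limits of the two packings via the ring lemma) shows that either $\rho$ is constant or $\sup_v\rho_v=\infty$, and symmetrically $\inf_v\rho_v=0$.

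For the parabolic case the carrier of $P$ is the whole plane, so by the He--Schramm theorem (\cref{thm:introheschramm}, proved in \cref{chp:heschramm}) simple random walk on $T$ is recurrent, and since the $c_{vw}$ are comparable to unit conductances the network $(T,c)$ is recurrent too. If $\rho$ were bounded this finishes it at once: $h$ would be a bounded $c$-harmonic function on a recurrent network, hence constant. The crux is therefore the remaining case, $\sup\rho=\infty$ and $\inf\rho=0$; here one proceeds as in He--Schramm's original extremal-length arguments, taking the packing $P$ itself as a conformal metric on $\CC$. Since the level sets of $\rho$ lying strictly between two of its values are, by the no--extremum property, unbounded inside the carrier, they organise into a family of disjoint separating curves; the effective resistance (equivalently, extremal length) estimate this yields is forced to diverge by recurrence of $T$ but is simultaneously bounded because the carrier is the parabolic surface $\CC$ --- a contradiction. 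I expect this last step --- the topological description of the level sets and the accompanying length--area estimate --- to be the main difficulty; it plays the role that the outer face plays in the finite uniqueness statement \cref{claim:rigidity}, with the ``boundary at infinity'' as the stand-in.

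For the hyperbolic case the carrier is $\UU$, all vertices remain interior, and the maximum principle and the dichotomy above still hold, but recurrence is unavailable --- indeed $(T,c)$ is transient --- so the argument must exploit the ideal boundary $\partial\UU$. The plan is: using the ring lemma, show that the circles of any packing of $T$ in $\UU$ accumulate on $\partial\UU$ along a well-defined boundary correspondence, and that the correspondences induced by $P$ and $P'$ agree up to a M\"obius automorphism of $\UU$; precompose $P$ with such a map so that the two packings now match along $\partial\UU$. Then $h=\log\rho$ is harmonic for bounded conductances on $T$ and its ``boundary values'' on $\partial\UU$ vanish, whereupon a Phragm\'en--Lindel\"of/maximum-principle argument (using once more that $\rho$ has no strict interior extremum) gives $h\equiv 0$; thus $P$ and $P'$ agree up to a rotation or reflection of $\UU$, and undoing the normalization exhibits $P'$ as a M\"obius image of $P$. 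Constructing the boundary correspondence and proving the matching statement is the technical heart of this case.
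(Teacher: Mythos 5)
You should know at the outset that the paper does not prove \cref{thm:Schramm_Rigidity} at all: it is stated in the introduction and cited from Schramm's 1991 paper, and the only uniqueness argument actually carried out in the text is the finite case (\cref{thm:cp:uniqueness}). Your first two paragraphs essentially adapt that finite maximum-principle computation to interior flowers of an infinite packing, and that local step (no strict local extremum of $\rho=r'/r$, harmonicity of $\log\rho$ for suitable conductances) is fine. But everything that is specific to the infinite setting -- which is the actual content of Schramm's theorem -- is announced rather than proved. In the parabolic case you reduce to the dichotomy ``$\rho$ constant, or $\sup\rho=\infty$ and $\inf\rho=0$'' by an unproved compactness claim, and in the remaining case you explicitly say you \emph{expect} the organisation of level sets and the accompanying length--area estimate to be the main difficulty; that estimate is precisely the hard core of the rigidity theorem (Schramm's Incompatibility Theorem, or the later extremal-length proofs of He--Schramm and He), so the first bullet is missing its proof. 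In the hyperbolic case, the ``boundary correspondence'' between two packings in $\UU$ together with the claim that it agrees up to a M\"obius automorphism is essentially as strong as the statement being proved; you acknowledge it is the technical heart and give no construction (note that identifying $\partial\UU$ with a boundary for the walk needs bounded degree and substantial extra theory), and the Phragm\'en--Lindel\"of step is also unjustified: on a transient network, vanishing ``boundary values'' in an unspecified sense do not force a bounded harmonic function to vanish unless one knows the boundary in question is, say, the Poisson boundary.

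There is also a concretely wrong step. The theorem carries no bounded-degree hypothesis, yet your argument uses the Ring Lemma (\cref{lem:ring}) for two-sided conductance bounds and, worse, deduces recurrence of $T$ from CP parabolicity by citing \cref{thm:introheschramm}. That theorem says nothing about the random walk; the equivalence with recurrence is \cref{thm:hs}, which assumes bounded degree, and the implication genuinely fails without it -- the paper itself exhibits an unbounded-degree transient triangulation packed with carrier $\RR^2$ (\cref{fig:transient:unbounded}). So even the ``bounded ratio'' branch of your parabolic argument is only valid under an assumption the statement does not make. In short: the local engine is correct and mirrors the paper's finite-case proof, but the passage from ``no strict local extremum'' to global constancy -- the theorem itself -- is not there.
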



\subsection{Relation to conformal mapping} 

A central motivation behind Thurston's popularization of circle packing was its role as a discrete analogue of conformal mapping. The resulting theory is somewhat tangential to the main thrust of these notes, but is worth reviewing for its beauty, and for the intuition it gives about circle packing. A  more detailed treatment of this and related topics is given in \cite{St05}.

Recall that a map $\phi: D \to D'$ between two domains $D,D' \subseteq \CC$ is conformal if and only if it is holomorphic and one-to-one. 
Intuitively, we can think of the latter condition as saying that $\phi$ maps infinitesimal circles to infinitesimal circles. Thus, it is natural to wonder, as Thurston did, whether conformal maps can be approximated by graph isomorphisms between circle packings of the corresponding domains, which \emph{literally} map circles to circles.

\begin{figure}[t]
\centering
\includegraphics[height=0.35\textwidth]{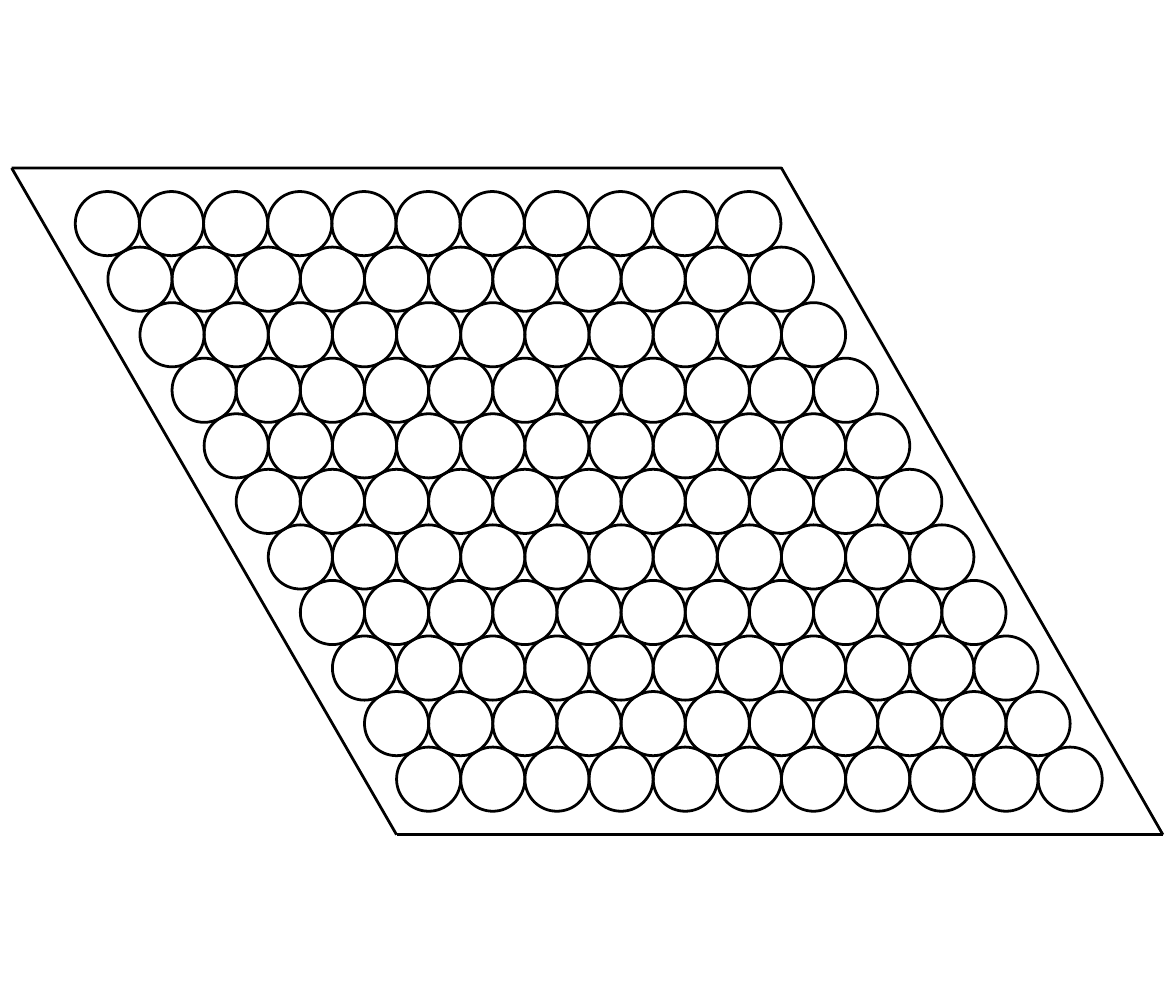}
\qquad
\includegraphics[trim = 5.47cm 7.33cm 3.25cm 7.65cm, clip, angle= 30, height=0.35\textwidth]{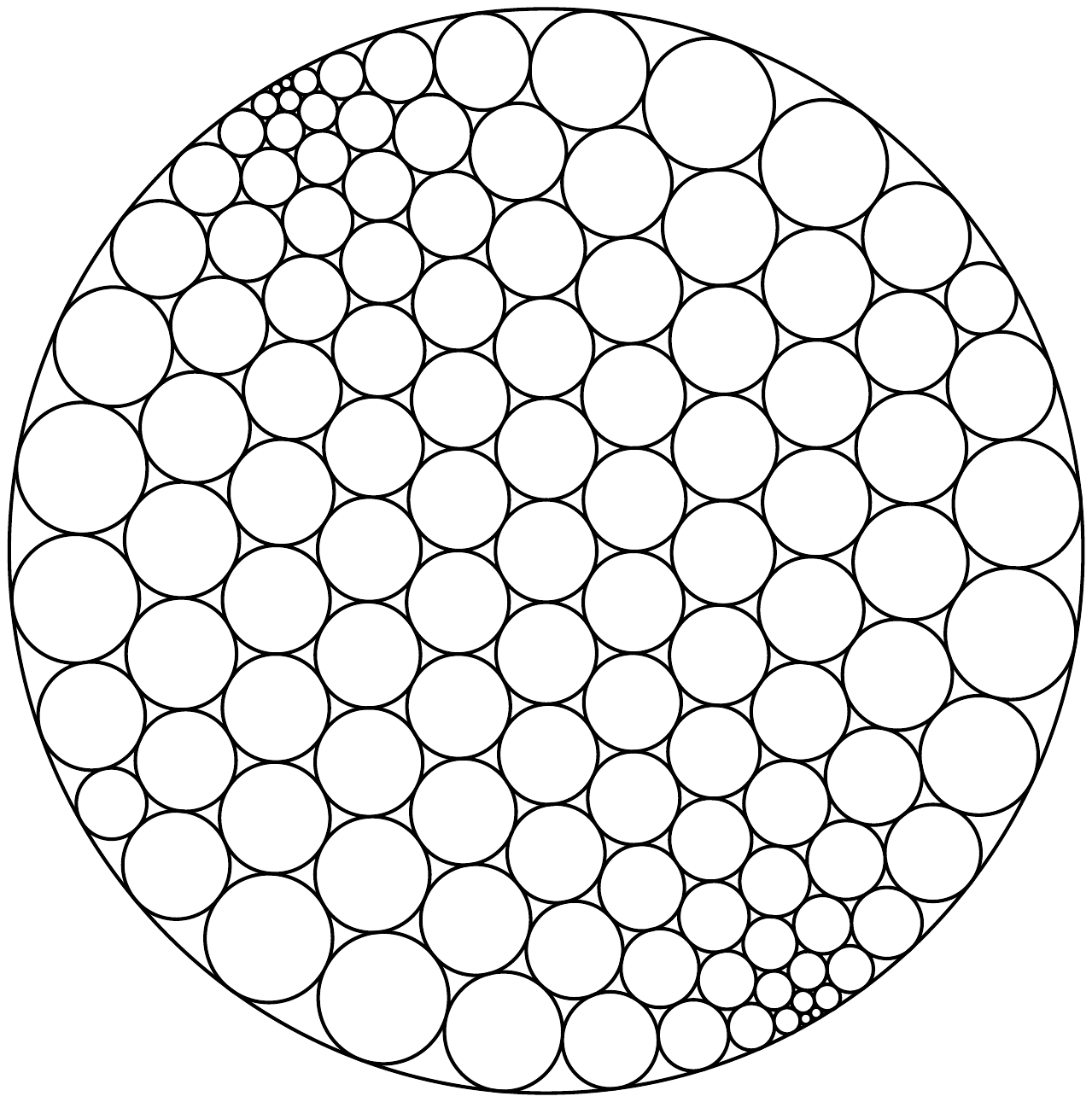}
\includegraphics[height=0.35\textwidth]{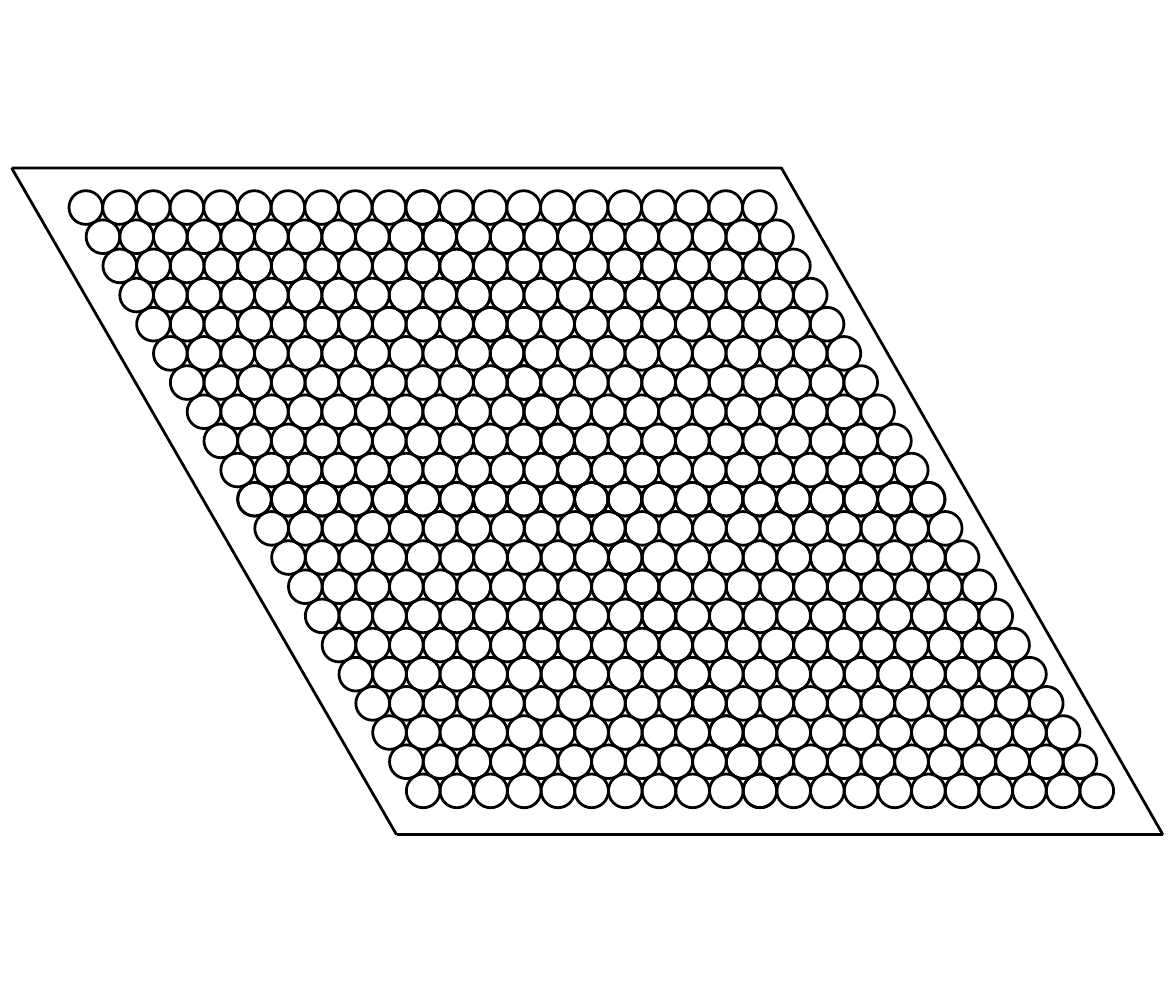}
\qquad
\includegraphics[
trim = 4.2cm 7.33cm 4.1cm 6.5cm, clip,
 angle= 30, height=0.35\textwidth]
 {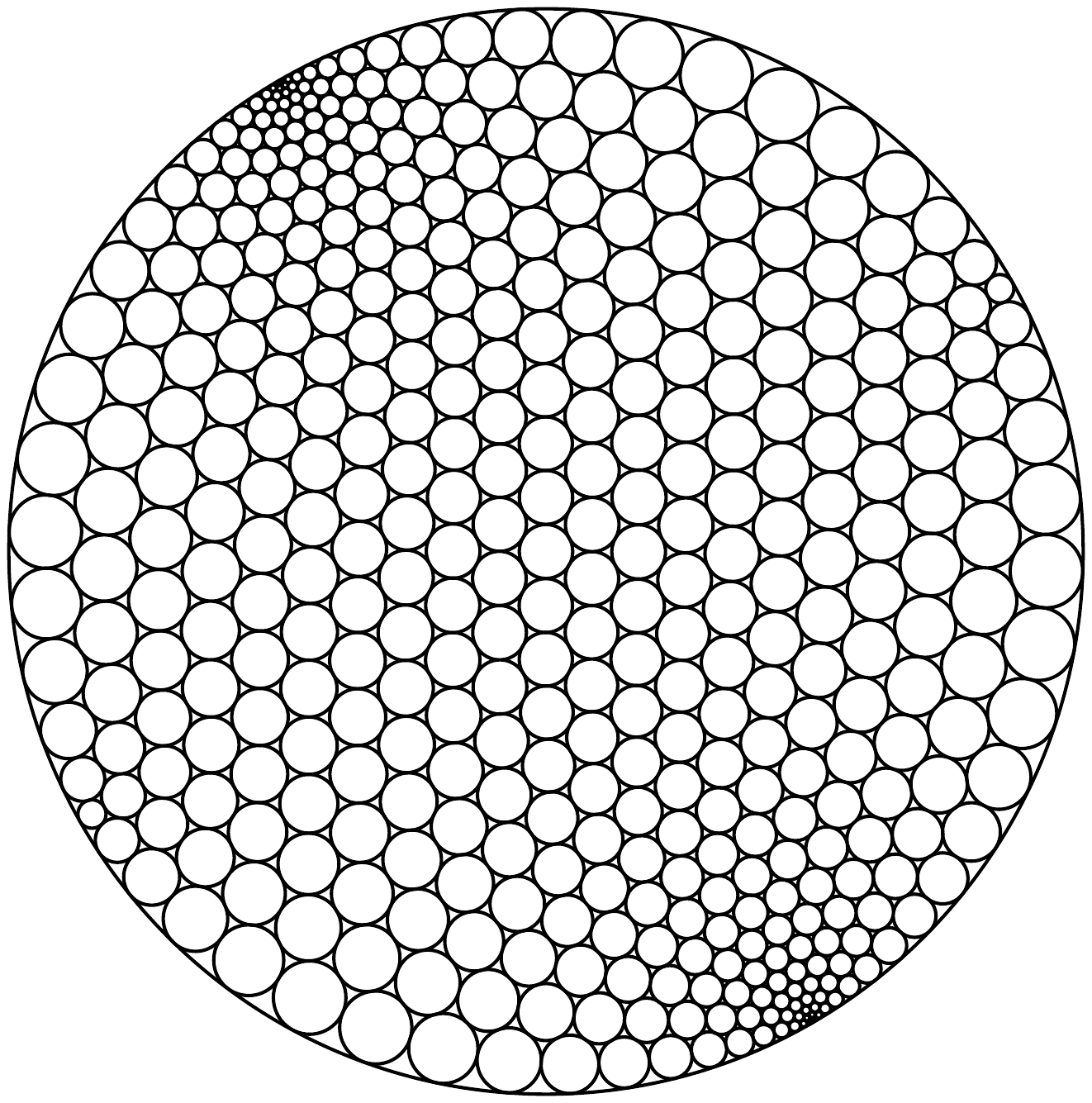}
\caption{Approximating the conformal map from a rhombus to the disc using circle packing, at two different degrees of accuracy.}
\label{fig:rodinsullivan}
\end{figure}

 For each $\eps>0$, let $\mathbb{T}_\eps =\{ \eps n + \eps \frac{1+\sqrt{3}i}{2}m: n,m\in \ZZ \} \subseteq \CC$ be the triangular lattice with lattice spacing $\eps$, which we make into a simple planar triangulation by connecting two vertices if and only if they have distance $\eps$ from each other. This triangulation is naturally circle packed in the plane by placing a disc of radius $\eps$ around each point of $\mathbb{T}_\eps$: this is known as the \textbf{hexagonal packing}. Now, let $D$ be a simply connected domain, and take $z_0$ to be a marked point in the interior of $D$. For each $\eps>0$ let $u_\eps$ be an element of $\mathbb{T}_\eps$ of minimal distance to $z_0$, and let $v_\eps=u_\eps+\eps$ and $w_\eps = u_\eps + (1+\sqrt{3}i)\eps/2$. For each $\eps>0$, let $T_\eps(D)$ be the subgraph of $\mathbb{T}_\eps$ induced by the vertices of distance at least $2\eps$ from $\partial D$ (i.e., the subgraph containing all such vertices and all the edges between them), and let $T'_\eps(D)$ be the component of $T_\eps(D)$ containing $u_\eps$. Finally, let $T''_\eps(D)$ be the triangulation obtained from $T'_\eps(D)$ by placing a single additional vertex $\partial_\eps$ in the outer face of $T'_\eps(D)$ and connecting this vertex to every vertex in the outer boundary of $T'_\eps(D)$.

Applying the circle packing theorem to $T''_\eps(D)$ and then applying a M\"obius transformation or a reflection if necessary, we obtain a circle packing $P_\eps$ of $T''_\eps(D)$ with the following properties:
\begin{itemize}
  \item The boundary vertex $\partial_\eps$ is represented by the unit circle,
  \item the vertex $u_\eps$ is represented by a circle centered at the origin, 
  \item the vertex $v_\eps$ is represented by a circle centered on the real line, and
  \item the vertex $w_\eps$ is represented by a circle centered in the upper half-plane.
\end{itemize}
The function sending each vertex of $T'_\eps(D)$ to the center of the circle representing it in $P_\eps$ can be extended piecewise on each triangle by an affine extension. Call the resulting function $\phi_\eps$.

The following theorem was conjectured by Thurston and proven by Rodin and Sullivan \cite{RS87}.

\begin{theorem}[Rodin and Sullivan '87]
\label{thm:RodinSullivan}
Let $\phi$ be the unique conformal map from $D$ to $\UU$ with $\phi(z_0)=0$ and $\phi'(z_0)>0$. Then $\phi_\eps$ converge to $\phi$ as $\eps \downarrow 0$, uniformly on compact subsets of $D$. 
\end{theorem}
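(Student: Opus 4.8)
The plan is to follow the strategy of Rodin and Sullivan, resting on two pillars: the Ring Lemma (\cref{lem:ring}), which supplies a priori geometric control, and a \emph{quantitative} rigidity statement for the regular hexagonal packing. First I would record the consequences of the Ring Lemma. Every vertex of $T'_\eps(D)$ has all of its neighbours present in $T''_\eps(D)$, so the Ring Lemma applies at each such vertex and shows that the ratio $r_u/r_v$ of the radii of two tangent circles of $P_\eps$ is bounded above and below by universal constants whenever $u \in T'_\eps(D)$. Chaining this bound along edge-paths, and using that the carrier of $P_\eps$ lies in $\overline{\UU}$, one obtains that on any compact $K \subseteq D$ the maps $\phi_\eps$ are uniformly bounded and equicontinuous, hence form a normal family on $D$. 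Passing to a subsequence $\eps_k \downarrow 0$ we get locally uniform convergence $\phi_{\eps_k} \to \phi_0$ to some $\phi_0 \colon D \to \overline{\UU}$, and it suffices to show $\phi_0 = \phi$ for every such subsequential limit.

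The crux is that $\phi_0$ is conformal, and for this I would invoke the \textbf{Hexagonal Packing Lemma}: there is a sequence $s_n \downarrow 0$ such that if a circle $C$ of a packing has the combinatorics of $n$ consecutive generations of the triangular lattice around it, then $|r_C/r_{C'} - 1| \le s_n$ for every neighbour $C'$ of $C$. A fixed compact $K \subseteq D$ lies at distance $\ge \delta > 0$ from $\partial D$, so vertices of $T'_{\eps_k}(D)$ representing points near $K$ are surrounded by $\asymp \delta/\eps_k \to \infty$ generations of hexagonal combinatorics; hence the radius ratios across each small triangle tend to $1$ uniformly on $K$, forcing the affine pieces of $\phi_{\eps_k}$ to converge to similarities. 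Therefore $\phi_0$ is $1$-quasiconformal, hence conformal. A further argument, again of length--area type, together with the normalization --- the circles of $u_\eps, v_\eps, w_\eps$ being centred at the origin, on $\RR$, and in the upper half-plane --- then shows that $\phi_0$ is non-constant with $\phi_0(z_0) = 0$ and $\phi_0'(z_0) > 0$, and that $\phi_0$ maps $D$ conformally onto $\UU$ (this last point uses that the outer-boundary circles of $P_\eps$ have radii tending to $0$). By uniqueness of the Riemann map $\phi_0 = \phi$; since $\phi$ is the only possible subsequential limit, $\phi_\eps \to \phi$ locally uniformly on $D$.

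I expect the main obstacle to be the Hexagonal Packing Lemma itself, i.e.\ the decay $s_n \downarrow 0$, whose proof I would organise as a compactness argument. If $\limsup_n s_n > 0$ there would exist circle packings of larger and larger hexagonal patches in which some adjacent pair of radii has ratio bounded away from $1$; normalising the central circle to be the unit disk, the Ring Lemma bounds all radii and centres on bounded regions, so along a subsequence these packings converge to a circle packing of the \emph{full} triangular lattice $T$ in which some adjacent pair of radii still has ratio $\neq 1$. The delicate point --- and, I expect, the real work --- is to show that this limiting packing is locally finite and has carrier all of $\CC$ rather than collapsing into a bounded domain, which requires a quantitative (length--area) input showing that $n$ generations of near-hexagonal combinatorics must span a region whose diameter grows with $n$. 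Granting this, $T$ is CP parabolic, so by Schramm's rigidity (\cref{thm:Schramm_Rigidity}) the limiting packing is the regular hexagonal packing, in which all radii are equal --- a contradiction. A separate but milder technical point is that the piecewise-affine maps $\phi_\eps$ need not be globally injective (there may be ``flipped'' triangles near $\partial T'_\eps(D)$); this is harmless in the interior but must be controlled when checking that $\phi_0$ is onto $\UU$.
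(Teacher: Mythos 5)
The paper does not actually prove \cref{thm:RodinSullivan}: it states the result, cites Rodin--Sullivan, and remarks only that the key input is the uniqueness of the hexagonal packing, now a special case of \cref{thm:Schramm_Rigidity}. Your proposal is the classical Rodin--Sullivan argument (normal family, Hexagonal Packing Lemma, rigidity of the hexagonal packing via compactness), so in spirit it matches exactly the route the paper points to; your discussion of the Hexagonal Packing Lemma, including the delicate point that the limiting packing of the triangular lattice must be shown to fall within the scope of \cref{thm:Schramm_Rigidity} (carrier issues), is sound.

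There is, however, one genuine gap as written: the normal-family step. The Ring Lemma (\cref{lem:ring}) controls only \emph{ratios} of adjacent radii, by some constant $A>1$; chaining it along an edge-path of combinatorial length $\asymp \delta/\eps$ gives bounds of the form $A^{\delta/\eps}$, and the trivial area bound (disjoint circles inside $\overline{\UU}$) then yields nothing better than $r_v \lesssim A^{m}/m$ over a combinatorial ball of radius $m$, which does not tend to $0$. So ``uniformly bounded and equicontinuous'' does not follow from the Ring Lemma plus containment in $\overline{\UU}$. What is needed is the statement that the radii of $P_\eps$ tend to $0$ uniformly on compact subsets of $D$; Rodin and Sullivan prove this \emph{first}, via their Length--Area Lemma (a circle surrounded by $n$ disjoint combinatorial rings packed in $\UU$ has radius $O(1/\sqrt{\log n})$). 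Alternatively, you can repair your outline by invoking the Hexagonal Packing Lemma before the normality step: if adjacent ratios are at most $1+s_{n}$ with $s_n\downarrow 0$, then choosing $m=m(n)\to\infty$ with $m\,s_{n-m}\to 0$, the area bound over the ball of radius $m$ gives $r_v\lesssim (1+s_{n-m})^{m}/m \to 0$, with no rate on $s_n$ required. Relatedly, ``ratios $\to 1$, hence $\phi_0$ is $1$-quasiconformal, hence conformal'' needs a non-degeneracy input: a constant limit is not excluded by ratio control alone, so before applying quasiconformal limit theorems you must bound the image scale from below (e.g.\ by running the length--area argument in the inverse direction, as Rodin--Sullivan do), not only from above. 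You invoke length--area only for surjectivity and inside the Hexagonal Packing Lemma; it (or the HPL-plus-area substitute) is needed already at the equicontinuity and non-degeneracy stages.
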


See \cref{fig:rodinsullivan}. The key to the proof of \cref{thm:RodinSullivan} was to establish that the hexagonal packing is the only circle packing of the triangular lattice, which is now a special case of \cref{thm:Schramm_Rigidity}. 

Various strengthenings and generalizations of \cref{thm:RodinSullivan} have been established in the works \cite{MR1639851,MR1638772,MR1395721,MR1230272,MR1094463,MR1244888}.

\section{Probabilistic applications}
\label{sec:intro_prob}

Why should we be interested in circle packing as probabilists? At a very heuristic level, when we uniformize the \emph{geometry} of a triangulation by applying the circle packing theorem, we also uniformize the \emph{random walk} on the triangulation, allowing us to compare it to a standard reference process that we understand very well, namely Brownian motion. Indeed, since Brownian motion is conformally invariant and circle packings satisfy an approximate version of conformality, it is not unreasonable to expect that the random walk on a circle packed triangulation will behave similarly to Brownian motion. This intuition turns out to be broadly correct, at least when the triangulation has bounded degrees, although it is more accurate to say that the random walk behaves like a \emph{quasi-conformal image} of Brownian motion, that is, the image of Brownian motion under a function that distorts angles by a bounded amount.

Although it is possible to make the discussion in the paragraph above precise, in these notes we will be interested primarily in much coarser information that can be extracted from circle packings, namely \emph{effective resistance estimates} for planar graphs. This fundamental topic is thoroughly discussed in \cref{chp:electric}. One of the many definitions of the effective resistance $\reff(A \lr B)$ between two disjoint sets $A$ and $B$ in a finite graph is 
\[
\reff(A \lr B) = \sum_{v\in A} \deg(v) \pr_v(\tau_B < \tau^+_A),
\]
where $\pr_v$ is the law of the simple random walk started at $v$, $\tau_B$ is the first time the walk hits $B$, and $\tau_A^+$ is the first positive time the walk visits $A$.  Good enough control of effective resistances allows one to understand most aspects of the random walk on a graph. We can also define effective resistances on infinite graphs, although issues arise with boundary conditions. An infinite graph is recurrent if and only if the effective resistance from a vertex to infinity is infinite. 

The effective resistance can also be computed via either of two variational principles: \emph{Dirichlet's principle} and \emph{Thomson's principle}, see \cref{sec:energy}. The first expresses the effective resistance as a \emph{supremum} of energies of a certain set of \emph{functions}, while the second expresses the effective resistance as an \emph{infimum} of energies of a certain set of \emph{flows}. Thus, we can bound effective resistances from above by constructing flows, and from below by constructing functions. A central insight is that we can \emph{use the circle packing} to construct these functions and flows. This idea leads fairly easily to various statements such as the following:

\begin{itemize}
  \item The effective resistance across a Euclidean annulus of fixed modulus is at most a constant. If the triangulation has bounded degrees, then the resistance is at least a constant.
  \item The effective resistance between the left and right sides of a Euclidean square is at most a constant. If the triangulation has bounded degrees, then the resistance is at least a constant.
\end{itemize}

See for instance \cref{hs:1:lem:1}. We will use these ideas to prove the following remarkable theorem of He and Schramm \cite{HeSc}, which pioneered the connection between circle packing and random walks.

\begin{theorem}[He and Schramm, '95]
Let $T$ be a one-ended infinite triangulation. If $T$ has bounded degrees, then it is CP parabolic if and only if it is recurrent for simple random walk, that is, if and only if the simple random walk on $T$ visits every vertex infinitely often almost surely.
\end{theorem}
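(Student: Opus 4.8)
The plan is to prove both directions by combining the circle packing theorem (Koebe, \cref{thm:cp1}) with effective resistance estimates. The key geometric input is that a one-ended triangulation of bounded degree $M$ can be circle packed with carrier either $\CC$ or $\UU$ (\cref{thm:introheschramm}), and we must link the choice of carrier to recurrence versus transience of the random walk. Throughout, fix such a circle packing $P=\{C_v\}$ and write $r_v$ for the radius and $z_v$ for the center of $C_v$; since the triangulation has bounded degree, the \emph{ring lemma} (\cref{lem:ring}) gives a constant $c=c(M)>0$ such that for any tangent pair $u\sim v$ we have $c\,r_u\le r_v\le c^{-1}r_u$, and more generally the radii of nearby circles are comparable. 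This ``bounded geometry'' is the engine that makes the Euclidean picture quantitatively reflect the combinatorial one.

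For the direction ``CP parabolic $\implies$ recurrent'': suppose the carrier is all of $\CC$. I would show $\reff(v_0\lr\infty)=\infty$ by exhibiting, for each scale, a unit-conductance cutset whose resistance contribution is bounded below by a constant, and summing over dyadic scales. Concretely, the annulus $A_k=\{z: 2^k\le |z-z_{v_0}|\le 2^{k+1}\}$ separates $v_0$ from $\infty$ once $k$ is large; by the bounded-geometry estimates mentioned in the excerpt (``the effective resistance across a Euclidean annulus of fixed modulus is at least a constant if the triangulation has bounded degrees'', cf.\ \cref{hs:1:lem:1}), crossing $A_k$ costs effective resistance at least some $c_1=c_1(M)>0$. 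Since the packing fills $\CC$, infinitely many such annuli are nontrivial and are arranged in series (using the one-endedness so that the complement structure is as expected), so by the series law $\reff(v_0\lr\infty)\ge\sum_k c_1=\infty$, giving recurrence. The technical care here is in the cutset/series argument — turning the topological separation of annuli in $\CC$ into a genuine series decomposition of the network — but this is exactly the kind of statement packaged in \cref{chp:electric} and \cref{chp:heschramm}.

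For the contrapositive of the other direction — ``CP hyperbolic $\implies$ transient'' — suppose the carrier is $\UU$. Now I would build a \emph{flow} of finite energy from $v_0$ to infinity, using Thomson's principle to conclude $\reff(v_0\lr\infty)<\infty$. The natural construction is to push forward, via the circle packing, a bounded-energy radial-type flow on $\UU$ (for instance the harmonic flow associated with the function $z\mapsto\log\frac{1}{1-|z|}$ truncated appropriately, or a direct dyadic construction where the flow out of the annulus $\{1-2^{-k}\le|z|<1-2^{-k-1}\}$ is split among the circles meeting it). Because the circles exhaust $\UU$ and their radii have bounded geometry, the number of circles of a given scale near the boundary is controlled, the edge lengths $|z_u-z_v|$ are comparable to $r_u+r_v$, and the resulting discrete flow has energy comparable to the (finite) continuous Dirichlet energy; the total flow into the boundary circle of $\UU$ is finite by a Euclidean area/geometry count. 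Hence the flow has finite energy, the walk is transient, and $T$ is not recurrent.

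The main obstacle, in both directions, is the passage between the \emph{Euclidean} geometry of the carrier and the \emph{combinatorial} network: one must verify that ``being near the boundary of the carrier'' translates faithfully into ``being far from $v_0$ in the graph'', and that cutsets/flows defined by Euclidean annuli really are cutsets/flows in $T$. Here one-endedness (so that the carrier is simply connected, \cref{disk:one:ended}) and bounded degree (so that the ring lemma applies and radii don't degenerate) are both essential — without bounded degree the radii can shrink so fast that, e.g., the hyperbolic disc is the carrier yet the walk is still recurrent, which is why the theorem's hypothesis cannot be dropped. Once the dictionary is set up, the recurrence side is a sum of lower bounds over annuli and the transience side is a single finite-energy flow, both of which are routine given the effective-resistance toolkit of \cref{chp:electric,chp:heschramm}.
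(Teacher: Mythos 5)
The two implications you actually carry out---a packing with carrier $\CC$ forces recurrence, and a packing with carrier $\UU$ forces transience---are precisely the paper's \cref{hs:RR:rec} and \cref{hs:UU:trans}, and your arguments track the paper's: disjoint annular shells each contributing resistance at least $c(M)$ (this is \cref{hs:1:lem:1}) summed along the unit current flow, and a finite-energy flow to infinity built from the packing in $\UU$ (the paper samples a uniform point of $\partial\UU$ and takes the circles met by the radius to it, bounding the energy by $C\Delta\sum_v \rad(v)^2\le C\Delta\pi$). One repair: the annuli must have the Ring-Lemma ratio $C=C(M)$, not a fixed ratio $2$, and must be spaced so that no edge has an endpoint on each side (part (i) of \cref{hs:1:lem:1}); with ratio $2$ an edge could jump across the shell and your series/cutset decomposition would fail.

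The genuine gap is the converse direction. You obtain ``recurrent $\Rightarrow$ CP parabolic'' by quoting the dichotomy \cref{thm:introheschramm} and then taking the contrapositive of ``CP hyperbolic $\Rightarrow$ transient''. But in these notes the dichotomy for bounded-degree triangulations is not an available black box: it is itself proved in \cref{chp:heschramm} as parts \cref{hs:rec:RR} and \cref{hs:trans:UU} of \cref{thm:hs}, that is, by showing \emph{directly} that a recurrent triangulation admits a packing whose carrier is all of $\RR^2$ (take the compactness packing of \cref{cpt:inf}, then derive a contradiction from any point missing in the carrier by producing a finite-energy flow along random circular arcs around that point) and that a transient one admits a packing with carrier exactly $\UU$ (pack the exhaustion $(V_j,E_j)$ with outer circles tangent to $\partial\UU$ and use the two-sided resistance estimate \cref{hs:lemma:diam:res} to rule out the limit carrier being a strict subset of $\UU$). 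These existence statements are the bulk of the chapter and the part of the theorem your proposal leaves untouched; invoking \cref{thm:introheschramm} is legitimate only if one imports the 1993 He--Schramm extremal-length proof as external background, which the self-contained development here does not do.
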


This has been extended to the multiply-ended cases in \cite{MR3607800}, see also \cref{chp:related}, item 4.

\subsection{Recurrence of distributional limits of random planar maps}

Random planar maps is a widely studied field lying at the intersection of probability, combinatorics and statistical physics. It aims to answer the vague question ``what does a typical random surface look like?''

 We provide here a very quick account of this field, referring the readers to the excellent lecture notes \cite{LeGallMiermontBuzios} by Le Gall and Miermont, and the many references within for further reading. The enumerative study of planar maps (answering questions of the form ``how many simple triangulations on $n$ vertices are there?'')\ began with the work of Tutte in the 1960's \cite{Tutte62} who enumerated various classes of finite planar maps, in particular triangulations. Cori and Vauquelin \cite{CV}, Schaeffer \cite{Sc} and Chassaing and Schaeffer \cite{CS} have found beautiful bijections between planar maps and labeled trees and initiated this fascinating topic in enumerative combinatorics. The bijections themselves are model dependent and extremely useful since many combinatorial and metric aspects of random planar maps can be inferred from them. This approach has spurred a new line of research: limits of large random planar maps. 

Two natural notions of such limits come to mind: scaling limits and local limits. In the first notion, one takes a random planar map $M_n$ on $n$ vertices, scales the distances appropriately (in most models the correct scaling turns out to be $n^{-1/4}$), and aims to show that this random metric space converges in distribution in the Gromov-Hausdorff sense. The existence of such limits was suggested by Chassaing and Schaeffer \cite{CS}, Le Gall \cite{legall07}, and Marckert and Mokkadem \cite{MM06}, who coined the term \emph{the Brownian map} for such a limit. The recent landmark work of Le Gall \cite{LeGall2013} and Miermont \cite{Miermont2013} establishes the convergence of random $p$-angulations for $p=3$ and all even $p$ to the Brownian map. 

The study of local limits of random planar maps, initiated by Benjamini and Schramm \cite{BeSc}, while bearing many similarities, is independent of the study of scaling limits. 
The \emph{local} limit of a random planar map $M_n$ on $n$ vertices is an infinite random rooted graph $(U,\rho)$ with the property that neighborhoods of $M_n$ around a random vertex converge in distribution to neighborhoods of $U$ around $\rho$. The infinite random graph $(U,\rho)$ captures the local behavior of $M_n$ around typical vertices. We develop this notion precisely in \cref{chp:locallimit}. 

In their pioneering work, Angel and Schramm \cite{AS03} showed that the local limit of a uniformly chosen random triangulation on $n$ vertices exists and that it is a one-ended infinite planar triangulation. They termed the limit as the \emph{uniform infinite planar triangulation} (UIPT). The uniform infinite planar quadrangulation (UIPQ), that is, the local limit of a uniformly chosen random quadrangulation (i.e., each face has $4$ edges) on $n$ vertices, was later constructed by Krikun \cite{Krikun}. 

The questions in this line of research concern the almost sure properties of this limiting geometry. It is a highly fractal geometry that is drastically different from $\ZZ^2$. Angel \cite{Angel03} proved that the volume of a graph-distance ball of radius $r$ in the UIPT is almost surely of order $r^{4+o(1)}$ and that the boundary component separating this ball from infinity has volume $r^{2+o(1)}$ almost surely. For the UIPQ this is proved in \cite{CS}. 


Due to the various combinatorial techniques of generating random planar maps, many of the metric properties of the UIPT/UIPQ are firmly understood. Surface properties of these maps are somewhat harder to understand using enumerative methods. Recall that a non-compact simply connected Riemannian surface is either conformally equivalent to the disc or the whole plane and that this is determined according to whether Brownian motion on the surface is transient or recurrent. Hence, the behavior of the simple random walk on the UIPT/UIPQ is considered here as a ``surface property'' (see also \cite{GillRohde}).

As mentioned earlier, one of the main objectives of these notes is to answer the question of the almost sure recurrence/transience of the simple random walk on the UIPT/UIPQ. We provide a general statement, \cref{thm:lim:rec} of these notes, to which a corollary is 

\begin{theorem}[\cite{GGN13}] \label{thm:uiptrecurrent}
The UIPT and UIPQ are almost surely recurrent.
\end{theorem}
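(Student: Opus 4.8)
The strategy is to obtain \cref{thm:uiptrecurrent} as an instance of the general recurrence criterion \cref{thm:lim:rec}, which asserts that a distributional (local) limit of finite planar graphs in which the degree of the root has an exponential tail is almost surely recurrent. The proof therefore reduces to two verifications: that the UIPT and the UIPQ really are such distributional limits, and that in each of them the root degree has an exponential tail.

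For the first verification I would simply recall the constructions. By definition the UIPT is the local limit of a uniformly random triangulation $T_n$ on $n$ vertices, rooted at a uniform vertex \cite{AS03}, and the UIPQ is the local limit of a uniformly random quadrangulation $Q_n$ \cite{Krikun}; both $T_n$ and $Q_n$ are finite planar maps, hence finite planar graphs once we forget the embedding (for the UIPT one works with a model in which the triangulation is simple). If one prefers to run \cref{thm:lim:rec} in its cleanest form, namely for triangulations, then for the UIPQ one first adds a single diagonal to (or a vertex inside) each quadrilateral face of $Q_n$: this turns $Q_n$ into a near-triangulation $Q_n'$, increases every degree by at most a factor of two (a vertex gains at most one edge per incident face), is compatible with taking local limits, and preserves one-endedness of the limit. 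Since the UIPQ is a \emph{subgraph} of the resulting limiting triangulation, recurrence of that triangulation forces recurrence of the UIPQ by Rayleigh monotonicity, so it suffices to treat the triangulation.

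For the second verification I would use the explicit description of the neighborhood of the root. In the UIPT the law of $\deg(\rho)$ is computed by Angel and Schramm \cite{AS03} from the Tutte-type enumeration of triangulations and decays exponentially; in the UIPQ the analogous fact follows from the Cori--Vauquelin--Schaeffer bijection with labeled trees \cite{CV,Sc,CS,Krikun}. Equivalently, since the root of the limit is the limit of a uniform vertex of $T_n$ or $Q_n$, the tail of $\deg(\rho)$ is the limiting empirical degree distribution, whose exponential decay is a standard consequence of the same enumerative estimates. With both hypotheses in hand, \cref{thm:lim:rec} yields that the UIPT and the UIPQ are almost surely recurrent.

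The substance of the argument lies of course in \cref{thm:lim:rec} itself rather than in the reduction above. Its proof rests on the circle packing theorem and the He--Schramm correspondence (\cref{thm:introheschramm}) together with its quantitative refinement \cref{theorem:loc:lim:recurrence}: a distributional limit is a stationary/reversible (unimodular) random rooted graph, one reduces to the one-ended triangulated case, and if such a triangulation were transient then by He--Schramm it would be CP hyperbolic and hence circle packed in $\UU$; this random packing embeds the unimodular triangulation into a bounded domain, and a mass-transport argument of the kind introduced by Benjamini and Schramm (the ``magic lemma'') --- controlling, for the uniform random circle, the probability that it is large relative to its distance to $\partial\UU$ --- leads to a contradiction. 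I expect the main obstacle to be precisely what distinguishes \cref{thm:lim:rec} from the bounded-degree theorem of Benjamini--Schramm \cite{BeSc}: when degrees are unbounded one loses the ring lemma and the uniform control on how circles cluster in the packing, and it is exactly the exponential-tail assumption that must be leveraged --- via the quantitative resistance estimates of \cref{theorem:loc:lim:recurrence}, applied, roughly, after truncating the high-degree vertices --- to push the argument through. This is the heart of \cite{GGN13}.
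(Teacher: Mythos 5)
Your proposal follows essentially the same route as the paper: the result is deduced directly from \cref{thm:lim:rec}, with the exponential tail of the root degree in the UIPT/UIPQ taken from the literature (the paper cites \cite{AS03} or \cite{GaoRichmond} for the UIPT and \cite{BC13} for the UIPQ). The only superfluous step is your triangulation of the quadrangulation: \cref{thm:lim:rec} is stated for arbitrary (possibly random) planar maps with an exponential tail on $\deg(\rho)$, so it applies to the UIPQ directly, and your closing sketch of how \cref{thm:lim:rec} itself is proved (truncation rather than the star-tree transform) is not quite the paper's argument, but that theorem is quoted, not reproved, so this does not affect the correctness of your derivation.
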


The proof heavily relies on the circle packing theorem and can be viewed as an extension of the remarkable theorem of Benjamini and Schramm \cite{BeSc} stating that the local limit of finite planar maps with finite maximum degree is almost surely recurrent. The maximum degree of the UIPT is unbounded and so one cannot apply \cite{BeSc}. A combination of the techniques presented in \cref{chp:heschramm,chp:locallimit,chp:randommaps} is required to overcome this difficulty.

Recently, there have been terrific new developments studying further surface properties of the UIPT/UIPQ. Lee \cite{Lee17} has given an exciting new proof of \cref{thm:uiptrecurrent} based on a spectral analysis and an embedding theorem for planar maps due to \cite{KLPT11}. His proof also yields that the spectral dimension of the UIPT/UIPQ is at most $2$ and applies to local limits of sphere-packable graphs in higher dimensions as well. Gwynne and Miller \cite{GM17} provided the converse bound showing that the spectral dimension of the UIPT equals $2$ and calculated other exponents governing the behavior of the random walk. Their results are based on the deep work of Gwynne, Miller and Sheffield \cite{GMS17} (see also \cref{chp:related}, item 9). 

\chapter{Random walks and electric networks}\label{chp:electric}

An extremely useful tool and viewpoint for the study of random walks is Kirchhoff's theory of electric networks. Our treatment here roughly follows \cite[Chapter 8]{PeresClimb}, we also refer the reader to \cite{LyonsPeres} for an in-depth comprehensive study.

\begin{definition}
  A \defn{network} is a connected graph $G=(V,E)$  endowed with positive edge weights, $\{c_e\}_{e\in E}$ (called \textbf{conductances}\index{conductance}).  The reciprocals $r_e=1/c_e$ are
  called \textbf{resistances}\index{resistance}.
\end{definition}

In sections \ref{sec:voltages}---\ref{sec:energy} below we discuss finite networks. We extend our treatment to infinite networks in Section \ref{sec:infinitegraphs}.

\section{Harmonic functions and voltages}\label{sec:voltages}

Let $G=(V,E)$ be a finite network. In physics classes it is taught that when we impose specific voltages at fixed vertices $a$ and $z$, then current flows through the network according to certain laws (such as the series and parallel laws). An immediate consequence of these laws is that the function from $V$ to $\RR$ giving the voltage at each vertex is harmonic at each $x \in V \setminus \{a,z\}$.

\begin{definition}
  A function $h:V\to\RR$ is \textbf{harmonic} \index{harmonic function} at a vertex $x$ if
  \begin{equation}\label{eq:harmon}
    h(x) = \frac{1}{\pi_x}\sum_{y:y\sim x}c_{xy}h(y)
    \qquad \textrm{where} \qquad
    \pi_x:=\sum_{y:y\sim x}c_{xy}.
  \end{equation}
\end{definition}

Instead of starting with the physical laws and proving that voltage is harmonic, we now take the axiomatically 
equivalent approach of defining voltage to be a harmonic function and deriving the laws as corollaries.

\begin{definition} Given a network $G=(V,E)$ and two distinct vertices $a,z \in V$, 
  a \defn{voltage} is a function $h:V\to\RR$ that is harmonic at any
  $x\in V\setminus\{a,z\}$.
\end{definition}

We will show in \cref{voltage:exist} and \cref{voltage:unique} that for any $\alpha, \beta \in \RR$, there is a unique voltage $h$ such that $h(a) = \alpha$ and $h(z) = \beta$ (this assertion is true only when the network is finite).

\begin{claim}\label{harmon:linear}
  If $h_1,h_2$ are harmonic at $x$ then so is any linear combination of
  $h_1,h_2$.
\end{claim}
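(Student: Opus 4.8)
The plan is to observe that the defining equation \eqref{eq:harmon} for harmonicity at $x$ is \emph{linear} in the function $h$, so the claim is immediate from the linearity of summation. First I would fix scalars $\lambda_1, \lambda_2 \in \RR$ and set $h = \lambda_1 h_1 + \lambda_2 h_2$. Then I would simply compute $\frac{1}{\pi_x}\sum_{y:y\sim x} c_{xy} h(y)$, substitute $h(y) = \lambda_1 h_1(y) + \lambda_2 h_2(y)$, split the sum into two pieces, and pull the constants $\lambda_1, \lambda_2$ out of their respective sums.

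Concretely, the key step is the chain of equalities
\[
  \frac{1}{\pi_x}\sum_{y:y\sim x} c_{xy} h(y)
  = \lambda_1 \cdot \frac{1}{\pi_x}\sum_{y:y\sim x} c_{xy} h_1(y)
  + \lambda_2 \cdot \frac{1}{\pi_x}\sum_{y:y\sim x} c_{xy} h_2(y)
  = \lambda_1 h_1(x) + \lambda_2 h_2(x) = h(x),
\]
where the middle equality uses that $h_1$ and $h_2$ are each harmonic at $x$ (the hypothesis), and the value $\pi_x = \sum_{y:y\sim x} c_{xy}$ is the same normalizing constant for all three functions since it depends only on the network, not on $h$. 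This verifies \eqref{eq:harmon} for $h$ at $x$, so $h$ is harmonic at $x$.

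There is no real obstacle here — the statement is a direct consequence of the fact that, for fixed $x$, the map $h \mapsto h(x) - \frac{1}{\pi_x}\sum_{y\sim x} c_{xy} h(y)$ is a linear functional on the space of functions $V \to \RR$, and harmonicity at $x$ is precisely the condition that this functional vanishes. One could even phrase the proof in one line: the set of functions harmonic at $x$ is the kernel of a linear map, hence a subspace. I would include the short explicit computation above rather than the abstract phrasing, since the subsequent applications (e.g.\ \cref{voltage:exist}, \cref{voltage:unique}) will use exactly this concrete linearity of voltages.
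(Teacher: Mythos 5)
Your proof is correct and is essentially the same as the paper's: both verify the harmonicity identity \eqref{eq:harmon} for the linear combination by splitting the weighted sum and using the hypothesis on $h_1$ and $h_2$. No gaps.
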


\begin{proof}
  Let $\bar{h}=\alpha h_1+\beta h_2$ for some $\alpha,\beta\in\RR$.
  It holds that
  \begin{equation*}
    \bar{h}(x)
    = \alpha h_1(x) + \beta h_2(x)
    = \frac{1}{\pi_x}\sum_{y:y\sim x}c_{xy} \alpha h_1(y)
      + \frac{1}{\pi_x}\sum_{y:y\sim x}c_{xy} \beta h_2(y)
    = \frac{1}{\pi_x}\sum_{y:y\sim x}c_{xy}\bar{h}(y).\qedhere
  \end{equation*}
\end{proof}

\begin{claim}\label{maximum}
  If $h:V\to\RR$ is harmonic at all the vertices of a finite network, then it is constant.
\end{claim}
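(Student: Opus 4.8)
The plan is to invoke the discrete maximum principle. Since the network is finite, $h$ attains its maximum value $M := \max_{v \in V} h(v)$, so the set $A := \{v \in V : h(v) = M\}$ is nonempty. The key step is to show that $A$ is closed under adjacency: if $x \in A$ and $y \sim x$, then $y \in A$. Granting this, the connectedness of $G$ (part of the definition of a network) forces $A = V$, and hence $h \equiv M$ is constant.

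To carry out the key step, fix $x \in A$. By \eqref{eq:harmon},
\[
  h(x) = \frac{1}{\pi_x}\sum_{y:y\sim x}c_{xy}h(y),
\]
and since $\pi_x = \sum_{y : y \sim x} c_{xy}$, the coefficients $c_{xy}/\pi_x$ are strictly positive and sum to $1$. Thus $h(x)$ is a convex combination, with all weights positive, of the neighboring values $\{h(y) : y \sim x\}$, each of which is at most $M = h(x)$. If some neighbor $y_0$ satisfied $h(y_0) < M$, the convex combination would be strictly smaller than $M$, contradicting $h(x) = M$. Therefore $h(y) = M$ for every $y \sim x$, i.e.\ every neighbor of $x$ lies in $A$, as desired.

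There is essentially no obstacle here; the one point requiring care is that the conductances are strictly positive, which is exactly what makes the averaging weights positive and allows the equality case in the convex combination to propagate along edges. (One could equally well phrase the argument by applying the same reasoning to the minimum of $h$, but the propagation argument above already suffices.)
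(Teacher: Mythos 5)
Your proof is correct and follows essentially the same route as the paper: take the set $A$ where the maximum $M$ is attained, use harmonicity (a weighted average with strictly positive weights) to show $A$ is closed under adjacency, and conclude $A=V$ by connectedness. You simply spell out the equality case of the convex combination a bit more explicitly than the paper does.
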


\begin{proof}
  Let $M=\sup_{x}h(x)$ be the maximum value of $h$. Let $A=\{x\in V:h(x)=M\}$.
  Since $G$ is finite, $A\neq \emptyset$. Given $x \in A$, we have that $h(y) \leq h(x)$ for all neighbors $y$ of $x$. By harmonicity, $h(x)$ is the weighted average of the values of $h(y)$ at the neighbors; but this can only happen if all neighbors of $x$ are also in $A$. Since $G$ is connected
  we obtain that $A=V$ implying that $h$ is constant.
\end{proof}

%

We now show that a voltage is determined by its boundary values, i.e., by its
values at $a,z$.

\begin{claim}\label{voltage:boundary}
  If $h$ is a voltage satisfying $h(a)=h(z)=0$, then $h\equiv 0$.
\end{claim}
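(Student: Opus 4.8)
The plan is to run the maximum principle argument of \cref{maximum}, but adapted to the fact that $h$ is only guaranteed harmonic away from $a$ and $z$. The key point is that the exceptional vertices carry the value $0$, so if the global maximum of $h$ were strictly positive it could not sit at $a$ or $z$, which lets the ``propagation of the maximum'' argument run unobstructed.

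Concretely, let $M=\sup_{x\in V}h(x)$; since $G$ is finite the supremum is attained, say on the nonempty set $A=\{x\in V:h(x)=M\}$. Suppose for contradiction that $M>0$. Then $a,z\notin A$ because $h(a)=h(z)=0<M$, so every vertex of $A$ is a vertex at which $h$ is harmonic. Now repeat the argument of \cref{maximum}: if $x\in A$, then by \eqref{eq:harmon} the value $h(x)=M$ is a weighted average (with positive weights $c_{xy}/\pi_x$ summing to $1$) of the values $h(y)\le M$ at the neighbors $y$ of $x$, which forces $h(y)=M$ for every neighbor $y$, i.e.\ all neighbors of $x$ lie in $A$. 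Since $G$ is connected and $A\ne\emptyset$, iterating gives $A=V$; but then $h(a)=M>0$, contradicting $h(a)=0$. Hence $M\le 0$. Applying the same reasoning to $-h$ (which is also a voltage vanishing at $a,z$, by \cref{harmon:linear}), or directly to $m=\inf_x h(x)$ with the roles of max/min swapped, gives $m\ge 0$. Since $m\le M$, we conclude $m=M=0$, i.e.\ $h\equiv 0$.

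There is essentially no serious obstacle here; the only thing to be careful about is that the propagation step is only legitimate at vertices where \eqref{eq:harmon} actually holds, and the hypothesis $h(a)=h(z)=0$ together with the assumed strict sign of the extremum is exactly what guarantees we never need to propagate through $a$ or $z$. (One could alternatively phrase the argument probabilistically — $h(x)=\E_x[h(X_\tau)]$ where $\tau$ is the hitting time of $\{a,z\}$, which is finite a.s.\ on a finite connected graph — but that machinery is not yet available at this point in the text, so the maximum-principle version is the natural choice.)
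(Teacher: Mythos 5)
Your proof is correct and takes essentially the same route as the paper: a maximum-principle argument propagating the set where the extremum is attained, applied to both $h$ and $-h$ (the paper reaches the boundary by tracing a path from a maximizer to $a$ or $z$, whereas you argue by contradiction that the maximal set would be all of $V$ — a cosmetic difference only).
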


\begin{proof}
  Put $M=\max_{x}h(x)$ (which is attained since $G$ is finite) and let
  $A=\{x\in V:h(x)=M\}$. As before, by harmonicity, if $x\in
  A\setminus\{a,z\}$ then all of its neighbors are also in $A$. Since $G$
  is connected, there exists a simple path from $x$ to either $a$ or $z$ such that only its endpoint is in $\{a,z\}$. Since $h(a)=h(z)=0$ we learn that $M=0$, that is, $h$ is
  non-positive. Similarly, one proves that $h$ is non-negative, thus
  $h\equiv 0$.
\end{proof}


\begin{corollary}\label{voltage:unique}[Voltage uniqueness]
  For every $\alpha,\beta\in\RR$, if $h, h'$ are voltages satisfying
  $h(a)=h'(a)=\alpha$ and $h(z)=h'(z)=\beta$, then $h\equiv h'$. 
\end{corollary}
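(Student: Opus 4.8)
The plan is to deduce this immediately from the two preceding claims by a standard linearity-plus-zero-boundary-data argument. First I would introduce the difference function $g := h - h'$ and observe that it is defined on all of $V$. The goal is to show $g \equiv 0$.

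The key step is to check that $g$ is itself a voltage with vanishing boundary values. For harmonicity: by \cref{harmon:linear}, at every vertex $x \in V \setminus \{a,z\}$ the functions $h$ and $h'$ are each harmonic at $x$, so the linear combination $g = h + (-1)h'$ is harmonic at $x$ as well; thus $g$ is a voltage in the sense of the definition preceding the statement. For the boundary values: $g(a) = h(a) - h'(a) = \alpha - \alpha = 0$ and likewise $g(z) = h(z) - h'(z) = \beta - \beta = 0$. Now I would simply invoke \cref{voltage:boundary}, which applies precisely to a voltage with zero values at $a$ and $z$, to conclude $g \equiv 0$, i.e.\ $h \equiv h'$.

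There is no real obstacle here: all the substance was already carried out in \cref{voltage:boundary}, whose proof uses the maximum principle together with connectedness to propagate the boundary value $0$ to every vertex. The present corollary is just the observation that uniqueness for arbitrary boundary data $(\alpha,\beta)$ reduces, via linearity of harmonicity, to the homogeneous case $(0,0)$. (One could equally absorb \cref{voltage:boundary} and this corollary into a single statement, but splitting them keeps the role of linearity transparent.)
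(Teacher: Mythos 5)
Your proof is correct and follows exactly the paper's argument: form the difference $h-h'$, note by \cref{harmon:linear} that it is a voltage vanishing at $a$ and $z$, and apply \cref{voltage:boundary}. Nothing to add.
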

\begin{proof}
  By \cref{harmon:linear}, the function $h-h'$ is a voltage, taking the value
  $0$ at $a$ and $z$, hence by \cref{voltage:boundary} we get $h \equiv h'$.
\end{proof}

\begin{claim}\label{voltage:exist}
  For every $\alpha,\beta\in\RR$, there exists a voltage $h$ satisfying
  $h(a)=\alpha$, $h(z)=\beta$.
\end{claim}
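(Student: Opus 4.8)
The plan is to phrase the harmonicity requirement as a square system of linear equations in the unknowns $h(x)$, $x \in V\setminus\{a,z\}$, and to read off solvability from the injectivity statement already contained in \cref{voltage:boundary}. This keeps the proof within the tools developed so far in this section; no random walk or energy machinery is needed.

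Concretely, I would enumerate the interior vertices $V\setminus\{a,z\}=\{x_1,\dots,x_n\}$, so that fixing $h(a)=\alpha$ and $h(z)=\beta$ reduces the task to producing a vector $\mathbf{h}=(h(x_1),\dots,h(x_n))\in\RR^n$. Writing out \eqref{eq:harmon} at each $x_i$ and moving the boundary contributions to the right-hand side turns the harmonicity requirement into a linear system $L\mathbf{h}=\mathbf{b}$, where $L$ is the $n\times n$ matrix with $L_{ii}=\pi_{x_i}$ and $L_{ij}=-c_{x_i x_j}$ for $i\neq j$ (using the convention $c_{x_i x_j}=0$ when $x_i\not\sim x_j$), and $\mathbf{b}$ is the vector whose $i$th entry is $\alpha c_{x_i a}+\beta c_{x_i z}$ (again with the convention that an absent edge contributes conductance $0$). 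The key step is to check that $L$ is invertible, since then $L\mathbf{h}=\mathbf{b}$ is solvable for every $\mathbf{b}$, and in particular for the one arising from our chosen $\alpha,\beta$; appending the values $\alpha$ at $a$ and $\beta$ at $z$ to the solution $\mathbf{h}$ then yields the desired voltage. As $L$ is square it suffices to show its kernel is trivial: if $L\mathbf{h}=0$, then extending $\mathbf{h}$ to a function on $V$ with the value $0$ at both $a$ and $z$ produces exactly a voltage satisfying $h(a)=h(z)=0$, whence $h\equiv 0$ by \cref{voltage:boundary}, i.e.\ $\mathbf{h}=0$.

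I expect the only real subtlety to be bookkeeping: verifying that $L$ is genuinely square (one equation and one unknown per interior vertex), that rearranging \eqref{eq:harmon} correctly isolates the boundary terms into $\mathbf{b}$, and that a kernel element of $L$ matches precisely the hypothesis of \cref{voltage:boundary}. All of this is routine once the indexing is fixed. An alternative that avoids linear algebra is to exhibit the voltage explicitly as $h(x)=\alpha\,\pr_x(\tau_a<\tau_z)+\beta\,\pr_x(\tau_z<\tau_a)$ for the weighted random walk and to verify harmonicity at interior vertices by conditioning on the first step, but the linear-algebraic route has the advantage of relying only on results already proved above.
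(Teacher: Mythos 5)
Your proposal is correct and matches the paper's first proof: the paper likewise views the harmonicity conditions as a square linear system in the $n-2$ interior values and invokes the uniqueness statement (which rests on \cref{voltage:boundary}, exactly as in your kernel argument) to conclude the matrix is invertible. Your suggested alternative via hitting probabilities is also the paper's second proof, so nothing here deviates from the text.
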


\begin{proof}[Proof 1]
  We write $n=|V|$. Observe that a voltage $h$ with $h(a)=\alpha$ and
  $h(z)=\beta$ is defined by a system of $n-2$ linear equations of the form
  \eqref{eq:harmon} in $n-2$ variables (which are the values $h(x)$ for $x\in
  V\setminus\{a,z\}$). \Cref{voltage:unique} guarantees that the matrix
  representing that system has empty kernel, hence it is invertible.
\end{proof}

We present an alternative proof of existence based on the random walk on the
network. Consider the Markov chain $\{X_n\}$ on the state space $V$ with transition probabilities
\begin{equation}\label{wRW:def}
  p_{xy} := \pr(X_{t+1}=y\mid X_t=x) = \frac{c_{xy}}{\pi_x}.
\end{equation}
This Markov chain is a \defn{weighted random walk} (note that if $c_{xy}$ are
all $1$ then the described chain is the so-called \defn{simple random walk}).
We write $\pr_x$ and $\E_x$ for the probability and expectation, respectively,
conditioned on $X_0=x$. For a vertex $x$, define the \defn{hitting time} of $x$
by
\begin{equation*}
  \tau_x := \min\{t\ge 0\mid X_t=x\}.
\end{equation*}

\begin{proof}[Proof 2]
We will find a voltage $g$ satisfying $g(a)=0$ and $g(z)=1$ by setting
\begin{equation*}
  g(x) = \pr_x(\tau_z<\tau_a).
\end{equation*}
Indeed, $g$ is harmonic at $x\ne a,z$, since by the law of total probability
and the Markov property we have
\begin{equation*}
  g(x) = \frac{1}{\pi_x}\sum_{y:y\sim x} c_{xy} \pr_x(\tau_z<\tau_a\mid X_1=y)
  = \frac{1}{\pi_x}\sum_{y:y\sim x} c_{xy} \pr_y(\tau_z<\tau_a)
  = \frac{1}{\pi_x}\sum_{y:y\sim x} c_{xy} g(y).
\end{equation*}
For general boundary conditions $\alpha, \beta$ we define $h$ by
\begin{equation*}
  h(x) = g(x)\cdot(\beta-\alpha)+\alpha \, .
\end{equation*}
By \cref{harmon:linear}, $h$ is a voltage, and clearly $h(a)=\alpha$ and
$h(z)=\beta$, concluding the proof.
\end{proof}

This proof justifies the equality between simple random walk probabilities and voltages that was discussed at the start of this chapter: since the function $x \mapsto \pr_x( \tau_z < \tau_a )$ is harmonic on $V \setminus \{a,z\}$ and takes values $0,1$ at $a,z$ respectively, it must be equal to the voltage at $x$ when voltages $0,1$ are imposed at $a,z$.

\begin{claim}\label{voltage:maxprin} If $h$ is a voltage with $h(a) \leq
h(z)$, then $h(a) \leq h(x) \leq h(z)$ for all $x \in V$. 

Furthermore, if $h(a) < h(z)$ and $x \in V\setminus\{a,z\}$ is a vertex such that $x$ is in the connected component of $z$ in the graph $G \setminus \{a\}$, and $x$ is in the connected component of $a$ in the graph $G \setminus \{z\}$, then $h(a) < h(x) < h(z)$.  \end{claim}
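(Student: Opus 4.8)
The plan is to prove both assertions by the maximum principle, in the same spirit as \cref{maximum} and \cref{voltage:boundary}, but being careful to track at which vertices harmonicity may be invoked (namely, everywhere except $a$ and $z$).

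For the first assertion, I would let $M=\max_{x\in V}h(x)$, which is attained since $G$ is finite, and set $A=\{x\in V:h(x)=M\}$. If there is some $x\in A\setminus\{a,z\}$, then harmonicity at $x$ writes $h(x)=M$ as a weighted average of the values $h(y)$ over neighbors $y$ of $x$, each of which is $\le M$; this forces every neighbor of $x$ to lie in $A$, and by connectedness $A=V$, so $h$ is constant and the claim is trivial. Otherwise $A\subseteq\{a,z\}$, whence $M=\max(h(a),h(z))=h(z)$, so $h(x)\le h(z)$ for every $x$. Running the same argument for $-h$ (or for $\min_x h(x)$) gives $h(x)\ge h(a)$.

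For the second assertion, assume $h(a)<h(z)$. By the first part $h(z)=M:=\max_x h(x)$ and $h(a)=m:=\min_x h(x)$, and $a\notin A$, $z\notin B$, where $A$ and $B$ are the maximum and minimum level sets. Suppose towards a contradiction that $h(x)=h(z)$, i.e.\ $x\in A$. By hypothesis there is a simple path $x=w_0,w_1,\dots,w_k=a$ in $G\setminus\{z\}$ (and $k\ge1$ since $x\ne a$). I would show by induction that $w_i\in A$ for all $i$: indeed $w_0=x\in A$, and if $w_i\in A$ with $i<k$, then $w_i\ne a$ (simple path, $a$ only at the end) and $w_i\ne z$ (the path avoids $z$), so harmonicity at $w_i$ forces all its neighbors, in particular $w_{i+1}$, into $A$. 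Taking $i=k$ gives $a=w_k\in A$, i.e.\ $h(a)=M=h(z)$, contradicting $h(a)<h(z)$; hence $h(x)<h(z)$. Symmetrically, using the path from $x$ to $z$ in $G\setminus\{a\}$ and propagating membership in the minimum level set $B$ along it, one obtains $h(x)>h(a)$.

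The only point that requires care — and hence where I would concentrate — is the induction step: each use of harmonicity needs the current vertex to differ from both $a$ and $z$, which is exactly why one must chase toward $a$ along a path avoiding $z$, and toward $z$ along a path avoiding $a$. These are precisely the two connectivity hypotheses imposed on $x$, so they are used in an essential way; the rest of the argument is routine.
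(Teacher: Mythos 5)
Your proof is correct. For the first (non-strict) assertion your maximum-principle argument is essentially the alternative the paper itself sketches: the maximum and minimum level sets must meet $\{a,z\}$, exactly as in \cref{voltage:boundary}. For the strict inequalities, however, you take a genuinely different route. The paper reduces, via \cref{voltage:exist} and \cref{voltage:unique}, to the particular voltage $h(x)=\pr_x(\tau_z<\tau_a)$ and observes that the two connectivity hypotheses make both $\pr_x(\tau_a<\tau_z)$ and $\pr_x(\tau_z<\tau_a)$ strictly positive, so $h(x)\in(0,1)$. You instead run a strict maximum principle directly: if $h(x)$ equalled $h(z)=\max h$, you propagate membership in the maximum level set along a simple path from $x$ to $a$ inside $G\setminus\{z\}$, where every intermediate vertex avoids both $a$ and $z$ and hence is a point of harmonicity, forcing $h(a)=\max h$, a contradiction; and symmetrically for the minimum along a path to $z$ in $G\setminus\{a\}$. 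Your propagation step is sound because the conductances are strictly positive, so a weighted average attaining the maximum forces all neighbors to attain it too, and your bookkeeping of where harmonicity may be invoked (the whole point of the two connectivity hypotheses) is exactly right. What each approach buys: the paper's argument is shorter given the machinery already in place and makes the probabilistic meaning of the hypotheses transparent, while yours is purely deterministic and self-contained, using neither the existence and uniqueness results nor the random-walk representation of the voltage.
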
 

\begin{proof} This follows
directly from the construction of $h$ in Proof 2 of \cref{voltage:exist} and
the uniqueness statement of \cref{voltage:unique}. Alternatively, one can
argue as in the proof of \cref{voltage:boundary} that if $M = \max_x h(x)$ and
$m = \min_x h(x)$, then the sets $A = \{ x \in V : h(x) = M \}$ and $B = \{ x
\in V : h(x) = m \}$ must each contain at least one element of $\{a,z\}$.

To prove the second assertion, we note that by \cref{voltage:exist} and \cref{voltage:unique} it is enough to check when $h$ is the voltage with boundary values $h(a)=0$ and $h(z)=1$. In this case, the condition on $x$ guarantees that the probabilities that the random walk started at $x$ visits $a$ before $z$ or visits $z$ before $a$ are positive. By proof 2 of \cref{voltage:exist} we find that $h(x) \in (0,1)$. 
\end{proof}

\section{Flows and currents}\label{sec:flows:and:currents}

For a graph $G=(V,E)$, denote by $\vec{E}$ the set of edges of $G$, each
endowed with the two possible orientations.  That is, $(x,y)\in \vec{E}$ iff
$\{x,y\}\in E$ (and in that case, $(y,x)\in\vec{E}$ as well).

\begin{definition}
  A \textbf{flow from $a$ to $z$}\index{flow} in a network $G$ is a function
  $\theta:\vec{E}\to\RR$ satisfying
  \begin{enumerate}
    \item For any $\{x,y\}\in E$ we have $\theta(xy)=-\theta(yx)$ (\defn{antisymmetry}), and
    \item $\forall x\not \in \{a,z\}$ we have $\sum_{y:y\sim x}\theta(xy)=0$ (\defn{Kirchhoff's node law}\index{node law}).
  \end{enumerate}
\end{definition}

\begin{claim}\label{flow:linear}
  If $\theta_1,\theta_2$ are flows then, so is any linear combination of
  $\theta_1,\theta_2$.
\end{claim}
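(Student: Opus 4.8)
The plan is to mimic, essentially verbatim, the proof of \cref{harmon:linear}: both defining properties of a flow --- antisymmetry and Kirchhoff's node law --- are \emph{linear} constraints on the function $\theta:\vec{E}\to\RR$, so they are automatically preserved under taking linear combinations. Concretely, I would fix scalars $\alpha,\beta\in\RR$ and set $\bar\theta=\alpha\theta_1+\beta\theta_2$, and then check the two conditions in turn.

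First I would verify antisymmetry: for any $\{x,y\}\in E$,
\[
  \bar\theta(xy)=\alpha\theta_1(xy)+\beta\theta_2(xy)=-\alpha\theta_1(yx)-\beta\theta_2(yx)=-\bar\theta(yx),
\]
using that $\theta_1,\theta_2$ are each antisymmetric. Next I would verify the node law: for any $x\notin\{a,z\}$,
\[
  \sum_{y:y\sim x}\bar\theta(xy)=\alpha\sum_{y:y\sim x}\theta_1(xy)+\beta\sum_{y:y\sim x}\theta_2(xy)=\alpha\cdot 0+\beta\cdot 0=0,
\]
since $\theta_1$ and $\theta_2$ separately satisfy Kirchhoff's node law at $x$. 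Hence $\bar\theta$ is a flow from $a$ to $z$, which completes the argument.

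There is no real obstacle here --- the only ``content'' is the observation that the flow axioms are linear, and the computation is a one-liner in each case. The one point worth a moment's care is simply to state explicitly that the sums defining the node law are finite (the graph is assumed finite in this section), so that splitting the sum and pulling out the scalars $\alpha,\beta$ is legitimate; otherwise the proof is purely formal and parallels \cref{flow:linear}'s counterpart for harmonic functions.
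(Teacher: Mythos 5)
Your proof is correct and is essentially identical to the paper's: both set $\bar\theta=\alpha\theta_1+\beta\theta_2$ and verify antisymmetry and the node law by the same one-line computations. The remark about finiteness of the sums is a harmless extra precaution, not a difference in approach.
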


\begin{proof}
  Let $\bar{\theta}=\alpha \theta_1+\beta \theta_2$ for some
  $\alpha,\beta\in\RR$.
  It holds that
  \begin{equation*}
    \bar{\theta}(xy)
    = \alpha \theta_1(xy) + \beta \theta_2(xy)
    = -\alpha \theta_1(yx) - \beta \theta_2(yx)
    = -\bar{\theta}(yx),
  \end{equation*}
  and for $x\ne a,z$,
  \begin{equation*}
    \sum_{y:y\sim x}\bar{\theta}(xy)
    = \alpha\sum_{y:y\sim x} \theta_1(xy)
     +\beta\sum_{y:y\sim x} \theta_2(xy)=0.\qedhere
  \end{equation*}
\end{proof}

\begin{definition}
  Given a voltage $h$, the \defn{current flow} $\theta=\theta_h$ associated
  with $h$ is defined by $\theta(xy) = c_{xy}(h(y)-h(x))$.
\end{definition}

In other words, the voltage difference across an edge is the product of the current flowing along the edge with the resistance of the edge. This is known as \textbf{Ohm's law}. According to this definition, the current flows from vertices with lower voltage to vertices with higher voltage. We will use this convention throughout, but the reader should be advised that some other sources use the opposite convention.

\begin{claim}
  The current flow associated with a voltage is indeed a flow.
\end{claim}

\begin{proof}
  The current flow is clearly antisymmetric by definition.  To show that it
  satisfies the node law, observe that for $x\ne a,z$, since $h$ is harmonic,
  \begin{equation*}
    \sum_{y:y\sim x}\theta(xy)
    = \overbrace{\sum_{y:y\sim x}c_{xy}h(y)}^{=\pi_x h(x)}
    - \overbrace{\sum_{y:y\sim x}c_{xy}h(x)}^{=\pi_x h(x)} = 0.\qedhere
  \end{equation*}
\end{proof}

\begin{claim}\label{claim:cyclelaw}
  The current flow associated with a voltage $h$ satisfies \defn{Kirchhoff's cycle law}, that is, for every
  directed cycle $\vec{e}_1,\ldots,\vec{e}_m$,
  \begin{equation*}
    \sum_{i=1}^r r_{e_i}\theta(\vec{e}_i) = 0.
  \end{equation*}
\end{claim}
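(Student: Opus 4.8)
The plan is to reduce the cycle law to a telescoping sum using Ohm's law. The essential observation is that the quantity $r_{e}\theta(\vec{e})$ appearing in the statement is precisely a voltage difference: if $\vec{e}=(x,y)$, then by the definition of the current flow $\theta(xy)=c_{xy}(h(y)-h(x))$, and since $r_{xy}=1/c_{xy}$ we get
\begin{equation*}
  r_{xy}\,\theta(xy) = \frac{1}{c_{xy}}\cdot c_{xy}\bigl(h(y)-h(x)\bigr) = h(y)-h(x).
\end{equation*}
This is just a restatement of Ohm's law, and it holds for every oriented edge regardless of harmonicity; no property of $h$ beyond the definition of $\theta_h$ is needed.

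Next I would write the directed cycle as $\vec{e}_i=(x_{i-1},x_i)$ for $i=1,\dots,m$, where $x_0,x_1,\dots,x_m$ is the sequence of vertices visited, with $x_m=x_0$ because the cycle is closed. Applying the identity above to each edge gives $r_{e_i}\theta(\vec{e}_i)=h(x_i)-h(x_{i-1})$, so that
\begin{equation*}
  \sum_{i=1}^{m} r_{e_i}\theta(\vec{e}_i) = \sum_{i=1}^{m}\bigl(h(x_i)-h(x_{i-1})\bigr) = h(x_m)-h(x_0) = 0,
\end{equation*}
the middle equality being the telescoping collapse and the last using $x_m=x_0$.

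There is really no obstacle here: the proof is a one-line computation once Ohm's law is invoked, and the cycle law should be seen as the ``conservative vector field'' counterpart of the node law (which is the analogue of being divergence-free). The only minor point to be careful about is bookkeeping of orientations — one must check that traversing the cycle in its given direction makes the terms genuinely telescope, i.e. that the head of $\vec{e}_i$ is the tail of $\vec{e}_{i+1}$ — but this is exactly what it means for $\vec{e}_1,\dots,\vec{e}_m$ to form a directed cycle, and antisymmetry of $\theta$ guarantees the formula $r_{e}\theta(\vec e) = h(y)-h(x)$ is consistent with reversing an edge. (I would also silently correct the summation upper limit in the statement from $r$ to $m$.)
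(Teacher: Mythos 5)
Your proof is correct and is essentially identical to the one in the paper: both apply Ohm's law $r_{e}\theta(\vec{e})=h(y)-h(x)$ edge by edge and observe that the sum telescopes around the closed cycle. Your remarks on orientation bookkeeping and the typo $r$ versus $m$ in the summation limit are accurate but do not change the argument.
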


\begin{proof}
  Write $\vec{e}_i=(x_{i-1},x_i)$, and observe that $x_0=x_m$.  We have that
  \begin{equation*}
    \sum_{i=1}^m r_{e_i}\theta(\vec{e}_i)
    = \sum_{i=1}^m r_{x_{i-1}x_i}c_{x_{i-1}x_i}(h(x_i)-h(x_{i-1}))
    = \sum_{i=1}^m (h(x_i)-h(x_{i-1}))
    =0.\qedhere
  \end{equation*}
\end{proof}

For examples of a flow which does not satisfy the cycle law and a current flow,
see \cref{fig:flows}.

\begin{claim}\label{voltage:of:flow}
  Given a flow $\theta$ which satisfies the cycle law, there exists a voltage
  $h=h_\theta$ such that $\theta$ is the current flow associated with $h$. Furthermore, this voltage is unique up to an additive constant.
\end{claim}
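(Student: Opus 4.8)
The plan is to reconstruct $h$ by ``integrating'' $\theta$ along paths, exactly as Ohm's law dictates. Fix a reference vertex $x_0$ and declare $h(x_0) = 0$. For any $x \in V$, pick (using connectedness of $G$) a path $x_0 = v_0, v_1, \dots, v_k = x$ and set
\[
  h(x) := \sum_{i=1}^{k} r_{v_{i-1} v_i}\, \theta(v_{i-1} v_i).
\]
This choice is forced: if $\theta = \theta_h$ then $h(y) - h(x) = r_{xy}\theta(xy)$ on every edge, so telescoping along any path gives the displayed formula.

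The one genuine thing to check is that $h$ is well defined, i.e.\ that the sum above does not depend on the chosen path; since $r_e$ is symmetric and $\theta$ antisymmetric, this is equivalent to showing that $\sum_{i=1}^{m} r_{v_{i-1}v_i}\theta(v_{i-1}v_i) = 0$ for every closed walk $v_0, \dots, v_m = v_0$. I would prove this by induction on the length $m$: for $m \le 2$ it is immediate from antisymmetry; if the vertices $v_0, \dots, v_{m-1}$ are distinct the walk is a simple directed cycle and the sum vanishes by the cycle law (\cref{claim:cyclelaw}); otherwise a repeated vertex $v_i = v_j$ with $i<j$ lets us split the walk into two strictly shorter closed walks whose edge-sums add up to the original one, and we invoke the inductive hypothesis. (Alternatively, one can sidestep the induction by fixing a spanning tree of $G$, defining $h$ via the unique tree-path from $x_0$, and then checking Ohm's law across each non-tree edge by applying the cycle law to its fundamental cycle.)

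With $h$ in hand the rest is direct verification. For an oriented edge $(x,y)$, appending the step $x \to y$ to a path from $x_0$ to $x$ shows $h(y) - h(x) = r_{xy}\theta(xy)$, i.e.\ $\theta(xy) = c_{xy}(h(y)-h(x))$, so $\theta = \theta_h$. For $x \notin \{a,z\}$, Kirchhoff's node law for $\theta$ reads $0 = \sum_{y \sim x}\theta(xy) = \sum_{y\sim x} c_{xy}h(y) - \pi_x h(x)$, which is precisely harmonicity of $h$ at $x$; hence $h$ is a voltage. Finally, if $\theta_h = \theta_{h'}$ then $h - h'$ takes equal values at the two endpoints of every edge, so by connectedness of $G$ it is constant, giving uniqueness up to an additive constant (consistent with the fact that changing $x_0$ only shifts $h$ by a constant).

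I expect the well-definedness step to be the crux: the substance there is the combinatorial reduction of an arbitrary closed walk to simple cycles so that \cref{claim:cyclelaw} applies. Once path-independence is granted, every remaining assertion is a one-line computation.
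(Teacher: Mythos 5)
Your proof is correct and follows essentially the same route as the paper's: define $h$ by summing $r_e\,\theta(\vec e)$ along a path from a fixed base vertex, use the cycle law to get path-independence, read off Ohm's law by appending a single edge to a path, deduce harmonicity from the node law, and obtain uniqueness up to an additive constant from connectedness. The only difference is that you spell out the reduction of an arbitrary closed walk to simple cycles (or the spanning-tree alternative), a detail the paper leaves implicit when it asserts that path-independence follows from the cycle law.
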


\begin{proof}
  For every vertex $x$, let $\vec{e}_1,\ldots,\vec{e}_k$ be a path from $a$ to
  $x$, and define
  \begin{equation}\label{eq:hbypath}
    h(x) = \sum_{i=1}^k r_{e_i}\theta(\vec{e}_i).
  \end{equation}
  Note that since $\theta$ satisfies the cycle law, the right hand side of
  \eqref{eq:hbypath} does not depend on the choice of the path, hence $h(x)$ is
  well defined.  Let $x\in V$, and consider a
  given path $\vec{e}_1,\ldots,\vec{e}_k$ from $a$ to $x$ (if $x=a$ we take the empty path). To evaluate $h(y)$
  for $y\sim x$, consider the path $\vec{e}_1,\ldots,\vec{e}_k,xy$ from $a$ to
  $y$, so $h(y)=h(x) + r_{xy}\theta(xy)$.  It follows that
  $h(y)-h(x)=r_{xy}\theta(xy)$, hence $\theta(xy)=c_{xy}(h(y)-h(x))$, meaning
  that $\theta$ is indeed the current flow associated with $h$. 

  Since $\theta(xy)=c_{xy}(h(y)-h(x))$ for any $x \sim y$, the node law of immediately implies that $h$ is a voltage. To show that $h$ is unique up to an additive constant, suppose that $g:V\to\RR$ is another voltage such that $r_{xy} \theta(xy)=g(y)-g(x)$. It follows that $g(y)-g(y)=g(x)-g(x)$ for any $x \sim y$. Since $G$ is connected it follows that $g-h$ is the constant function on $V$.
\end{proof}

\begin{figure}[t]
  \centering
  \begin{subfigure}[b]{0.475\linewidth}
    \centering
    \begin{tikzpicture}[font=\small, scale=3]

      \node[vx,label=180:$a$] (a) at (-0.866,0) {};
      \node[vx,label=0:$z$]   (z) at (0.866,0) {};
      \node[vx]               (1) at (0,0.5) {};
      \node[vx]               (2) at (0,-0.5) {};

      \draw[-{Latex[length=3mm]}] (a) -- (1) node [midway,above,sloped] {$1$};
      \draw[-{Latex[length=3mm]}] (a) -- (2) node [midway,above,sloped] {$1$};
      \draw[-{Latex[length=3mm]}] (1) -- (2) node [midway,above,sloped] {$1$};
      \draw[-{Latex[length=3mm]}] (1) -- (z) node [midway,above,sloped] {$0$};
      \draw[-{Latex[length=3mm]}] (2) -- (z) node [midway,above,sloped] {$2$};

    \end{tikzpicture}
  \end{subfigure}\hfill%
  \begin{subfigure}[b]{0.475\linewidth}
    \centering
    \begin{tikzpicture}[font=\small, scale=3]

      \node[vx,label=180:$a$] (a) at (-0.866,0) {};
      \node[vx,label=0:$z$]   (z) at (0.866,0) {};
      \node[vx]               (1) at (0,0.5) {};
      \node[vx]               (2) at (0,-0.5) {};

      \draw[-{Latex[length=3mm]}] (a) -- (1) node [midway,above,sloped] {$1/2$};
      \draw[-{Latex[length=3mm]}] (a) -- (2) node [midway,above,sloped] {$1/2$};
      \draw[-{Latex[length=3mm]}] (1) -- (2) node [midway,above,sloped] {$0$};
      \draw[-{Latex[length=3mm]}] (1) -- (z) node [midway,above,sloped] {$1/2$};
      \draw[-{Latex[length=3mm]}] (2) -- (z) node [midway,above,sloped] {$1/2$};

    \end{tikzpicture}
  \end{subfigure}
  \caption{On the left, a flow of strength $2$ in which the cycle law is
    violated. On the right, the unit (i.e., strength $1$) current flow.}
  \label{fig:flows}
\end{figure}
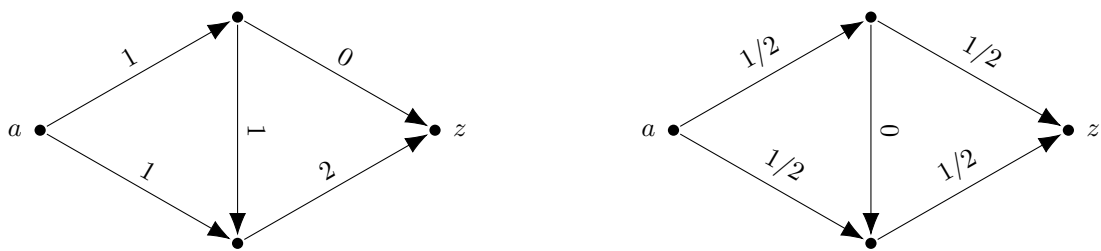

\begin{definition}
  The \textbf{strength} \index{strength of a flow} of a flow $\theta$ is
  \begin{equation*}
    \|\theta\|=\sum_{x:x\sim a}\theta(ax).
  \end{equation*}
\end{definition}

\begin{claim}\label{flow:in:flow:out}
  For every flow $\theta$,
  \begin{equation*}
    \sum_{x:x\sim z}\theta(xz) = \|\theta\|.
  \end{equation*}
\end{claim}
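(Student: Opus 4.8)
The plan is to sum Kirchhoff's node law over \emph{all} vertices and to exploit antisymmetry to see that the total telescopes to a relation between the net flow out of $a$ and the net flow out of $z$. Concretely, I would introduce the double sum
\[
  S \;=\; \sum_{v\in V}\ \sum_{y:y\sim v}\theta(vy)
\]
and evaluate it in two different ways.

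First I would compute $S$ by grouping terms according to the underlying undirected edge: every edge $\{x,y\}\in E$ contributes exactly the two summands $\theta(xy)$ and $\theta(yx)$ to $S$, and these cancel by antisymmetry (property 1 in the definition of a flow). Since $V$ (hence $E$) is finite, this rearrangement is legitimate, and we conclude $S=0$.

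Next I would compute $S$ by grouping terms according to the outer vertex $v$. For each $v\notin\{a,z\}$ the inner sum $\sum_{y:y\sim v}\theta(vy)$ is $0$ by Kirchhoff's node law (property 2), so the only surviving contributions come from $v=a$ and $v=z$:
\[
  S \;=\; \sum_{y:y\sim a}\theta(ay) \;+\; \sum_{y:y\sim z}\theta(zy)
    \;=\; \|\theta\| + \sum_{y:y\sim z}\theta(zy).
\]
Combining this with $S=0$ gives $\sum_{y:y\sim z}\theta(zy) = -\|\theta\|$, and one final use of antisymmetry ($\theta(zy)=-\theta(yz)$) turns this into $\sum_{y:y\sim z}\theta(yz)=\|\theta\|$, which is exactly the claim.

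There is no serious obstacle here; the argument is pure bookkeeping, and the only things to keep an eye on are that the graph is finite (so all the sums and the interchange of summation order are valid) and that the cancellation in the first evaluation really does pair up $\theta(xy)$ with $\theta(yx)$ for each edge. Since $G$ is simple there are no self-loops to worry about, so the pairing is clean.
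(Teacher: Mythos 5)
Your argument is correct and is essentially identical to the paper's proof: the double sum $\sum_{x\in V}\sum_{y\sim x}\theta(xy)$ is shown to vanish by antisymmetry, the node law kills all interior vertices, and a final application of antisymmetry converts $\sum_{y\sim z}\theta(zy)=-\|\theta\|$ into the claimed identity.
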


\begin{proof} We have that
  \begin{align*}
    0 &= \sum_{x\in V}\sum_{y:y\sim x}\theta(xy)\\
      &= \sum_{x\in V\setminus\{a,z\}}\sum_{y:y\sim x}\theta(xy)
        + \sum_{y:y\sim a}\theta(ay) + \sum_{y:y\sim z}\theta(zy)\\
      &= \sum_{y:y\sim a}\theta(ay) + \sum_{y:y\sim z}\theta(zy)
  \end{align*}
  where the first equality is due to antisymmetry, and the third equality is
  due to the node law. The claim follows again by antisymmetry.
\end{proof}

\begin{claim}\label{claim:cycleflows}
  If $\theta_1,\theta_2$ are flows satisfying the cycle law and
  $\|\theta_1\|=\|\theta_2\|$, then $\theta_1=\theta_2$.
\end{claim}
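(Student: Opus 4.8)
The plan is to pass to the difference $\theta := \theta_1 - \theta_2$ and show it vanishes identically. First I would note that by \cref{flow:linear}, $\theta$ is again a flow from $a$ to $z$, and that it satisfies the cycle law: for any directed cycle $\vec e_1, \ldots, \vec e_m$ we have $\sum_i r_{e_i}\theta(\vec e_i) = \sum_i r_{e_i}\theta_1(\vec e_i) - \sum_i r_{e_i}\theta_2(\vec e_i) = 0 - 0 = 0$. Moreover $\|\theta\| = \sum_{x \sim a}\theta_1(ax) - \sum_{x\sim a}\theta_2(ax) = \|\theta_1\| - \|\theta_2\| = 0$. So it suffices to prove: a flow $\theta$ satisfying the cycle law with $\|\theta\| = 0$ is the zero function.

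Next I would invoke \cref{voltage:of:flow} to produce a voltage $h = h_\theta$ with $\theta(xy) = c_{xy}(h(y) - h(x))$ for every oriented edge $(x,y)$. A priori $h$ is only guaranteed to be harmonic on $V \setminus \{a,z\}$. The key observation is that the strength-zero hypothesis upgrades this to harmonicity everywhere. Indeed, by the definition of strength, $\sum_{y \sim a}\theta(ay) = \|\theta\| = 0$, i.e.\ $\sum_{y\sim a} c_{ay}(h(y) - h(a)) = 0$, which is precisely the statement that $h$ is harmonic at $a$. Similarly, by \cref{flow:in:flow:out} we have $\sum_{x \sim z}\theta(xz) = \|\theta\| = 0$, hence by antisymmetry $\sum_{y\sim z}\theta(zy) = 0$, which says $h$ is harmonic at $z$ as well.

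Therefore $h$ is harmonic at every vertex of the finite network $G$, so by \cref{maximum} it is constant. Consequently $\theta(xy) = c_{xy}(h(y) - h(x)) = 0$ for every edge, i.e.\ $\theta \equiv 0$, which gives $\theta_1 = \theta_2$.

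I do not expect any serious obstacle here; the only step that requires a moment's thought is recognizing that $\|\theta\| = 0$ is exactly what is needed to make the associated voltage harmonic at the two special vertices $a$ and $z$, after which \cref{maximum} finishes the argument immediately. (One could alternatively avoid \cref{voltage:of:flow} entirely and argue directly that $\sum_{(x,y)} r_{xy}\theta(xy)^2 = 0$ by writing the energy as a sum over cycles in a cycle basis, but routing through the voltage and the maximum principle is cleaner given the tools already developed.)
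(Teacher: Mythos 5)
Your proof is correct and follows essentially the same route as the paper: form the difference $\theta_1-\theta_2$, note it is a flow of strength zero satisfying the cycle law, produce the associated voltage via \cref{voltage:of:flow}, use the zero strength (together with \cref{flow:in:flow:out}) to upgrade harmonicity to the vertices $a$ and $z$, and conclude by the maximum principle (\cref{maximum}).
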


\begin{proof}
  Let $\bar{\theta}=\theta_1-\theta_2$.  According to \cref{flow:linear},
  $\bar{\theta}$ is a flow.  It also satisfies the cycle law, as for every
  cycle $\vec{e}_1,\ldots,\vec{e}_m$,
  \begin{equation*}
    \sum_{i=1}^m r_{e_i}\bar{\theta}(\vec{e}_i)
    = \sum_{i=1}^m r_{e_i}\theta_1(\vec{e}_i)
     -\sum_{i=1}^m r_{e_i}\theta_2(\vec{e}_i) = 0.
  \end{equation*}

  Observe in addition that $\|\bar{\theta}\|=\|\theta_1\|-\|\theta_2\|=0$.
  Now, let $h=h_{\bar{\theta}}$ be the voltage defined in
  \cref{voltage:of:flow}, chosen so that $h(a)=0$.  Note that it is harmonic
  at $a$, since
  \begin{align*}
    \frac{1}{\pi_a}\sum_{x:x\sim a}c_{ax}h(x)
    &= \frac{1}{\pi_a}\sum_{x:x\sim a}c_{ax}(h(a)+r_{ax}\bar\theta(ax))\\
    &= \frac{1}{\pi_a}\sum_{x:x\sim a}c_{ax}h(a)
      + \frac{1}{\pi_a}\sum_{x:x\sim a}\bar\theta(ax)
    = h(a)+ \frac{\|\bar{\theta}\|}{\pi_a} = h(a).
  \end{align*}
  Similarly, using \cref{flow:in:flow:out} it is also harmonic at $z$. Since
  $h$ is harmonic everywhere, it is constant by \cref{maximum}, and thus
  $h\equiv 0$, hence $\bar{\theta}\equiv 0$ and so $\theta_1=\theta_2$.
\end{proof}

This last claim prompts the following useful definition. 
\begin{definition}\label{def:unitcurrentflow}
  The \defn{unit current flow} from $a$ to $z$ is the unique current flow
  from $a$ to $z$ of strength $1$.
\end{definition}

\section{The effective resistance of a network}\label{sec:effectiveresistance}

Suppose we are given a voltage $h$ on a network $G$ with fixed vertices $a$ and $z$. Scaling $h$ by a constant multiple causes the associated current flow to scale by the same multiple, while adding a constant to $h$ does not change the current flow at all. Therefore, the strength of the current flow is proportional to the difference $h(z) - h(a)$. 
\begin{claim} \label{claim:ohm}
  For every non-constant voltage $h$ and a current flow $\theta$ corresponding
  to $h$, the ratio
  \begin{equation}\label{eq:reff}
    \frac{h(z)-h(a)}{\|\theta\|}
  \end{equation}
  is a positive constant which does not depend on $h$.
\end{claim}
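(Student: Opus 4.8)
The plan is to reduce the assertion to a single canonical voltage, namely the voltage $g$ with boundary values $g(a)=0$ and $g(z)=1$, whose existence and uniqueness are guaranteed by \cref{voltage:exist} and \cref{voltage:unique}. First I would record that a non-constant voltage $h$ must satisfy $h(a)\neq h(z)$: if $h(a)=h(z)$, then $h-h(a)$ is again a voltage — it is a linear combination of $h$ and a constant function, both harmonic on $V\setminus\{a,z\}$, so \cref{harmon:linear} applies — and it vanishes at $a$ and $z$, hence is identically zero by \cref{voltage:boundary}, contradicting non-constancy. Therefore $\alpha := h(z)-h(a)\neq 0$, and I may set $g := (h-h(a))/\alpha$. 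By \cref{harmon:linear} (together with the trivial fact that constant functions are harmonic) $g$ is a voltage, and $g(a)=0$, $g(z)=1$; by \cref{voltage:unique} it is \emph{the} unique voltage with these boundary values, in particular it does not depend on the choice of $h$.

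Next I would track how the associated current flow transforms under this affine rescaling. Since $\theta(xy)=c_{xy}(h(y)-h(x))$ and $g(y)-g(x)=(h(y)-h(x))/\alpha$, the current flow associated with $g$ is $\theta_g=\theta/\alpha$, and hence $\|\theta_g\|=\|\theta\|/\alpha$. Consequently
\[
\frac{h(z)-h(a)}{\|\theta\|}=\frac{\alpha}{\|\theta\|}=\frac{1}{\|\theta_g\|},
\]
and the right-hand side depends only on $g$, so it is the same for every non-constant voltage $h$. This establishes that the ratio in \eqref{eq:reff} is a constant independent of $h$ (and, incidentally, that $\|\theta\|$ and $h(z)-h(a)$ always have the same sign).

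It remains to see that this constant is positive, i.e.\ that $\|\theta_g\|>0$. Here I would use the probabilistic formula from Proof 2 of \cref{voltage:exist}, namely $g(x)=\pr_x(\tau_z<\tau_a)$. Writing $\tau_a^+:=\min\{t\ge 1:X_t=a\}$ for the first return time to $a$, a first-step decomposition gives
\[
\|\theta_g\|=\sum_{x\sim a}c_{ax}g(x)=\pi_a\sum_{x\sim a}p_{ax}\pr_x(\tau_z<\tau_a)=\pi_a\,\pr_a(\tau_z<\tau_a^+).
\]
In a finite connected network the weighted random walk started at $a$ reaches $z$ almost surely, so the number of returns to $a$ before hitting $z$ is almost surely finite; in particular the walk hits $z$ before returning to $a$ with positive probability, giving $\|\theta_g\|>0$. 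The genuinely delicate point of the proof is exactly this positivity step — everything else is bookkeeping with the earlier lemmas — and within it one must be careful that the first-step identity $\pr_a(\tau_z<\tau_a^+)=\sum_{x\sim a}p_{ax}\pr_x(\tau_z<\tau_a)$ is applied correctly and that finiteness and connectedness of $G$ are genuinely used to force the walk from $a$ to hit $z$ with probability one.
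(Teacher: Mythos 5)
Your proof is correct, but it takes a different route from the one in the notes. The notes fix \emph{two} non-constant voltages, rescale each by the strength of its current flow, and invoke the uniqueness of unit-strength current flows (\cref{claim:cycleflows}) together with \cref{voltage:of:flow} to conclude the rescaled voltages differ by an additive constant; positivity is then checked on the $(0,1)$-voltage via the maximum principle (\cref{voltage:maxprin}). You instead normalize on the voltage side: after showing via \cref{voltage:boundary} that a non-constant voltage has $h(a)\neq h(z)$, you affinely rescale to the unique voltage $g$ with $g(a)=0$, $g(z)=1$ (\cref{voltage:unique}) and read off the ratio as $1/\|\theta_g\|$, proving positivity from the probabilistic representation $g(x)=\pr_x(\tau_z<\tau_a)$. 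Your normalization is the dual of the paper's and is arguably more elementary, since it avoids \cref{claim:cycleflows} and \cref{voltage:of:flow} entirely and uses only the results of the voltage section; it also makes the constant explicitly equal to $\bigl(\pi_a\pr_a(\tau_z<\tau_a^+)\bigr)^{-1}$, so you essentially obtain \cref{effresprob} as a byproduct rather than as a separate computation. The paper's route, in exchange, stays within the electrical framework (it exercises the just-proved uniqueness of the unit current flow, which motivates \cref{def:unitcurrentflow}) and its positivity argument needs no facts about the walk beyond the maximum principle. One small streamlining of your final step: rather than appealing to almost-sure absorption at $z$ (itself an unproved, if standard, fact about finite irreducible chains), you can get $\pr_a(\tau_z<\tau_a^+)>0$ directly, since by connectedness there is a simple path from $a$ to $z$ and the walk follows it with positive probability.
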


\begin{proof}
  Let $h_1,h_2$ be two non-constant voltages, and let $\theta_1,\theta_2$ be their
  associated current flows. For $i=1,2$, let $\bar{h}_i=h_i/\|\theta_i\|$
  and let $\bar{\theta}_i$ be the current flow associated with $\bar{h}_i$ (note that since $h_i$ is non-constant $\|\theta_i\|\neq 0$). Thus, $\|\bar{\theta}_i\|=1$. By \cref{claim:cycleflows} we get $\bar{\theta}_1=\bar{\theta}_2$ and
  therefore $\bar{h}_1=\bar{h}_2+c$ for some constant $c$ by \cref{voltage:of:flow}. It follows that
  $\bar{h}_1(z)-\bar{h}_1(a)=\bar{h}_2(z)-\bar{h}_2(a)$.

  To see that this constant is positive, it is enough to check one particular choice of a voltage. By \cref{voltage:exist}, let $h$ be the voltage with $h(a)=0$ and $h(z)=1$. By \cref{voltage:maxprin} and since $G$ is connected, we have that $h(x) > 0$ for at least one neighbor $x$ of $a$. Thus, the corresponding current flow $\theta$ has $\|\theta\|>0$ making \eqref{eq:reff} positive.
\end{proof}

\cref{claim:ohm} is the mathematical manifestation of Ohm's law which states that the voltage difference across an electric circuit is proportional to the current through it. The constant of proportionality is usually called the \emph{effective resistance} of the circuit.

\begin{figure}[t]
  \centering
  \begin{subfigure}[t]{0.475\linewidth}
    \centering
    \begin{tikzpicture}[font=\small, scale=1]

      \node[vx,label=-90:$a$,label=90:$0$] (a) at (0,0) {};
      \node[vx,label=90:$1$]               (1) at (1,0) {};
      \node[vx,label=90:$2$]               (2) at (2,0) {};
      \node[vx,label=90:$3$]               (3) at (3,0) {};
      \node[vx,label=90:$4$]               (4) at (4,0) {};
      \node[vx,label=-90:$z$,label=90:$5$] (z) at (5,0) {};

      \draw (a) -- (1) -- (2) -- (3) -- (4) -- (z);

    \end{tikzpicture}
    \caption{\label{fig:reff:path} For the voltage depicted, the voltage difference
    between $a$ and $z$ is $5$, and the current flow's strength
    is $1$, hence the effective resistance is $5/1=5$.}
  \end{subfigure}\hfill%
  \begin{subfigure}[t]{0.475\linewidth}
    \centering
    \begin{tikzpicture}[font=\small, scale=3]

      \node[vx,label=180:$a$,label=90:$0$] (a) at (-0.866,0) {};
      \node[vx,label=0:$z$,label=90:$1$]   (z) at (0.866,0) {};
      \node[vx,label=90:$1/2$]               (1) at (0,0.1) {};
      \node[vx,label=-90:$1/2$]               (2) at (0,-0.1) {};

      \draw (1) -- (a) -- (2) -- (1) -- (z) -- (2);

    \end{tikzpicture}
    \caption{\label{fig:reff:diamond} For the voltage depicted, the voltage difference
    between $a$ and $z$ is $1$, and the current flow's strength is
    $1$, hence the effective resistance is $1/1=1$.}
  \end{subfigure}
  \caption{Examples for effective resistances of two networks with unit edge conductances.}
  \label{fig:reff}
\end{figure}

\begin{definition}
The number defined in \eqref{eq:reff} is called the \defn{effective resistance} between $a$ and $z$ in the network, and is denoted $\reff(a \lr z)$. We call its reciprocal the \defn{effective conductance} between $a$ and $z$ and is denoted $\ceff(a \lr z):=\reff(a\lr z)^{-1}$.
\end{definition}

For examples of computing the effective resistances of networks, see \cref{fig:reff}.

\paragraph{Notation}
In most cases we write $\reff(a\lr z)$ and suppress the notation of which
network we are working on. However, when it is important to us what the
network is, we will write $\reff(a\lr z ; G)$ for the effective resistance in
the network $G$ with unit edge conductances and $\reff(a\lr z ; (G,\{r_e\}))$
for the effective resistance in the network $G$ with edge resistances
$\{r_e\}_{e\in E}$.  Furthermore, given disjoint subsets $A$ and $Z$ of vertices in a graph
$G$, we write $\reff(A \lr Z)$ for the effective resistance
between $a$ and $z$ in the network obtained from the original network by
identifying all the vertices of $A$ into a single vertex $a$, and all the vertices of $Z$ into a single vertex $z$.

\paragraph{Probabilistic interpretation}

For a vertex $x$ we write $\tau_x^+$ for the stopping time 
\begin{equation}\label{eq:tauplusdef} \tau_x^+ = \min \{ t \geq 1 \mid X_t = x \} \, ,\end{equation}
where $X_t$ is the weighted random walk on the network, as defined in \eqref{wRW:def}. Note that if $X_0 \neq x$
then $\tau_x = \tau_x^+$ with probability $1$. 

\begin{claim}\label{effresprob}
$$ \reff(a\lr z) = \frac{1}{\pi_a \pr_a(\tau_z<\tau_a^+)} \, .$$
\end{claim}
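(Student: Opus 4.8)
The plan is to build the voltage $h$ with $h(a)=0$ and $h(z)=1$ out of hitting probabilities, exactly as in Proof 2 of \cref{voltage:exist}, and then compute the strength of its associated current flow directly by a first-step analysis of the random walk at $a$. Recall that $h(x)=\pr_x(\tau_z<\tau_a)$ is the voltage with these boundary values. By \cref{claim:ohm} the effective resistance is $\reff(a\lr z)=(h(z)-h(a))/\|\theta\|=1/\|\theta\|$, where $\theta(xy)=c_{xy}(h(y)-h(x))$ is the current flow associated with $h$. So it suffices to show $\|\theta\|=\pi_a\,\pr_a(\tau_z<\tau_a^+)$.

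First I would expand the strength using the definition: $\|\theta\|=\sum_{x:x\sim a}\theta(ax)=\sum_{x:x\sim a}c_{ax}(h(x)-h(a))=\sum_{x:x\sim a}c_{ax}h(x)$, since $h(a)=0$. Now divide and multiply by $\pi_a$ to recognize a weighted average over the first step of the walk:
\[
\|\theta\| = \pi_a\sum_{x:x\sim a}\frac{c_{ax}}{\pi_a}\,h(x)
= \pi_a\sum_{x:x\sim a}p_{ax}\,\pr_x(\tau_z<\tau_a).
\]
The key step is to identify $\sum_{x:x\sim a}p_{ax}\,\pr_x(\tau_z<\tau_a)$ with $\pr_a(\tau_z<\tau_a^+)$. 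By the Markov property applied at time $1$, conditioning on $X_1=x$, the walk started from $a$ satisfies $\tau_z<\tau_a^+$ exactly when, from its position $x$ after one step, it reaches $z$ strictly before returning to $a$; that is, $\pr_a(\tau_z<\tau_a^+\mid X_1=x)=\pr_x(\tau_z<\tau_a)$ for every neighbor $x$ of $a$ (here we use $x\ne z$ gives the same event, and if $x=z$ both sides equal $1$; also if $x=a$ is impossible in a simple graph, and in any case $p_{aa}=0$). Summing over $x$ with weights $p_{ax}$ gives $\pr_a(\tau_z<\tau_a^+)$ by the law of total probability, so $\|\theta\|=\pi_a\,\pr_a(\tau_z<\tau_a^+)$ and the claim follows by taking reciprocals.

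The only mild subtlety — and the place I expect to spend the most care — is the bookkeeping in the first-step identity $\pr_a(\tau_z<\tau_a^+\mid X_1=x)=\pr_x(\tau_z<\tau_a)$: one must check that for $X_0=a$ the hitting time $\tau_z$ of the original walk equals $1$ plus the hitting time of $z$ for the shifted walk, and similarly that the first \emph{return} time $\tau_a^+$ for the original walk equals $1$ plus the (ordinary) hitting time $\tau_a$ for the shifted walk started at $x$; both are immediate from the definitions once one notes that the shifted walk started at $x=X_1$ has the law of a walk from $x$. One should also confirm $\pr_a(\tau_z<\tau_a^+)>0$ so that $\reff(a\lr z)$ is finite, which holds by connectedness exactly as in the proof of \cref{claim:ohm}. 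No deeper input is needed; this is essentially a repackaging of the identities already established for voltages and currents.
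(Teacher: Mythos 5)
Your argument is correct and follows essentially the same route as the paper's proof: identify the voltage with boundary values $0,1$ as $h(x)=\pr_x(\tau_z<\tau_a)$, use a first-step decomposition at $a$ to equate $\pr_a(\tau_z<\tau_a^+)$ with $\frac{1}{\pi_a}\sum_{x\sim a}c_{ax}h(x)=\|\theta\|/\pi_a$, and conclude via $\reff(a\lr z)=(h(z)-h(a))/\|\theta\|$. The extra care you flag about the Markov property and positivity is fine but not treated any differently in the paper.
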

\begin{proof}
Consider the voltage $h$ satisfying $h(a)=0$ and $h(z)=1$, and let
$\theta$ be the current flow associated with $h$.  Due to uniqueness of $h$
(\cref{voltage:unique}) we have
that for $x\ne a,z$,
\begin{equation*}
  h(x) = \pr_x(\tau_z<\tau_a),
\end{equation*}
hence
\begin{align*}
  \pr_a(\tau_z<\tau_a^+)
  &= \frac{1}{\pi_a}\sum_{x\sim a} c_{ax}\pr_x(\tau_z < \tau_a)\\
  &= \frac{1}{\pi_a}\sum_{x\sim a} c_{ax} h(x)\\  
  &= \frac{1}{\pi_a}\sum_{x\sim a} \theta(ax) 
  = \frac{\|\theta\|}{\pi_a} = \frac{1}{\pi_a\reff(a\lr z)} \, . \qedhere
\end{align*}
\end{proof}

\paragraph{Network Simplifications}
Sometimes a network can be replaced by a simpler network, without changing
the effective resistance between a pair of vertices.
\begin{claim}\label{parallellaw}[Parallel law]\index{parallel law}
  Conductances add in parallel. Suppose $e_1, e_2$ are parallel edges between
  a pair of vertices, with conductances $c_1$ and $c_2$, respectively. If we
  replace them with a single edge $e'$ with conductance $c_1+c_2$, then the
  effective resistance between $a$ and $z$ is unchanged.
\end{claim}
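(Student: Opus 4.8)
The plan is to show that replacing the two parallel edges by a single edge of conductance $c_1+c_2$ leaves the voltage function with prescribed boundary values completely unchanged, and then to check that the strength of the associated current flow is also unchanged; the claim will then follow at once from the definition of effective resistance. Write $u,w$ for the common endpoints of $e_1,e_2$, let $G$ denote the original network and $G'$ the network in which $e_1,e_2$ have been replaced by the single edge $e'$ of conductance $c_1+c_2$. First I note that $G'$ is still a connected network with positive conductances, so every result of this section applies to it. Let $h$ be the voltage on $G$ with $h(a)=0$ and $h(z)=1$, which exists by \cref{voltage:exist}. I claim $h$ is also a voltage on $G'$. Indeed, the harmonicity condition \eqref{eq:harmon} at a vertex $x\notin\{a,z\}$ only involves $\sum_{y\sim x}c_{xy}h(y)$ and $\pi_x=\sum_{y\sim x}c_{xy}$, and the only vertices where these quantities could conceivably change are $u$ and $w$. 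At $u$ the two parallel edges contribute $c_1h(w)+c_2h(w)=(c_1+c_2)h(w)$ to the first sum and $c_1+c_2$ to $\pi_u$, which is exactly what $e'$ contributes in $G'$; the same holds at $w$ with the roles of $u,w$ exchanged. Hence the harmonicity equations are identical in $G$ and $G'$, so $h$ is a voltage on $G'$, and by \cref{voltage:unique} it is \emph{the} voltage on $G'$ with boundary values $0$ and $1$.

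Next I would compare the strengths of the current flows $\theta$ (in $G$) and $\theta'$ (in $G'$) associated with $h$, recalling $\theta(xy)=c_{xy}(h(y)-h(x))$. Since $a\neq z$, the vertex $a$ coincides with at most one of $u,w$. If $a\notin\{u,w\}$, the edges incident to $a$ are the same in both networks, so $\theta(ax)=c_{ax}(h(x)-h(a))=\theta'(ax)$ for each, and $\|\theta\|=\|\theta'\|$. If $a$ equals one endpoint, say $a=u$, then the contribution of $e_1,e_2$ to $\|\theta\|=\sum_{x\sim a}\theta(ax)$ is $c_1(h(w)-h(a))+c_2(h(w)-h(a))=(c_1+c_2)(h(w)-h(a))$, which is precisely the contribution of $e'$ to $\|\theta'\|$, while all other terms agree; again $\|\theta\|=\|\theta'\|$.

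Finally, since $h(z)-h(a)=1$ in both networks and the flow strengths coincide, \cref{claim:ohm} gives $\reff(a\lr z;G)=1/\|\theta\|=1/\|\theta'\|=\reff(a\lr z;G')$, as desired. I do not expect a genuine obstacle here; the only point needing a little care is the bookkeeping of whether $a$ (or $z$) happens to be one of the endpoints $u,w$, which is disposed of by the short case split above. An equally painless alternative would be to invoke the probabilistic identity of \cref{effresprob} together with the observation that the weighted random walk is unaffected by merging parallel edges into one of combined conductance, but the voltage/current argument is the most direct given what has already been established.
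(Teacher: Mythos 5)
Your proof is correct and follows essentially the same route as the paper, which simply observes that any voltage on $G$ remains a voltage on $G'$ and concludes from there; you have just spelled out the details (the harmonicity bookkeeping at $u,w$ and the check that the flow strength is unchanged, including the case $a\in\{u,w\}$) that the paper leaves implicit with ``The claim follows.''
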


A demonstration of the parallel law appears in \cref{fig:parallel}.

\begin{proof}
Let $G'$ be the graph where $e_1$ and $e_2$ are replaced with $e'$ with
conductance $c_1+c_2$. Then it is immediate that if $h$ is any voltage
function on $G$, then it remains a voltage function on the network $G'$. The claim follows.
\end{proof}

\begin{figure}[t]
  \centering
  \begin{subfigure}[t]{0.475\linewidth}
    \centering
    \begin{tikzpicture}[font=\small, scale=4,baseline=(u.base)]]
      \node[vx,label=180:$u$]   (u) at (0,0) {};
      \node[vx,label=0:$v$]   (v) at (1,0) {};
      
      \draw -- (u) to [bend left=10] node[midway,above] {$c_1$} (v);
      \draw -- (u) to [bend right=10] node[midway,below] {$c_2$} (v);
    \end{tikzpicture}
  \end{subfigure}\hfill%
  \begin{subfigure}[t]{0.475\linewidth}
    \centering
    \begin{tikzpicture}[font=\small, scale=4, baseline=(u.base)]
      \node[vx,label=180:$u$]   (u) at (0,0) {};
      \node[vx,label=0:$v$]   (v) at (1,0) {};
      
      \draw -- (u) to node[midway,above] {$c_1 + c_2$} (v);
    \end{tikzpicture}
  \end{subfigure}
  \caption{Demonstrating the parallel law. Two parallel edges are replaced by a single edge.}\label{fig:parallel}
\end{figure}
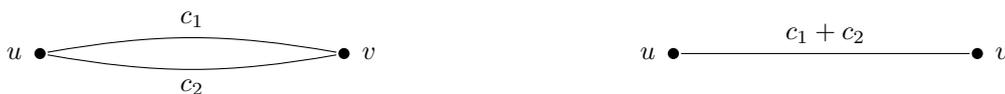

\begin{claim}\label{serieslaw}[Series law]\index{series law}
  Resistances add in series. Suppose that $u \not \in \{a,z\}$ is a vertex of
  degree $2$ and that $e_1=(u,v_1)$ and $e_2=(u,v_2)$ are the two edges
  touching $u$ with edge resistances $r_1$ and $r_2$, respectively. If we
  erase $u$ and replace $e_1$ and $e_2$ by a single edge $e'=(v_1,v_2)$ of
  resistance $r_1+r_2$, then the effective resistance between $a$ and $z$ is
  unchanged.
\end{claim}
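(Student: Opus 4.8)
The plan is to follow the pattern of the proof of the parallel law (\cref{parallellaw}): starting from a voltage on $G$, I will manufacture a voltage on the simplified network $G'$ with the same boundary values and a current flow of the same strength, so that the ratio in \cref{claim:ohm} that defines $\reff(a\lr z)$ comes out identical for $G$ and $G'$.

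Concretely, I would first fix, using \cref{voltage:exist}, the voltage $h$ on $G$ with $h(a)=0$ and $h(z)=1$, and let $\theta=\theta_h$ be its associated current flow. Let $G'$ be the network obtained by deleting $u$ and replacing $e_1,e_2$ by a single edge $e'=(v_1,v_2)$ of resistance $r_1+r_2$, i.e.\ of conductance $c'=\tfrac{c_1c_2}{c_1+c_2}$ where $c_i=1/r_i$. Define $h'$ to be the restriction of $h$ to $V\setminus\{u\}$, and define $\theta'$ on the oriented edges of $G'$ by keeping $\theta$ on every edge other than $e_1,e_2$ and setting $\theta'(v_1v_2):=\theta(v_1u)$. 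Since $u$ has degree $2$, Kirchhoff's node law for $\theta$ at $u$ gives $\theta(v_1u)=\theta(uv_2)$, so $\theta'$ is well defined.

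Then I would check the two required properties. First, $\theta'$ is a flow from $a$ to $z$ in $G'$: antisymmetry is immediate, the node law at any $x\notin\{a,z,v_1,v_2\}$ is inherited from $\theta$, and at $v_1$ (and symmetrically $v_2$) the summand $\theta(v_1u)$ in the old node balance is simply replaced by the equal summand $\theta'(v_1v_2)$, so the sum still vanishes. Second, $\theta'$ obeys Ohm's law with respect to $h'$: on the untouched edges this is inherited, and across $e'$, Ohm's law for $\theta$ on $e_1$ and $e_2$ gives $h(u)-h(v_1)=r_1\theta(v_1u)$ and $h(v_2)-h(u)=r_2\theta(uv_2)=r_2\theta(v_1u)$, which add to $h'(v_2)-h'(v_1)=(r_1+r_2)\theta'(v_1v_2)$, i.e.\ $\theta'(v_1v_2)=c'(h'(v_2)-h'(v_1))$. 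Combining these, $\sum_{y\sim x}c_{xy}(h'(y)-h'(x))=\sum_{y\sim x}\theta'(xy)=0$ for every $x\ne a,z$, so $h'$ is harmonic off $\{a,z\}$, i.e.\ $h'$ is a voltage on $G'$ and $\theta'$ is the current flow it induces.

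Finally I would compare the two sides of \cref{claim:ohm}. We have $h'(z)-h'(a)=h(z)-h(a)=1$ and $h'$ is non-constant, while $\|\theta'\|=\|\theta\|$ because the strength is read off at $a$: if $a\notin\{v_1,v_2\}$ the edges at $a$ are untouched, and if $a$ is $v_1$ or $v_2$ the single altered term at $a$ is again replaced by an equal one. Hence
\[
  \reff(a\lr z;G')=\frac{h'(z)-h'(a)}{\|\theta'\|}=\frac{h(z)-h(a)}{\|\theta\|}=\reff(a\lr z;G).
\]
There is no real obstacle here; the only points needing a moment's care are the bookkeeping when $v_1$ or $v_2$ happens to equal $a$ or $z$ (dealt with above) and the degenerate case $v_1=v_2$, where deleting $u$ merely removes two edges at a vertex that plays no role in the $a$–$z$ resistance, so the statement is trivial.
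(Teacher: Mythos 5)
Your proof is correct and follows essentially the same route as the paper: both transfer the current flow from $G$ to $G'$ by merging the two series edges into one (using the node law at the degree-two vertex $u$ to see the flow is consistent), preserve the strength, and conclude that the voltage difference, and hence the ratio defining $\reff(a\lr z)$, is unchanged. Your verification via the restricted voltage and Ohm's law is just a slightly more explicit form of the paper's appeal to the cycle law, and your attention to the cases $a,z\in\{v_1,v_2\}$ and $v_1=v_2$ is a harmless extra check.
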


The series law is depicted in \cref{fig:series:law}.

\begin{proof}
  Denote by $G'$ the graph in which $u$ is erased and $e_1$ and $e_2$ are
  replaced by a single edge $(v_1,v_2)$ of resistance $r_1+r_2$. Let $\theta$
  be a current flow from $a$ to $z$ in $G$, and define a flow $\theta'$ from
  $a$ to $z$ in $G'$ by putting $\theta'(e)=\theta(e)$ for any $e \neq
  e_1,e_2$ and $\theta'(v_1,v_2)=\theta(v_1,u)$. Since $u$ had degree $2$, it
  must be that $\theta(v_1,u)=\theta(u,v_2)$. Thus $\theta'$
  satisfies the node law at any $x\not\in\{a,z\}$ and 
  $\|\theta\|=\|\theta'\|$. Furthermore, since
  $\theta$ satisfies the cycle law, so does $\theta'$. We conclude $\theta'$ is a current flow of the same strength as $\theta$ and the voltage difference
  they induce is the same.
\end{proof}

\begin{figure}[t]
  \centering
  \begin{subfigure}[baseline]{0.475\linewidth}
    \centering
    \begin{tikzpicture}[font=\small, scale=4]
    \node[vx,label=180:$v_1$]   (u) at (0,0) {};
    \node[vx,label=90:$u$]   (z) at (0.5,0) {};
    \node[vx,label=0:$v_2$]   (v) at (1,0) {};
    
    \draw -- (u) to node[midway,above] {$r_1$} (z);
    \draw -- (z) to node[midway,above] {$r_2$} (v);
    \end{tikzpicture}
  \end{subfigure}
  \begin{subfigure}[baseline]{0.475\linewidth}
    \centering
    \begin{tikzpicture}[font=\small, scale=4]
      \node[vx,label=180:$v_1$]   (u) at (0,0) {};
      \node[vx,label=0:$v_2$]   (v) at (1,0) {};

      \draw -- (u) to node[midway,above] {$r_1 + r_2$} (v);
    \end{tikzpicture}
  \end{subfigure}
  \caption{\label{fig:series:law} An example of a network $G$ where edges in
    series are replaced by a single edge.}
\end{figure}
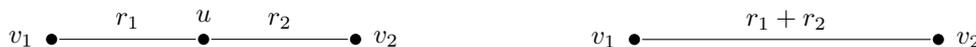

The operation of \defn{gluing} a subset of vertices $S \subset V$ consists of
identifying the vertices of $S$ into a single vertex and keeping all the edges 
and their conductances. In this process we may generate parallel edges or loops.

\begin{claim}\label{gluinglaw}[Gluing]\index{gluing}
  Gluing vertices of the same voltage does not change the effective resistance between $a$ and $z$.
\end{claim}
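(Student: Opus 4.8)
The plan is to argue, as with the parallel law, that gluing vertices of equal voltage does not destroy harmonicity, so the voltage function survives the operation and the current flow is essentially unchanged. Let $G$ be the original network and fix the two distinguished vertices $a,z$. Let $h$ be the voltage on $G$ with $h(a)=0$ and $h(z)=1$, which exists and is unique by \cref{voltage:exist} and \cref{voltage:unique}, and let $\theta=\theta_h$ be the associated current flow. Suppose $S\subseteq V$ is a set of vertices on which $h$ takes a common value $\lambda$; let $G'$ be the network obtained from $G$ by identifying all vertices of $S$ into a single vertex $s$ (keeping all edges and conductances, possibly creating multi-edges or loops). The first step is to observe that self-loops created at $s$ carry no current under $\theta$ (an edge between two vertices of $S$ had voltage difference $\lambda-\lambda=0$, so by Ohm's law $\theta$ vanishes on it) and contribute nothing to any node-law sum, so we may simply delete them; parallel edges created between $s$ and some other vertex can be left as they are, since the node law is insensitive to whether we record them separately or combine them.

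The second step is to define $h'$ on $G'$ by $h'(x)=h(x)$ for $x\notin S$ and $h'(s)=\lambda$, and to check that $h'$ is a voltage on $G'$, i.e. harmonic at every vertex of $G'\setminus\{a,z\}$. For a vertex $x\notin S\cup\{a,z\}$, the set of neighbors of $x$ in $G'$ (with multiplicity) is exactly the set of neighbors of $x$ in $G$, with each neighbor lying in $S$ replaced by $s$, which carries the same value $\lambda$; hence the weighted average defining harmonicity at $x$ is unchanged, and $h'$ is harmonic at $x$. For the glued vertex $s$ (assuming $s\notin\{a,z\}$, i.e. $S\cap\{a,z\}$ is empty — if not, one of $a,z$ simply becomes $s$ and there is nothing to check there), harmonicity at $s$ says
\[
\pi_s h'(s) = \sum_{y\sim s} c_{sy} h'(y),
\]
where $\pi_s=\sum_{y\sim s}c_{sy}$ and the sums run over all edges of $G'$ incident to $s$. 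Summing the harmonicity relations $\pi_x h(x)=\sum_{y\sim x}c_{xy}h(y)$ over all $x\in S$ in $G$, the contributions of edges internal to $S$ cancel in pairs (each such edge $\{u,v\}$ with $u,v\in S$ contributes $c_{uv}h(v)$ to the right side at $u$ and $c_{uv}h(u)$ to the right side at $v$, while appearing with coefficient $c_{uv}$ in $\pi_u$ and $\pi_v$ on the left, and $h(u)=h(v)=\lambda$), leaving exactly the desired relation at $s$; alternatively, and more cleanly, one notes that harmonicity at $x\in S$ with $h(x)=\lambda$ says that $\lambda$ is the $c$-weighted average of the neighbors' values, and a weighted average of weighted averages all equal to $\lambda$ over external neighbors is again $\lambda$. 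Either way $h'$ is harmonic at $s$, so $h'$ is a voltage on $G'$ with $h'(a)=0$, $h'(z)=1$.

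The third step is to identify the current flow and conclude. The current flow $\theta'=\theta_{h'}$ on $G'$ satisfies $\theta'(xy)=c_{xy}(h'(y)-h'(x))$, which agrees with $\theta(xy)$ on every edge not internal to $S$, and in particular on every edge incident to $a$. Therefore $\|\theta'\|=\sum_{x\sim a}\theta'(ax)=\sum_{x\sim a}\theta(ax)=\|\theta\|$, while $h'(z)-h'(a)=1=h(z)-h(a)$. By the definition of effective resistance via the ratio \eqref{eq:reff} (\cref{claim:ohm}), we get $\reff(a\lr z;G')=(h'(z)-h'(a))/\|\theta'\|=(h(z)-h(a))/\|\theta\|=\reff(a\lr z;G)$, as claimed. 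I do not expect any serious obstacle here; the only point requiring a little care is the harmonicity check at the glued vertex $s$, and the bookkeeping needed when $S$ meets $\{a,z\}$ or when loops and parallel edges are produced — all of which is handled by the remarks above (delete loops, keep or combine parallel edges freely, and note that if $S$ contains $a$ or $z$ then $s$ plays that role and imposes no harmonicity constraint).
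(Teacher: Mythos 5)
Your proof is correct and is essentially the paper's argument: the paper disposes of this claim in one line by noting that the voltage on the glued graph remains harmonic, which is exactly the content you verify in detail (harmonicity at the glued vertex, unchanged current through $a$, and the ratio \eqref{eq:reff} via \cref{claim:ohm}). Your extra bookkeeping about loops, parallel edges, and the case $S\cap\{a,z\}\neq\emptyset$ is sound but not needed beyond what the paper takes as immediate.
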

\begin{proof} This is immediate since the voltage on the glued graph is still harmonic. 
\end{proof}

\paragraph{Example: Spherically Symmetric Tree}
Let $\Gamma$ be a \defn{spherically symmetric tree}, that is, a rooted tree
where all vertices at the same distance from the root have the same number of
children. Denote by $\rho$ the root of the tree, and let $\{d_n\}_{n\in \NN}$
be a sequence of positive integers.  Every vertex at distance $n$ from the root $\rho$
has $d_n$ children.  Denote by $\Gamma_n$ the set of all vertices of height
$n$.  We would like to calculate $\reff(\rho \lr \Gamma_n)$.  Due to the
tree's symmetry, all vertices at the same level have the same voltage and
therefore by \cref{gluinglaw} we can identify them. Our simplified network
has now one vertex for each level,  denoted by $\{v_i\}_{i\in \NN}$ (where $\rho = v_0$), with
$|\Gamma_{n+1}|$ edges between $v_{n}$ and $v_{n+1}$. Using the parallel law
(\cref{parallellaw}), we can reduce each set of $|\Gamma_n|$ edges to a single edge
with resistance $\frac{1}{|\Gamma_n|}$, then, using the series law
(\cref{serieslaw}) we get
\begin{equation*}
  \reff(\rho \lr \Gamma_n)
  = \sum_{i=1}^{n} \frac{1}{|\Gamma_i|}
  = \sum_{i=1}^{n} \frac{1}{d_0\cdots d_{i-1}} \, ,
\end{equation*}
see Figure \ref{fig:parserlaw}.

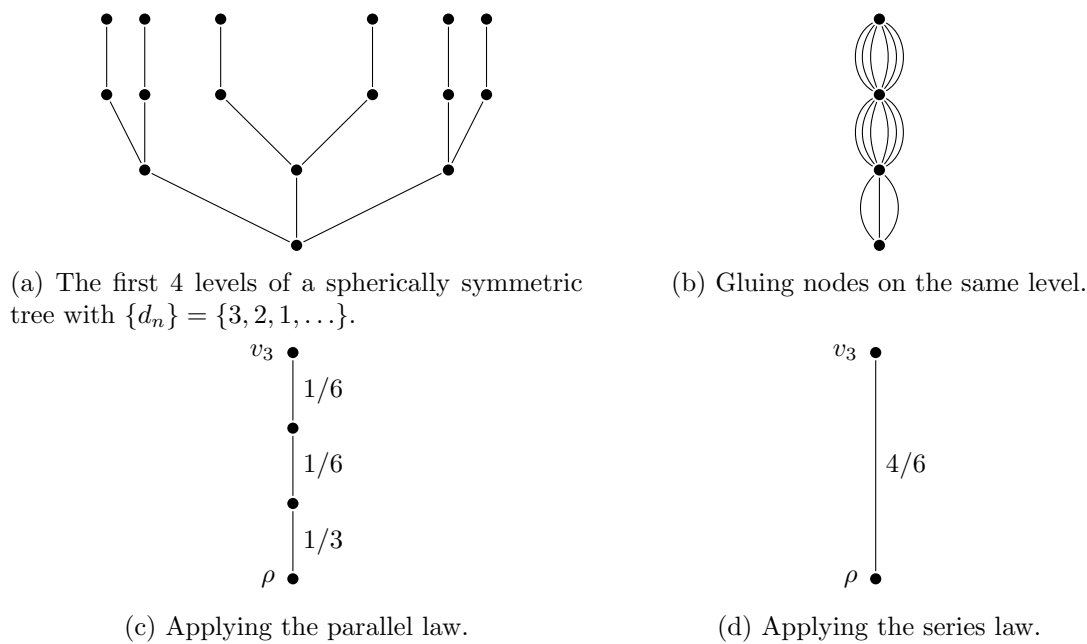
\begin{figure}[ht]
  \centering
  \begin{subfigure}[t]{0.475\linewidth}
    \centering
    \begin{tikzpicture}[font=\small, scale=3]
      \node[vx]   (1) at (0,0) {};
      \node[vx]   (11) at (-0.666,0.333) {};
      \node[vx]   (12) at (0,0.333) {};
      \node[vx]   (13) at (0.666,0.333) {};
      \node[vx]   (131) at (0.833,0.666) {};          \node[vx]   (132) at (0.666,0.666) {};              \node[vx]   (121) at (0.333,0.666) {};
      \node[vx]   (122) at (-0.333,0.666) {};         \node[vx]   (111) at (-0.666,0.666) {};             \node[vx]   (112) at (-0.833,0.666) {};
      \node[vx]   (1311) at (0.833,1) {};         \node[vx]   (1321) at (0.666,1) {};             \node[vx]   (1211) at (0.333,1) {};
      \node[vx]   (1221) at (-0.333,1) {};
      \node[vx]   (1111) at (-0.666,1) {};
      \node[vx]   (1121) at (-0.833,1) {};

      \draw (1) -- (11) (1) -- (12) (1) -- (13)
      (11) --(111) (11)--(112) (12)--(121) (12)--(122) (13)--(131) (13)--(132) (131)--(1311) (132)--(1321) (121)--(1211) (122)--(1221) (111)--(1111) (112)--(1121);


    \end{tikzpicture}
    \caption{The first $4$ levels of a spherically symmetric tree with
      $\{d_n\} = \{3,2,1,\ldots\}$.}
  \end{subfigure}
  \begin{subfigure}[t]{0.475\linewidth}
    \centering
    \begin{tikzpicture}[font=\small, scale=3]
      \node[vx]   (1) at (0,0) {};
      \node[vx]   (2) at (0,0.333) {};
      \node[vx]   (3) at (0,0.666) {};
      \node[vx]   (4) at (0,1) {};

      \draw -- (1) to [bend left=45] (2);
      \draw -- (1) to [bend right=45] (2);
      \draw -- (1) to (2);
      \draw -- (2) to [bend left=20] (3);
      \draw -- (2) to [bend left=40] (3);
      \draw -- (2) to [bend left=60] (3);
      \draw -- (2) to [bend right=20] (3);
      \draw -- (2) to [bend right=40] (3);
      \draw -- (2) to [bend right=60] (3);
      \draw -- (3) to [bend left=20] (4);
      \draw -- (3) to [bend left=40] (4);
      \draw -- (3) to [bend left=60] (4);
      \draw -- (3) to [bend right=20] (4);
      \draw -- (3) to [bend right=40] (4);
      \draw -- (3) to [bend right=60] (4);
    \end{tikzpicture}
    \caption{Gluing nodes on the same level.}
  \end{subfigure}
  \begin{subfigure}[b]{0.475\linewidth}
    \centering
    \begin{tikzpicture}[font=\small, scale=3]
      \node[vx,label=180:$\rho$]   (1) at (0,0) {};
      \node[vx]   (2) at (0,0.333) {};
      \node[vx]   (3) at (0,0.666) {};
      \node[vx,label=180:$v_3$]   (4) at (0,1) {};

      \draw -- (1) to node[right,midway] {1/3} (2);
      \draw -- (2) to node[right,midway] {1/6} (3);
      \draw -- (3) to node[right,midway] {1/6} (4);
    \end{tikzpicture}
    \caption{Applying the parallel law.}
  \end{subfigure}
  \begin{subfigure}[b]{0.475\linewidth}
    \centering
    \begin{tikzpicture}[font=\small, scale=3]
      \node[vx,label=180:$\rho$]   (1) at (0,0) {};
      \node[vx,label=180:$v_3$]   (4) at (0,1) {};

      \draw -- (1) to node[right,midway] {4/6} (4);
    \end{tikzpicture}
    \caption{Applying the series law.}
  \end{subfigure}
  \caption{Using network simplifications.}
  \label{fig:parserlaw}
\end{figure}

By \cref{effresprob} we learn that
\begin{equation}\label{eq:returnprobtree}
  \pr_\rho(\tau_{n} < \tau_\rho^+)
    = \frac{1}{d_0 \sum_{i=1}^n \frac{1}{d_0 \cdots d_{i-1}}} \, ,
\end{equation}
where $\tau_n$ is the hitting time of $\Gamma_n$ for the random walk on
$\Gamma$. Observe that 
\begin{equation*}
  \pr_\rho \left(\tau_{n} < \tau_\rho^+ \text{ for all } n \right)
    = \pr_\rho \left( X_t \text{ never returns to }\rho \right) \, ,
\end{equation*}
so by \eqref{eq:returnprobtree} we reach an interesting dichotomy. If
$\sum_{i=1}^\infty \frac{1}{d_1 \cdots d_i} = \infty$, then the random walker
returns to $\rho$ with probability $1$, and hence returns to $\rho$ infinitely
often almost surely. If $\sum_{i=1}^\infty \frac{1}{d_1 \cdots d_i} <
\infty$, then with positive probability the walker never returns to $\rho$, and
hence visits $\rho$ only finitely many times almost surely. 

The former graph is called a \defn{recurrent} graph and the latter is called
\defn{transient}. We will get back to this dichotomy in
\cref{sec:infinitegraphs}.

\subsection{The commute time identity}

The following lemma shows that the effective resistance between $a$ and $z$ is proportional to the expected time it takes the random walk starting at $a$ to visit $z$ and then return to $a$, in other words, the expected \emph{commute time} between $a$ and $z$. We will use this lemma only in \cref{chp:randommaps} so the impatient reader can skip this section and return to it later.

\begin{lemma}[Commute time identity]\label{commute}\index{commute time identity}
  Let $G=(V,E)$ be a finite network and $a\neq z$ two vertices. Then 
  \begin{equation*}
  \E_a[\tau_z] + \E_z[\tau_a] = 2 \reff(a\lr z) \sum_{e\in E} c_e
  \end{equation*}
\end{lemma}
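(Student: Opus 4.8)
The plan is to recognize the two mean hitting-time functions as (almost) voltages and then extract the commute time from Ohm's law. Write $m = \sum_{e \in E} c_e$ and note $\sum_{x \in V}\pi_x = 2m$. For a function $u\colon V\to\RR$ let $(\cL u)(x) := \sum_{y\sim x}c_{xy}\bigl(u(x)-u(y)\bigr)$ denote the weighted Laplacian, so that $u$ is harmonic at $x$ exactly when $(\cL u)(x)=0$.

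First I would check that all mean hitting times are finite: since $G$ is finite and connected there is a path of length at most $|V|$ from any vertex to any other, traversed with probability at least $p^{|V|}$, where $p>0$ is the minimal transition probability, so each $\tau_x$ is stochastically dominated by $|V|$ times a geometric random variable and hence has finite mean. Set $f(x) := \E_x[\tau_z]$ and $g(x) := \E_x[\tau_a]$. First-step analysis gives $f(z)=0$ and, for $x\neq z$,
\[
  f(x) = 1 + \frac{1}{\pi_x}\sum_{y\sim x}c_{xy}f(y),
\]
which, using $\pi_x=\sum_{y\sim x}c_{xy}$, rearranges to $(\cL f)(x)=\pi_x$ for all $x\neq z$. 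Symmetrically, $g(a)=0$ and $(\cL g)(x)=\pi_x$ for all $x\neq a$.

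Now put $h := g - f$. Then $(\cL h)(x)=\pi_x-\pi_x=0$ for every $x\notin\{a,z\}$, so $h$ is a voltage, with $h(a)=-\E_a[\tau_z]$ and $h(z)=\E_z[\tau_a]$; both numbers are strictly positive (as $a\neq z$ and $G$ is connected), so $h$ is non-constant and $h(z)-h(a)=\E_a[\tau_z]+\E_z[\tau_a]$. To compute the strength of the current flow $\theta_h$ associated with $h$, note that antisymmetry gives $\sum_{x\in V}(\cL g)(x)=0$, so $(\cL g)(a)=-\sum_{x\neq a}\pi_x=\pi_a-2m$, while $(\cL f)(a)=\pi_a$ since $a\neq z$; hence $(\cL h)(a)=-2m$. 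On the other hand $(\cL h)(a)=\sum_{x\sim a}c_{ax}\bigl(h(a)-h(x)\bigr)=-\sum_{x\sim a}\theta_h(ax)=-\|\theta_h\|$, so $\|\theta_h\|=2m$. Finally \cref{claim:ohm} yields
\[
  \reff(a\lr z)=\frac{h(z)-h(a)}{\|\theta_h\|}=\frac{\E_a[\tau_z]+\E_z[\tau_a]}{2m},
\]
which is the asserted identity after multiplying through by $2m=2\sum_{e\in E}c_e$.

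The computations are routine; the only point needing care is the evaluation of $(\cL h)(a)$, i.e., realizing that the ``missing'' Poisson equation for $g$ at $a$ forces a net current of size $2m$ there, which is precisely what makes $\|\theta_h\|$ independent of the choice of representative $h$ and equal to $2\sum_e c_e$. I do not expect a genuine obstacle beyond this bookkeeping: the conceptual content is just that $\E_\cdot[\tau_z]$ and $\E_\cdot[\tau_a]$ solve the same inhomogeneous equation away from their respective roots, so their difference is harmonic and carries a flow whose strength depends only on the conductances.
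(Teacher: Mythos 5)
Your proof is correct: the first-step analysis, the identification of $h=g-f$ as a voltage with $h(a)=-\E_a[\tau_z]$, $h(z)=\E_z[\tau_a]$, and the computation $\|\theta_h\|=2\sum_e c_e$ via $\sum_x(\cL g)(x)=0$ all check out, and \cref{claim:ohm} then gives the identity. However, your route differs from the paper's. The paper works with the Green function $G_z(a,x)$ (expected occupation times before hitting $z$): it shows $\nu(x)=G_z(a,x)/\pi_x$ is a voltage with boundary values $\nu(z)=0$ and $\nu(a)=\pi_a^{-1}G_z(a,a)=\reff(a\lr z)$ (the latter via the escape-probability formula, \cref{effresprob}), writes $\E_a[\tau_z]=\sum_x \nu(x)\pi_x$, does the symmetric computation for $\E_z[\tau_a]$ with the voltage $\nu(a)-\nu(x)$, and sums. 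You instead treat the two mean hitting-time functions $x\mapsto\E_x[\tau_z]$ and $x\mapsto\E_x[\tau_a]$ as solutions of the Poisson equation $\cL u=\pi$ away from their respective roots, so that their difference is a voltage whose potential gap is the commute time and whose current strength is forced to be $2\sum_e c_e$ by global flow conservation. Your argument is somewhat more self-contained: it needs only first-step analysis and Ohm's law (\cref{claim:ohm}), whereas the paper's needs the Green-function facts ($\E_a[\tau_z]=\sum_x G_z(a,x)$, harmonicity of $G_z(a,\cdot)/\pi$, and $G_z(a,a)=\pi_a\reff(a\lr z)$, which rests on \cref{effresprob}); on the other hand, the paper's computation exhibits the two voltages $\nu$ and $\nu(a)-\nu$ explicitly and produces the occupation-time identities $\E_a[\tau_z]=\sum_x\nu(x)\pi_x$, which are of independent use. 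One cosmetic point: when you say ``both numbers are strictly positive,'' you mean the expectations $\E_a[\tau_z]$ and $\E_z[\tau_a]$, not $h(a)$ (which is negative); stating $h(a)<0<h(z)$ would make the non-constancy claim cleaner.
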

\begin{proof}
  We denote by $G_z:V\times V \to \RR$ the Green function
  \begin{equation*}
  G_z(a,x) = \E_a[\text{number of visits to $x$ before $z$}]
  \end{equation*}
  and note that
  \begin{equation*}
  \E_a[\tau_z] = \sum_{x\in V} G_z(a,x).
  \end{equation*}
  It is straightforward to show that the function $\nu(x) = G_z(a,x)/\pi_x$ is harmonic in $V\setminus\{a,z\}$. Also, we have that $G_z(a,z) = 0$ and $G_z(a,a) = \frac{1}{\pr_a(\tau_z < \tau_a)} = \pi_a\reff(a\lr z)$.
  Thus, $\nu$ is a voltage function with boundary conditions $\nu(z) = 0$ and $\nu(a) = \reff(a\lr z)$ which satisfies
  \begin{equation*}
  \E_a[\tau_z] = \sum_{x\in V}\nu(x)\pi_x \, .
  \end{equation*}
  Similarly, the same analysis for $\E_z[\tau_a]$ yields the same result, with the voltage function $\eta$ which has boundary conditions $\eta(z) = \reff(a\lr z)$ and $\eta(a)=0$. Therefore, $\eta(x) = \nu(a) - \nu(x)$ for all $x \in V$ since both sides are harmonic functions in $V\setminus\{a,z\}$ that receive the same boundary values. This implies that \begin{equation*}
  \E_z[\tau_a] = \sum_{x\in V}\pi_x\left(\nu(a)-\nu(x)\right).
  \end{equation*}
  Summing these up gives \begin{equation*}
  \E_a[\tau_z] + \E_z[\tau_a] = \sum_{x\in V} \pi_x \nu(a) =  2 \sum_{e\in E} c_e \reff(a\lr z).\qedhere
  \end{equation*}
\end{proof}

\section{Energy}\label{sec:energy}

So far we have seen how to compute the effective resistance of a network via harmonic functions and current flows. However, in typical situations it is hard to find a flow satisfying the circle law. Luckily, an extremely useful property of the effective resistance is that it can be represented by a variational problem. Our Physics intuition asserts that the \emph{energy} of the unit current flow is minimal among all unit flows from $a$ to $z$. The notion of energy can be made precise and will allow us to obtain valuable monotonicity properties. For instance, removing any edge from an electric network can only increase its effective resistance. Hence, any recurrent graph remains recurrent after removing any subset of edges from it. Two variational problems govern the effective resistance, Thomson's principle, which is typically used to bound the effective resistance from above, and Dirichlet's principle, allowing to bound it from below.

\begin{definition}
  The \textbf{energy} \index{energy of a flow} of a flow $\theta$ from $a$ to $z$, denoted by $\energy(\theta)$, is defined to be
  \begin{equation*}
    \energy(\theta) := \frac{1}{2}\sum_{\vec{e}\in\vec{E}}r_{\vec{e}}\, \theta(\vec{e})^2 = \sum_{e\in E} \theta(e)^2 r_e.
  \end{equation*}
\end{definition}
Note that in the second sum we sum over undirected edges, but since
$\theta(xy)^2 = \theta(yx)^2$, this is well defined.

\begin{theorem}[Thomson's Principle]\label{thomson:principle}
  \index{Thomson's principle}
  \begin{equation*}
    \reff(a\lr z) = \inf\{\energy(\theta) : \|\theta\| = 1,
      \theta \text{ is a flow from } a\to z\}
  \end{equation*}
  and the unique minimizer is the unit current flow.
\end{theorem}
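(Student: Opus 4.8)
The plan is to show that among all unit flows from $a$ to $z$, the unit current flow $\theta^\star$ uniquely minimizes the energy, and that its energy equals $\reff(a\lr z)$. First I would verify the easy identity: if $h$ is the voltage with $h(a)=0$, $h(z)=\reff(a\lr z)$ inducing the unit current flow $\theta^\star$ (possible by \cref{voltage:exist} and the definition of effective resistance, rescaling so that $\|\theta^\star\|=1$), then a telescoping computation along the current gives
\[
  \energy(\theta^\star) = \sum_{e\in E} r_e\, \theta^\star(e)^2 = \sum_{e=\{x,y\}} \theta^\star(xy)\,(h(y)-h(x)),
\]
and reorganizing this sum by vertices, using Kirchhoff's node law at every $x\neq a,z$ and \cref{flow:in:flow:out} at $a$ and $z$, collapses it to $(h(z)-h(a))\|\theta^\star\| = \reff(a\lr z)$. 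So the infimum is at most $\reff(a\lr z)$, attained at $\theta^\star$.

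Next I would prove optimality by a standard orthogonality/variational argument. Let $\theta$ be any unit flow from $a$ to $z$, and write $\theta = \theta^\star + \psi$ where $\psi := \theta - \theta^\star$. Since both $\theta$ and $\theta^\star$ have strength $1$ and both satisfy the node law at interior vertices, $\psi$ is a flow of strength $0$, i.e.\ a \emph{sum of cycle flows} (it satisfies the node law at \emph{every} vertex, including $a$ and $z$). Then
\[
  \energy(\theta) = \sum_e r_e (\theta^\star(e)+\psi(e))^2 = \energy(\theta^\star) + 2\sum_e r_e\,\theta^\star(e)\psi(e) + \energy(\psi).
\]
The cross term vanishes: expanding $r_e\theta^\star(e) = h(y)-h(x)$ over the oriented edge $(x,y)$, the sum $\sum_{(x,y)} (h(y)-h(x))\psi(xy)$ regroups (using antisymmetry of $\psi$) into $-\sum_x h(x)\sum_{y\sim x}\psi(xy)$, which is $0$ because $\psi$ obeys the node law at every vertex. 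Hence $\energy(\theta) = \energy(\theta^\star) + \energy(\psi) \geq \energy(\theta^\star)$, with equality iff $\energy(\psi)=0$, i.e.\ iff $\psi\equiv 0$ (since all $r_e>0$), i.e.\ iff $\theta=\theta^\star$. This gives both the value of the infimum and uniqueness of the minimizer.

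The one genuinely delicate point — the main obstacle — is the claim that a strength-$0$ flow satisfying the node law at \emph{all} vertices makes the cross term vanish; this is exactly the discrete "integration by parts" $\langle dh, \psi\rangle = -\langle h, \operatorname{div}\psi\rangle$, and I should be careful that the node law for $\psi$ at $a$ and $z$ really does hold (it does, precisely because $\theta$ and $\theta^\star$ have equal strength, so their difference has zero net flow out of $a$ and, by \cref{flow:in:flow:out}, zero net flow into $z$). Everything else is routine algebra with the antisymmetry convention and careful bookkeeping of oriented versus unoriented edges in the two forms of $\energy$. An alternative to the orthogonality argument would be a direct Lagrange-multiplier / convexity argument (the energy is a strictly convex quadratic on the affine space of unit flows, so it has a unique minimizer characterized by the Euler--Lagrange equations, which are precisely the cycle law — hence the minimizer is the current flow by \cref{voltage:of:flow}), but the orthogonality proof is cleaner and self-contained given what has been established.
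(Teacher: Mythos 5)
Your proposal is correct and follows essentially the same route as the paper: first the telescoping/regrouping computation showing $\energy(\theta^\star)=\reff(a\lr z)$, then the decomposition $\theta=\theta^\star+\psi$ with $\psi$ of strength $0$ and the vanishing cross term, giving optimality and uniqueness. The only cosmetic difference is that you kill the cross term by invoking the node law for $\psi$ at every vertex (including $a$ and $z$), whereas the paper regroups it into $2\|\psi\|\bigl(h(z)-h(a)\bigr)$ and uses $\|\psi\|=0$; these are the same computation.
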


\begin{proof}
  First, we will show that the energy of the unit current flow is the
  effective resistance. Let $I$ be the unit current flow, and $h$ the
  corresponding (\cref{voltage:of:flow}) voltage function.
  \begin{align*}
    \energy(I) &= \frac{1}{2}\sum_{x\in V}\sum_{y:y\sim x}r_{xy}I(xy)^2
    = \frac{1}{2}\sum_{x\in V}\sum_{y:y\sim x} r_{xy}
      \left(\frac{h(y)-h(x)}{r_{xy}}\right) I(xy) \\
    &= \frac{1}{2}\sum_{x\in V}\sum_{y:y\sim x} \left(h(y)-h(x)\right) I(xy) 
    = \frac{1}{2}\sum_{x\in V}\sum_{y:y\sim x}h(y)I(xy) - \frac{1}{2}\sum_{x\in V}\sum_{y:y\sim x}h(x)I(xy).
  \end{align*}
  
  Observe that in the second term of the right hand side, for every $x\ne a,z$
  the sum over all $y\sim x$ is $0$ due to the node law, hence the entire term
  equals $\frac{1}{2}(h(a)-h(z))$.  From antisymmetry of $I$, the first term
  on the right hand side equals $-\frac{1}{2}(h(a)-h(z))$, hence the right hand
  side equals altogether $h(z)-h(a)=\reff(a\lr z)$.
  
  We will now show that every other flow $J$ with $\|J\| = 1$ has
  $\energy(J) \geq \energy(I)$. Let $J$ be such flow and write $J = I + (J
  - I)$. Set $\theta = J-I$ and note that $\|\theta\| = 0$. We have
  \begin{align*}
    \energy(J) &=
    \frac{1}{2}\sum_{x\in V}\sum_{y:y\sim x} r_{xy}(I(xy)+\theta(xy))^2 \\
    &= \frac{1}{2}\sum_{x\in V}\sum_{y:y\sim x}r_{xy}I(xy)^2
      + \frac{1}{2}\sum_{x\in V}\sum_{y:y\sim x}r_{xy}\theta(xy)^2
      + \sum_{x\in V}\sum_{y:y\sim x}r_{xy}\theta(xy)I(xy) \\
    &\geq \energy(I) + \energy(\theta) + \sum_{x\in V}\sum_{y:y\sim x}r_{xy}\theta(xy)I(xy).
  \end{align*}
  Now,
  \begin{align*}
    \sum_{x\in V}\sum_{y:y\sim x}r_{xy}\theta(xy)I(xy)
    &= \sum_{x\in V}\sum_{y:y\sim x}r_{xy}\theta(xy)\frac{(h(y)-h(x))}{r_{xy}}\\
    &= \sum_{x\in V}\sum_{y:y\sim x}\theta(xy)\left(h(y)-h(x)\right) \\
    &= 2\cdot \|\theta \|\cdot \left(h(z)-h(a)\right) = 0,
  \end{align*}
  where the last inequality follows from the same reasoning as before. We
  conclude that $\energy(J) \geq \energy(I)$ as required and that equality
  holds if and only if $\energy(\theta)=0$, that is, if and only if $J = I$.
\end{proof}

\begin{corollary}[Rayleigh's Monotonicity Law]\label{rayleigh}
  \index{Rayleigh's Monotonicity Law}
  If $\{r_e\}_{e\in E}$ and $\{r'_e\}_{e\in E}$ are edge resistances on the
  same graph $G$ so that $r_e \leq r'_e$ for all edges $e\in E$, then
  \begin{equation*}
    \reff(a\lr z ; (G,\{r_e\})) \leq \reff(a\lr z ; (G,\{r'_e\})).
  \end{equation*}
\end{corollary}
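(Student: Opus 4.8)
The plan is to deduce this immediately from Thomson's principle (\cref{thomson:principle}). The key observation is that the notion of a flow from $a$ to $z$ of strength $1$ depends only on the underlying graph $G$ and not on the choice of edge resistances: a unit flow for $(G,\{r_e\})$ is exactly the same object as a unit flow for $(G,\{r'_e\})$. So both effective resistances are infima of energy functionals over the \emph{same} set of flows, and the only thing that changes between the two problems is the functional being minimized.

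First I would let $I'$ denote the unit current flow in the network $(G,\{r'_e\})$, so that by \cref{thomson:principle} applied to $(G,\{r'_e\})$ we have $\reff(a\lr z;(G,\{r'_e\})) = \sum_{e\in E} I'(e)^2 r'_e$. Next, since $r_e \le r'_e$ for every $e$ and $I'(e)^2 \ge 0$, we get the termwise inequality
\begin{equation*}
  \sum_{e\in E} I'(e)^2 r_e \;\le\; \sum_{e\in E} I'(e)^2 r'_e .
\end{equation*}
Finally, $I'$ is in particular a flow from $a$ to $z$ with $\|I'\| = 1$, so it is an admissible competitor in the variational problem defining $\reff(a\lr z;(G,\{r_e\}))$; Thomson's principle applied to $(G,\{r_e\})$ therefore gives
\begin{equation*}
  \reff(a\lr z;(G,\{r_e\})) \;\le\; \sum_{e\in E} I'(e)^2 r_e \;\le\; \sum_{e\in E} I'(e)^2 r'_e \;=\; \reff(a\lr z;(G,\{r'_e\})),
\end{equation*}
which is the desired inequality.

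There is essentially no hard part here: the whole content is the interplay between the two ingredients, namely that the feasible set in Thomson's principle is resistance-independent and that the energy of a fixed flow is monotone nondecreasing in each edge resistance. The one point worth stating carefully is the first of these — that we may legitimately plug the minimizer of one problem into the other — since this is exactly what makes the one-line argument work. (One could alternatively dualize and use Dirichlet's principle with voltage functions, but the Thomson route is the cleanest since it only requires exhibiting a single competitor flow rather than reasoning about optimal voltages.)
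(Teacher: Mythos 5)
Your proof is correct and follows essentially the same route as the paper: both rest on Thomson's principle together with the observation that the set of unit flows is resistance-independent and the energy of any fixed flow is monotone in the resistances. The only cosmetic difference is that you plug in the specific minimizer $I'$ of the primed problem, whereas the paper takes the infimum of the pointwise inequality over all unit flows at once.
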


\begin{proof}
  Let $\theta$ be a flow on $G$, then
  \begin{equation*}
    \sum_{e\in E} r_{e}\theta(e)^2
    \leq \sum_{e \in E} r'_{e}\theta(e)^2.
  \end{equation*}
  This inequality is preserved while taking infimum over all flows with strength $1$. Applying
  \cref{thomson:principle} finishes the proof.
\end{proof}

\begin{corollary}\label{gluing:monotone}
  Gluing vertices cannot increase the effective resistance between $a$ and $z$.
\end{corollary}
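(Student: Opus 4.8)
The plan is to reduce the statement to an instance of Rayleigh's Monotonicity Law (Corollary~\ref{rayleigh}), exactly as the series and parallel laws were handled above, by viewing the glued graph as obtained from the original network by \emph{lowering resistances} (equivalently raising conductances) rather than by a genuine topological operation. Concretely, suppose $G=(V,E)$ is our network and $S\subseteq V$ is the set of vertices we glue; write $G'$ for the glued network. First I would observe that gluing $S$ is the same as taking $G$, adjoining to it the complete graph on $S$ with every new edge given conductance $+\infty$ (or, to stay within finite conductances, conductance $N$ for a parameter $N$), and then noting that identifying two vertices joined by an infinite-conductance edge does not affect the effective resistance between $a$ and $z$. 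To make this rigorous without infinities, I would instead argue by a limiting/comparison argument: the glued network $G'$ has the same edge set as $G$ together with the extra ``short-circuit'' edges among $S$, so by Rayleigh's law adding edges can only decrease $\reff(a\lr z)$; and identifying vertices connected by arbitrarily high conductance edges is the limit of this.

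A cleaner route, which I would actually carry out, avoids limits entirely and uses the variational characterization directly. By Thomson's Principle (Theorem~\ref{thomson:principle}), $\reff(a\lr z;G') = \inf\{\energy(\theta): \|\theta\|=1,\ \theta \text{ a flow from }a\text{ to }z\text{ in }G'\}$, and similarly for $G$. Every unit flow $\theta$ on $G$ from $a$ to $z$ induces a unit flow $\theta'$ on $G'$ simply by pushing edges forward under the quotient map $V\to V'$ (edges inside $S$ disappear, edges between $S$ and $V\setminus S$ become edges at the glued vertex, the node law survives the identification since summing the node-law equations over the vertices of $S$ gives the node law at the glued vertex, using that the internal flows among $S$ cancel in that sum). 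The energies satisfy $\energy(\theta') \le \energy(\theta)$ because edges of $G$ lying inside $S$ contribute nonnegatively to $\energy(\theta)$ but are deleted in $\theta'$, while all other edges keep the same flow value and resistance. Taking the infimum over all unit flows $\theta$ on $G$ therefore gives $\reff(a\lr z;G') \le \reff(a\lr z;G)$, which is the claim.

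The one point requiring a little care --- and the step I expect to be the main (minor) obstacle --- is verifying that $\theta'$ is genuinely a flow on $G'$, i.e.\ that Kirchhoff's node law holds at the new glued vertex and that the strength is preserved when $a$ or $z$ themselves happen to lie in $S$. For the node law at the glued vertex $s$: $\sum_{y\sim s}\theta'(sy)$ equals the sum over all $x\in S$ of $\sum_{y\notin S}\theta(xy)$, and rewriting $\sum_{y\notin S}\theta(xy) = \sum_{y}\theta(xy) - \sum_{y\in S}\theta(xy) = 0 - \sum_{y\in S}\theta(xy)$ (using the node law in $G$, assuming $S\cap\{a,z\}=\emptyset$), the remaining double sum $\sum_{x\in S}\sum_{y\in S}\theta(xy)$ vanishes by antisymmetry. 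The case where $a$ or $z$ is in $S$ is handled the same way, with the glued vertex simply playing the role of $a$ or $z$ and the strength bookkeeping going through identically. Once this is checked, the energy comparison and the appeal to Theorem~\ref{thomson:principle} finish the proof in one line.
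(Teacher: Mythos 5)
Your proof is correct and is essentially the paper's argument: the paper also invokes Thomson's principle and observes that every unit flow on $G$ is (as a function on the edges) a unit flow on the glued network, so the infimum there is over a larger set; your version just spells out the node-law check at the glued vertex and drops the internal edges (which the paper keeps as loops), neither of which changes the substance.
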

\begin{proof}
  Denote by $G$ the original network and by $G'$ the network obtained from
  gluing a subset of vertices. Then every flow $\theta$ on $G$ (viewed as a function on the edges) is a flow on $G'$. Hence the infimum in
  \cref{thomson:principle} taken over flows in $G'$ is taken over a larger
  subset of flows.
\end{proof}

\begin{definition}
  The \textbf{energy} \index{energy of a function} of a \emph{function} $h:V\to\RR$, denoted by $\energy(h)$, is 
  defined to be
  \begin{equation*}
    \energy(h) := \sum_{\{x,y\}\in E} c_{xy}(h(x)-h(y))^2.
  \end{equation*}
\end{definition}

Compare the following lemma with Thomson's principle 
(\cref{thomson:principle}).
\begin{lemma}[Dirichlet's principle]\label{lem:dirichlet}
  \index{Dirichlet's principle}
  Let $G$ be a finite network with source $a$ and sink $z$.  Then
  \begin{equation*}
    \frac{1}{\reff(a\lr z)} = \inf\big\{\energy(h):
    h:V\to\RR, \, h(a)=0, \, h(z)=1 \big\}.
  \end{equation*}
\end{lemma}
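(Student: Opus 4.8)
The plan is to produce an explicit minimizer: let $h_0$ be the voltage with $h_0(a)=0$ and $h_0(z)=1$, which exists and is unique by \cref{voltage:exist} and \cref{voltage:unique}. I will show $\energy(h_0)=1/\reff(a\lr z)$ and that every competitor $h$ with $h(a)=0$, $h(z)=1$ satisfies $\energy(h)\geq\energy(h_0)$, with equality only for $h=h_0$. The whole argument runs parallel to the proof of Thomson's principle (\cref{thomson:principle}), with the roles of functions and flows swapped.

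First I would compute $\energy(h_0)$. Let $\theta$ be the current flow associated with $h_0$, so $\theta(xy)=c_{xy}(h_0(y)-h_0(x))$. For each oriented edge, $c_{xy}(h_0(x)-h_0(y))^2 = r_{xy}\theta(xy)^2 = \theta(xy)\bigl(h_0(y)-h_0(x)\bigr)$, and summing over undirected edges gives $\energy(h_0)=\sum_{e\in E}\theta(e)^2 r_e=\energy(\theta)$ (the flow energy). Since $h_0(z)-h_0(a)=1$, the definition of $\reff(a\lr z)$ via \eqref{eq:reff} (see \cref{claim:ohm}) gives $\|\theta\|=1/\reff(a\lr z)$, and since flows satisfying the cycle law are determined by their strength (\cref{claim:cycleflows}), $\theta=\|\theta\|\cdot I$ where $I$ is the unit current flow (\cref{def:unitcurrentflow}). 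Hence, using $\energy(I)=\reff(a\lr z)$ from the proof of \cref{thomson:principle}, $\energy(h_0)=\|\theta\|^2\energy(I)=\reff(a\lr z)^{-2}\reff(a\lr z)=1/\reff(a\lr z)$.

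Next I would prove the lower bound. Given $h$ with $h(a)=0$, $h(z)=1$, set $g=h-h_0$, so $g(a)=g(z)=0$. Expanding the square, $\energy(h)=\energy(h_0)+\energy(g)+2\sum_{\{x,y\}\in E}c_{xy}(h_0(x)-h_0(y))(g(x)-g(y))$. Reorganizing the cross term over oriented edges and relabeling shows it equals $\sum_{x\in V}g(x)\bigl(\pi_x h_0(x)-\sum_{y\sim x}c_{xy}h_0(y)\bigr)$, and each summand vanishes: for $x\notin\{a,z\}$ by harmonicity of $h_0$ at $x$, and for $x\in\{a,z\}$ because $g(x)=0$. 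Therefore $\energy(h)=\energy(h_0)+\energy(g)\geq\energy(h_0)$, with equality iff $\energy(g)=0$, i.e. iff $g$ is constant on the connected graph $G$, i.e. iff $g\equiv 0$ since $g(a)=0$. Combining the two steps, the infimum is attained uniquely at $h_0$ with value $1/\reff(a\lr z)$.

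The one step requiring genuine care is the vanishing of the cross term: one must correctly convert the double sum over oriented edges into a single sum over vertices (using antisymmetry and the $x\leftrightarrow y$ relabeling), after which harmonicity of $h_0$ off $\{a,z\}$ together with $g(a)=g(z)=0$ finishes it. Everything else — the energy identity $\energy(h_0)=\energy(\theta)$ and the scaling $\energy(\|\theta\| I)=\|\theta\|^2\reff(a\lr z)$ — is routine bookkeeping already present in the proof of \cref{thomson:principle}.
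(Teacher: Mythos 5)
Your proof is correct, and it differs from the paper's in the optimality step. The paper argues that the infimum is attained at the harmonic function by a local improvement argument: if $h$ fails \eqref{eq:harmon} at some $v\notin\{a,z\}$, replacing $h(v)$ by the weighted average of its neighbors strictly decreases $\energy(h)$ (the ``$\E(X)$ minimizes $\E[(X-x)^2]$'' observation), and then it computes $\energy(h_0)=1/\reff(a\lr z)$ by feeding the scaled current flow $\reff(a\lr z)\cdot I$ into Thomson's principle --- essentially the same value computation you do. You instead prove optimality by the orthogonal (Pythagorean) decomposition $h=h_0+g$ with $g(a)=g(z)=0$, showing the cross term vanishes via harmonicity of $h_0$ off $\{a,z\}$; this mirrors the paper's own proof of \cref{thomson:principle} (the $J=I+(J-I)$ expansion) with the roles of functions and flows swapped. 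What your route buys: it directly exhibits $h_0$ as the (unique) minimizer, so you never need to know a priori that the infimum is attained --- a point the paper's local-improvement argument quietly relies on (for a finite network it is easy to supply, e.g.\ by coercivity of the quadratic form, but it is left implicit). What the paper's route buys is a slightly shorter argument and a reusable heuristic (harmonic replacement decreases Dirichlet energy) that is handy elsewhere. Your cross-term computation --- passing to oriented edges, using antisymmetry and the $x\leftrightarrow y$ relabeling to get $\sum_x g(x)\bigl(\pi_x h_0(x)-\sum_{y\sim x}c_{xy}h_0(y)\bigr)$ --- is exactly right, and the identification $\theta=\|\theta\|\cdot I$ via \cref{claim:cycleflows} together with $\energy(I)=\reff(a\lr z)$ from the proof of \cref{thomson:principle} correctly yields $\energy(h_0)=1/\reff(a\lr z)$.
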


\begin{proof}
  The infimum is obtained when $h$ is \emph{the} harmonic function taking $0$
  and $1$ at $a,z$ respectively.  The reason is that if there exists
  $v\ne a,z$ with
  \begin{equation}\label{eq:dirichlet}
    h(v)\ne\sum_{u\sim v}\frac{c_{vu}}{\pi_v} h(u),
  \end{equation}
  then we can change the value of $h$ at $v$ to be the right hand side of 
  \eqref{eq:dirichlet} and the energy will only decrease. One way to see this is that if $X$ is a random variable then the value $\E(X)$ minimizes the 
  function $f(x)=\E\left( (X-x)^2 \right)$.
  
  Let $h$ be that harmonic function and let $I$ be its current flow, so
  $I(xy)=c_{xy}(h(y)-h(x))$.  Write $\hat{I}=\reff(a\lr z)\cdot I$, so
  $\|\hat{I}\|=1$. By Thomson's principle,
  \begin{equation*}
    \reff(a\lr z) = \energy(\hat{I})
    = \sum_{e\in E}r_e\hat{I}(e)^2
    = \sum_{\{x,y\}\in E}r_{xy}\reff(a\lr z)^2 c_{xy}^2 (h(y)-h(x))^2,
  \end{equation*}
  hence
  \begin{equation*}
    \frac{1}{\reff(a\lr z)} = \energy(h).\qedhere
  \end{equation*}
\end{proof}
\section{Infinite graphs}\label{sec:infinitegraphs}

Let $G=(V,E)$ be an infinite connected graph with edge
resistances $\{r_e\}_{e\in E}$. We assume henceforth that this network is \defn{locally finite}, that is, for any vertex $x\in V$ we have $\sum_{y : y \sim x} c_{xy} < \infty$. Let $\{G_n\}$ be a sequence of finite subgraphs of $G$ such that
$\bigcup_{n\in \NN} G_n = G$ and $G_n \subset G_{n+1}$; we call such a sequence an \defn{exhaustive sequence} of $G$. Identify all vertices
of $G \setminus G_n$ with a single vertex $z_n$.

\begin{claim}
  Given an exhaustive sequence $\{G_n\}$ of $G$, the limit
  \begin{equation} \label{eq:resistancetoinfinity}
    \lim_{n\to \infty} \reff(a\lr z_n ; G_n \cup \{z_n\})
  \end{equation}
  exists.
\end{claim}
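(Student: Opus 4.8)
The plan is to show that the quantity $\reff(a\lr z_n;G_n\cup\{z_n\})$ is monotone non-decreasing in $n$ (once $n$ is large enough that $a\in G_n$), so that the limit in \eqref{eq:resistancetoinfinity} automatically exists in $(0,\infty]$, being the limit of a monotone sequence. The only tool I expect to need is the gluing monotonicity of effective resistance, \cref{gluing:monotone}.

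First I would fix $n_0$ with $a\in G_{n_0}$; such $n_0$ exists since $\bigcup_n G_n=G$ and the $G_n$ are increasing, and then $a\in G_n$ for all $n\ge n_0$, so we may restrict attention to $n\ge n_0$. The key observation is that the network $G_n\cup\{z_n\}$ is obtained from $G_{n+1}\cup\{z_{n+1}\}$ by gluing. Indeed, inside $G_{n+1}\cup\{z_{n+1}\}$ one identifies into a single vertex the set consisting of $z_{n+1}$ together with all vertices of $G_{n+1}$ not lying in $G_n$; every vertex of $G\setminus G_n$ is then represented by this one vertex, which we call $z_n$. Edges of $G$ with both endpoints outside $G_n$ become loops at $z_n$ and may be discarded (loops carry no current and do not affect effective resistance), while the remaining edges, with their conductances, are exactly the edges of $G_n\cup\{z_n\}$ (any parallel edges produced are harmless, cf.\ \cref{parallellaw}). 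Throughout this operation the vertex $a$ is untouched.

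Applying \cref{gluing:monotone} then yields
\[
\reff(a\lr z_n;G_n\cup\{z_n\})\ \le\ \reff(a\lr z_{n+1};G_{n+1}\cup\{z_{n+1}\})
\]
for every $n\ge n_0$, so the sequence in \eqref{eq:resistancetoinfinity} is non-decreasing and hence converges to a limit in $(0,\infty]$ (the limit is finite precisely when the sequence is bounded). The only step requiring any care is the bookkeeping in the second paragraph — checking that the prescribed identification of vertices in $G_{n+1}\cup\{z_{n+1}\}$ does reproduce $G_n\cup\{z_n\}$ edge for edge, by sorting edges according to whether their endpoints lie in $G_n$, in $G_{n+1}\setminus G_n$, or in $G\setminus G_{n+1}$ — but this is routine and presents no genuine obstacle.
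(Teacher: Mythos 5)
Your argument is correct and is essentially the paper's own proof: the network $G_n\cup\{z_n\}$ is obtained from $G_{n+1}\cup\{z_{n+1}\}$ by gluing $z_{n+1}$ with the vertices of $G_{n+1}\setminus G_n$, so \cref{gluing:monotone} gives monotonicity and hence existence of the limit. The extra bookkeeping about loops and parallel edges is fine but not needed beyond what the paper states.
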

\begin{proof}
  The graph $G_{n} \cup \{z_n\}$ can be obtained
  from $G_{n+1} \cup \{z_{n+1}\}$ by gluing the vertices in $G_{n+1}\setminus G_n$ with
  $z_{n+1}$ and labeling the new vertex $z_n$. By \cref{gluing:monotone}, the effective resistance $\reff(a\lr z_n ; G_n \cup \{z_n\})$ is increasing in $n$. 
\end{proof}

\begin{claim}\label{claim:sandwiching}
  The limit in \eqref{eq:resistancetoinfinity} does not depend on the choice of exhaustive sequence $\{G_n\}$.
\end{claim}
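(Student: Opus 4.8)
The plan is to show that any two exhaustive sequences yield the same limit by a standard interleaving (sandwiching) argument. Let $\{G_n\}$ and $\{G_n'\}$ be two exhaustive sequences of $G$, and write $R_n = \reff(a \lr z_n; G_n \cup \{z_n\})$ and $R_n' = \reff(a \lr z_n'; G_n' \cup \{z_n'\})$ for the corresponding monotone sequences, whose limits exist by the previous claim. The goal is to prove $\lim_n R_n = \lim_n R_n'$.

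First I would observe that since each $G_n$ is a \emph{finite} subgraph and $\bigcup_m G_m' = G$, there exists an index $m = m(n)$ with $G_n \subseteq G_m'$; similarly, for each $m$ there is an index $k = k(m)$ with $G_m' \subseteq G_k$. The key step is then a monotonicity comparison: if $H \subseteq H'$ are finite subgraphs of $G$, then $\reff(a \lr z_H; H \cup \{z_H\}) \leq \reff(a \lr z_{H'}; H' \cup \{z_{H'}\})$, where $z_H$ denotes the vertex obtained by identifying everything outside $H$. This is exactly the monotonicity used in the proof of the preceding claim: the network $H \cup \{z_H\}$ is obtained from $H' \cup \{z_{H'}\}$ by gluing the vertices of $H' \setminus H$ together with $z_{H'}$, so by \cref{gluing:monotone} the effective resistance only decreases under this gluing. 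Applying this with $H = G_n$, $H' = G_{m(n)}'$ gives $R_n \leq R_{m(n)}'$, and hence $\lim_n R_n \leq \lim_m R_m'$. By symmetry (swapping the roles of the two sequences) we also get $\lim_m R_m' \leq \lim_n R_n$, and the two limits coincide.

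The main (and essentially only) obstacle is making sure the monotonicity comparison is stated and applied correctly when $H$ and $H'$ do not have nested vertex sets in the naive sense — that is, one must be careful that gluing all of $G \setminus H$ to a point is the same as first restricting to $G \setminus H'$ being glued and then further gluing $H' \setminus H$ into that vertex. Once this bookkeeping is handled, the rest is immediate from \cref{gluing:monotone} and the existence of the limits established above. No new estimates are required.
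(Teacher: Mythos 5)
Your proof is correct and is in substance the same sandwiching argument as the paper's: the paper interleaves the two exhaustions into a single exhaustive sequence $G_{i_1} \subseteq G'_{j_1} \subseteq G_{i_2} \subseteq \ldots$ and invokes equality of subsequence limits, while you run the same gluing-monotonicity comparison (via \cref{gluing:monotone}) directly between $G_n$ and some $G'_{m(n)}$ containing it, obtaining the two inequalities by symmetry. There is no gap; the bookkeeping point you flag is exactly the step already used in the preceding claim.
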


\begin{proof}
  Indeed, let $\{G_n\}$ and $\{G'_n\}$ be two exhaustive sequences of $G$. We can find subsequences $\{i_k\}_{k \geq 1}$ and $\{j_k\}_{k\geq 1}$ such that
  \begin{equation*}
    G_{i_1} \subseteq G'_{j_1} \subseteq G_{i_2} \subseteq \ldots
  \end{equation*}
 Since $\{ G_{i_1}, G'_{j_1}, G_{i_2}, \ldots \}$ is itself an exhaustive sequence of G, the limit of effective resistances for this sequence exists and equals the limits of effective resistances for the subsequences $\{ G_{i_k} \}$ and $\{ G'_{j_k} \}$. In turn, these are equal to the limits of effective resistances for the original sequences $\{ G_n \}$ and $\{ G'_n \}$, respectively.
 \end{proof}

\begin{definition} In an infinite network, the \text{effective resistance} \index{effective resistance in an infinite network}  from a vertex $a$ and $\infty$ is
  \begin{equation*}
    \reff(a\lr \infty) := \lim_{n\to \infty} \reff(a\lr z_n ; G_n \cup \{z_n\}) \, .
  \end{equation*}
\end{definition}

We are now able to address the question of recurrence versus transience of a graph systematically. Recall the definition of $\tau_x^+$ in \eqref{eq:tauplusdef}. In an infinite network we define $\tau_a^+=\infty$ when there is no time $t$ such that $X_a=x$. 

\begin{definition}
  A network $(G,\{r_e\}_{e\in E})$ is called \textbf{recurrent} \index{recurrent network} if
  $\pr_a(\tau_a^+=\infty) = 0$, that is, if the probability of the random walker started at $a$ never returning to $a$ is $0$. Otherwise, it is called \textbf{transient} \index{transient network}.
\end{definition}

Observe that since $G$ is connected, if $\pr_a(\tau_a^+=\infty)=0$ for one vertex $a$, then it holds for all vertices in the network. As we have seen, if $n$ is large enough so that $a\in G_n$, then
\begin{equation*}
  \reff(a\lr z_n ; G_n \cup \{z_n\})
  = \frac{1}{\pi_a \cdot \pr_a\left(\tau_{G\setminus G_{n}} < \tau_a^+\right)} \,\,.
\end{equation*}
Since $\bigcap_n \{\tau_{G\setminus G_{n}} < \tau_a^+\} = \{\tau_a^+=\infty\}$ we have
\begin{equation*}
  \reff(a\lr \infty) = \frac{1}{\pi_a \cdot \pr_a(\tau_a^+=\infty)} \,\, ,
\end{equation*}
with the convention that $1/0=\infty$.

\begin{definition}
  Let $G$ be an infinite network. A function $\theta:E(G)\to \RR$ is a
  \defn{flow from $a$ to $\infty$} if it is anti-symmetric and satisfies the
  node law on each vertex $v\neq a$.
\end{definition}

The following follows easily from \cref{thomson:principle}, we omit the proof.

\begin{theorem}[Thomson's principle for infinite networks]
  \label{thomson:principle:infinite}
  Let $G$ be an infinite network, then
  \begin{equation*}
  \reff(a\lr\infty) = \inf\{\energy(\theta)
    : \theta \text{ is a flow from } a\to\infty \text{ of strength } 1\}.
  \end{equation*}
\end{theorem}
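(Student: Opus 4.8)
The plan is to prove the two inequalities separately, using the exhaustion $\{G_n\}$ from the definition of $\reff(a\lr\infty)$ together with the finite-network Thomson principle, \cref{thomson:principle}. Throughout write $R_n := \reff(a\lr z_n; G_n\cup\{z_n\})$, so that $R_n \uparrow \reff(a\lr\infty)$ as $n\to\infty$ and, by \cref{thomson:principle}, $R_n = \energy(I_n)$ where $I_n$ is the unit current flow from $a$ to $z_n$ in $G_n\cup\{z_n\}$.

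For the inequality $\reff(a\lr\infty) \le \inf\{\energy(\theta):\theta\text{ a unit flow }a\to\infty\}$, fix such a flow $\theta$ on $G$. For each $n$ large enough that $a\in G_n$, push $\theta$ forward to a flow $\theta_n$ on $G_n\cup\{z_n\}$: keep the value of $\theta$ on every edge with both endpoints in $G_n$, redirect to $z_n$ every edge with exactly one endpoint in $G_n$ (keeping its resistance and allowing the resulting parallel edges at $z_n$), and discard all other edges. Antisymmetry is immediate; the node law of $\theta_n$ at any $v\in V(G_n)\setminus\{a\}$ coincides term-by-term with the node law of $\theta$ at $v$; and $\|\theta_n\|=\|\theta\|=1$ since every edge incident to $a$ survives. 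As $\energy(\theta_n)$ is a sum of the terms $r_e\theta(e)^2$ over a subset of the edges of $G$, \cref{thomson:principle} gives $R_n \le \energy(\theta_n)\le\energy(\theta)$; letting $n\to\infty$ yields $\reff(a\lr\infty)\le\energy(\theta)$, and taking the infimum over $\theta$ finishes this direction. In particular the recurrent case $\reff(a\lr\infty)=\infty$ is already settled, since then no finite-energy unit flow to infinity exists and both sides are $+\infty$.

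For the reverse inequality we may therefore assume $C:=\reff(a\lr\infty)<\infty$. View each $I_n$ as an edge function $\hat I_n$ on $G$ by setting $\hat I_n(e)=I_n(e)$ for $e\in E(G_n)$ and $\hat I_n(e)=0$ otherwise, so that $\energy(\hat I_n)=\energy(I_n)=R_n\le C$ and hence $|\hat I_n(e)|\le\sqrt{C/r_e}$ for every edge $e$. Since the edge set is countable, a diagonal extraction produces a subsequence along which $\hat I_n(e)$ converges, say to $\theta(e)$, for every edge $e$. Then $\theta$ is antisymmetric; for a fixed vertex $v\neq a$, once $n$ is large enough that $v$ and all of its (finitely many, by local finiteness) incident edges lie in $G_n$ we have $\sum_{y\sim v}\hat I_n(vy)=\sum_{y\sim v}I_n(vy)=0$, so the node law passes to the limit and $\theta$ is a flow from $a$ to $\infty$; the same argument at $a$ gives $\|\theta\|=\lim_n\|I_n\|=1$. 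Finally, for every finite edge set $F$ we have $\sum_{e\in F}r_e\theta(e)^2=\lim_n\sum_{e\in F}r_e\hat I_n(e)^2\le\liminf_n\energy(\hat I_n)\le C$, and taking the supremum over finite $F$ gives $\energy(\theta)\le C=\reff(a\lr\infty)$. Hence $\inf\{\energy(\theta):\theta\text{ a unit flow }a\to\infty\}\le\reff(a\lr\infty)$, which together with the first inequality proves the theorem.

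The only genuinely delicate step is this second inequality: one must extract a limiting flow from the sequence of finite current flows and verify it is a bona fide unit flow to infinity of energy at most $C$. Local finiteness is what makes the node law and the strength computation pass to the limit term by term, and the energy bound survives the limit by testing against finite edge sets (an elementary instance of Fatou's lemma). Everything else is a direct transcription of the finite statement combined with the monotone convergence $R_n\uparrow\reff(a\lr\infty)$ established before the theorem.
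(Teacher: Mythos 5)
Your proof is correct; the paper itself omits the argument (stating only that the result follows easily from \cref{thomson:principle}), and your two-step scheme—pushing a unit flow to $\infty$ forward onto the wired exhaustion $G_n\cup\{z_n\}$ to get $\reff(a\lr\infty)\le\energy(\theta)$, and extracting a diagonal subsequential limit of the unit current flows $I_n$ (using local finiteness for the node law and strength, and finite edge sets for the energy bound) for the reverse inequality—is exactly the standard argument the authors have in mind. One cosmetic slip: $\energy(\hat I_n)$ need not equal $\energy(I_n)$, since the edges joining $G_n$ to $z_n$ are zeroed out in $\hat I_n$; but only the inequality $\energy(\hat I_n)\le\energy(I_n)=R_n\le C$ is used, so nothing is affected.
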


\begin{corollary}\label{cor:transiencecondition}
  Let $G$ be an infinite graph. The following are equivalent:
  \begin{enumerate}
    \item $G$ is transient.
    \item There exists a vertex $a\in V$ such that $\reff(a\lr\infty) < \infty$. Hence all vertices satisfy this.
    \item There exists a vertex $a\in V$ (and hence all vertices) and a unit flow $\theta$ from $a$ to $\infty$ with $\energy(\theta) <\infty$. Hence all vertices satisfy this.
  \end{enumerate}
\end{corollary}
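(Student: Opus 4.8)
The plan is to establish the cycle of implications $(1)\Rightarrow(2)\Rightarrow(3)\Rightarrow(1)$, relying on two facts already in hand: the identity $\reff(a\lr\infty)=\tfrac{1}{\pi_a\,\pr_a(\tau_a^+=\infty)}$ (with the convention $1/0=\infty$) derived just above the statement, and Thomson's principle for infinite networks (\cref{thomson:principle:infinite}). The ``hence all vertices'' clauses in $(2)$ and $(3)$ will come essentially for free once the cycle is closed, using the remark that connectivity forces $\pr_x(\tau_x^+=\infty)$ to be positive at one vertex if and only if it is positive at every vertex.

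For $(1)\Rightarrow(2)$: by definition, $G$ transient means $\pr_a(\tau_a^+=\infty)>0$ for some vertex $a$, and hence, by the connectivity remark, for every vertex. Since local finiteness gives $\pi_a=\sum_{y\sim a}c_{ay}<\infty$, the identity above immediately yields $\reff(a\lr\infty)<\infty$ for every $a$. For $(3)\Rightarrow(1)$: if $\theta$ is a unit flow from $a$ to $\infty$ with $\energy(\theta)<\infty$, then \cref{thomson:principle:infinite} gives $\reff(a\lr\infty)\le\energy(\theta)<\infty$, so $\pr_a(\tau_a^+=\infty)=\tfrac{1}{\pi_a\reff(a\lr\infty)}>0$, i.e., $G$ is transient.

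For $(2)\Rightarrow(3)$: fix a vertex $a$ with $\reff(a\lr\infty)<\infty$. By \cref{thomson:principle:infinite}, the infimum of $\energy(\theta)$ over unit flows $\theta$ from $a$ to $\infty$ equals $\reff(a\lr\infty)$, which is finite; in particular the set of such flows is nonempty and contains one of finite energy (any $\theta$ with $\energy(\theta)<\reff(a\lr\infty)+1$ suffices — no attainment of the infimum is needed). Once the cycle is closed we know $(2)$ holds at every vertex, hence so does $(3)$.

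I do not expect a genuine obstacle here: the substance has already been absorbed into the infinite-network resistance identity and into Thomson's principle. The only points demanding minor care are invoking local finiteness so that $\pi_a<\infty$ (without which a positive return probability would not correspond to a finite effective resistance), and the quantifier bookkeeping that upgrades ``there exists a vertex'' to ``for all vertices'' via the connectivity remark.
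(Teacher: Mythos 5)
Your argument is correct and is exactly the intended one: the paper omits a proof of \cref{cor:transiencecondition} precisely because it follows, as you show, from the identity $\reff(a\lr\infty)=1/(\pi_a\pr_a(\tau_a^+=\infty))$ derived just above it together with \cref{thomson:principle:infinite}, with local finiteness giving $\pi_a<\infty$ and connectivity upgrading the statements from one vertex to all vertices.
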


We will now develop a useful method for bounding effective resistances from below. This will lead us to a popular sufficient criterion for recurrence in \cref{nash:williams:recurrence}.

\begin{definition}
  A \defn{cutset} $\Gamma \subseteq E(G)$ separating $a$ from $z$ is a set of edges such that every path from $a$ to $z$ must use an edge from $\Gamma$.
\end{definition}

\begin{claim}\label{flow:cutset} Let $\theta$ be a flow from $a$ to $z$ in a finite network, and let $\Gamma$ a cutset separating $a$ from $z$. Then
  \begin{equation*}
    \sum_{e\in \Gamma} |\theta(e)| \geq \|\theta\|.
  \end{equation*}
\end{claim}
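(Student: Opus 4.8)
The plan is to exhibit a vertex set whose edge boundary is contained in $\Gamma$ and then apply flow conservation (the node law together with antisymmetry). First I would define $S$ to be the set of all vertices $v$ for which there is a path from $a$ to $v$ in $G$ that does not traverse any edge of $\Gamma$. Since the trivial path connects $a$ to itself, $a\in S$; and since $\Gamma$ is a cutset separating $a$ from $z$, every path from $a$ to $z$ uses an edge of $\Gamma$, so $z\notin S$.

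Next I would compute the quantity $\sum_{v\in S}\sum_{y:y\sim v}\theta(vy)$ in two ways. On one hand, by Kirchhoff's node law the inner sum vanishes for every $v\neq a,z$, and since $a\in S$ while $z\notin S$, the only surviving term is $\sum_{y:y\sim a}\theta(ay)=\|\theta\|$. On the other hand, I would split the double sum according to whether the other endpoint $y$ lies in $S$ or not: the contribution of edges with both endpoints in $S$ cancels in pairs by antisymmetry, since each such edge $\{v,y\}$ contributes $\theta(vy)+\theta(yv)=0$. This leaves
\[
\|\theta\| = \sum_{\substack{v\in S,\ y\notin S\\ y\sim v}}\theta(vy).
\]

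The key observation is that every edge $\{v,y\}$ appearing on the right, with $v\in S$ and $y\notin S$, must belong to $\Gamma$: otherwise the path from $a$ to $v$ avoiding $\Gamma$ could be extended by the edge $\{v,y\}$ to reach $y$ while still avoiding $\Gamma$, forcing $y\in S$, a contradiction. Moreover each such edge is counted exactly once in the sum, since its $S$-endpoint cannot simultaneously play the role of $y$. Hence the edges indexing the sum form a subset of $\Gamma$, and the triangle inequality gives
\[
\|\theta\| = \sum_{\substack{v\in S,\ y\notin S\\ y\sim v}}\theta(vy) \le \sum_{\substack{v\in S,\ y\notin S\\ y\sim v}}|\theta(vy)| \le \sum_{e\in\Gamma}|\theta(e)|,
\]
as required.

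I do not expect a genuine obstacle here; the only point requiring a little care is the verification that the edge boundary of $S$ lies inside $\Gamma$ — which is exactly where the cutset hypothesis is used — together with the bookkeeping that each boundary edge is counted once, so that passing to absolute values and then enlarging the index set to all of $\Gamma$ is legitimate.
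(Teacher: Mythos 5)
Your proof is correct and is essentially the argument the paper gives: both identify the set of vertices on the $a$-side of the cutset, observe that its edge boundary is contained in $\Gamma$, and use conservation of flow (node law plus antisymmetry) to see that the net flow across that boundary is $\|\theta\|$. The only difference is presentational: the paper contracts the separated vertices to a single vertex and cites \cref{flow:in:flow:out}, whereas you carry out that same double-sum cancellation inline.
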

\begin{proof}
  Denote by $Z$ the set of vertices separated from $a$ by $\Gamma$. Denote by $G'$ the network where $Z$ is identified to a single vertex $x$ and all edges having both endpoints in $Z$ are removed.
  Now, the restriction of $\theta$ to the edges of the new network is a flow from $a$ to $x$. By \cref{flow:in:flow:out}, we have $\sum_{y:y\sim x}\theta(yx) = \|\theta\|$. Also, all edges incident to $x$ must be in $\Gamma$, since otherwise $x$ is not separated from $a$ by $\Gamma$.
  Therefore 
  \begin{equation*}
   \sum_{e\in \Gamma} |\theta(e)| \geq \sum_{y:y\sim x}\theta(yx) = \|\theta\|.\qedhere
   \end{equation*}
\end{proof}

\begin{theorem}[Nash-Williams inequality]\index{Nash-Williams inequality}
  Let $\{\Gamma_n\}$ be disjoint cutsets separating $a$ from $z$ in a finite network. Then
  \begin{equation*}
    \reff(a\lr z) \geq \sum_n \left(\sum_{e\in \Gamma_n} c_e\right)^{-1}.
  \end{equation*}
\end{theorem}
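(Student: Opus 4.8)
The plan is to use Thomson's principle (\cref{thomson:principle}) together with the cutset bound of \cref{flow:cutset}. Let $I$ denote the unit current flow from $a$ to $z$, so that $\|I\|=1$ and, by \cref{thomson:principle}, $\reff(a\lr z)=\energy(I)=\sum_{e\in E} r_e I(e)^2$. Since each $\Gamma_n$ is a cutset separating $a$ from $z$, \cref{flow:cutset} gives $\sum_{e\in\Gamma_n}|I(e)|\ge\|I\|=1$.

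The heart of the argument is a Cauchy--Schwarz estimate applied on each individual cutset. Writing $|I(e)| = \sqrt{r_e}\,|I(e)|\cdot\sqrt{c_e}$ for $e\in\Gamma_n$ (recall $r_e=1/c_e$), Cauchy--Schwarz yields
\begin{equation*}
1 \le \Big(\sum_{e\in\Gamma_n}|I(e)|\Big)^2 \le \Big(\sum_{e\in\Gamma_n} r_e I(e)^2\Big)\Big(\sum_{e\in\Gamma_n} c_e\Big),
\end{equation*}
and hence $\sum_{e\in\Gamma_n} r_e I(e)^2 \ge \big(\sum_{e\in\Gamma_n} c_e\big)^{-1}$.

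Finally, since the cutsets $\Gamma_n$ are pairwise disjoint as subsets of $E(G)$ and all summands $r_e I(e)^2$ are nonnegative, we may sum the previous display over $n$ and bound it by the full energy:
\begin{equation*}
\reff(a\lr z)=\energy(I)=\sum_{e\in E} r_e I(e)^2 \ge \sum_n \sum_{e\in\Gamma_n} r_e I(e)^2 \ge \sum_n \Big(\sum_{e\in\Gamma_n} c_e\Big)^{-1},
\end{equation*}
which is the claimed inequality. I do not expect any serious obstacle here: the only point requiring care is the per-cutset Cauchy--Schwarz step and remembering to use the disjointness of the $\Gamma_n$ so that no edge is double-counted when passing from $\energy(I)$ to $\sum_n\sum_{e\in\Gamma_n} r_e I(e)^2$; everything else is a direct application of results already established. (One could equally run the argument with an arbitrary unit flow $\theta$ in place of $I$ and invoke Thomson's principle only at the end to replace $\energy(\theta)$ by its infimum $\reff(a\lr z)$, but using the current flow directly is cleanest.)
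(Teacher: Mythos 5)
Your proof is correct and follows essentially the same route as the paper: per-cutset Cauchy--Schwarz combined with \cref{flow:cutset} and the disjointness of the cutsets, then Thomson's principle (\cref{thomson:principle}). The paper works with an arbitrary unit flow and invokes Thomson's principle at the end, which is exactly the variant you mention in your closing parenthetical, so the difference is purely cosmetic.
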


\begin{proof}
    Let $\theta$ be a flow from $a$ to $z$ with $\|\theta\|=1$. From
    Cauchy-Schwarz, for each $n$ we have
    \begin{equation*}
      \left(\sum_{e\in \Gamma_n} \sqrt{r_e}\sqrt{c_e}|\theta(e)|\right)^2
      \leq \sum_{e\in \Gamma_n} c_e \sum_{e\in \Gamma_n} r_e \theta(e)^2 \, .
    \end{equation*}
    Also, since $\Gamma_n$ is a cutset, the flow passing through $\Gamma_n$ is at least $\|\theta\|$, by \cref{flow:cutset}. So
    \begin{equation*}
      \left(\sum_{e\in \Gamma_n} \sqrt{r_e}\sqrt{c_e}|\theta(e)|\right)^2
      \geq \|\theta\|^2 = 1.
    \end{equation*}
    Combining them, we get that
    \begin{equation*}
      \sum_{e\in \Gamma_n} r_e \theta(e)^2
      \geq \frac{1}{\sum_{e\in \Gamma_n} c_e}.
    \end{equation*}
    Summing over all $n$ gives
    \begin{equation*}
      \energy(\theta) \geq \sum_n \sum_{e\in \Gamma_n} r_e \theta(e)^2
      \geq \sum_n \left(\sum_{e\in \Gamma_n} c_e\right)^{-1}.
    \end{equation*}
    Applying Thomson's principle (\cref{thomson:principle}) yields the result.
\end{proof}

Consider now an infinite network $G=(V,E)$. We say that $\Gamma \subset E$ is a cutset separating $a$ from $\infty$ if any infinite simple path from $a$ must intersect $\Gamma$.

\begin{corollary} \label{nash:williams:recurrence} In any infinite network, 
  if there exists a collection $\{\Gamma_n\}$ of disjoint cutsets separating $a$ from $\infty$ such that
  \begin{equation*}
    \sum_n \left(\sum_{e\in \Gamma_n} c_e\right)^{-1} = \infty,
  \end{equation*}
  then the network is recurrent.
\end{corollary}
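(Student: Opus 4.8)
The plan is to deduce the statement from the finite Nash--Williams inequality along an exhaustion, after first replacing the given cutsets by ones of a more convenient shape. \emph{Reshaping the cutsets.} For each $n$ let $A_n\ni a$ be the connected component of $a$ in the graph obtained from $G$ by deleting the edges of $\Gamma_n$. Since $\Gamma_n$ separates $a$ from $\infty$ and $G$ is locally finite, $A_n$ must be \emph{finite}: an infinite $A_n$ would, by K\"onig's lemma, contain an infinite simple path from $a$ avoiding $\Gamma_n$. Let $\Gamma_n':=\partial_E A_n$ be the set of edges of $G$ with exactly one endpoint in $A_n$. A one-line argument gives $\Gamma_n'\subseteq\Gamma_n$ (an edge leaving $A_n$ that is not in $\Gamma_n$ would enlarge $A_n$), so the $\Gamma_n'$ are pairwise disjoint, each is finite, each still separates $a$ from $\infty$ (an infinite simple path from $a$ must leave the finite set $A_n$, and does so across $\partial_E A_n$), and $\sum_n(\sum_{e\in\Gamma_n'}c_e)^{-1}\ge\sum_n(\sum_{e\in\Gamma_n}c_e)^{-1}=\infty$. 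Hence we may assume from now on that $\Gamma_n=\partial_E A_n$ with $A_n$ finite and $a\in A_n$.

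\emph{Truncating and applying finite Nash--Williams.} Recall $\reff(a\lr\infty)=\lim_m\reff(a\lr z_m\,;\,G_m\cup\{z_m\})$ for any exhaustion $\{G_m\}$. Fix $K\in\NN$ and choose $m$ so large that $V(G_m)$ contains $A_1,\dots,A_K$ together with all their neighbours. Then for each $n\le K$ the set $\Gamma_n$ is a set of edges of $G_m\cup\{z_m\}$, none incident to $z_m$, carrying its original conductances. I claim each such $\Gamma_n$ separates $a$ from $z_m$ in $G_m\cup\{z_m\}$: a simple path from $a$ to $z_m$ sits, just before reaching $z_m$, at a vertex $v\in V(G_m)$ having a $G$-neighbour outside $V(G_m)$, and such $v$ cannot lie in $A_n$ (all neighbours of $A_n$ lie in $V(G_m)$), so the path must have passed from $A_n$ to its complement at some earlier step, using an edge of $\partial_E A_n=\Gamma_n$. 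The finite-network Nash--Williams inequality applied in $G_m\cup\{z_m\}$ therefore gives
\[
\reff(a\lr z_m\,;\,G_m\cup\{z_m\})\ \ge\ \sum_{n=1}^{K}\Big(\sum_{e\in\Gamma_n}c_e\Big)^{-1}.
\]

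\emph{Passing to the limit.} Letting $m\to\infty$ gives $\reff(a\lr\infty)\ge\sum_{n=1}^K(\sum_{e\in\Gamma_n}c_e)^{-1}$ for every $K$, hence $\reff(a\lr\infty)=\infty$. By the identity $\reff(a\lr\infty)=1/(\pi_a\,\pr_a(\tau_a^+=\infty))$ (with the convention $1/0=\infty$) established just before the corollary, together with $\pi_a<\infty$, this forces $\pr_a(\tau_a^+=\infty)=0$, i.e.\ $G$ is recurrent; alternatively one may invoke \cref{cor:transiencecondition} directly.

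The only genuine obstacle is the point handled in the reshaping step: an arbitrary cutset separating $a$ from $\infty$ need not separate $a$ from the contracted vertex $z_m$ in the finite truncation, since a path that reaches the ``boundary'' of $G_m$ and is then routed into $z_m$ has no reason to extend to a path escaping to infinity in $G$. Passing to the edge boundary of the finite region $A_n$ enclosed by each cutset makes ``leaving $A_n$'' synonymous with ``crossing $\Gamma_n$'', after which the remaining work is the routine monotone passage to the limit of effective resistances along an exhaustion.
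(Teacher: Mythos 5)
Your proof is correct and follows the route the paper intends: the corollary is stated there without proof, as the direct consequence of the finite Nash--Williams inequality applied along a wired exhaustion together with the limit definition of $\reff(a\lr\infty)$ and \cref{effresprob}. The reshaping step — replacing each $\Gamma_n$ by the edge boundary of the finite component $A_n$ of $a$ in $G\setminus\Gamma_n$, so that the cutsets genuinely separate $a$ from $z_m$ in $G_m\cup\{z_m\}$ — is precisely the point the paper leaves implicit, and you handle it correctly (under the usual combinatorial local finiteness that the notes assume throughout).
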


\begin{example}[$\ZZ^2$ is recurrent]
  Define $\Gamma_n$ as the set of vertical edges $\{ (x,y), (x,y+1) \}$ with $|x| \leq n$ and $\min \{ |y|, |y+1| \} = n$ along with the horizontal edges $\{ (x,y), (x+1,y) \}$ with $|y| \leq n$ and $\min \{ |x|, |x+1| \} =n$, see Figure \ref{fig:z2}. Then $\{\Gamma_n\}$ is a collection of disjoint cutsets
  separating $0$ from $\infty$. Also, $|\Gamma_n| = 4(2n+1)$ and therefore $\sum_n \left(\sum_{e\in \Gamma_n} c_e\right)^{-1} = \infty$. We deduce by \cref{nash:williams:recurrence} that $\ZZ^2$ is recurrent. 
\end{example}

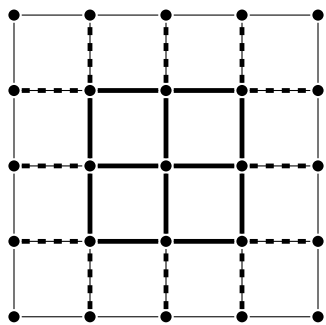
\begin{figure}[t]
    \centering
    \begin{subfigure}[b]{0.475\linewidth}
        \centering
        \begin{tikzpicture}[font=\small]
        \newcount\py
        \newcount\py
        \foreach \x in {0,...,4}
        \foreach \y in {0,...,4}
        {\node[vx] (\x\y) at (\x,\y) {};}

        \draw -- (12) to (13);
        
        \foreach \x in {0,...,4}
        \foreach \y in {0,...,3}
        {
            \pgfmathtruncatemacro{\py}{\y+1};
            \draw -- (\x\y) to (\x\py);
        }
        
        \foreach \x in {0,...,3}
        \foreach \y in {0,...,4}
        {
            \pgfmathtruncatemacro{\px}{\x+1};
            \draw -- (\x\y) to (\px\y);
        }
        
        \foreach \x in {1,...,3}
        \foreach \y in {1,...,2}
        {
            \pgfmathtruncatemacro{\py}{\y+1};
            \draw[line width=1.8pt] -- (\x\y) to (\x\py);
        }
        
        \foreach \x in {1,...,2}
        \foreach \y in {1,...,3}
        {
            \pgfmathtruncatemacro{\px}{\x+1};
            \draw[line width=1.8pt] -- (\x\y) to (\px\y);
        }
        
        \foreach \x in {1,...,3}
        \foreach \y in {0,3}
        {
            \pgfmathtruncatemacro{\py}{\y+1};
            \draw[line width=1.8pt,dashed] -- (\x\y) to (\x\py);
        }
        \foreach \y in {1,...,3}
        \foreach \x in {0,3}
        {
            \pgfmathtruncatemacro{\px}{\x+1};
            \draw[line width=1.8pt,dashed] -- (\x\y) to (\px\y);
        }
        
        \end{tikzpicture}
    \end{subfigure}
    \caption{A part of $\ZZ^2$: the edges in $\{-1,0,1\}^2$ are drawn in bold. $\Gamma_1$ is dashed.}
    \label{fig:z2}
\end{figure}

\begin{remark}
    There are recurrent graphs for which there exists $M<\infty$ such that for every collection
    $\{\Gamma_n\}$ of disjoint cutsets, $\sum_n \left(\sum_{e\in \Gamma_n}
    c_e\right)^{-1} \leq M$. Therefore, the Nash-Williams inequality is not
    sharp. See Example 1.2 in \cite{LyonsPeres}.
\end{remark}

\section{Random paths}\index{method of random paths}\label{sec:randompaths}

We now present the method of random paths, which is one of the most useful methods for generating unit flows on a network and bounding their energy. In fact, it is possible to show that the electric flow can be represented by such a random path. Suppose $G$ is a network with fixed vertices $a,z$ and $\mu$ is a probability measure on the set of 
paths from $a$ to $z$.

\begin{claim}\label{claim:randompath}
  For a path $\gamma$ sampled from $\mu$, let
  \begin{equation*}
    \theta_\gamma(\vec{e}) =
    \text{(\# of times $\vec{e}$ was traversed by $\gamma$)
     $-$ (\# of times $\cev{e}$ was traversed by $\gamma$)},
  \end{equation*}
where by $\vec{e}$ and $\cev{e}$ we mean the two orientations of an edge $e$ of $G$. Set
  \begin{equation*}
    \theta(\vec{e}) = \E{\theta_\gamma(\vec{e})}.
  \end{equation*}
  Then $\theta$ is a flow from $a$ to $z$ with $\|\theta\|=1$.
\end{claim}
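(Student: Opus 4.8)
The plan is to verify the two defining properties of a flow — antisymmetry and Kirchhoff's node law — together with the strength normalization, by first checking each assertion for a \emph{deterministic} path and then averaging via linearity of expectation. Throughout, write a path in the support of $\mu$ as $\gamma=(v_0,v_1,\ldots,v_k)$ with $v_0=a$ and $v_k=z$, so that $\gamma$ traverses in order the directed edges $(v_0,v_1),(v_1,v_2),\ldots,(v_{k-1},v_k)$ (repetitions of vertices or edges are allowed, and the length $k$ may be random). I will assume, as is implicit here, that $\mathbb{E}|\theta_\gamma(\vec e)|<\infty$ for every edge (for instance because $\mu$ is supported on paths of bounded length, or of finite expected length), so that $\theta(\vec e)=\E\theta_\gamma(\vec e)$ is well defined and linearity of expectation applies.

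Antisymmetry is immediate: for a fixed $\gamma$, reversing the orientation of $e$ simply interchanges the two counts in the definition of $\theta_\gamma$, so $\theta_\gamma(\vec e)=-\theta_\gamma(\cev e)$; taking expectations gives $\theta(\vec e)=-\theta(\cev e)$. For the node law, fix $x\notin\{a,z\}$ and a path $\gamma$, and observe that $\sum_{y\sim x}\theta_\gamma(xy)$ equals the number of steps of $\gamma$ whose \emph{tail} is $x$ minus the number of steps whose \emph{head} is $x$. The first quantity is $|\{j:0\le j\le k-1,\ v_j=x\}|$ and the second is $|\{i:1\le i\le k,\ v_i=x\}|$. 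Since $x\ne v_0=a$ and $x\ne v_k=z$, both index sets coincide with $\{j:1\le j\le k-1,\ v_j=x\}$, hence have the same size, so $\sum_{y\sim x}\theta_\gamma(xy)=0$. Averaging over $\gamma\sim\mu$ yields $\sum_{y\sim x}\theta(xy)=0$, which is Kirchhoff's node law, so $\theta$ is a flow from $a$ to $z$.

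The same bookkeeping at $x=a$ gives the strength. The steps of $\gamma$ with tail $a$ are indexed by $\{j:0\le j\le k-1,\ v_j=a\}$ and those with head $a$ by $\{i:1\le i\le k,\ v_i=a\}$; because $v_0=a$ while $v_k=z\ne a$, the first set is precisely the second set together with the additional index $j=0$, so the difference of their sizes is exactly $1$. Thus $\sum_{x\sim a}\theta_\gamma(ax)=1$ for \emph{every} path $\gamma$, and therefore $\|\theta\|=\E[1]=1$. The only genuinely delicate point I anticipate is the integrability/well-definedness issue — making sense of $\theta(\vec e)$ when paths in the support of $\mu$ may be long — since the combinatorial identities for a single path are elementary; if the ambient notion of ``path'' already builds in finiteness, even that concern disappears and the proof reduces to a one-line application of linearity of expectation to the deterministic statements above.
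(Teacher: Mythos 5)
Your proof is correct and follows essentially the same route as the paper: verify antisymmetry, the node law, and strength $1$ for each individual path $\gamma$, then pass to $\theta$ by linearity of expectation. The paper states these deterministic facts without the explicit tail/head counting you supply, so your write-up is simply a more detailed version of the same argument.
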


\begin{proof}
  $\theta$ is antisymmetric since $\theta_\gamma$ is antisymmetric for every 
  $\gamma$,
  and it satisfies the node law since $\theta_\gamma$ satisfies the node law.
  Similarly, the ``strength'' of $\theta_\gamma$ (i.e., $\sum_{x\sim 
  a}\theta_\gamma(ax)$) is $1$,
  hence $\|\theta\|=1$.
\end{proof}

An example of the use of this method is the following classical result. 
\begin{theorem}\label{thm:z3transient}
  $\ZZ^3$ is transient.
\end{theorem}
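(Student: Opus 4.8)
The plan is to exhibit a unit flow from the origin to infinity in $\ZZ^3$ with finite energy and then invoke \cref{cor:transiencecondition}, part 3. Following the method of random paths just introduced, I would build this flow by averaging a random infinite path emanating from $\origin$ that tracks a uniformly random ray; the heuristic is that such a path "spreads out" enough in three dimensions that the flow it induces through each edge decays like the inverse square of the distance.

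Concretely: let $\Theta$ be uniformly distributed on the unit sphere $\SS^2\subseteq\RR^3$ and consider the ray $R=\{t\Theta:t\ge 0\}$. From $R$ produce a nearest-neighbor path $\gamma=(\origin=\gamma_0,\gamma_1,\ldots)$ in $\ZZ^3$ by a deterministic tracking rule: at each step move by the coordinate vector $\pm e_i$ that most decreases the distance to $R$, with ties broken by a fixed convention, so that $\gamma$ stays within bounded Euclidean distance of $R$ and $|\gamma_k|\to\infty$. For all $\Theta$ outside a null set (excluding, e.g., directions through lattice points or parallel to an axis) this is a well-defined simple infinite path from $\origin$ to $\infty$, and loop-erasing it if necessary does not increase the resulting averaged flow. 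Averaging the signed edge-traversal counts $\theta_\gamma$ exactly as in \cref{claim:randompath} — whose proof carries over verbatim, since antisymmetry, the node law at each $v\neq\origin$, and $\|\theta_\gamma\|=1$ are all properties of the individual paths — yields a flow $\theta$ from $\origin$ to $\infty$ with $\|\theta\|=1$.

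It then remains to bound $\energy(\theta)=\sum_e\theta(e)^2$. The key geometric estimate is that for an edge $e$ whose distance from $\origin$ is of order $n$, one has $|\theta(e)|\le\pr(\gamma\text{ traverses }e)\lesssim n^{-2}$: for $\gamma$ to pass near the fixed-size edge $e$, the tracked ray $R$ must meet a bounded neighborhood of $e$, which forces $\Theta$ into a spherical cap of solid angle $O(n^{-2})$. Since there are $O(n^2)$ edges $e$ with $n\le\dist(\origin,e)<n+1$, summing gives
\[
\energy(\theta)\;=\;\sum_{n\ge 1}\ \sum_{n\le\dist(\origin,e)<n+1}\theta(e)^2\;\lesssim\;\sum_{n\ge 1}n^2\cdot n^{-4}\;=\;\sum_{n\ge 1}n^{-2}\;<\;\infty,
\]
the $O(1)$ edges incident to a bounded neighborhood of $\origin$ contributing only a finite amount. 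By \cref{cor:transiencecondition} this proves $\ZZ^3$ is transient.

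The main obstacle is entirely in making the tracking construction precise enough to support the two estimates simultaneously: one must choose the rule so that $\gamma$ provably stays within a bounded tube around $R$ (which gives both $\|\theta_\gamma\|=1$, the fact that $\gamma$ reaches infinity, and the uniform cap bound $\pr(\gamma\ni e)\lesssim\dist(\origin,e)^{-2}$), while checking that the set of ``bad'' directions $\Theta$ for which the rule misbehaves is a null set and hence contributes nothing to the average. Once the tube bound and the solid-angle estimate are in hand, the remainder is the elementary convergence of $\sum n^{-2}$.
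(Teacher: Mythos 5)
Your proposal is correct and is essentially the paper's argument: the method of random paths applied to a uniformly random ray from the origin, with the $O(r^{-2})$ solid-angle bound on the probability of hitting a given edge at distance $r$ and $O(r^2)$ such edges, giving a unit flow of finite energy and transience via \cref{cor:transiencecondition}. The only cosmetic difference is that you build a single infinite flow to $\infty$ directly, whereas the paper constructs flows to $\partial V_R$ with energy bounded uniformly in $R$ and then lets $R\to\infty$; your care about the tracking rule and bounded tube is at the same level of detail as the paper's choice of a path inside a width-$10$ tube around the segment.
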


\begin{proof}
  For $R>0$ denote by $B_R= \{(x,y,z):x^2+y^2+z^2 \leq R^2\}$ the ball of radius $R$ in $\RR^3$. Put $V_R=B_R\cap\ZZ^3$ and let $\partial V_R$ be the external vertex boundary of $V_R$, that is, the set of vertices not in $V_R$ which belong to an edge with an endpoint in $V_R$. 

We construct a random path $\mu$ from $\origin$ to $\partial V_R$ by choosing a uniform random
point $\vect{p}$ in $\partial B_R = \{(x,y,z) : x^2 + y^2 + z^2 = R^2\}$, drawing a straight line between $\origin$ and $\vect{p}$ in $\RR^3$, considering the set of distance at most $10$ in $\RR^3$ from the line, and then choosing (in some arbitrary fashion) a path in $\ZZ^3$ which is contained inside this set. The non-optimal constant $10$ was chosen in order to guarantee that such a discrete path exists for any point $\vect{p} \in \partial B_R$.

By \cref{claim:randompath}, the measure $\mu$ corresponds to a flow from $\origin$ to $\partial V_R$. To estimate the energy of this flow, we note that if $\vec{e}$ is an edge at distance $r \leq R$ from the origin, then the probability that it is traversed by a path drawn by $\mu$ is $O(r^{-2})$. Furthermore, there are $O(r^2)$ such edges. Hence the energy of the flow is at most 
\begin{equation*}
    \energy = O \big ( \sum_{r=1}^R r^2 \cdot (r^{-2})^2 \big ) \leq C \, , 
  \end{equation*}
for some constant $C<\infty$ which does not depend on $R$. By \cref{claim:randompath} and \cref{thomson:principle} we learn that $\reff(\origin\lr \partial V_R)\le C$ for all $R$, and so by \cref{cor:transiencecondition} we deduce that $\ZZ^3$ is transient. 
\end{proof}

\section{Exercises}

\begin{enumerate}
\item Let $G_z(a,x)$ be the Green's function, that is,
$$G_z(a,x) = \E_a\big [ \# \mbox{visits to $x$ before visiting $z$} \big ] \, .$$
Show that the function $h(x) = G_z(a,x) / \pi(x)$ is a voltage.



\item Show that the effective resistance satisfies the triangle inequality. That is, for any three vertices $x,y,z$ we have 
\begin{equation} \label{exercise:triangle} \reff(x \lr z) \leq \reff(x \lr y) + \reff(y \lr z) \, .\end{equation}
\item Let $a,z$ be two vertices of a finite network and let $\tau_a, \tau_z$ be the first visit time to $a$ and $z$, respectively, of the weighted random walk. Show that for any vertex $x$
$$ \PP_x( \tau_a < \tau_z ) \leq {\reff(x \lr \{a,z\} ) \over \reff(x \lr a)} \, .$$


\item Consider the following tree $T$. At height $n$ it has $2^n$ vertices (the root is at height $n=0$) and if $(v_1, \ldots, v_{2^n})$ are the vertices at level $n$ we make it so that $v_k$ has $1$ child at level $n+1$ and if $1 \leq k \leq 2^{n-1}$ and $v_k$ has $3$ children at level $n+1$ for all other $k$.
\begin{itemize} \item[(a)] Show that $T$ is recurrent.
\item [(b)] Show that for any disjoint edge cutsets $\Pi_n$ we have that $\sum_{n} |\Pi_n|^{-1} < \infty$. (So, the Nash-Williams criterion for recurrence is not sharp)
\end{itemize}

\item \begin{itemize}
\item[(a)] Let $G$ be a finite planar graph with two distinct vertices $a\neq z$ such that $a,z$ are on the outer face. Consider an embedding of $G$ so that $a$ is the left most point on the real axis and $z$ is the right most point on the real axis. Split the outer face of $G$ into two by adding the ray from $a$ to $-\infty$ and the ray from $z$ to $+\infty$. Consider the dual graph $G^*$ of $G$ and write $a^*$ and $z^*$ for the two vertices corresponding to the split outer face of $G$. Assume that all edge resistances are $1$. Show that
    $$ \reff(a \lr z; G) = {1 \over \reff(a^* \lr z^*; G^*)} \, .$$
\item[(b)] Show that the probability that a simple random walk on $\mathbb{Z}^2$ started at $(0,0)$ has probability $1/2$ to visit $(0,1)$ before returning to $(0,0)$.
\end{itemize}




\end{enumerate}

\chapter{The circle packing theorem}\label{chp:cp}
\section{Planar graphs, maps and embeddings}\label{sec:planarstuff}

\begin{definition}
  A graph $G=(V,E)$ is \textbf{planar}\index{planar graph} if it can be
  \textbf{properly drawn}\index{proper drawing} in the plane, that is,
  there exists a mapping sending distinct vertices to distinct points of
  $\RR^2$ and edges to continuous curves between the corresponding vertices so that no two curves intersect, except at the   vertices they share. We call such a mapping a \textbf{proper drawing}\index{drawing} of $G$.
\end{definition}

\begin{remark}
  A single planar graph has infinitely many drawings. Intuitively, some may seem similar to one another, while others seem different. For example,
  \begin{center}
    \begin{tikzpicture}[font=\small, scale=1]
      \node[vx]               (1) at (0,0) {};
      \node[vx]               (2) at (1,0) {};
      \node[vx]               (3) at (0.5,0.866) {};

      \draw (1) -- (2) -- (3) -- (1);

      \node at (2,0.433) {$\equiv$};

      \node[vx]               (a) at (3,0) {};
      \node[vx]               (b) at (4,0) {};
      \node[vx]               (c) at (3.5,0.866) {};

      \node (a0) at (3.00,0.00) {};
      \node (a1) at (3.10,-0.04) {};
      \node (a2) at (3.20,0.00) {};
      \node (a3) at (3.30,0.04) {};
      \node (a4) at (3.40,0.00) {};
      \node (a5) at (3.50,0.04) {};
      \node (a6) at (3.60,0.09) {};
      \node (a7) at (3.70,0.04) {};
      \node (a8) at (3.80,0.09) {};
      \node (a9) at (3.90,0.04) {};
      \node (a10) at (4.00,0.00) {};

      \node (b0) at (4.00,0.00) {};
      \node (b1) at (3.99,0.11) {};
      \node (b2) at (3.90,0.17) {};
      \node (b3) at (3.89,0.28) {};
      \node (b4) at (3.80,0.35) {};
      \node (b5) at (3.79,0.46) {};
      \node (b6) at (3.70,0.52) {};
      \node (b7) at (3.69,0.63) {};
      \node (b8) at (3.60,0.69) {};
      \node (b9) at (3.59,0.80) {};
      \node (b10) at (3.50,0.87) {};

      \node (c0) at (3.50,0.87) {};
      \node (c1) at (3.49,0.76) {};
      \node (c2) at (3.40,0.69) {};
      \node (c3) at (3.31,0.63) {};
      \node (c4) at (3.22,0.56) {};
      \node (c5) at (3.13,0.50) {};
      \node (c6) at (3.12,0.39) {};
      \node (c7) at (3.11,0.28) {};
      \node (c8) at (3.02,0.22) {};
      \node (c9) at (3.01,0.11) {};
      \node (c10) at (3.00,0.00) {};

      \draw plot [smooth] coordinates
        {(a) (a0) (a1) (a2) (a3) (a4) (a5) (a6) (a7) (a8) (a9) (a10) (b)};
      \draw plot [smooth] coordinates
        {(b) (b0) (b1) (b2) (b3) (b4) (b5) (b6) (b7) (b8) (b9) (b10) (c)};
      \draw plot [smooth] coordinates
        {(c) (c0) (c1) (c2) (c3) (c4) (c5) (c6) (c7) (c8) (c9) (c10) (a)};

      \node[vx]               (a*) at (3,0) {};
      \node[vx]               (b*) at (4,0) {};
      \node[vx]               (c*) at (3.5,0.866) {};

    \end{tikzpicture}
  \end{center}
  while
  \begin{center}
    \begin{tikzpicture}[font=\small, scale=1]
      \node[vx]               (1a) at (0,1) {};
      \node[vx]               (2a) at (1,1) {};
      \node[vx]               (3a) at (1,0) {};
      \node[vx]               (4a) at (0,0) {};
      \node[vx]               (5a) at (1.866,0.5) {};

      \draw (2a) -- (1a) -- (4a) -- (3a) -- (5a) -- (2a) -- (3a);

      \node at (2.866,0.5) {$\not\equiv$};

      \node[vx]               (1b) at (3.866,1) {};
      \node[vx]               (2b) at (4.866,1) {};
      \node[vx]               (3b) at (4.866,0) {};
      \node[vx]               (4b) at (3.866,0) {};
      \node[vx]               (5b) at (4,0.5) {};

      \draw (2b) -- (1b) -- (4b) -- (3b) -- (5b) -- (2b) -- (3b);

    \end{tikzpicture}
  \end{center}
\end{remark}

The following definition gives a precise sense to the above intuitive
equivalence / non-equivalence of drawings.

\begin{definition}
  A \defn{planar map} is a graph endowed with a cyclic permutation of the
  edges incident to each vertex, such that there exists a proper drawing in
  which the clockwise order of the curves touching the image of a vertex
  respects that cyclic permutation.
\end{definition}

The combinatorial structure of a planar map allows us to define faces directly (that is, without mentioning the drawing). Consider each edge of the graph as directed in both ways, and say that a directed edge $\vec{e}$ {\bf precedes} $\vec{f}$ (or, equivalently, $\vec{f}$ {\bf succeeds} $\vec{e}$), if there exist vertices  $v,x,y$ such that $\vec{e}=(x,v)$, $\vec{f}=(v,y)$, and $y$ is the successor
of $x$ in the cyclic permutation $\sigma_v$; see \Cref{fig:precedes}.

We say that $\vec{e},\vec{f}$ {\bf belong to the same face} if there exists a finite directed path $\vec{e}_1,\ldots,\vec{e}_m$ in the graph with $\vec{e}_i$ preceding $\vec{e}_{i+1}$ for $i=1,\ldots, m-1$ and such that either $\vec{e}=\vec{e}_1$ and $\vec{f}=\vec{e}_m$, or $\vec{f}=\vec{e}_1$ and $\vec{e}=\vec{e}_m$. This is readily seen to be an equivalence relation and we call each equivalence class a \defn{face}. Even though a face is a set of directed edges, we frequently ignore the orientations and consider a face the set of corresponding undirected edges. Each (undirected) edge is henceforth incident to either one or two faces.

\begin{figure}[t]
  \begin{subfigure}[b]{0.475\linewidth}
    \centering
    \begin{tikzpicture}[font=\small, scale=2,rotate=-10]
      \node[vx] (o) at (0:0) {};
      \node[vx] (0) at (0:1) {};
      \node[vx] (1) at (72:1) {};
      \node[vx] (2) at (144:1) {};
      \node[vx] (3) at (216:1) {};
      \node[vx] (4) at (288:1) {};
    
      \draw[gray,dashed] (0) -- (o) -- (1);
      \draw[gray,dashed] (o) -- (2);
    
      \draw[-{Latex[length=3mm]}] (4) -- (o) node [midway,right] {$\vec{e}$};
      \draw[-{Latex[length=3mm]}] (o) -- (3) node [midway,above] {$\vec{f}$};
    \end{tikzpicture}
    \caption{\label{fig:precedes:a} $\vec{e}$ precedes $\vec{f}$.}
  \end{subfigure}\hfill%
  \begin{subfigure}[b]{0.475\linewidth}
    \centering
    \begin{tikzpicture}[font=\small, scale=2,rotate=0]
      \node[vx,label=-90:$y$] (o) at (0:0) {};
      \node[vx,label=90:$x$] (0) at (90:1) {};
      \node[vx] (1) at (210:1) {};
      \node[vx] (2) at (330:1) {};
    
      \draw[gray,dashed] (0) -- (1) -- (2) -- (0);
    
      \draw[-{Latex[length=3mm]}] (0) -> (o);
      \draw[-{Latex[length=3mm]}] (o) -- (0);
    \end{tikzpicture}
    \caption{\label{fig:precedes:b} $(x,y)$ precedes $(y,x)$.}
  \end{subfigure}
  \caption{\label{fig:precedes}Examples for the edge precedence relation.}
\end{figure}
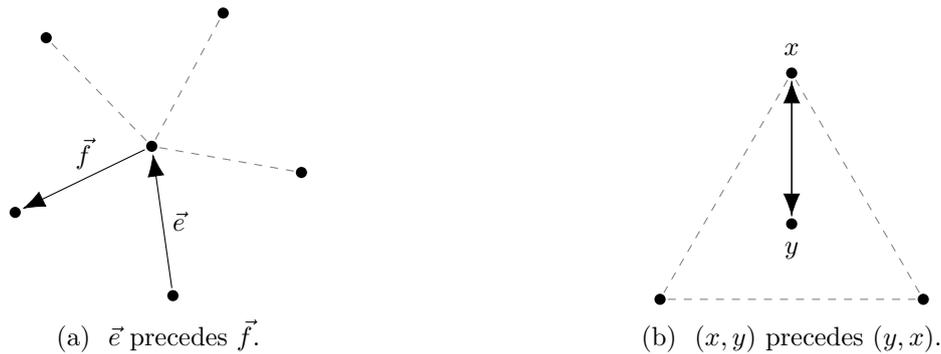

When the map is finite an equivalent definition of a face is the set of edges that bound a connected component 
of the complement of the drawing, that is, of $\RR^2$ with the images of the vertices and edges removed. This definition is not suitable for infinite planar maps since there may be a complicated set of accumulation points. Given a proper drawing of a finite planar map, there is a unique unbounded connected component of the complement of the drawing; the edges that bound it are called the \defn{outer face} and all other faces are called \textbf{inner faces} \index{inner face}. However, for any face in a finite map there is a drawing so that this face bounds the unique unbounded connected component, and because of this we shall henceforth refer to the outer face as an arbitrarily chosen face of the map.



We will use the following classical formula.
\begin{theorem}[Euler's formula]\label{euler}\index{Euler's formula}
  Suppose $G$ is a planar graph with $n$ vertices, $m$ edges and $f$ faces.
  Then
  \begin{equation*}
    n - m + f = 2.
  \end{equation*}
\end{theorem}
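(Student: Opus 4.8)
The plan is to prove the identity by induction on the number of edges $m$. The statement as given is for connected $G$ (in general $n-m+f = 1+c$, where $c$ is the number of connected components, which follows by running the argument below on each piece), so I assume $G$ connected and fix a proper drawing; since $G$ is finite I may use the description of faces as the connected components of the complement in $\RR^2$ of the images of the vertices and edges.

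First I would handle the base case in which $G$ is a tree, so that $m = n-1$. Here the claim reduces to showing that a proper drawing of a finite tree has exactly one face, that is, that its complement in $\RR^2$ is connected. I would prove this by a secondary induction on $n$: the complement of a single point is connected, and if a tree $T'$ is obtained from a tree $T$ by adjoining one new leaf together with the edge to it, then the drawing of $T'$ adds a single arc to that of $T$, one of whose endpoints (the new leaf) is free, and an arc with a free endpoint cannot disconnect the plane. Hence $f=1$ and $n-m+f = n-(n-1)+1 = 2$.

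For the inductive step, suppose $G$ is not a tree, so it contains a cycle $C$; fix an edge $e$ lying on $C$ and let $G' = G-e$. Then $G'$ is still connected (the endpoints of $e$ remain joined through the rest of $C$) and still planar, with $n$ vertices and $m-1$ edges, so it suffices to show $G'$ has exactly $f-1$ faces. For this I claim the two faces of $G$ incident to $e$ are distinct and coalesce into a single face of $G'$. Distinctness is the one genuinely topological input: $C$ is drawn as a Jordan curve, which by the Jordan curve theorem separates the plane into a bounded and an unbounded region, and a small neighbourhood of an interior point of $e$ meets both, so the two faces of $G$ bordering $e$ lie on opposite sides of $C$ and cannot coincide; conversely, once $e$ is deleted, any curve in the complement of the $G'$-drawing joining the two former faces can be pushed across the gap left by $e$, so they become one face. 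Granting this, $f(G') = f(G)-1$, and the inductive hypothesis gives $n - (m-1) + (f-1) = 2$, i.e.\ $n-m+f = 2$.

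The step I expect to be the crux is precisely this last topological claim — that a simple closed curve in the plane bounds exactly two faces, and that deleting one of its edges merges exactly two faces into one — which is essentially the Jordan curve theorem together with the fact that an arc does not separate the plane. An honest account either quotes these facts or, alternatively, avoids them by rephrasing the whole argument in terms of the combinatorial definition of faces of a planar map via the rotation system $\{\sigma_v\}$: there, ``deleting $e$ merges its two incident faces'' becomes a finite check on the precedence relation, using that an edge lying on a combinatorial cycle is not a bridge, so that its two orientations belong to different faces.
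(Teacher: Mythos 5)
The paper itself offers no proof of this statement: Euler's formula is quoted as a classical fact (``We will use the following classical formula'') and is then applied, e.g.\ to count the faces of a finite triangulation, so there is no argument in the text to compare yours against. Your proof is the standard and correct one: induction on the number of edges, with trees as the base case and deletion of an edge lying on a cycle as the inductive step. Two points are worth keeping in mind. First, you are right that the formula as stated needs $G$ connected (otherwise $n-m+f=1+c$); the paper leaves this implicit, and in every application the graph is a triangulation, hence connected, so your explicit reduction to the connected case is the honest reading of the statement. Second, you correctly isolate the genuine topological content --- that a drawing of a tree has connected complement (an arc with a free endpoint does not separate the plane) and that an edge on a cycle borders two distinct faces which merge upon deletion (Jordan curve theorem) --- and either quoting these facts or, as you suggest, recasting the whole argument combinatorially via the rotation-system definition of faces that the paper sets up in its section on planar maps, would make the proof complete. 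Nothing in your argument would fail; the only caution is that the ``push the curve across the gap left by $e$'' step deserves one more sentence (a small disc around an interior point of $e$ meets only $e$ among the edges, so after deleting $e$ that disc lies in the complement and joins the two former faces), but this is a routine tightening rather than a gap in the idea.
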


We now state the main theorem we will discuss and use throughout this course. Its proof is presented in the next section.

\begin{theorem}[The circle packing theorem, \cite{K36}]\label{thm:cp} \index{circle packing theorem}
  Given any finite simple planar map $G=(V,E)$, $V=\{v_1,\ldots,v_n\}$, there
  exist $n$ circles in $\RR^2$, $C_1,\ldots,C_n$, with disjoint interiors,
  such that $C_i$ is tangent to $C_j$ if and only if $\{i,j\}\in E$. Furthermore, for every
  vertex $v_i$, the clockwise order of the circles tangent to $C_i$ agrees
  with the cyclic permutation of $v_i$'s neighbors in the map.
\end{theorem}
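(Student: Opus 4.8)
The plan is to reduce the statement to the case of a triangulation, to produce a vector of circle radii by exploiting the monotonicity of the relevant angle functions, and then to invoke the drawing lemma \cref{thm:howtodraw} to lay the circles down in $\RR^2$ with the prescribed tangencies and cyclic orders. For the reduction, the cases $n\le 2$ are immediate, so assume $n\ge 3$, fix a proper drawing realizing the given rotation system, and (by a standard argument which we do not belabour) assume $G$ is $2$-connected, so that every face is bounded by a simple cycle. Insert a new vertex $x_f$ in the interior of each face $f$ and join it to every vertex on the bounding cycle of $f$; the resulting map $T$ is a finite simple planar triangulation whose map restricts to that of $G$, and every new edge of $T$ has an endpoint among the $x_f$. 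Consequently, if we circle pack $T$ in accordance with its map, then the sub-collection of circles indexed by $V(G)$ has tangency graph exactly $G$, and after discarding the circles of the $x_f$ the clockwise order of the remaining circles tangent to $C_v$ is precisely the rotation at $v$ in the map of $G$. So it suffices to prove the theorem when $G$ itself is a triangulation; using \cref{euler} we single out one face as the outer triangle $\{a,b,c\}$ and call the rest inner faces.

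\emph{The radius equations.} We look for radii $r=(r_v)_{v\in V}\in(0,\infty)^V$ as follows. For an inner face $f=\{u,v,w\}$, three mutually tangent discs of radii $r_u,r_v,r_w$ have centres forming a Euclidean triangle with side lengths $r_u+r_v,\ r_v+r_w,\ r_w+r_u$; let $\alpha_v(f,r)\in(0,\pi)$ be its angle at the centre of the disc of $v$, determined by the law of cosines. These functions satisfy: (i) $\alpha_u(f,r)+\alpha_v(f,r)+\alpha_w(f,r)=\pi$; (ii) scale invariance, $\alpha_v(f,\lambda r)=\alpha_v(f,r)$ for $\lambda>0$; (iii) $\alpha_v(f,r)$ is strictly decreasing in $r_v$ and strictly increasing in $r_u$ and in $r_w$; and (iv) $\alpha_v(f,r)\to\pi$ as $r_v\downarrow 0$, while $\alpha_v(f,r)\to 0$ as $r_v\to\infty$ or as $r_u\downarrow 0$ or $r_w\downarrow 0$. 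We impose the angle-sum equations: for every interior vertex $v$,
\[
\theta_v(r):=\sum_{\text{inner }f\ni v}\alpha_v(f,r)=2\pi ,
\]
together with the normalization $r_a=r_b=r_c=1$ on the outer triangle. A solution $r$ is exactly the combinatorial data of a circle packing of $T$: the discs $C_a,C_b,C_c$ are mutually tangent and form the outer triangle, and the equations say that around each interior vertex the triangles of centres coming from the incident inner faces fit together with neither gap nor overlap.

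\emph{Solving the system and drawing.} It remains to solve $\theta_v(r)=2\pi$ over the interior vertices with the three boundary radii fixed; this is the heart of the proof. The map $r\mapsto(\theta_v(r))_v$ is continuous, and property (iii) yields a discrete maximum principle that prevents the radii of a solution from degenerating: roughly, at a vertex whose radius greatly exceeds those of its neighbours the angle sum is near $0$, and at a vertex whose radius is greatly exceeded by those of its neighbours it is near $\pi\deg(v)$, so along any would-be degenerating family some equation $\theta_v=2\pi$ must eventually fail, while the fixed boundary radii pin down the overall scale. With this a priori control one produces a solution either by minimizing a functional whose gradient is the curvature vector $(2\pi-\theta_v(r))_v$ and which is convex in logarithmic radius coordinates, or by a continuity/degree argument; this is the step where we follow, with simplifications, Thurston \cites{Th78,marden1990thurston} and Brightwell--Scheinerman \cite{BriSch93}. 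Finally, given a valid radius vector one must actually place discs of these radii so that the tangency pattern and the clockwise cyclic orders at every vertex are those of $T$; classically this requires a monodromy argument, but here it is supplied directly by \cref{thm:howtodraw}, which converts a solution of the angle-sum equations into an honest circle packing. Restricting the resulting packing to the circles of $V(G)$ completes the proof; the corresponding uniqueness statement is a separate matter, treated via the maximum principle in \cref{claim:rigidity}.

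\emph{The main obstacle.} The crux is solving the nonlinear angle-sum system, and in particular ruling out degeneration of the radii: the functions $\alpha_v(f,\cdot)$ are well behaved only on the open positive orthant, and a priori the target vector $(2\pi,\dots,2\pi)$ could be approached only along sequences in which some radius ratio blows up. The discrete maximum principle for angle sums is precisely the input that confines the minimizer—equivalently, keeps the degree computation valid—inside the region where an actual packing exists, and getting that estimate cleanly (rather than through the topological arguments used in the classical proofs) is what the $\alpha_v$ monotonicity in (iii)–(iv), together with \cref{thm:howtodraw}, is designed to do.
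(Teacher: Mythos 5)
Your skeleton matches the paper's: the same reduction to triangulations by inserting a vertex in each face, the same angle-sum system for a radius vector, the same appeal to \cref{thm:howtodraw} to turn a valid radius vector into an actual non-crossing drawing, and uniqueness deferred to a separate maximum-principle argument. The problem is that the one step you yourself identify as ``the heart of the proof'' --- producing a radius vector solving the angle-sum equations --- is never actually carried out. You offer two possible routes (a convex functional in logarithmic radii, or a continuity/degree argument) without proving either the convexity, the boundary behaviour of the functional, or the degree computation, and the ``a priori control'' you describe is a single-vertex heuristic: a vertex much larger than its neighbours has small angle sum, a vertex much smaller has large angle sum. That heuristic does not rule out the real degeneration scenario, in which an entire \emph{set} $S$ of vertices shrinks at the same rate relative to its complement; inside such a cluster all radius ratios stay bounded and no individual equation visibly fails, so your maximum principle gives no contradiction.

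This is precisely where the paper does genuine work. Its existence proof is an explicit iterative correction scheme (\cref{claim:A}, \cref{claim:B}, \cref{leastsquares}) that shrinks the radii of the complement of the set $S$ of vertices with largest angle deficit and shows the squared error drops by a factor $1-\tfrac{1}{2n^3}$ per step; the key input, used both to close the gap (\cref{claim:C}) and to show the limiting radii are strictly positive (\cref{r:positive}), is a purely combinatorial counting estimate via Euler's formula: for every proper nonempty $S \subsetneq V$, the number of inner faces spanned entirely by $S$ satisfies $|\bar F|\pi < \sum_{v\in S}\theta(v)$, with the borderline case excluded because then every face of $G[S_j]$ would be a face of $G$, leaving nowhere to place the vertices of $V\setminus S$. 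Some global estimate of exactly this kind (or a worked-out convexity argument \`a la Colin de Verdi\`ere, which is a nontrivial computation you do not supply) is indispensable; without it your argument is a plan rather than a proof. The reduction, the use of \cref{thm:howtodraw}, and the remark that non-adjacent circles must be checked to be non-tangent are all fine, but as written the existence of the radius vector is assumed, not established.
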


\begin{figure}[t]
  \centering
  \begin{subfigure}[b]{0.475\linewidth}
    \centering
    \begin{tikzpicture}[font=\small, scale=1.5]
      \node[vx,label=90:$1$]  (1) at (0,1) {};
      \node[vx,label=90:$2$]  (2) at (1,1) {};
      \node[vx,label=-90:$3$] (3) at (1,0) {};
      \node[vx,label=-90:$4$] (4) at (0,0) {};
      \node[vx,label=0:$5$]   (5) at (1.866,0.5) {};

      \draw (2) -- (1) -- (4) -- (3) -- (5) -- (2) -- (3);

      \node (c1) at (0,-1) {$1$};
      \node (c2) at (1,-1) {$2$};
      \node (c3) at (1,-2) {$3$};
      \node (c4) at (0,-2) {$4$};
      \node (c5) at (1.866,-1.5) {$5$};

      \draw (c1) circle (0.5);
      \draw (c2) circle (0.5);
      \draw (c3) circle (0.5);
      \draw (c4) circle (0.5);
      \draw (c5) circle (0.5);
    \end{tikzpicture}
  \end{subfigure}\hfill%
  \begin{subfigure}[b]{0.475\linewidth}
    \centering
    \begin{tikzpicture}[font=\small,scale=1.5]
      \node[vx,label=90:$1$]  (1) at (0,1) {};
      \node[vx,label=90:$2$]  (2) at (1,1) {};
      \node[vx,label=-90:$3$] (3) at (1,0) {};
      \node[vx,label=-90:$4$] (4) at (0,0) {};
      \node[vx,label=90:$5$]  (5) at (0.134,0.5) {};

      \draw (2) -- (1) -- (4) -- (3) -- (5) -- (2) -- (3);

      \node (c1) at (0,-1) {$1$};
      \node (c2) at (1,-1) {$2$};
      \node (c3) at (1,-2) {$3$};
      \node (c4) at (0,-2) {$4$};
      \node (c5) at (0.625,-1.5) {$5$};

      \draw (c1) circle (0.5);
      \draw (c2) circle (0.5);
      \draw (c3) circle (0.5);
      \draw (c4) circle (0.5);
      \draw (c5) circle (0.125);
    \end{tikzpicture}
  \end{subfigure}
  \caption{\label{fig:cp} Two distinct planar maps (of the same graph) with corresponding circle packings.}
\end{figure}
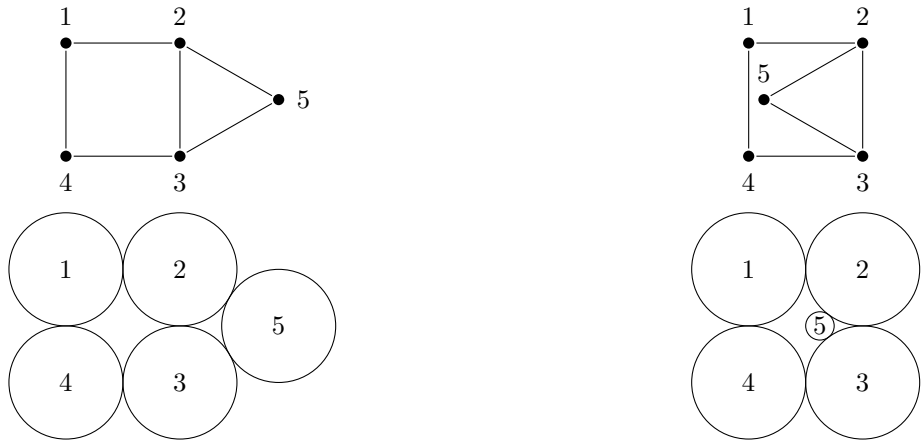


\Cref{fig:cp} gives examples for embeddings of maps which respect the cyclic
orderings of neighbors, as guaranteed to exist according to the theorem. 

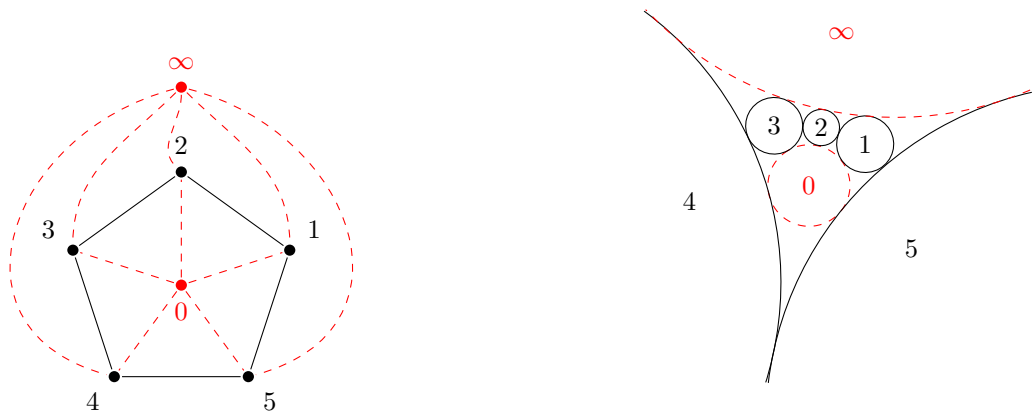
\begin{figure}[t]
  \centering
  \begin{subfigure}[b]{0.475\linewidth}
    \centering
    \begin{tikzpicture}[font=\small, scale=1.5]
      \node[vx,label=18:$1$]  (1) at (0.951,0.309) {};
      \node[vx,label=90:$2$]  (2) at (0,1) {};
      \node[vx,label=162:$3$] (3) at (-0.951,0.309) {};
      \node[vx,label=234:$4$] (4) at (-0.588,-0.809) {};
      \node[vx,label=306:$5$] (5) at (0.588,-0.809) {};

      \node[vx,fill=red,label={[red]-90:$0$}]      (0) at (0,0) {};
      \node[vx,fill=red,label={[red]90:$\infty$}]  (i) at (0,1.75) {};

      \draw (1) -- (2) -- (3) -- (4) -- (5) -- (1);
      \draw[red,dashed] (0) -- (1);
      \draw[red,dashed] (0) -- (2);
      \draw[red,dashed] (0) -- (3);
      \draw[red,dashed] (0) -- (4);
      \draw[red,dashed] (0) -- (5);
      \draw[red,dashed] (i) to[out=-45,in=90,looseness=1] (1);
      \draw[red,dashed] (i) to[out=-90,in=135,looseness=1] (2);
      \draw[red,dashed] (i) to[out=225,in=90,looseness=1] (3);
      \draw[red,dashed] (i) to[out=195,in=165,looseness=1.5] (4);
      \draw[red,dashed] (i) to[out=-15,in=15,looseness=1.5] (5);

    \end{tikzpicture}
    \caption{\label{fig:cp:tri1}
      A planar map and a triangulation.}
  \end{subfigure}\hfill%
  \begin{subfigure}[b]{0.475\linewidth}
    \centering
    \begin{tikzpicture}[font=\small,scale=1.5,rotate=120]
       \clip (3.29,1.07) circle (2);



\node[red] (0) at (3.13,1.22) {$0$};
\draw[red,dashed] (0) circle (0.36);
\node (3) at (3.74,1.22) {$3$};
\draw (3) circle (0.25);
\node (2) at (3.52,0.87) {$2$};
\draw (2) circle (0.16);
\node (1) at (3.2,0.61) {$1$};
\draw (1) circle (0.25);
\node (5) at (-0.0,0.0) {$5$};
\node (5l) at (2.2,0.72) {$5$};
\draw (5) circle (3.0);
\node (4) at (4.01,4.46) {$4$};
\node (4l) at (3.53,2.2) {$4$};
\draw (4) circle (3.0);
\node[red] (i) at (5.87,-1.24) {$\infty$};
\node[red] (il) at (4.15,0.3) {$\infty$};
\draw[red, dashed] (i) circle (3.0);

    \end{tikzpicture}
    \caption{\label{fig:cp:tri2} A circle packing of the triangulation.}
  \end{subfigure}
  \caption{\label{fig:cp:tri}
    Circle packing of a triangulation of a planar map.}
\end{figure}

First note that it suffices to prove the theorem for {\bf triangulations}\index{triangulation}, that is, simple planar maps in which every face has precisely three edges. Indeed, in any planar map we may add a
single vertex inside each face and connect it to all vertices bounding that
face. The obtained map is a triangulation, and after applying the circle packing theorem for
triangulations, we may remove the circles corresponding to the added
vertices, obtaining a circle packing of the original map which respects its
cyclic permutations.  This is depicted in \cref{fig:cp:tri}. 

Thus, it suffices to prove \cref{thm:cp} for finite triangulations. In this case an important uniqueness statement also holds. 

\begin{theorem}\label{thm:cptriangulation} Let $G=(V,E)$ be a finite triangulation on vertex set $V=\{v_1,\ldots,v_n\}$ and assume that $\{v_1,v_2,v_3\}$ form a face. Then for any three positive numbers $\rho_1, \rho_2, \rho_3$, there exists a circle packing $C_1,\ldots,C_n$ as in \cref{thm:cp} with the additional property that $C_1, C_2, C_3$ are mutually tangent, form the outer face, and have radii $\rho_1, \rho_2,\rho_3$, respectively. Furthermore, this circle packing is unique, up to translations and rotations of the plane.
\end{theorem}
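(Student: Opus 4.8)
The plan follows Thurston's strategy: decouple the \emph{metric} problem of choosing radii from the \emph{geometric} problem of drawing the circles, and treat uniqueness separately via a maximum principle. For a face $f$ with vertices $x,y,z$ and positive reals $a=r_x,b=r_y,c=r_z$, three mutually (externally) tangent circles of those radii have centres spanning a Euclidean triangle with side lengths $a+b$, $b+c$, $c+a$; write $\alpha_f^x(a;b,c)\in(0,\pi)$ for the angle of that triangle at the centre of $x$'s circle (law of cosines). I will use throughout the elementary facts that $\alpha_f^x+\alpha_f^y+\alpha_f^z=\pi$, that $a\mapsto\alpha_f^x(a;b,c)$ is a strictly decreasing bijection of $(0,\infty)$ onto $(0,\pi)$, that $\alpha_f^x$ is strictly increasing in each of $b,c$, and that $\alpha_f^x$ is scale invariant. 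Call $r:V\to(0,\infty)$ a \emph{label} if $r_{v_i}=\rho_i$ for $i=1,2,3$ and $\theta_v(r):=\sum_{f\ni v}\alpha_f^v(r)=2\pi$ for every $v\notin\{v_1,v_2,v_3\}$. The theorem then reduces to: (i) a label exists; (ii) a label can be realised as a circle packing as in the statement; (iii) the label, hence the packing up to a rigid motion, is unique.

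Step (i) is the heart of the matter. I would work in logarithmic coordinates $s_v=\log r_v$ on the $n-3$ free vertices and realise a label as a critical point of a potential $\mathcal{F}(s)=\sum_{f}\Lambda_f(s)-\sum_{v\notin\{v_1,v_2,v_3\}}2\pi s_v$, where $\Lambda_f$ is a Milnor--Lobachevsky-type antiderivative of the angles chosen so that $\partial_{s_v}\Lambda_f=\alpha_f^v$; then $\partial_{s_v}\mathcal{F}=\theta_v(r)-2\pi$, so critical points are exactly the labels. One checks that $\mathcal{F}$ is convex (reducing to a concavity property of the angle functions) and coercive on $\RR^{n-3}$. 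Coercivity is the delicate point, and I expect it to be the main obstacle: one must rule out minimising sequences escaping to the boundary of the parameter space, i.e.\ some free radii tending to $0$ or $\infty$ relative to the rest. Here the combinatorics enters — $n\ge 4$, every free vertex has degree $\ge 3$, every edge lies in two faces — and is used to show that along any such degeneration some free angle sum is driven strictly below $2\pi$, forcing $\mathcal{F}\to+\infty$, so that the minimum is attained in the interior. (A Perron-type argument on sub- and super-labels, or a topological degree argument exploiting the monotonicity of $\theta_v$, would be alternatives; the degeneration analysis is the crux either way.)

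For step (ii), given a label $r$ I would develop the circles in the plane: place the face $\{v_1,v_2,v_3\}$ as three mutually tangent circles of radii $\rho_1,\rho_2,\rho_3$ in the prescribed clockwise order, then attach the circle of each further face across an already-drawn edge, its radius given by $r$ and its position then determined (orientation pinned by the cyclic permutations). The conditions $\theta_v=2\pi$ at the free vertices make this development single valued; and the identity $\sum_i\gamma_i=\pi=\sum_i\beta_i$ for the angles $\gamma_i$ swept at $v_i$ and the angles $\beta_i$ of the $(\rho_1,\rho_2,\rho_3)$-triple (from Euler's formula \cref{euler} together with $\theta_v\equiv 2\pi$), combined with the rigidity of a mutually tangent triple of prescribed radii, forces the outer circles to close up correctly as well. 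Rather than verify by hand that the resulting discs have disjoint interiors with exactly the asserted tangency pattern — the classical topological step — I would invoke \cref{thm:howtodraw}, which takes precisely this combinatorial-metric data as input and outputs the desired circle packing.

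For step (iii), let $r,r'$ be labels with the same outer values; after possibly swapping them, $\lambda:=\max_v r'_v/r_v\ge 1$, and the set $A=\{v:r'_v/r_v=\lambda\}$ of maximisers contains no outer vertex when $\lambda>1$ (since $r'_{v_i}=r_{v_i}$). Fix $v\in A$. By scale invariance $\alpha_f^v(r'_v;r'_u,r'_w)=\alpha_f^v(\lambda r_v;r'_u,r'_w)$, and since $r'_u\le\lambda r_u$, $r'_w\le\lambda r_w$ with $\alpha_f^v$ increasing in the neighbour radii, $\alpha_f^v(r')\le\alpha_f^v(\lambda r)=\alpha_f^v(r)$, with equality only if $r'_u=\lambda r_u$ and $r'_w=\lambda r_w$. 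Summing over the faces at $v$ gives $2\pi=\theta_v(r')\le\theta_v(r)=2\pi$, so equality holds in every incident face and all neighbours of $v$ lie in $A$; connectedness then gives $A=V$, contradicting that $A$ omits the outer vertices. Hence $\lambda=1$ and $r'=r$ — this is Schramm's maximum-principle argument. Finally, any packing as in the statement has radius function a label, hence equal to $r$; with the combinatorics, radii, and clockwise orientation all fixed, the development of step (ii) shows the packing is determined up to the orientation-preserving isometry matching one placement of the outer triple to another, namely a translation composed with a rotation (reflections being excluded by the orientation constraint). This completes the proof.
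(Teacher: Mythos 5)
Your decomposition (radii, then drawing, then uniqueness) matches the paper's, and two of your three steps essentially coincide with its proof: step (ii) is exactly the paper's appeal to \cref{thm:howtodraw}, and step (iii) is Schramm's maximum-principle argument of \cref{thm:cp:uniqueness}, resting on \cref{obs:geometric}. Step (i) is where you genuinely diverge: instead of prescribing the boundary angle sums $\theta_1,\theta_2,\theta_3$ as in \eqref{eq:defr} and running the paper's iterative gap-closing scheme (Claims \ref{claim:A}--\ref{r:positive}), you fix the boundary radii $\rho_1,\rho_2,\rho_3$ and seek a critical point of a Colin de Verdi\`ere--type potential in log-radius coordinates. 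That is a legitimate, well-known alternative (it would even hand you uniqueness of the label from strict convexity, which you don't exploit), and your Dirichlet-type boundary condition is equivalent to the paper's Neumann-type one: once \cref{thm:howtodraw} yields a non-crossing straight-line drawing, the three outer edges have lengths $\rho_i+\rho_j$, so the outer circles are mutually tangent and bound the outer face, and the individual equalities $\gamma_i=\theta_i$ fall out as a consequence — it is the planarity of the drawing, not your identity $\sum_i\gamma_i=\pi$, that does the work there.

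The gap is that step (i) is a program rather than a proof, and the two facts you leave unchecked are precisely where the content lies. First, the existence of $\Lambda_f$ with $\partial_{s_v}\Lambda_f=\alpha_f^v$ and the convexity claim require showing that the matrix $\bigl(\partial \alpha_f^v/\partial s_u\bigr)$ is symmetric and semidefinite with kernel the scaling direction; this is a real computation, and note a sign slip: with your normalization $\partial_{s_v}\mathcal F=\theta_v-2\pi$, the potential built from the (concave) angle primitives is concave, so you should maximize $\mathcal F$ or flip its sign. Second, and more seriously, coercivity — ruling out extremizing sequences in which a nonempty set $S$ of free vertices has radii tending jointly to $0$ or to $\infty$ — is exactly the combinatorial estimate the paper proves in \cref{claim:C} and reuses in \cref{r:positive}: along such a degeneration the relevant angle sums converge to $\pi$ times a count of incident faces, and one must beat $2\pi|S|$ (equivalently the prescribed $\theta$-sum) \emph{strictly} by an Euler-formula count; the paper's inequality \eqref{Fj-bar-ineq}, proved via \cref{euler} and $2m_j\geq 3f_j$, is the complementary form of what you need, including the separate treatment of components containing the outer vertices. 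You correctly flag this as the crux, but merely asserting that ``some free angle sum is driven strictly below $2\pi$'' does not establish it, so as written the existence of the label — and hence of the packing — is not proved. Everything downstream of step (i) is in order.
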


\section{Proof of the circle packing theorem}

We here prove \cref{thm:cptriangulation} which implies \cref{thm:cp} as explained above. We therefore assume from now on that our map is a triangulation.  Denote by
$n$, $m$ and $f$ the number of vertices, edges and faces of the map
respectively, and observe that $3f=2m$, since each edge is counted in
exactly two faces, and each face is bounded by exactly three edges.
Therefore, by Euler's formula (\cref{euler}), we have that
\begin{equation*}
  2 = n - m + f = n - \frac{3}{2}f + f = n - \frac{1}{2}f,
\end{equation*}
thus
\begin{equation}\label{eq:num:innerfaces}
  f=2n-4.
\end{equation}

We assume the vertex set is $\{v_1, \ldots, v_n\}$, that $\{v_1,v_2,v_3\}$ is the outer face and that $\rho_1, \rho_2, \rho_3$ are three positive numbers that will be the radii of the outer circles $C_1, C_2, C_3$ eventually. Denote by $F^\circ$ the set of faces of the map except the outer face, and for a subset of
vertices $A$ let $F(A)$ be the set of inner faces with at least one vertex in
$A$.  We write $F(v)$ when we mean $F(\{v\})$.  

Given a vector
$\vect{r}=(r_1,\ldots,r_n)\in(0,\infty)^n$, an inner face $f$ bounded by the
vertices $v_i,v_j,v_k$, and a distinguished vertex $v_j$, we associate a
number $\alpha_f^{\vect{r}}(v_j)=\ang v_iv_jv_k\in(0,\pi)$ which is the angle
of $v_j$ in the triangle $\triangle v_iv_jv_k$ created by connecting the
centers of three mutually tangent circles $C_i,C_j,C_k$ of radii $r_i,r_j$ and $r_k$ (that is, in a triangle with side lengths $r_i+r_j$, $r_j+r_k$ and $r_k+r_i$).  This number can be calculated using the cosine formula
\begin{equation*}
  \cos(\ang v_iv_jv_k) = 1 - \frac{2r_ir_k}{(r_i+r_j)(r_j+r_k)} \, ,
\end{equation*}
however, we will not use this formula directly. For every $j\in \{1,\ldots,n\}$ we define
\begin{equation*}
  \sigma_{\vect{r}}(v_j) = \sum_{f\in F(v_j)}\alpha_f^{\vect{r}}(v_j)
\end{equation*}
to be the \emph{sum of angles} at $v_i$ with respect to $\vect{r}$.  Let $\theta_1, \theta_2, \theta_3$ be the angles formed at the centers of three mutually tangent circles $C_1, C_2, C_3$ of radii $\rho_1, \rho_2, \rho_3$. Equivalently, these are the angles of a triangle with edge lengths $r_1+r_2$, $r_2+r_3$ and $r_3+r_1$. If the vector $\vect{r}$ was indeed the vector of radii of a circle packing of the map satisfying \cref{thm:cptriangulation}, then we would have
\begin{equation} \label{eq:defr}
  \sigma_{\vect{r}}(v_i)=
  \begin{cases}
    \theta_i & i\in\{1,2,3\} \, ,\\
     2\pi & \text{otherwise} \, ,
  \end{cases}
\end{equation}
and additionally $(r_1,r_2,r_3)=(\rho_1,\rho_2,\rho_3)$. The proof is split into three parts:
\begin{enumerate}
  \item Show that there exists a vector $\vect{r}\in(0,\infty)^n$ satisfying \eqref{eq:defr}; 
  \item Given such $\vect{r}$, show that a circle packing with these radii exists and that $(r_1,r_2,r_3)$ is a positive multiple of $(\rho_1,\rho_2,\rho_3)$; furthermore, this circle packing is unique up to translations and rotations.
  \item Show that $\vect{r}$ is unique up to scaling all entries by a constant factor.
\end{enumerate}

\subsection{Proof of \cref{thm:cptriangulation}, step 1: Finding the radii vector $\vect{r}$ } 

\begin{observation}\label{obs:sum:sigma}
  For every $\vect{r}$,
  \begin{equation*}
    \sum_{i=1}^n\sigma_{\vect{r}}(v_i) = \left|F^\circ\right|\pi = (2n-5)\pi.
  \end{equation*}
\end{observation}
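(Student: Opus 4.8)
The plan is to switch the order of summation and group the angle contributions by face rather than by vertex. Writing out the definition, $\sum_{i=1}^n \sigma_{\vect{r}}(v_i) = \sum_{i=1}^n \sum_{f\in F(v_i)} \alpha_f^{\vect{r}}(v_i)$, and here the pair $(f,v_i)$ ranges precisely over all pairs in which $f$ is an inner face and $v_i$ is one of the three vertices incident to $f$ (this is exactly the condition $f\in F(v_i)$, i.e.\ $v_i\in A$ with $A=\{v_i\}$ meeting $f$). Hence I would reindex the double sum as $\sum_{f\in F^\circ}\sum_{v_i\in f}\alpha_f^{\vect{r}}(v_i)$.

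Next, for each fixed inner face $f$ bounded by vertices $v_i,v_j,v_k$, the three numbers $\alpha_f^{\vect{r}}(v_i),\alpha_f^{\vect{r}}(v_j),\alpha_f^{\vect{r}}(v_k)$ are by definition the three angles of the Euclidean triangle $\triangle v_iv_jv_k$ with side lengths $r_i+r_j$, $r_j+r_k$, $r_k+r_i$ (these side lengths are positive and satisfy the triangle inequality, so such a triangle genuinely exists). Therefore $\sum_{v\in f}\alpha_f^{\vect{r}}(v)=\pi$. Summing this over all inner faces gives $\sum_{i=1}^n\sigma_{\vect{r}}(v_i)=|F^\circ|\,\pi$.

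Finally, to identify $|F^\circ|$ with $2n-5$: the map has $f=2n-4$ faces by \eqref{eq:num:innerfaces}, and $F^\circ$ is obtained by deleting the single outer face, so $|F^\circ|=f-1=2n-5$. This completes the computation. There is no real obstacle here; the only points to state carefully are that $F(v_i)$ really does enumerate the incidence pairs (so that the reindexing is a genuine bijection, with no double counting and nothing omitted) and that the three side lengths always form a nondegenerate triangle so the angles are well defined and sum to $\pi$.
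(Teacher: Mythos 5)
Your proposal is correct and is essentially the paper's own argument: group the angle sum by inner faces, note each face contributes $\pi$ since its three angles are those of a genuine Euclidean triangle with side lengths $r_i+r_j$, $r_j+r_k$, $r_k+r_i$, and count $|F^\circ| = 2n-5$ via \eqref{eq:num:innerfaces}. The paper states this more tersely, but the reindexing and the nondegeneracy of the triangles that you spell out are exactly what makes the "immediate" step work.
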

\begin{proof} Follows immediately since each inner face $f$ bounded by the vertices $v_i, v_j, v_k$ contributes the three angles $\alpha_f^{\mathbf{r}}(v_i)$, $\alpha_f^{\mathbf{r}}(v_j)$ and $\alpha_f^{\mathbf{r}}(v_k)$ which sum to $\pi$. By \eqref{eq:num:innerfaces}, there are $2n-5$ inner faces.
\end{proof}

We now set
\begin{equation} \label{eq:deltadef}
  \delta_{\vect{r}}(v_i)=
  \begin{cases}
    \sigma_{\vect{r}}(v_i) - \theta_i
      & j \in \{1,2,3\}\, ,\\
    \sigma_{\vect{r}}(v_j) - 2\pi & \text{otherwise}.
  \end{cases}
\end{equation}
Using this notation, our goal is to find $\vect{r}$ for which
$\delta_{\vect{r}}\equiv 0$.  It follows from \cref{obs:sum:sigma} that for
every $\vect{r}$,
\begin{equation}\label{eq:sum:delta}
  \sum_{i=1}^n\delta_{\vect{r}}(v_i)
  = \sum_{i=1}^n\sigma_{\vect{r}}(v_i) - \theta_1 - \theta_2 - \theta_3 - (n-3)\cdot 2\pi
  = 0 \, . 
\end{equation}
We define
\begin{equation*}
  \cE_{\vect{r}}=\sum_{i=1}^n \delta_{\vect{r}}(v_i)^2.
\end{equation*}
We would like to find $\vect{r}$ for which
$\cE_{\vect{r}}=0$. We will use the following geometric observation; see \cref{fig:angles:obs}.
\begin{observation}\label{obs:geometric}
  Let $\vect{r}=(r_1,\ldots,r_n)$ and $\vect{r}'=(r_1',\ldots,r_n')$, and let
  $f\in F^\circ$ be bounded by $v_i,v_j,v_k$.
  \begin{itemize}
    \item If $r_i'\le r_i$, $r_k'\le r_k$ and $r_j'\ge r_j$, then
      $\alpha_f^{\vect{r}'}(v_j) \le \alpha_f^{\vect{r}}(v_j)$.
    \item If $r_i'\ge r_i$, $r_k'\ge r_k$ and $r_j'\le r_j$, then
          $\alpha_f^{\vect{r}'}(v_j) \ge \alpha_f^{\vect{r}}(v_j)$.
    \item $\alpha_f^{\vect{r}}(v_j)$ is continuous in $\vect{r}$.
  \end{itemize}
\end{observation}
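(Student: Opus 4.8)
The plan is to read all three assertions directly off the cosine-rule expression for $\alpha_f^{\vect{r}}(v_j)$ that is displayed just above the statement, without any geometric case analysis. Recall that $\alpha_f^{\vect{r}}(v_j)$ is the angle at the center of $C_j$ in the triangle with side lengths $r_i+r_j$, $r_j+r_k$, $r_k+r_i$, so, as noted there,
\[
  \cos\bigl(\alpha_f^{\vect{r}}(v_j)\bigr) = 1 - \frac{2r_ir_k}{(r_i+r_j)(r_j+r_k)} = 1 - \frac{2}{\bigl(1+r_j/r_i\bigr)\bigl(1+r_j/r_k\bigr)},
\]
the last form obtained by dividing the numerator and denominator of the fraction by $r_ir_k$. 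Everything follows from this identity together with the fact that $\cos$ is a homeomorphism from $(0,\pi)$ onto $(-1,1)$.

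For continuity: the middle expression exhibits $\cos\bigl(\alpha_f^{\vect{r}}(v_j)\bigr)$ as a rational function of $\vect{r}$ whose denominator never vanishes on $(0,\infty)^n$, and whose value lies in $(-1,1)$ --- it is $<1$ because the fraction is positive, and $>-1$ because $(r_i+r_j)(r_j+r_k)>r_ir_k$. Since $\alpha_f^{\vect{r}}(v_j)\in(0,\pi)$ and $\arccos$ is continuous, writing $\alpha_f^{\vect{r}}(v_j)=\arccos\bigl(1-\tfrac{2r_ir_k}{(r_i+r_j)(r_j+r_k)}\bigr)$ shows it is a continuous function of $\vect{r}$.

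For the two monotonicity statements I would use the rightmost form. Under the hypotheses of the first bullet, $r_j'\ge r_j$ and $r_i'\le r_i$ give $r_j'/r_i'\ge r_j/r_i$, and likewise $r_j'/r_k'\ge r_j/r_k$; hence $(1+r_j'/r_i')(1+r_j'/r_k')\ge(1+r_j/r_i)(1+r_j/r_k)$, so $\cos\bigl(\alpha_f^{\vect{r}'}(v_j)\bigr)\ge\cos\bigl(\alpha_f^{\vect{r}}(v_j)\bigr)$, and since both angles lie in $(0,\pi)$ where $\cos$ is strictly decreasing, $\alpha_f^{\vect{r}'}(v_j)\le\alpha_f^{\vect{r}}(v_j)$. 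The second bullet is the exact mirror image: all three inequalities reverse, each ratio $r_j/r_i$ and $r_j/r_k$ weakly decreases, the product decreases, $\cos\bigl(\alpha_f^{\vect{r}'}(v_j)\bigr)\le\cos\bigl(\alpha_f^{\vect{r}}(v_j)\bigr)$, and hence $\alpha_f^{\vect{r}'}(v_j)\ge\alpha_f^{\vect{r}}(v_j)$.

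There is essentially no obstacle here; the only thing to keep straight is the direction of the inequalities (raising a ratio shrinks $2/(\cdots)$, hence raises $\cos$, hence lowers the angle). If one wanted to avoid invoking the closed-form cosine rule, the same monotonicities could instead be obtained by passing from $\vect r$ to $\vect r'$ one coordinate at a time and checking each single-coordinate move from the law of cosines (increasing $r_j$ keeps the side opposite $v_j$ fixed while lengthening both adjacent sides; decreasing $r_i$ or $r_k$ shortens the opposite side and one adjacent side while fixing the other), but the computation above is the streamlined version of exactly this and is self-contained.
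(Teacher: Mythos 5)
Your argument is correct and is exactly the route the paper has in mind: the paper's proof of this observation simply states that a proof via the cosine formula is routine and omits it, and your rewriting $\cos\alpha_f^{\vect{r}}(v_j)=1-\frac{2}{(1+r_j/r_i)(1+r_j/r_k)}$ supplies those routine details (monotonicity in the ratios plus continuity of $\arccos$) cleanly and correctly.
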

\begin{proof} A proof using the cosine formula is routine and is omitted.
\end{proof}

\begin{figure}[t]
  \centering
  \begin{subfigure}[b]{0.475\linewidth}
    \centering
    \begin{tikzpicture}[font=\small, scale=0.5, rotate=75]
      \node[vx] (ci) at (1.342,2.683) {};
      \node[vx] (cj) at (0,0) {};
      \node[vx] (ck) at (-2.236,4.472) {};

      \draw (ci) circle (1);
      \draw (cj) circle (2);
      \draw (ck) circle (3);
      
      \draw (cj) -- (ci);
      \draw (cj) -- (ck);
      \pic [draw, color=red, <->, "$\alpha$", angle eccentricity=1.5]
      {angle = ci--cj--ck};
    \end{tikzpicture}
  \end{subfigure}\hfill%
  \begin{subfigure}[b]{0.475\linewidth}
    \centering
    \begin{tikzpicture}[font=\small, scale=0.5, rotate=75]
      \node[vx] (ci) at (0.851,2.615) {};
      \node[vx] (cj) at (0,0) {};
      \node[vx] (ck) at (-1.469,4.517) {};

      \draw (ci) circle (0.5);
      \draw (cj) circle (2.25);
      \draw (ck) circle (2.5);
      
      \draw (cj) -- (ci);
      \draw (cj) -- (ck);
      \pic [draw, color=red, <->, "$\alpha'$", angle eccentricity=1.5]
      {angle = ci--cj--ck};
    \end{tikzpicture}
  \end{subfigure}
  \caption{\label{fig:angles:obs}
    When the radius of a circle corresponding to a vertex increases, while the
    radii of the circles corresponding to its two neighbors in a given face decrease, the vertex's angle in the corresponding triangle decreases (see \cref{obs:geometric}).}
\end{figure}
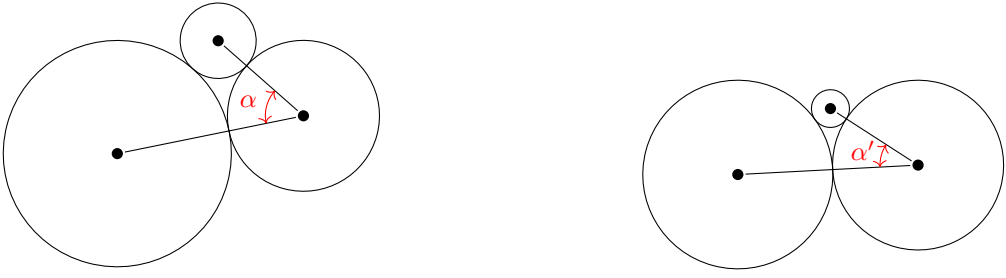

We now define an iterative algorithm, whose input and output are both vectors
of radii normalized to have $\ell_1$ norm $1$.  We start with the vector
$\vect{r}^{(0)}=\left(\frac{1}{n},\ldots,\frac{1}{n}\right)$, and given
$\vect{r}=\vect{r}^{(t)}$ we construct $\vect{r}'=\vect{r}^{(t+1)}$.  Write
$\delta=\delta_{\vect{r}}$ and $\delta'=\delta_{\vect{r}'}$, and similarly
$\cE=\cE_{\vect{r}}$ and $\cE'=\cE_{\vect{r}'}$.  We begin by
ordering the set of reals $\{\delta(v_i)\mid 1 \leq i \leq n\}$.  If $\delta\equiv 0$
we are done; otherwise, we may choose $s\in\RR$ such that the set
$S=\{v\mid \delta(v)>s\}\ne\varnothing$ and its complement $V \setminus S$ are non-empty and such that the gap
\begin{equation*}
  \gap_\delta(S):=\min_{v\in S}\delta(v) - \max_{v\notin S}\delta(v)>0
\end{equation*}
is maximal over all such $s$. See \cref{fig:gap} for illustration.

Once we choose $S$, a step of the algorithm consists of two steps:
\begin{enumerate}
  \item For some $\lambda\in(0,1)$ to be chosen later, we set
    \begin{equation*}
      (\vect{r}_\lambda)_i =
          \begin{cases}
            r_i & v_i\in S,\\
            \lambda r_i & v_i\notin S.
          \end{cases}
    \end{equation*}
  \item We normalize $\vect{r}_\lambda$ so that the sum of entries is $1$,
    letting $\bar{\vect{r}}_\lambda$ be the normalized vector. Note that this step does not change the vector $\delta$.
\end{enumerate}
We will choose an appropriate $\lambda$ that will decrease all values of
$\delta(v)$ for $v\in S$, increase all values of $\delta(v)$ for
$v\notin S$, and will close the gap.  This will be made formal in the following
two claims.

\begin{claim}\label{claim:A}
  For every $\lambda\in(0,1)$, setting $\vect{r}'=\bar{\vect{r}}_\lambda$,
  we have that $\delta'(v)\le\delta(v)$ for any $v\in S$, and 
  $\delta'(v)\ge \delta(v)$ for any $v\notin S$.
\end{claim}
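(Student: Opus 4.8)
The plan is to prove the claim face by face: for each inner face $f$ incident to a vertex $v$, I will compare the angle $\alpha_f^{\vect{r}_\lambda}(v)$ with $\alpha_f^{\vect{r}}(v)$ and read off the sign of the change from \cref{obs:geometric}. The device that makes this comparison possible is that $\alpha_f^{\vect{r}}(v)$ depends only on the \emph{ratios} of the three radii sitting at the corners of $f$, hence is unchanged when all three of them are multiplied by a common positive constant.

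First I would reduce to this face-by-face statement. Note that $\delta_{\vect{r}}(v)-\sigma_{\vect{r}}(v)$ is a constant that does not depend on $\vect{r}$ — it equals $-\theta_i$ if $v=v_i$ with $i\in\{1,2,3\}$ (recall $\theta_1,\theta_2,\theta_3$ are fixed, determined by $\rho_1,\rho_2,\rho_3$) and $-2\pi$ otherwise — and that the normalization in step 2 of the algorithm multiplies all radii by a common factor, hence changes no angle, so $\delta_{\bar{\vect{r}}_\lambda}=\delta_{\vect{r}_\lambda}$ as the statement already remarks. Therefore it suffices to show, for every inner face $f$ incident to $v$, that $\alpha_f^{\vect{r}_\lambda}(v)\le\alpha_f^{\vect{r}}(v)$ when $v\in S$ and $\alpha_f^{\vect{r}_\lambda}(v)\ge\alpha_f^{\vect{r}}(v)$ when $v\notin S$; summing over $f\in F(v)$ and adding back the $v$-dependent constant then yields both asserted inequalities.

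Now fix an inner face $f$ bounded by $v=v_j$ and two other vertices $v_i,v_k$. Under $\vect{r}\mapsto\vect{r}_\lambda$ the radius at a vertex of $S$ is unchanged and the radius at a vertex outside $S$ is multiplied by $\lambda\in(0,1)$. Rescale the three radii of $\vect{r}_\lambda$ at the corners of $f$ by the common factor $1$ if $v_j\in S$ and by $1/\lambda$ if $v_j\notin S$; write $\vect{s}=(s_i,s_j,s_k)$ for the resulting triple, so that by scale invariance $\alpha_f^{\vect{s}}(v_j)=\alpha_f^{\vect{r}_\lambda}(v_j)$. By construction $s_j=r_j$ in both cases. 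If $v_j\in S$ then $s_i\in\{r_i,\lambda r_i\}$ and $s_k\in\{r_k,\lambda r_k\}$, so $s_i\le r_i$ and $s_k\le r_k$, and the first bullet of \cref{obs:geometric} (neighbour radii weakly down, radius at $v_j$ unchanged) gives $\alpha_f^{\vect{s}}(v_j)\le\alpha_f^{\vect{r}}(v_j)$. If $v_j\notin S$ then $s_i\in\{r_i,r_i/\lambda\}$ and $s_k\in\{r_k,r_k/\lambda\}$, so $s_i\ge r_i$ and $s_k\ge r_k$, and the second bullet gives $\alpha_f^{\vect{s}}(v_j)\ge\alpha_f^{\vect{r}}(v_j)$. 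This is exactly the face-by-face inequality, so the claim follows.

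The only point that calls for attention is the rescaling step: when $v_j$ and both of its neighbours in $f$ lie on the same side of the split, all three radii of $f$ get multiplied by the same factor and neither bullet of \cref{obs:geometric} applies verbatim; the cure is precisely to pass to the scale-invariant normalized triple $\vect{s}$, after which the hypotheses of one of the bullets hold. (The continuity bullet of \cref{obs:geometric} plays no role here; it is needed only later, when $\lambda$ is chosen to close the gap.)
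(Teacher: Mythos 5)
Your proof is correct and follows essentially the same route as the paper: a face-by-face comparison using the monotonicity bullets of \cref{obs:geometric} together with the scale-invariance of the angles, which the paper invokes in its Case III and which you use to unify the paper's three cases into a single rescaling step. The only difference is cosmetic — the paper splits into explicit cases according to how many neighbours lie in $S$ and treats $v\notin S$ with the $v\in S$ case declared symmetric, while you normalize the triple once and apply the appropriate bullet directly.
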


\begin{claim}\label{claim:B}
  There exists $\lambda\in(0,1)$ such that setting
  $\vect{r}'=\bar{\vect{r}}_\lambda$ gives that $\gap_{\delta'}(S)=0$.
\end{claim}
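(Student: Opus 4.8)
The plan is to fix $S$ and view $\delta'=\delta_{\bar{\vect{r}}_\lambda}$ as a function of $\lambda$, closing the gap by the intermediate value theorem. For $\lambda\in(0,1]$ write $g(\lambda):=\gap_{\delta_{\bar{\vect{r}}_\lambda}}(S)=\min_{v\in S}\delta_{\bar{\vect{r}}_\lambda}(v)-\max_{v\notin S}\delta_{\bar{\vect{r}}_\lambda}(v)$. Since $\vect{r}\mapsto\alpha_f^{\vect{r}}(v)$ is continuous (\cref{obs:geometric}) and $\lambda\mapsto\bar{\vect{r}}_\lambda$ is continuous, $g$ is continuous on $(0,1]$, and $g(1)=\gap_\delta(S)>0$ by the choice of $S$. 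So it suffices to show $g(\lambda)<0$ for all sufficiently small $\lambda>0$; the intermediate value theorem applied on $[\lambda_0,1]$ for such a $\lambda_0$ then produces $\lambda\in(0,1)$ with $g(\lambda)=0$, which is exactly the claim.

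The core of the argument is to compute $\lim_{\lambda\to 0^+}\delta_{\bar{\vect{r}}_\lambda}(v)$. Because the angles $\alpha_f^{\vect{r}}(v)$ depend only on the ratios of the coordinates of $\vect{r}$, we may compute them from the unnormalized vector $\vect{r}_\lambda$ (coordinates $r_v$ on $S$, $\lambda r_v$ off $S$). Fix an inner face $f$ bounded by $a,b,c$. If all of $a,b,c$ lie in $S$, or none does, the three side lengths are unchanged, respectively all scaled by $\lambda$, so the angles of $f$ do not depend on $\lambda$. If exactly one vertex, say $c$, lies outside $S$, then as $\lambda\to0$ the side lengths tend to $r_a+r_b$, $r_b$, $r_a$, a degenerate triangle, and the cosine rule gives $\alpha_f^{\vect{r}_\lambda}(a),\alpha_f^{\vect{r}_\lambda}(b)\to0$. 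If exactly one vertex, say $a$, lies in $S$, the side lengths tend to $r_a$, $0$, $r_a$ and similarly $\alpha_f^{\vect{r}_\lambda}(a)\to0$. Hence, writing $\sigma_S(v):=\sum_{f\ni v,\,f\subseteq S}\alpha_f^{\vect{r}}(v)$, we get $\lim_{\lambda\to0^+}\delta_{\bar{\vect{r}}_\lambda}(v)=\sigma_S(v)-t_v$ for $v\in S$, where $t_v$ is the target of $v$ (equal to $\theta_i$ if $v=v_i$ with $i\le3$, and to $2\pi$ otherwise); for $v\notin S$ the limit also exists, but we will not need its value.

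The key estimate is $\sum_{v\in S}(\sigma_S(v)-t_v)<0$. Grouping the sum of $\sigma_S(v)$ over $v\in S$ by face, each inner face of $G$ with all three vertices in $S$ contributes its full angle sum $\pi$ and the others contribute $0$, so $\sum_{v\in S}\sigma_S(v)=\pi\, n_S$ with $n_S$ the number of such faces. By Euler's formula (\cref{euler}) applied to the planar graph induced on $S$, $n_S\le 2|S|-5$ when $|S|\ge3$ (and $n_S=0$ when $|S|\le2$, in which case $\sum_{v\in S}(\sigma_S(v)-t_v)=-\sum_{v\in S}t_v<0$ directly). Also $\sum_{v\in S}t_v\ge 2\pi|S|-5\pi$, with equality precisely when $\{v_1,v_2,v_3\}\subseteq S$, because $\theta_1+\theta_2+\theta_3=\pi$. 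Thus $\sum_{v\in S}(\sigma_S(v)-t_v)\le\pi(n_S-2|S|+5)\le0$, and the inequality is strict: if $\{v_1,v_2,v_3\}\not\subseteq S$ the bound on $\sum t_v$ is strict, while if $\{v_1,v_2,v_3\}\subseteq S$ then $n_S=2|S|-5$ would force the graph induced on $S$ to be a triangulation with outer face $v_1v_2v_3$ each of whose inner faces is a face of $G$, so that any vertex of $V\setminus S$ — which lies strictly inside the triangle $v_1v_2v_3$ — would lie inside a face of $G$, which is impossible; hence $S=V$, contrary to assumption, and $n_S<2|S|-5$.

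Finally, $\sum_{v\in V}\delta_{\bar{\vect{r}}_\lambda}(v)=0$ for every $\lambda$ by \eqref{eq:sum:delta}, so passing to the limit, $\sum_{v\notin S}\big(\lim_\lambda\delta_{\bar{\vect{r}}_\lambda}(v)\big)=-\sum_{v\in S}(\sigma_S(v)-t_v)>0$; therefore $\max_{v\notin S}\lim_\lambda\delta_{\bar{\vect{r}}_\lambda}(v)>0$ while $\min_{v\in S}\lim_\lambda\delta_{\bar{\vect{r}}_\lambda}(v)=\min_{v\in S}(\sigma_S(v)-t_v)<0$, giving $\lim_{\lambda\to0^+}g(\lambda)<0$ and hence the claim. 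The one genuinely delicate point is the combinatorial estimate on $n_S$ together with the planarity argument excluding $n_S=2|S|-5$ when $S\subsetneq V$ contains the outer face; the degenerate-triangle limits via the cosine rule are routine, and continuity is immediate from \cref{obs:geometric}.
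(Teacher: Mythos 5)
Your argument is correct and follows essentially the same route as the paper: there the claim is proved by exactly your continuity-plus-intermediate-value step, with the heart of the matter isolated as \cref{claim:C}, whose proof is your computation of the degenerate-triangle limits of the angle sums followed by an Euler-formula count and the same planarity argument ruling out the borderline equality case when $v_1,v_2,v_3\in S$ (a vertex of $V\setminus S$ would have to lie inside a face of $G$). Working with $\sum_{v\in S}$ of the limits rather than $\sum_{v\notin S}$ is an immaterial difference, since the two are negatives of each other by \eqref{eq:sum:delta}.

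The one step you state too quickly is the invocation of \cref{euler} for the graph $G[S]$ induced on $S$: Euler's formula as cited is for connected graphs, and $G[S]$ need not be connected. The inequality $n_S\le 2|S|-5$ is still true, and, more delicately, your assertion that $n_S=2|S|-5$ forces $G[S]$ to be a triangulation with outer face $v_1v_2v_3$ all of whose inner faces are faces of $G$ implicitly uses that equality forces $G[S]$ to be connected with every face of degree three; both points require running the count component by component (or using the version of Euler's formula with the number of components) rather than applying the formula once to $G[S]$. This is precisely why the paper's proof of \cref{claim:C} partitions $S$ into the connected components of $G[S]$, proves the strict bound $|\bar F_j|\pi<\sum_{v\in S_j}\theta(v)$ for each component, and sums. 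With that routine repair your proof is complete.
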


\begin{proof}[Proof of \cref{claim:A}]
  Consider $v_j\notin S$ and an inner face $v_i,v_j,v_k$.
  \begin{description}
    \item[Case I] $v_i,v_k\notin S$.  In this case, the radii of $C_i,C_j,C_k$ 
      are all multiplied by the same number $\lambda$, so
      $\alpha_f^{\vect{r}'}(v_j) = \alpha_f^{\vect{r}}(v_j)$.
    \item[Case II] $v_i,v_k\in S$.  In this case, the radii of $C_i,C_k$ remain
      unchanged and the radius of $C_j$ decreases, thus by
      \cref{obs:geometric},
      $\alpha_f^{\vect{r}'}(v_j) \ge \alpha_f^{\vect{r}}(v_j)$.
    \item[Case III] $v_i\notin S$, $v_k\in S$.  In this case the radii of 
      $C_i,C_j$ are multiplied by $\lambda$ and the radius of $C_k$ is unchanged. The angles of $\triangle v_i v_j v_k$ remain unchanged if we multiply all radii by $\lambda^{-1}$, thus we could just as easily have left $C_i$, $C_j$ unchanged and increased the radius of $C_k$. By \cref{obs:geometric}, we get that
      $\alpha_f^{\vect{r}'}(v_j) \ge \alpha_f^{\vect{r}}(v_j)$.
  \end{description}
  It follows that $\delta'(v)\ge\delta(v)$ for any $v\notin S$. An identical argument shows that
 $\delta'(v)\le \delta(v)$ for all $v\in S$.
\end{proof}

In order to prove \cref{claim:B}, we present another claim.

\begin{claim}\label{claim:C}
  \begin{equation*}
    \lim_{\lambda\downto 0} \sum_{v\notin S} \delta_{\vect{r}_\lambda}(v) > 0.
  \end{equation*}
\end{claim}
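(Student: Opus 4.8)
The plan is to evaluate the limit explicitly, reduce \cref{claim:C} to a combinatorial inequality, and prove that inequality with Euler's formula. Since the angles $\alpha_f^{\vect r}$ depend only on ratios of radii, the normalization step does not change $\delta$, so I work with $\vect r_\lambda$ directly. Fix $v_j\notin S$ and an inner face $f$ bounded by $v_i,v_j,v_k$, and use $\cos\alpha_f^{\vect r}(v_j)=1-\tfrac{2r_ir_k}{(r_i+r_j)(r_j+r_k)}$ to compute $\lim_{\lambda\downto 0}\alpha_f^{\vect r_\lambda}(v_j)$ according to how many of $v_i,v_k$ lie in $S$: if none, all three radii are scaled by $\lambda$ and the angle is unchanged; if exactly one, say $v_i\in S$ and $v_k\notin S$, the angle tends to $\arccos\tfrac{r_j-r_k}{r_j+r_k}$, and the two limiting angles at the two vertices of $f$ outside $S$ sum to $\pi$; if both $v_i,v_k\in S$, the angle tends to $\pi$ (geometrically, as $\lambda\downto 0$ each circle indexed by $S$ looks infinitely large, i.e.\ like a half-plane, to the vertices outside $S$). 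Hence $\sigma_{\vect r_\lambda}(v)$ converges for each $v\notin S$, and summing the limiting angles over $v\notin S$ and over the incident inner faces, \emph{every} inner face meeting $W:=V\setminus S$ contributes exactly $\pi$, so
\[
  \lim_{\lambda\downto 0}\sum_{v\notin S}\delta_{\vect r_\lambda}(v)=\pi\,|F(W)|-\sum_{v\in W}c_v,
\]
where $c_{v_i}=\theta_i$ for $i\in\{1,2,3\}$ and $c_v=2\pi$ otherwise.

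Next I would rewrite the right-hand side. By \eqref{eq:num:innerfaces} and $\theta_1+\theta_2+\theta_3=\pi$ we have $\sum_{v\in V}c_v=\pi+2\pi(n-3)=(2n-5)\pi=\pi\,|F^\circ|$, and $|F(W)|=|F^\circ|-|F_S|$, where $F_S$ is the set of inner faces all three of whose vertices lie in $S$. Substituting gives $\pi|F(W)|-\sum_{v\in W}c_v=\sum_{v\in S}c_v-\pi|F_S|$, so it suffices to prove $\sum_{v\in S}c_v>\pi\,|F_S|$ for every nonempty $S\subsetneq V$.

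For this I would argue by Euler's formula (\cref{euler}). If $|S|\le 2$ then $F_S=\varnothing$ and the inequality is clear since every $c_v>0$, so assume $|S|\ge 3$. Let $H$ be the subgraph of $G$ induced on $S$, drawn via the restriction of the drawing of $G$; it is simple and planar on $|S|$ vertices, hence has at most $2|S|-4$ faces. Each face in $F_S$ is literally a face of $H$, so $F_S$ embeds into the set of faces of $H$; this embedding misses the face of $H$ containing any fixed $u\in W$ (which is not a face of $G$), and, when $\{v_1,v_2,v_3\}\subseteq S$, it also misses the unbounded face of $H$ (all vertices of $G$ lie inside the outer triangle $v_1v_2v_3$, which belongs to $H$), and these two faces are distinct. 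Therefore $|F_S|\le 2|S|-5$, improving to $|F_S|\le 2|S|-6$ when $\{v_1,v_2,v_3\}\subseteq S$. On the other side, $\sum_{v\in S}c_v=2\pi\bigl(|S|-|S\cap\{v_1,v_2,v_3\}|\bigr)+\sum_{v_i\in S}\theta_i\ge\pi(2|S|-5)$, with equality exactly when $\{v_1,v_2,v_3\}\subseteq S$. Comparing the two bounds: if some outer vertex lies outside $S$ then $\sum_{v\in S}c_v>\pi(2|S|-5)\ge\pi|F_S|$; if $\{v_1,v_2,v_3\}\subseteq S$ then $\sum_{v\in S}c_v=\pi(2|S|-5)\ge\pi(|F_S|+1)>\pi|F_S|$. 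Either way $\sum_{v\in S}c_v>\pi|F_S|$, which yields \cref{claim:C}.

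The step I expect to be the main obstacle is the geometric limit in the first paragraph: one must check that $\alpha_f^{\vect r_\lambda}(v_j)$ actually converges, correctly identify the three limiting regimes, and — crucially — notice that in all three a face incident to $W$ contributes the same value $\pi$, which is exactly what collapses the bookkeeping into the clean inequality $\sum_{v\in S}c_v>\pi|F_S|$. The Euler-formula part is then routine; its only delicate point is extracting the extra unit ($2|S|-6$ rather than $2|S|-5$) needed for strictness precisely in the borderline case where $S$ contains the whole outer face.
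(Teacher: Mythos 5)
Your proposal is correct, and its first half is the paper's argument: the same three-case analysis of the angle limits (the paper phrases it as the sum of angles at the $V\setminus S$ vertices of each face in $F(V\setminus S)$ tending to $\pi$, you compute the individual limits from the cosine formula), leading to $\lim_{\lambda\downto 0}\sum_{v\notin S}\sigma_{\vect{r}_\lambda}(v)=\pi|F(V\setminus S)|$ as in \eqref{sumsigma:limit}, and then the same reduction to the combinatorial inequality $\pi|F_S|<\sum_{v\in S}\theta(v)$, which is exactly \eqref{F-bar-ineq}. Where you genuinely diverge is in proving that inequality. The paper decomposes $G[S]$ into connected components $S_1,\dots,S_k$, applies Euler's formula to each to get $|\bar F_j|\le f_j-1\le 2|S_j|-5$, and rules out the borderline equality case ($v_1,v_2,v_3\in S_j$ and $|\bar F_j|=f_j-1$) by contradiction, using that any $v\in V\setminus S$ sits inside some face of $G[S_j]$ which then cannot be a face of $G$. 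You instead work with $H=G[S]$ globally: $F_S$ injects into the faces of $H$, the injection misses the face of $H$ containing a fixed $u\in V\setminus S$, and additionally misses the unbounded face when $\{v_1,v_2,v_3\}\subseteq S$, so $|F_S|\le 2|S|-5$ (respectively $2|S|-6$), which gives strictness directly with no contradiction argument. This is a tidy reorganization — note that your ``face containing $u$'' observation is the same geometric fact the paper deploys only in its equality case, which you instead use quantitatively up front. The one point you should justify rather than assert is the bound $f(H)\le 2|S|-4$: the textbook statement is for connected simple planar graphs, and $G[S]$ may be disconnected (this is precisely why the paper passes to components). The bound does hold in general — e.g.\ add edges to complete $H$ to a triangulation on $S$, which never decreases the number of faces and ends with $2|S|-4$ of them, or run Euler's formula component by component — so this is a small hole to patch, not a flaw in the strategy.
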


\begin{proof}[Proof of \cref{claim:B} using \cref{claim:C}]
The function $\lambda \mapsto \gap_{\delta_{\mathbf{r}_\lambda}}(S)$ is continuous on $(0,1]$ by the third bullet of \cref{obs:geometric}, and its value at $\lambda = 1$ is $\gap_\delta(S) > 0$. \cref{claim:C} says that if $\mu > 0$ is small enough, then
$$
\sum_{v \notin S} \delta_{\mathbf{r}_\mu}(v) > 0 \, ,
$$
from which it follows that $\max_{v \notin S} \delta_{\mathbf{r}_\mu}(v) > 0$. By (3.4), we also have
\[
\sum_{v \in S} \delta_{\mathbf{r}_\mu}(v) < 0, 
\]
meaning that $\min_{v \in S} \delta_{\mathbf{r}_\mu}(v) < 0$ and therefore $\gap_{\delta_{\mathbf{r}_\mu}}(S) < 0$. By continuity, there exists $\lambda \in (\mu, 1)$ such that $\mathrm{gap}_{\delta_{\mathbf{r}_\lambda}}(S) = 0$.
\end{proof}

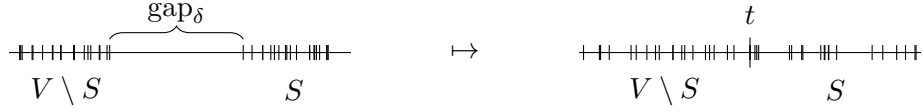
\begin{figure}[t]
  \centering
  \begin{tikzpicture}
    \draw (-6,0) -- (-1.5,0);
    \node at (0,0) {$\mapsto$};
    \draw (1.5,0) -- (6,0);
    
    \node at (-5.25,-0.5) {$V\setminus S$};
    \foreach\x in {-4.71,-5.149,-5.314,-5.009,-5.321,-4.804,-4.961,-4.675,-5.696,-5.685,-5.558,-4.812,-5.14,-5.846,-5.429,-5.86,-5.146,-5.827,-4.924,-5.422} {
      \draw (\x,-0.1) -- (\x,0.1);};
    \node at (-2.25,-0.5) {$S$};
    \foreach\x in {-2.044,-2.344,-2.662,-1.913,-2.802,-2.362,-2.662,-1.822,-2.298,-1.954,-1.977,-2.044,-2.555,-2.508,-2.217,-1.803,-1.803,-1.987,-2.449,-2.918} {
      \draw (\x,-0.1) -- (\x,0.1);};

    \draw
    [decorate,decoration={brace,amplitude=5pt,raise=5pt},yshift=0pt]
(-4.675,0) -- (-2.918,0) node [black,midway,yshift=15pt] {$\gap_\delta$};

    \node at (2.625,-0.5) {$V\setminus S$};
    \foreach\x in {2.556,2.852,3.164,2.507,3.539,2.393,3.453,2.999,2.253,2.735,1.768,2.891,1.898,3.218,3.278,2.737,2.185,1.784,1.56,3.75} {
      \draw (\x,-0.1) -- (\x,0.1);};
    \node at (4.875,-0.5) {$S$};
    \foreach\x in {4.681,4.43,4.298,4.89,3.815,5.796,5.497,4.269,5.935,4.736,3.839,5.357,4.787,4.782,4.727,4.445,5.68,3.864,5.921,3.75} {
      \draw (\x,-0.1) -- (\x,0.1);};
      
    \draw (3.75,-0.2) -- (3.75,0.2);
    \node at (3.75,0.5) {$t$};
  \end{tikzpicture}
  \caption{Left: Finding the maximum gap between two consecutive values of $\delta$,
  and splitting the set of values into $S$ and its complement. Right: Moving from $\vect{r}$ to $\vect{r}'$ closes the gap between $S$ and $V\setminus S$.}
  \label{fig:gap}
\end{figure}

\begin{proof}[Proof of \cref{claim:C}]
  We first show that for each face $f\in F(V\setminus S)$ bounded by 
  $v_i,v_j,v_k$, the sum of angles at the vertices belonging to 
  $V\setminus S$ converges to $\pi$ as $\lambda\downto 0$. We show this by the following case analysis. The statements in cases II and III can be justified by drawing a picture or appealing to the cosine formula.
  \begin{description}
    \item[Case I] If $v_i,v_j,v_k\notin S$ then since the face is a
      triangle,
      $\alpha_f^{\vect{r}_\lambda}(v_i)
        +\alpha_f^{\vect{r}_\lambda}(v_j)
        +\alpha_f^{\vect{r}_\lambda}(v_k)= \pi$ for all $\lambda \in (0,1)$.
    \item[Case II] If $v_i,v_j\notin S$ but $v_k\in S$ then
      $\lim_{\lambda\downto 0}\alpha_f^{\vect{r}_\lambda}(v_k)=0$,
      hence $\lim_{\lambda\downto 0}\alpha_f^{\vect{r}_\lambda}(v_i)
        + \alpha_f^{\vect{r}_\lambda}(v_j) = \pi$.
    \item[Case III] If $v_i\notin S$ but $v_j,v_k\in S$ then
      $\lim_{\lambda\downto 0}\alpha_f^{\vect{r}_\lambda}(v_j)
        + \alpha_f^{\vect{r}_\lambda}(v_k)=0$,
      hence $\lim_{\lambda\downto 0}\alpha_f^{\vect{r}_\lambda}(v_i) = \pi$.
  \end{description}
  
  It follows that
  \begin{equation}\label{sumsigma:limit}
    \lim_{\lambda\downto 0}\sum_{v \notin S} \sigma_{\vect{r}_\lambda}(v)
    = |F(V\setminus S)|\pi.
  \end{equation}

For convenience, set
\[
\theta(v_i) = \begin{cases} \theta_i & 1 \leq i \leq 3, \\ 2\pi & \text{otherwise}, \end{cases}
\]
so that $\delta_\vect{r}(v) = \sigma_\vect{r}(v) - \theta(v)$ for all $v \in V$. Then
\begin{equation} \label{V-minus-S}
\lim_{\lambda \searrow 0} \sum_{v \notin S} \delta_{\vect{r}_\lambda}(v) = |F(V \setminus S)| \pi - \sum_{v \notin S} \theta(v).
\end{equation}
Let $\bar{F} = F^\circ \setminus F(V \setminus S)$, so every face in $\bar{F}$ contains only vertices of $S$. We will show that
\begin{equation} \label{F-bar-ineq}
|\bar{F}| \pi < \sum_{v \in S} \theta(v).
\end{equation}
If \eqref{F-bar-ineq} holds, then we can add the negative quantity $|\bar{F}| \pi - \sum_{v \in S} \theta(v)$ to the right side of \eqref{V-minus-S}, obtaining $|F^\circ| \pi - \sum_{v \in V} \theta(v) = (2n-5)\pi - (2n-5)\pi = 0$. It follows that \eqref{V-minus-S} is strictly positive, proving the claim. Thus it suffices to show \eqref{F-bar-ineq}.

In the rest of the proof, we fix an embedding of $G$ in the plane with $(v_1,v_2,v_3)$ as the outer face. Let $G[S]$ be the subgraph of $G$ induced by $S$. Partition $S$ into equivalence classes, $S = S_1 \cup \cdots \cup S_k$, where two vertices are equivalent if they are in the same connected component of $G[S]$. Then $G[S] = G[S_1] \cup \cdots \cup G[S_k]$. Let $\bar{F}_j$ be the set of faces in $\bar{F}$ that appear as faces of $G[S_j]$, so that we have the disjoint union $\bar{F} = \bar{F}_1 \cup \cdots \cup \bar{F}_k$.

Since $S$ is nonempty, it is enough to show that for all $1 \leq j \leq k$,
\begin{equation} \label{Fj-bar-ineq}
|\bar{F}_j| \pi < \sum_{v \in S_j} \theta(v).
\end{equation}
Let $m_j$ and $f_j$ denote the number of edges and faces, respectively, of $G[S_j]$. Observe that $|\bar{F}_j| \leq f_j - 1$. If $|\bar{F}_j| = 0$, then \eqref{Fj-bar-ineq} is trivial. If $|\bar{F}_j| \geq 1$, then $G[S_j]$ has at least one inner face, and since it is a simple graph, every face must have degree at least 3. (The degree of a face is the number of directed edges that make up its boundary.) Because the sum of the degrees of all the faces equals twice the number of edges, we have $2m_j \geq 3f_j$. Euler's formula now gives
\[
|S_j| + f_j - 2 = m_j \geq \frac{3}{2} f_j,
\]
and hence $f_j \leq 2|S_j| - 4$. Thus, the left side of \eqref{Fj-bar-ineq} satisfies
\[
|\bar{F}_j| \pi \leq (2|S_j| - 5) \pi.
\]
If $S_j$ contains all of $v_1,v_2,v_3$, then the right side of \eqref{Fj-bar-ineq} is
\[
\theta_1 + \theta_2 + \theta_3 + (|S_j| - 3)\cdot  2\pi = (2|S_j| - 5) \pi.
\]
Otherwise, at least one of the $\theta_i$ is replaced by $2\pi$ and so the right side of \eqref{Fj-bar-ineq} is strictly greater than the left side. In fact, \eqref{Fj-bar-ineq} holds except when $v_1,v_2,v_3 \in S_j$ and $|\bar{F}_j| = f_j - 1 = 2|S_j| - 5$. We now show that this situation cannot occur.

The equality $|\bar{F}_j| = f_j - 1$ means that every inner face of $G[S_j]$ is an element of $\bar{F}_j$ and therefore a face of $G$. Since $v_1,v_2,v_3 \in S_j$, the outer face of $G[S_j]$ is $(v_1,v_2,v_3)$, which is the same as the outer face of $G$. So, every face of $G[S_j]$ is also a face of $G$. But this is impossible: if we choose any $v \in V \setminus S$, then $v$ must lie in some face of $G[S_j]$, which then cannot be a face of $G$. Therefore, it cannot be true that $v_1,v_2,v_3 \in S_j$ and also $|\bar{F}_j| = f_j - 1$, so we conclude that \eqref{Fj-bar-ineq} always holds.
\end{proof}

We now analyse the algorithm.  Let $\lambda\in(0,1)$ be the one guaranteed by
\cref{claim:B}, and set $\vect{r}'=\bar{\vect{r}}_\lambda$.

\begin{claim}\label{leastsquares}
  $\cE' \le \cE\left(1-\frac{1}{2n^3}\right)$.
\end{claim}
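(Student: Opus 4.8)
\emph{Plan.} The goal is to show that one step of the algorithm multiplies $\cE$ by at most $1-\tfrac{1}{2n^3}$. I would do this by combining two estimates: a lower bound for the chosen gap $g:=\gap_\delta(S)$ in terms of $\cE$, and a lower bound for $\cE-\cE'$ in terms of $g$. Throughout one may assume $\cE>0$, since otherwise $\delta\equiv 0$ and the algorithm has already terminated.

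\emph{Step 1: $g^2\ge \cE/n^3$.} Write $M=\max_i\delta(v_i)$ and $m=\min_i\delta(v_i)$. Sorting the $n$ values $\delta(v_i)$, the quantity $\gap_\delta(\{v:\delta(v)>s\})$, as $s$ ranges over admissible thresholds, takes exactly the $n-1$ differences between consecutive sorted values, so $g$ is the largest of them; since these differences sum telescopically to $M-m$, maximality of $g$ gives $g\ge (M-m)/(n-1)$. On the other hand $\sum_i\delta(v_i)=0$ by \eqref{eq:sum:delta} and $\cE>0$, so $M>0>m$ and hence $M-m\ge\max_i|\delta(v_i)|\ge\sqrt{\cE/n}$, the last step using $\cE\le n\max_i\delta(v_i)^2$. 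Combining, $g^2\ge \cE/\bigl(n(n-1)^2\bigr)\ge \cE/n^3$.

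\emph{Step 2: $\cE-\cE'\ge g^2/2$.} Let $a=\min_{v\in S}\delta(v)$ and $b=\max_{v\notin S}\delta(v)$, so $a-b=g$, and let $c$ be the common value $\min_{v\in S}\delta'(v)=\max_{v\notin S}\delta'(v)$ provided by \cref{claim:B}. From \cref{claim:A} (namely $\delta'\le\delta$ on $S$ and $\delta'\ge\delta$ on $V\setminus S$) one gets $b\le c\le a$. Now translate by $c$: set $\tilde\delta(v)=\delta(v)-c$ and $\tilde\delta'(v)=\delta'(v)-c$; since $\sum_v\delta(v)=\sum_v\delta'(v)=0$, expanding squares gives $\cE-\cE'=\sum_v\bigl(\tilde\delta(v)^2-\tilde\delta'(v)^2\bigr)$. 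By \cref{claim:A} and the definitions of $a,b,c$: for $v\in S$ one has $0\le\tilde\delta'(v)\le\tilde\delta(v)$ with $\tilde\delta(v)\ge a-c$, and for $v\notin S$ one has $\tilde\delta(v)\le\tilde\delta'(v)\le 0$ with $|\tilde\delta(v)|\ge c-b$. Writing $\eta_v=|\delta(v)-\delta'(v)|=\bigl|\,|\tilde\delta(v)|-|\tilde\delta'(v)|\,\bigr|$, each summand satisfies $\tilde\delta(v)^2-\tilde\delta'(v)^2\ge\eta_v|\tilde\delta(v)|$, so
\[
\cE-\cE'\ \ge\ (a-c)\sum_{v\in S}\eta_v+(c-b)\sum_{v\notin S}\eta_v\ =\ D\bigl((a-c)+(c-b)\bigr)\ =\ Dg,
\]
where $D=\sum_{v\in S}(\delta(v)-\delta'(v))=\sum_{v\notin S}(\delta'(v)-\delta(v))$, the two sums being equal by \eqref{eq:sum:delta}. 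Finally, the vertex of $S$ at which $\delta'=c$ contributes at least $a-c$ to $D$, and the vertex of $V\setminus S$ at which $\delta'=c$ contributes at least $c-b$, so $2D\ge(a-c)+(c-b)=g$, i.e.\ $D\ge g/2$. Hence $\cE-\cE'\ge g^2/2$, and together with Step 1 this yields $\cE-\cE'\ge \cE/(2n^3)$, which is the claim.

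\emph{Main obstacle.} The only genuinely delicate part is the sign bookkeeping in Step 2: verifying $b\le c\le a$, and then the pointwise inequality $\tilde\delta(v)^2-\tilde\delta'(v)^2\ge\eta_v|\tilde\delta(v)|$, which relies on $\tilde\delta(v)$ and $\tilde\delta'(v)$ having the same sign with $|\tilde\delta'(v)|\le|\tilde\delta(v)|$ on each of $S$ and $V\setminus S$. Everything else is accounting with the conservation law \eqref{eq:sum:delta} together with \cref{claim:A} and \cref{claim:B}.
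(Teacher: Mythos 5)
Your proof is correct and follows essentially the same two-step route as the paper: a lower bound $\gap_\delta(S)\ge (M-m)/n$ combined with $n(M-m)^2\ge\cE$, and the energy-drop bound $\cE-\cE'\ge\gap_\delta(S)^2/2$ obtained from \cref{claim:A}, \cref{claim:B} and the conservation law \eqref{eq:sum:delta}. The only (harmless) difference is in the derivation of the latter bound: the paper expands $\cE-\cE'=\sum_i(\delta(v_i)-\delta'(v_i))^2+2\sum_i(t-\delta'(v_i))(\delta'(v_i)-\delta(v_i))$ and keeps just the two vertices where $\delta'=t$, whereas you recenter at $c=t$ and pass through the transferred amount $D$, proving $\cE-\cE'\ge Dg$ and $D\ge g/2$, which yields the same estimate.
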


\begin{proof}
  As depicted in \cref{fig:gap}, define
  \begin{equation*}
    t = \min_{v\in S}\delta'(v) = \max_{v\notin S}\delta'(v).
  \end{equation*}
  By \eqref{eq:sum:delta} we have that $\sum_{i=1}^n\delta(v_i)=\sum_{i=1}^n\delta'(v_i)=0$, hence
  \begin{equation*}
    \cE-\cE' = \sum_{i=1}^n \delta(v_i)^2
      - \sum_{i=1}^n \delta'(v_i)^2
    = \sum_{i=1}^n (\delta(v_i)-\delta'(v_i))^2
      + 2\sum_{i=1}^n (t-\delta'(v_i))(\delta'(v_i)-\delta(v_i)).
  \end{equation*}
  
  If $v\in S$, then $t \le \delta'(v) \le \delta(v)$ and if $v\notin S$, then $t \ge \delta'(v) \ge \delta(v)$. Thus, in both cases $(t-\delta'(v))(\delta'(v)-\delta(v))\ge 0$.
  Taking $u\in S$ and $v\notin S$ with $\delta'(u)=\delta'(v)=t$, we have that
  \begin{equation*}
    \cE-\cE' \ge (\delta(u)-t)^2 + (\delta(v)-t)^2
    \ge \frac{(\delta(u)-\delta(v))^2}{2}
    \ge \frac{\gap_{\delta}(S)^2}{2} \, .
  \end{equation*}
  Since $\gap_{\delta}(S)$ was chosen to be the maximal gap we may bound,
  \begin{equation*}
    \gap_{\delta}(S) \ge
    \frac{1}{n}\left(\max_{v\in V}\delta(v) - \min_{v\in V}\delta(v)\right) \, .
  \end{equation*}
  For every $v\in V$,
  \begin{equation*}
    \max_{w\in V}\delta(w) - \min_{w\in V}\delta(w) \ge |\delta(v)|,
  \end{equation*}
  and thus
  \begin{equation*}
    n\left(\max_{v\in V}\delta(v) - \min_{v\in V}\delta(v)\right)^2
      \ge \sum_{i=1}^n \delta(v_i)^2 = \cE \, .
  \end{equation*}
  Hence
  \begin{equation*}
    \cE-\cE' \ge \frac{1}{2n^2}
      \left( \max_{v\in V}\delta(v) - \min_{v\in V}\delta(v) \right)^2
    \ge \frac{1}{2n^2}\cdot\frac{\cE}{n},
  \end{equation*}
  and we conclude that
  \begin{equation*}
    \cE' \le \cE\left(1-\frac{1}{2n^3}\right).\qedhere
  \end{equation*}
\end{proof}

Write $\cE^{(t)}=\cE_{\vect{r}^{(t)}}$.
By iterating the described algorithm, we obtain from \cref{leastsquares} that
\begin{equation*}
  \cE^{(t)} \le \cE^{(0)}\left(1-\frac{1}{2n^3}\right)^t \longrightarrow 0 \qquad \hbox{as } t \to \infty \, .
\end{equation*}
By our normalization $\left\|\vect{r}^{(t)}\right\|_{\ell_1}=1$. Thus, by compactness, there exists a subsequence $\{t_k\}$ and a 
vector $\vect{r}^\infty$ such that $\vect{r}^{(t_k)}\to \vect{r}^\infty$ as $k \to \infty$.
From continuity of $\cE$ we have that $\cE\left(\vect{r}^\infty\right)=0$, meaning that \eqref{eq:defr} is satisfied.  For 
$\vect{r}^\infty$ to be feasible as a vector of radii, we also have to argue
that it is positive (the fact that no coordinates are $\infty$ follows since $|| \vect{r}^\infty||_{\ell_1}=1$).

\begin{claim}\label{r:positive}
  $\vect{r}^\infty_i>0$ for every $i$.
\end{claim}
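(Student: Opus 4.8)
The plan is to argue by contradiction. Suppose $Z := \{\, v_i : r^\infty_i = 0 \,\}$ is non-empty, and put $W := V \setminus Z$, which is non-empty since $\|\vect r^\infty\|_{\ell_1} = 1$. Note first that $\vect r^{(t)} > 0$ coordinate-wise for every finite $t$ — the algorithm only rescales positive vectors after multiplying some coordinates by factors in $(0,1)$ — so $\sigma_{\vect r^{(t)}}(v)$ and $\delta_{\vect r^{(t)}}(v)$ are defined for all $v$, and since $\cE^{(t_k)} \to 0$ we have $\delta_{\vect r^{(t_k)}}(v) \to 0$, i.e.
\[
  \sigma_{\vect r^{(t_k)}}(v) \longrightarrow \theta(v) \qquad \text{for all } v \in V,
\]
where $\theta(v) = \theta_i$ for $i \in \{1,2,3\}$ and $\theta(v) = 2\pi$ otherwise, as in the proof of \cref{claim:C}.

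First I would evaluate $\lim_k \sum_{v \in Z} \sigma_{\vect r^{(t_k)}}(v)$ geometrically, by regrouping over inner faces:
\[
  \sum_{v \in Z} \sigma_{\vect r^{(t_k)}}(v) = \sum_{f \in F^\circ} \; \sum_{v \in f \cap Z} \alpha_f^{\vect r^{(t_k)}}(v).
\]
Along the subsequence, every circle indexed by $Z$ has radius $\to 0$ while every circle indexed by $W$ has radius bounded away from $0$. I claim that for each inner face $f = \triangle v_i v_j v_k$ meeting $Z$ the inner sum tends to $\pi$ (faces disjoint from $Z$ contribute $0$). Indeed: if $f$ has exactly one $Z$-vertex, say $v_j$, then $C_j$ shrinks between two circles of bounded radius, so the triangle of centers degenerates to a segment with $v_j$ in its interior and $\alpha_f^{\vect r^{(t_k)}}(v_j) \to \pi$; if $f$ has exactly two $Z$-vertices, then the angle at the remaining vertex (which lies in $W$) sits opposite a side of length $\to 0$ between two sides of comparable positive length, hence tends to $0$, so the two $Z$-angles of $f$ sum to $\pi - o(1)$; if all three vertices of $f$ lie in $Z$ the three angles sum to $\pi$ identically. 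Hence
\[
  \sum_{v \in Z} \sigma_{\vect r^{(t_k)}}(v) \longrightarrow |F(Z)|\,\pi,
\]
and comparison with the first display gives $\sum_{v \in Z} \theta(v) = |F(Z)|\,\pi$.

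To reach a contradiction I would reuse the Euler-formula count from \cref{claim:C}. Since $\theta_1 + \theta_2 + \theta_3 = \pi$, we have $\sum_{v \in V} \theta(v) = \pi + 2\pi(n-3) = (2n-5)\pi = |F^\circ|\,\pi$ (recall $|F^\circ| = 2n-5$ by \cref{obs:sum:sigma}), so
\[
  \sum_{v \in W} \theta(v) = \bigl(|F^\circ| - |F(Z)|\bigr)\pi = |\bar F|\,\pi,
\]
where $\bar F := F^\circ \setminus F(Z)$ is the set of inner faces all of whose vertices lie in $W$. But $W$ and $\bar F$ stand in exactly the relation of $S$ and $\bar F$ in the proof of \cref{claim:C}, and that argument uses only that $W$ and its complement $Z$ are both non-empty; hence inequality \eqref{F-bar-ineq}, applied with $W$ in place of $S$, gives $|\bar F|\,\pi < \sum_{v \in W} \theta(v)$, contradicting the equality just obtained. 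Therefore $Z = \varnothing$, i.e.\ $r^\infty_i > 0$ for every $i$.

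The main obstacle is the case analysis behind $\sum_{v \in Z} \sigma_{\vect r^{(t_k)}}(v) \to |F(Z)|\pi$, in particular the faces with two or three $Z$-vertices: there the individual angles at the vanishing circles need not converge (their limits depend on the ratios of the shrinking radii), yet the relevant partial sum of $Z$-angles over each such face still converges to $\pi$. Everything else is bookkeeping, and its only non-trivial ingredient — the strict inequality \eqref{F-bar-ineq} — has already been established inside the proof of \cref{claim:C}.
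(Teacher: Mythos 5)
Your proof is correct and follows essentially the same route as the paper's: the paper's own argument for \cref{r:positive} likewise takes $S=\{v:\vect{r}^\infty_v>0\}$, repeats the case analysis of \cref{claim:C} to get $\lim\sum_{v\notin S}\sigma_{\vect{r}^{(t)}}(v)=|F(V\setminus S)|\pi$, and then contradicts $\cE^{(t)}\to 0$ via the strict inequality \eqref{F-bar-ineq}. Your only additions—working explicitly along the convergent subsequence and observing that the face-wise angle sums (rather than the individual angles at vanishing circles) converge even when the shrinking radii are not proportional—are careful touches, not a different method.
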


\begin{proof}

Let $S = \{ v_i \in V : \vect{r}_i^\infty > 0 \}$. Because of the normalization of $\vect{r}$, we know that $S$ is nonempty. Assume for contradiction that $S \subsetneq V$. We repeat the exact same argument used in the proof of \cref{claim:C} showing first by case analysis that
\[
\lim_{t \to \infty} \sum_{v \notin S} \sigma_{\vect{r}^{(t)}}(v) = |F(V \setminus S)| \pi
\]
and then deducing that
\[
\lim_{t \to \infty} \sum_{v \notin S} \delta_{\vect{r}^{(t)}}(v) > 0.
\]
This contradicts that $\lim_{t \to \infty} \cE^{(t)} = 0$, so we conclude that $S = V$.
\end{proof}

\subsection{Proof of \cref{thm:cptriangulation}, step 2: Drawing the circle packing described by $\vect{r}^\infty$}

Given the vector of radii $\vect{r}^\infty$ satisfying \eqref{eq:defr}, we now show that the corresponding circle packing can be drawn uniquely up to translations and rotations. In fact, we provide a slightly more general statement which is due to Ori Gurel-Gurevich and Ohad Feldheim [personal communications, 2018].

Let $G=(V,E)$ be a finite planar triangulation on vertex set $\{v_1,\ldots,v_n\}$ and assume that $\{v_1,v_2,v_3\}$ is the outer face. A vector of positive real numbers $\Bell=\{\ell_e\}_{e \in E}$ indexed by the edge set $E$ is called \emph{feasible} if for any face enclosed by edges $e_1,e_2,e_3$, the lengths $\ell_{e_1}, \ell_{e_2}, \ell_{e_3}$ can be made to form a triangle. In other words, these lengths satisfy three triangle inequalities,
$$ \ell_i + \ell_j > \ell_k \qquad \{i,j,k\} = \{1,2,3\} \, .$$

Given a feasible edge length vector $\Bell$ we may again use the cosine formula to compute, for each face $f$, the angle at a vertex of the triangle formed by the three corresponding edge lengths. We denote these angles, as before, by $\alpha^{\Bell}_f(v)$ where $v$ is a vertex of $f$. Similarly, we define 
$$ \sigma_{\Bell}(v) = \sum_{f \in F(v)} \alpha^{\Bell}_f(v)$$
to be the sum of angles at a vertex $v$. 

\begin{theorem}\label{thm:howtodraw} Let $G$ be a finite triangulation and $\Bell$ a feasible vector of edge lengths. Assume that $\sigma_{\Bell}(v)=2\pi$ for any internal vertex $v$. Then there is a drawing of $G$ in the plane so that each edge $e$ is drawn as a straight line segment of length $\ell_e$ and no two edges cross. Furthermore, this drawing is unique up to translations and rotations.
\end{theorem}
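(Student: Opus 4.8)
I would separate existence from uniqueness. Uniqueness is the soft part: it follows from rigidity of triangles under side-length constraints. Existence reduces to showing that a ``developing'' of the triangles into the plane is well-defined and injective, and the injectivity (which is what rules out edge crossings) is the heart of the matter. The hypothesis $\sigma_{\Bell}(v)=2\pi$ at internal vertices will be used twice: once to make the developing well-defined, once to make it a local homeomorphism.

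\textbf{Uniqueness.} Suppose we are given any valid straight-line drawing realizing $\Bell$. Fix the placement of the outer triangle $v_1v_2v_3$; this triangle is rigid, since a triangle with prescribed side lengths $\ell_{v_1v_2},\ell_{v_2v_3},\ell_{v_3v_1}$ is determined up to isometry, and the cyclic orders of the map fix the orientation, so only translations and rotations remain. Since $G$ is a triangulation, order the inner faces $f_1,f_2,\dots$ so that each $f_i$ shares an edge with the already-placed region (starting from the outer triangle). When $f_i=\triangle uvw$ shares the edge $uv$ with the placed part, the positions of $u,v$ are already fixed and the position of $w$ is the \emph{unique} point at distances $\ell_{uw},\ell_{vw}$ from $u,v$ on the side of the line $uv$ prescribed by the map. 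Hence every vertex position is forced by that of $v_1v_2v_3$, which gives uniqueness up to translations and rotations.

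\textbf{Existence via a developing map.} Take one Euclidean triangle $T_f$ per inner face $f$, with side lengths read off from $\Bell$ (a genuine triangle exists since $\Bell$ is feasible), and glue them along shared edges according to the combinatorial structure of the map; call the resulting metric complex $D$. Because $G$ is a finite triangulation with the chosen outer face removed, $D$ is a topological disk, and its flat metric has no cone point in the interior of a face or edge, and none at an internal vertex $v$ either, precisely because the total angle there is $\sigma_{\Bell}(v)=2\pi$. Fix an isometric embedding of one triangle $T_{f_0}$ into $\RR^2$ and extend it across shared edges (placing each neighbouring triangle on the far side of the common edge). Since $D$ is simply connected and its interior is genuinely flat, this \emph{developing map} is well-defined as a map $P\colon D\to\RR^2$: the only potential obstruction is the monodromy around a loop of faces encircling an internal vertex $v$, which is a rotation by the total angle $\sigma_{\Bell}(v)=2\pi$, hence trivial, and such loops generate the relevant cycles in the disk. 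By construction $P$ is affine on each $T_f$ and sends each edge of the complex to a straight segment of the prescribed length, so setting $v\mapsto P(v)$ and drawing edges as segments realizes $\Bell$; it remains to show $P$ is injective on $D$, which forbids crossings.

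\textbf{Non-crossing — the crux.} I claim $P$ is a local homeomorphism: on the interior of a face it is an isometry; across the interior of an edge the two incident triangles develop to opposite sides; at an internal vertex the incident triangles are laid around the image point with angles summing to exactly $2\pi$, so they tile a punctured neighbourhood injectively; and at a boundary vertex $v_i$ the incident triangles have angles summing to $\sigma_{\Bell}(v_i)=\theta_i<\pi$, so $P$ is injective on a wedge-neighbourhood. Moreover $P$ sends $\partial D$ homeomorphically onto the outer triangle, a Jordan curve $\gamma$ (the three boundary edges develop to segments of the right lengths, closing up into the triangle because the angles at $v_1,v_2,v_3$ are exactly $\theta_1,\theta_2,\theta_3$). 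Now invoke the standard topological fact: a continuous map from a closed disk that is a local homeomorphism on the interior and maps the boundary homeomorphically onto a Jordan curve $\gamma$ is a homeomorphism of the disk onto the closed region bounded by $\gamma$. (Indeed $P(\operatorname{int}D)$ is open and $P(D)$ is compact with boundary inside $\gamma$, forcing $P(D)$ to be the closed region; then $P|_{\operatorname{int}D}$ is a proper local homeomorphism onto a simply connected open set, hence a homeomorphism.) Thus $P$ is injective on $D$, so the segments $P(e)$ meet only at the shared endpoints dictated by $G$, and the drawing is proper.

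\textbf{Where the difficulty lies.} The rigidity argument and the verification that $P$ realizes $\Bell$ are routine; the real content is the non-crossing assertion, and there the hypothesis $\sigma_{\Bell}(v)=2\pi$ is doubly essential — it makes the developing map single-valued and makes it a local homeomorphism (exact $2\pi$ cone angles mean consecutive triangles around a vertex neither overlap nor leave a gap). I expect the fiddliest point to be the clean justification that $P$ is well-defined, i.e.\ that loops around internal vertices generate all the monodromy; one can settle this by the cellular computation that $H_1$ of a disk vanishes, or — more in the spirit of the Feldheim--Gurel-Gurevich argument, which avoids such topology — by building the drawing through an explicit shelling order of the faces so that the only loops ever closed are those around internal vertices, where closure is guaranteed directly by $\sigma_{\Bell}(v)=2\pi$ together with the fact that each internal angle lies in $(0,\pi)$ (so the residual angle in the last triangle is the correct one).
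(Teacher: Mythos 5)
Your proof is correct, but it follows a genuinely different route from the paper's. The paper proves \cref{thm:howtodraw} by induction on the number of vertices: pick an internal vertex $v$, lay out its star (the hypothesis $\sigma_{\Bell}(v)=2\pi$ makes the incident triangles tile a neighbourhood of $v$ and their outer edges close into a simple polygon containing $v$), triangulate that polygon by diagonals using the classical polygon-triangulation theorem, apply the induction hypothesis to the resulting smaller triangulation with the induced feasible lengths, and then re-insert $v$; uniqueness is carried along the same induction. You instead glue one Euclidean triangle per inner face, use simple connectivity together with trivial monodromy at internal vertices to get a well-defined developing map $P$, and then deduce injectivity from ``local homeomorphism on the interior plus boundary homeomorphism onto a Jordan curve''. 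What your route buys is a one-shot, non-inductive argument in which the role of $\sigma_{\Bell}(v)=2\pi$ is completely transparent (flatness, hence trivial monodromy and local injectivity); what it costs is exactly the topological input the Feldheim--Gurel-Gurevich induction was designed to avoid: that the glued complex $D$ is a closed disk, the Jordan-curve/covering-space facts behind your final lemma, and, in the uniqueness step, the fact that in any non-crossing straight-line drawing two faces sharing an edge lie on opposite sides of it (plus connectivity of the dual graph to order the faces) --- none of these is wrong, but they deserve at least a sketch. Two specific points to patch: when asserting trivial monodromy, note that the loop around an internal vertex fixes the developed image of that vertex, so ``rotation by $2\pi$'' really is the identity isometry and not merely a trivial rotational part; and since the theorem makes no hypothesis at the three outer vertices, your claim that the boundary angles are in $(0,\pi)$ (needed for the boundary to develop onto a non-degenerate triangle) should be derived from the angle count as in \cref{obs:sum:sigma}: the $2n-5$ inner faces contribute total angle $(2n-5)\pi$, the internal vertices absorb $(n-3)\cdot 2\pi$, so the three boundary angle sums are positive and add up to $\pi$, hence each lies in $(0,\pi)$.
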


It is easy to use the theorem above to draw the circle packing given the radii vector $\vect{r}^\infty$ satisfying \eqref{eq:defr}. Indeed, given $\vect{r}^\infty$ we set $\Bell$ by putting $\ell_e = \vect{r}^\infty_i + \vect{r}^\infty_j$ for any edge $e=\{v_i,v_j\}$ of the graph. Condition \eqref{eq:defr} implies that $\Bell$ is feasible. We now apply \cref{thm:howtodraw} and obtain the guaranteed drawing and draw a circle $C_i$ of radii $\vect{r}^\infty_i$ around $v_i$ for all $i$. \cref{thm:howtodraw} guarantees that the for any edge $\{v_i,v_k\}$ the distance between $v_i,v_j$ is precisely $\vect{r}^\infty_i + \vect{r}^\infty_j$ and thus $C_i$ and $C_j$ are tangent. Conversely, assume that $v_i, v_j$ do not form an edge. To each vertex $v$ let $A_v$ be the union of faces touching $v_i$ in the drawing of \cref{thm:howtodraw}. Since $v_i$ and $v_j$ are not adjacent we have that $A_{v_i}$ and $A_{v_j}$ have disjoint interiors. Furthermore, $C_i \subset {\rm {Int}}(A_i)$ since the boundary of $A_i$ is composed of lines between consecutive neighbors of $v_i$ and each of these lines are contained in the corresponding circles. By the same token $C_j \subset {\rm {Int}}(A_j)$ and we conclude that $C_i$ and $C_j$ are not tangent.

Lastly, we note that by \eqref{eq:defr} the outer boundary of the polygon we drew is a triangle with angles $\theta_1, \theta_2, \theta_3$ and hence $(r_1,r_2,r_3)$ is a positive multiple of $(\rho_1,\rho_2,\rho_3)$. Step 2 of the proof of \cref{thm:cp} is now concluded.

\begin{proof}[Proof of \cref{thm:howtodraw}] We prove this by induction on the number of vertices $n$. The base case $n=3$ is trivial since the feasibility of $\Bell$ guarantees that the edge lengths of the three edges of the outer face can form a triangle. Any two triangles with the same edge lengths can be rotated and translated to be identical, so the uniqueness statement holds for $n=3$.

Assume now that $n>3$ so that there exists an internal vertex $v$. Denote by $v_1, \ldots, v_m$ the neighbors of $v$ ordered clockwise. We begin by placing $v$ at the origin and drawing all the faces to which $v$ belongs, see \cref{fig:howtodraw}, left. That is, we draw the edge $\{v,v_1\}$ as a straight line interval of length $\ell_{\{v,v_1\}}$ on the positive $x$-axis emanating from the origin and proceed iteratively: for each $1 < i \leq m$ we draw the edge $\{v,v_i\}$ as a straight line interval of length $\ell_{\{v,v_i\}}$ emanating from the origin ($v$) at a clockwise angle of $\alpha^{\Bell}_f(v)$ from the previous drawn line segment of $\{v,v_{i-1}\}$, where $f=\{v,v_{i-1},v_i\}$. This determines the location of $v_1, \ldots, v_m$ in the plane and allows us to ``complete'' the triangles by drawing the straight line segments connecting $v_i$ to $v_{i+1}$, each of length $\ell_{\{v_i,v_{i+1}\}}$ where $1 \leq i \leq m$ (where $v_{m+1}=v_1$). Denote these edges by $e_1, \ldots, e_m$.

\begin{figure} \centering
\includegraphics[scale=0.75]{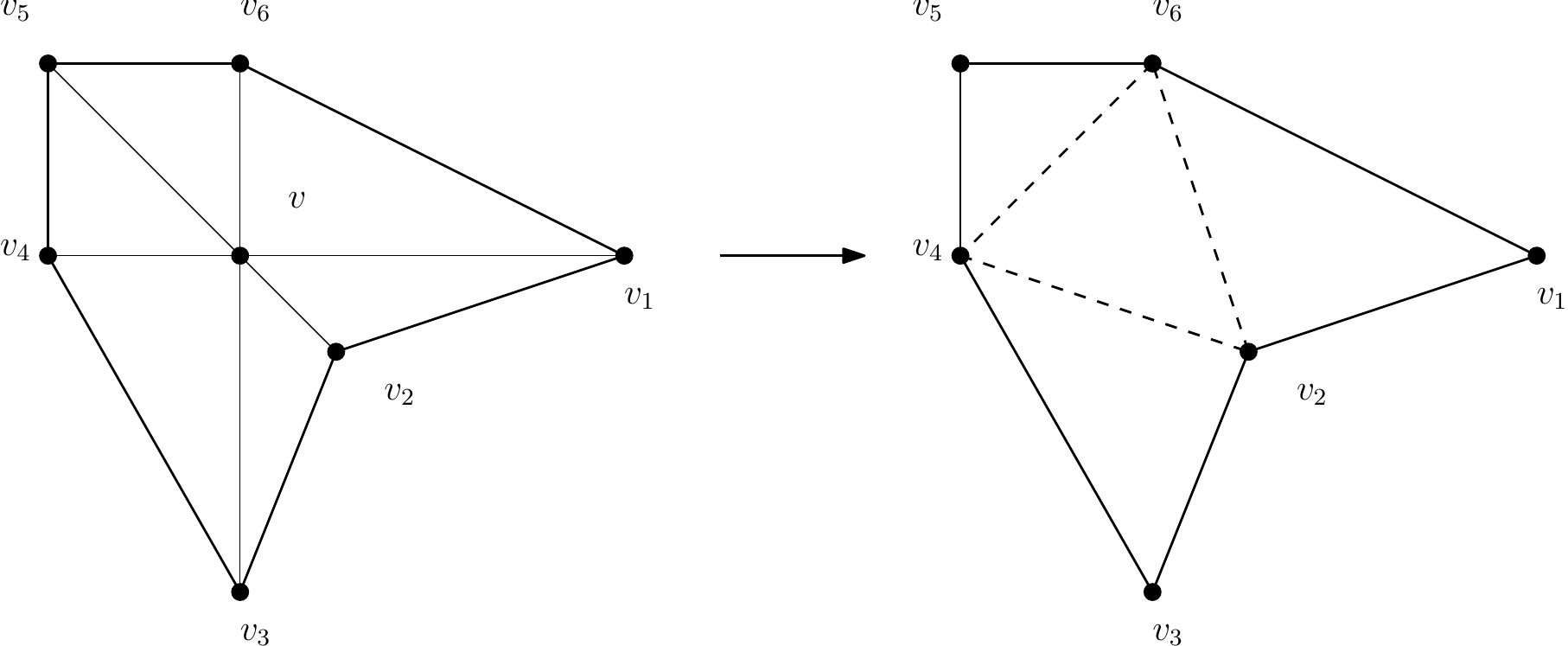}
\caption{On the left, we first draw the polygon surrounding $v$. On the right, we then erase $v$ and the edge emanating from it, replacing it with diagonals that triangulate the polygon while recording the lengths of the diagonals in $\Bell'$. The latter is the input to the induction hypothesis.}
\label{fig:howtodraw}
\end{figure}

Since $\sigma_{\Bell}(v)=2\pi$ we learn that these $m$ triangles have disjoint interiors and that the edges $e_1, \ldots, e_m$ form a closed polygon containing the origin in its interior. It is a classical fact \cite{Meisters75} that every closed polygon can be triangulated by drawing some diagonals as straight line segments in the interior of the polygon. We fix such a choice of diagonals and use it to form a new graph $G'$ on $n-1$ vertices and $|E(G)|-3$ edges by erasing $v$ and the $m$ edges emanating from it and adding the new $m-3$ edges corresponding to the diagonals we added. Furthermore, we generate a new edge length vector $\Bell'$ corresponding to $G'$ by assigning the new edges lengths corresponding to the Euclidean length of the drawn diagonals and leaving the other edge lengths unchanged. See \cref{fig:howtodraw}, right.

It is clear that $\Bell'$ is feasible and that the angle sum at each internal vertex of $G'$ is $2\pi$. Therefore we may apply the induction hypothesis and draw the graph $G'$ according to the edge lengths $\Bell'$. This drawing is unique up to translations and rotations by induction. Note that in this drawing of $G'$, the polygon corresponding to $e_1, \ldots, e_m$ must be the exact same polygon as before, up to translations and rotations, since it has the same edge lengths and the same angles between its edges. Since it is the same polygon, we can now erase the diagonals in this drawing and place a new vertex in the same relative location where we drew $v$ previously, along with the straight line segments connecting it to $v_1, \ldots, v_m$. Thus we have obtained the desired drawing of $G$. The uniqueness up to translations and rotations of this drawing follows from the uniqueness of the drawing of $G'$ and the fact that the location of $v$ is uniquely in that drawing.
\end{proof}

\subsection{Proof of \cref{thm:cptriangulation}, step 3: Uniqueness}

\begin{theorem} [Uniqueness of Circle Packing] \label{thm:cp:uniqueness}
Given a simple finite triangulation with outer face $v_1,v_2,v_3$ and three radii $\rho_1,\rho_2, \rho_3$, the circle packing with $C_{v_1},C_{v_2},C_{v_3}$ having radii $\rho_1,\rho_2,\rho_3$ is unique up to translations and rotations.
\end{theorem}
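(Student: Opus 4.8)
The plan is to carry out step 3 of the three-part outline and combine it with step 2; the argument is Schramm's and is an application of the maximum principle. In outline: (i) show that any circle packing as in the statement is determined by its vector of radii, and that this vector is forced; then (ii) invoke the uniqueness clause of \cref{thm:howtodraw} to pass from the radii to the actual drawing. So suppose $P=\{C_i\}$ and $P'=\{C'_i\}$ are two circle packings of $G$ as in the statement, with radius vectors $\vect{r}=(r_1,\dots,r_n)$ and $\vect{r}'=(r'_1,\dots,r'_n)$ respectively; by hypothesis $r_i=r'_i=\rho_i$ for $i\in\{1,2,3\}$. In a genuine circle packing respecting the cyclic structure of the map, the triangles formed by the centres of the faces around an interior vertex $v_j$ tile a neighbourhood of the centre of $C_j$ and close up, so $\sigma_{\vect{r}}(v_j)=\sigma_{\vect{r}'}(v_j)=2\pi$ for every interior vertex $v_j$ (this is just \eqref{eq:defr} restricted to interior vertices).

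The crux is the comparison step. Put $t_i:=r'_i/r_i$, let $s:=\max_i t_i$, and let $A:=\{v_i : t_i=s\}$; I claim $s\le 1$. Suppose not. Since $t_1=t_2=t_3=1<s$, the set $A$ contains no outer vertex. Fix $v_j\in A$ (necessarily interior) and a face $f=\{v_i,v_j,v_k\}$ incident to it. For each neighbour $v_i$ of $v_j$ we have $t_i\le t_j$, equivalently $r'_i/r'_j\le r_i/r_j$, and likewise for $v_k$. Now the face angle at $v_j$ depends only on the ratios $a:=r_i/r_j$ and $c:=r_k/r_j$, and by the cosine formula its cosine equals $1-2\tfrac{a}{1+a}\tfrac{c}{1+c}$; since $x\mapsto x/(1+x)$ is strictly increasing on $(0,\infty)$, this cosine is strictly decreasing in each of $a$ and $c$, so the angle itself is strictly increasing in each (a strict form of \cref{obs:geometric}). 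Hence $\alpha^{\vect{r}'}_f(v_j)\le\alpha^{\vect{r}}_f(v_j)$ for every face $f\ni v_j$, and summing over these faces gives $2\pi=\sigma_{\vect{r}'}(v_j)\le\sigma_{\vect{r}}(v_j)=2\pi$. So equality holds face by face, and strict monotonicity forces $r'_i/r'_j=r_i/r_j$ for every neighbour $v_i$, i.e. $t_i=t_j=s$. Thus $A$ is closed under taking neighbours; as $G$ is connected, $A=V$, contradicting $v_1\notin A$. Therefore $s\le 1$, i.e. $r'_i\le r_i$ for all $i$; exchanging the roles of $P$ and $P'$ gives the reverse inequality, so $\vect{r}=\vect{r}'$.

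With $\vect{r}=\vect{r}'$ in hand, set $\ell_{\{v_i,v_j\}}:=r_i+r_j$ for each edge; the resulting edge-length vector $\Bell$ is feasible and has angle sum $2\pi$ at every interior vertex, and both $P$ and $P'$ induce (via the centres of tangent circles) drawings of $G$ realizing $\Bell$ by non-crossing straight segments. By the uniqueness clause of \cref{thm:howtodraw}, any two such drawings agree up to a translation and a rotation; since the radii agree too, $P$ and $P'$ agree up to a translation and a rotation, which is the assertion. I expect the only real friction to be in writing up the strict monotonicity of the face angle as a function of the two radius ratios — \cref{obs:geometric} is stated only with weak inequalities, so strictness must be extracted from the explicit cosine formula — together with the small topological bookkeeping that a nonempty, neighbour-closed set of interior vertices of a connected triangulation must contain $v_1,v_2,v_3$, which yields the desired contradiction.
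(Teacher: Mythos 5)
Your proposal is correct and follows essentially the same route as the paper: a maximum-principle argument on the ratio of the two radius vectors, using the (strict) monotonicity of the face angle in the neighbouring radii via the cosine formula to force the ratio to be constant, contradicting its value $1$ at the outer vertices, and then invoking the uniqueness clause of \cref{thm:howtodraw} (step 2) to pass from the radii to the drawing. The only cosmetic difference is that you extract strictness directly from the explicit formula $\cos\alpha = 1-2\tfrac{a}{1+a}\tfrac{c}{1+c}$, whereas the paper combines \cref{obs:geometric} with the cosine formula; the content is identical.
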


\begin{proof} We have already seen in step 2 that given the radii vector $\vect{r}$ the drawing we obtain is unique up to translations and rotations. Thus, we only need to show the uniqueness of $\vect{r}$ given $\rho_1, \rho_2, \rho_3$.

To that aim, suppose that $\vect{r}^a$ and $\vect{r}^b$ are two vectors satisfying \eqref{eq:defr}. Since the outer face in both vectors correspond to a triangle of angles $\theta_1, \theta_2, \theta_3$ we may rescale so that $\vect{r}^a_i=\vect{r}^b_i=\rho_i$ for $i=1,2,3$. After this rescaling, assume by contradiction that $\vect{r}^a \neq \vect{r}^b$ and let $v$ be the interior vertex which maximizes ${\vect{r}_v^a}/{\vect{r}_v^b}$. We can assume without loss of generality that this quantity is strictly larger than $1$, as otherwise we can swap $\vect{r}^a$ and $\vect{r}^b$.

Now we claim that for each $f = (v, u_1, u_2) \in F(v)$, we have $\alpha_f^{\vect{r}^a}(v) \leq \alpha_f^{\vect{r}^b}(v)$, with equality if and only if the ratios $\vect{r}_{u_i}^a / \vect{r}_{u_i}^b$, for $i=1,2$, are both equal to $\vect{r}_v^a / \vect{r}_v^b$. This is a direct consequence of \cref{obs:geometric}. Indeed, scale all the radii in $\vect{r}^b$ by a factor of $\vect{r}_v^a / \vect{r}_v^b$ to get a new vector $\vect{r}'$ such that $\vect{r}_v^a = \vect{r}'_v$ and $\vect{r}_u^a \leq \vect{r}'_u$ for all $u \neq v$. The second bullet point in \cref{obs:geometric} implies that $\alpha_f^{\vect{r}^a}(v) \leq \alpha_f^{\vect{r}'}(v) = \alpha_f^{\vect{r}^b}(v)$. As well, if either $\vect{r}_{u_1}^a < \vect{r}'_{u_1}$ or $\vect{r}_{u_2}^a < \vect{r}'_{u_2}$, then the cosine formula  yields the strict inequality $\alpha_f^{\vect{r}^a}(v) < \alpha_f^{\vect{r}'}(v)$. Thus, $\alpha_f^{\vect{r}^a}(v) = \alpha_f^{\vect{r}^b}(v)$ only if $\vect{r}_{u_i}^a / \vect{r}_{u_i}^b = \vect{r}_v^a / \vect{r}_v^b$ for $i = 1,2$.

Now, since $\alpha_f^{\vect{r}^a}(v) \leq \alpha_f^{\vect{r}^b}(v)$ for each $f \in F(v)$, while $\sigma_{\vect{r}^a}(v) = \sigma_{\vect{r}^b}(v) = 2\pi$, the equality $\alpha_f^{\vect{r}^a}(v) = \alpha_f^{\vect{r}^b}(v)$ must hold for each $f$. Therefore, each neighbor $u$ of $v$ satisfies $\vect{r}_u^a / \vect{r}_u^b = \vect{r}_v^a / \vect{r}_v^b$. Because the graph is connected, the ratio $\vect{r}_u^a / \vect{r}_u^b$ must be constant for all vertices $u \in V(G)$. But this contradicts that $\vect{r}_v^a / \vect{r}_v^b > 1$ while $\vect{r}_{v_i}^a / \vect{r}_{v_i}^b = 1$ for $i = 1,2,3$. We conclude that $\vect{r}^a = \vect{r}^b$.
\end{proof}

\chapter{Parabolic and hyperbolic packings}\label{chp:heschramm}

\section{Infinite planar maps}

In this chapter we discuss countably infinite \defn{locally finite} (that is, the vertex degrees are finite) connected simple graphs. In a similar fashion to the previous chapter, an infinite planar graph is a connected infinite graph such that there exists a drawing of it in the plane. We recall that a \emph{drawing} is a correspondence sending vertices to points of $\RR^2$ and edges to continuous curves between the corresponding vertices such that no two edges cross. An \defn{infinite planar map} is an infinite planar graph equipped with a set of cyclic permutations $\{\sigma_v : v\in V\}$ of the neighbors of each vertex $v$, such that there exists a drawing of the graph which respects these permutations, that is, the clockwise order of edges emanating from a vertex $v$ coincides with $\sigma_v$. 

Unlike the finite case, one cannot define faces as the connected components of the plane with the edges removed since the drawing may have a complicated set of accumulation points. This is the reason that we have defined faces in \cref{sec:planarstuff} combinatorially, that is, based solely on the edge set and the cyclic permutation structure. This definition makes sense in both the finite and infinite case.

A (finite or infinite) planar map is a {\bf triangulation}\index{triangulation} if each of its faces has exactly $3$ edges. Given a drawing of a triangulation, the Jordan curve theorem implies that the edges of each face bound a connected component of the plane minus the edges. We will often refer to the faces as these connected components. A triangulation is called a \defn{plane triangulation} if there exists a drawing of it such that every point of the plane is contained in either a face or an edge and any compact subset of the plane intersects at most finitely many edges and vertices. The term \defn{disk triangulation} is also used in the literature and means that there exists a drawing in the open unit disk in $\RR^2$ such that every point of the disk is contained in either a face or an edge and any compact subset of the disk intersects at most finitely many edges and vertices. Since the plane and the open disk are homeomorphic, we deduce that these two definitions are equivalent. For example, take the product of the complete graph $K_3$ on $3$ vertices with an infinite ray $\NN$ and add a diagonal edge in each face that has $4$ edges; then this is a plane triangulation. However, the product of $K_3$ with a bi-infinite ray $\ZZ$ together with the same diagonals is a triangulation but not a plane triangulation, since it cannot be drawn in the plane without an accumulation point.

It turns out that there is a combinatorial criterion for a triangulation to be a plane/disk triangulation. We say that an infinite graph is \defn{one-ended} if the removal of any finite set of its vertices leaves exactly one infinite connected component.

\begin{lemma}\label{disk:one:ended}
  An infinite triangulation is a plane triangulation if and only if it is 
  one-ended.
\end{lemma}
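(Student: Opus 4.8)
The plan is to prove the two implications separately, after one preliminary remark used in both: an infinite, locally finite, connected graph still has \emph{at least} one infinite connected component after deleting any finite vertex set (every vertex starts a ray, by König's lemma). Hence ``one-ended'' is equivalent to ``never leaves two or more infinite components'', which is the form I will use.

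\textbf{Plane triangulation $\Rightarrow$ one-ended.} Fix a drawing $\phi$ of $T$ as in the definition, and suppose for contradiction that $T-K$ has two infinite components $W_1,W_2$ for some finite $K\subseteq V$. The key step is to enlarge $K$ to a finite \emph{disk patch} $\hat K\supseteq K$: a finite connected vertex set such that the closed star $U:=\bigcup\{\overline{\phi(f)}: f\text{ a face with a vertex in }\hat K\}$ is a closed topological disk, no vertex of $V\setminus \hat K$ is mapped into its interior, and $\partial U$ is a simple cycle $C$ of $T$ with all vertices outside $\hat K$. Such a $\hat K$ is produced from the drawing by a standard planarity clean-up: take $R$ large enough that $\phi(K)\subseteq B(0,R)$ and $B(0,R)$ contains a vertex; let $\hat K$ consist of the (finitely many, by local finiteness) vertices drawn inside $B(0,R)$, then enlarge to be connected and absorb every vertex that ends up surrounded by its star — working out this clean-up and the cyclic-boundary claim is the technical heart of this direction. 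Replacing $K$ by $\hat K$ is harmless, since if $T-K$ has two infinite components then so does $T-\hat K$. Now $C$ is connected and disjoint from $\hat K$, so it lies in a single component of $T-\hat K$, say $W_1$. If $W_2$ is any other component, every vertex of $W_2$ has all neighbours in $W_2\cup\hat K$; but a vertex $w\in W_2$ adjacent to some $k\in\hat K$ is a vertex of a face containing $k$, so $\phi(w)\in U$, hence $\phi(w)$ is either interior to $U$ (forcing $w\in\hat K$) or on $C$ (forcing $w\in W_1$), both impossible. So $W_2$ has no neighbours outside itself, making it a union of connected components of the connected graph $T$, i.e.\ $W_2=T$, contradicting $\hat K\cap W_2=\varnothing$. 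Thus $T-K$ has at most one infinite component.

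\textbf{One-ended $\Rightarrow$ plane triangulation.} Here I would construct the required drawing by exhaustion. First a purely combinatorial lemma: every finite vertex set lies in a finite \emph{disk patch} $H$, i.e.\ a finite connected subgraph such that $T-V(H)$ is connected and the faces of $T$ with all vertices in $H$ form a triangulated topological disk whose boundary is a cycle of $T$. One-endedness enters exactly here — to discard the superfluous infinite components of the complement of a large ball and to absorb its finitely many finite components and surrounded regions — while the cyclic boundary is a combinatorial Jordan-curve argument for triangulations. Iterating gives an exhaustion $H_1\subseteq H_2\subseteq\cdots$ in which each $H_{n+1}$ minus the interior of $H_n$ is a triangulated annulus with inner boundary cycle $\partial H_n$ and outer boundary cycle $\partial H_{n+1}$. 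Then I would draw $T$ inductively: draw $H_1$ as a triangulated polygon inside $B(0,1)$ (for instance apply \cref{thm:cp} after triangulating the outer face with an apex and then deleting that apex's circle); given a proper drawing of $H_n$ filling a closed topological disk bounded by the Jordan curve $\phi(\partial H_n)$ and containing $\overline{B(0,n-1)}$, extend over the annulus $H_{n+1}\setminus H_n$ by drawing its triangles one at a time in the unbounded region outside $\phi(\partial H_n)$, arranging $\phi(\partial H_{n+1})$ to enclose $\overline{B(0,n+1)}$. No topological obstruction arises because the region being filled is a disk. The union is a drawing of $T$; it fills the plane since the drawn region eventually contains every ball, and it is locally finite since every compact set lies in some $\overline{B(0,n)}$, hence inside the image of $H_{n+1}$.

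\textbf{Main obstacle.} In both directions the crux is the same structural fact in different guises: a plane, resp.\ one-ended, triangulation can be exhausted by finite simply connected patches with cyclic boundary. In the first direction this is teased out of the given drawing by the planarity clean-up; in the second it must be established combinatorially and then realised by a mutually consistent sequence of drawings — and the bookkeeping that keeps the drawings consistent while forcing them to exhaust $\RR^2$ (and stay locally finite) is the other point that needs care. An alternative to the second direction, bypassing the hands-on extension step, would be to circle pack finite patches and pass to a subsequential limit (using the ring lemma for compactness), obtaining a circle packing of $T$ whose carrier is the sought drawing; but showing the carrier is simply connected then essentially re-proves the same structural lemma, so I expect the exhaustion approach above to be the cleanest self-contained route.
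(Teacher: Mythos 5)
Your ``plane triangulation $\Rightarrow$ one-ended'' direction is the same idea as the paper's but carries machinery you do not need and do not justify. The paper's proof simply takes a ball $B$ containing the finitely many vertices of $K$, all edges touching them and all faces incident to those edges; any two vertices drawn outside $B$ are joined by a curve in $\RR^2\setminus B$, and that curve can be shadowed by a graph path avoiding $K$, so $T\setminus K$ has one infinite component. No boundary cycle is required. In your version, the ``clean-up'' producing a patch whose closed star is a topological disk with a simple cycle boundary is not free: the union of the closed faces meeting $\hat K$ can have pinch points (a vertex adjacent to two vertices of $\hat K$ without the corresponding triangle being a face), and one has to fill bounded complementary components and run a Jordan-curve argument to see that a single boundary cycle remains. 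You explicitly flag this as ``the technical heart'' and leave it unproved, even though the weaker big-ball statement already gives the implication.

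For ``one-ended $\Rightarrow$ plane triangulation'' your route is genuinely different from the paper's: the paper projects an arbitrary drawing stereographically to the sphere, uses one-endedness to show the closed set of points not covered by edges and faces is connected, rotates one of its points to the north pole, and then invokes the Riemann mapping theorem to identify the (simply connected) covered region with the plane; you instead propose a combinatorial exhaustion by disk patches and an inductive drawing of the resulting annuli. Your scheme can be made to work and is more constructive, but as written it has a genuine gap: the existence of finite disk patches containing a prescribed finite set, with connected complement and a single boundary cycle, and the fact that consecutive patches differ by a triangulated annulus, is precisely where all the difficulty of the lemma lives, and you only assert it (``a combinatorial Jordan-curve argument for triangulations''). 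Since at this stage you only have an abstract one-ended triangulation (or a drawing that may accumulate badly), this lemma must be extracted either from that arbitrary drawing or purely combinatorially, and the same pinch-point and hole-filling issues reappear, now with one-endedness needed to rule out more than one boundary cycle; the consistency of the inductive drawings is routine by comparison. Until that structural lemma is proved, the argument is a plan rather than a proof; either supply it, or note that the paper's projection-plus-Riemann-mapping argument avoids it entirely.
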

\begin{proof}
Suppose $G=(V,E)$ is a plane triangulation and consider a drawing of the graph with no accumulation points in the plane such that every point of the plane belongs to either an edge or a face. Let $A\subseteq V$ be a finite set of vertices and take $B \subset \RR^2$ to be a ball around the origin which contains every vertex of $A$, every edge touching a vertex of $A$ and every face incident to such an edge. Let $u\neq v$ be two vertices drawn outside of $B$ and take a continuous curve $\gamma$ between them in $\RR^2 \setminus B$.  By definition of $B$, this path only touches faces and edges that are not incident to the vertices of $A$ and hence one can trace a discrete path from $u$ to $v$ in the graph that ``follows'' $\gamma$ and avoids $A$. Since $B$ intersects only finitely many edges and vertices, we learn that $G \setminus A$ has a unique infinite component.

Conversely, assume now that $G$ is one-ended and consider a drawing of $G$ in the plane. By the stereographic projection we project the drawing to the unit sphere $\SS^2$ in $\RR^3$. Denote by $\mathcal{I}$ the complement in $\SS^2$ of the union of all faces and edges. Since $G$ is an infinite triangulation this union is an open set, hence $\mathcal{I}$ is a closed set and its boundary $\partial \mathcal{I}$ is precisely the set of accumulation points of the drawing. Since $\mathcal{I}$ is closed, each connected component of $\mathcal{I}$ must be closed as well and hence contain at least one accumulation point. Since $G$ is one-ended $\mathcal{I}$ cannot have more than one connected component, since otherwise we would be able to separate the two components by a finite set of edges and obtain two infinite connected components. Now choose a point  $p \in \mathcal{I}$ and rotate the sphere so that $p$ is the north pole. Project back the rotated sphere to the plane and consider the drawing in the plane. In this drawing the union of all faces and edges must be a simply connected set. By the Riemann mapping theorem this set is homeomorphic to the whole plane, and we deduce that the triangulation is a plane triangulation. 
\end{proof}


  

\section{The Ring Lemma and infinite circle packings}

The circle packing theorem \cref{thm:cp} is stated for finite planar maps. However, it is not hard to argue that any infinite map also has a circle packing. To this aim we will prove what is known as Rodin and Sullivan's \emph{Ring Lemma} \cite{RS87}; we will use it again many times throughout these notes. Given circles $C_0,C_1,\ldots,C_M$ with disjoint interiors, we say that $C_1, \ldots, C_M$ completely surround $C_0$ if they are all tangent to $C_0$ and $C_i$ is tangent to $C_{i+1}$ for $i=1,\ldots, M$ (where $C_{M+1}$ is set to be $C_1$).

\begin{lemma}[Ring Lemma, Rodin \& Sullivan '87 \cite{RS87}]\label{lem:ring}
  \index{Ring Lemma}
  For every integer $M>0$ there exists $A>0$ such that if $C_0$ is a circle completely surrounded by $M$ circles $C_1,\ldots,C_M$, and $r_i$ is the radius of $C_i$ for every $i=0,1,\ldots,M$, then $r_0/r_i\le A$ for every
  $i=1,\ldots,M$.
\end{lemma}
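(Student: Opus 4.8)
The plan is to reformulate the conclusion as a lower bound $r_i\ge c(M)\,r_0$ for a positive constant $c(M)$ depending only on $M$; after a dilation we may assume $r_0=1$, and then $A=1/c(M)$ works. The single geometric computation I will use repeatedly is the following: if two circles with disjoint interiors, of radii $s$ and $t$, are both externally tangent to $C_0$, and their tangency points with $C_0$ subtend an angle $\phi$ at the center of $C_0$, then the triangle on the three centers has side lengths $1+s$, $1+t$ and at least $s+t$, so the law of cosines gives
\[
\sin^{2}(\phi/2)\ \geq\ \frac{st}{(1+s)(1+t)},
\]
with equality exactly when the two circles are tangent to each other. Specializing to consecutive (hence tangent) circles $C_j,C_{j+1}$ of the chain shows that the angle $\theta_j$ subtended by their tangency points satisfies $\sin(\theta_j/2)=\sqrt{r_jr_{j+1}/((1+r_j)(1+r_{j+1}))}$; since the tangency points of $C_1,\dots,C_M$ with $C_0$ wind once around $C_0$, $\sum_{j=1}^M\theta_j=2\pi$ with each $\theta_j\in(0,\pi)$, and convexity of $\arcsin$ on $[0,1]$ yields the crude estimate $\theta_j\le\pi\sqrt{\min(r_j,r_{j+1})}$.

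I would argue by induction on $M$ (the cases $M\le2$ being vacuous). For $M=3$ the angle identity alone suffices: taking $C_1$ of minimal radius $m$, we have $\theta_1,\theta_3\le\pi\sqrt m$ and $\theta_2<\pi$, so $2\pi=\theta_1+\theta_2+\theta_3<\pi+2\pi\sqrt m$ forces $m>1/4$; thus $c(3)=1/4$. For the inductive step, assume the lemma with constant $\kappa:=c(M-1)$, let $M\ge4$, and let $C_1$ realize the minimal radius $m$, with chain-neighbors $C_2$ and $C_M$. I would \emph{contract} the chain at $C_1$: replace the consecutive pair $\{C_1,C_2\}$ by a single circle $C^{*}$ that is externally tangent to $C_0$ and to the two outer neighbors $C_3,C_M$ of the pair and lies in the region of the plane freed by deleting $C_1$ and $C_2$; then $C^{*},C_3,\dots,C_M$ completely surround $C_0$ using $M-1$ circles, so the inductive hypothesis gives $r_3,\dots,r_M\ge\kappa$, and contracting $\{C_1,C_M\}$ instead gives $r_2,\dots,r_{M-1}\ge\kappa$. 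Hence $r_i\ge\kappa$ for every $i\ge2$; in particular the two neighbors $C_2,C_M$ of $C_1$, which are not tangent to one another since $M\ge4$, both have radius $\ge\kappa$, so the displayed inequality applied to them — their tangency points subtending the angle $\theta_1+\theta_M\le2\pi\sqrt m$ — yields $\pi\sqrt m\ge(\theta_1+\theta_M)/2\ge\sin((\theta_1+\theta_M)/2)\ge\kappa/(1+\kappa)$, i.e.\ $m\ge\pi^{-2}\big(\kappa/(1+\kappa)\big)^2$. So $c(M)=\min\{1/4,\ \pi^{-2}(c(M-1)/(1+c(M-1)))^2\}$ closes the induction.

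The step I expect to be the genuine obstacle is the contraction: producing $C^{*}$ and checking that $\{C^{*},C_3,\dots,C_M\}$ is a legitimate ring around $C_0$ — pairwise disjoint interiors, the prescribed tangencies, and tangency points still winding once around $C_0$. Existence of $C^{*}$ I would get from an Apollonius-type continuity argument: among circles externally tangent to $C_0$ and to $C_M$, slide the point of tangency with $C_0$ along the arc vacated by $C_1$ and $C_2$; the radius varies continuously, failing to reach $C_3$ at one extreme and forced to overlap it at the other, so some intermediate circle is tangent to $C_3$. The remaining checks (that $C^{*}$ avoids $C_4,\dots,C_{M-1}$, and that the new tangency data still describe a single loop) I would read off from a fixed planar drawing of the configuration. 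An alternative that sidesteps the contraction is a compactness argument: were the lemma false for some $M$, take configurations with $r_0=1$ and minimal surrounding radius tending to $0$, pass to a Hausdorff limit in which each circle degenerates to a circle, a point, or the tangent line to $C_0$ at its limiting tangency point, and observe that the two chain-neighbors of the vanishing circle are then forced to be tangent to $C_0$ at one and the same point while still being disjoint and exterior to $C_0$ — impossible, given that $\sum_j\theta_j=2\pi$ with each $\theta_j<\pi$.
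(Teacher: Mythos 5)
Your preparatory estimates (the tangency/disjointness inequality, the bound $\theta_j\le\pi\sqrt{\min(r_j,r_{j+1})}$, the case $M=3$) are fine, but the contraction step on which your induction rests is not merely delicate to verify: it can fail outright, because the circle $C^*$ need not exist. Your own identity shows why. For a circle of radius $\rho$ externally tangent to $C_0$ (radius $1$) and to a petal of radius $s$, the angular distance between the two tangency points is $2\arcsin\sqrt{\rho s/((1+\rho)(1+s))}<2\arcsin\sqrt{s/(1+s)}$ for every finite $\rho$; so any circle externally tangent to both $C_0$ and that petal must have its tangency point within angular distance $2\arcsin\sqrt{s/(1+s)}$ of the petal's. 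Consequently a circle tangent to $C_0$, $C_M$ and $C_3$ can exist only if the angular distance between $p_M$ and $p_3$ is less than $2\arcsin\sqrt{r_M/(1+r_M)}+2\arcsin\sqrt{r_3/(1+r_3)}$. Now take $r_1=0.049$ and $r_M=0.05$ (two adjacent small petals, $C_1$ the unique minimum), $r_2=100$, $r_3=1$, and complete the ring on the far side of $C_0$ with finitely many moderate circles; this is a legitimate surrounding ring for a suitable fixed $M$. The vacated arc then has angular length $\angle(p_M,p_1)+\angle(p_1,p_2)+\angle(p_2,p_3)\approx 5.4^\circ+24.8^\circ+89.4^\circ\approx 120^\circ$, while $2\arcsin\sqrt{r_M/(1+r_M)}+2\arcsin\sqrt{r_3/(1+r_3)}\approx 25.2^\circ+90^\circ\approx 115^\circ$, so by the triangle inequality for angular distance there is \emph{no} circle in the plane externally tangent to all of $C_0$, $C_M$, $C_3$. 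Your sliding argument breaks precisely here: the family of circles tangent to $C_0$ and to $C_M$ ceases to exist once the tangency point is more than $2\arcsin\sqrt{r_M/(1+r_M)}$ from $p_M$, long before it is ``forced to overlap $C_3$''. Since your scheme needs the contraction of $\{C_1,C_2\}$ to bound $r_3,\dots,r_M$, the induction does not close.

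Your fallback compactness argument is the salvageable route, but it too needs one more case: the chain-neighbours of a vanishing petal may themselves vanish, so you should pass to a \emph{maximal} consecutive run of vanishing petals. The angle identity $\sum_j\theta_j=2\pi$ rules out the run being the whole ring (if every radius tends to $0$, every $\theta_j$ tends to $0$); the two petals flanking a maximal run then have non-degenerate limits (a disc or a half-plane), tangent to $C_0$ at one and the same point, and both of their interiors contain points just outside $C_0$ along the common normal, contradicting the fact that disjointness of interiors survives the Hausdorff limit. With that repair the compactness proof is correct, though non-quantitative. For comparison, the paper's own argument needs neither surgery nor a limit: it propagates smallness along the chain — if some petal is small then one of its two neighbours must be small as well, since two comparatively large circles tangent to $C_0$ at angularly close points would overlap (exactly your inequality), and iterating forces an ever longer run of small petals, which cannot wrap all the way around $C_0$ because the angular gaps must sum to $2\pi$. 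That route gives the lemma directly for each fixed $M$, with constants if one cares to track them.
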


\begin{proof}
We may scale the picture so that $r_0=1$. Assume that the radius of $C_2$ is small and consider the circles $C_1$ and $C_3$ to its left and right. It cannot be that both $C_1$ and $C_3$ have large radii compared to $C_2$ since in this case they will intersect; see \cref{fig:ringlemma}. Hence, one of them has to be small as well. Assume without loss of generality that it is $C_3$. By similar reasoning, one of $C_1$ and $C_4$ has to be small. We continue this argument this way and get a path of circles of small radii; thus, for the circles $C_1, \ldots, C_M$ to completely surround $C_0$ we learn that $M$ must be large. \end{proof}

\begin{figure}
  \centering
    \begin{tikzpicture}[font=\small, scale=0.3]
      \node[vx,label=-90:$C_0$, inner sep=0.8pt, minimum size=0.5pt] (c0) at (0,0) {};
      \node[vx,label=-90:$C_2$,inner sep=0.8pt, minimum size=0.5pt] (c2) at (90:4.4) {};
      \node[vx,label=90:$C_3$, inner sep=0.8pt, minimum size=0.5pt] (c3) at (67.23:7) {};
      \node[vx,label=90:$C_1$, inner sep=0.8pt, minimum size=0.5pt] (c1) at (112.77:7) {};
      
      \draw (c0) circle (4);
      \draw (c2) circle (0.4);
      \draw (c3) circle (3);
      \draw (c1) circle (3);

    \end{tikzpicture}
    \caption{$C_2$ is small, but both $C_1$ and $C_3$ are large.}
\label{fig:ringlemma}
\end{figure}
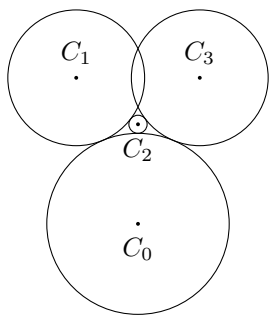

For a circle packing $P$ and a vertex $v$, denote by $C_v$ the circle corresponding to $v$, by $\cent(v)$ the center of that circle, and by $\rad(v)$ its radius. We write $G(P)$ for the tangency graph of the packing $P$, that is, the graph in which each vertex is a circle of $P$ and two such circles form an edge when they are tangent. 

\begin{claim}\label{cpt:inf}
  Let $G$ be an infinite planar map. Then there exists a circle packing $P$ such that $G(P)$ is isomorphic to $G$ as planar maps.
\end{claim}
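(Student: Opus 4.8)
The plan is to realize $P$ as a subsequential limit of circle packings of finite submaps of $G$, with the Ring Lemma (\cref{lem:ring}) supplying the necessary compactness. A tangency graph is always simple, so the statement is only meaningful — and I only prove it — when $G$ is simple; I also take $G$ connected. \textbf{First I would reduce to the case that $G$ is a triangulation.} Given an arbitrary planar map $G$, I build a locally finite simple planar triangulation $\hat{G}\supseteq G$ with the property that the only edges of $\hat{G}$ joining two vertices of $V(G)$ are the edges of $G$, and that the cyclic permutation of $\hat{G}$ at each vertex of $V(G)$ restricts to that of $G$: triangulate each finite face of $G$ by inserting a new vertex joined to all the (finitely many) vertices on that face's boundary, and fill each infinite face with a standard locally finite ``triangulated half-plane'' gadget, which contributes only new vertices together with edges among new vertices and from new vertices to the face's boundary, but no edge between two vertices of $G$. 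If we can pack $\hat{G}$, obtaining $\hat{P}=\{C_v:v\in V(\hat{G})\}$ with $G(\hat{P})\cong\hat{G}$ as planar maps, then $P:=\{C_v:v\in V(G)\}$ is a circle packing whose tangency graph is precisely $G$ as a planar map, by the defining property of $\hat{G}$. So assume henceforth that $G$ is a triangulation.

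\textbf{Next I would set up an exhaustion and normalize.} Fix a vertex $\rho\in V(G)$ and take finite simple connected submaps $G_n\uparrow G$ (for instance, the union of the closed faces of $G$ within combinatorial distance $n$ of $\rho$) such that every vertex is eventually \emph{internal} to $G_n$, meaning that all faces of $G$ incident to it lie in $G_n$. Applying the circle packing theorem (\cref{thm:cp}) to the finite simple planar map $G_n$ yields a packing $P_n$ with $G(P_n)\cong G_n$ respecting cyclic orders; after a translation and a dilation we may assume the circle $C_\rho^{(n)}$ of $P_n$ is the unit circle centered at the origin. If $v$ is internal to $G_n$, then the faces of $G_n$ around $v$ are exactly its (triangular) faces in $G$, so $C_v^{(n)}$ is completely surrounded in $P_n$ by the circles of its $\deg_G(v)$ neighbors, and \cref{lem:ring} gives a constant $A(\deg_G(v))$ with $\rad(C_u^{(n)})/\rad(C_v^{(n)})\le A(\deg_G(v))$ for every neighbor $u$ of $v$. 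Fixing, for each vertex $v$, a path $\rho=w_0,\dots,w_k=v$ in $G$ and applying this bound at each $w_i$ once all of them are internal to $G_n$, we conclude that $\rad(C_v^{(n)})$ is bounded above and below by positive constants depending only on $v$ (using $\rad(C_\rho^{(n)})=1$), and then $|\cent(C_v^{(n)})|$ is bounded as well since consecutive circles along the path are tangent.

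\textbf{Then I would pass to the limit.} By a diagonal argument over the countable vertex set, pass to a subsequence along which $C_v^{(n)}\to C_v$ for every vertex $v$, with each $C_v$ a circle of positive radius. Set $P=\{C_v\}$. ``Having disjoint interiors'' and ``being tangent'' are closed conditions on pairs of circles, so $P$ is a circle packing and $E(G)\subseteq E(G(P))$; the circles are pairwise distinct, since otherwise disjointness of interiors would fail in the limit. It remains to rule out extra tangencies and to check cyclic orders: for each $v$, the completely-surrounded flower of $C_v^{(n)}$ converges to a flower of $C_v$ in $P$, and the uniform \emph{lower} bounds on radii from the previous step force the curvilinear-triangle interstices of this flower to remain nondegenerate, which separates $C_v$ from every circle other than those of its $G$-neighbors; the cyclic order of those neighbors around $C_v$ is preserved as well. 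Hence $G(P)\cong G$ as planar maps, which together with the reduction completes the proof.

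\textbf{The main obstacle} is, I expect, twofold: carrying out the reduction of the first step when $G$ has infinite faces (triangulating such a face by a locally finite gadget that introduces no edge between two vertices of $G$), and verifying in the last step that no spurious tangencies survive the limit and that the cyclic orders are preserved. The latter is precisely what the uniform Ring-Lemma control buys us: without a lower bound on the radii a flower could collapse in the limit and create extra tangencies.
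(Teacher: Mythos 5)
Your argument is correct and takes essentially the same route as the paper's own proof: reduce to a triangulation by filling faces with auxiliary vertices, circle pack a finite exhaustion via \cref{thm:cp}, normalize at a root circle, use the Ring Lemma (\cref{lem:ring}) for compactness, and extract a limit by diagonalization. The only difference is that you spell out the verification that no spurious tangencies arise and that cyclic orders persist in the limit, a point the paper leaves implicit.
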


\begin{proof} If $G$ is not a triangulation, then it is always possible to add in each face new vertices and edges touching them so the resulting graph is a planar triangulation (in an infinite face we have to put infinitely many vertices). After circle packing this new graph, we can remove all the circles corresponding to the added vertices and remain with a circle packing of $G$. Thus, we may assume without loss of generality that $G$ is a triangulation.

Fix a vertex $x$, and let $G_n$ be the graph distance ball of radius $n$ around $x$. Apply the circle packing theorem to $G_n$ to obtain a packing $P_n$, and scale and translate it so that $\rad(x)=1$ and $\cent(x)$ is the origin. 

  Consider a neighbor $y$ of $x$. By the
  Ring Lemma (\cref{lem:ring}), there exists a constant $A=A(x,y)>0$ such that $A^{-1}\le \rad(y) \le A$. By compactness there exists a subsequence of packings $P_{n_k}$ for which
  $\rad_{n_k}(y)$ and $\cent_{n_k}(y)$ both converge. By taking further subsequences for the rest of $x$'s neighbors, and then for the rest of the graph's vertices, it follows by a  diagonalization argument that there exists a subsequence such that the radii and centers of all vertices converge. The limiting packing $P_{\infty}$ satisfies that $G(P_{\infty})$ is isomorphic to $G$.
\end{proof}

\section{Statement of the He-Schramm theorem}

Given a circle packing $P$ of a graph $G$, we obtain a drawing of $G$ as follows: plot each vertex as the center of its corresponding circle in $P$ and connect adjacent vertices by straight lines. It is immediate that this is a drawing of $G$. When $G$ is a triangulation and $P$ is a circle packing of $G$, we define the \defn{carrier} of $P$, denoted $\carrier(P)$, to be the open subset of the plane obtained by taking the union of all faces (seen as open subsets of the plane) and all edges. When $P$ is a circle packing of an infinite one-ended triangulation, the argument in \cref{disk:one:ended} shows that $\carrier(P)$ is simply connected.

 We say that {\bf $G$ is circle packed in $\RR^2$} when $\carrier(P)=\RR^2$. Denote by $\UU$ the disk $\{z \in \RR^2 : |z|<1\}$; we say that {\bf $G$ is circle packed in $\UU$} when $\carrier(P)=\UU$. See \cref{fig:CPtypes}.

\begin{figure}[t]
  \centering
  \begin{subfigure}[b]{0.475\linewidth}
    \centering
    \includegraphics[width=0.9\linewidth]{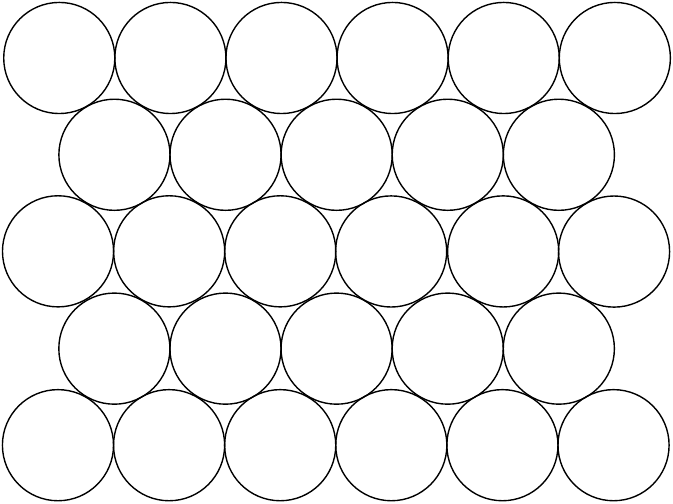}
  \end{subfigure}\hfill%
  \begin{subfigure}[b]{0.475\linewidth}
    \centering
    \includegraphics[width=0.9\linewidth]{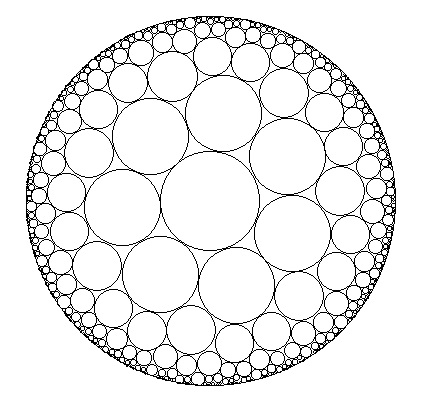}
  \end{subfigure}
  \caption{Two circle packings with carriers $\RR^2$ (left) and $\UU$ (right).}
  \label{fig:CPtypes}
\end{figure}

Let $G$ be a plane triangulation. Then $G$ can be drawn in the plane $\RR^2$ or alternatively in the disk $\UU$ (since they are homeomorphic), but can it be \emph{circle packed} both in  $\RR^2$ and in $\UU$? A celebrated theorem of He and Schramm \cite{HeSc} states that this cannot be done: each plane triangulation can be circle packed in either the plane or the disk, but not both. In fact, the combinatorial property of $G$ that determines on which side of the dichotomy we are is the recurrence or transience of the simple random walk on $G$ (assuming also that $G$ has bounded degrees, that is, $\sup_{x \in V(G)} \deg(x) < \infty$). This is the content of the He-Schramm theorem, which we are now ready to state.

\begin{theorem}[He, Schramm '95 \cite{HeSc}]\label{thm:hs}
  \index{He-Schramm's theorem}
  Let $G$ be an infinite plane triangulation with bounded degrees.
  
  \begin{thmenum}
    \item\label{hs:rec:RR} If $G$ is recurrent, then there exists a circle packing $P$ of $G$ such that $\carrier(P) = \RR^2$.
    \item\label{hs:trans:UU} If $G$ is transient, then there exists a circle packing $P$ of $G$ such that $\carrier(P) = \UU$.
    \item\label{hs:RR:rec} If $P$ is a circle packing of $G$ with $\carrier(P) = \RR^2$, then $G$ is recurrent.
    \item\label{hs:UU:trans} If $P$ is a circle packing of $G$ with
    $\carrier(P) = \UU$, then $G$ is transient.
  \end{thmenum}
\end{theorem}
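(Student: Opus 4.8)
The plan is to treat the four assertions in two pairs. The implications \cref{hs:RR:rec} and \cref{hs:UU:trans}, where the geometry of a given packing dictates the walk type, are quantitative, and I would derive them directly from the variational principles of \cref{chp:electric} together with the Ring Lemma (\cref{lem:ring}): a packing in $\UU$ yields a finite-energy flow to infinity via the method of random paths, while a packing in $\RR^2$ forces the effective resistance across each Euclidean annulus of bounded modulus to be bounded below, so that $\reff(v_0 \lr \infty) = \infty$. The implications \cref{hs:rec:RR} and \cref{hs:trans:UU} then follow formally: since a plane triangulation is one-ended (\cref{disk:one:ended}), \cref{cpt:inf} gives it a circle packing whose carrier is a simply connected domain, and once one knows --- this is \cref{thm:introheschramm} --- that in the bounded-degree case this domain can be taken to be $\RR^2$ or $\UU$, recurrence excludes $\UU$ by \cref{hs:UU:trans} and transience excludes $\RR^2$ by \cref{hs:RR:rec}.

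For \cref{hs:UU:trans}, suppose $\carrier(P) = \UU$ and translate so that the origin lies in the interior of $C_{v_0}$. I would first note that $P$ has no accumulation point of circles in the open disk --- a limit of centres would lie in no circle and in no interstice, hence outside the carrier --- so only finitely many circles meet each compact $K \subseteq \UU$, and for $v \neq v_0$ the disc $C_v$ excludes the origin and therefore subtends an angle $\vartheta_v = 2\arcsin(\rad(v)/|\cent(v)|) \le \pi\,\rad(v)/|\cent(v)|$ at the origin. Sampling $\xi$ uniformly on $\partial\UU$ and letting $\gamma$ be the path in $G$ through the circles met, in order, by the radial segment from the origin to $\xi$ (ties within interstices broken arbitrarily) produces an infinite simple path from $v_0$; by the method of random paths (\cref{claim:randompath}) the averaged traversal function $\theta$ is a unit flow from $v_0$ to $\infty$. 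Since the segment meets $C_v$ precisely when $\xi$ falls in an arc of length $\vartheta_v$, we have $\pr(\{u,v\}\text{ traversed}) \le \min(\vartheta_u,\vartheta_v)/2\pi$, whence, up to absolute constants,
\[
\energy(\theta) \le \sum_e \pr(e \text{ traversed})^2 \lesssim \sum_v \deg(v)\,\vartheta_v^2 \lesssim \Big(\max_v \deg(v)\Big)\Big(1 + \sum_{v:\,|\cent(v)| > 1/2} \rad(v)^2\Big),
\]
where the $1$ absorbs the finitely many circles with $|\cent(v)| \le 1/2$, and $\sum_v \rad(v)^2 \le \pi$ since the discs are disjoint and lie in $\UU$. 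Thus $\theta$ has finite energy and $G$ is transient by \cref{cor:transiencecondition}.

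For \cref{hs:RR:rec}, suppose $\carrier(P) = \RR^2$. Again $P$ has no accumulation point in the plane, so only finitely many circles meet a given ball (finitely many centres lie in it, and disjointness bounds the number of large circles meeting it), and since the carrier is all of $\RR^2$ the circles escape to infinity. For large $k$, set $A_k = \{2^k \le |z| \le 2^{k+1}\}$ and let $U_k$, $W_k$ be the sets of vertices whose circle meets $\{|z| = 2^k\}$, $\{|z| = 2^{k+1}\}$ respectively; I claim that, within the subnetwork on vertices whose circle lies near $A_k$, the effective resistance $\reff(U_k \lr W_k)$ is at least a constant $c = c(\max_v \deg(v)) > 0$. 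This comes from Dirichlet's principle (\cref{lem:dirichlet}) with $h(v) = \min\!\big(1, \max(0, \log_2(|\cent(v)|/2^k))\big)$: for adjacent circles the Ring Lemma gives comparable radii, so $|h(u) - h(v)| \lesssim \rad(u)/2^k$, and therefore
\[
\energy(h) \lesssim \Big(\max_v \deg(v)\Big)\, 2^{-2k} \sum_{v:\, C_v \subseteq \{2^{k-1} \le |z| \le 2^{k+2}\}} \rad(v)^2 \lesssim \Big(\max_v \deg(v)\Big)\, 2^{-2k} \cdot 2^{2k} = O(1)
\]
by disjointness of the discs. Taking $k$ even and large, these annular layers are disjoint, so $\reff(v_0 \lr \infty)$ is at least the sum of their resistances (deleting edges and identifying each interface to a point only decrease resistance, by \cref{gluing:monotone} and \cref{rayleigh}), giving $\reff(v_0 \lr \infty) \ge \sum_k c = \infty$; hence $G$ is recurrent.

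Granting \cref{thm:introheschramm}, \cref{hs:rec:RR} and \cref{hs:trans:UU} follow by the reduction of the first paragraph, and \cref{thm:introheschramm} itself is the main obstacle: one must produce a packing whose carrier is \emph{exactly} $\RR^2$ or $\UU$, not some other simply connected domain. The natural route is to take a limit, via the Ring Lemma, of suitably normalized circle packings of the graph-distance balls $B_n(v_0)$ with an added outer apex vertex, obtaining a packing in a simply connected $D \subseteq \UU$; the delicate point --- and where recurrence versus transience must genuinely enter --- is to rule out an intermediate $D$ by showing the outer circles of the finite packings converge to $\partial\UU$, i.e.\ that $D$ fills the disk. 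The annulus- and square-crossing resistance estimates used above, together with their degree-free upper-bound counterparts, are exactly the tools for this last step; once \cref{thm:introheschramm} is in hand the deductions above are routine.
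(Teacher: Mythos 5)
Your proofs of \cref{hs:RR:rec} and \cref{hs:UU:trans} are essentially the paper's own arguments (Dirichlet test functions across annuli plus disjoint annular edge layers for the plane; the random radial path with $\sum_v \rad(v)^2 \le \pi$ for the disk), up to two small repairs: with dyadic annuli an edge may jump entirely across a layer, since the Ring Lemma only gives a radius ratio $\le A$, which can exceed $2$ --- the paper takes the annulus ratio $C=A+2$ precisely so that no edge crosses a layer and the edge sets feeding energy to the unit current flow are genuinely disjoint (\cref{hs:1:lem:1}); and deleting edges \emph{increases} effective resistance (\cref{rayleigh}), so the series lower bound should be obtained by gluing the interfaces, or by restricting the unit current flow to the disjoint layers as the paper does, not by deletion.

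The genuine gap is in the existence halves, \cref{hs:rec:RR} and \cref{hs:trans:UU}. You deduce them from \cref{thm:introheschramm}, but in these notes that dichotomy is not an available input: it is proved (in the bounded degree case) exactly by establishing parts 1 and 2 of the present theorem, with the walk type entering each side --- as your own closing remark concedes --- so the ``formal deduction'' buys nothing and everything rests on your final sketch, which omits the actual content. For \cref{hs:trans:UU} the paper packs the hulls $V_j$ in $\UU$ with boundary circles tangent to $\partial\UU$ (\cref{cp:no:outer:pack}), centers the root at the origin, and then needs two nontrivial steps absent from your sketch: transience is used quantitatively (via \cref{hs:1:lem:1}) to show the root radius $r_0^j$ stays bounded below, so that the subsequential limit is a nondegenerate packing in $\UU$; and the two-sided resistance--diameter estimate of \cref{hs:lemma:diam:res} --- the lower bound \eqref{eq:reffupper} applied in the limit packing together with the upper bound \eqref{eq:refflower} applied in the finite packings to components reaching $\partial V_j$ --- is what forces every accumulation point of the limiting carrier onto $\partial\UU$, ruling out an intermediate simply connected $D$. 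For \cref{hs:rec:RR} your sketch says nothing at all: the disk normalization degenerates in the recurrent case (a nondegenerate limit with bounded carrier would already produce a finite-energy flow and hence transience), and the paper argues differently --- take any limit packing from \cref{cpt:inf}; if its carrier omitted a point $p$, the circles $\{z : |z-p| = R-U\}$ with $U$ uniform in $[-1,1]$ would trace infinite simple paths in $G$, and the induced unit flow has finite energy because the Ring Lemma bounds the radii of all circles meeting a neighbourhood of $p$, contradicting recurrence via \cref{cor:transiencecondition}. Without these steps the first two assertions of the theorem are unproved.
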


\begin{corollary}
  Any bounded degree plane triangulation can be circle-packed in $\RR^2$ or $\UU$, but not both.
\end{corollary}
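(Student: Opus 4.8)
The plan is to deduce the corollary directly from the four parts of the He--Schramm theorem (\cref{thm:hs}) together with the recurrence/transience dichotomy for infinite networks established in \cref{sec:infinitegraphs}. Let $G$ be a plane triangulation with bounded degrees; note that a plane triangulation is necessarily infinite, since its drawing is required to cover the whole plane. As an infinite connected graph, $G$ is either recurrent or transient, and never both.

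First I would establish existence of a packing in $\RR^2$ or $\UU$. If $G$ is recurrent, then \cref{hs:rec:RR} furnishes a circle packing $P$ of $G$ with $\carrier(P) = \RR^2$; if $G$ is transient, then \cref{hs:trans:UU} furnishes a circle packing $P$ with $\carrier(P) = \UU$. In either case $G$ admits a circle packing whose carrier is one of the two prescribed domains.

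Next I would rule out the possibility of packing $G$ in \emph{both} domains. Suppose, for contradiction, that there exist circle packings $P_1$ of $G$ with $\carrier(P_1) = \RR^2$ and $P_2$ of $G$ with $\carrier(P_2) = \UU$. Applying \cref{hs:RR:rec} to $P_1$ shows that $G$ is recurrent, while applying \cref{hs:UU:trans} to $P_2$ shows that $G$ is transient. This contradicts the dichotomy, so no such pair of packings can exist. Combining the two paragraphs gives that $G$ can be circle packed in exactly one of $\RR^2$ and $\UU$.

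There is no genuine obstacle here: all the mathematical content lives in \cref{thm:hs}, and the corollary is merely the logical repackaging of its four implications together with the fact that an infinite graph satisfies exactly one of recurrence and transience. The only minor point worth flagging is the implicit restriction to infinite triangulations, which is automatic from the definition of a plane triangulation and which I would mention in one sentence.
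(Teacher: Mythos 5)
Your proposal is correct and is exactly the intended argument: the paper leaves the corollary as an immediate consequence of \cref{thm:hs}, combining parts 1--2 for existence with parts 3--4 and the recurrence/transience dichotomy for exclusivity, just as you do.
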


\begin{remark} In fact, it is proved in \cite{HeSc} that the corollary above holds without the assumption of bounded degree. Furthermore, in \cite{HeSc} \cref{hs:rec:RR} and \cref{hs:UU:trans} are proved without the bounded degrees assumption, but the other two statements require this assumption.
\end{remark}

The following example demonstrates why the bounded degree condition is necessary for \cref{hs:trans:UU} and \cref{hs:RR:rec}.

\begin{example} Let $P$ be a triangular lattice circle packing (as in \cref{fig:transient:unbounded}), and let $C_0,C_1,C_2,\ldots$ be an infinite horizontal path of circles in $P$ going (say) to the right. In the upper face shared by $C_n$ and $C_{n+1}$, draw $2^n$ circles which form a vertical path and each of them tangent both to $C_n$ and $C_{n+1}$; the last circle of these is also tangent to the upper neighbor of $C_n$ and $C_{n+1}$. See \cref{fig:transient:unbounded}. 

The resulting graph is a plane triangulation and the carrier of the packing is $\RR^2$. However, it is an easy exercise to verify that the tangency graph of this circle packing is transient.

\end{example}

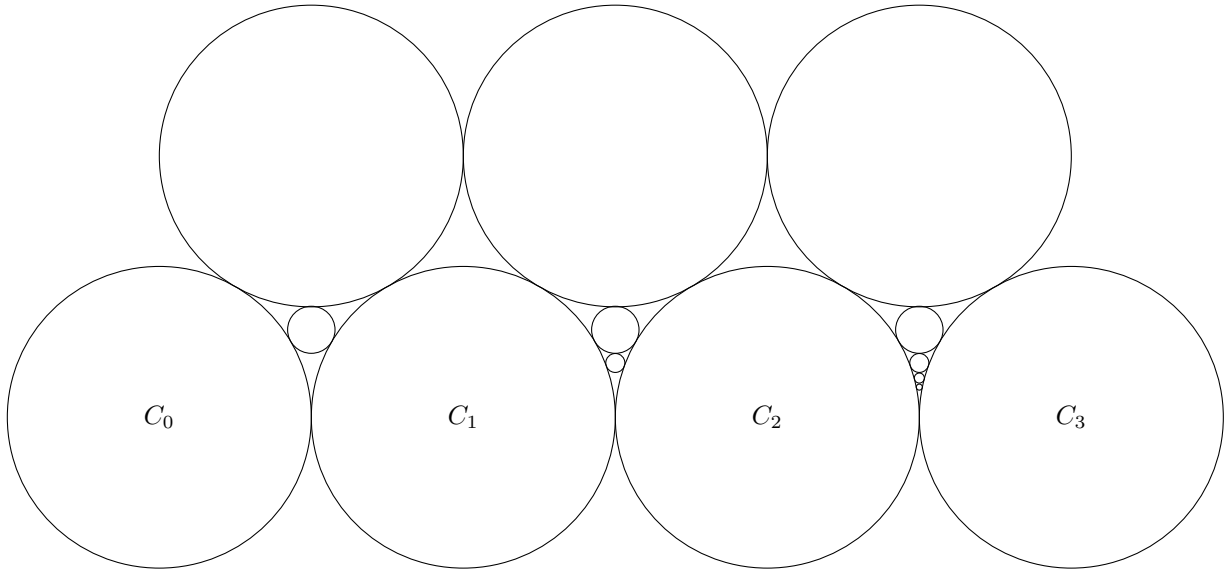
\begin{figure}[ht]
  \centering
    \begin{tikzpicture}[font=\small,scale=4]
    \node (1) at (-1,0) {$C_0$};
    
    \node (2) at (0,0) {$C_1$};
    \node (3) at (1,0) {$C_2$};
    \node (4) at (2,0) {$C_3$};
    \node (5) at (-0.5,0.866) {};
    \node (6) at (0.5,0.866) {};
    \node (7) at (1.5,0.866) {};
    \node (8) at (-0.5,0.29) {};
    \node (9) at (0.5,0.18) {};
    \node (10) at (0.5,0.29) {};
    \node (11) at (1.5,0.1) {};
    \node (12) at (1.5,0.13) {};
    \node (13) at (1.5,0.18) {};
    \node (14) at (1.5,0.29) {};
    
    \draw (1) circle (0.5);
    \draw (2) circle (0.5);
    \draw (3) circle (0.5);
    \draw (4) circle (0.5);
    \draw (5) circle (0.5);
    \draw (6) circle (0.5);
    \draw (7) circle (0.5);
    \draw (8) circle (0.078);
    \draw (9) circle (0.0314);
    \draw (10) circle (0.078);
    \draw (11) circle (0.01);
    \draw (12) circle (0.0166);                 \draw (13) circle (0.0314);
    \draw (14) circle (0.078);
    \end{tikzpicture}
    \caption{\label{fig:transient:unbounded} Unbounded degree transient triangulation circle packed in $\RR^2$.}
\end{figure}




In the rest of this chapter we prove \cref{thm:hs}. We begin by proving parts $3$ and $4$, in which a circle packing is given and one uses its geometry to deduce estimates about the effective resistance. Afterwards we prove parts $1$ and $2$, in which we use electrical estimates to deduce facts about the geometry of the circle packing.  

\section{Proof of the He-Schramm Theorem}

\subsection{Proof of \cref{hs:RR:rec}}

Denote the circle packing $P=\{C_v\}_{v\in V}$ where $V$ is the vertex set of $G$ and $C_v$ denotes the circle corresponding to the vertex $v$. Write $\Delta$ for the maximum degree of $G$ and fix a vertex $v_0$. By scaling and translating we may assume that $C_{v_0}$ is a radius $1$ circle around the origin. Given an open set $D$, we denote by $V_D\subseteq V$ the set of vertices $v$ for which the center of $C_v$ is in $D$.  For a real number $R>0$, let $V_R=V_{B(\origin,R)}$ where $B(\origin,R)$ is the Euclidean ball of radius $R$ around the origin.

\begin{lemma}\label{hs:1:lem:1}
  There exist $C=C(\Delta)>1$ and $c=c(\Delta)>0$ such that for every $R\ge 1$ 
  we have
  \begin{description}
    \item[(i)] There are no edges between $V_R$ and $V \setminus V_{CR}$, and
    \item[(ii)] $\reff\left(V_R\lr V \setminus V_{CR} \right)\ge c$.
  \end{description}
\end{lemma}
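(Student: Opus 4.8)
The key geometric input is a lower bound on the radii of circles whose centers lie in a fixed ball. Since $C_{v_0}$ has radius $1$ and is centered at the origin, and since the graph has maximum degree $\Delta$, I would first establish that there is a constant $\rho_0 = \rho_0(\Delta, R) > 0$ such that every circle $C_v$ with center in $B(\origin, R)$ has radius at least $\rho_0$. The mechanism is the Ring Lemma (\cref{lem:ring}): for a one-ended triangulation every vertex is completely surrounded by its neighbors, so the ratio of radii of tangent circles is bounded by $A = A(\Delta)$; chaining this bound along a path of circles from $C_{v_0}$ to $C_v$ gives $\rad(v) \ge A^{-k}$ where $k$ is the number of circles crossed. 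A circle of radius $r$ with center in $B(\origin, R)$ occupies area $\ge$ something like $\pi \min(r,1)^2$ inside $B(\origin, R+1)$; since the circles have disjoint interiors, only boundedly many can have radius exceeding any given threshold, and combined with the chaining bound this forces a uniform lower bound $\rho_0$ on all radii of circles with centers in $B(\origin,R)$. Actually the cleanest route: any circle centered in $B(\origin,R)$ is connected to $C_{v_0}$ by a chain of tangent circles staying within $B(\origin, R+\text{(sum of diameters)})$, and a packing argument bounds the length of such a chain, hence bounds $\rad(v)$ from below.

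For part (i): if $u \in V_R$ and $v \sim u$, then $C_u$ and $C_v$ are tangent, so $\cent(v)$ lies within $\rad(u) + 2\rad(v)$ of $\cent(u)$. Using the lower bound on $\rad(u)$ (to know circles in $B(\origin,R)$ are not too small relative to... actually we need an \emph{upper} bound here) — more precisely, I would bound $\rad(u)$ from above for $u \in V_R$: a circle centered inside $B(\origin,R)$ with disjoint interior from $C_{v_0}$ cannot have radius more than roughly $R+1$, and in fact the Ring Lemma applied around $v_0$'s neighbors bounds $\rad$ of circles near the origin. Then $\cent(v)$ is within a bounded multiple of $R$ of the origin, so $v \in V_{CR}$ for a suitable $C = C(\Delta)$. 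This is where the bounded-degree hypothesis enters through the Ring Lemma constant.

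For part (ii): I would exhibit a function $h : V \to \RR$ that is $0$ on $V_R$, $1$ on $V\setminus V_{CR}$, and has bounded Dirichlet energy $\energy(h) \le 1/c$; then Dirichlet's principle (\cref{lem:dirichlet}) gives $\reff(V_R \lr V\setminus V_{CR}) = 1/\energy(\text{harmonic minimizer}) \ge 1/\energy(h) \ge c$ — wait, I need the inequality in the right direction: $\ceff = 1/\reff = \inf \energy(h) \le \energy(h)$, so $\reff \ge 1/\energy(h)$. The natural choice is $h(v) = \min\big(1, \max(0, \frac{\log(|\cent(v)|/R)}{\log C})\big)$, a logarithmic cutoff, since circle packings behave like conformal maps and $\log|z|$ is harmonic. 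For an edge $\{u,v\}$, $|h(u) - h(v)| \le \frac{1}{\log C}\big|\log|\cent(u)| - \log|\cent(v)|\big|$, which is at most a constant times $\frac{\rad(u)+\rad(v)}{\log C \cdot \min(|\cent(u)|,|\cent(v)|)}$ by tangency; summing $(h(u)-h(v))^2$ over edges, the edges with both endpoints in the annulus $\{R \le |z| \le CR\}$ contribute, and grouping them into dyadic annuli $\{2^j R \le |z| \le 2^{j+1}R\}$ — in each such annulus the disjoint circles have total area $O((2^j R)^2)$ while each contributes $O(\rad^2/(2^j R)^2 \cdot (\log C)^{-2})$ to the energy and there are $O(\log C)$ such dyadic scales — I get $\energy(h) = O(1/\log C)$, hence $\reff \ge c(\Delta) > 0$ after choosing $C$.

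The main obstacle I expect is part (ii): making the dyadic-annulus energy estimate genuinely rigorous requires controlling, uniformly in the scale, the sum $\sum \rad(e)^2 / \text{dist}(e,0)^2$ over edges in an annulus, which ultimately rests on the fact that disjoint discs in an annulus of fixed modulus carry bounded total "logarithmic length squared" — this is essentially the statement that effective resistance across a Euclidean annulus of bounded modulus is bounded below for bounded-degree triangulations, i.e.\ the first bullet in the introduction's list. I would isolate this as a clean sub-lemma: \emph{for any annulus $A(a,b) = \{a \le |z| \le b\}$ and any circle packing of a bounded-degree triangulation, $\sum_{e : C_e \cap A \ne \emptyset} \rad(e)^2 \le O(b^2)$ and the number of relevant $e$ with $\rad(e) \ge \eta b$ is $O(\eta^{-2})$}, proved purely by disjointness and area comparison, with the bounded-degree and Ring-Lemma input ensuring no single circle is anomalously large or that a vertex's incident edges don't proliferate.
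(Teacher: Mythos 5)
Your proposal is correct in substance and follows the same overall strategy as the paper for both parts: part (i) is proved exactly as in the text (an upper bound $\rad(u)\le R$ for $u\in V_R$ forced by disjointness from $C_{v_0}$, then the Ring Lemma, \cref{lem:ring}, to bound the radii of neighbours, giving $C=C(\Delta)$), and part (ii) is an application of Dirichlet's principle (\cref{lem:dirichlet}) to an explicit radial test function whose energy is controlled by tangency ($|\cent(x)-\cent(y)|=\rad(x)+\rad(y)$) together with disjointness of the discs. The one genuine difference is your choice of test function: you take the logarithmic cutoff $h(v)\approx \log(|\cent(v)|/R)/\log C$ and pay for it with a dyadic-annulus decomposition and a sub-lemma excluding anomalously large circles, whereas the paper simply takes the linear profile $h(v)=(|\cent(v)|-R)/((C-1)R)$ and bounds the whole energy in one stroke by comparing $\sum_x \area(C_x)$ with $\area(B(\origin,2CR))$, giving $\energy(h)\le 4\Delta C^2$ and hence $c=(4\Delta C^2)^{-1}$. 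Since the lemma only claims a constant lower bound for the \emph{fixed} modulus $C=C(\Delta)$, the linear function suffices and is markedly simpler; the logarithmic gain your version would provide (resistance of order $\log C$ across an annulus of modulus $C$) is recovered in the paper not inside this lemma but by iterating the constant bound over the disjoint annuli $V_{C^{2k+1}R}\setminus V_{C^{2k}R}$, as in the proof of \cref{hs:RR:rec} and again in \cref{hs:lemma:diam:res}. Your log-profile sketch does go through (note that by part (i) any edge with nonzero $h$-increment has both centers at distance at least $R/C$ from the origin, which is what keeps the denominators under control), but be aware that your opening paragraph --- a uniform \emph{lower} bound $\rho_0$ on radii of circles centred in $B(\origin,R)$ --- is never actually needed for either part and can be dropped.
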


\begin{proof} We begin with part (i). For every $v\in V_R$ it holds that $\rad(v)\le R$ since $C_{v_0}$ is centered at the origin.  By the Ring Lemma 
  (\cref{lem:ring}), there exists $A=A(\Delta)$ such that $\rad(u)\le AR$ for every $u\sim v$ , and therefore $|\cent(u)|\le (A+2)R$.  Hence (i) holds  with $C=A+2$. \newline

  To prove part (ii) we define
  \begin{equation*}
    h(v) =
    \begin{cases}
      0 & v\in V_R,\\
      1 & v\in V \setminus V_{CR},\\
      \frac{|\cent(v)| - R}{(C-1)R} & \text{otherwise}.
    \end{cases}
  \end{equation*}
Recall from \cref{lem:dirichlet} that $\reff\left(V_R\lr V \setminus V_{CR} \right)\ge 
  \energy(h)^{-1}$.
  By the triangle inequality, for an edge $\{x,y\}$ with both endpoints in $V_{CR}\setminus V_R$ we have
  \begin{equation*}
    |h(x)-h(y)| \le \frac{|\cent(x)-\cent(y)|}{(C-1)R}
    = \frac{\rad(x)+\rad(y)}{(C-1)R},
  \end{equation*}
and it is straightforward to check that the same bound holds also when one of the edge's endpoints is in $V_R$ or $V\setminus V_{CR}$. Thus, using the Ring Lemma's (\cref{lem:ring}) constant $A=A(\Delta)$ from part (i),
  \begin{equation*}
    \energy(h) \le \sum_{x \in V_{CR} \setminus V_R} \sum_{y: y \sim x} \frac{( (A+1)\rad(x) )^2}{(C-1)^2R^2}
    \le \frac{\Delta(A+1)^2}{\pi(C-1)^2R^2}\cdot \sum_{x\in V_{CR \setminus V_R}} \area(C_x),
  \end{equation*}
where $\area(C_x)$ is the area that $C_x$ encloses (that is, $\pi \rad(x)^2$). We have that $\sum_x\area(C_x)\le \area(B(\origin,2CR)) = 4\pi C^2R^2$, hence 
if $C=A+2$, then
  \begin{equation*}
    \energy(h) \le 4\Delta C^2,
  \end{equation*}
  and the result follows for $c=(4\Delta C^2)^{-1}$.
\end{proof}

\begin{proof}[Proof of \cref{hs:RR:rec}]
Consider the unit current flow $I$ from $v_0$ to $\infty$ and fix any $R\geq 1$. Restricting this flow to the edges which have at least one endpoint in the annulus $V_{CR} \setminus V_R$ gives a unit flow from $V_R$ to $V \setminus V_{CR}$, by part (i) of \cref{hs:1:lem:1}. Hence, by part (ii) of that lemma and by Thomson's principle (\cref{thomson:principle}), the energy contributed to $\energy(I)$ from these edges is at least $c$. In the same manner, the edges which have at least one endpoint in the annulus $V_{C^{2k+1}R} \setminus V_{C^{2k}R}$ contribute at least $c$ to $\energy(I)$. Part (i) of \cref{hs:1:lem:1} implies that all these edge sets are disjoint, hence $\energy(I)=\infty$ and we learn that $G$ is recurrent (\cref{cor:transiencecondition}).
\end{proof}

\subsection{Proof of \cref{hs:UU:trans}}

We will use the given circle packing of $G$ to create a random path to infinity with finite energy. This gives transience by \cref{claim:randompath}. This proof strategy is similar to that of \cref{thm:z3transient}.

\begin{proof}[Proof of \cref{hs:UU:trans}]
Let $v_0$ be a fixed vertex of the graph, and apply a M\"obius transformation to make the circle of $P$ corresponding to $v_0$ be centered at the origin $\origin$. We now use \cref{claim:randompath} to construct a flow $\theta$ from $v_0$ to $\infty$ by choosing a uniform random point $\vect{p}$ on $\partial\UU$, taking the straight line from $\origin$ to $\vect{p}$ and considering the set of all circles in the packing $P$ that intersect this line in the order that they are visited; this set forms an infinite simple path in the graph which starts at $v_0$.

To bound the energy of the flow, we claim that there exists some constant $C$ (which may depend on the graph $G$ and the packing $P$) such that the probability that the random path uses the vertex $v$ is bounded above by $C\rad(v)$. Indeed, since there are only finitely many vertices with centers at distance at most $1/2$ from $\origin$, we may assume that the center of $v$ is of distance at least $1/2$ from $\origin$. In this case, in order for $v$ to be included in the random path the circle of $v$ must intersect the line between $\origin$ and $\vect{p}$. By the Ring Lemma (\cref{lem:ring}) the neighbors of $v$ have circles of radii comparable to $\rad(v)$ and so the probability of the line touching them is at most $C \rad(v)$. Since the vertex degree is bounded by $\Delta$, we find that
  \begin{equation*}
    \energy(\theta)
    \le C \Delta \sum_{v \in V} \rad(v)^2 \leq C \Delta \pi \, ,
  \end{equation*}
and we deduce by \cref{cor:transiencecondition} that $G$ is transient.
\end{proof}



\subsection{Proof of \cref{hs:rec:RR}}
We apply \cref{cpt:inf} to obtain a circle packing $P$ of $G$. We claim that $\carrier(P)=\RR^2$. Fix some vertex $v$ and rescale and translate so that $P(v)$ is the the unit circle $\partial \UU$. Assume by contradiction that $\carrier(P) \neq \RR^2$ and let $p \in \RR^2 \setminus \carrier(P)$ be a point not in the carrier. Rotate the packing so that $p=R$ for some real number $R>1$. Let $U\in [-1,1]$ and consider the circle $C_U = \{ z : |z-p| = R-U \}$. We traverse $C_U$ from the point $U$ counterclockwise and consider all the circles of $P$ which intersect $C_U$. The circles of $P$ we obtain this way is a simple path in the graph $G$ starting from $v$. The argument in \cref{disk:one:ended} shows that $\carrier(P)$ is simply connected, and since $p \not \in \carrier(P)$ it cannot be that $C_U$ is contained in $\carrier(P)$. Thus, as we traverse $C_U$ counterclockwise we must hit the boundary of $\carrier(P)$. We conclude that the path in $G$ we obtained in this manner is an infinite simple path starting at $v$. 

We now let $U$ be a uniform random variable in $[-1,1]$ and let $\mu$ denote the probability measure on random infinite paths starting at $v$ we obtained as described above. Let $\theta$ be the flow induced by $\mu$ as in \cref{claim:randompath}. We wish to bound the energy $\energy(\theta)$. Consider a vertex $w\in G$ and its circle $P(w)$ and let $B$ be the Euclidean ball of radius $R+1$ around $p$. If $P(w)$ does not intersect B, it cannot be included in the random path by our construction. If it does intersect this ball, then the probability that the random path intersects it is bounded above by its radius. Thus, 
$$ \energy(\theta) \leq \Delta \sum_{w : P(w) \cap B \neq \emptyset} \rad(w)^2 \, ,$$
where $\Delta$ is the maximal degree of $G$. We learn that $\energy(\theta)$ is bounded above by a constant multiple of the area of all circles of $P$ that intersect $B$. Since $p \not \in \carrier(P)$, by the Ring Lemma (\cref{lem:ring}), any circle of $P$ that intersects $B$ cannot have radius more than $AR$ for some large $A \geq R$ (since otherwise, all the circles surrounding this vertex will have radius more than $R+1$, contradicting the fact that $p \not \in \carrier(P)$). We learn that all the circles counted in the sum above are contained in the Euclidean ball of radius $(A+1)R+1$ around $p$. Since these circles has disjoint interiors, the sum of their area is bounded above by the area of the Euclidean ball above. We conclude that $\energy(\theta)<\infty$, hence $G$ is transient by \cref{cor:transiencecondition} and we have reached a contradiction. \qed
%


\newcommand{\Beuc}{B_{\mathrm{euc}}}

\subsection{Proof of \cref{hs:trans:UU}}

We will use the following simple corollary of the circle packing theorem, \cref{thm:cp}.
\begin{claim}\label{cp:no:outer:pack}
  Let $G$ be a finite simple planar map such that all faces have three edges except for one face (which we can think of as the outer face). Then, there is a circle packing $P$ of $G$ such that all circles of the outer face are internally tangent to $\partial \UU$ and all other circles of $P$ are contained in $\UU$.
\end{claim}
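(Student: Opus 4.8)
The plan is to turn $G$ into a triangulation by inserting an apex vertex into its distinguished face, invoke the circle packing theorem, and then apply a M\"obius transformation of the sphere $\widehat{\CC}=\CC\cup\{\infty\}$ so that the apex circle becomes the complement of $\UU$.

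Concretely, let $w_1,\dots,w_k$ be the vertices incident to the unique non-triangular face of $G$, listed in the cyclic order prescribed by the map, and assume first that this boundary is a simple cycle. Add a new vertex $\partial$ inside that face together with edges $\{\partial,w_i\}$, each inserted into the rotation at $w_i$ in the slot of the distinguished face. This replaces that face by the $k$ triangles $\{\partial,w_i,w_{i+1}\}$ and changes nothing else, so after declaring (say) $\{\partial,w_1,w_2\}$ to be the outer face, the resulting map $G'$ is a finite simple planar triangulation. Apply \cref{thm:cp} (equivalently \cref{thm:cptriangulation}) to obtain a circle packing $P'$ of $G'$ inside $\CC\subseteq\widehat{\CC}$, and let $D_\partial$ denote the closed round disk bounded by the circle $C'_\partial$ of $\partial$; by the packing property every other disk of $P'$ meets $D_\partial$ only on its boundary.

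Since the M\"obius group of $\widehat{\CC}$ acts transitively on round disks, I would pick a M\"obius map $\phi$ with $\phi(D_\partial)=\widehat{\CC}\setminus\UU$ and apply it to $P'$. For $v\neq\partial$ the disk $\phi(D_v)$ then has interior disjoint from $\widehat{\CC}\setminus\overline{\UU}$, so $\phi(C'_v)\subseteq\overline{\UU}$; and since $\phi$ sends tangent circles to tangent circles, $\phi(C'_v)$ is tangent to $\partial\UU=\phi(C'_\partial)$ precisely when $v$ is a neighbour of $\partial$ in $G'$, i.e.\ when $v\in\{w_1,\dots,w_k\}$. Thus the circles of the outer face of $G$ are internally tangent to $\partial\UU$ while all other circles lie in the open disk. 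Finally let $P=\phi(P')$ with the apex circle deleted: deleting a circle only removes tangencies, so the tangency graph of $P$ is $G'$ with the edges at $\partial$ removed, which is exactly $G$, while the rotation at each $w_i$ simply loses the $\partial$-slot and the $k$ apex triangles merge back into the distinguished face, so $G(P)\cong G$ as planar maps. This gives the required packing.

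The one step that needs genuine care is the assumption that the boundary of the distinguished face is a simple cycle. If some boundary vertex $v$ occurs more than once on that boundary walk then $v$ is a cut vertex, and the naive apex construction produces a double edge $\{\partial,v\}$, which no circle packing can realize. To handle this I would decompose $G$ along its cut vertices into $2$-connected blocks — each a triangulation of a disk with a simple-cycle boundary, to which the construction above applies — and then rescale and reposition the block packings inside $\UU$ so that they agree on the circle of each shared cut vertex, keeping that circle internally tangent to $\partial\UU$; alternatively, one may note that the uses of this claim in the sequel only involve the simple-cycle case. Apart from this point, the argument is a routine check that the M\"obius image and the restriction behave as described.
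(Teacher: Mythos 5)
Your argument is correct and is essentially the paper's own proof: the paper likewise adds an apex vertex joined to the outer-face vertices in their cyclic order, circle packs the resulting triangulation via \cref{thm:cp}, normalizes so that the apex circle is the unit circle centered at the origin, and applies $z \mapsto 1/z$, which is precisely your M\"obius map $\phi$ sending the apex disk to $\widehat{\CC}\setminus\UU$ before deleting the apex circle. Your closing caveat about a non-simple outer boundary (a cut vertex forcing a double edge at the apex) is a point the paper passes over in silence; since the subsequent application in the proof of \cref{hs:trans:UU} and the corresponding exercise concern the simple-cycle situation, this is a sensible refinement rather than a different route.
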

\begin{proof}
Denote by $v_1, \ldots, v_m$ the vertices of the outer face in clockwise order. Add a new vertex $v^*$ to the graph and connect it to $v_1, \ldots, v_m$ according to their order. We obtain a triangulation $G^*$. Apply \cref{thm:cp} to obtain a circle packing $P=\{C_v\}_{v \in V(G^*)}$. By translating and dilating we may assume that $C_{v^*}$ is centered at the origin and has radius $1$. Apply the map $z \mapsto \frac{1}{z}$ on this packing. Since this map preserves circles, the image of the circles $\{C_v\}_{v \in V(G^*)\setminus\{v^*\}}$ under this map is precisely the desired circle packing. 
\end{proof}

Furthermore, we will require an auxiliary general estimate. Given a circle packing $P$ and a set of vertices $A$, we write $\diam_P(A)$ for the Euclidean diameter of the union of all circles in $P$ corresponding to the vertices of $A$. 

\begin{lemma}\label{hs:lemma:diam:res}
Let $P$ be a circle packing in $\UU$ of a finite triangulation except the outer face with maximum degree $\Delta$, such that the circle of the vertex $v_0$ is centered at the origin and has radius $r_0$. Assume that $r_0 \in (r_{\min},r_{\max})$ for some constants $0<r_{\min}<r_{\max}<1$. Then there exists a constant $c=c(r_{\min}, r_{\max}, \Delta)>0$ such that for any connected set $A$ of vertices,
\begin{equation}\label{eq:reffupper} \reff(v_0 \lr A) \geq c \log \frac{1}{\diam_P(A)} \, .\end{equation}
If in addition all circles of the outer face are tangent to $\partial \UU$ and $A$ contains a vertex of the outer face, then
\begin{equation}\label{eq:refflower} \reff(v_0 \lr A) \leq  c^{-1} \log \frac{1}{\diam_P(A)\wedge{1 \over 2}} \, .\end{equation}
\end{lemma}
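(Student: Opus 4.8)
The plan is to establish the lower bound \eqref{eq:reffupper} by Dirichlet's principle (\cref{lem:dirichlet}), exhibiting a single test function of small energy, and the upper bound \eqref{eq:refflower} by Thomson's principle (\cref{thomson:principle}), exhibiting a single unit flow of small energy built via the method of random paths (\cref{claim:randompath}), after first normalising the packing by a Möbius transformation.

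For \eqref{eq:reffupper}: write $d = \diam_P(A)$. If $d \ge r_{\min}/4$ then $\log(1/d)$ is bounded by a constant depending on $r_{\min}$, and the bound follows from the trivial estimate $\reff(v_0\lr A)\ge 1/\deg(v_0)\ge 1/\Delta$; so assume $d < r_{\min}/4$. Fix a point $x_A$ lying in one of the circles of $A$. Each circle of $A$ has radius at most $d/2$ and disjoint interior from $C_{v_0}$ (which is centred at $\origin$ with radius $r_0\ge r_{\min}$), so every circle of $A$ lies in $B(x_A,2d)$ and $\rho_{v_0}:=|\cent(v_0)-x_A|=|x_A|\ge r_0\ge r_{\min}$. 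With $L:=\log\bigl(r_{\min}/(4d)\bigr)\asymp\log(1/d)$ and $\rho_v:=|\cent(v)-x_A|$, I would take
\[
  h(v) = \max\!\Bigl(0,\ \min\!\bigl(1,\ \tfrac1L\log(r_{\min}/2\rho_v)\bigr)\Bigr),
\]
a clamped logarithm which equals $1$ on $\{\rho_v\le 2d\}\supseteq A$ and vanishes at $v_0$. For an adjacent pair $\{v,w\}$ with $\rho_v\le\rho_w$, since clamping is $1$‑Lipschitz and $|\log a-\log b|\le|a-b|/\min(a,b)$,
\[
  |h(v)-h(w)|\le\frac1L\cdot\frac{|\cent(v)-\cent(w)|}{\rho_v}=\frac1L\cdot\frac{\rad(v)+\rad(w)}{\rho_v},
\]
which vanishes unless $\rho_v\le r_{\min}/2$. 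Using $\rho_v\ge\rad(v)$ (disjointness of $C_v$ and the circle of $A$ containing $x_A$) and the Ring Lemma (\cref{lem:ring}) to compare the radii of adjacent circles — with the usual care near the outer face — this gives $(h(v)-h(w))^2 = O_\Delta\bigl(L^{-2}(\rad(v)^2+\rad(w)^2)/\rho_v^2\bigr)$ on each of the $O(\Delta)$‑per‑vertex edges meeting the annulus $\{2d\le\rho_v\le r_{\min}/2\}$. Summing over the dyadic shells $\{2^{k+1}d\le\rho_v<2^{k+2}d\}$, $0\le k\lesssim L$, and using that the disjoint circles in shell $k$ have total area $O((2^kd)^2)$ and that $\sum_v\deg(v)\rad(v)^2\le\Delta\sum_v\rad(v)^2$, each shell contributes $O_\Delta(L^{-2})$, so $\energy(h)=O_\Delta(L^{-1})$ and \cref{lem:dirichlet} yields $\reff(v_0\lr A)\ge\energy(h)^{-1}\ge c\log(1/d)$.

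For \eqref{eq:refflower}: let $a^*\in A$ be an outer vertex, whose circle is tangent to $\partial\UU$ at a point $p$, and set $d'=d\wedge\frac12$. The key first step is to apply the Möbius automorphism $\Phi$ of $\UU$ that fixes $p$ and sends the point of $\UU$ at Euclidean distance $d'$ from $p$ along $[\origin,p]$ to $\origin$. Since $\Phi$ maps circle packings in $\UU$ to circle packings in $\UU$, preserves tangencies and the property that the outer circles touch $\partial\UU$, and effective resistance is a graph invariant, we may replace $P$ by $\Phi(P)$: in the new packing $C_{v_0}$ has radius $\asymp d'$ and sits near $-p$ at distance $\asymp d'$ from $\partial\UU$ (using $r_0\in(r_{\min},r_{\max})$ and $|\Phi'(\origin)|\asymp d'$), while $A$, which touched $\partial\UU$ at $p$ and had diameter $d$, now has Euclidean diameter bounded above and below by constants depending only on $r_{\min},r_{\max}$. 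In this normalised picture I would build a unit flow from $v_0$ to $A$ as follows: fire a uniformly random ray from $\cent(v_0)$; by \cref{claim:randompath} this produces a unit flow from $v_0$ to the set of outer vertices, and grouping circles into dyadic shells around $\cent(v_0)$ (whose disjoint circles have total area controlled because the packing lies in $\UU$), the energy contributed by ``escaping'' the radius‑$\asymp d'$ circle out to scale $1$ is $O(\Delta\log(1/d'))$. With probability bounded below the ray exits near $p$ and hence passes through a fixed macroscopic vicinity $W$ of $p$; conditioning on this event (which inflates the energy only by a constant, by convexity of the energy) gives a unit flow $v_0\to W$ of energy $O(\Delta\log(1/d'))$. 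Concatenating with a unit flow $W\to A$ — which has energy $O(1)$ because $A$ is a connected set of macroscopic diameter, so that $\reff(W\lr A)=O(1)$ by the bounded‑modulus‑annulus resistance estimates underlying \cref{hs:1:lem:1} — and combining the pieces again by convexity, we obtain a unit flow $v_0\to A$ of energy $O(\log(1/d'))$; \cref{thomson:principle} then gives \eqref{eq:refflower}.

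The main obstacle is the upper bound, and within it the two geometric inputs used in the normalised picture: that the random ray reaches the macroscopic vicinity of $A$ with probability bounded below by a constant depending only on $(r_{\min},r_{\max},\Delta)$, and that $\reff(W\lr A)=O(1)$ uniformly over such configurations. Both require controlling the possibly very non‑uniform local geometry of the packing near the boundary contact point $p$ — in particular ruling out that $A$, though of macroscopic diameter, is a thin ``needle'' that a random ray would miss with high probability and into which the flow could only be funnelled at large energy cost — and it is precisely here that the hypothesis that $A$ contains an outer vertex (so that $A$ reaches all the way to $\partial\UU$) is used. I expect this to be handled by a shell/parallel‑paths argument near $p$: the circles surrounding $A$ at scale $2^{-k}$ are numerous enough (roughly $2^k$ of them) that both the random ray and the completing flow see only bounded effective resistance at each scale, so the logarithm in \eqref{eq:refflower} comes entirely from escaping $C_{v_0}$.
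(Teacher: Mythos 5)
Your lower bound \eqref{eq:reffupper} is fine, and it takes a mildly different route from the paper: you exhibit a single clamped-logarithmic test function centered at a point of $z(A)$ and bound its energy shell by shell via Dirichlet's principle (\cref{lem:dirichlet}), whereas the paper stacks $\Omega(\log(1/\diam_P(A)))$ disjoint annuli around $A$, shows each has resistance at least a constant by re-running the linear test function of \cref{hs:1:lem:1}, and adds their contributions to the energy of the unit current flow. The two arguments are equivalent in substance, need the same Ring Lemma input to control radii of circles adjacent to the annular region, and share the same (glossed-over, in the paper as well) caveats about outer-face circles; your version buys a one-step proof at the cost of the clamping/adjacent-radius bookkeeping you sketch.

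The upper bound \eqref{eq:refflower} is where there is a genuine gap, and it is exactly the step you flag: the claim $\reff(W \lr A)=O(1)$ (equivalently, that your random flow can be made to terminate \emph{on $A$} at only constant additional energy). This does not follow from ``$A$ has macroscopic diameter after the M\"obius normalization'': a ray passing through a fixed neighbourhood $W$ of $p$ need not meet any circle of $A$ if $A$ is a thin needle, and there is no a priori lower bound on how many circles surround $A$ at a given scale (your ``roughly $2^k$ circles at scale $2^{-k}$'' is unsubstantiated — nothing prevents a few large circles from dominating a scale). Producing a constant-energy flow into an arbitrary thin connected set touching $\partial\UU$ is the whole difficulty of the lemma, and it is what the paper's proof is built around: it chooses a one-parameter family of curves (interpolating semicircles in one case, parallel chords of a trapezoid in the other, according to whether $A$ extends inward or along the boundary) so that \emph{every} curve in the family is topologically forced to cross the connected path in $z(A)$ between the boundary tangency point $z_0$ and a second well-separated point $z_1$ of $z(A)$, and then gets the $O(\log(1/\eps))$ energy by cutting the swept region into $O(\log(1/\eps))$ pieces of bounded aspect ratio. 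Your M\"obius normalization is correct and attractive (the computation that $C_{v_0}$ becomes a circle of radius $\asymp d'$ near $-p$ while $A$ becomes macroscopic is right, and the radial escape flow of energy $O(\Delta\log(1/d'))$ is fine), but to close the argument you would still need a crossing family at unit scale — e.g.\ the arcs $\{|z-p|=\rho\}\cap\UU$, $\rho\in[1/4,1/2]$, each of which must meet the renormalized $z(A)$ since that set joins $p$ to a point at distance $\geq 1/2$ from $p$ — together with the chain-of-circles argument (including what happens when such an arc leaves the carrier through an interstice between outer circles and $\partial\UU$) and a way to splice this flow onto your radial flow. In other words, the reduction you propose is of the same nature and difficulty as the paper's construction, and as written the key step is asserted rather than proved.
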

\begin{proof} Write $\eps = \diam_P(A)$ and let $z(A)$ denote the union of all circles corresponding to the vertices of $A$. We begin with the proof of \eqref{eq:reffupper}, which goes along similar lines to the proof of \cref{hs:1:lem:1}. Let $z_0\in \RR^2$ be such that $z(A) \subset \{|z-z_0|\leq \eps\}$. For any $r>0$ denote by $V_r$ the set of circles with centers inside $\{|z-z_0| \leq r\}$, so that $z(A) \subset V_\eps$. Repeating the proof of \cref{hs:1:lem:1} shows that there exists a constant $C=C(\Delta)>0$ such that
\begin{enumerate}
\item[(i)] There are no edges between $V_r$ and $V \setminus V_{Cr}$, and
\item[(ii)] $\reff(V_r \lr V \setminus V_{Cr}) \geq C^{-1}$, as long as $V_r$ and $V\setminus V_{Cr}$ are non-empty. 
\end{enumerate}
Regarding this proof, we note that even though for some values of $r$ the set $\{|z-z_0|\leq r\}$ is not contained in $\UU$ (unlike the proof of \cref{hs:1:lem:1} when the carrier is all of $\RR^2$). However, this only works to our benefit. The proof of \eqref{eq:reffupper} now proceeds similarly to the proof of \cref{hs:RR:rec}. By the Ring Lemma (\cref{lem:ring}), the Euclidean distance between the circle corresponding to $v_0$ and $A$ is at least some constant (which depends on $\Delta, r_{\min}, r_{\max}$) so that $v_0 \not \in V_{C^K \eps}$ for some $K = \Omega(\log(1/\eps))$. For each $k=0,2,4,\ldots, K$ the sets of edges which have at least one endpoint in the annulus $V_{C^{k+1} \eps} \setminus V_{C^k \eps}$ are disjoint by (i). By (ii), each of these sets of edges contribute at least $C^{-1}$ to the energy of the unit current flow from $A$ to $v_0$, concluding the proof of \eqref{eq:reffupper} using Thomson's principle (\cref{thomson:principle}). \newline

For the proof of \eqref{eq:refflower} we construct a unit flow from $v_0$ to $A$ that has energy $O(\log(1/\eps))$. The construction is in the same spirit as the proof of \cref{hs:UU:trans}, but there are some technical difficulties. Since $A$ contains a vertex that is tangent to $\partial \UU$, we choose $z_0\in \partial \UU$ that belongs to a circle of $A$. By rotating the packing we may assume that $z_0 = e^{i \eps /4}$. 

We now treat two cases separately. In the first case we assume that there exists $z_1$ in $z(A)$ such that $\arg(z_1)\in [0,\eps/2]$ and $|z_1|\leq 1-\eps/2$ such that the path in $z(A)$ from $z_0$ to $z_1$ remains in the sector $\arg(z) \in [0,\eps/2]$. Consider the points 
$$ x_0=-r_0 \qquad x_1=r_0 \qquad y_1 = 1-\eps/3 \qquad y_0=1 \, ,$$
and note that $x_0,x_1$ are the two leftmost and rightmost points on the circle of $v_0$. Let $C_0$ and $C_1$ be the upper half plane semi-circles in which $x_0,y_0$ and $x_1,y_1$ are antipodal points, respectively. The choice of $y_0, y_1$ is made so that the path between $z_0$ to $z_1$ in $z(A)$ must cross the region bounded by $C_0,C_1$ and the intervals $[x_0,x_1], [y_1,y_0]$, by our assumption on $z_1$ as long as $\eps$ is small enough. See \cref{fig:circularenergycalc}, left. 

For each $t\in[0,1]$ write $C_t$ for the upper half plane semi-circle in which $ty_1+(1-t)y_0$ and $tx_1+(1-t)x_0$ are antipodal points, so that $C_t$ continuously interpolates between $C_0$ and $C_1$. See \cref{fig:circularenergycalc}, left. Choose $t\in[0,1]$ uniformly at random and consider the random path $\gamma$ which traces $C_t$ from left to right. This random path starts at the circle of $v_0$ and must hit the path between $z_0$ and $z_1$ by our previous discussion. Hence, the circles of $P$ that intersect $\gamma$ must contain a path in the graph from $v_0$ to $A$. By \cref{claim:randompath} we obtain a flow $I$ from $v_0$ to $A$ whose energy $\energy(I)$ we now bound.

\begin{figure}[t]
  \centering
  \begin{subfigure}[b]{0.475\linewidth}
    \centering
    \includegraphics{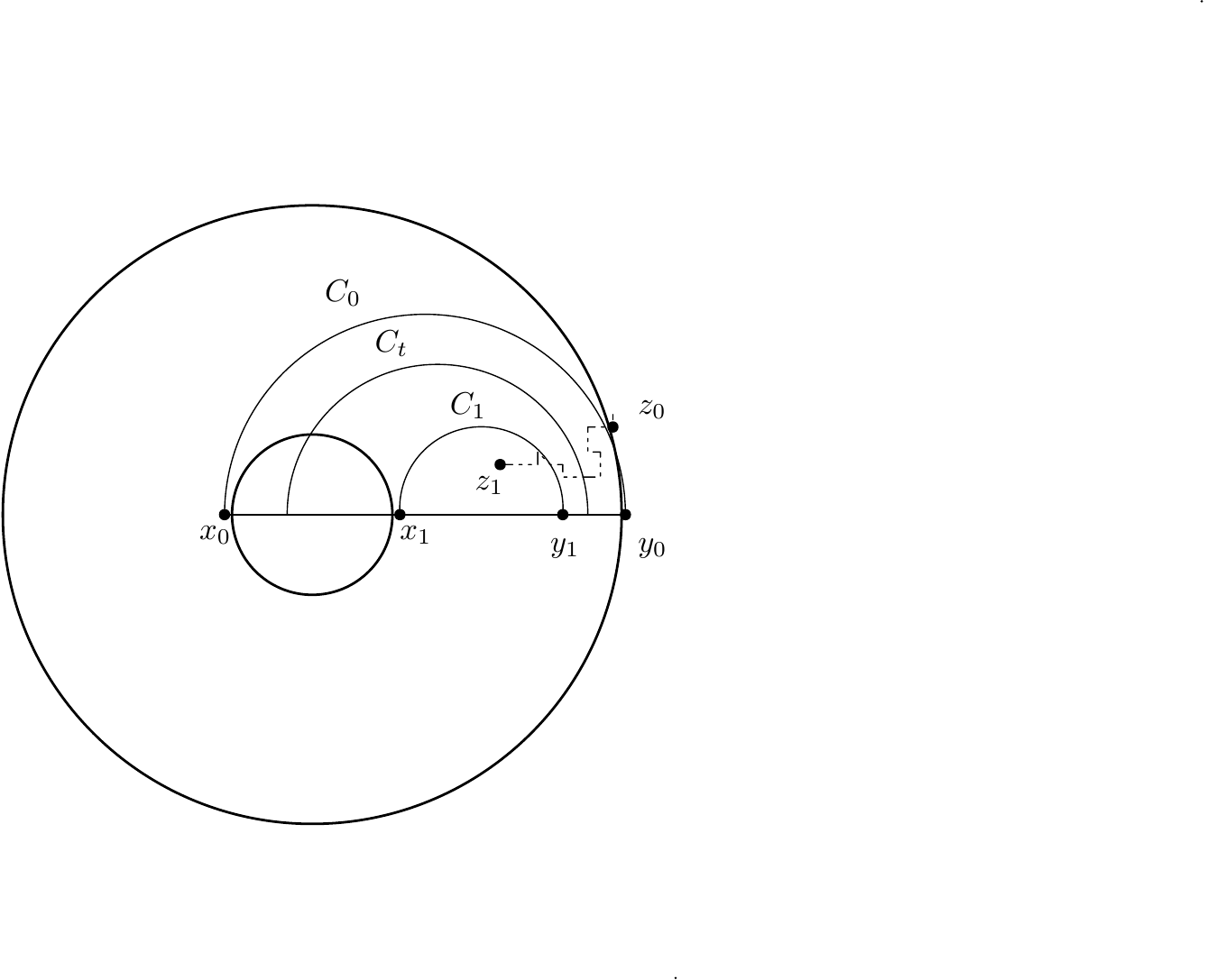}
  \end{subfigure}\hfill%
  \begin{subfigure}[b]{0.475\linewidth}
    \centering
    \includegraphics{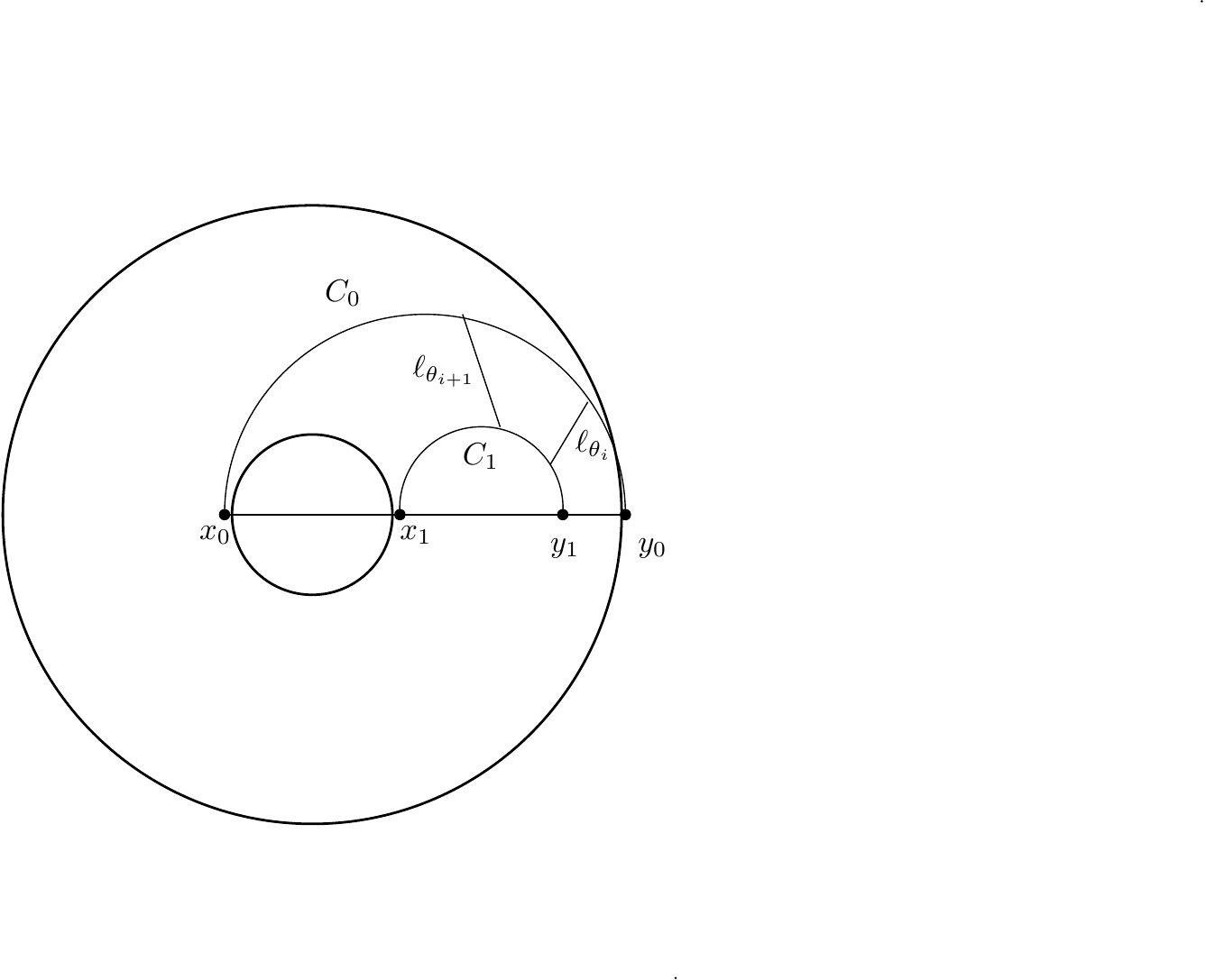}
  \end{subfigure}
  \caption{Left: For any $t\in[0,1]$ the semi-circle $C_t$ must intersect the path in $A$ between $z_0$ and $z_1$. Right: The quadrilateral $Q_i$ is bounded between $\ell_{\theta_i}$, $\ell_{\theta_{i+1}}$, $C_0$ and $C_1$.}
  \label{fig:circularenergycalc}
\end{figure}

For an angle $\theta\in[0,\pi]$ we denote by $w_\theta(t)$ the point at angle $\theta$ on the semi-circle $C_t$. It is an exercise to see that the set of points $\{w_\theta(t) : t\in[0,1]\}$ form a straight line interval $\ell_\theta$. Furthermore, when $t$ is chosen uniform in $[0,1]$, the intersection of $C_t$ and $\ell_\theta$ is a uniformly chosen point on $\ell_\theta$. Fix some constant $A>1$ and set $\theta_0 = 0$ and $\theta_i = A^{i-1} \eps$ for $i=1,\ldots,K$ where $K=\Theta(\log(1/\eps))$ such that $\theta_K=\pi$. We will obtain the bound $\energy(I)=O(\log(1/\eps))$ by bounding from above by a constant the contribution to $\energy(I)$ coming from edges which intersect the quadrilateral $Q_i$ of $\RR^2$ bounded by $\ell_{\theta_i}, \ell_{\theta_{i+1}}, C_0,C_1$; see \cref{fig:circularenergycalc}, right. The random path $\gamma$ restricted to $Q_i$ can be sampled by choosing a uniform random point on $\ell_{\theta_i}$, setting $t\in[0,1]$ to be the unique number such that $C_t$ intersects $\ell_{\theta_i}$ at the chosen point, and tracing the part of $C_t$ from $\ell_{\theta_i}$ to $\ell_{\theta_{i+1}}$. The lengths of the four curves bounding $Q_i$ are all of order $A^i \eps$ and so we deduce that if $v$ corresponds to a circle of radius $O(A^i \eps)$ which intersects $Q_i$, then the probability that it is visited by $\gamma$ is $O(\rad(v)/A^i \eps)$. Since the sum of $\rad(v)^2$ over such $v$'s is at most the area of $Q_i$, up to a multiplicative constant since some of these circles need not be contained in $Q_i$, and so it has order $A^{2i} \eps^2$. Since the degrees are bounded, we deduce that the contribution to the energy from edges touching such $v$'s is $O(1)$. Lastly, if $v$ corresponds to a larger circle, then we bound its probability of being visited by $\gamma$ by $1$ and note that there can only be $O(1)$ many such $v$'s whose circles intersects $Q_i$. Thus the contribution from these is another $O(1)$. Since there are $O(\log(1/\eps))$ such $i$, we learn that $\energy(I) = O(\log(1/\eps))$ finishing our proof in this case using Thomson's principle (\cref{thomson:principle}).

In the second case, we assume that there exists $z_1 \in z(A)$ such that $\arg(z_1) \not \in [0,\eps/2]$ and $|z_1|\geq 1-\eps$. It is clear that since $\diam_P(A)=\eps$ either the first or the second case must occur. Denote $z_0'=|z_1|e^{i\eps/4}$ and let $x_0, x_1$ be antipodal points on the circle of $v_0$ such that the straight line between $x_0$ and $x_1$ is parallel to the straight line between $z_0'$ and $z_1$. Consider the trapezoid on the vertices $z_0', z_1, x_0, x_1$. We choose a uniform random point $t \in [0,1]$ and stretch a straight line from $t x_0 + (1-t)x_1$ to $t z_1 + (1-t)z_0'$. We then add to it a straight line from $t z_1 + (1-t)z_0'$ to $w \in \partial \UU$ where $\arg(w) = \arg(t z_1 + (1-t)z_0')$. For any $t\in [0,1]$, this path $\gamma$ starts from the circle of $v_0$ and must hit the path between $z_0$ and $z_1$ in $z(A)$. Thus, the set of all circles which intersect $\gamma$ must contain a path in the graph that starts at $v_0$ and ends at $A$; this random choice of $\gamma$ gives us as usual a unit flow from $v_0$ to $A$. See \cref{fig:trapezoidenergycalc}. By repeating the same argument as in the previous case (that is, splitting the trapezoid into $O(\log (1/\eps))$ many trapezoids of constant aspect ratio), we see that the contribution to the energy of the flow induced by $\gamma$ of the edges in the trapezoid is $O(\log(1/\eps))$. Furthermore, the same argument gives that the edges in the quadrilateral formed by the vertices $z_0, z_0', z_1$ and $e^{i \arg(z_1)}$ contribute at most a constant to the energy, concluding our proof by Thomson's principle in this case as well. 
\end{proof}

\begin{figure}[t]
  \centering
  \includegraphics[scale=0.8]{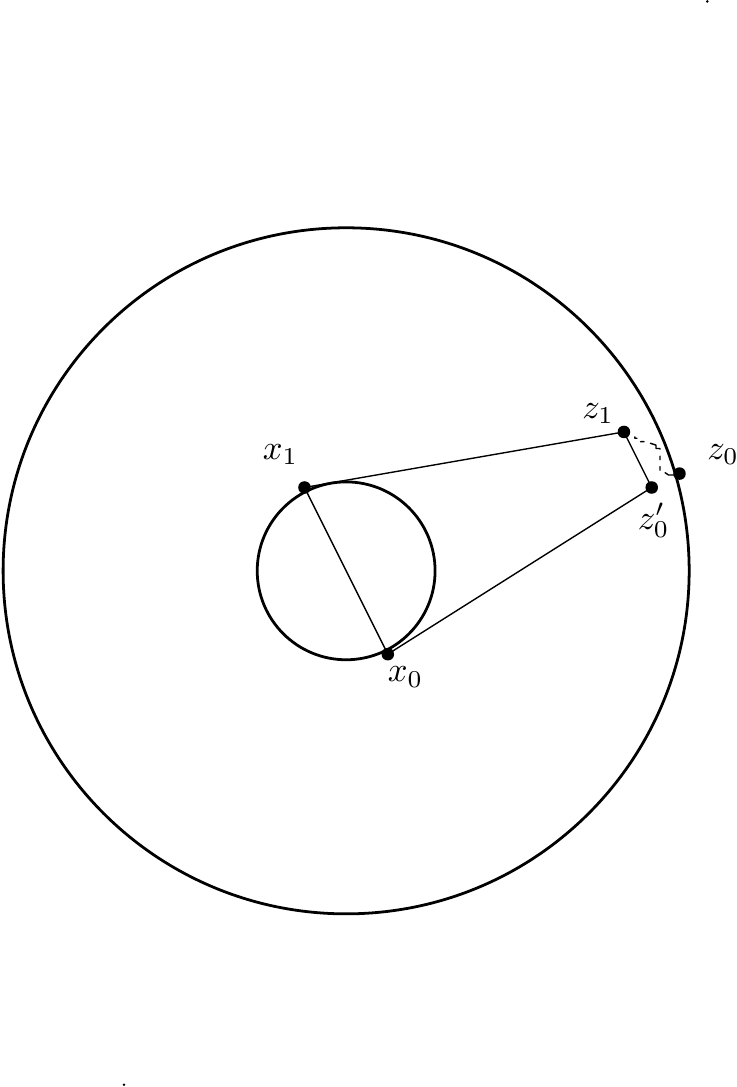}

  \caption{The resistance across the trapezoid on vertices $x_0, x_1, z_0', z_1$ is $O(\log(1/\eps))$ when $|z_0'-z_1|=\Theta(\eps)$.}
  \label{fig:trapezoidenergycalc}
\end{figure}

\begin{proof}[Proof of \cref{hs:trans:UU}] As usual we denote by $d_G(u,v)$ the graph distance between the vertices $u,v$ of $G$. Fix some $v_0 \in V$ and let \begin{align*}
  A_j &= \{v: d_G(v_0,v) \leq j\}, \\
  V_j &= A_j \cup \{\text{finite components of }V\setminus A_j\}, \\ 
  E_j &= \{\text{edges induced by }V_j\}.
  \end{align*} 
Observe that since $G$ is one-ended the finite map $\left(V_j, E_j\right)$ is a triangulation except the outer 
face which we denote by $\partial V_j$. We apply \cref{cp:no:outer:pack} to pack $(V_j,E_j)$ inside the unit disk $\UU$ such that the circles of $\partial V_j$ are tangent to $\UU$. By applying a M\"obius transformation from $\UU$ onto $\UU$, we may assume that the circle corresponding to $v_0$ is centered at the origin $\origin$. We denote this packing by $P_j$ and let $r_0^j$ be the radius of $v_0$ in $P_j$. 

Since $G$ is transient it follows that there exists some $c=c(\Delta)>0$ such that $r_0^j \geq c$ for all $j$ by \cref{cor:transiencecondition}. Indeed, if $r_0^j \leq \eps$, we learn by \cref{hs:1:lem:1} and the proof of \cref{hs:RR:rec} that $\reff(v_0 \to \infty) \geq c' \log(\eps^{-1})$ for some $c'=c'(\Delta)>0$.

As we did in \cref{cpt:inf}, we now take a subsequence in which the centers and radii of all vertices converge. Denote the resulting limiting packing by $P_\infty$. This packing has all circles inside $\UU$ and we therefore deduce that $\carrier(P_\infty) \subseteq \UU$. It is a priori possible that $\carrier(P_\infty)$ is some strict subset of $\UU$, i.e., that all the circles stabilize inside some strict subset of $\UU$. We now argue however that this is not possible.

Let $Z$ be the set of accumulation points of $\carrier(P_\infty)$; it suffices to show that $Z\subset \partial \UU$ (since any simply connected open set $G \subset \UU$ for which $\partial G \subset \partial \UU$ must equal $\UU$). Since $Z$ is a compact set, let $z \in Z$ minimize $|z|$ among all $z\in Z$; it suffices to show that $z\in \partial \UU$. Fix $\varepsilon>0$ and put \begin{equation*} 
U_\eps(z)= \big \{ v \in G : |\cent_{P_\infty}(v) - z| \leq \eps \big \} \, .
\end{equation*}

The set $U_\eps(z)$ may not be connected (graph-wise), yet by our choice of $z$ it is clear that $U_\eps(z)$ has an infinite connected component. Indeed, one can draw a straight line from the origin to $z$ without intersecting $Z$ and consider the set of all circles intersecting this line; from some point onwards the vertices corresponding to these circles will reside in $U_\eps(z)$.

Therefore, let $W_\eps(z)$ be an infinite connected component of the graph spanned on $U_\eps(z)$. Let $J=J(z,\eps)$ be the first integer such that $V_J \cap W_\eps(z) \neq \emptyset$. Since the $V_j$'s are increasing sets and $W_\eps(z)$ is an infinite connected set, we have that $\partial V_j \cap W_\eps(z) \neq \emptyset$ for all $j \geq J$. Consider now a connected component $A$ of the graph spanned on the vertices $V_j \cap W_\eps(z)$. 
Denote by $P_\infty^{j}$ the finite circle packing obtained from $P_\infty$ by taking only the circles of $V_j$. 

Since $A \subset W_\eps(z)$, it follows that $\diam_{P_\infty^{j}}(A) \leq 4\eps$. By \cref{hs:lemma:diam:res}, \cref{eq:reffupper}, applied to the set $A$ in the packing $P_\infty^{j}$, we deduce that $\reff(v_0 \lr A; V_j) \geq c \log(1/\eps)$. By Rayleigh's monotonicity (\cref{rayleigh}) we have that $\reff(v_0 \lr A; (V_j,E_j)) \geq c \log(1/\eps)$. Since $A$ is a connected component of $V_j \cap W_\eps(z)$ and since $W_\eps(z)$ is an infinite connected set of vertices in $G$, it follows that $A$ must contain a vertex of $\partial V_j$. Thus, we may apply \cref{hs:lemma:diam:res}, \cref{eq:refflower}, to the set $A$ in the packing $P_j$, we get that there exists some $c=c(G)>0$ such that
\begin{equation}\label{eq:diamA} \diam_{P_j}(A) \leq \eps^c \, .\end{equation}

Choose some $v_J \in \partial V_J \cap W_\eps(z)$ so that the circle corresponding to $v_J$ in $P_\infty$ touches $\partial \UU$ and $|\cent_{P_\infty}(v_J) - z|\leq \eps$. For each $j > J$ choose some $v_j \in \partial V_j \cap W_\eps(z)$ so that $v_j$ and $v_J$ are in the same connected component $A$ of $V_j \cap W_\eps(z)$. Since the circle of $v_j$ in $P_j$ touches $\partial \UU$ we learn by \eqref{eq:diamA} that the distance of the circle of $v_J$ in $P_j$ from $\partial \UU$ is at most $\eps^c$ for all $j \geq J$. Since the circle corresponding to $v_J$ in $P_\infty$ is the limit of its circles in $P_j$ we deduce that the distance of $\cent_{P_\infty}(v_J)$ from $\partial \UU$ is at most $\eps^c$. We deduce that distance of $z$ from $\partial \UU$ is at most $\eps + \eps^c$. Since $\eps$ was arbitrary we learn that $z\in \partial \UU$, as required.
\end{proof}

\section{Exercises}

\begin{enumerate}


\item Let $G$ be a finite simple planar map such that all of its faces have 3 edges except for the outer face which is a simple cycle. Show that there exists a circle packing of $G$ such that all the circles are inside the unit disc $\{z: |z|\leq 1\}$ and all the circles corresponding to the vertices of the outer face are tangent to the unit circle $\{z: |z|=1\}$.

\item Let $G$ be a triangulation of the plane with maximal degree at most $6$. Prove that the simple random walk on $G$ is recurrent.



\item Let $G$ be a plane triangulation that can be circle packed in the unit disc $\{z : |z|<1\}$. Show that the simple random walk on $G$ is transient. (Note that $G$ may have \emph{unbounded} degrees)

\item[4.(*)] Let $P$ be a circle packing of a finite simple planar map with degree bounded by $D$ such that all of its faces are triangles except for the outerface. Assume that the carrier of $P$ is contained in $[-11,11]^2$, contains $[-10,10]^2$ and that all circles have radius at most $1$. Let $h$ be the harmonic function taking the value $1$ on all vertices with centers left of the line $\{-10\}\times \mathbb{R}$, taking the value $0$ on all vertices with centers right of the line $\{10\}\times \mathbb{R}$, and is harmonic anywhere else. Assume $x$ and $y$ are two vertices such that their centers are contained in $[-1,1]^2$ and that the Euclidean distance between these centers is at most $\epsilon>0$. Show that
$$ |h(x) - h(y)| \leq {C \over \log(1/\epsilon)} \, ,$$
for some constant $C=C(D)>0$ independent of $\epsilon$. [Hint: assume $h(x)<h(y)$ and consider the sets $A=\{v:h(v)\leq h(x)\}$ and $B = \{v: h(v) \geq h(y)\}$].




\end{enumerate}

\chapter{Planar local graph limits}\label{chp:locallimit}

\section{Local convergence of graphs and maps}\label{sec:locconv}

In order to study large random graphs it is mathematically appealing and natural to introduce an infinite limiting object and study its properties. In their seminal paper, Benjamini and Schramm \cite{BeSc} introduced the notion of locally convergent graph sequences, which we now describe.

We will consider random variables taking values in the space $\Gdot$ of rooted locally finite connected graphs viewed up to root preserving graph isomorphisms. That is, $\Gdot$ is the space of pairs $(G,\rho)$ where $G$ is a graph (finite or infinite) and $\rho \in V(G)$ is a vertex of $G$, where two elements $(G_1,\rho_1), (G_2,\rho_2)$ are considered equivalent if there is a graph isomorphism between them (that is, a bijection $\varphi:V(G_1)\to V(G_2)$ such that $\varphi(\rho_1)=\varphi(\rho_2)$ and $\{v_1,v_2\}\in E(G_1)\iff \{\varphi(v_1),\varphi(v_2)\}\in E(G_2)$). In a similar fashion we define $\Mdot$ to be the set of equivalence classes of rooted maps; in this case we require the graph isomorphism in addition to preserve the cyclic permutations of the neighbors of each vertex, that is, it is a {\bf map isomorphism}. Let us describe the topology on $\Gdot$ and $\Mdot$. For convenience we discuss $\Gdot$ but every statement in the following holds for $\Mdot$ as well.

Given an element $(G,\rho)$ of $\Gdot$ the finite graph $B_{G}(\rho,R)$ is the the subgraph of $(G,\rho)$ rooted at $\rho$ spanned by the vertices of distance at most $R$ from $\rho$. We provide $\Gdot$ with the \defn{local metric}
\begin{equation*}
  \dloc((G_1,\rho_1),(G_2,\rho_2)) = 2^{-R},
\end{equation*}
where $R$ is the largest integer for which
$B_{G_1}(\rho_1,R)$ and $B_{G_2}(\rho_2,R)$ are isomorphic as graphs. This is a separable topological space (the finite graphs form a countable base for the topology) and is easily seen to be complete, i.e., it is a Polish space. The distances are bounded by $1$ but the space is not compact. Indeed, the sequence $G_n$ of stars with $n$ leaves emanating from the root $\rho$ has no converging subsequence. 

%

Since $\Gdot$ is a Polish space, we can discuss convergence in distribution of a sequence of random variables $\{X_n\}_{n=1}^\infty$ taking values in $\Gdot$. We say that $X_n$ 
\textbf{converges in distribution}\index{convergence in distribution} to a random variable $X$,
and denote it by $X_n\convd X$, if for every bounded continuous function $f:\Gdot\to \RR$ we have that $\E(f(X_n))\to\E(f(X))$. We will be focused here on the particular situation in which $X_n$ is a \emph{finite} rooted random graph $(G_n,\rho_n)$ such that given $G_n$, the root $\rho_n$ is uniformly distributed among the vertices of $G_n$. It is a very common setting and justifies the following definition.

\begin{definition} \label{def:locconv}
  Let $\{G_n\}$ be a sequence of (possibly random) finite graphs.  We say that $G_n$ \textbf{converges locally}\index{local convergence}\index{local limit} to a (possibly infinite) random rooted graph $(U,\rho)\in\Gdot$, and denote it by $G_n\convl(U,\rho)$, if for every integer $r\ge 1$,
  \begin{equation*}
    B_{G_n}(\rho_n,r) \convd B_U(\rho,r),
  \end{equation*}
  where $\rho_n$ is a uniformly chosen vertex from $G_n$.
\end{definition}
\noindent It is straightforward to see that this definition is equivalent to saying that the random variables $(G_n,\rho_n)$ converge in distribution to $(U,\rho)$.

\subsection{Examples}
\begin{itemize}
\item The sequence $\{G_n\}$ of paths of length $n$ converges locally to the graph $(\ZZ,0)$ (note that the root vertex can be chosen to be any vertex of $\ZZ$ since $(\ZZ,i)$ and $(\ZZ,j)$ are equivalent for all $i,j \in \ZZ$).
\item The sequence $\{G_n\}$ of the $n \times n$ square grid converges locally to the graph $(\ZZ^2,\origin)$ (again the root can be chosen to be any vertex of $\ZZ^2$).
\item Let $\lambda>0$ be fixed and let $\{G(n,\frac{\lambda}{n})\}$ be the sequence of random graphs obtained from the complete graph $K_n$ by retaining each edge with probability $\frac{\lambda}{n}$ and erasing it otherwise, independently for all edges. This is known as the Erd\"os-R\'enyi random graph. One can verify that this sequences converges locally to a branching process with progeny distribution Poisson$(\lambda)$.
\item If $G_n$ is the binary tree of height $n$ then its local limit is
\emph{not} the infinite binary tree with any vertex distribution.  Instead, it is the following so-called \defn{canopy tree} depicted in \cref{fig:canopy} and the root is at distance $k \geq 0$ from the leaves with probability $2^{-k-1}$. Note that the distance of the root from the leaves determines the isomorphism class of the rooted graph. 
It is easy to see that the canopy tree is not the infinite binary tree (for example, it has leaves); in fact, it is recurrent.
\item Consider $G_n$ to be a path of length $n$, glued via one of its leaves
into a $\sqrt{n}\times\sqrt{n}$ grid.  The local limit of $G_n$ is $(U,\rho)$,
where $(U,\rho)$ is $(\ZZ,0)$ with probability $1/2$, and $(\ZZ^2,\origin)$ 
otherwise.
\end{itemize}

\begin{figure}[t]
  \centering
    \begin{tikzpicture}[font=\small, scale=1.5]
      \input{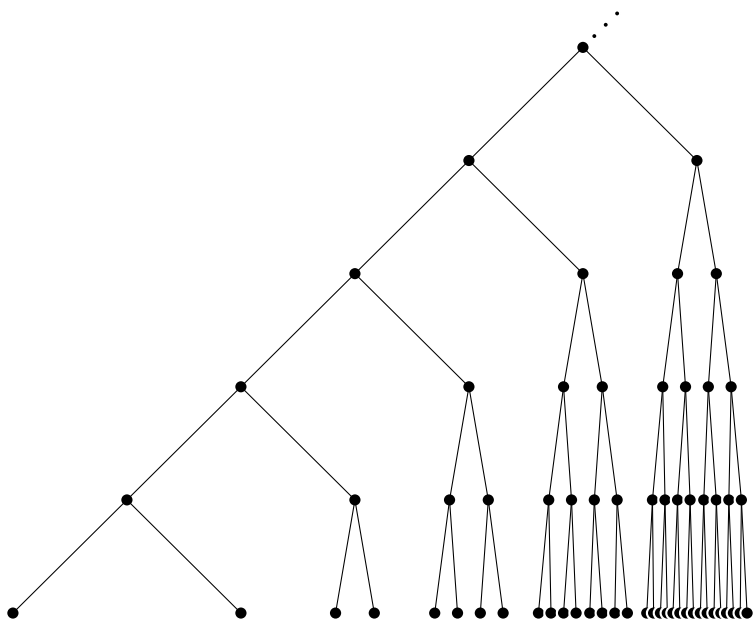}
    \end{tikzpicture}
  \caption{\label{fig:canopy} A part of the \textbf{canopy tree}.}
\end{figure}

Our goal in this chapter is to prove the following pioneering result.

\begin{theorem}[Benjamini-Schramm \cite{BeSc}]\label{thm:BeScLimit} Let $M<\infty$ and let $G_n$ be finite planar maps (possibly random) with degrees almost surely bounded by $M$ such that $G_n\convl(U,\rho)$. Then $(U,\rho)$ is a.s.\ recurrent.
\end{theorem}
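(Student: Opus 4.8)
The plan is to leverage the He--Schramm machinery (specifically parts 3 and 4 of \cref{thm:hs}, i.e.\ \cref{hs:RR:rec} and \cref{hs:UU:trans}, together with \cref{cpt:inf}) after reducing to the case where the local limit $(U,\rho)$ is an infinite one-ended plane triangulation of bounded degree. The key difficulty is that $(U,\rho)$ need be neither a triangulation nor one-ended nor even planar as an abstract graph, and even if it is planar its local limit structure carries no a priori embedding; so the first bulk of work is a sequence of reductions turning the problem into one where circle packing applies directly.

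First I would record the easy structural facts. Since degrees are bounded by $M$, the limit $(U,\rho)$ has degrees bounded by $M$ a.s., and the space of such rooted graphs is compact, so tightness is automatic and the limit is well-defined. Next, a standard ``local property passes to the limit'' argument shows $U$ is a.s.\ planar: planarity of a locally finite graph is equivalent, by Kuratowski together with a compactness/König argument, to planarity of all finite balls $B_U(\rho,r)$, and each $B_U(\rho,r)$ is a subgraph of some $B_{G_n}(\rho_n,r)$ which is planar, so in fact $U$ inherits a planar map structure in the limit (the cyclic permutations converge along with the balls, so one may work in $\Mdot$ from the start and take $G_n \convl (U,\rho)$ as maps). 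The serious reduction is to a \emph{one-ended triangulation}: given the planar map $U$, I would first triangulate it by adding, inside every face, a new vertex joined to all vertices on that face's boundary --- but one must do this in a way compatible with the local limit and without blowing up degrees unboundedly (faces of $G_n$ could be large). The cleaner route, following Benjamini--Schramm, is to use the fact that it suffices to prove recurrence for \emph{some} supergraph obtained by a bounded-degree, locally-defined augmentation, or alternatively to invoke that a bounded-degree planar graph is recurrent iff a bounded-degree triangulation containing it (with bounded face sizes one can triangulate keeping degrees bounded) is recurrent, and that recurrence is monotone under adding edges only in the wrong direction --- so instead one passes to a subgraph/uses Rayleigh monotonicity carefully: we add the diagonals to make a triangulation of a bounded-degree map whose faces have bounded size, which we may assume after noting that faces of unbounded size in $G_n$ are a measure-zero event in the limit when degrees are bounded (a bounded-degree finite planar graph rooted at a uniform vertex has, with probability $\to 1$, all faces near the root of bounded size --- this needs a short argument via Euler's formula and the mass-transport principle). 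One-endedness of the limit is obtained similarly: a uniform-rooted bounded-degree planar map is, locally, one-ended with probability tending to $1$, again by a mass-transport / Euler counting argument (the number of vertices on ``small'' separating cycles is negligible).

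Once $(U,\rho)$ is reduced to an infinite one-ended plane triangulation of bounded degree, \cref{cpt:inf} gives a circle packing $P$ of $U$, and by \cref{disk:one:ended} its carrier is simply connected. By the uniformization dichotomy the carrier is conformally $\RR^2$ or $\UU$; if it is all of $\RR^2$ then \cref{hs:RR:rec} gives recurrence and we are done. So the heart of the matter is to rule out the hyperbolic case --- equivalently, by \cref{hs:UU:trans}, to rule out transience of $U$. For this I would use the quantitative resistance estimate \cref{hs:1:lem:1}: transience of $U$ via \cref{hs:trans:UU} would produce a circle packing of $U$ in $\UU$, and then one transports a ``good annulus'' crossing estimate back to the finite maps $G_n$. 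Concretely, the standard Benjamini--Schramm argument runs: suppose for contradiction $U$ is transient with positive probability; fix a large ball $B_U(\rho,R)$; by transience (via the circle packing in $\UU$ and the resistance lower bound from \cref{hs:1:lem:1}-type estimates) the effective resistance from $\rho$ to $\partial B_U(\rho,R)$ stays bounded, i.e.\ the probability that the walk from $\rho$ escapes $B_U(\rho,R)$ before returning is bounded below by a constant $c>0$ uniformly in $R$. By local convergence, for $n$ large the same escape probability holds for $G_n$ from a uniform root; but in a \emph{finite} graph, the expected number of distinct vertices visited before returning to the start is $O(1)$ when the return probability is bounded away from $0$, while summing the escape-probability lower bound over all $n$ vertices (using that the root is uniform and the mass-transport/handshake identity $\sum_v \pi_v \pr_v(\tau_v^+=\infty\text{-analogue})$) forces the walk to be ``too transient'' for a finite graph --- more precisely one derives that the number of distinct vertices visited in the first $T$ steps is $\gg T$ in expectation, contradicting finiteness as $n\to\infty$ uniformly. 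The cleanest packaging is: local convergence plus bounded escape probability implies, via Fatou/portmanteau, that $\E_\rho[\#\{\text{visits to }\rho\text{ before time }n\}]$ stays bounded as $n\to\infty$ in $G_n$, i.e.\ the Green's function at the root is bounded; but $\sum_{v} \deg(v)\,\E_v[\text{visits to }v\text{ before time }T]/(\text{normalization})$ is forced to grow, contradiction.

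The step I expect to be the main obstacle is precisely this last one: converting a uniform-in-$R$ resistance/escape-probability lower bound for the infinite limit into a genuine contradiction with finiteness of $G_n$, while handling the exchange of limits ($n\to\infty$ versus $R\to\infty$ versus time $T\to\infty$) rigorously. The technical crux is showing that ``positive probability of transience for $(U,\rho)$'' is incompatible with $G_n$ being finite; this is where one genuinely uses that $\rho_n$ is \emph{uniform} (stationarity of the random rooted graph) together with the mass-transport principle, and where the quantitative He--Schramm estimate \cref{hs:1:lem:1} --- giving that in \emph{any} circle packing in $\UU$ of a bounded-degree triangulation the effective resistance across dyadic Euclidean annuli is bounded below by a constant --- does the real work of producing a uniform escape probability. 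The reductions to one-ended bounded-face triangulations are routine but fiddly; I would state them as lemmas (each proved by Euler's formula plus a mass-transport count) and then spend the bulk of the writeup on the resistance-to-finiteness contradiction.
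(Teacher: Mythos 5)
There is a genuine gap, and it sits exactly where you predicted: the concluding ``transience of the limit contradicts finiteness of $G_n$'' step. That step, as you describe it, uses only that $(U,\rho)$ is a local limit of finite bounded-degree graphs rooted at a uniform vertex, plus stationarity/mass-transport and Green-function bookkeeping --- and no such argument can work, because the conclusion it would prove is false without planarity: random $d$-regular graphs converge locally to the transient $d$-regular tree, and there a uniform-in-$R$ escape probability bounded below coexists happily with finiteness. Planarity enters your outline only to circle pack the \emph{limit}, but that cannot save the step either: transient bounded-degree one-ended plane triangulations exist (the CP hyperbolic ones, e.g.\ the $7$-regular triangulation), so knowing that $U$ would be packed in $\UU$ produces no contradiction by itself. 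The true obstruction is quantitative and must be extracted from circle packings of the \emph{finite} graphs $G_n$ at a \emph{uniformly chosen} circle. This is what the paper's Magic Lemma (\cref{lem:magic}, via \cref{cor:magic}) does: in any packing of $G_n$ normalized so the root circle is a unit circle at the origin, with probability $1-O(k^{-1/3}\log k)$ over the uniform root there is a point $p$ so that only $<k$ circle centers lie in a large ball minus a small ball around $p$; combined with \cref{hs:1:lem:1} this yields a set $B$ with $|B|\le Ck$ and $\reff(\rho\lr V\setminus B)\ge c\log k$ (\cref{local:limit:subset:lemma}), and these finite-size statements pass to the limit and give $\reff(\rho\lr\infty)=\infty$ by Borel--Cantelli (\cref{theorem:loc:lim:recurrence}). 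Nothing in your outline plays the role of this lemma, and without it the escape-probability bookkeeping proves too much.

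Two of your reduction claims are also false as stated, though they are not the fatal issue. Local limits of bounded-degree planar maps need not be one-ended: the limit of paths is $\ZZ$, which is two-ended. And it is not true that faces near a uniform root are of bounded size with high probability: the limit of binary trees is the canopy tree, whose every vertex lies on a face of infinite degree (any finite tree has all its vertices on one face). The paper avoids both problems by never modifying or packing the limit at all: it triangulates each finite $G_n$ by inserting new vertices inside faces (the zig-zag/inner-cycle construction in the proof of \cref{local:limit:subset:lemma}), which keeps the maximum degree bounded by $3D$ and only dilutes the uniform root by a factor $D^{-1}$, and then runs the entire resistance estimate on the finite packings.
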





For instance, a local limit of planar maps cannot give the $3$-regular tree as its limit (this tree however can be obtained as a local limit of random $3$-regular graphs). The bounded degree assumption is necessary for this theorem. Indeed, suppose we start with a binary tree of height $n$ and replace each edge $(u,v)$ that is at distance $k\geq 0$ from the leaves by $2^k$ parallel edges. By the same reasoning of the local convergence of binary trees to the canopy tree, the modified graph sequence converges locally to a modified canopy tree in which an edge at distance $k$ from the leaves is replaced with $2^k$ parallel edges. Using the parallel law it is immediate to see that this graph is transient, and that the effective resistance from a leaf to $\infty$ is at most $2$ (in fact it equals $2$). See \cref{fig:canopy:transient}.


\begin{figure}[t]
  \centering
  \begin{tikzpicture}[font=\small, scale=1.5]
  \input{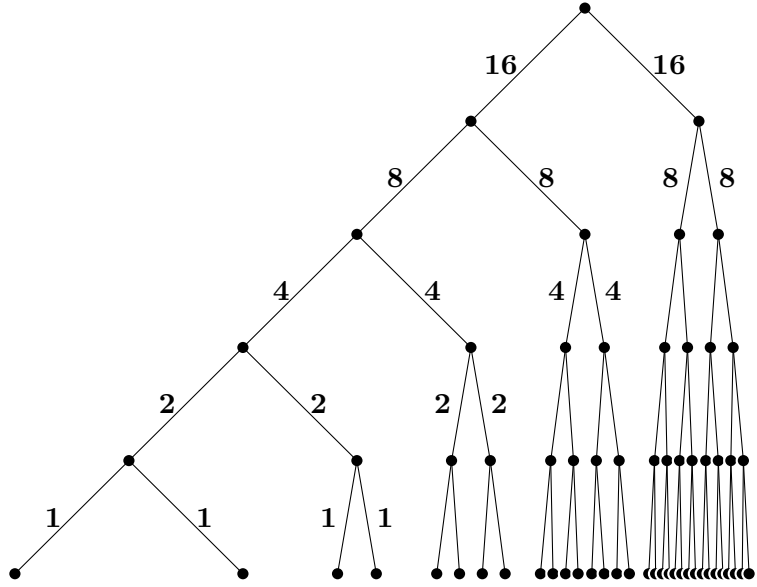}
  \end{tikzpicture}
  \caption{\label{fig:canopy:transient} A part of a \textbf{transient canopy tree}\index{transient canopy tree}. Numbers on edges are conductances of those edges after applying the parallel law.}
\end{figure}

\section{The Magic Lemma}\index{magic lemma}
Suppose $C\subseteq\RR^2$ is finite.  For each $w\in C$, define
\begin{equation*}
  \rho_w = \min\{|v-w|:v\in C\setminus \{w\}\}.
\end{equation*}
We call $\rho_w$ the \defn{isolation radius} of $w$.
Given $\delta\in(0,1)$, $s\ge 2$ and $w\in C$, we say that $w$ is 
\textbf{$(\delta,s)$-supported} if in the disk of radius $\delta^{-1}\rho_w$ 
around $w$ there are at least $s$ points of $C$ outside any given disk of 
radius $\delta\rho_w$. Formally, $w$ is 
\textbf{$(\delta,s)$-supported} if
\begin{equation*}
  \inf_{p\in\RR^2}
  \left|
    C\cap B\left(w,\delta^{-1}\rho_w\right)\setminus B(p,\delta\rho_w)
  \right| \ge s.
\end{equation*}

The proof of \cref{thm:BeScLimit} is based on the following lemma, which has been dubbed ``the Magic Lemma''.

\begin{lemma}[\cite{BeSc}]\label{lem:magic}
  There exists $A>0$ such that for every $\delta\in(0,1/2)$, every finite
  $C\subseteq\RR^2$ and every $s\ge 2$, the number of $(\delta,s)$-supported
  points in $C$ is at most
  \begin{equation*}
    \frac{A|C|\delta^{-2}\ln(\delta^{-1})}{s}.
  \end{equation*}
\end{lemma}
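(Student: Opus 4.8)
The plan is to prove \cref{lem:magic} by a weighted double-counting argument between the set $D\subseteq C$ of $(\delta,s)$-supported points and $C$ itself. I would assign to each $w\in D$ a finite multiset $T_w$ of points of $C$ with two properties: (a) $|T_w|\ge c_1 s$ for every $w\in D$, and (b) every $v\in C$ lies in $T_w$ for at most $c_2\,\delta^{-2}\ln(\delta^{-1})$ of the points $w\in D$, where $c_1,c_2>0$ are absolute constants. Granting (a) and (b), summing the incidences in two ways gives
\[
c_1 s\,|D|\ \le\ \sum_{w\in D}|T_w|\ =\ \sum_{v\in C}\bigl|\{w\in D: v\in T_w\}\bigr|\ \le\ c_2\,|C|\,\delta^{-2}\ln(\delta^{-1}),
\]
which is the assertion with $A=c_2/c_1$.

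For property (a): applying the definition of $(\delta,s)$-supported with a disk $B(p,\delta\rho_w)$ chosen disjoint from $B(w,\delta^{-1}\rho_w)$ shows $|C\cap B(w,\delta^{-1}\rho_w)|\ge s$; and since $\delta<1$ while $\rho_w$ is the distance from $w$ to the nearest other point of $C$, the concentric disk $B(w,\delta\rho_w)$ meets $C$ only at $w$, so the annulus $A_w:=B(w,\delta^{-1}\rho_w)\setminus B(w,\delta\rho_w)$ already contains at least $s-1\ge s/2$ points of $C$. One would take $T_w$ to be $C\cap A_w$, or a carefully chosen sub-multiset of it (see below). For the per-scale part of (b), the tool is a dyadic packing estimate: fix $v\in C$; if $w\in D$ has $v\in A_w$ then $\delta\,|v-w|\le\rho_w\le|v-w|$, so partitioning such $w$ by the dyadic scale $j$ with $2^{j}\le|v-w|<2^{j+1}$, all of them satisfy $\rho_w\ge\delta 2^{j}$. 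Hence any two of them are at distance $\ge\delta 2^{j}$ (two points of $C$ closer than $\rho_w$ would contradict minimality of $\rho_w$), and all lie in $B(v,2^{j+1})\setminus B(v,2^{j})$, so a two-dimensional area count bounds their number by $O(\delta^{-2})$.

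It therefore remains to arrange that, for each $v\in C$, the set of dyadic scales $j$ at which $v$ is charged is contained in a window of $O(\ln(\delta^{-1}))$ consecutive scales; together with the per-scale bound this yields (b). Note that for a single $w$ the scales appearing in $A_w$ already span the ratio $\delta^{-1}\rho_w:\delta\rho_w=\delta^{-2}$, i.e.\ $\log_2(\delta^{-2})=2\log_2(\delta^{-1})$ dyadic levels, which is exactly the source of the $\ln(\delta^{-1})$ factor in the statement; the issue is that, a priori, a fixed point $v$ could be charged by supported points at unboundedly many unrelated scales.

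Ruling this out is the heart of the matter and the step I expect to be the main obstacle. It is here that one must use the full strength of the hypothesis that $w$ is $(\delta,s)$-supported --- that at least $s$ points of $C\cap B(w,\delta^{-1}\rho_w)$ survive the removal of \emph{every} disk of radius $\delta\rho_w$, not merely one --- rather than the weak consequence $|C\cap B(w,\delta^{-1}\rho_w)|\ge s$. Concretely, I would define $T_w$ not as all of $C\cap A_w$ but as a ``spread-out'' subfamily of $C\cap A_w$ extracted using supportedness, and then show that if two supported points $w,w'$ both charge the same $v$, a comparison of their spread-out point sets together with the disjointness of the balls $\{B(u,\rho_u/2):u\in C\}$ (which are pairwise disjoint since $|u-u'|\ge\max(\rho_u,\rho_{u'})$) forces $\rho_w$ and $\rho_{w'}$ to be comparable up to a factor $\delta^{-O(1)}$, hence the corresponding scales to lie in a common $O(\ln\delta^{-1})$-window. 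Making this choice of the multisets $T_w$ so that (a) and the $O(\ln\delta^{-1})$-scale bound hold simultaneously is where essentially all the work lies; properties (a) and the per-scale packing estimate are routine.
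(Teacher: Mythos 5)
Your framework (double counting between supported points and points of $C$), your property (a), and your per-scale packing bound are all fine, but the proof has a genuine gap exactly where you say "essentially all the work lies", and the mechanism you sketch for closing it does not work. You propose to choose a "spread-out" $T_w\subseteq C\cap A_w$ and to argue, using supportedness together with the disjointness of the balls $B(u,\rho_u/2)$, that two supported points $w,w'$ charging the same $v$ must have $\rho_w/\rho_{w'}\le \delta^{-O(1)}$. But $(\delta,s)$-supportedness of $w$ gives no lower bound whatsoever on the isolation radii of the points that $w$ is forced to charge: take $C$ to contain two groups of $s$ points each, each group of arbitrarily small diameter $\eta$, at mutual distance $L$, and a point $w$ with $\eta\ll\delta\rho_w$, $2\delta\rho_w<L$ and both groups inside $B(w,\delta^{-1}\rho_w)$. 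Then $w$ is $(\delta,s)$-supported (removing any single disk of radius $\delta\rho_w$ leaves one full group), yet every point available for $T_w$ lies in a tight group whose isolation radius is at most $\eta$, completely unrelated to $\rho_w$; the half-isolation balls of the charged points are microscopic and carry no information about the scale $\rho_w$. Nesting such configurations (clusters of clusters, with scale ratios far exceeding $\delta^{-O(1)}$) produces supported points at arbitrarily many widely separated scales whose admissible targets all come from the same bottom-level clusters, so no comparability statement of the kind you invoke can be extracted "locally" from supportedness; whether a global assignment with your property (b) exists is exactly as hard as the lemma itself, and your sketch defers that entire difficulty rather than resolving it.

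The paper resolves the multi-scale problem by a different device, and it is worth seeing why it sidesteps the per-point charge bound you are after. It introduces a hierarchy of square tilings $G_n$ of $\RR^2$ with scale ratio $k=\lceil 20\delta^{-2}\rceil$, calls a square $S\in G_n$ $s$-supported if $|C\cap(S\setminus S')|\ge s$ for every subsquare $S'\in G_{n-1}$, and proves (\cref{claim:mag:squares}) that the total number of $s$-supported squares over \emph{all} scales is at most $2|C|/s$. The proof is a flow argument in which each square forwards to its parent at most $\min(s/2,|S'\cap C|)$ units of mass: the cap at $s/2$ is precisely what prevents a single point of $C$ from certifying supportedness at unboundedly many scales, i.e.\ it plays the role of the scale-window control that is missing from your argument, but it is implemented as a telescoping mass-transport across the hierarchy rather than as a bound on how many supported points charge a fixed $v$. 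The factor $\delta^{-2}\ln(\delta^{-1})$ then enters through a randomly scaled and translated tiling (random padded partition): each $(\delta,s)$-supported point is a "city" of some square with probability $\Omega(1/\ln(\delta^{-1}))$, a city that is $(\delta,s)$-supported forces its square to be $s$-supported, and at most $O(\delta^{-2})$ points can be cities of a single square. If you want to complete your route, you would in effect have to prove an assignment (or fractional assignment) version of \cref{claim:mag:squares}; the isolation-radius comparability you propose is not a valid substitute.
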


\begin{remark}
  We prove the lemma for $\RR^2$, but it holds for $\RR^d$ or any other doubling metric space. In fact, a metric space for which the lemma holds must be doubling; see \cite{Gill}.
\end{remark}

\subsection{Proof of \cref{lem:magic}}
Let $k\ge 3$ be an integer (later we will take $k=k(\delta)$).  Let $G_0$ be 
a tiling of $\RR^2$ by $1\times 1$ squares, rooted at some point $p$, and 
for every $n\in\ZZ$, let $G_n$ be a tiling of $\RR^2$ by $k^n\times k^n$ such that each square of $G_n$ is tiled by $k\times k$ squares of $G_{n-1}$. We may choose $p$ so that none of the points of $C$ lies on the edge of a square.
  
We say that a square $S\in G_n$ is \textbf{$s$-supported} if for every 
smaller square $S'\in G_{n-1}$ we have that
$|C\cap (S\setminus S')|\ge s$.

\begin{claim}\label{claim:mag:squares}
  For any $s\geq 2$ the total number of $s$-supported squares, in $G=\bigcup_{n\in\ZZ} G_n$, is 
  at most $2|C|/s$.
\end{claim}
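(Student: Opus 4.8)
The plan is a charging argument organised by the inclusion order on the $s$-supported squares; call $T$ the set of these squares. Throughout write $f(S)=|C\cap S|$, and for $S\in T$ with $S\in G_n$ let $Q(S)\in G_{n-1}$ be a square of $G_{n-1}$ contained in $S$ (a child of $S$) maximising $f$. Since $S$ is $s$-supported, $f(S)=f(Q(S))+|C\cap(S\setminus Q(S))|\ge f(Q(S))+s$; in particular every square in $T$ contains at least $s$ points of $C$. Two remarks to be used freely: since no point of $C$ lies on the edge of any square, two squares of $G$ that are not nested have disjoint point sets; and since a supported square has points of $C$ in at least two distinct children, each pair $\{u,v\}\subseteq C$ lies in a common square at exactly one scale and so can witness at most one element of $T$, whence $T$ is finite.

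First I would equip $T$ with a forest structure. For $S\in T$ the squares of $G$ strictly containing $S$ form a chain under inclusion, so there is a well-defined smallest supported square $\mathrm{succ}(S)$ strictly containing $S$ whenever one exists; declare $\mathrm{succ}(S)$ the parent of $S$ in the forest, the roots being the maximal elements of $T$. If $S'_1\subsetneq S'_2$ were both forest-children of $S$, then $\mathrm{succ}(S'_1)\subseteq S'_2\subsetneq S$, a contradiction, so the forest-children of any $S$ are pairwise non-nested and hence occupy pairwise disjoint subsets of $S$. The leaves of this forest are exactly the minimal elements of $T$ (those containing no other supported square); being pairwise non-nested, they have pairwise disjoint point sets, each of size at least $s$, so the forest has at most $|C|/s$ leaves.

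Now the charge. For $S\in T$ put $N(S)=C\cap\bigl(S\setminus\bigcup\{S':S'\text{ is a forest-child of }S\}\bigr)$. The two facts to establish are: (i) the sets $N(S)$ are pairwise disjoint, and (ii) $|N(S)|\ge s$ whenever $S$ has at most one forest-child. For (i), fix $v\in C$ and let $T_1\subsetneq\dots\subsetneq T_J$ be the supported squares containing $v$; one checks that $\mathrm{succ}(T_{j-1})=T_j$, so for $j\ge 2$ the point $v$ lies in the forest-child $T_{j-1}$ of $T_j$ and hence $v\notin N(T_j)$, whereas no forest-child of $T_1$ can contain $v$ (it would be a supported square strictly inside $T_1$ containing $v$, contradicting minimality of $T_1$), so $v\in N(T_1)$ and $v$ lies in no other $N(S)$; therefore $\sum_{S\in T}|N(S)|\le|C|$. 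For (ii), if $S\in G_n$ has a unique forest-child $S'$, then $S'$ lies inside a single square $D\in G_{n-1}$, and $D$ is a child of $S$, so $f(S')\le f(D)\le f(Q(S))$, whence $|N(S)|=f(S)-f(S')\ge f(S)-f(Q(S))\ge s$; and $|N(S)|=f(S)\ge s$ if $S$ has no forest-child at all.

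With (i) and (ii) the count closes: $\#\{S\in T:|N(S)|\ge s\}\le\tfrac1s\sum_{S\in T}|N(S)|\le|C|/s$, while by (ii) $\#\{S\in T:|N(S)|<s\}\le\#\{S\in T:S\text{ has at least two forest-children}\}$, which is at most the number of leaves of the forest (a finite forest has at most as many nodes with $\ge 2$ children as it has leaves), hence at most $|C|/s$. Summing the two estimates gives the desired bound $|T|\le 2|C|/s$. The only part that needs genuine care is the bookkeeping in (i) and (ii) — being precise about how a supported square, which may sit far down in scale, nests inside the tilings $G_n$ — but I do not expect a real obstacle; the metric geometry of $\RR^2$ enters only through the triviality that non-nested squares have disjoint point sets.
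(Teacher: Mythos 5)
Your argument is correct, and it takes a genuinely different route from the one in the text. The text runs a flow (mass-transport) argument between consecutive levels of the tiling: it sends $\min(s/2,\,|S'\cap C|)$ units from each square $S'\in G_{n-1}$ to the square of $G_n$ containing it, checks that every square has nonnegative net inflow while every $s$-supported square has net inflow at least $s/2$, and telescopes the total over levels between a fine level (where each square holds at most one point of $C$) and an arbitrary coarse level to bound it by $|C|$. You instead work directly with the nesting structure of the supported squares themselves: you organize them into a forest by inclusion, charge every supported square having at most one forest-child at least $s$ points of $C$ lying in no forest-child (your sets $N(S)$, whose pairwise disjointness you verify correctly, the unique-child case resting on the estimate $f(S')\le f(D)\le f(Q(S))$ with $Q(S)$ the $f$-maximal child), and bound the remaining squares --- those with at least two forest-children --- by the number of leaves of the forest, which is at most $|C|/s$ since minimal supported squares are non-nested and each contains at least $s$ points. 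Both arguments yield the same constant $2|C|/s$. What your version buys is that it bypasses the level-by-level bookkeeping, the choice of the fine level $a$ and the limit in the coarse level; what it costs is that you must first establish finiteness of the family of supported squares, which your pair-witnessing observation does (though the phrase ``lies in a common square at exactly one scale'' should more precisely say ``lies in a common square with the two points in distinct children at exactly one scale''), and you need the slightly delicate unique-forest-child estimate, which you handle correctly.
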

\begin{proof}  
  Define a ``flow'' $f:G\times G\to\RR$ as follows:
  \begin{equation*}
    f(S',S) = \begin{cases}
      \min(s/2,|S'\cap C|) & S'\subseteq S, S'\in G_n, S\in G_{n+1},\\
      -f(S,S') & S\subseteq S', S\in G_n, S'\in G_{n+1},\\
      0 & \text{otherwise}.
    \end{cases}
  \end{equation*}
  Let us make two initial observations. First we have that
  \begin{equation}\label{magic:obs1}
  \sum_{S'\in G} f(S',S)\ge 0 \, ,
  \end{equation}
  by splitting into the two cases depending on whether there exists a square $S' \subseteq S$ such that $f(S',S)=s/2$ or not. Secondly, if $S$ is a $s$-supported square 
  \begin{equation}\label{magic:obs2}
  \sum_{S'\in G} f(S',S) \ge \frac{s}{2} \, ,
  \end{equation}
  by splitting into cases depending on whether the number of squares $S' \subseteq S$ such that $f(S',S)=s/2$ is at most one or at least two. 
  
  Let $a\in\ZZ$ be such that each square in $G_a$ contains at most $1$ point 
  of $C$ so there are no $s$-supported squares in $\bigcup_{n\leq a} G_n$.  It easily follows from the definition of $f$ that
  \begin{equation}\label{magic:sum:S:a}
    \sum_{S'\in G_a} \sum_{S\in G_{a+1}} f(S',S) = |C|,
  \end{equation}
  and that for every $b\in\ZZ$
  \begin{equation}\label{magic:sum:S:b}
    \sum_{S'\in G_b}\sum_{S\in G_{b+1}} f(S',S) \ge 0.
  \end{equation}

  Now, using \eqref{magic:sum:S:a} and \eqref{magic:sum:S:b},
  \begin{align*}
    \sum_{n=a+1}^b \sum_{S\in G_n}\sum_{S'\in G} f(S',S)
    &= \sum_{n=a+1}^b \sum_{S\in G_n} \left(  
      \sum_{S'\in G_{n-1}} f(S',S) + \sum_{S'\in G_{n+1}} f(S',S)
    \right)\\
    &= \sum_{S\in G_{a+1}}\sum_{S'\in G_a} f(S',S)
      +\sum_{S\in G_b}\sum_{S'\in G_{b+1}} f(S',S)
      \le |C|.
  \end{align*}
\noindent Therefore, using \eqref{magic:obs1} and \eqref{magic:obs2}, we deduce that there are at most $2|C|/s$ $s$-supported squares in $\bigcup_{n> a} G_n$. Sending $b\to \infty$ finishes the proof.
\end{proof}

The above claim is very close to the statement of \cref{lem:magic} which we are pursuing. However, we need to move from squares to circles. We use a technique called \defn{random padded partitions}.

We choose $k=\ceil{20\delta^{-2}}$ and let $\beta\sim\unif([0,\ln{k}])$.
Let $G_0$ be a tiling with side length $e^\beta$, based at the origin.
Suppose we have defined $G_n$ as a tiling of squares of side length $e^\beta k^n$; then $G_{n+1}$ is a tiling of squares of side length $e^\beta k^{n+1}$ that is based uniformly at one of the $k^2$ possible points of $G_n$. Because the desired statement is invariant under translation and dilation of $C$, we may assume that $C$ does not intersect the edges of $G_n$ (for every $n$) and that $\rho_w\ge k$ for every $w\in C$. We call a point $w\in C$ a \textbf{city} in a square $S\in G$ if:
\begin{itemize}
\item the side length of $S$ is in the interval 
$[4\delta^{-1}\rho_w,5\delta^{-1}\rho_w]$, and
\item the distance from $w$ to the center of $S$ is at most 
$\delta^{-1}\rho_w$.
\end{itemize}

\begin{claim}
  The probability that any given $w\in C$ is a city is
  $\Omega(\ln^{-1}(\delta^{-1}))$.
\end{claim}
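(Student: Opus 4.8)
The plan is to decouple the definition of ``city'' into two events that are governed by independent sources of randomness: the choice of $\beta$, which fixes the \emph{scales} $s_n:=e^\beta k^n$ of the tilings, and the hierarchical shifts $v_0,v_1,\dots$ (iid uniform on $\{0,\dots,k-1\}^2$, where $v_j$ is the shift used in passing from $G_j$ to $G_{j+1}$), which fix the \emph{positions} of the grids. Fix $w\in C$ and recall that, after the dilation normalization, $\rho_w\ge k$. For $w$ to be a city we need (a)~a level $n$ with $s_n\in[4\delta^{-1}\rho_w,5\delta^{-1}\rho_w]$, and then (b)~that $w$ lies within Euclidean distance $\delta^{-1}\rho_w$ of the center of some square of $G_n$. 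Since consecutive scales differ by the factor $k=\ceil{20\delta^{-2}}>5/4$, the level $n$ in (a) is unique when it exists, so these are the only two conditions to control.

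For (a), I would rewrite $s_n\in[4\delta^{-1}\rho_w,5\delta^{-1}\rho_w]$ as $\beta+n\ln k\in[\ln(4\delta^{-1}\rho_w),\ln(5\delta^{-1}\rho_w)]$. As $n$ ranges over $\ZZ$ and $\beta$ over $[0,\ln k]$ the quantity $\beta+n\ln k$ sweeps $\RR$ bijectively up to a null set, so with $\beta\sim\unif([0,\ln k])$ the probability that such an $n$ exists is exactly $\ln(5/4)/\ln k$. Because $20\delta^{-2}\le k\le 21\delta^{-2}$ and $\delta^{-1}>2$, one checks $\ln k=\Theta(\ln(\delta^{-1}))$, so this probability is already $\Omega(\ln^{-1}(\delta^{-1}))$.

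For (b), I would condition on $\beta$ being as in (a), so that $n$ and $e^\beta$ are determined, while $(v_j)$ remains independent of $\beta$. Iterating the offset recursion, the offset of $G_n$ relative to the origin-based $G_0$ equals $e^\beta\sum_{j=0}^{n-1}k^j v_j$, and reading $\sum_{j=0}^{n-1}k^j v_j$ as a base-$k$ expansion shows it is uniform on $\{0,\dots,k^n-1\}^2$; hence the offset of $G_n$, taken modulo $s_n\ZZ^2$, is uniform over the $k^{2n}$-point grid $e^\beta\{0,\dots,k^n-1\}^2$. The centers of the squares of $G_n$ form a translate of $s_n\ZZ^2$, so the probability in (b) is the fraction of that $k^{2n}$-point grid falling inside a disk of radius $R:=\delta^{-1}\rho_w$ (and since $2R\le s_n/2<s_n$ the disk injects into $\RR^2/s_n\ZZ^2$). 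Because $R\ge\delta^{-1}k\ge 2k\ge\sqrt2\,e^\beta$, that disk contains an axis-aligned square of side $R\sqrt2$, which in turn contains at least $R^2/(2e^{2\beta})$ points of any translate of $e^\beta\ZZ^2$; dividing by $k^{2n}=(s_n/e^\beta)^2$ gives a lower bound of $R^2/(2s_n^2)\ge 1/50$, using $s_n\le 5R$. Combining (a) and (b), $\pr(w\text{ is a city})\ge \frac{1}{50}\cdot\frac{\ln(5/4)}{\ln k}=\Omega(\ln^{-1}(\delta^{-1}))$, as claimed.

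The only genuine content is the uniformity statement for the offset of $G_n$ in step (b): this is exactly what the hierarchical, randomly-shifted (``random padded partition'') construction of the $G_n$ is designed to provide, via the base-$k$ digit decomposition. Everything else is elementary disk-in-square geometry together with bookkeeping to ensure that $\beta$ and the shifts $v_j$ are independent so that the probabilities in (a) and (b) multiply; I would also take care to verify the small inequalities ($2R<s_n$, $R\ge\sqrt2\,e^\beta$, $\ln k=\Theta(\ln(\delta^{-1}))$) from $k=\ceil{20\delta^{-2}}$, $\delta\in(0,1/2)$ and $\rho_w\ge k$.
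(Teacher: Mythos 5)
Your proof is correct and follows essentially the same route as the paper: split the city event into the scale condition, handled exactly as in the text via the uniform $\beta$ giving probability $\ln(5/4)/\ln k=\Omega(\ln^{-1}(\delta^{-1}))$, and the positional condition, handled by the random basing of the grids. The only difference is that you make the second step quantitative by composing all the shifts into an offset uniform on the $e^\beta$-grid and counting grid points in a disk of radius $\delta^{-1}\rho_w$, whereas the paper simply observes that, conditionally on the scale event, the center condition holds with positive probability (independent of $\delta$) over the $k^2$ basing choices at the relevant level.
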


\begin{proof}
  For the first item to hold, $\beta$ needs to satisfy that there 
  exists $n\in\ZZ$ such that
  $e^\beta k^n\in[4\delta^{-1}\rho_w,5\delta^{-1}\rho_w]$,
  or $\beta+n\ln{k} \in \ln(\delta^{-1}\rho_w) + [\ln4,\ln5]$. Since $\beta\in\unif([0,\ln{k}])$, the probability for that is 
  $(\ln(5/4))/\ln{k}$, which is $\Omega(\ln^{-1}(\delta^{-1}))$ when $\delta\in(0,1/2)$.

  As for the second item, it holds with positive probability (independent of
  $\delta$) over the $k^2$ basing choices for $G_n$, given that $\beta$ satisfies the 
  requirement posed by the first item.
\end{proof}

\begin{claim}
  If $w$ is a city in $S$ and is $(\delta,s)$-supported, then $S$ is 
  $s$-supported.
\end{claim}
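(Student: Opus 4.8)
The plan is to show that the city conditions force the Euclidean ball $B(w,\delta^{-1}\rho_w)$ to lie inside $S$, while every square one level below $S$ is small enough to be covered by a ball of radius $\delta\rho_w$; then the definition of $(\delta,s)$-supportedness, applied with $p$ the center of that sub-square, supplies the $s$ points of $C$ inside $S\setminus S'$ demanded by the definition of $s$-supportedness.

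First I would verify $B(w,\delta^{-1}\rho_w)\subseteq S$. Write $L$ for the side length of $S$ and $c$ for its center. Being a city in $S$ gives $L\ge 4\delta^{-1}\rho_w$ and $|w-c|\le \delta^{-1}\rho_w$. For any $x\in B(w,\delta^{-1}\rho_w)$ we then have, using that the $\ell_\infty$ norm is dominated by the Euclidean norm,
\[
\|x-c\|_\infty \le \|x-c\|_2 \le \|x-w\|_2 + \|w-c\|_2 \le 2\delta^{-1}\rho_w \le \tfrac{L}{2},
\]
so $x$ lies in the axis-aligned square $S$. Next I would bound the size of a sub-square: if $S\in G_n$, then a smaller square $S'\in G_{n-1}$ has side length $L/k$ with $k=\ceil{20\delta^{-2}}$, so from $L\le 5\delta^{-1}\rho_w$ and $k\ge 20\delta^{-2}$ we get $L/k\le \delta\rho_w/4$, and hence $S'$ has circumradius $\tfrac{\sqrt2}{2}\cdot \tfrac{L}{k}<\delta\rho_w$. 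Therefore $S'\subseteq B(p_{S'},\delta\rho_w)$ where $p_{S'}\in\RR^2$ is the center of $S'$.

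Finally I would combine these. Let $S'\in G_{n-1}$ be arbitrary and set $p=p_{S'}$. Since $B(w,\delta^{-1}\rho_w)\subseteq S$ and $S'\subseteq B(p,\delta\rho_w)$,
\[
C\cap(S\setminus S') \ \supseteq\ C\cap\bigl(B(w,\delta^{-1}\rho_w)\setminus B(p,\delta\rho_w)\bigr),
\]
and the right-hand side has at least $s$ points because $w$ is $(\delta,s)$-supported and the defining infimum over all $p\in\RR^2$ is at least $s$. Hence $|C\cap(S\setminus S')|\ge s$ for every smaller square $S'$, which is exactly the statement that $S$ is $s$-supported. I do not anticipate any genuine obstacle here; the only point requiring care is reconciling the $\ell_\infty$ geometry of the dyadic-type squares with the Euclidean balls appearing in the two definitions, and the constants $4$, $5$ in the city definition together with the choice $k\ge 20\delta^{-2}$ are calibrated precisely so that both inclusions above hold with room to spare.
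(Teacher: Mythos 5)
Your proof is correct and follows essentially the same route as the paper: bound the side length of a sub-square $S'\in G_{n-1}$ by $\delta\rho_w/4$ so it fits in a ball of radius $\delta\rho_w$, and then invoke the infimum in the definition of $(\delta,s)$-supportedness. The only difference is that you explicitly verify the containment $B(w,\delta^{-1}\rho_w)\subseteq S$ from the city conditions, a step the paper leaves implicit, which is a welcome addition.
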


\begin{proof}
  If $S\in G_n$, any little square $S' \in G_{n-1}$ has side length at most \begin{equation*}\frac{\delta^2}{20} \cdot \frac{5\rho_w}{\delta} = \frac{\delta\rho_w}{4}.
  \end{equation*} Hence, it is contained in a  ball of radius
  $\delta\rho_w$. Thus, for every $S' \in G_{n-1}$ with $S'\subseteq S$ there exists a point $p$ such that \begin{equation*}
  |C\cap (S\setminus S')|\ge \left|
  C\cap \left( B\left(w,\delta^{-1}\rho_w\right)\setminus B(p,\delta\rho_w)\right)
  \right| \geq s.\qedhere
  \end{equation*}
\end{proof}

Now note that the expected number of pairs $(w,S)$ such that $S$ is
$s$-supported, $w$ is $(\delta,s)$-supported, and $w$ is a city, is at least
$c\ln^{-1}(\delta^{-1})N$, where $N$ is the number of $(\delta,s)$-supported 
points.  Also, no more than $c\delta^{-2}$ points of $C$ can be cities in a 
single square $S$.  It follows from \cref{claim:mag:squares} that
\begin{equation*}
  N \le \frac{A|C|\delta^{-2}\ln(\delta^{-1})}{s},
\end{equation*}
concluding the proof of \cref{lem:magic}. \qed 

\section{Recurrence of bounded degree planar graph limits}\label{graphlimits:recurrence}

\cref{thm:BeScLimit} follows immediately from the following theorem which gives a quantitative estimate on the growth of the resistance in local limits of bounded degree planar maps. In particular, it states that the resistance grows logarithmically in the Euclidean distance of the corresponding circle packing. 

\begin{theorem} \label{theorem:loc:lim:recurrence}
  Let $(U, \rho)$ be a local limit of finite planar maps with maximum degree at most $D$. Then, almost surely, there exist a constant $c>0$ and a sequence $\{B_k\}_{k\geq 1}$ of subsets of $U$ such that for each $k$ we have
  \begin{enumerate}
    \item $\left|B_k\right| \leq c^{-1}k$, and
    \item $\reff(\rho \lr U\setminus B_k) \geq c \log k$.
  \end{enumerate}
  In particular, $(U,\rho)$ is almost surely recurrent.
\end{theorem}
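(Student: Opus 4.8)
The plan is to pull back a quantitative resistance estimate from the finite approximants $G_n$ to $(U,\rho)$ via circle packing and the Magic Lemma. First I would reduce to the case that each $G_n$ is simple (deleting loops and merging parallel edges changes effective resistances by at most a factor depending on $D$ and keeps the map planar) and apply \cref{thm:cp} to get a circle packing $P_n$ of $G_n$; after an affine map, assume the carrier of $P_n$ lies in the unit square. Let $z_v$ and $r_v$ be the center and radius of the circle of a vertex $v$, and $C_n=\{z_v:v\in V(G_n)\}$. The structural input used throughout is that $\deg v\le D$ together with the Ring Lemma (\cref{lem:ring}) forces the circles incident to any vertex to be comparable in size; this both limits how many circles can straddle a thin Euclidean shell and is exactly what makes $C_n$ amenable to \cref{lem:magic}.

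For a vertex $v$ and a radius $t$, let $\Gamma_v(t)$ be the set of edges of $G_n$ joining a vertex with circle center in $B(z_v,t)$ to one with center outside $B(z_v,t)$. By planarity (the Jordan curve theorem applied to the drawing induced by $P_n$) these are cutsets, and $\Gamma_v(t),\Gamma_v(t')$ are disjoint for $t<t'$. The size of $\Gamma_v(t)$ is of order $D$ times the number of circles of $P_n$ meeting the Euclidean circle $\{|z-z_v|=t\}$, which in a part of the packing where the circles have comparable radius $w$ is of order $t/w$. Call a dyadic scale $2^{-j}$ \emph{good for $v$} if $v$ is \emph{not} $(\delta,s_j)$-supported with respect to $C_n$ seen at scale $2^{-j}$ (formally, with respect to a maximal $2^{-j}$-separated subset of $C_n$ containing $z_v$), where $\delta=\delta(D)\in(0,1/2)$ and $s_j$ are suitably chosen thresholds. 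The effect of goodness is that the crossing cutsets $\Gamma_v(t)$ for $t$ running over the dyadic block $[2^{-j-1},2^{-j}]$, spaced by the common local circle size, are disjoint and their reciprocal sizes sum to a positive constant $c(D)$; this is a short computation once goodness rules out a pathological cluster of circles inside the block. Summing the Nash--Williams inequality over a family of good scales then gives a logarithmic lower bound on effective resistance.

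Now I would choose the outermost radius $R_k=R_k(v)$ to be the smallest one for which $B(z_v,R_k)$ contains $\asymp k$ circle centers, and let $B_k(v)$ be the connected set of vertices reachable from $v$ without crossing $\Gamma_v(R_k)$, so that $|B_k(v)|\le c^{-1}k$. The heart of the argument is a multiscale application of \cref{lem:magic}: for each dyadic scale $2^{-j}$ it bounds the number of $(\delta,s_j)$-supported vertices by $A\,|C_n|\,\delta^{-2}\ln(\delta^{-1})/s_j$ (with $|C_n|$ replaced by the size of the relevant separated subset), and with the thresholds $s_j$ chosen to grow with the scale one can sum these bounds over the $\Theta(\log k)$ dyadic scales between $r_v$ and $R_k$ to conclude that all but a proportion $\varepsilon_k\to0$ of vertices are good at at least half of them (and that for such vertices $R_k/r_v$ is polynomially large in $k$, since a good scale can only enlarge the enclosed circle count by a bounded factor). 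For such a vertex the previous paragraph and the finite Nash--Williams inequality give $\reff\big(v\lr V(G_n)\setminus B_k(v);G_n\big)\ge c\log k$. Hence, for a uniformly chosen root $\rho_n$, this inequality holds with probability at least $1-\varepsilon_k$.

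Finally, since $B_k(\rho_n)$ is connected with at most $c^{-1}k$ vertices it is contained in $B_{G_n}(\rho_n,c^{-1}k)$, and the resistance in question depends only on the isomorphism type of $B_{G_n}(\rho_n,c^{-1}k+1)$. Therefore the event ``there is a connected $B\ni\rho$ with $B\subseteq B(\rho,c^{-1}k)$, $|B|\le c^{-1}k$ and resistance from $\rho$ to the complement of $B$ at least $c\log k$'' is clopen in $(\Gdot,\dloc)$, so the portmanteau theorem and $G_n\convl(U,\rho)$ transfer the estimate to $(U,\rho)$. Passing to a sufficiently sparse subsequence $\{k_i\}$ with $\sum_i\varepsilon_{k_i}<\infty$ and $\log k_{i+1}/\log k_i\to1$, Borel--Cantelli and interpolation between consecutive $k_i$ produce, almost surely, a constant $c=c(\omega)>0$ and sets $\{B_k\}_{k\ge1}$ with the two asserted properties. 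Since each $B_k$ is finite and $\reff(\rho\lr U\setminus B_k)\ge c\log k\to\infty$, Rayleigh's monotonicity (\cref{rayleigh}) along any exhaustion of $U$ forces $\reff(\rho\lr\infty)=\infty$, so $(U,\rho)$ is recurrent by \cref{cor:transiencecondition} (recovering \cref{thm:BeScLimit}). The main obstacle is precisely the multiscale bookkeeping of the third paragraph: one must juggle the scale-dependent thresholds $s_j$ so that \cref{lem:magic} simultaneously leaves few bad vertices after summation over $\Theta(\log k)$ scales and yet keeps each good dyadic scale ``worth'' a constant amount of resistance, so that the aggregate is genuinely of order $\log k$ for a set of only $\asymp k$ vertices; the reduction to simple maps, the planarity of the crossing cutsets, and the transfer to the limit are comparatively routine.
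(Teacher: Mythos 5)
There is a genuine gap at the heart of your multiscale scheme. Being ``good'' at a scale, i.e.\ \emph{not} $(\delta,s_j)$-supported, does \emph{not} mean that the number of enclosed circle centers grows by a bounded factor from one dyadic scale to the next: non-supportedness only says that, apart from fewer than $s_j$ centers, all the remaining centers of the big disk lie in \emph{some} disk of radius $\delta\rho_w$ --- and that small disk may contain arbitrarily many centers. So your parenthetical claim that for good vertices $R_k/r_v$ is polynomially large in $k$ is false, and with it the guarantee of $\Theta(\log k)$ usable scales between $r_v$ and $R_k$; likewise, taking $B_k(v)$ to be everything enclosed by $B(z_v,R_k)$ does not control $|B_k|$ when such a concentrated cluster sits inside the ball. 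This concentrated-cluster scenario is exactly the crux of the argument, and the paper's proof (\cref{local:limit:subset:lemma}) deals with it differently: the Magic Lemma is applied \emph{once}, at the single scale $r=k^{1/3}$ with $s=k$ (after normalizing the \emph{root} circle to unit size, not the carrier), yielding a point $p$ such that all but $<k$ centers of $B(\origin,r)$ lie in $B(p,1/r)$. If that small disk holds at most one center one takes $B=V_{B_{\euc}(\origin,r)}$ and a single application of the annulus estimate \cref{hs:1:lem:1} finishes; otherwise the cluster is \emph{excluded}, $B=V_{B_{\euc}(\origin,r)}\setminus V_{B_{\euc}(p,1/r)}$, and one bounds $\pr_\rho(\tau_{V\setminus B}<\tau_\rho^+)$ by a union bound over the two rare events ``escape the big ball'' and ``hit the small cluster'', each of probability $O(1/\log k)$ by \cref{hs:1:lem:1} applied around $\rho$ and around a circle contained in $B_{\euc}(p,2/r)$ (the Ring Lemma gives $|p|\ge 1+c'$, so $\rho$ lies outside $B_{\euc}(p,c'/2)$), converting back and forth with \cref{effresprob}. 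This hitting-probability/union-bound step is the idea missing from your proposal; without it, or some substitute treatment of the cluster, the argument does not close.

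Two further points. First, you invoke the Ring Lemma for a circle packing of $G_n$ itself, but the Ring Lemma requires circles that are completely surrounded, i.e.\ a triangulation; the paper first triangulates each face (zig-zag, inserting an interior cycle when non-consecutive boundary vertices are adjacent), keeping the degree at most $3D$ and diluting the uniform root by a factor $D^{-1}$, and your reduction should do the same. Second, your Nash--Williams step asserts that the cutset $\Gamma_v(t)$ has size of order $t/w$ ``where the circles have comparable radius $w$'', but circles crossing a fixed Euclidean circle need not have comparable radii --- the Ring Lemma only compares \emph{tangent} circles --- so the cutset sizes are not bounded and goodness of the scale does not repair this (the exceptional small disk may straddle many of your circles $\{|z-z_v|=t\}$). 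The paper's test-function/Dirichlet bound in \cref{hs:1:lem:1}(ii), which uses only the Ring Lemma and the area of the circles, avoids this issue entirely and you could substitute it. Your final steps --- locality of the event, the portmanteau transfer to $(U,\rho)$, Borel--Cantelli along $k=2^j$ with interpolation, and recurrence via \cref{cor:transiencecondition} --- are fine and essentially identical to the paper's.
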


We write $B_{\euc}(p,r)$ for the Euclidean ball of radius $r$ around a point $p\in \RR^2$. As before, for a subset $O \subset \RR^2$ and a given circle packing we write $V_O$ for the set of vertices in which the centers of the corresponding circles are in $O$. In order to prove \cref{theorem:loc:lim:recurrence}, we will need the following immediate corollary of the Magic Lemma (\cref{lem:magic}):

\begin{corollary}\label{cor:magic}
  Let $G$ be a finite simple planar triangulation, and $P$ a circle packing of $G$. Let $\rho$ be a uniform random vertex and $P'$ a dilation and translation of $P$ such that the circle of $\rho$ is a unit circle centered at the origin $\origin$. Then, there exists a universal constant $A>0$ such that in the packing $P'$, for every real $r\geq 2$ and integer $s \geq 2$ 
  \begin{equation*}
  \pr\left(\forall p \in \RR^2 \quad \left|V_{B_{\euc}(\origin,r)\setminus B_{\euc}(p, 
  \frac{1}{r})} \right| \geq s\right) \leq \frac{Ar^2 \log r}{s} \, .
  \end{equation*}
\end{corollary}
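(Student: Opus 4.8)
The plan is to read the displayed event as a statement about the point set $C\subseteq\RR^2$ of circle centers of $P'$, and to show that it forces the origin to be $(\delta,s)$-supported in $C$ for $\delta$ of order $1/r$; then \cref{lem:magic}, whose conclusion is invariant under dilation and translation of the point set, applied to the center set of $P$ and transferred to $C$, gives the bound. Identify each vertex $v$ with its center, so that $V_O=C\cap O$ for $O\subseteq\RR^2$, and write $\rho_{\mathrm{iso}}$ for the isolation radius of $\origin$ in $C$. I claim that if $|C\cap B(\origin,r)\setminus B(p,1/r)|\ge s$ for every $p\in\RR^2$, then $\origin$ is $(\delta,s)$-supported with $\delta=1/(rK_0)$, where $K_0$ is a universal constant with $1\le\rho_{\mathrm{iso}}\le K_0$. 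Indeed, the circle of $\rho$ has radius $1$ and circles have disjoint interiors, so every other center has norm at least $1$, giving $\rho_{\mathrm{iso}}\ge 1$; hence $\delta^{-1}\rho_{\mathrm{iso}}=rK_0\rho_{\mathrm{iso}}>r$ and $\delta\rho_{\mathrm{iso}}=\rho_{\mathrm{iso}}/(rK_0)\le 1/r$, so that $B(\origin,r)\subseteq B(\origin,\delta^{-1}\rho_{\mathrm{iso}})$ while $B(p,\delta\rho_{\mathrm{iso}})\subseteq B(p,1/r)$ for every $p$. The assumed lower bound then yields $\inf_p\,|C\cap B(\origin,\delta^{-1}\rho_{\mathrm{iso}})\setminus B(p,\delta\rho_{\mathrm{iso}})|\ge s$, i.e.\ $\origin$ is $(\delta,s)$-supported.

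Granting the claim, the estimate is immediate: $(\delta,s)$-supportedness is invariant under dilation and translation of the underlying point set, so the probability that $\origin$ is $(\delta,s)$-supported in $C$ equals the probability that the uniform center $c_\rho$ is $(\delta,s)$-supported in the center set of $P$, and by \cref{lem:magic} (whose hypotheses $\delta<1/2$ and $s\ge 2$ hold since $rK_0>8$) this is at most $A\delta^{-2}\ln(\delta^{-1})/s=A(rK_0)^2\ln(rK_0)/s$. As $K_0$ is universal and $r\ge 2$, this is at most $A'r^2\log r/s$ for a universal $A'$. Thus everything reduces to the single geometric input below, which is where I expect the actual work to lie.

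The obstacle is the universal bound $\rho_{\mathrm{iso}}\le K_0$ for the normalised packing; this is a short flower-counting argument in the spirit of the Ring Lemma (\cref{lem:ring}). We may assume $|V|\ge 4$ (for $|V|\le 3$ the set $C$ has at most three points and the claimed bound is trivial: for $|V|\le 2$ the event is impossible once $s\ge 2$, and $|V|=3$ is checked directly), so every vertex of the triangulation has degree at least $3$. Let $\rho$ be any vertex, not on the outer face, with neighbours $v_1,\dots,v_k$ in cyclic order ($k=\deg\rho\ge 3$) and radii $r_{v_1},\dots,r_{v_k}$ in $P'$; consecutive petals are tangent and the angles $\alpha_i=\ang v_i\origin v_{i+1}$ of the center triangles satisfy $\sum_i\alpha_i=2\pi$. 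Set $\mu=\min_i r_{v_i}$. By the cosine formula $\cos\alpha_i=1-2r_{v_i}r_{v_{i+1}}/((1+r_{v_i})(1+r_{v_{i+1}}))$ and the fact that $t\mapsto t/(1+t)$ is increasing (cf.\ \cref{obs:geometric}), each $\alpha_i\ge\arccos\!\bigl(1-2\mu^2/(1+\mu)^2\bigr)$. Together with $k\ge 3$ and $\sum_i\alpha_i=2\pi$ this forces $\arccos(1-2\mu^2/(1+\mu)^2)\le 2\pi/3$, i.e.\ $\mu\le 3+2\sqrt3$. Since some center (that of the smallest petal) lies at distance $1+\mu$ from $\origin$, the isolation radius satisfies $\rho_{\mathrm{iso}}\le 1+\mu\le 4+2\sqrt3$, and we take $K_0=4+2\sqrt3$. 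For the three vertices on the outer face the same computation applies with $\sum_i\alpha_i<\pi$ in place of $2\pi$ (the outer-triangle angles sum to $\pi$) and at least $\deg\rho-1\ge 2$ summands, which again gives $\mu\le 3+2\sqrt3$; so the bound is uniform over all vertices. This completes the proof, with $A'$ depending only on the universal constants $A$ (from \cref{lem:magic}) and $K_0$.
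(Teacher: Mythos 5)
Your proof is correct and takes essentially the same route as the paper's: apply the Magic Lemma to the circle centers of $P'$ with $\delta$ of order $1/r$, using that the isolation radius of the root's center is sandwiched between its radius ($=1$) and a universal constant multiple of it. The only difference is that the paper merely asserts the comparison $\rad(C_w)\le \rho_w\le C\rad(C_w)$ (noting it needs no Ring Lemma), while you actually prove the upper bound via the flower-angle computation and carry out the $\delta=1/(rK_0)$ bookkeeping explicitly, which is a welcome completion of the sketch rather than a different argument.
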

\begin{proof}
  Apply the Magic Lemma with $\delta = \frac{1}{r}$ and $s=s$, with the
  centers of circles of $P'$ as the point set $C$. Note that there exists a constant $C>0$ such that for all $w \in V$ the isolation radius of $w$,  $\rho_w$, satisfies $\rad(C_w) \leq \rho_w \leq C\rad(C_w)$ (without appealing to the Ring Lemma).
\end{proof}

The following lemma provides the main estimate needed to prove \cref{theorem:loc:lim:recurrence}. Once it has been shown, \cref{theorem:loc:lim:recurrence} will follow by a Borel-Cantelli argument.

\begin{lemma}\label{local:limit:subset:lemma}
  Let $G$ be a finite simple planar map with maximum degree at most $D$ and let $\rho$ 
  be a uniform random vertex of $G$. Then, there exists a constant $C=C(D)<\infty$ 
  such that for all $k\geq 1$,
  \begin{equation*}
  \pr\left(\exists B\subseteq V, \,\, \left|B\right| \leq C k, \,\, \reff(\rho \lr 
  V\setminus B) \geq C^{-1} \log k \right) \geq 1 - Ck^{-\frac{1}{3}}\log k.
  \end{equation*}
\end{lemma}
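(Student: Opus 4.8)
The plan is as follows. First I would reduce to the case where $G$ is a simple triangulation: adding one vertex in each face joined to all the vertices on that face produces a simple triangulation $G^+$ with maximum degree at most $2D$ and $|V(G^+)|\le (1+\tfrac D2)|V(G)|$, and one checks using \cref{rayleigh} (deleting the new vertices only raises resistances) together with a smaller‑sink comparison that any $B^+\subseteq V(G^+)$ witnessing the statement for $G^+$ yields $B=B^+\cap V(G)$ witnessing it for $G$; since a uniform vertex of $G$ is a uniform vertex of $G^+$ up to a bounded factor in probabilities, it suffices to treat $G$ a simple finite triangulation with $\rho$ uniform. Fix a circle packing of $G$ (\cref{thm:cp}), and given $\rho$ rescale and translate so that $C_\rho$ is the unit circle about $\origin$; for $O\subseteq\RR^2$ write $V_O$ for the vertices whose circle is centred in $O$, and $V_R:=V_{B_{\euc}(\origin,R)}$.

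The deterministic ingredient is a finite, recentred version of \cref{hs:1:lem:1}: there are $c=c(D)>0$, $\kappa=\kappa(D)>1$ so that for every such packing, every $R\ge 1$, and every $q\in\RR^2$ lying in no circle's open disk,
\[
\reff\!\bigl(\rho\lr V\setminus V_R\bigr)\ge c\log R,\qquad
\reff\!\bigl(\rho\lr V_{B_{\euc}(q,1/R)}\bigr)\ge c\log\!\bigl(R|q|\bigr)\ \text{ if }V_{B_{\euc}(q,1/R)}\ne\varnothing,\ |q|\ge\tfrac12 .
\]
Both follow from the argument of \cref{hs:1:lem:1} and the proof of \cref{hs:RR:rec}: for any such $q$, a circle centred within distance $d$ of $q$ has radius $\le d$, so the Ring Lemma (\cref{lem:ring}) gives that there are no edges from $V_{B_{\euc}(q,t)}$ to $V\setminus V_{B_{\euc}(q,\kappa t)}$ and that $\reff(V_{B_{\euc}(q,t)}\lr V\setminus V_{B_{\euc}(q,\kappa t)})\ge c$ via the Dirichlet test function of \cref{lem:dirichlet}. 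The only novelty over the infinite case is that a Euclidean annulus may be empty; but a short ``gap'' argument shows that $V_{B_{\euc}(q,\kappa^{j-1}t)}\ne V$ forces the annulus between radii $\kappa^{j-1}t$ and $\kappa^{j}t$ to be nonempty, so all the annuli used in the flow argument of \cref{hs:RR:rec} carry the full unit current and each contributes $\ge c$ to its energy, giving $\Theta(\log R)$ and $\Theta(\log(R|q|))$ disjoint contributions respectively.

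Now apply \cref{cor:magic} with $r=4k^{1/3}$ and $s=k$: with probability $\ge 1-C'k^{-1/3}\log k$ there is $p$ with $|V_r\setminus B_{\euc}(p,1/r)|<k$. Put $\mathcal C:=V_r\cap B_{\euc}(p,1/r)$ and $O:=V_r\setminus\mathcal C$, so $|O|<k$. If every point of $B_{\euc}(p,1/(2r))$ lies in a circle's open disk, then—the disks being disjoint and the ball connected—that ball lies in one disk $C_w$, whence $\mathcal C\subseteq\{w\}$, $|V_r|\le k+1$, and $B:=V_r$ works with $\reff(\rho\lr V\setminus B)\ge c\log r$ by the first estimate. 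Otherwise pick $p'\in B_{\euc}(p,1/(2r))$ in no circle's disk, so that $\mathcal C\subseteq V_{B_{\euc}(p',2/r)}$. If $\rho\in\mathcal C$ then $|p|\le 1/r$, so $B_{\euc}(p,1/r)$ lies strictly inside $C_\rho$ and $\mathcal C=\{\rho\}$, reducing to the previous case. If $\rho\notin\mathcal C$, set $B:=O$; then $V\setminus B=\mathcal C\cup(V\setminus V_r)$, and since effective conductance is subadditive in the sink (apply $\energy(h_1\vee h_2)\le\energy(h_1)+\energy(h_2)$ to the harmonic Dirichlet functions from \cref{lem:dirichlet}),
\[
\reff(\rho\lr V\setminus B)\ \ge\ \tfrac12\min\!\bigl(\reff(\rho\lr V\setminus V_r),\ \reff(\rho\lr\mathcal C)\bigr).
\]
Each term is $\ge c'\log r$: the first by the first estimate, and for the second note that when $\mathcal C\ne\varnothing$ its circles are disjoint from $C_\rho$, hence centred at distance $>1$ from $\origin$, so $|p'|\ge 1-2/r\ge\tfrac12$ and the second estimate applies to $q=p'$. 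In every case $|B|\le k+1$ and $\reff(\rho\lr V\setminus B)\ge c'\log r=\tfrac{c'}{3}\log k+O(1)$, which is the claim for a suitable $C=C(D)$.

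I expect the main obstacle to be this clustered alternative of \cref{cor:magic}: a typical root may lie within bounded Euclidean distance of a huge cluster $\mathcal C$ of circles squeezed into a disk of radius $1/r$, so that no set $B$ of size $O(k)$ can contain $\mathcal C$, and one is forced to argue that $\mathcal C$ is far from $\rho$ in the resistance metric rather than to excise it. The facts that make this go through—that such a cluster necessarily consists of circles of radius $O(1/r)$ (so $\rho$ is not even adjacent to it), that the $\Theta(\log r)$ Euclidean annuli separating $\rho$ from $\mathcal C$ each cost $\Omega(1)$ resistance, and the conductance‑subadditivity over the sink $\mathcal C\cup(V\setminus V_r)$—together with the minor nuisance of relocating the annulus centre off all circles, are where the real work lies.
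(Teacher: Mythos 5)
Your core argument is essentially the paper's: circle pack a triangulation with $C_\rho$ normalized to the unit circle, apply \cref{cor:magic} with $r\asymp k^{1/3}$, $s=k$, split according to what sits inside the exceptional ball $B_{\euc}(p,1/r)$, get $\Omega(\log k)$ lower bounds from nested annuli as in \cref{hs:1:lem:1}, and combine the two sinks by subadditivity of effective conductance (the paper does this via \cref{effresprob} and a union bound; your $\energy(h_1\vee h_2)$ argument is a fine substitute, as is recentring the annuli at an uncovered point $p'$ in place of the paper's rescaling that turns the small circle near $p$ into a unit circle). However, two steps fail as written. First, the reduction to triangulations: coning each face (one new vertex joined to all vertices of the face) does \emph{not} produce a graph of maximum degree $O(D)$ — the apex of a face has degree equal to the face degree, which is not bounded in terms of $D$ (a cycle has maximum degree $2$ and a face of degree $n$), and simplicity can also fail when a face has chords or repeated boundary vertices. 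Since every constant in the packing estimates (the Ring Lemma constant, hence $C(D)$) depends on the maximum degree, this destroys the claimed bound. The paper instead triangulates each face by zig-zag diagonals, inserting an interior cycle first when chords are present, which keeps the degree at most $3D$ and preserves simplicity; some such construction is genuinely needed here.

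Second, your ``covered'' case is wrong as stated: from ``every point of $B_{\euc}(p,1/(2r))$ lies in some open disk'' you correctly get $B_{\euc}(p,1/(2r))\subseteq \mathrm{int}(C_w)$ for a single $w$, but this does \emph{not} give $\mathcal C\subseteq\{w\}$. There may be arbitrarily many tiny circles whose centers lie in $B_{\euc}(p,1/r)\setminus \mathrm{int}(C_w)$ (e.g.\ packed against $\partial C_w$ when $p$ is at distance between $1/(2r)$ and $1/r$ from that boundary), so $|V_r|$ can far exceed $k+1$ and taking $B=V_r$ violates the cardinality constraint. The fix is easy and brings you back to the paper's dichotomy: either run your covering argument on the full ball $B_{\euc}(p,1/r)$ (if it is covered it lies in one $\mathrm{int}(C_w)$, and then indeed no other center can lie in it), or split on $|V_{B_{\euc}(p,1/r)}|\le 1$ versus $\ge 2$ as the paper does; in the latter case two disjoint-interior circles with centers within $2/r$ of each other force one of radius at most $1/r$ inside $B_{\euc}(p,2/r)$, and any point of its boundary lies in no open disk and can serve as your recentring point $p'$. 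With these two repairs your proof goes through and is, in substance, the paper's proof.
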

\begin{proof}
  We first assume that $G$ is a triangulation and consider a circle packing of it where the circle of $\rho$ is a unit circle centered at the origin $\origin$. Applying \cref{cor:magic} with $r = 
  k^{\frac{1}{3}}, s = k$, we have that with probability at least $1 - 
  Ak^{-\frac{1}{3}} \log(k)/3$, there exists $p\in\RR^2$ with 
  $$\left|V_{B_{\euc}(\origin,r)\setminus B_{\euc}(p,\frac{1}{r})} \right| < k.$$
  Now, if $|V_{B_{\euc}(p,\frac{1}{r})}| \leq 1$, we set $B = 
  V_{B_{\euc}(\origin,r)}$. We then have $\left|B_k\right| \leq k$ and by applying 
  \cref{hs:1:lem:1} we get that $\reff(\rho \lr V\setminus B) \geq C^{-1} \log(k)$.
  Else, if $|V_{B_{\euc}(p,\frac{1}{r})}| \geq 2$ then we take $B =  
  V_{B_{\euc}(\origin,r)} \setminus V_{B_{\euc}(p, \frac{1}{r})}$. By the Ring 
  Lemma, there exists a $c'=c'(D)>0$ such that $\left|p\right| \geq 1+c'$. 
  Since $|V_{B_{\euc}(p,\frac{1}{r})}| \geq 2$, we have a vertex in that set with radius at 
  most $r^{-1}$. Therefore, $B_{\euc}(p, \frac{2}{r})$ contains at least one 
  full circle $C_v$. Hence, by scaling and translating such that $C_v = 
  \UU$, we get (again, by \cref{hs:1:lem:1}) that
  \begin{equation*}
  \reff\left(V_{B_{\euc}(p,\frac{2}{r})} \lr V\setminus 
  V_{B_{\euc}(p,c'/2)}\right) \geq c_2 \log k \, ,
  \end{equation*}
  for some other constant $c_2=c_2(D)>0$. Since $\rho \in V\setminus 
  V_{B_{\euc}(p,c'/2)}$ we obtain 
  \begin{equation*}
  \reff\left(\rho \lr V_{B_{\euc}(p,\frac{2}{r})}\right) \geq c_2 \log k \, .
  \end{equation*} 
  By \cref{hs:1:lem:1} we also have that
  \begin{equation*}
  \reff\left(\rho \lr V\setminus V_{B_{\euc}(\origin,r)}  \right) \geq c_3 \log k \, ,
  \end{equation*}
  for some $c_3=c_3(D)>0$. By \cref{effresprob} this means that
  \begin{equation*}
  \pr_\rho\left(\tau_{V\setminus V_{B_{\euc}(\origin,r)}} < \tau_\rho^+\right) \leq 
  \frac{1}{c_2\log(k)} \quad \textrm{and} \quad \pr_\rho\left(\tau_{ V_{B_{\euc}(p,\frac{2}{r})}} < 
  \tau_\rho^+\right) \leq \frac{1}{c_3\log(k)} \, .
  \end{equation*}
  By the union bound
  \begin{equation*}
  \pr_\rho\left(\tau_{V\setminus B} < \tau_\rho^+\right) \leq 
  \frac{2}{\min(c_2, c_3)\log(k)} \, ,
  \end{equation*}
  hence by \cref{effresprob} again
  \begin{equation*}
  \reff\left(\rho \lr V\setminus B\right) \geq \min(c_2,c_3) D^{-1} \log(k) /2 \, ,
  \end{equation*} concluding the proof when $G$ is a triangulation.

  If $G$ is not a triangulation, we would like to add edges to make it a triangulation while making sure that the maximal degree does not increase too much. We also have to ensure that the graph remains simple which may require us to add some additional vertices as well. Let $f$ be a 
  face of $G$ with vertices $v_1,\ldots,v_{k}$. 
  Suppose first that there are no edges between non-consecutive vertices of 
  the face. In this case, we draw the edges in a zig-zag fashion, as in \cref{fig:zigzag:1}.
  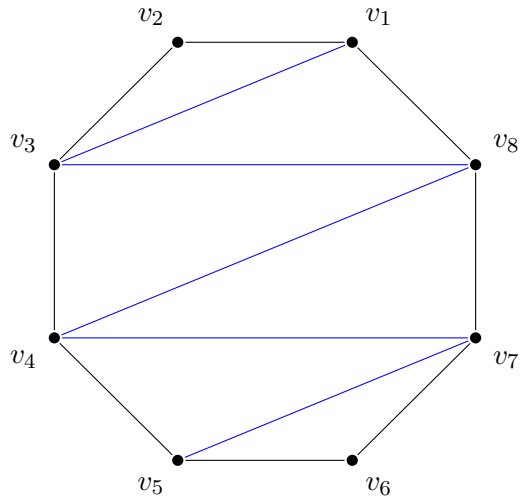
\begin{figure}[h]
    \centering
    \begin{tikzpicture}
    \foreach\i in {1,...,8} {
      \path let \n1={22.5+45*\i} in
        node (v\i) at (\n1:3) [vx,label={\n1:$v_\i$}] {};
    }
    \foreach\i in {1,...,8} {
      \draw let \n1={int(mod(\i,8)+1)} in (v\i) -- (v\n1);
    }
    \draw[color=blue] (v1) -- (v3) -- (v8) -- (v4) -- (v7) -- (v5);
    \end{tikzpicture}
    \caption{Adding diagonals to a face in a zigzag fashion}
    \label{fig:zigzag:1}
  \end{figure}
  
  In the case where edges between non-consecutive vertices of the face exist,
  we draw a cycle $u_1,\ldots,u_k$ inside $f$. Then, we connect $u_i$ to $v_i$ 
  and $v_{i+1}$ for each $i < k$ and $u_k$ to $v_k$ and $v_1$. Finally, we 
  triangulate the inner face created by the new cycle by zig-zagging as in 
  the previous case (see \cref{fig:zigzag:2}). 

  Since each vertex of the original graph is a member of at most $D$ faces and for each face at most $2$ edges are added, the maximal degree of the resulting graph is at most $3D$. Similarly, the number of vertices in the resulting graph is at most $D$ times the number of vertices in the original graph hence the probability of a random vertex being a vertex of the original graph is at least $D^{-1}$. If this occurs then it is 
  straightforward to see that the existence of a subset of vertices $B$ in the new graph which satisfies the required conditions implies the existence of such a set in the old graph, concluding our reduction to the triangulation case and finishing our proof. 
  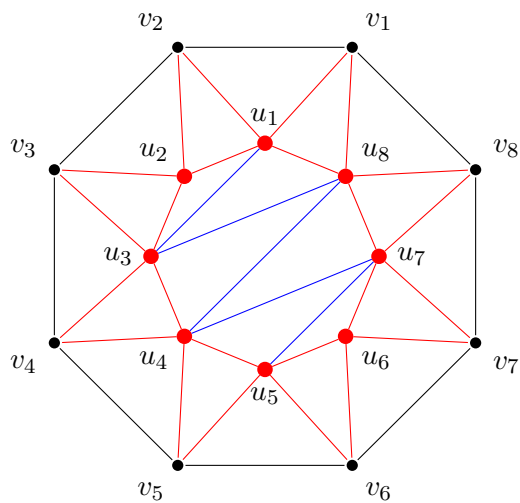
\begin{figure}[h]
    \centering
    \begin{tikzpicture}
    \foreach\i in {1,...,8} {
      \path let \n1={22.5+45*\i} in
        node (v\i) at (\n1:3) [vx,label={\n1:$v_\i$}] {};
      \path let \n1={45+45*\i} in
        node (u\i) at (\n1:1.5) [vx,red,label={\n1:$u_\i$}] {};
    }
    \foreach\i in {1,...,8} {
      \draw let \n1={int(mod(\i,8)+1)} in (v\i) -- (v\n1);
      \draw[red] let \n1={int(mod(\i,8)+1)} in (u\i) -- (u\n1);
      \draw[red] let \n1={int(mod(\i,8)+1)} in (u\i) -- (v\i);
      \draw[red] let \n1={int(mod(\i,8)+1)} in (u\i) -- (v\n1);
    }
    \draw[color=blue] (u1) -- (u3) -- (u8) -- (u4) -- (u7) -- (u5);
    \end{tikzpicture}
    \caption{Drawing an inner cycle and triangulating the new inner face.}    \label{fig:zigzag:2}
  \end{figure}
\end{proof}
We are ready to deduce \cref{theorem:loc:lim:recurrence}.
\begin{proof}[Proof of \cref{theorem:loc:lim:recurrence}] Assume that $G_n$ are finite planar maps with maximum degree at most $D$ such that $G_n \convl (U,\rho)$. If $\{G_n\}$ are not simple graphs we erase loops and merge parallel edges into a single edge to obtain the sequence $\{G_n'\}$. It is immediate that $G_n' \convl (U',\rho')$ where $(U',\rho')$ is distributed as $(U,\rho)$ after removing from $U$ all loops and merging parallel edges into a single edge. Since the maximum degree is bounded, $U'$ is recurrent if and only if $U$ is recurrent. Thus we may assume that $G_n$ are simple graphs so the previous estimates may be used.

  Denote by $\mathcal{A}_k$ the event \begin{equation*}
  \mathcal{A}_k = \left\{\exists B \subseteq U, \,\, \left|B\right| \leq Ck, \,\,
  \reff\left(\rho \lr V\setminus B \right)\ge c\log{k}\right\} \, ,
  \end{equation*}
  where $C=C(D)<\infty$ is the constant from \cref{local:limit:subset:lemma}. Therefore 
  $\pr(\mathcal{A}_k^c) \leq c^{-1}k^{-\frac{1}{3}}\log(k)$. Looking at the sequence 
  $\{\mathcal{A}_{2^j}\}_{j\geq 1}$, by Borel-Cantelli, almost surely there exists $j_0$ such that for all 
  $j\geq j_0$ the event $\mathcal{A}_{2^j}$ holds. Thus we have proved the required assertion for $k$ which is a power of $2$. To prove this for all $k$ sufficiently large, let $B_{2^j}$ be the set guaranteed to exist in the definition of $\mathcal{A}_{2^j}$, and take $B_k = B_{2^j}$ for the unique $j$ for which $2^j \leq k < 2^{j+1}$. It is immediate that these sets satisfy the assertion of the theorem, concluding our proof.
\end{proof}

\section{Exercises}

\begin{enumerate}

\item For a graph $G$, let $G^2$ be the graph on the same vertex set as $G$ so that vertices $u,v$ form an edge if and only if the graph distance in $G$ between $u$ and $v$ is at most $2$. Show that if $G$ has uniformly bounded degrees, then $G$ is recurrent if and only if $G^2$ is recurrent.

\item Construct an example of a local limit $(U, \rho)$ of finite planar graphs such that $U$ is almost surely recurrent, but $U^2$ is almost surely transient. 

\item Let $G(n,p)$ be the random graph on $n$ vertices drawn such that each of the ${n \choose 2}$ possible edges appears with probability $p$ independently of all other edges. Let $c>0$ be a constant, show that $G(n,c/n)$ converges locally to a branching process with progeny distribution Poisson$(c)$.

\item Fix an integer $k\geq 1$. Construct an example of a sequence of finite simple planar maps $G_n$ such that $G_n$ converge locally to $(U,\rho)$ with the property that $\E[\deg^k(\rho)] < \infty$ and $U$ is almost surely transient.

\item (*) Suppose that $G_n$ is a sequence of finite trees converging locally to $(U, \rho)$. Show that $U$ is almost surely recurrent. (Note that the degrees may be \emph{unbounded})






\end{enumerate}

\chapter{Recurrence of random planar maps} \label{chp:randommaps}

Our main goal in this chapter is to remove the bounded degrees assumption in \cref{thm:BeScLimit} and replace it with the assumption that the degree of the root has an exponential tail.

\begin{theorem}\label{thm:lim:rec} [\cite{GGN13}]
  Let $G_n$ be a sequence of (possibly random) planar graphs such that $G_n \convl (U,\rho)$ and there exist $C,c > 0$ such that $\pr(\deg(\rho) \geq k) \leq Ce^{-ck}$ for every $k$. Then $U$ is almost surely recurrent.
\end{theorem}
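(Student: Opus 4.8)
The plan is to reprove the quantitative Benjamini--Schramm estimate behind \cref{theorem:loc:lim:recurrence} --- that almost surely there are $c>0$ and sets $B_k\subseteq U$ with $|B_k|\le c^{-1}k$ and $\reff(\rho\lr U\setminus B_k)\ge c\log k$ --- while replacing each appeal to the Ring Lemma (\cref{lem:ring}), which is the only place bounded degree enters, by an argument exploiting the stationarity of the limit together with the exponential tail of $\deg(\rho)$. Granting such an estimate, $\reff(\rho\lr\infty)=\infty$ and hence recurrence follow exactly as in the proof of \cref{theorem:loc:lim:recurrence}, by Borel--Cantelli along $k=2^j$.

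First I would carry out the standard reductions. A local limit of finite maps rooted at a uniform vertex is a stationary random rooted map (a unimodular random graph), and it is this stationarity --- used through the mass-transport principle --- that will stand in for bounded degree. Triangulating every face of every approximant $G_n$ equivariantly by the zig-zag construction from the proof of \cref{local:limit:subset:lemma} (an interior cycle plus diagonals) multiplies each vertex degree by at most an absolute constant, so the limit is still a stationary simple planar triangulation whose root degree has an exponential tail, and recurrence is unchanged; as there, we also erase loops and merge multi-edges.

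The core is then a degree-free analogue of \cref{local:limit:subset:lemma}. Circle pack a large finite ball of $G_n$ (\cref{thm:cp}), normalize the circle of the uniform root to be the unit disc at the origin, and invoke \cref{cor:magic} with $r=k^{1/3}$, $s=k$ to obtain, with probability $1-O(k^{-1/3}\log k)$, a Euclidean ball $B_{\euc}(p,1/r)$ such that $B_{\euc}(\origin,r)\setminus B_{\euc}(p,1/r)$ contains fewer than $k$ circle centers. One then takes $B_k$ to be (essentially) $V_{B_{\euc}(\origin,r)}$, or $V_{B_{\euc}(\origin,r)}\setminus V_{B_{\euc}(p,1/r)}$ in the degenerate case, just as in \cref{local:limit:subset:lemma}, and must show $\reff(\rho\lr U\setminus B_k)\gtrsim\log k$. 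For a bounded-degree triangulation this is \cref{hs:1:lem:1} applied across the $\Theta(\log r)$ dyadic Euclidean annuli between scales $1$ and $r$: the logarithmic radial test function $h(v)=\log|\cent(v)|/\log r$ has Dirichlet energy $\energy(h)\lesssim(\log r)^{-2}\sum_{j}2^{-2j}\sum_{x:\,|\cent(x)|\sim 2^{j}}\deg(x)\rad(x)^{2}$, which is $O(1/\log r)$ because $\sum_{|\cent(x)|\sim 2^{j}}\rad(x)^{2}$ is at most the area of the annulus and degrees are bounded.

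The main obstacle --- and where the hypotheses really enter --- is that with unbounded degrees a vertex whose circle has radius $\approx R$ may be tangent to an enormous circle, so \cref{hs:1:lem:1} fails twice over: edges may jump across an annulus (breaking part (i)), and the weighted sums $\sum_{|\cent(x)|\sim 2^{j}}\deg(x)\rad(x)^{2}$ need no longer be $O(2^{2j})$. Two ingredients are needed. A deterministic packing estimate handles the jumps: by disjointness of interiors only $O(\eta^{-2})$ circles of radius $\ge\eta R$ can meet $B_{\euc}(\origin,R)$, so the ``long'' edges at each scale have $O(1)$ endpoints, which we absorb into $B_k$ at a total cost of only $O(\log k)\ll k$ extra vertices. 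The weighted-sum bound, and --- crucially --- the \emph{locality} needed to transfer the estimate from the finite $G_n$ to the infinite $U$ (the event ``$\exists B_k$ \ldots'' must be realised as depending on a finite, albeit $k$-dependent, graph ball around $\rho$), are where the exponential degree tail is used, via mass transport: it forces the $\rad^{2}$-weighted contribution of high-degree vertices in each annulus to be negligible against the area bound, and it shows that every vertex whose circle meets $B_{\euc}(\origin,r)$ lies within a bounded (function of $k$) graph distance of $\rho$ outside an event of probability $O(k^{-1/3}\log k)$. With these in hand the Dirichlet computation above runs annulus by annulus, giving $\reff(\rho\lr U\setminus B_k)\ge c\log k$ with the required probability, and Borel--Cantelli concludes. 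I expect the delicate point to be exactly this last interplay --- converting the exponential tail, through stationarity, into a genuinely local control on both the geometry of the packing near $\rho$ and the degree-weighted energies.
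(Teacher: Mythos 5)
Your plan is a genuinely different route from the paper's, and the difference matters. The paper never attempts to prove the quantitative estimate of \cref{theorem:loc:lim:recurrence} on $U$ itself; instead it applies the star-tree transform (\cref{lem:startree}) to replace $U$ by a degree-$3$ graph $U^*$ carrying edge resistances $1/\deg_U(v)$, applies \cref{theorem:loc:lim:recurrence} to $U^*$ with \emph{unit} resistances, and then transfers the bound to the true resistances in two deliberately lossy steps: the coupling lemma \cref{lem:comparerw} (stationarity, exponential tail, commute-time identity) shows the walk typically meets no edge of mark larger than $C\log|B_k|$ before exiting $B_k$, and the remaining edges are handled by the crude comparison $1/M(e)\geq 1/(C\log k)$, which sacrifices the entire factor $\log k$. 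That loss is affordable because the endgame (via \cref{claim:resvertexball}) only needs a \emph{constant} lower bound on $\reff(B_{U^*}(\rho^*,m)\lr U^*\setminus B_k)$, uniform in $k$. Your argument, by contrast, needs the full $c\log k$ bound for the unit-resistance network on $U$ itself --- a strictly stronger statement than anything established in the paper --- and that is exactly where your proposal has a gap.

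The gap is the degree-weighted annulus estimate that you assert ``via mass transport''. Note first that bounded degree enters \cref{hs:1:lem:1} not only through the Ring Lemma but directly as the neighbour count: the energy of your radial test function is of order $(\log k)^{-2}\sum_j 2^{-2j}\sum_{x:\,|\cent(x)|\asymp 2^j}\deg(x)\rad(x)^2$ (plus the symmetric term in $\rad(y)^2$), and disjointness of interiors only controls $\sum_x\rad(x)^2$, not the factor $\deg(x)$. The exponential tail of $\deg(\rho)$ does not obviously supply the missing factor: the vertices appearing in a given annulus of the packing normalized at $\rho$ are selected by the geometry (effectively size-biased by radius), not distributed like the root, and there is no evident mass-transport identity for $\sum_x \deg(x)\rad(x)^2/|\cent(x)-\cent(\rho)|^2$ in the root-normalized packing --- naively exchanging the average over roots diverges, since arbitrarily many tiny circles may sit at a given distance. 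Quantitatively, an exponential tail is perfectly consistent with (and in examples such as the UIPT forces) degrees of order $\log k$ among the $\mathrm{poly}(k)$ vertices involved; a single vertex of degree $\asymp\log k$ whose circle has radius comparable to its annulus contributes $\asymp 1/\log k$ to the energy whether or not you place it in $B_k$, so if such vertices appear in a positive fraction of the $\Theta(\log k)$ scales your method yields only $\reff\gtrsim 1$ rather than $\gtrsim\log k$, and no mechanism is offered for showing they are rare across scales. Your other points are fine or repairable: the zig-zag triangulation and loop/multi-edge reductions are as in \cref{local:limit:subset:lemma}; absorbing the $O(1)$ large circles per scale into $B_k$ is legitimate; and the locality worry can be handled (even without bounded degree) by observing that only the connected component of $\rho$ inside $B_k$ affects $\reff(\rho\lr U\setminus B_k)$, so the event is determined by a graph ball of radius $\asymp k$. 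But the crucial energy estimate is assumed rather than proved, and it is precisely the difficulty the star-tree transform was introduced to circumvent.
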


As discussed in \cref{sec:intro_prob}, the last theorem is immediately applicable in the setting of random planar maps. It is well known that the degree of the root in the UIPT and the UIPQ has an exponential tail. See \cite[Lemma 4.1 and 4.2]{AS03} or \cite{GaoRichmond} for the UIPT and \cite[Proposition 9]{BC13} for the UIPQ.

\begin{corollary} [\cite{GGN13}]
  The UIPT/UIPQ are almost surely recurrent.
\end{corollary}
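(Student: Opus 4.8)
The plan is to obtain this as an immediate consequence of \cref{thm:lim:rec}; the only thing to check is that the UIPT and the UIPQ each satisfy the two hypotheses of that theorem, namely that they arise as local limits of finite planar maps and that the degree of the root has an exponential tail.

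First I would recall the constructions of these limits. If $M_n$ is a uniformly random (rooted) triangulation of the sphere with $n$ vertices, then by the theorem of Angel and Schramm \cite{AS03} we have $M_n \convl (U,\rho)$, and the limit $(U,\rho)$ is by definition the UIPT; likewise, if $Q_n$ is a uniformly random quadrangulation with $n$ faces, then by Krikun's theorem \cite{Krikun} we have $Q_n \convl (U,\rho)$, the UIPQ. In both cases the approximating objects $M_n$, $Q_n$ are finite planar maps, and upon forgetting the map structure they are finite planar graphs; since recurrence depends only on the underlying graph and the forgetful map is continuous in the local topology, we are exactly in the setting of \cref{thm:lim:rec}.

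Next I would invoke the exponential tail of the root degree. For the UIPT this is \cite[Lemmas 4.1 and 4.2]{AS03} (see also \cite{GaoRichmond}), and for the UIPQ it is \cite[Proposition 9]{BC13}: in each case there are constants $C,c>0$ with $\pr(\deg(\rho)\geq k)\leq Ce^{-ck}$ for all $k$. Both hypotheses of \cref{thm:lim:rec} being in force, we conclude that the UIPT and the UIPQ are almost surely recurrent.

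There is no genuine obstacle at this stage: the entire weight of the argument sits inside the proof of \cref{thm:lim:rec}, which combines the circle packing theorem, the He--Schramm-type resistance estimates of \cref{chp:heschramm}, the Magic Lemma, and the stationarity input of this chapter. The only point requiring a word of care is the compatibility of the rooting conventions --- the UIPT and UIPQ are customarily rooted at an oriented edge rather than at a uniformly random vertex --- but this affects the law of $\deg(\rho)$ only by a bounded size-bias, so the exponential tail, and hence the applicability of \cref{thm:lim:rec}, is unaffected.
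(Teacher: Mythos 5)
Your proposal is correct and follows the same route as the paper: the corollary is deduced immediately from \cref{thm:lim:rec}, using that the UIPT and UIPQ arise as local limits of finite planar maps \cites{AS03,Krikun} and that the root degree has an exponential tail (\cite[Lemmas 4.1 and 4.2]{AS03}, \cite{GaoRichmond}, \cite[Proposition 9]{BC13}). Your additional remark on reconciling the edge-rooted convention with the uniform-vertex rooting is a reasonable extra care point, not a deviation from the paper's argument.
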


\section{Star-tree transform} \label{sec:startree}
We present here a transformation which maps any planar map $G$ to a planar map $G^*$ with maximal 
degree of $3$. We call this transformation $G \mapsto G^*$  the {\bf star-tree transform}\index{star-tree transform}. Recall that a \defn{balanced rooted tree} is a finite rooted tree in which every non-leaf vertex has precisely two children and the distance of the leaves from the root differs by at most $1$. The transformation is performed as follows.
\begin{enumerate}
  \item Subdivide each edge $e$ by adding a new vertex $w_e$ of degree two in the ``middle''. See \cref{fig:startree:b}. Denote the resulting graph by $G'$.
  \item For every vertex $v\in V(G)$, replace all edges incident to $v$ in $G'$ by a   
    balanced binary tree rooted at $v$, whose leaves are the neighbors of $v$ 
    in $G'$. We perform this in a fashion which preserves the cyclic order of these neighbors and thus preserves planarity. Furthermore, add two extra vertices and attach them to the root. Denote this tree by $T_v$.  See \cref{fig:startree:d}. 


    
\end{enumerate}
\begin{figure}[ht]
  \centering
  \begin{subfigure}[t]{0.475\linewidth}
    \centering
    \begin{tikzpicture}[font=\small, scale=3]
    \node[lvxb,label={210:$u$}] (u) at (0,0) {};
    \node[lvxb,label={30:$v$}] (v) at (30:1) {};
    
    \draw (u)--(v);
  \end{tikzpicture}
    
  \caption{An original edge of $G$.}
  \end{subfigure}
  \begin{subfigure}[t]{0.475\linewidth}
    \centering
    \begin{tikzpicture}[font=\small, scale=3]
      \node[lvxb,label={210:$u$}] (u) at (0,0) {};
      \node[lvxw,label={120:$w$}] (w) at (30:0.5) {};
      \node[lvxb,label={30:$v$}]  (v) at (30:1) {};
    
      \draw (u)--(w)--(v);
    \end{tikzpicture}
    \caption{Subdividing an edge.}
    \label{fig:startree:b}
  \end{subfigure}
  \begin{subfigure}[b]{0.475\linewidth}
  \centering
  \begin{tikzpicture}[font=\small, scale=3]
    \node[lvxb,label={90:$v$}]  (v) at (0,0) {};
    \foreach\i in {1,...,6} {
      \path let \n1={-\i*60+60} in
        node[lvxw,label={\n1:$w_\i$}] (w\i) at (\n1:0.5) {};
      \draw (v) -- (w\i);
    }
  \end{tikzpicture}
  \caption{The ``star'' of a vertex in $G'$.}
  \end{subfigure}
  \begin{subfigure}[b]{0.475\linewidth}
    \centering
    \begin{tikzpicture}[font=\small, scale=1]
      \node[lvxb,label={90:$v$}] (v) at (0,0) {}; 
      \node[fill] (e1) at (-1,0) {};
      \node[fill] (e2) at (1,0) {};
      \node[lvxg] (g1) at (-1,1) {};
      \node[lvxg] (g2) at (1,1) {};
      \node[lvxg] (g11) at (-2,2) {};
      \node[lvxg] (g12) at (-1,2) {};

      \node[lvxw,label={90:$w_1$}] (w1) at (-3,3) {};
      \node[lvxw,label={90:$w_2$}] (w2) at (-2,3) {};
      \node[lvxw,label={90:$w_3$}] (w3) at (-1,3) {};
      \node[lvxw,label={90:$w_4$}] (w4) at (0,3) {};
      \node[lvxw,label={90:$w_5$}] (w5) at (1,2) {};
      \node[lvxw,label={90:$w_6$}] (w6) at (2,2) {};
    
      \draw (v) -- (g1) -- (g11) -- (w1);
      \draw (g1) -- (g11) -- (w2);
      \draw (g1) -- (g12) -- (w3);
      \draw (g1) -- (g12) -- (w4);
      \draw (v) -- (g2) -- (w5);
      \draw (v) -- (g2) -- (w6);
      \draw (v) -- (e1);
      \draw (v) -- (e2);
    \end{tikzpicture}
    \caption{Transforming the star of $v$ into a tree $T_v$.}
    \label{fig:startree:d}
  \end{subfigure}
  \caption{The star-tree transform}
  \label{fig:startree}
\end{figure}
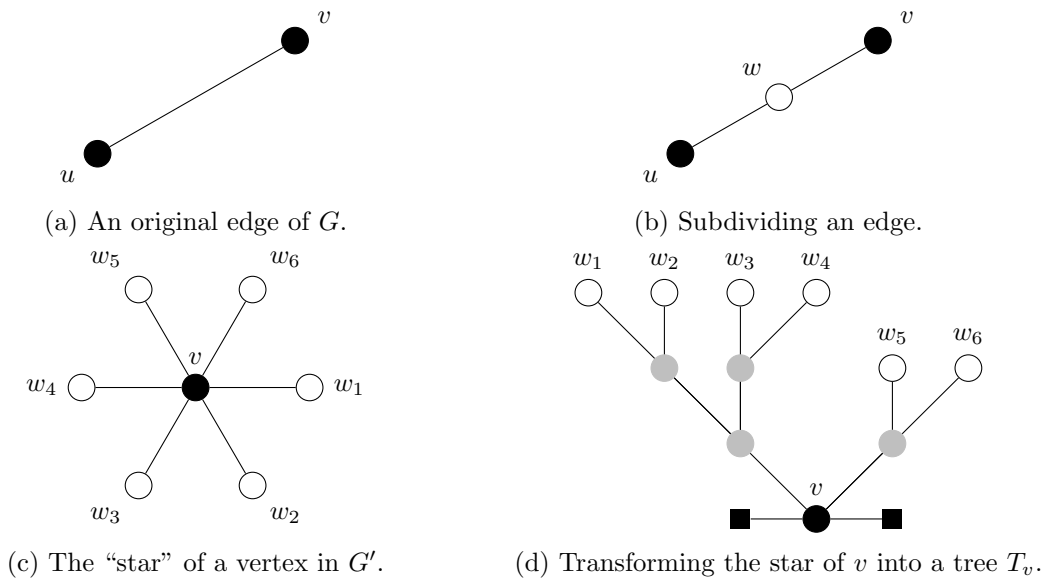

Denote the resulting graph by $G^*$. Note that each edge $e$ in $G^*$ corresponds to precisely one vertex $v$ of $G$ such that $e$ belongs to $T_v$. 

\begin{lemma}\label{lem:startree}
  Let $G$ be a planar map and $G^*$ its star-tree transform. We set 
  edge resistances on $G^*$ by putting $R_e = 1/d_G(v)$, where $v$ is the vertex of $G$ for which $e\in T_v$ and $d_G(v)$ is the degree of $v$ in $G$. If the network $(G^*,R_e)$ is 
  recurrent, then $G$ is recurrent as well. 
\end{lemma}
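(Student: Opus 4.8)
The plan is to prove the contrapositive: if $G$ is transient, then so is the network $(G^*,\{R_e\})$. By \cref{cor:transiencecondition}, transience of $G$ yields a unit flow $\theta$ from some vertex $a\in V(G)$ to $\infty$ in $G$ with $\energy(\theta)<\infty$; I will build from it a unit flow $\theta^*$ from $a$ to $\infty$ in the network $(G^*,\{R_e\})$ with finite energy, and then invoke \cref{cor:transiencecondition} once more. Here I use that the original vertices of $G$ survive in $G^*$ as the roots of the trees $T_v$, that the subdivision vertices $w_e$ survive as leaves, and that $w_e$ (for $e=\{u,v\}$) is a common leaf of $T_u$ and $T_v$, so that $G^*$ contains a unique path running from $u$ through $T_u$ to $w_e$ and then through $T_v$ to $v$.

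First I would define $\theta^*$ by routing: for every edge $e=\{u,v\}$ of $G$, send a current of size $\theta(uv)$ along the path just described, superpose these contributions over all edges of $G$, and set $\theta^*\equiv 0$ on the two pendant edges attached to each root. Then I would check that $\theta^*$ is a unit flow from $a$ to $\infty$. Antisymmetry is immediate. For the node law at a vertex $x\neq a$ of $G^*$: if $x$ is internal to some $T_v$ it holds because routing along root-to-leaf paths conserves current at every internal node; if $x=w_e$ is a subdivision vertex (which has degree two) the current entering from the $T_u$-side leaves toward the $T_v$-side; if $x$ is pendant it is trivial; and if $x=v$ is a root, the net current leaving $v$ inside $T_v$ equals $\sum_{u\sim v}\theta(vu)$, which is $0$ by the node law of $\theta$ at $v$ in $G$ (the pendant edges being inert). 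The strength at $a$ is $\sum_{u\sim a}\theta(au)=1$, so $\theta^*$ is indeed a unit flow from $a$ to $\infty$.

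The crux is the energy bound, which I would organise tree by tree. In the network $(G^*,\{R_e\})$,
\[
\energy(\theta^*)=\sum_{v\in V(G)}\frac{1}{d_G(v)}\sum_{f\in T_v}\theta^*(f)^2 .
\]
Fixing $v$ and writing $d=d_G(v)$: the tree $T_v$ is balanced binary with $d$ leaves, so its depth is $O(\log d)$, and for an edge $f$ of $T_v$ at distance $j$ from the root the set $L_f$ of leaves of $T_v$ lying below $f$ has $|L_f|=O(d/2^{j})$, while $\theta^*(f)=\sum_{w_e\in L_f}\pm\,\theta(vu_e)$, where $u_e$ is the $G$-neighbour of $v$ corresponding to $e$. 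Cauchy--Schwarz gives $\theta^*(f)^2\le|L_f|\sum_{w_e\in L_f}\theta(vu_e)^2$, and since the sets $L_f$ with $f$ at a fixed distance $j$ partition the leaves of $T_v$, summing over such $f$ gives $\sum_{f\text{ at distance }j}\theta^*(f)^2\le O(d/2^{j})\sum_{u\sim v}\theta(vu)^2$. Summing the resulting geometric series over $j$ up to depth $O(\log d)$ yields $\sum_{f\in T_v}\theta^*(f)^2\le O(d)\sum_{u\sim v}\theta(vu)^2$, so the contribution of $T_v$ to $\energy(\theta^*)$ is $O\big(\sum_{u\sim v}\theta(vu)^2\big)$ --- the resistance $1/d_G(v)$ is tuned precisely to cancel the factor $d_G(v)$ coming from the tree. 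Summing over $v$ and using $\sum_v\sum_{u\sim v}\theta(vu)^2=2\sum_{e\in E(G)}\theta(e)^2=2\energy(\theta)$ gives $\energy(\theta^*)=O(\energy(\theta))<\infty$, and \cref{cor:transiencecondition} applied to $(G^*,\{R_e\})$ finishes the argument.

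I expect the main obstacle to be the bookkeeping at the two points just indicated: verifying that $\theta^*$ satisfies the node law at every original vertex $v$ (which collapses to the node law of $\theta$ at $v$ in $G$ once the pendant edges are recognised as carrying no current), and matching the tree depth $\log_2 d_G(v)$ against the resistance $1/d_G(v)$ so that the level-by-level Cauchy--Schwarz estimate telescopes down to a constant multiple of $\energy(\theta)$ rather than blowing up.
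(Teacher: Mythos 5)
Your proposal is correct and takes essentially the same route as the paper's proof: argue the contrapositive via \cref{cor:transiencecondition}, push the finite-energy flow $\theta$ through each tree $T_v$ (which is exactly the paper's formula $\theta^*(e)=\sum_{w\in C_y}\theta(v,v_w)$), and bound the energy level by level with Cauchy--Schwarz so that the factor $2^{h}\approx d_G(v)$ from the tree is cancelled by the resistance $1/d_G(v)$, giving $\energy(\theta^*)=O(\energy(\theta))$. No changes are needed.
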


\begin{proof}

It is clear that from the point of view of recurrence versus transience, the two edges leading to the two ``extra'' neighbors of each root do not matter and can be removed. Hence for the rest of the proof we write $T_v$ for the previously defined tree with these two edges removed. The purpose of these extra edges will become apparent later in the proof of \cref{thm:lim:rec}. 

Assume $G$ is transient and let $a\in V(G)$ be some vertex. There is a flow $\theta$ from $a$ to $\infty$ such that $\energy(\theta) < \infty$. We will construct a flow $\theta^*$ on $(G^*,R_e)$ from $a$ to $\infty$ with finite energy, showing that $(G^*,R_e)$ is transient, giving the theorem.

We first provide some notation. We denote by $A$ the set of vertices that were added to form $G'$ in the first step of the star-tree transform (that is, the white vertices in \cref{fig:startree}). Each vertex $w\in A$ is a leaf of precisely two trees $T_u$ and $T_v$, where $\{u,v\}$ was the edge of $G$ that $w$ divided. We call $u$ and $v$ the {\bf tree roots} of $w$. We denote by $B$ the set of vertices that were added to $G^*$ in the second step of the star-tree transform, that is, the gray vertices in \cref{fig:startree:d}. The vertices of $V(G)$ are the black discs in \cref{fig:startree}. Each vertex of $x\in V(G)\cup B$ is a member of a single tree $T_v$; we call $v$ the {\bf tree root} of $x$. Lastly, for any $x\in V(G)\cup B$ we denote by $C_x \subset A$ the set of leaves of $T_v$, where $v$ is the tree root of $x$, for which the path from the leaf to the root of $T_v$ goes through $x$; in other words, $C_x$ is the set of leaves of $T_v$ which are the ``descendants'' of $x$. If $x\in A$, then we set $C_x=\{x\}$.


To define $\theta^*$, let $e=(x,y)$ be an edge of $T_v$. Assume that $x$ is closer to the root of $T_v$ than $y$ in graph distance. We set
$$\theta^*(e) = \sum_{w \in C_y} \theta(v, v_w)\, ,$$ where $v_w$ is the tree root of $w$ that is \emph{not} $v$. The construction of $\theta^*$ is depicted in \cref{fig:startreeflow}.
  
\begin{figure}[ht]
  \centering
  \begin{subfigure}[t]{0.475\linewidth}
    \centering
    \begin{tikzpicture}[font=\small, scale=3]
    \node[lvxb,label={210:$u$}] (u) at (0,0) {};
    \node[lvxb,label={30:$v$}] (v) at (30:1) {};
    
    \draw (u) -- (v) node [midway,above] {$\theta_1$};
  \end{tikzpicture}
  \caption{An original edge of $G$ which has flow $\theta_1$.}
  \end{subfigure}
  \begin{subfigure}[t]{0.475\linewidth}
    \centering
    \begin{tikzpicture}[font=\small, scale=3]
      \node[lvxb,label={210:$u$}] (u) at (0,0) {};
      \node[lvxw,label={-60:$w$}] (w) at (30:0.5) {};
      \node[lvxb,label={30:$v$}]  (v) at (30:1) {};
    
      \draw (u)--(w) node [midway,above] {$\theta_1$};
      \draw (w)--(v) node [midway,above] {$\theta_1$};
    \end{tikzpicture}
    \caption{The flow passes through the divided edge.}
    \label{fig:startree:flow:b}
  \end{subfigure}
  \begin{subfigure}[b]{0.475\linewidth}
  \centering
  \begin{tikzpicture}[font=\small, scale=3]
    \node[lvxb,label={90:$v$}]  (v) at (0,0) {};
    \foreach\i in {1,...,6} {
      \path let \n1={-\i*60+60} in
        node[lvxw,label={\n1:$w_\i$}] (w\i) at (\n1:0.5) {}
        node[] (t\i) at (\n1-15:0.25) {$\theta_\i$};
      \draw (v) -- (w\i);
    }
  \end{tikzpicture}
  \caption{The flow going out from a vertex of $G$ in $G'$.}
  \end{subfigure}
  \begin{subfigure}[b]{0.475\linewidth}
    \centering
    \begin{tikzpicture}[font=\small, scale=1]
      \node[lvxb,label={90:$v$}] (v) at (0,0) {}; 
      \node[lvxg] (g1) at (-1,1) {};
      \node[lvxg] (g2) at (1,1) {};
      \node[lvxg] (g11) at (-2,2) {};
      \node[lvxg] (g12) at (-1,2) {};

      \node[lvxw,label={90:$w_1$}] (w1) at (-3,3) {};
      \node[lvxw,label={90:$w_2$}] (w2) at (-2,3) {};
      \node[lvxw,label={90:$w_3$}] (w3) at (-1,3) {};
      \node[lvxw,label={90:$w_4$}] (w4) at (0,3) {};
      \node[lvxw,label={90:$w_5$}] (w5) at (1,2) {};
      \node[lvxw,label={90:$w_6$}] (w6) at (2,2) {};
      
      \node (t1) at (-2.7,2.5) {$\theta_1$};
      \node (t2) at (-1.8,2.5) {$\theta_2$};
      \node (t3) at (-1.2,2.5) {$\theta_3$};
      \node (t4) at (-0.2,2.5) {$\theta_4$};
      \node (t12) at (-2.2,1.5) {$\theta_1+\theta_2$};
      \node (t34) at (-0.3,1.5) {$\theta_3+\theta_4$};
      \node (t5) at (0.8,1.5) {$\theta_5$};
      \node (t6) at (1.8,1.5) {$\theta_6$};
      \node (t56) at (1.3,0.5) {$\theta_5+\theta_6$};
      \node (t1234) at (-2,0.5) {$\theta_1+\theta_2+\theta_3+\theta_4$};

      \draw (v) -- (g1) -- (g11) -- (w1);
      \draw (g1) -- (g11) -- (w2);
      \draw (g1) -- (g12) -- (w3);
      \draw (g1) -- (g12) -- (w4);
      \draw (v) -- (g2) -- (w5);
      \draw (v) -- (g2) -- (w6);
    \end{tikzpicture}
    \caption{The division of the flow in $T_v$.}
    \label{fig:startree:flow:d}
  \end{subfigure}
  \caption{The construction of the flow $\theta^*$ from $\theta$.}
  \label{fig:startreeflow}
\end{figure}
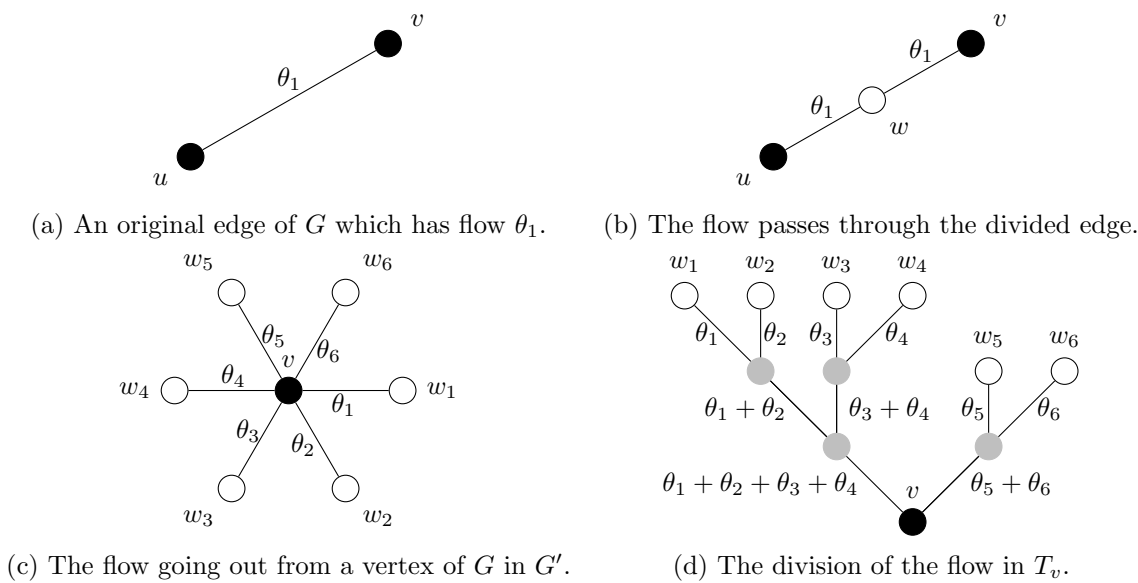

  We will now show that $\energy(\theta^*) \leq 4 \energy(\theta)$ where the energy of $\theta^*$ is taken in the network $(G^*, R_e)$. Let $v\in V(G)$ and write $h$ for the height of $T_v$, that is, $h$ is the maximal graph distance from a leaf of $T_v$ to its root. Note that since the tree is balanced, the distances from the leaves to the root vary by at most $1$. Let $e = (x,y)$ be an edge of $T_v$ and assume that $x$ is closer than $y$ to the root of $T_v$ and that the graph distance of $y$ from the root is $\ell \in \{1,\ldots, h\}$. By the construction of $\theta^*$, the contribution of $e$ to $\energy(\theta^*)$ is 
  $$R_e\theta^*(e)^2 = \frac{1}{d_G(v)} \left(\sum_{w\in 
  C_y}\theta(v,v_w)\right)^2.$$
  Since $y$ is at distance $\ell$ from the root, $|C_y| \leq 2^{h-\ell}$. Hence by Cauchy-Schwarz
  $$R_e\theta^*(e)^2 \leq \frac{2^{h-\ell}}{d_G(v)}\sum_{w\in 
  C_y}\theta(v,v_w)^2 \, .$$
  Summing over all edges in $T_v$ at distance $\ell$ from the root, we go over each leaf of $T_v$ precisely once. Thus,
  $$ \sum_{\substack{e=(x,y)\in T_v \\ d_{G^*}(y,v) = \ell}} R_e\theta^*(e)^2 \leq 
  \frac{2^{h-\ell}}{d_G(v)} \sum_{w\in C_v}\theta(v,v_w)^2 \, .$$
  We now sum over all edges in $T_v$ by summing over $\ell \in \{1,\ldots,h\}$. We get
  $$ \sum_{e\in T_v} R_e\theta^*(e)^2 \leq \frac{2^h}{d_G(v)}\sum_{w\in 
  C_v}\theta(v,v_w)^2 \leq 2 \sum_{w\in C_v}\theta(v,v_w)^2 \, ,$$
 since $h \leq \log_2(d_G(v)) + 1$. Lastly, we sum this over all $v\in V(G)$ and note that each term of the form $\theta(v,v_w)^2$ in the last sum appears twice. Hence, 
  $$ \energy(\theta^*) \leq 4\energy(\theta) \, ,$$
concluding our proof.
\end{proof}

\section{Stationary random graphs and markings}

\subsection{Stationary random graphs}

Recall that \cref{thm:lim:rec} and the entire setup of \cref{chp:locallimit} is adapted to the case when $G_n$ is itself random. The reason is that in \cref{def:locconv} we consider the graph distance ball $B_{G_n}(\rho_n,r)$ as a random variable in the probability space $(\Gdot,\dloc)$, where $\rho_n$ conditioned on $G_n$ is a uniformly chosen random vertex. 

Let us emphasize that this is {\bf not} the same as drawing a sample of $\{G_n\}$ and claiming that almost surely $G_n\convl(U,\rho)$. For example, let $G_n$ be a path of length $n$ with probability $1/2$ and an $n \times n$ square grid with probability $1/2$, independently for all $n$. In this case $G_n \convl (U,\rho)$ where $U=\ZZ$ with probability $1/2$ and $U=\ZZ^2$ with probability $1/2$, however, almost surely on the sequence $\{G_n\}$, the local limit of $G_n$ does not exist.

In many cases it is useful to take a random root drawn from the stationary distribution on $G_n$, that is, the probability distribution on vertices giving each vertex $v$ probability $\deg_{G_n}(v)/2|E(G_n)|$. In a similar fashion to \cref{def:locconv}, we define this type of local convergence. 

\begin{definition} \label{def:locconvpi}
  Let $\{G_n\}$ be a sequence of (possibly random) finite graphs.  We say that $G_n \convlpi (U,\rho)$ where $(U,\rho)$ is a random rooted graph, if for every integer $r\ge 1$,
  \begin{equation*}
    B_{G_n}(\rho_n,r) \convd B_U(\rho,r),
  \end{equation*}
  where $\rho_n$ is a randomly chosen vertex from $G_n$ distributed according to the stationary distribution on $G_n$. We call such a limit a \defn{stationary local limit}. 
\end{definition}

Let us remark that $G_n \convl (U,\rho)$ does not imply that $G_n \convlpi (U',\rho')$ for some $(U',\rho')$. Indeed, let $G_n$ be a path of length $n$ attached to a complete graph on $\sqrt{n}$ vertices. Then the local limit of $G_n$ is $\ZZ$, however the limit according to a stationary random root does not exist. 




The reason for taking the $\convlpi$ limit rather than the uniform limit as before is that the random walk on the limit $(U, \rho)$ starting from $\rho$ is then stationary.

\begin{claim}
  Assume that $G_n \convlpi (U,\rho)$. Conditioned on $(U,\rho)$, let $X_1$ be a uniformly chosen neighbor of $\rho$. Then $(U,X_1)$ is equal in law to $(U,\rho)$. Similarly, if $\{X_n\}_{n\geq 0}$ is the simple random walk on $(U,\rho)$, then for each $n \geq 0$ the law of $(U,X_n)$ coincides with the law of $(U,\rho)$.
\end{claim}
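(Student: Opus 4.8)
The plan is to prove this by a standard ``mass-transport meets weak convergence'' argument, exploiting the fact that the root of $G_n$ is chosen proportionally to its degree, which makes the pair \emph{root, uniformly random neighbor of the root} exchangeable in the limit. First I would fix $r \ge 1$ and a finite rooted graph $H$ with a distinguished edge at the root (or rather a finite rooted graph $H'$ with root $\rho_{H'}$ together with a marked neighbor), and compute the probability that $(B_U(X_1,r), X_1)$, augmented with the edge $\{\rho, X_1\}$, is isomorphic to a prescribed pattern. The key observation is the finite-$n$ identity: if $\rho_n$ is $\pi_n$-distributed (i.e.\ $\pr(\rho_n = v \mid G_n) = \deg_{G_n}(v)/2|E(G_n)|$) and $X_1$ is a uniform random neighbor of $\rho_n$, then the ordered pair $(\rho_n, X_1)$ has the same law as a uniformly chosen \emph{oriented} edge of $G_n$, and hence $(X_1, \rho_n)$ has the same law as $(\rho_n, X_1)$ by the symmetry of reversing orientation. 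Consequently, for every bounded function $f$ of the rooted ball of radius $r$,
\[
\E\big[f(B_{G_n}(\rho_n, r))\big] = \E\big[f(B_{G_n}(X_1, r))\big].
\]

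Next I would pass to the limit. Since $G_n \convlpi (U,\rho)$, the left side converges to $\E[f(B_U(\rho,r))]$ by definition. For the right side I need that $(G_n, X_1) \convd (U, X_1)$ where on the limit side $X_1$ is a uniform random neighbor of $\rho$; this is not quite immediate from $\convlpi$ alone because $X_1$ depends on one more step of local information, but it follows because the map sending a rooted graph $(G,\rho)$ with a uniform random neighbor re-root to $(G, X_1)$ is a measurable operation that is continuous on the (full-measure) set of locally finite graphs, and $B_{G_n}(X_1, r)$ is determined by $B_{G_n}(\rho_n, r+1)$ together with the choice of neighbor. So I would phrase it as: for bounded continuous $f$ on $\Gdot$ depending only on $B(\cdot, r)$, condition on $B_{G_n}(\rho_n, r+1)$, average over the uniform choice of neighbor, and note this average is itself a bounded continuous function of $B_{G_n}(\rho_n, r+1)$, to which $\convlpi$ applies directly. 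Taking $n \to \infty$ in the displayed identity then yields $\E[f(B_U(\rho,r))] = \E[f(B_U(X_1,r))]$ for all such $f$ and all $r$, which is exactly the statement that $(U,\rho) \stackrel{d}{=} (U, X_1)$.

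The extension to $(U, X_n)$ for all $n \ge 0$ then follows by induction: given that $(U,\rho) \stackrel{d}{=} (U, X_1)$, and given that conditionally on $(U,\rho)$ the walk is Markov, one sees that $(U, X_{n+1})$ is obtained from $(U, X_n)$ by the same one-step re-rooting operation, so its law is preserved. More carefully, I would set up the argument on the space of rooted graphs equipped with a trajectory, note that the one-step shift $\Theta$ (move the root to a uniform random neighbor, using fresh randomness) satisfies $\Theta_*(\text{law of }(U,\rho)) = \text{law of }(U,\rho)$ by the base case, and then $X_n$ is $\Theta^n$ applied to the stationary initial state.

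The main obstacle I expect is the measure-theoretic bookkeeping in the weak-convergence step: one must be careful that re-rooting at a uniform random neighbor is a well-defined continuous-enough operation on $\Gdot$, and that ``$B_{G_n}(\rho_n, r+1) \convd B_U(\rho, r+1)$'' (which is what $\convlpi$ gives) genuinely transfers to a statement about the $X_1$-rooted ball after averaging over the neighbor. This is where the degree-biasing is essential and where the analogue for $\convl$ (uniform root) would fail. Everything else — the finite-$n$ oriented-edge symmetry and the induction — is routine.
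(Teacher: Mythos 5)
Your proposal is correct and follows essentially the same route as the paper: the key fact is that in each finite $G_n$ a uniform neighbor of a stationarily-chosen root is again stationarily distributed (your oriented-edge symmetry is exactly this), after which one passes to the limit using the definition of $\convlpi$. The paper compresses the weak-convergence transfer into ``the claim follows now by definition,'' whereas you spell out the conditioning on $B_{G_n}(\rho_n,r+1)$ and the induction for $X_n$; these are refinements of, not departures from, the same argument.
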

\begin{proof} If $H$ is a finite graph and $v$ is a vertex chosen from the stationary distribution, then it is immediate that a uniformly chosen random neighbor of $v$ is distributed according to the stationary distribution. Thus for any fixed $r>0$ we have that $B_{G_n}(\rho_n,r)$ has the same distribution as $B_{G_n}(X_1,r)$ where $\rho_n$ is drawn from the stationary distribution on $G_n$ and $X_1$ is a uniform neighbor of $\rho_n$. The claim follows now by definition.
\end{proof}

\begin{definition} \label{def:stationaryrg}
A random rooted graph $(G,\rho)$ is called a \defn{stationary random graph} if $(G,X_1)$ has the same distribution as $(G,\rho)$, where the vertex $X_1$ is a uniform neighbor of $\rho$ (conditioned on $(G,\rho)$). 
\end{definition}

We would like to develop a simple abstract framework that will allow us to comfortably move from $\convl$ convergence to $\convlpi$ convergence and vice versa. This is straightforward when $\{G_n\}$ are a sequence of \emph{deterministic} graphs with uniformly bounded average degree but is less obvious when $G_n$ themselves are random. For this we need to \emph{degree bias} our random graphs.

\begin{definition}\label{def:degbias} \index{degree biasing and unbiasing} Denote by $\pr$ the law of a random rooted graph $(G,\rho)$ and assume that $\E \deg(\rho) < \infty$. The probability measure $\mu$ on $(\Gdot, \dloc)$ defined by
$$ \mu(\A) := {1 \over \E \deg(\rho)} \sum_{k \geq 1} k \,\,\pr(\A \cap \deg(\rho) = k) \, ,$$
for any event $\A \subset (\Gdot, \dloc)$ is called the {\bf degree biasing} of $\pr$. 
Similarly, the probability measure $\nu$ defined by 
$$ \nu(\A) = {1 \over \E[\deg(\rho)^{-1}]} \sum_{k \geq 1} {\pr(\A \cap \deg(\rho) = k) \over k} \, ,$$
is called the {\bf degree unbiasing} of $\pr$. Note that to define $\nu$ we do not need to require that $\E \deg(\rho) < \infty$.
\end{definition} 

\begin{lemma} Assume that $(G,\rho)$ is a random rooted graph such that $G$ is almost surely finite, that the distribution of $\rho$ given $G$ is uniform and that $\E \deg(\rho) < \infty$. Then the degree biasing of $(G,\rho)$ is a stationary random graph.

Conversely, assume that $(G^\pi,\rho^\pi)$ is a stationary random graph such that $G^\pi$ is almost surely finite. Then the degree unbiasing of it $(G,\rho)$ is such that $G$ is almost surely finite and $\rho$ condition on $G$ is uniformly distributed.
\end{lemma}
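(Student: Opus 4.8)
The plan is to prove both assertions by testing against an arbitrary bounded measurable $f:\Gdot\to\RR$ and reducing everything to the double-counting identity $\sum_{u}\sum_{v\sim u}f(G,v)=\sum_{v}\deg_G(v)f(G,v)$ on a finite graph. Throughout one may exclude the degenerate case of a single isolated vertex, for which degree (un)biasing is undefined. Recall that ``$\rho$ is uniform given $G$'' means precisely that the conditional law of $\rho$ given the underlying unrooted graph is uniform on the vertices, i.e.\ $\E[f(G,\rho)] = \E\big[|V(G)|^{-1}\sum_{v\in V(G)}f(G,v)\big]$ for all bounded measurable $f$.

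For the first assertion, write $\mu$ for the degree biasing of $\pr$, so that $\mathrm{d}\mu/\mathrm{d}\pr = \deg(\rho)/\E\deg(\rho)$, and let $X_1$ be a uniform neighbour of $\rho$. One computes, for $(G,\rho)\sim\mu$,
\[
\E_\mu[f(G,X_1)] = \E_\mu\!\Big[\tfrac{1}{\deg_G(\rho)}\sum_{v\sim\rho}f(G,v)\Big] = \frac{1}{\E\deg(\rho)}\,\E\!\Big[\sum_{v\sim\rho}f(G,v)\Big],
\]
the $\deg_G(\rho)$ cancelling against the Radon--Nikodym weight. Conditioning on the (finite, connected) graph $G$ and using that $\rho$ is then uniform among its $|V(G)|$ vertices, the last expectation equals $\E\big[|V(G)|^{-1}\sum_{u}\sum_{v\sim u}f(G,v)\big]$; applying the double-counting identity and then undoing the conditioning turns this into $\E[\deg_G(\rho)f(G,\rho)]$. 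Hence $\E_\mu[f(G,X_1)] = \E[\deg_G(\rho)f(G,\rho)]/\E\deg(\rho) = \E_\mu[f(G,\rho)]$, and since $f$ was arbitrary $(G,X_1)\stackrel{d}{=}(G,\rho)$ under $\mu$, i.e.\ $\mu$ is a stationary random graph.

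For the converse, write $\nu$ for the degree unbiasing of the stationary law $\pr^\pi$, so $\mathrm{d}\nu/\mathrm{d}\pr^\pi=\deg(\rho^\pi)^{-1}/Z$ with $Z=\E_{\pr^\pi}[\deg(\rho^\pi)^{-1}]$; then $G$ is $\nu$-a.s.\ finite because $\nu\ll\pr^\pi$. The key step is to show that for \emph{any} finite stationary random graph the conditional law of the root given the underlying unrooted graph is the stationary distribution $\pi_{G}(v)=\deg_{G}(v)/(2|E(G)|)$: letting $\Lambda$ be this (random) conditional law, one step of simple random walk leaves the unrooted graph fixed, so stationarity forces $\Lambda=\Lambda P$ with $P$ the SRW transition operator on $G$, and a finite connected graph admits a unique $P$-stationary probability measure, namely $\pi_{G}$. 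Granting this, I would condition on the unrooted graph in $\E_\nu[f(G,\rho)]=Z^{-1}\E_{\pr^\pi}[\deg(\rho^\pi)^{-1}f(G^\pi,\rho^\pi)]$ and use that $\pi_{G^\pi}(v)\deg_{G^\pi}(v)^{-1}=(2|E(G^\pi)|)^{-1}$ is independent of $v$, obtaining $\E_\nu[f(G,\rho)] = Z^{-1}\,\E_{\pr^\pi}\big[(2|E(G^\pi)|)^{-1}\sum_{v}f(G^\pi,v)\big]$. A parallel computation of $\E_\nu\big[|V(G)|^{-1}\sum_{v}f(G,v)\big]$, in which conditioning on the unrooted graph turns $\E[\deg(\rho^\pi)^{-1}\mid G^\pi]=\sum_v\pi_{G^\pi}(v)\deg_{G^\pi}(v)^{-1}$ into $|V(G^\pi)|/(2|E(G^\pi)|)$, gives the same quantity. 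Since this holds for all bounded measurable $f$, the conditional law of $\rho$ given $G$ under $\nu$ is uniform.

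I expect the main obstacle to be the claim in the converse that stationarity of a finite random graph pins down the conditional law of the root given the unrooted graph. It rests on the fixed-point identity $\Lambda=\Lambda P$ combined with uniqueness of the SRW stationary distribution on a finite connected graph, and needs some care in setting up the conditioning on isomorphism classes of unrooted graphs (working with an explicit realization of $G$ and then pushing forward to $\Gdot$). Everything else is bookkeeping with Radon--Nikodym derivatives and the double-counting identity.
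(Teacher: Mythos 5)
Your proof is correct and follows essentially the same route as the paper: your first part is the paper's pointwise computation (the degree bias cancels against the uniform choice of neighbour, and then uniformity of $\rho$ given $G$ together with double counting restores the degree weight), merely phrased with test functions and Radon--Nikodym derivatives instead of probabilities of fixed rooted graphs $(H,v)$. For the converse, which the paper only dismisses as ``similar'', your identification of the conditional root law with the unique SRW-stationary distribution $\pi_G(v)\propto \deg_G(v)$ on a finite connected graph is the natural way to flesh it out and is equivalent to the harmonicity/maximum-principle computation that a direct mirroring of the paper's first calculation would produce.
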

\begin{proof} We will prove only the first statement and the second is similar. Denote by $(G^\pi,\rho^\pi)$ a random variable drawn according to the degree biasing of $(G,\rho)$. Let $H$ be a fixed finite graph and denote by $\deg_H(v)$ the degree of a vertex $v$ in $H$. By definition we have that
\begin{equation}\label{eq:stationarymidstep} \pr ( (G^\pi,\rho^\pi) = (H,v) ) = {\deg_H(v) \cdot \pr( (G,\rho)=(H,v) ) \over \E \deg(\rho)} \, .\end{equation}
Let $X_1$ be a uniformly chosen neighbor of $\rho^\pi$. Then by \eqref{eq:stationarymidstep}
$$ \pr ( (G^\pi, X_1) = (H,u)) = \sum _{v \, : \, \{u,v\}\in E(H)} {\pr ( (G^\pi,\rho^\pi) = (H,v) ) \over \deg_H(v)} = {\sum _{v \, : \, \{u,v\}\in E(H)} \pr ( (G,\rho) = (H,v) ) \over \E \deg(\rho)} \, .$$
Since $\rho$ is uniformly distributed given $G$ the quantity $\pr ( (G,\rho) = (H,v) )$ is the same for all $v$. So
$$\pr ( (G^\pi, X_1) = (H,u)) = {\deg_H(u) \pr ( (G,\rho) = (H,u) ) \over \E \deg(\rho)}$$
so by \eqref{eq:stationarymidstep} the required assertion follows. \end{proof}

\begin{corollary}\label{cor:switch} Assume that $\{G_n\}$ is a sequence of random finite graphs such that $G_n \convl (U,\rho)$ and denote by $\rho_n$ a uniformly chosen vertex of $G_n$ and by $(G_n^\pi,\rho_n^\pi)$ the degree biasing of $(G_n,\rho_n)$. Assume further that $\E \deg (\rho) < \infty$ and that $\E \deg (\rho_n) \to \E \deg (\rho)$. 

Then $G_n^\pi \convlpi (U^\pi,\rho^\pi)$ where $(U^\pi,\rho^\pi)$ is the degree biasing of $(U,\rho)$. Furthermore, $(U,\rho)$ and $(U^\pi,\rho^\pi)$ are absolutely continuous with respect to each other.

\noindent Conversely, assume that $\{G_n^\pi\}$ is a sequence of random finite graphs such that $G_n^\pi \convlpi (U^\pi,\rho^\pi)$, denote by $\rho_n^\pi$ a random vertex of $G_n$ drawn according to the stationary distribution and by $(G_n,\rho_n)$ the degree unbiasing of $(G_n^\pi,\rho_n^\pi)$.

Then $G_n \convl (U,\rho)$ where $(U,\rho)$ is the degree unbiasing of $(U^\pi,\rho^\pi)$. Furthermore, $(U,\rho)$ and $(U^\pi,\rho^\pi)$ are absolutely continuous with respect to each other.
\end{corollary}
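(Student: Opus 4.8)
The plan is to recast both notions of local convergence as convergence in distribution in the Polish space $(\Gdot,\dloc)$, and to recognize degree biasing and unbiasing as tiltings of the law of a $\Gdot$-valued random variable by the nonnegative functions $\deg(\rho)/\E[\deg(\rho)]$ and $\deg(\rho)^{-1}/\E[\deg(\rho)^{-1}]$ respectively. Recall from the discussion after \cref{def:locconv} that $G_n\convl(U,\rho)$ is equivalent to $(G_n,\rho_n)\convd(U,\rho)$ in $(\Gdot,\dloc)$, and by the same reasoning $G_n^\pi\convlpi(U^\pi,\rho^\pi)$ is equivalent to $(G_n^\pi,\rho_n^\pi)\convd(U^\pi,\rho^\pi)$, provided $\rho_n^\pi$ is a stationary-distributed root of $G_n^\pi$ — and this last fact is exactly the content of the computation in the lemma preceding the corollary, since $(G_n^\pi,\rho_n^\pi)$ is the degree biasing of a uniformly rooted graph. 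The one elementary observation we will lean on is that $\deg\colon\Gdot\to\NN$ is continuous (it is determined by the ball of radius $1$ about the root), so $\deg(\rho_n)\convd\deg(\rho)$; moreover $\deg^{-1}\le 1$ is bounded, and continuous on the almost surely full set where $\deg\ge 1$.

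For the forward direction, fix a bounded continuous $\Phi\colon\Gdot\to\RR$. By the definition of the degree biasing,
\[
  \E\bigl[\Phi(G_n^\pi,\rho_n^\pi)\bigr]=\frac{\E\bigl[\Phi(G_n,\rho_n)\,\deg(\rho_n)\bigr]}{\E[\deg(\rho_n)]},
\]
and the denominator converges to $\E[\deg(\rho)]\in(0,\infty)$ by hypothesis. For the numerator, the continuous mapping theorem gives $\Phi(G_n,\rho_n)\deg(\rho_n)\convd\Phi(U,\rho)\deg(\rho)$, and I would upgrade this to convergence of expectations by uniform integrability: since $\deg(\rho_n)\ge 0$, $\deg(\rho_n)\convd\deg(\rho)$ and $\E[\deg(\rho_n)]\to\E[\deg(\rho)]<\infty$, the family $\{\deg(\rho_n)\}_n$ is uniformly integrable (a standard fact, obtainable via Skorokhod representation and Scheff\'e's lemma). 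Then $\{\Phi(G_n,\rho_n)\deg(\rho_n)\}_n$ is dominated by the uniformly integrable family $\|\Phi\|_\infty\cdot\{\deg(\rho_n)\}_n$, hence uniformly integrable, so its expectations converge to $\E[\Phi(U,\rho)\deg(\rho)]$. Dividing, $\E[\Phi(G_n^\pi,\rho_n^\pi)]\to\E[\Phi(U,\rho)\deg(\rho)]/\E[\deg(\rho)]=\E[\Phi(U^\pi,\rho^\pi)]$, which is precisely $(G_n^\pi,\rho_n^\pi)\convd(U^\pi,\rho^\pi)$, i.e.\ $G_n^\pi\convlpi(U^\pi,\rho^\pi)$.

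The converse direction is easier and uses no integrability hypothesis, because $\deg^{-1}$ is bounded. For bounded continuous $\Phi$,
\[
  \E\bigl[\Phi(G_n,\rho_n)\bigr]=\frac{\E\bigl[\Phi(G_n^\pi,\rho_n^\pi)\,\deg(\rho_n^\pi)^{-1}\bigr]}{\E[\deg(\rho_n^\pi)^{-1}]};
\]
since $(H,v)\mapsto\Phi(H,v)\deg(v)^{-1}$ and $(H,v)\mapsto\deg(v)^{-1}$ are bounded and continuous, both numerator and denominator converge along $(G_n^\pi,\rho_n^\pi)\convd(U^\pi,\rho^\pi)$, the denominator to $\E[\deg(\rho^\pi)^{-1}]\in(0,1]$. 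Hence $(G_n,\rho_n)\convd(U,\rho)$ with $(U,\rho)$ the degree unbiasing of $(U^\pi,\rho^\pi)$; that $\rho_n$ is uniform given $G_n$ — so that this assertion really is $G_n\convl(U,\rho)$ — is once more guaranteed by the preceding lemma. Finally, mutual absolute continuity of $(U,\rho)$ and $(U^\pi,\rho^\pi)$ is immediate from the constructions: the law of $(U^\pi,\rho^\pi)$ is the tilting of the law of $(U,\rho)$ by $\deg(\rho)/\E[\deg(\rho)]$, a density that is $\pr$-a.s.\ finite and (as $\deg(\rho)\ge 1$ a.s.) positive, so both it and its reciprocal are genuine Radon--Nikodym derivatives. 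The main obstacle is the uniform integrability step in the forward direction — this is the only place the hypothesis $\E[\deg(\rho_n)]\to\E[\deg(\rho)]$ enters, and without it the conclusion genuinely fails; the rest is bookkeeping about tiltings and continuity on $\Gdot$.
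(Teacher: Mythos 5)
Your proof is correct, but it runs along a different technical track than the one in the text. The paper's proof never leaves the level of cylinder events: it evaluates the degree-biased law on the events $\{B_{G_n^\pi}(\rho_n^\pi,r)=(H,v)\}$, and since for $r\ge 1$ the root degree is the constant $\deg_H(v)$ on such an event, the tilting identity reduces the numerator to an ordinary ball probability, which converges by $G_n\convl(U,\rho)$; the hypothesis $\E\deg(\rho_n)\to\E\deg(\rho)$ enters only through the normalizing constant, and no integrability argument is needed (the converse is handled the same way, with the bounded density $\deg^{-1}$ and dominated convergence). You instead test against arbitrary bounded continuous $\Phi$, so your numerator $\E[\Phi(G_n,\rho_n)\deg(\rho_n)]$ involves an unbounded continuous function, and you must upgrade convergence in distribution to convergence of means; your uniform-integrability lemma (nonnegative, $\convd$, and convergence of first moments imply UI, via Skorokhod and Scheff\'e) is exactly the right tool and is the only place the moment hypothesis is used, which your argument makes pleasingly explicit. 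What the paper's route buys is elementarity --- it exploits that $\deg(\rho)$ is determined by (indeed constant on) each radius-$1$ ball class, so no Skorokhod/Scheff\'e machinery appears; what your route buys is generality and transparency: it works verbatim for tilting by any continuous nonnegative functional whose means converge, and it isolates UI of $\{\deg(\rho_n)\}$ as the real content of the hypothesis. Two small matters, which the paper also glosses: mutual absolute continuity in the stated form needs $\deg(\rho)\ge 1$ almost surely (otherwise only $(U^\pi,\rho^\pi)\ll(U,\rho)$ holds), which you at least flag; and your identification of $(G_n^\pi,\rho_n^\pi)\convd(U^\pi,\rho^\pi)$ with the $\convlpi$ convergence of \cref{def:locconvpi} correctly rests on the lemma preceding the corollary, which guarantees that the biased (resp.\ unbiased) root is stationary (resp.\ uniform) given the graph.
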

\begin{proof} Indeed, let $(H,v)$ be a finite rooted graph and $r>0$ a fixed integer. Then 
$$ \pr ( B_{G_n^\pi}(\rho, r) =  (H,v) ) = { \deg_H(v) \pr (B_{G_n}(\rho_n,r) = (H,v)) \over \E \deg (\rho_n)} \, .$$  
Since $G_n \convl (U,\rho)$ and $\E \deg (\rho_n) \to \E \deg (\rho)$ we obtain that 
$$ \lim_{n \to \infty} \pr ( B_{G_n^\pi}(\rho, r) =  (H,v) ) = {\deg_H(v) \pr (B_{U}(\rho,r) = (H,v)) \over \E \deg (\rho)} = \pr ( B_{U^\pi}(\rho^\pi,r) = (H,v)) \, ,$$
where the last equality is by definition. The absolute continuity of $(U,\rho)$ and $(U^\pi, \rho^\pi)$ follows immediately from the definition. 

The second statement follows by the same proof. Note that we by the dominated convergence theorem we have that $\E [\deg(\rho_n^\pi)^{-1}] \to \E [ \deg(\rho^\pi)^{-1}]$. 
\end{proof}

We end this subsection by addressing the somewhat technical issue of verifying the condition $\E \deg(\rho_n) \to \E\deg(\rho)$ in \cref{cor:switch}. It is not guaranteed guaranteed just by requiring $\sup \E \deg (\rho_n) <\infty$ as we see in the example of a path of length $n$ together with $\sqrt{n}$ loops attached to $\sqrt{n}$ arbitrary vertices of the path; in this example $\deg (\rho) =2$ almost surely, and $\E \deg (\rho_n) = 3 + o(1)$. However, we now show that it is always possible to ``truncate'' the finite graphs $G_n$ by removing edges touching vertices of large degrees so that the limit is unchanged and the average degrees converge to the expected degree of the limit. Given a finite graph $G$ and an integer $k\geq 1$ we denote by $G \wedge k$ the graph obtained from $G$ by erasing all the edges touching vertices of degree at least $k$.

\begin{lemma}\label{lem:truncate} Let $\{G_n\}$ be a sequence of random finite graphs such that $G_n \convl (U,\rho)$ and $\E \deg(\rho) < \infty$. Then there exists a sequence $k(n)\to\infty$ such that 
$$ G_n \wedge k(n) \convl (U,\rho) \, ,$$
and 
$$ \E \deg(\rho_n) \to \E \deg(\rho) \, ,$$
where $\rho_n$ is a uniformly chosen vertex of $G_n$.
\end{lemma}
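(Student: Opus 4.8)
The plan is to construct the sequence $k(n)$ by a diagonal argument built on two facts: truncating at \emph{any} level $k(n)\to\infty$ leaves the local limit unchanged, while for each \emph{fixed} $k$ the average degree of $G_n\wedge k$ converges, as $n\to\infty$, to a number that in turn tends to $\E\deg(\rho)$ as $k\to\infty$. Throughout, write $\rho_n$ for a uniform vertex of $G_n$ and $D_n(k):=\E[\deg_{G_n\wedge k}(\rho_n)]$ for the average degree of $G_n\wedge k$ (averaged over $G_n$ as well, when it is random); the assertion ``$\E\deg(\rho_n)\to\E\deg(\rho)$'' is to be read as $D_n(k(n))\to\E\deg(\rho)$.

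First I would check that $G_n\wedge k(n)\convl(U,\rho)$ for \emph{any} deterministic $k(n)\to\infty$. Fix $r\ge1$. If every vertex within $G_n$-distance $r$ of $\rho_n$ has $G_n$-degree less than $k$, then no edge of $B_{G_n}(\rho_n,r)$ is erased in $G_n\wedge k$, and since erasing edges only lengthens distances no shortest path of length $\le r$ out of $\rho_n$ is destroyed, so $B_{G_n\wedge k}(\rho_n,r)=B_{G_n}(\rho_n,r)$. The largest $G_n$-degree $M_{n,r}$ of a vertex within distance $r$ of $\rho_n$ is a function of $B_{G_n}(\rho_n,r+1)$, hence $M_{n,r}$ converges in distribution to the corresponding quantity $M_r$ for $(U,\rho)$, which is a.s.\ finite since $U$ is a.s.\ locally finite. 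Thus, given $\eps>0$, one can pick $K$ with $\pr(M_r\ge K)<\eps/2$ and then $N$ with $\pr(M_{n,r}\ge K)<\eps$ for $n\ge N$; as $k(n)\ge K$ for all large $n$, this yields $\pr(B_{G_n\wedge k(n)}(\rho_n,r)\ne B_{G_n}(\rho_n,r))<\eps$ for all large $n$, and since $B_{G_n}(\rho_n,r)\convd B_U(\rho,r)$ and $\eps$ was arbitrary, $B_{G_n\wedge k(n)}(\rho_n,r)\convd B_U(\rho,r)$. As $r$ is arbitrary this gives the local convergence.

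Next I would establish the degree asymptotics. For fixed $k$ the variable $\deg_{G_n\wedge k}(\rho_n)$ takes values in $\{0,1,\dots,k-1\}$ (it equals $0$ when $\deg_{G_n}(\rho_n)\ge k$, and is at most $\deg_{G_n}(\rho_n)$ otherwise) and is determined by $B_{G_n}(\rho_n,2)$; by local convergence it converges in distribution to $\deg_{U\wedge k}(\rho)$, and being uniformly bounded its mean converges as well, so $D_n(k)\to D(k):=\E[\deg_{U\wedge k}(\rho)]$ as $n\to\infty$. Since $\deg_{U\wedge k}(\rho)$ is non-decreasing in $k$ and increases a.s.\ to $\deg_U(\rho)$ (again because $U$ is a.s.\ locally finite), monotone convergence gives $D(k)\to D_\infty:=\E\deg(\rho)$ as $k\to\infty$, with $D_\infty<\infty$ by hypothesis. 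Then the diagonal step: for each $j\ge1$ choose $K_j$, strictly increasing to $\infty$, with $|D(K_j)-D_\infty|<1/j$, and then $N_j$, strictly increasing, with $|D_n(K_j)-D(K_j)|<1/j$ for all $n\ge N_j$; setting $k(n)=K_j$ for $N_j\le n<N_{j+1}$ (and $k(n)=K_1$ for $n<N_1$), one has $k(n)\to\infty$, so $G_n\wedge k(n)\convl(U,\rho)$ by the first step, while for $N_j\le n<N_{j+1}$,
\[
|D_n(k(n))-D_\infty|\le|D_n(K_j)-D(K_j)|+|D(K_j)-D_\infty|<\tfrac{2}{j},
\]
so $D_n(k(n))\to\E\deg(\rho)$, as required.

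The argument is largely routine and I do not expect a serious obstacle. The one place where truncation is genuinely doing work is the convergence $D_n(k)\to D(k)$ for fixed $k$: there the truncated degree $\deg_{G_n\wedge k}(\rho_n)$ is \emph{uniformly bounded}, so convergence in distribution upgrades to convergence of means, whereas $\E\deg_{G_n}(\rho_n)$ itself may fail to converge to $\E\deg(\rho)$ — precisely the pathology in the example preceding the lemma. One must also be mildly careful that each quantity invoked ($M_{n,r}$, $\deg_{G_n\wedge k}(\rho_n)$) is a function of a fixed-radius ball, so that local convergence applies.
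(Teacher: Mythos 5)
Your proof is correct and follows essentially the same two-step route as the paper: first showing that truncating at \emph{any} level $k(n)\to\infty$ preserves the local limit because the maximal degree in a fixed-radius ball is tight, and then combining bounded/monotone convergence for fixed $k$ with a diagonal choice of $k(n)$. The only (harmless) difference is that you track the degree of $\rho_n$ in the truncated graph $G_n\wedge k$, whereas the paper works with the truncated degree $\deg_{G_n}(\rho_n)\wedge k$; your version is in fact the quantity needed when the lemma is fed into \cref{cor:switch}.
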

\begin{proof} We first show that for \emph{any} sequence $k(n) \to \infty$ we have that $G_n \wedge k(n) \convl (U,\rho)$. Indeed, since $G_n \convl (U,\rho)$ we have that for any fixed integer $r \geq 1$
$$ \pr \Big ( \max \big \{ \deg (v) : v \in B_{G_n}(\rho_n, r) \big \} \geq k(n) \Big ) \to 0 \, .$$
If $\max \{ \deg (v) : v \in B_{G_n}(\rho_n, r+1) \} < k(n)$, then $B_{G_n}(\rho_n,r) = B_{G_n\wedge k(n)}(\rho_n,r)$. Since $G_n$ and $G_n \wedge k(n)$ have the same set of vertices we deduce that for any fixed $r\geq 1$ and any rooted graph $(H,v)$  
$$ \pr \big ( B_{G_n\wedge k(n)}(\rho_n,r) =(H,v)) \to \pr( B_U(\rho,r) = (H,v)) \, .$$

Secondly, since $\deg(\rho_n)$ converges in distribution to $\deg(\rho)$ we have that there exists a sequence $k(n)\to \infty$ such that $\E \deg(\rho_n)\wedge k(n) \to \E \deg (\rho)$. Indeed, by dominated convergence we have that $\displaystyle \E [\deg(\rho) \wedge k] \to_{k \to \infty} \E \deg(\rho)$. Furthermore, by bounded convergence for any fixed $k$ we have $\E \deg(\rho_n) \wedge k \to_{n \to \infty} \E [\deg(\rho) \wedge k]$. Hence for any $\eps>0$ there exists $k$ and $n_0$ such that for all $n \geq n_0$ we have that $|\E[\deg(\rho_n) \wedge k] - \E \deg(\rho)| \leq \eps$.
\end{proof}

\subsection{Markings} \index{Markings}

Given a locally convergent sequence of (possibly random) graphs $G_n$, we wish to apply the star-tree transform on them to create a sequence $G_n^*$ and take its local limit of that while ``remembering'', in light of \cref{lem:startree}, the original degrees of $G_n$. The approach is a rather straightforward extension of the abstract setting of \cref{sec:locconv}, see also \cite{AldousLyons}. We consider the space of triples $(G,\rho, M)$ where $G=(V,E)$ is a graph, $\rho\in V$ is a vertex and $M:E\to \RR$ is a function assigning real values to the edges. We endow the space with a metric by setting the distance between $(G_1,\rho_1,M_1)$ and $(G_2,\rho_2,M_2)$ to be $2^{-R}$ where $R$ is the maximal value such that there exists a rooted graph isomorphism $\varphi$ between $B_{G_1}(\rho_1,R)$ and $B_{G_2}(\rho_2,R)$ such that $|M_1(e) - M_2(\varphi(e))|\leq R^{-1}$ for all edges $e\in E(G)$ both of whose end points are in $B_{G_1}(\rho_1,R)$. It is easy to check that this space is again a Polish space, so again we may define convergence in distribution of random variables taking values in this space. 

We say that such a random triplet $(U,\rho,M)$ is {\bf stationary} if conditioned on $(U,\rho,M)$ a uniformly chosen random neighbor $X_1$ of $\rho$ satisfies that $(U,\rho,M)$ has the same law as $(U,X_1,M)$ in the space of isomorphism classes of rooted graphs with markings (that is, rooted isomorphisms that preserve the markings). Given a marking $M$ we extend it to $M:E(U)\cup V(U)\to \RR$ by setting $M(v) = \max_{e: v\in e} M(e)$ for any $v\in V(U)$. We say that $(U,\rho,M)$ has an {\bf exponential tail} if for some $A<\infty$ and $\beta>0$ we have that $\pr(M(\rho)\geq s)\leq A e^{-\beta s}$ for all $s\geq 0$. 

In the following lemma we consider a stationary triplet $(U,\rho,M)$ that has an exponential tail and compare the hitting probabilities of certain sets when we endow the graphs with two sets of edge resistances: the first are the usual unit resistances, and in the second we may change the edge resistances arbitrarily but only on edges with high $M$ values. We tailored the lemma this way in order to show that $(G^*,R_e)$ from \cref{lem:startree} is recurrent.

\begin{lemma}\label{lem:comparerw} Let $(U,\rho,M)$ be a stationary, bounded degree rooted random graph with markings which has an exponential tail. Conditioned on $(U, \rho, M)$, fix some finite set $B\subset U$. Let $\PP_\rho$ denote the unit-resistance random walk on $U$ starting from $\rho$ and let $\PP'_{\rho}$ denote the random walk on $U$ with edge resistances $R'_e$ satisyfing that $R'_e=1$ whenever $M(e)\leq 21 \beta^{-1} \log|B|$. Then almost surely on $(U,\rho,M)$ there exists $K<\infty$ such that for any finite subset $B\subset U$ with $|B|\geq K$ we have
$$ \big | \PP_\rho(\tau_{U\setminus B} < \tau^+_\rho) - \PP'_\rho(\tau_{U \setminus B} < \tau^+_\rho) \big | \leq {1 \over |B|} \, .$$
\end{lemma}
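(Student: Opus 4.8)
Write $t=t(B)=21\beta^{-1}\log|B|$ and call an edge $e$ \emph{heavy} if $M(e)>t$. Since $R'_e=1$ on every light edge, we may realise $\PP_\rho$ and $\PP'_\rho$ on one probability space so that the two walks agree step for step until the first time $\sigma$ at which a heavy edge is traversed. On $\{\sigma\ge \tau_{U\setminus B}\wedge\tau_\rho^+\}$ both walks decide the event $\{\tau_{U\setminus B}<\tau_\rho^+\}$ in the same way, and, because the walks coincide up to $\sigma$, the event $\{\sigma<\tau_{U\setminus B}\wedge\tau_\rho^+\}$ has the same probability under $\PP_\rho$ and under $\PP'_\rho$. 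Hence
\[
\bigl|\PP_\rho(\tau_{U\setminus B}<\tau_\rho^+)-\PP'_\rho(\tau_{U\setminus B}<\tau_\rho^+)\bigr|\le q_B:=\PP_\rho\bigl(\sigma<\tau_{U\setminus B}\wedge\tau_\rho^+\bigr),
\]
and $q_B$ is the \emph{same} quantity for both walks. So it suffices to show that, almost surely on $(U,\rho,M)$, one has $q_B\le 1/|B|$ for every finite $B$ with $|B|$ large enough. Note that until time $\tau_{U\setminus B}$ the walk stays inside $B^\circ$, the connected component of $\rho$ in $B$, which is connected, contains $\rho$, and satisfies $|B^\circ|\le|B|$; thus $q_B\le\sum_{e\ \text{heavy},\ e\cap B^\circ\neq\emptyset}\PP_\rho\bigl(e\text{ is reached before }\tau_{U\setminus B}\wedge\tau_\rho^+\bigr)$, and only the \emph{position} of the heavy edges inside $B^\circ$ matters.

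\textbf{Step 2: the exponential tail pushes heavy edges far away.} Fix a small constant $c>0$ (to be chosen in terms of the maximal degree $D$) and let $R(n)$ be the largest radius with $|E(B_U(\rho,R))|\le n^{c}$; since the graph has bounded degree, $R(n)$ is of order $\log n$. The expected number of heavy edges inside $B_U(\rho,R(|B|))$ is at most $|E(B_U(\rho,R(|B|)))|\cdot A e^{-\beta t}\le A\,|B|^{c-21}$, which is summable over $|B|=2^j$ once $c<20$; by Borel--Cantelli, almost surely for all large $j$ \emph{there is no heavy edge within graph distance $R(2^j)$ of $\rho$}. Consequently every heavy edge that can be reached by the walk inside $B^\circ$ lies at graph distance $>R(|B|)$ from $\rho$ (for $|B|$ along the dyadic scale; the general case follows by monotonicity in $|B|$ of the threshold $t$).

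\textbf{Step 3: the confined walk rarely travels that far.} Since $B^\circ$ is connected, contains $\rho$ and has $|B^\circ|\le|B|$ vertices, the graph‑distance spheres of $B^\circ$ give disjoint cutsets separating $\rho$ from everything at distance $>R(|B|)$ whose total edge count is at most $D|B|$. Combining the reversibility/Green‑function estimate $\E_\rho[\#\text{visits to }x\text{ before }\tau_\rho^+]\le\deg(x)/\deg(\rho)$ (obtained from \cref{effresprob} and the geometric law of the number of returns, as in the proof of \cref{commute}) with Nash--Williams (\cref{nash:williams:recurrence}) applied inside $B^\circ$, one bounds $\PP_\rho\bigl(\text{the walk reaches distance }R(|B|)\text{ before }\tau_{U\setminus B}\wedge\tau_\rho^+\bigr)$, and hence $q_B$, by a quantity that is at most $1/|B|$ for all large $|B|$. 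A final Borel--Cantelli over $|B|=2^j$, together with the monotonicity of the bound in $|B|$, upgrades this to: almost surely there is $K<\infty$ with $q_B\le 1/|B|$ for all finite $B$, $|B|\ge K$, which by Step 1 is the assertion.

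\textbf{Main obstacle.} The delicate point is Step 3: one must control \emph{simultaneously} how many heavy edges a connected set of $|B|$ vertices can hold and how far, in effective resistance inside that set, the walk must travel to reach any of them, so that the product of their number and their hitting probabilities is genuinely at most $1/|B|$ rather than merely $o(1)$; for fast‑growing $U$ the crude Nash--Williams bound with graph‑distance cutsets must be combined with the fact that a size‑$|B|$ connected set cannot ``fatten'' enough to keep the confined walk from being killed at rate bounded below. The generously large constant $21$ in the heaviness threshold $21\beta^{-1}\log|B|$ is exactly the slack that lets these estimates close and the relevant Borel--Cantelli series converge.
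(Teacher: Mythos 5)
Your Step 1 coupling is essentially the right (and the paper's) starting point, with one small inaccuracy: the two walks already disagree at the first \emph{visit to a vertex incident to} a heavy edge (the conductances at that vertex differ, so even the choice among light neighbours has different probabilities), so the coupling must be broken there rather than at the first traversal of a heavy edge. The serious problems are in Steps 2 and 3. First, the hypothesis only gives an exponential tail for the mark at the root, $\pr(M(\rho)\ge s)\le Ae^{-\beta s}$; it does \emph{not} give $\pr(M(e)\ge t)\le Ae^{-\beta t}$ for every edge $e$ in a ball around $\rho$, which is what your expected-count computation in Step 2 uses. Transferring the tail to a vertex at graph distance $d$ via stationarity costs a factor of order $D^{d}$ (the reciprocal of the chance the walk is there at time $d$), so a union bound over a ball is only affordable at scale $O(\log|B|)$ with small constants --- and, fatally, the set $B^\circ$ is only constrained to be connected with $|B^\circ|\le|B|$, so it can reach to graph distance $|B|$ from $\rho$, far beyond any scale at which a spatial Borel--Cantelli over balls can control the heavy edges. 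Second, even granting that no heavy edge lies within distance $R(|B|)\asymp\log|B|$ of $\rho$, the claim underlying Step 3 is false in general: on a stationary bounded-degree graph with positive escape probability (e.g.\ tree-like), taking $B$ a ball of radius much larger than $R(|B|)$, the walk reaches distance $R(|B|)$ before $\tau_{U\setminus B}\wedge\tau^+_\rho$ with probability bounded below by a constant, not $1/|B|$; and the Nash--Williams bound you invoke, applied to the spheres of $B^\circ$, yields a resistance of order $(\log|B|)^2/|B|$ and hence a vacuous escape estimate. So the product of ``few heavy edges'' and ``small hitting probability'' cannot be closed along these lines.

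The mechanism that does close the argument (and is the one used in the paper) is to control the marks \emph{along the trajectory} rather than in space: by stationarity, $\E\big[\PP_\rho(M(X_t)\ge s)\big]\le Ae^{-\beta s}$ for every fixed $t$, hence $\E\big[\PP_\rho(\exists t<T:\ M(X_t)\ge s)\big]\le ATe^{-\beta s}$; Markov's inequality plus Borel--Cantelli over the pairs $(T,s)$ gives, almost surely, $\PP_\rho(\exists t<T: M(X_t)\ge s)\le T^3e^{-\beta s/2}$ for all but finitely many $(T,s)$. One then bounds how long the walk can take to leave $B$: by the commute time identity and bounded degree, $\E_\rho[\tau_{U\setminus B}]\le C\,\reff(\rho\lr U\setminus B)\,|B|\le C|B|^2$, so $\PP_\rho(\tau_{U\setminus B}\ge T)\le C|B|^2/T$. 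Choosing $T=2C|B|^3$ and $s=21\beta^{-1}\log|B|$ makes $C|B|^2/T+T^3e^{-\beta s/2}\le|B|^{-1}$ for large $|B|$, and the coupling (broken at the first visit to a vertex with $M(v)\ge s$) finishes the proof. Note that this is where the constant $21$ actually earns its keep --- it must beat the cubic-in-$T=\Theta(|B|^3)$ loss from the quenched Borel--Cantelli bound --- not, as in your sketch, a volume count of a logarithmic ball.
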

\begin{proof}
For every integers $T,s\ge 1$ we set
  \begin{equation*}
    \mathcal{A}_{T,s} = \left\{
      \PP_\rho(\exists t< T :  M(X_t)\ge s) \le T^3 e^{-\beta s/2}
    \right\} \, .
  \end{equation*}
Since $(U,\rho,M)$ is stationary and has an exponential tail for any $t\geq 0$ we have
  \begin{equation*}
    \E\big [\PP_\rho(M(X_t)\ge s) \big ] \le A e^{-\beta s} \, ,
  \end{equation*}
hence by the union bound
  \begin{equation*}
    \E\big [\PP_\rho(\exists t < T : M(X_t)\ge s) \big ] \le ATe^{-\beta s} \, .
  \end{equation*}
Thus by Markov's inequality
  \begin{equation*}
    \pr\left(\mathcal{A}_{T,s}^c\right)
    \le A T^{-2} e^{-\beta s/2} \, .
  \end{equation*}
  By Borel-Cantelli we deduce that almost surely $\mathcal{A}_{T,s}$ occurs for all but
  finitely many pairs $T,s$. Conditioned on $(U,\rho,M)$, we may consider only finite subsets $B\subset U$ which contain $\rho$, since otherwise both probabilities in the statement of the lemma are $1$. Let $B$ be such a subset. By the commute time identity \cref{commute}, and since the maximum degree of $U$ is bounded,
  \begin{equation*}
    \EE_\rho(\tau_{U \setminus B}) \le C \reff(\rho \lr U\setminus B) |B|
    \le C |B|^2 \, ,
  \end{equation*}
for some constant $C>0$. The last inequality is since the resistance is bounded by $|B|$ since there is a path of length at most $|B|$ from $\rho$ to $U \setminus B$. By Markov's inequality,
  \begin{equation*}
    \PP_\rho(\tau_{U \setminus B} \ge T) \leq {C|B|^2 \over T} \, .
  \end{equation*}
  Write $S=\{v\in U :M(v)\ge s\}$. For every $T,s$ for which 
  $\mathcal{A}_{T,s}$ occurs we have
  \begin{equation*}
    \PP_\rho\left(\tau_S < \tau^+_{\{\rho\}\cup U\setminus B}\right) \le 
    \PP_\rho(\tau_{U\setminus B}\ge T) + \PP_\rho(\exists t < T:M(X_t)\ge s)
    \le {C|B|^2 \over T} + T^3 e^{-\beta s/2}.
  \end{equation*}
  We now choose $T=2C|B|^3$ and $s=21\beta^{-1}\log |B|$ so that the right hand side of the last inequality is at most $|B|^{-1}$ when $|B|$ is sufficiently large. It is clear that we can couple two random walks starting from $\rho$, one walking on $U$ with unit resistances and the other on $(U,R_e)$, so that they remain together until they visit a vertex of $S$. Hence, when $|B|$ is large enough so that the chosen $T,s$ are such that $\mathcal{A}_{T,s}$ holds we deduce from the last inequality that with probability at least $1-|B|^{-1}$ the simple random walk on $U$ visits $\{\rho\} \cup U \setminus B$ before visiting $S$, concluding our proof.
\end{proof}

\section{Proof of Theorem 6.1}


We now proceed to wrapping up the proof of \cref{thm:lim:rec}. Recall that we have a sequence of finite planar graphs $\{G_n\}$ such that $G_n \convl (U,\rho)$ such that $\pr ( \deg(\rho) \geq k) \leq Ce^{-ck}$. Our goal is to prove that $(U,\rho)$ is almost surely recurrent.


By Lemma \ref{lem:truncate} and \cref{cor:switch} we may truncate and degree bias $G_n$ and $(U,\rho)$ so that we may assume without loss of generality that $G_n \convlpi (U,\rho)$. It is an easy computation using \cref{def:degbias} that we still have $\pr ( \deg(\rho) \geq k) \leq Ce^{-ck}$ (possibly for some other positive constants $C,c$). Thus, from now on we assume this that $G_n \convlpi (U,\rho)$ and that $\deg(\rho)$ has exponential tails.


Recall now the definitions and notations of \cref{sec:startree}. 
 Consider the star-tree transform $G_n^*$ of $G_n$ and let $\rho_n^*$ be a random vertex of $T_{\rho_n}$ drawn according to the stationary distribution of $T_{\rho_n}$. Similarly, conditioned on $(U,\rho)$, let $U^*$ be the star-tree transform of $U$ and $\rho^*$ be a random vertex of $T_\rho$ drawn according to the stationary distribution of $T_{\rho}$. Furthermore, we put markings on $G_n^*$ and $U^*$ by marking each edge $e$ of $G_n^*$ or $U^*$ with $\deg(v)$ whenever $e$ is in the tree $T_v$ and $\deg(v)$ is the degree of $v$ in $G_n$ or $U$, respectively. Denote these markings by $M_n$ and $M$, respectively.


\begin{claim} We have that $(G_n^*,\rho_n^*, M_n)$ for each $n$ and $(U^*,\rho^*, M)$ are stationary, and,
  $$(G_n^*, \rho_n^*, M_n) \convd (U^*, \rho^*, M) \, .$$
\end{claim}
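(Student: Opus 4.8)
There are really two things to prove: finite-$n$ stationarity, and the convergence; stationarity of the limit then comes almost for free from the other two.

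\emph{Finite-$n$ stationarity.} First I would check that $\rho_n^*$ is distributed \emph{exactly} according to the stationary distribution of $G_n^*$. To sample a stationary vertex of $G_n^*$ one may proceed in two stages: pick $v\in V(G_n)$ with probability proportional to $\deg_{G_n}(v)$ (that is, pick $\rho_n$), and then pick $x\in T_v$ with probability proportional to its degree \emph{inside} $T_v$. For a non-white vertex of $T_v$ this degree equals its degree in $G_n^*$; a white vertex $w_e$ has $T_v$-degree $1$ but $G_n^*$-degree $2$, and since $w_e$ is a leaf of the two trees $T_v,T_{v'}$ corresponding to the two endpoints of $e$, the two stage-2 contributions add up to the correct value. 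A short computation (using $|E(T_v)|=2\deg_{G_n}(v)$) confirms that this two-stage procedure yields precisely the stationary law of $G_n^*$, i.e.\ that of $\rho_n^*$. Consequently, by the elementary fact that a uniform neighbour of a stationary vertex of a finite graph is again stationary, $(G_n^*,\rho_n^*)$ is a stationary random graph; and since $M_n$ is a deterministic function of $G_n^*$ (the trees $T_v$ and the original degrees are recoverable from $G_n^*$), so is the triple $(G_n^*,\rho_n^*,M_n)$. The same argument applies verbatim to $(U^*,\rho^*,M)$ once we know the convergence below, but it is cleaner to deduce the limiting stationarity directly from convergence.

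\emph{The convergence.} The key structural observation is that for every $R\ge 1$ the marked ball $B_{G_n^*}(\rho_n^*,R)$ is a \emph{deterministic} function
\[
B_{G_n^*}(\rho_n^*,R)=\Phi_R\big(B_{G_n}(\rho_n,R+1),\,\ell_n\big),
\]
where $\ell_n$ records which vertex of $T_{\rho_n}$ equals $\rho_n^*$, and $\Phi_R$ does not depend on $n$. This rests on a routine distance comparison: assigning to each non-white vertex $x$ of $G_n^*$ its \emph{anchor} $a(x)\in V(G_n)$ (the unique $v$ with $x\in T_v$), any path in $G_n^*$ that changes anchor must pass through a white vertex, and hence $d_{G_n^*}(x,y)\ge d_{G_n}(a(x),a(y))$. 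Thus everything within $G_n^*$-distance $R$ of $\rho_n^*$ lies in some $T_v$ with $d_{G_n}(v,\rho_n)\le R$, and the local structure of such a $T_v$ near the leaf through which it is entered is determined by $\deg_{G_n}(v)$ --- in fact only by $\min(\deg_{G_n}(v),2^{R+2})$, since the deeper levels of the binary tree of a very high-degree vertex are never seen. All of this (including the degrees of the distance-$R$ vertices, which is why one extra level is taken) is read off $B_{G_n}(\rho_n,R+1)$, together with $\ell_n$. Moreover, conditionally on $\deg_{G_n}(\rho_n)$, the law of $\ell_n$ is the stationary law of the deterministic tree $T$ with $\deg_{G_n}(\rho_n)$ leaves (plus the two extra vertices), which does not depend on $n$. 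Since $\deg(\rho)<\infty$ almost surely (this is where the exponential-tail hypothesis enters), $T_\rho$ and $\rho^*$ are well defined and the identical structural identity holds with $(G_n,\rho_n)$ replaced by $(U,\rho)$.

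Now fix $R$ and a finite marked rooted graph $(H,v,m)$; since the markings take integer values, such balls form a countable discrete set and it suffices to prove convergence of point probabilities. With $g_R(\beta):=\pr_\ell\big(\Phi_R(\beta,\ell)\cong(H,v,m)\big)$, where $\ell$ is drawn from the stationary law of the tree attached to the root degree of $\beta$, we have $\pr\big(B_{G_n^*}(\rho_n^*,R)\cong(H,v,m)\big)=\E\big[g_R\big(B_{G_n}(\rho_n,R+1)\big)\big]$. The function $g_R$ is bounded and depends only on the isomorphism type of the $(R+1)$-ball, hence is continuous on $(\Gdot,\dloc)$, so $G_n\convlpi(U,\rho)$ gives
\[
\E\big[g_R(B_{G_n}(\rho_n,R+1))\big]\ \longrightarrow\ \E\big[g_R(B_U(\rho,R+1))\big]=\pr\big(B_{U^*}(\rho^*,R)\cong(H,v,m)\big),
\]
which is exactly $(G_n^*,\rho_n^*,M_n)\convd(U^*,\rho^*,M)$. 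Finally, stationarity of the limit follows: re-rooting at a uniform neighbour is a continuous operation on marked rooted graphs (for large $n$ the $R$-balls, markings included, coincide exactly with those of the limit), so for every bounded continuous $f$,
\begin{align*}
\E[f(U^*,X_1,M)] &= \E\Big[\tfrac{1}{\deg(\rho^*)}\sum_{w\sim\rho^*}f(U^*,w,M)\Big]
= \lim_n \E\Big[\tfrac{1}{\deg(\rho_n^*)}\sum_{w\sim\rho_n^*}f(G_n^*,w,M_n)\Big]\\
&= \lim_n \E\big[f(G_n^*,\rho_n^*,M_n)\big] = \E\big[f(U^*,\rho^*,M)\big],
\end{align*}
the third equality using the finite-$n$ stationarity. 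Hence $(U^*,\rho^*,M)$ is stationary.

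\emph{Main obstacle.} The only genuine work is the bookkeeping behind the structural identity $B_{G_n^*}(\rho_n^*,R)=\Phi_R(B_{G_n}(\rho_n,R+1),\ell_n)$: one must track carefully how the balanced binary trees $T_v$ sit inside $G_n^*$, handle the case where $\rho_n^*$ lies deep inside $T_{\rho_n}$ (so its anchor $\rho_n$ need not be close to $\rho_n^*$, yet everything seen still has anchor within $G_n$-distance $R$ of $\rho_n$), and handle vertices of $G_n$ of degree $\ge 2^{R+2}$ whose trees are only partially exposed within the $R$-ball. Everything else is soft.
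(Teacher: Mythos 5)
Your proof is correct and takes essentially the same approach as the paper: finite-$n$ stationarity via the two-stage computation $\pr(\rho_n^*=x\mid G_n)=\frac{\deg_{G_n}(v)}{2|E(G_n)|}\cdot\frac{\deg_{T_v}(x)}{2|E(T_v)|}=\frac{\deg_{T_v}(x)}{2|E(G_n^*)|}$ (using $|E(T_v)|=2\deg_{G_n}(v)$ and the fact that each white vertex is a leaf of two trees), convergence because balls of $(G_n^*,\rho_n^*)$ are determined by slightly larger balls of $(G_n,\rho_n)$ together with the choice inside $T_{\rho_n}$, and stationarity of the limit by passing the finite-$n$ statement to the limit. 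The paper states these steps tersely; your write-up simply fleshes out the bookkeeping.
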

\begin{proof} Since for any fixed integer $r>0$, the laws of $B_{G_n^*}(\rho_n^*,r)$ and $B_{U^*}(\rho^*,r)$ are determined by $B_{G_n}(\rho_n,r)$ and $B_{U}(\rho,r)$, respectively, we obtain that 
$$ (G_n^*,\rho_n^*, M_n) \convd (U^*,\rho^*, M) \, .$$

Secondly, it is immediate to check that for each $v\in G_n$ we have that the number of edges in $T_v$ is precisely $2\deg_{G_n}(v)$. This is the reason why we added the two ``extra'' neighbors to the root of $T_v$ in the star tree transform described in \cref{sec:startree}. Thus, conditioned on $G_n$ for any $x\in G_n^*$ such that $x\in T_v$ for some $v\in G_n$ we have that
$$ \pr ( \rho_n^*=x \mid G_n) = {\deg_{G_n}(v) \over 2|E(G_n)|} \cdot {\deg_{T_v}(x) \over 2 |E(T_v)|} = {\deg_{T_v}(x) \over 2|E(G_n^*)|} \, ,$$
or in other words, $(G_n^*,\rho_n^*, M_n)$ is a stationary random graph and since it converges to $(U^*,\rho^*, M)$, the latter is also stationary.
%
\end{proof}

\begin{lemma}\label{lem:exptail} The triplet $(U^*, \rho^*, M)$ has an exponential tail.
\end{lemma}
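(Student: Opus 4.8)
The plan is to reduce the claimed exponential tail of $M(\rho^*)$ to the assumed exponential tail of $\deg_U(\rho)$, via the stationarity of $(U,\rho)$. First I would record the value of $M$ near an arbitrary vertex $x$ of $U^*$. By the construction of the star-tree transform in \cref{sec:startree}, if $x$ is the root of a tree $T_v$, an internal vertex of $T_v$, or one of the two extra vertices attached to the root of $T_v$, then every edge incident to $x$ lies in $T_v$, so $M(x)=\deg_U(v)$; whereas if $x=w_e$ is a subdivision vertex arising from an edge $e=\{u,v\}$ of $U$, then $x$ is a leaf of exactly the two trees $T_u$ and $T_v$, so $M(x)=\max(\deg_U(u),\deg_U(v))$. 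Since $\rho^*$ is a vertex of $T_\rho$, in every case $M(\rho^*)\ge\deg_U(\rho)$, with equality unless $\rho^*$ is one of the $\deg_U(\rho)$ subdivision vertices $w_e$ that lie in $T_\rho$, in which case $M(\rho^*)=\max(\deg_U(\rho),\deg_U(u))$ where $u$ is the $U$-neighbour of $\rho$ with $e=\{\rho,u\}$.

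Next I would quantify how often $\rho^*$ is such a subdivision vertex. Recall that $T_\rho$ has exactly $2\deg_U(\rho)$ edges, and exactly $\deg_U(\rho)$ of its vertices are subdivision vertices $w_e$, each of $T_\rho$-degree $1$; hence under the stationary distribution on $T_\rho$ each of them receives probability $\tfrac{1}{2\cdot 2\deg_U(\rho)}=\tfrac{1}{4\deg_U(\rho)}$, so $\rho^*$ is a subdivision vertex with probability $\tfrac14$, and, conditioned on this, it is uniform among the $\deg_U(\rho)$ of them. Since these subdivision vertices correspond bijectively to the $U$-neighbours of $\rho$, conditioning on $\rho^*$ being a subdivision vertex makes the associated neighbour a uniformly chosen random neighbour $X_1$ of $\rho$. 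Consequently, conditionally on $(U,\rho)$,
\[
\pr\big(M(\rho^*)\ge s \mid U,\rho\big)\ \le\ \mathbf 1_{\{\deg_U(\rho)\ge s\}}\ +\ \tfrac14\,\pr\big(\deg_U(X_1)\ge s \mid U,\rho\big),
\]
where $X_1$ denotes a uniform random $U$-neighbour of $\rho$. Taking expectations, and using that $(U,\rho)$ is a stationary random graph (so $(U,X_1)$ has the same law as $(U,\rho)$ and in particular $\deg_U(X_1)$ has the same law as $\deg_U(\rho)$), together with the hypothesis $\pr(\deg_U(\rho)\ge s)\le Ce^{-cs}$ of \cref{thm:lim:rec} — which survives the truncation and degree-biasing carried out above — I obtain
\[
\pr\big(M(\rho^*)\ge s\big)\ \le\ \tfrac54\,\pr\big(\deg_U(\rho)\ge s\big)\ \le\ \tfrac54\,C\,e^{-cs}
\]
for all $s\ge 0$, i.e.\ an exponential tail with $A=\tfrac54 C$ and $\beta=c$.

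I do not expect a real obstacle here; the only thing demanding a little care is the bookkeeping identifying, conditionally on $\rho^*$ being a subdivision vertex of $T_\rho$, the corresponding $U$-neighbour with a uniform random neighbour of $\rho$, and then invoking the stationarity of $(U,\rho)$. One should also note that $\deg_U(\rho)\ge 1$ almost surely (a degree-biased root has positive degree), so that the denominators above are well defined.
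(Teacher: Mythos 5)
Your argument is correct, and it shares the paper's essential mechanism: after the truncation and degree-biasing, $(U,\rho)$ is stationary, so a uniformly chosen neighbour $X_1$ of $\rho$ satisfies $\pr(\deg(X_1)\geq s)=\pr(\deg(\rho)\geq s)\leq Ce^{-cs}$, and this is what transfers the tail of $\deg(\rho)$ to $M(\rho^*)$. The difference lies in how the reduction to neighbour degrees is made. The paper only records that $M(\rho^*)=\deg(v)$ with $v$ equal to $\rho$ or one of its neighbours, and therefore proves the stronger intermediate statement that $D(\rho)=\max_{v\sim\rho}\deg(v)$ has an exponential tail; to control this maximum it observes that, on the event $\{\deg(\rho)\leq k,\ D(\rho)\geq k\}$, a uniform neighbour has degree at least $k$ with conditional probability at least $1/k$, so that $\pr(\deg(\rho)\leq k,\,D(\rho)\geq k)\leq k\,\pr(\deg(X_1)\geq k)$, at the cost of a factor $k$ and hence a slight loss in the exponential rate. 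You instead use the exact stationary law of $\rho^*$ inside $T_\rho$: $M(\rho^*)$ can exceed $\deg(\rho)$ only when $\rho^*$ is one of the subdivision leaves, an event of conditional probability $\tfrac14$, and on that event the selected neighbour is uniform, so the tail passes over with only a constant factor (your $\tfrac54\,Ce^{-cs}$) and no loss of rate. Both routes are sound; yours is marginally sharper and bypasses the statement about $D(\rho)$, while the paper's establishes that stronger fact about the maximal neighbour degree. The one bookkeeping caveat, which affects the paper's count as much as yours, is that $|E(T_v)|=2\deg(v)$ fails when $\deg(v)=1$ (the two extra vertices then give three edges); but since any absolute constant in place of $\tfrac14$ suffices, and each leaf of $T_\rho$ always carries stationary mass at most $1/(2\deg_U(\rho))$, this does not affect your conclusion.
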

\begin{proof}
We observe that $M(\rho^*)=\deg(v)$ where $v$ is either $\rho$ or one of its neighbors (the latter can happen if $\rho^*$ was chosen to be a leaf of $T_\rho$). Hence it suffices to show that if $(U,\rho)$ is a stationary local limit such that $\deg(\rho)$ has an exponential tail, then the random variable $D(\rho)=\max_{v:\{\rho,v\}\in E(U)} \deg(v)$ has an exponential tail. We have
\begin{equation}\label{eq:exptailsplit} \pr(D(\rho)\geq k) \leq \pr(\deg(\rho)\geq k) + \pr(\deg(\rho)\leq k \text{ and } D(\rho)\geq k) \, .\end{equation}
The probability of the first term on the right hand side decays exponentially in $k$ due to our assumption on $(U,\rho)$. Conditioned on $(U,\rho)$, let $X_1$ be a uniformly chosen random neighbor of $\rho$. Then clearly
$$ \pr(\deg(X_1)\geq k \, \mid \, \deg(\rho)\leq k \text{ and } D(\rho)\geq k ) \geq k^{-1} \, .$$
However, by stationarity $\pr(\deg(X_1) \geq k) = \pr(\deg(\rho) \geq k)$, which decays exponentially. We conclude that the second term on the right hand side of \eqref{eq:exptailsplit} decays exponentially as well.
\end{proof}

\newcommand{\Runit}{R^{\mathrm{unit}}}
\newcommand{\RM}{R^{\mathrm{mark}}}
\newcommand{\Rmid}{R^{\mathrm{mid}}}
\newcommand{\PPM}{\PP^{\mathrm{mark}}}
\newcommand{\PPmid}{\PP^{\mathrm{mid}}}

Consider the stationary random graph $(U^*,\rho^{*}, M)$. By \cref{lem:exptail} it has an exponential tail.  Consider the edge resistances 
\begin{equation*}
  \Runit_e \equiv 1 \, ,\qquad
  \RM_e = {1 \over M(e)} \, .
\end{equation*}
In view of \cref{lem:startree}, it suffices to show that the network $(U^*, \RM)$ is almost surely recurrent, for then it will follow that $U$ is almost surely recurrent. To prove the former, we apply the second assertion of \cref{cor:switch} which allows us to assume without loss of generality that $(U^*,\rho^*)$ is a local limit of finite planar maps (rather than a stationary local limit). Since $(U^*,\rho^*)$ is now a local limit of finite planar maps with degrees bounded by $3$ we may apply
\cref{theorem:loc:lim:recurrence} to almost surely obtain a constant $c>0$ and a sequence of sets $B_k \subset U^*$ such that 
\begin{enumerate}
  \item $ck \leq |B_k| \leq c^{-1}k$, and
  \item $\reff(\rho^{*} \lr U^* \setminus B_k \,\, ; \,\, \{\Runit_e\}) \geq c \log k$,
\end{enumerate}
where we added to the conclusion of \cref{theorem:loc:lim:recurrence} that $B_k \geq ck$ since adding vertices to $B_k$ makes the lower bound on the resistance even better.

We now define one extra set of edge resistances on $U^*$ which will allow us to interpolate between the edge resistances $\Runit$ and $\RM$. For each integer $k \geq 1$ we define
\begin{equation*}
  \Rmid_e =
  \begin{cases}
    1 & M(e) \le C\log k,\\
    M^{-1}(e) & \text{otherwise} \, ,
  \end{cases}
\end{equation*}
where $C>0$ is some large constant that will be chosen later. We will use $\PP$, $\PPM$ and $\PPmid$ to denote the probability measures, conditioned on $(U^*,\rho^{*},M)$, of random walks on $U^*$ with edge resistances $\{\Runit_e\}$, $\{\RM_e\}$ and $\{\Rmid_e\}$, respectively.

\begin{lemma}\label{lem:rmidres} For some other constant $c>0$ we have
  $$ \reff(\rho^{*}\lr U^*\setminus B_k \,\, ; \{\Rmid_e\})
  \geq c\log k \, .$$
\end{lemma}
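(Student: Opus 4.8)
The idea is that $\Rmid$ and $\Runit$ agree on every edge except those with marking larger than $C\log k$, so one should be able to compare the two networks using \cref{lem:comparerw}. Fix the realization of $(U^*,\rho^*,M)$, which by \cref{lem:exptail} is a stationary, bounded degree random rooted graph with markings having an exponential tail, say $\pr(M(\rho^*)\geq s)\leq Ae^{-\beta s}$. Choose the constant $C$ in the definition of $\Rmid$ so that $C\geq 42\beta^{-1}$. Since $|B_k|\leq c^{-1}k$ we have $\log|B_k|\leq 2\log k$ for all $k$ large, hence $21\beta^{-1}\log|B_k|\leq C\log k$, which means that $\Rmid_e=1$ for every edge $e$ with $M(e)\leq 21\beta^{-1}\log|B_k|$. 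Therefore \cref{lem:comparerw}, applied with $B=B_k$ and the resistances $\Rmid$ in the role of $R'$, gives that almost surely, for every $k$ large enough,
\[
  \big|\PP_{\rho^*}\!\left(\tau_{U^*\setminus B_k}<\tau^+_{\rho^*}\right) - \PPmid_{\rho^*}\!\left(\tau_{U^*\setminus B_k}<\tau^+_{\rho^*}\right)\big| \leq \frac{1}{|B_k|} \leq \frac{1}{ck}.
\]

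Next I would convert the resistance bound from \cref{theorem:loc:lim:recurrence} into an escape-probability bound and then back. For the fixed realization $M(\rho^*)$ is a finite number, so for $k$ large we have $C\log k>M(\rho^*)$ and hence every edge incident to $\rho^*$ has $\Rmid_e=1$; in particular the weighted degree $\pi_{\rho^*}$ equals $\deg_{U^*}(\rho^*)\leq 3$ in both the $\Runit$ and the $\Rmid$ networks. Contracting $U^*\setminus B_k$ to a single vertex produces a finite network, so \cref{effresprob} applies; together with property~(2) of the sets $B_k$ it yields
\[
  \pi_{\rho^*}\,\PP_{\rho^*}\!\left(\tau_{U^*\setminus B_k}<\tau^+_{\rho^*}\right) = \frac{1}{\reff(\rho^*\lr U^*\setminus B_k;\{\Runit_e\})} \leq \frac{1}{c\log k}.
\]
Multiplying the first display by $\pi_{\rho^*}\leq 3$ and combining with the second, and using $3/(ck)\leq 1/(c\log k)$ for $k$ large, we obtain $\pi_{\rho^*}\,\PPmid_{\rho^*}(\tau_{U^*\setminus B_k}<\tau^+_{\rho^*})\leq 2/(c\log k)$. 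Applying \cref{effresprob} once more, now in the network with resistances $\Rmid$, gives $\reff(\rho^*\lr U^*\setminus B_k;\{\Rmid_e\})\geq (c/2)\log k$ for all $k$ large. Since $\reff(\rho^*\lr U^*\setminus B_k;\{\Rmid_e\})>0$ whenever $\rho^*\in B_k$, which holds for all $k\geq 2$ by property~(2), the asserted bound for all $k$ follows after decreasing the constant.

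I do not expect a genuine obstacle here: the real content is \cref{lem:comparerw}, and what remains is bookkeeping of constants — fixing $C$ in terms of the tail rate $\beta$, and checking that the $O(1/k)$ error from the comparison is negligible against the $\Theta(1/\log k)$ main term. The only subtlety is that the thresholds ``$k$ large enough'' and the value of $\pi_{\rho^*}$ depend on the realization of $(U^*,\rho^*,M)$, but since we are after an almost sure statement with a (possibly random) constant $c$, this causes no trouble.
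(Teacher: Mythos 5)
Your argument is correct and is essentially the paper's own proof: convert the unit-resistance lower bound on $\reff(\rho^*\lr U^*\setminus B_k;\{\Runit_e\})$ into an escape-probability bound via \cref{effresprob}, compare the escape probabilities under $\Runit$ and $\Rmid$ using \cref{lem:comparerw} (with $C$ chosen large relative to $\beta$ so the hypothesis $\Rmid_e=1$ for $M(e)\leq 21\beta^{-1}\log|B_k|$ holds), and translate back using the degree bound $3$. Your explicit constant bookkeeping ($C\geq 42\beta^{-1}$, absorbing finitely many small $k$ into the constant) only makes precise what the paper leaves implicit.
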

\begin{proof}
We may assume $k$ is large enough so that $M(e)\le C\log k$ for every edge $e$ incident to $\rho^{*}$. 
By \cref{effresprob} we have
 $$ \reff(\rho^{*} \lr U^* \setminus B_k \,\, ;\,\, \{\Runit_e\}) \leq  {1 \over \PP_{\rho^{*}} ( \tau_{U^*\setminus B_k} < \tau^+_{\rho^{*}} ) } \, ,$$
 hence 
 $$ \PP_{\rho^{*}} ( \tau_{U^*\setminus B_k} < \tau^+_{\rho^{*}} ) \leq  {1 \over c \log k} \, ,$$
 by our assumption on $B_k$ above. By \cref{lem:comparerw} it follows that 
$$ \PPmid_{\rho^{*}} ( \tau_{U^*\setminus B_k} < \tau^+_{\rho^{*}} ) \leq {1 \over 2c \log k} \, ,$$
when $k$ is large enough and the constant $C>0$ in the definition of $\{\Rmid_e\}$ is chosen large enough with respect to $\beta$. Using \cref{effresprob} again and the fact that $U^*$ has degrees bounded by $3$ concludes the proof.
\end{proof}


We need yet another easy general fact about electric networks.

\begin{claim}\label{claim:resvertexball} Consider a finite network $G$ in which all resistances are bounded above by $1$. Then for any integer $m \geq 1$ and any two vertices $a\neq z$ we have
$$ \reff ( B_G(a,m) \lr z) \geq \reff(a \lr z) - m \, .$$
\end{claim}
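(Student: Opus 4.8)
The plan is to bound the effective \emph{conductance} $\ceff(B_G(a,m)\lr z)$ from above using Dirichlet's principle. Write $A=B_G(a,m)$. Reading \cref{lem:dirichlet} on the network in which $A$ is glued to a single vertex (the edges internal to $A$ become loops and contribute nothing to the energy) gives $\ceff(A\lr z;G)=\inf\{\energy_G(h):h\equiv 0\text{ on }A,\ h(z)=1\}$. We may assume $R:=\reff(a\lr z;G)>m$, as otherwise the inequality is trivial; in particular $z\notin A$, since $d_G(a,z)\ge\reff(a\lr z;G)=R>m$. Let $h^\ast$ be the voltage with $h^\ast(a)=0$ and $h^\ast(z)=1$; by \cref{lem:dirichlet} it minimizes the energy among such functions, so $\energy_G(h^\ast)=1/R$.

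The first and crucial step is to show that $h^\ast$ is small throughout $A$, namely $h^\ast(u)\le m/R$ for every $u\in A$. Equivalently, putting $g=Rh^\ast$ (the voltage with $g(a)=0$ that drives the unit current flow from $a$ to $z$), I must show $g(u)\le m$ on $A$. I would deduce this from the reciprocity identity for electrical networks: if $v_1,v_2$ denote the voltages driving unit current from $a$ to $z$ and from $a$ to $u$ respectively, then $v_1(u)-v_1(a)=v_2(z)-v_2(a)$, which follows by a one-line computation from antisymmetry of current flows and Ohm's law. Since current flows from low to high potential, $v_2(u)-v_2(a)=\reff(a\lr u;G)>0$, so by the maximum principle (\cref{voltage:maxprin}) $v_2(a)\le v_2(z)\le v_2(u)$, and therefore $g(u)=v_1(u)-v_1(a)=v_2(z)-v_2(a)\le \reff(a\lr u;G)\le d_G(a,u)\le m$; the middle inequality holds because a geodesic from $a$ to $u$ consists of at most $m$ edges of resistance at most $1$, whose unit flow has energy at most $m$ (Thomson's principle, \cref{thomson:principle}). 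Set $\delta_0:=\max_{u\in A}h^\ast(u)$, so $0\le\delta_0\le m/R<1$.

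Next I would test the variational problem with $\tilde h(v)=\bigl(h^\ast(v)-\delta_0\bigr)_+\big/(1-\delta_0)$, which vanishes on $A$ and equals $1$ at $z$, giving $\ceff(A\lr z;G)\le\energy_G(\tilde h)$. On each edge with both endpoints in $A$, $\tilde h$ is constant; on every other edge $|\tilde h(x)-\tilde h(y)|\le(1-\delta_0)^{-1}|h^\ast(x)-h^\ast(y)|$. Hence, if $\energy_A(h^\ast)$ denotes the part of $\energy_G(h^\ast)$ coming from edges inside $A$, then $\energy_G(\tilde h)\le(1-\delta_0)^{-2}\bigl(\tfrac1R-\energy_A(h^\ast)\bigr)$. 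It remains to bound $\energy_A(h^\ast)$ from below: if $u^\ast\in A$ attains $h^\ast(u^\ast)=\delta_0$, then $h^\ast$ restricted to the connected graph $A$ is a competitor in Dirichlet's principle for the pair $(a,u^\ast)$, so $\energy_A(h^\ast)\ge\delta_0^2/\reff(a\lr u^\ast;A)\ge\delta_0^2/m$ (a geodesic from $a$ to $u^\ast$ lies inside $A$ and has at most $m$ edges). Combining these with the elementary inequality $\tfrac{\delta_0^2}{m}+\tfrac{(1-\delta_0)^2}{R-m}\ge\tfrac1R$ (Cauchy--Schwarz in the form $\tfrac{x^2}{p}+\tfrac{y^2}{q}\ge\tfrac{(x+y)^2}{p+q}$ with $x=\delta_0$, $y=1-\delta_0$) gives $\energy_G(\tilde h)\le\tfrac1{R-m}$, hence $\reff(A\lr z;G)\ge R-m$.

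I expect the main obstacle to be the first step, i.e. showing that the voltage $h^\ast$ is genuinely controlled by graph distance on the ball; the naive bound from the maximum principle is only $h^\ast(u)\le1$, which is far too weak, so one really needs the reciprocity identity (or, alternatively, the fact that the unit current flow satisfies $|I(e)|\le1$ on every edge, provable from a level-set cutset argument together with \cref{flow:in:flow:out}). A secondary subtlety is that the bookkeeping in the last step must discard the edges inside $A$ from the Lipschitz estimate and recover their contribution through the Cauchy--Schwarz step; this is precisely what produces the sharp constant $m$, whereas a cruder truncation would only yield $R-2m$, and the example of a path $a=v_0,\dots,v_n=z$ (for which equality holds) shows there is no further room.
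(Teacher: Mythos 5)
Your argument is correct, but it runs along the dual variational principle to the one the paper uses, so it is worth comparing. The paper works with flows and Thomson's principle: it takes the unit current flow $\theta^m$ realizing $\reff(B_G(a,m)\lr z)$, records the amount $\alpha_v\ge 0$ of current exiting the ball through each vertex $v$ (so $\sum_v\alpha_v=1$), appends to $\theta^m$ the shortest-path unit flows $\theta^{a,v}$ from $a$ to $v$ inside the ball (each of energy at most $m$ since resistances are at most $1$), and concludes by convexity/Jensen that $\reff(a\lr z)\le\energy(\theta^m)+m$; the cross terms vanish because $\theta^m$ lives on edges leaving the ball while the path flows live inside it. You instead work with functions and Dirichlet's principle: the key input is the bound $h^\ast\le m/R$ on the ball, which you obtain from the reciprocity identity $v_1(u)-v_1(a)=v_2(z)-v_2(a)$ together with the maximum principle and the estimate $\reff(a\lr u)\le d_G(a,u)\le m$; you then truncate and rescale $h^\ast$ to get a test function vanishing on the ball, and recover the discarded interior energy through the Cauchy--Schwarz inequality $\tfrac{x^2}{p}+\tfrac{y^2}{q}\ge\tfrac{(x+y)^2}{p+q}$. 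Both routes give the sharp constant $m$; the paper's flow construction is shorter and stays entirely within results proved in these notes, whereas yours needs the reciprocity identity (standard, and your summation-by-parts sketch is the right proof, or alternatively the bound $|I(e)|\le 1$ for the unit current flow), but in exchange it isolates the genuinely useful intermediate fact that the potential varies by at most $m$ over a radius-$m$ ball when resistances are bounded by $1$.
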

\begin{proof}
  Let $\theta^m$ be the unit current flow from $B(a,m)$ to $z$.
  For a vertex $v\in B(a,m)$ denote 
  $$ \alpha_v = \sum_{u \not \in B(a,m): u \sim v}\theta^m(vu) $$
so that $\alpha_v \geq 0$ for all $v\in B(a,m)$ and $\sum_{v\in B(a,m)}\alpha_v = 1$. For a vertex $v\in B(a,m)$ let $\theta^{a,v}$ be a unit flow putting flow $1$ on some shortest path from $a$ to $v$ in $B(a,m)$. Set
  \begin{equation*}
    \theta = \sum_{v\in B(a,m)}\alpha_v(\theta^m + \theta^{a,v}) \, .
  \end{equation*}

By Thomson's principle (\cref{thomson:principle}), Jensen's inequality and since $\sum_v \alpha_v=1$ we have 
\begin{align*}
    \reff(a\lr z) &\le
    \energy(\theta)
    = \energy(\theta^m)
    + \sum_e r_e \big [
      \sum_{v\in B(a,m)} \alpha_v \theta^{a,v}(e)
    \big ]^2
    \le \energy(\theta^m)
    + \sum_{v\in B(a,m)} \alpha_v \sum_e r_e \left(\theta^{a,v}(e)\right)^2 \\
    &\leq \energy(\theta^m) + \sum_{v\in B(a,m)} \alpha_v \cdot m
    = \reff(B(a,m)\lr z) + m \, .\qedhere
  \end{align*}
\end{proof}

We are finally ready to conclude the proof of the main theorem of this chapter.

\begin{proof}[Proof of \cref{thm:lim:rec}] By \cref{lem:rmidres} and \cref{claim:resvertexball} we have that the sets $B_k$ obtained earlier satisfy that for any $m\geq 0$
  \begin{equation*}
    \reff(B_{U^*}(\rho^{*},m) \lr U^*\setminus B_k \,\, ; \{\Rmid_e\})
      \ge c\log k-m \, .
  \end{equation*}
Moreover, for every edge $e$,
  \begin{equation*}
    \RM_e \geq {\Rmid_e  \over C \log k} \, ,
  \end{equation*}
  hence
  \begin{equation*}
  \reff(B_{U^*}(\rho^{*},m) \lr U^*\setminus B_k \,\, ; \{\RM_e\})
      \ge c/C - m/C \log k \, .
  \end{equation*}
  By taking $k\to \infty$ we deduce that there exists $c>0$ such that for any $m \geq 1$
  \begin{equation*}
    \reff(B_{U^*}(\rho^{*},m) \lr \infty;\{\RM_e\})\ge c \, .
  \end{equation*}
  Consider the current unit flow from $\rho^{*}$ to $\infty$ in $(U^*, \{\RM_e\})$. If this flow had finite energy, then it follows that for any $\eps>0$ there exists $m \geq 1$ such that $\reff(B_{U^*}(\rho^{*},m) \lr \infty;\{\RM_e\})\leq \eps$, which is a contradiction to the above. Hence
  \begin{equation*}
    \reff(\rho^{*} \lr \infty;\{\RM_e\})=\infty \, ,
  \end{equation*}
  that is, $(U^*, \{\RM_e\})$ is almost surely recurrent. The theorem now follows by \cref{lem:startree}.
\end{proof}

\newcommand{\WUSF}{\mathsf{WUSF}}
\newcommand{\FUSF}{\mathsf{FUSF}}
\newcommand{\UST}{\mathsf{UST}}
\newcommand{\F}{\mathfrak F}

\chapter{Uniform spanning trees of planar graphs}\label{chp:planarusf}

\section{Introduction}


Let $G$ be a finite connected graph. A {\bf spanning tree} \index{spanning tree} $T$ of $G$ is a connected subgraph of $G$ that contains no cycles and such that every vertex of $G$ is incident to at least one edge of $T$. The set of spanning trees of a given finite connected graph is obviously finite and hence we may draw one uniformly at random. This random tree is called the {\bf uniform spanning tree} (UST) of $G$. \index{uniform spanning tree} This model was first studied by Kirchhoff \cite{kirchhoff1847ueber} who gave a formula for the number of spanning trees of a given graph and provided a beautiful connection with the theory of electric networks. In particular, he showed that the probability that a given edge $\{x,y\}$ of $G$ is contained in the UST equals $\reff(x \lr y; G)$; we prove this fundamental equality in \cref{sec:basicust} (see \cref{thm:kirchhoff}).

Is there a natural way of defining a UST probability measure on an infinite connected graph? It will soon become clear that we have set the framework to answer this positively in \cref{sec:effectiveresistance}. Let $G=(V,E)$ be an infinite connected graph and assume that $\{G_n\}$ is a finite exhaustion of $G$ as defined in \cref{sec:infinitegraphs}. That is, $\{G_n\}$ is a sequence of finite graphs, $G_n \subset G_{n+1}$ for all $n$, and $\cup G_n = G$. Russell Lyons conjectured that the UST probability measure on $G_n$ converges weakly to some probability measure on subsets of $E$ and in his pioneering work Pemantle \cite{Pem91} showed that is indeed the case. 

More precisely, denote by $\T_n$ a UST of $G_n$, then it is shown in \cite{Pem91} that for any two finite subset of edges $A,B$ of $G$ the limit
\begin{equation} \label{eq:fusfexists} \lim_{n \to \infty} \PP(A \subset \T_n \,\, , \,\, B \cap \T_n = \emptyset) \, ,\end{equation}
exists and does not depend on the exhaustion $\{G_n\}$.
The proof is a consequence of Rayleigh's monotonicity (\cref{rayleigh}) and will be presented in \cref{sec:ustlimits}. This together with Kolmogorov's extension theorem \cite[Theorem A.3.1]{Durrett} implies that there exists a unique probability measure on infinite subsets of $E$ for which a sample of $\F$ satisfies
$$ \PP( A \subset \F \, \, , \,\, B \cap \F = \emptyset ) = \lim_{n \to \infty} \PP(A \subset \T_n \,\, , \,\, B \cap \T_n = \emptyset) \, ,$$
for any two finite subsets of edges of $G$. Thus, the law of $\F$ is determined and we denote it by $\mu^F$. The superscript $F$ stands for \emph{free} and will be explained momentarily. Let us explore some properties of $\mu^F$ that are immediate from its definition.


Since every vertex of $G$ is touched by at least one edge of $\T_n$ with probability $1$ when $n$ is large enough (so that $G_n$ contains the vertex), we learn that the edges of $\F$ almost surely touch every vertex of $G$, that is, $\F$ is almost surely \emph{spanning}. Similarly, the probability that the edges of a given cycle in $G$ are contained in $\T_n$ (once $n$ is large enough so that $G_n$ contains the cycle) is $0$. Since $G$ has countably many cycles we deduce that almost surely there are no cycles in $\F$. By a similar reasoning we deduce that almost surely any connected component of $\F$ is infinite. However, a moment's reflection shows that this kind of reasoning cannot be used to determine that $\F$ is almost surely connected.



It turns out, perhaps surprisingly, that $\F$ need not be connected almost surely. A remarkable result of Pemantle \cite{Pem91} shows that a sample of $\mu^F$ on $\ZZ^d$ is almost surely connected when $d=1,2,3,4$ and almost surely disconnected when $dֿ\geq 5$. Since it may be the case that a sample of $\mu^F$ is disconnected with positive probability, we call $\mu^F$ the \defn{free uniform spanning forest} (rather than tree) of $G$, denoted henceforth $\FUSF_G$. The term \emph{free} corresponds to the fact that we have not imposed any boundary conditions when taking a limit. It will be very useful to take other boundary conditions, such as the \emph{wired} boundary condition, see \cref{sec:ustlimits}. The seminal paper of Benjamini, Lyons, Peres and Schramm \cite{BLPS} explores many properties of these infinite random trees (properties such as number of components and connectivity in particular, size of the trees, recurrence or transience of the trees and many others) on various underlying graphs with an emphasis on Cayley graphs. We refer the reader to \cite{BLPS} and to \cite[Chapters 4 and 10]{LyonsPeres} for a comprehensive treatment.

The question of connectivity of the $\FUSF$ is therefore fundamental and unfortunately it is not even known  that connectivity is an event of probability $0$ or $1$ on any graph $G$, see \cite[Question 15.7]{BLPS}. In \cite{HNPlanarUST} the circle packing theorem (\cref{thm:cp}) is used to prove that $\FUSF_G$ is almost surely connected when $G$ is a bounded degree proper planar map, answering a question of \cite[Question 15.2]{BLPS}. Our goal in this chapter is to present a proof for a specific case where $G$ is a bounded degree, transient, one-ended planar triangulations. Even though this is a particular case of a general theorem, the argument we present here contains most of the key ideas. We refer the interested reader to \cite{HNPlanarUST} for the general statement. 

\begin{theorem}[\cite{HNPlanarUST}]\label{thm:ustconnected} Let $G$ be a simple, bounded degree, transient, one-ended planar triangulation. Then $\FUSF_G$ is almost surely connected. 
\end{theorem}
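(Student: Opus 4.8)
The strategy is to move the problem to the dual map and to the wired uniform spanning forest, which is far more tractable. Let $G^\dagger$ be the planar dual of $G$. Since $G$ is a simple, one-ended plane triangulation, $G^\dagger$ is a simple $3$-regular one-ended plane map, and being quasi-isometric to the bounded degree graph $G$ it is transient as well. I would first recall the planar duality of uniform spanning forests --- the limiting form of the classical fact that the complement of a spanning tree of a finite plane graph is a spanning tree of its dual --- in the form: a sample of $\FUSF_G$ and a sample of $\WUSF_{G^\dagger}$ can be coupled so that an edge $e$ belongs to $\FUSF_G$ if and only if its dual edge $e^\dagger$ does \emph{not} belong to $\WUSF_{G^\dagger}$. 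As in the introductory discussion of this chapter, $\FUSF_G$ is almost surely spanning, acyclic, and has no finite component, hence it is connected if and only if it has a single component. Via the duality this is equivalent to $\WUSF_{G^\dagger}$ containing no doubly-infinite simple path: such a path, drawn inside the carrier of any embedding of $G^\dagger$, leaves every compact set at both ends and separates the carrier into two regions each containing vertices of $G$, which are then necessarily in distinct components of $\FUSF_G$ (every $G$-edge crossing the path is excluded from $\FUSF_G$); conversely a nontrivial splitting of $\FUSF_G$ into infinite pieces produces such a dual path. Finally, every component of $\WUSF_{G^\dagger}$ is almost surely an infinite tree, and an infinite tree contains a doubly-infinite simple path if and only if it has at least two ends. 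The theorem is therefore equivalent to: \emph{almost surely every component of $\WUSF_{G^\dagger}$ is one-ended.}

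\textbf{Inputs from circle packing.} Because $G$ is a transient bounded degree plane triangulation, \cref{hs:trans:UU} gives a circle packing $P$ of $G$ with $\carrier(P)=\UU$. I would then use the associated \emph{double circle packing}: the unique circle through the three tangency points of the circles of each face of $G$; these circles pack $G^\dagger$, meet the circles of $P$ orthogonally, and also have carrier $\UU$. From this picture, the Ring Lemma (\cref{lem:ring}) and the resistance estimates of \cref{chp:heschramm} supply the two facts we need: (i) the simple random walk on $G^\dagger$ converges almost surely to a random point of $\partial\UU$, with trivial tail; and (ii) a boundary collar estimate in the spirit of \cref{hs:lemma:diam:res}: for $\zeta\in\partial\UU$ and small $\delta>0$, the effective resistance in $G^\dagger$ between the circles within Euclidean distance $\delta^2$ of $\zeta$ and those at distance at least $\delta$ from $\zeta$ is at least $c\log(1/\delta)$.

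\textbf{One-endedness of $\WUSF_{G^\dagger}$.} I would sample $\WUSF_{G^\dagger}$ by Wilson's algorithm rooted at infinity and orient every edge towards infinity, so that each vertex $v$ has a single out-edge --- the first step of the loop-erasure of the walk from $v$ --- and the component of $v$ is the loop-erased walk $\gamma_v$ from $v$ to $\partial\UU$ together with the subtrees of its past hanging off it; one-endedness of the component is exactly finiteness of the past of each of its vertices. Suppose, towards a contradiction, that the past of some vertex $w$ is infinite. It then contains an infinite simple path $\beta$; reading $\beta$ as a reversed concatenation of loop-erased walk segments feeding into $w$ and using (i), $\beta$ leaves every compact subset of $\UU$ and accumulates on $\partial\UU$ near a point $\eta$, while $w$ sits at positive distance from $\partial\UU$. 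Membership of a vertex $u$ in the past of $w$ forces the walk from $u$ to hit $w$, so infinitely many vertices near $\eta$ have this property; feeding estimate (ii) scale by scale across the nested annuli $\delta^2\le|z-\eta|\le\delta$ as $\delta\downto 0$ shows, via \cref{effresprob}, that the walk from a vertex within distance $\delta^2$ of $\eta$ reaches the bulk region around $w$ with probability only $O(1/\log(1/\delta))$. A multiscale summation, using the triviality of the exit point from (i) to decorrelate the scales, then shows that only finitely many vertices can succeed --- a contradiction. Hence every component of $\WUSF_{G^\dagger}$ is one-ended and, by the reduction above, $\FUSF_G$ is connected almost surely.

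\textbf{The main obstacle.} The crux is the last step: controlling the past in Wilson's algorithm rooted at infinity. A naive first-moment bound on the size of the past already diverges on $\ZZ^3$ (where the statement is nonetheless true), so the argument must be intrinsically almost-sure, combining the multiscale collar estimate with the tail behaviour of the walk's exit point; pinning down that the infinite past really accumulates near a \emph{single} boundary point, rather than oscillating along $\partial\UU$, is part of this difficulty and is where the trivial-tail input (i) is essential. Constructing the double circle packing and developing the boundary theory of the random walk on $G^\dagger$ is the other substantial ingredient. The duality reduction is comparatively routine, but still requires care about the limit ``at infinity'' defining $\FUSF_G$ and $\WUSF_{G^\dagger}$ and about the separation properties of curves in $\UU$.
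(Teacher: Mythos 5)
Your reduction is the same as the paper's: by \cref{prop:ustduality} and \cref{prop:ustduality2}, connectivity of $\FUSF_G$ is equivalent to one-endedness of every component of the wired forest on the dual, and your circle-packing inputs are in the right spirit. The gap is in the heart of the matter, the proof of one-endedness itself. First, in a single run of Wilson's algorithm rooted at infinity the claim that ``membership of $u$ in the past of $w$ forces the walk from $u$ to hit $w$'' is false: the walk from $u$ is stopped when it hits the tree built so far, not when it hits $w$. The claim is only true marginally, by reordering so that $u$ is the first vertex, which gives $\PP(u\in\mathrm{past}(w))\le \PP_u(\tau_w<\infty)$ for each fixed $u$ but does not let you conclude that infinitely many walks started near $\eta$ actually visit $w$. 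Second, and more seriously, the step that should turn the per-vertex bounds $O(1/\log(1/\delta))$ into a contradiction --- ``a multiscale summation, using the triviality of the exit point to decorrelate the scales, shows that only finitely many vertices can succeed'' --- is not an argument. At each scale there are unboundedly many vertices in the collar near $\eta$, the events $\{u\in\mathrm{past}(w)\}$ are positively correlated through the forest, and tail triviality of the exit point (itself a deep input not developed in these notes) provides no independence across scales. As you yourself observe, the first moment of $|\mathrm{past}(w)|$ diverges already on $\ZZ^3$, so precisely at this point a genuinely new mechanism is required, and the proposal does not supply one.

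For comparison, the paper avoids Wilson's algorithm and any control of the past of a vertex. Working on the dual (relabelled $G$), it fixes an edge $e=(x,y)$ and shows $\PP(e\in\F,\,\sA^e)=0$, where $\sA^e$ is the event that $x$ and $y$ lie in distinct infinite components of $\F\setminus\{e\}$. One explores the rightmost path $\eta^x$ from $x$ and the leftmost path $\eta^y$ from $y$ until they come within Euclidean distance $\eps$ of $\partial\UU$; by the spatial Markov property \eqref{eq:ustmarkov}, Kirchhoff's formula (\cref{thm:kirchhoffinfinite}) and Rayleigh monotonicity, conditionally on this exploration and on the two paths being disjoint one has $\PP(e\in\F\mid\cdot)\le \reff^W(\eta^x\lr\eta^y;\,G-K_c)$. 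The circle packing then enters through a concrete construction: random circular arcs about the boundary-touching endpoint of $\eta^x$, at $\Theta(\log(1/\eps))$ disjoint scales (\cref{lem:sepannuli}), give random sets of bounded energy per scale (\cref{lem:energyest}) connecting $\eta^x$ to $\eta^y\cup\{\infty\}$ inside the unexplored region, so by the method of random sets (\cref{lem:randomsets}) and \cref{lem:trianglewired} the conditional probability is $O(1/\log(1/\eps))$, and letting $\eps\to 0$ finishes the proof. Bounding a single conditional edge probability, rather than the whole past, is exactly what sidesteps the divergence that blocks your route; to salvage your approach you would need to replace the ``multiscale decorrelation'' step by an actual argument of comparable strength.
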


The rest of this chapter is organized as follows. In \cref{sec:basicust} we discuss two basic properties of USTs on finite graphs. Namely, Kirchhoff's effective resistance formula mentioned earlier and the spatial Markov property for the UST. In \cref{sec:ustlimits} we prove Pemantle's \cite{Pem91} result \eqref{eq:fusfexists} showing that $\FUSF_G$ exists. We will also define there the \emph{wired uniform spanning forest} which is obtained by taking a limit of the UST probability measures over exhaustions with wired boundary. We will also need some fairly basic notions of electric networks on infinite graphs that we have not discussed in \cref{sec:infinitegraphs}. Next, in \cref{sec:planarduality} we will restrict to the setting of planar graph and employ planar duality to obtain an extremely useful connection between the free and wired spanning forests which will be useful later. Using these tools we have collected we will prove \cref{thm:ustconnected} in \cref{sec:mainproof}.


\section{Basic properties of the UST}\label{sec:basicust}

\subsection{Kirchhoff's effective resistance formula} 
\index{Kirchhof's effective resistance formula}
\begin{theorem}[Kirchoff \cite{kirchhoff1847ueber}]\label{thm:kirchhoff} Let $G$ be a finite connected graph and denote by $\T$ a uniformly drawn spanning tree of $G$. Then for any edge $e=(x,y)$ we have
$$ \PP(e \in \T) = \reff(x \lr y) \, .$$
\end{theorem}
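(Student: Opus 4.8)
The plan is to derive \cref{thm:kirchhoff} from a \emph{random-walk sampling algorithm} for the uniform spanning tree. I would first establish one of the two classical such algorithms: either Wilson's algorithm (the UST is the union of successive loop-erased random walks) or the Aldous--Broder algorithm (running the simple random walk from a fixed vertex and keeping, for every other vertex, the edge through which that vertex was first entered, produces a uniform spanning tree). Proving one of these is the main piece of machinery, and is the step I expect to be the real obstacle; everything after it is a one-line computation using \cref{effresprob}. Since Wilson's algorithm is in any case the natural tool for the spatial Markov property treated in the same section, I would develop that one; but for concreteness I describe here the Aldous--Broder route, which gives the quickest path to the formula.

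So, run the simple random walk $(X_t)_{t\ge 0}$ with $X_0=y$, and let $\T$ be the tree of first-entrance edges; by the Aldous--Broder theorem $\T$ is a uniform spanning tree of $G$. For the edge $e=\{x,y\}$ we have $e\in\T$ if and only if $x$ is first entered directly from $y$, i.e.\ if and only if $X_{\tau_x-1}=y$, where $\tau_x$ is the hitting time of $x$ (note $\tau_x\ge 1$ since $x\ne y$). To compute $\PP_y(X_{\tau_x-1}=y)$ I would condition successively, via the strong Markov property, on the visits of the walk to $y$ that occur before time $\tau_x$. At each such visit exactly one of three disjoint events occurs: the next step is to $x$ (probability $1/\deg(y)$, and then indeed $X_{\tau_x-1}=y$); or the walk makes an excursion from $y$ returning to $y$ before reaching $x$ (probability $\PP_y(\tau_y^+<\tau_x)$, and we recurse); or the walk leaves $y$ and reaches $x$ before returning to $y$ (and then $X_{\tau_x-1}\ne y$). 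Summing the resulting geometric series,
\[
\PP(e\in\T)=\PP_y(X_{\tau_x-1}=y)=\frac{1/\deg(y)}{1-\PP_y(\tau_y^+<\tau_x)}=\frac{1}{\deg(y)\,\PP_y(\tau_x<\tau_y^+)}.
\]
Since all conductances are $1$ we have $\pi_y=\deg(y)$, so by \cref{effresprob} the right-hand side equals $\reff(y\lr x)=\reff(x\lr y)$, which is the claim.

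A second, equally classical route avoids the samplers entirely: the contraction bijection — spanning trees of $G$ containing $e$ correspond to spanning trees of the (multi)graph $G/e$ obtained by identifying $x$ with $y$ — gives $\PP(e\in\T)=\tau(G/e)/\tau(G)$, where $\tau(\cdot)$ counts spanning trees, and one then proves $\tau(G/e)/\tau(G)=\reff(x\lr y)$ via the Matrix--Tree theorem together with Cramer's rule, or, staying closer to the toolkit already built here, by induction on $|E(G)|$ using deletion--contraction for $\tau$ alongside the parallel law applied to the resistance-$1$ edge $e$. Whichever route one takes, the final identification of the ratio with the effective resistance is immediate; the substantive work — and the obstacle — is establishing the underlying sampling theorem (or the Matrix--Tree identity), and I would spend the bulk of the argument there.
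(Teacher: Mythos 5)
Your reduction is correct, but it takes a genuinely different route from the paper, and it leaves the main ingredient unproved. Granting the Aldous--Broder theorem, rooting the walk at $y$ indeed makes $\{e\in\T\}=\{X_{\tau_x-1}=y\}$, and your strong-Markov decomposition gives $\PP(e\in\T)=\bigl(\deg(y)\,\PP_y(\tau_x<\tau_y^+)\bigr)^{-1}$, which equals $\reff(y\lr x)=\reff(x\lr y)$ by \cref{effresprob} (together with the symmetry of effective resistance, e.g.\ via Thomson's principle). The paper, by contrast, never invokes a sampling algorithm. It observes that $\T$ contains a unique path from $x$ to $y$, so by \cref{claim:randompath} this random path induces a unit flow $\theta$ from $x$ to $y$, with $\theta(\vec{e})$ equal to the probability that the path traverses $\vec{e}$ minus the probability that it traverses the reversal. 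The heart of the argument is a combinatorial bijection showing that $\theta$ satisfies the cycle law: for a directed cycle and a tree $t$ whose $x$--$y$ path uses a cycle edge forwards, delete that edge and insert the next cycle edge joining the two resulting components; this pairs forward traversals with backward ones. By \cref{voltage:of:flow} and \cref{claim:cycleflows}, $\theta$ is therefore the unit current flow from $x$ to $y$, and evaluating at $e$ finishes the proof: $\theta(\vec{e})=\PP(e\in\T)$ (if $e\in\T$ the tree path is the edge $e$ itself, and a simple path from $x$ to $y$ never traverses $(y,x)$), while the unit current flow assigns $e$ the value $c_e\bigl(h(y)-h(x)\bigr)=\reff(x\lr y)$.

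As for what each approach buys: yours is a one-line computation once a UST sampler is available, and Wilson/Aldous--Broder are indeed indispensable tools elsewhere; but proving either of them (or the Matrix--Tree theorem) is considerably more work than the paper's entire argument, and within these self-contained notes that is precisely the cost being avoided. So, as written, your proposal is a correct proof only modulo a substantial black box, which you yourself identify as the obstacle; a complete argument using only the electrical toolkit already developed in \cref{chp:electric} is the tree-path/cycle-law argument sketched above.
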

\begin{proof}
Let $a\neq z$ be two distinct vertices of $G$ (later we will take $a=x$ and $z=y$) and note that any spanning tree of $G$ contains precisely one path connecting $a$ and $z$. Thus, a uniformly drawn spanning tree induces a random path from $a$ to $z$. By \cref{claim:randompath} we obtain a unit flow $\theta$ from $a$ to $z$. To be concrete, for each edge $e$ we have that $\theta(\vec{e})$ is the probability that the random path from $a$ to $z$ traverses $\vec{e}$ minus the probability that it traverses $\cev{e}$. We will now show that $\theta$ satisfies the cycle law (see \cref{claim:cyclelaw}), so it is in fact the unit current flow (see \cref{def:unitcurrentflow}).

Let $\vec{e_1},\ldots, \vec{e_m}$ be a directed cycle in $G$. Our goal is to show that
\begin{equation}\label{eq:usttoshow}
\sum _{i=1}^m \theta(\vec{e_i}) = 0 \, .
\end{equation}
Denote by $T(G)$ the set of spanning trees of $G$. Expanding the sum on the left hand side with the definition of $\theta$ we get that it equals
$$ |T(G)|^{-1} \sum_{t \in T(G)} \sum _{i=1}^m f_i^+(t) - |T(G)|^{-1} \sum_{t \in T(G)} \sum_{j=1}^m f_j^-(t) \, ,$$
where $f_i^+(t)$ equals $1$ if the unique path from $a$ to $z$ in $t$ traverses $\vec{e_i}$ and $0$ otherwise, and similarly, $f_j^-(t)$ equals $1$ if this path traverses $\cev{e_j}$ and $0$ otherwise.

For $1 \leq i \leq m$ we denote by $T^+_i$ the set of pairs $(t,i)$ for which $f^+_i(t)=1$. Similarly define $T^-_j$ as the set of pairs $(t,j)$ for which $f^-_j(t) = 1$. To prove \eqref{eq:usttoshow} it suffices to show that 
$$ |\uplus_{i \in \{1,\ldots m\}} T^+_i| = |\uplus_{j \in \{1,\ldots m\}} T^-_j| \, .$$
Let $(t,i)\in T^+_i$. The graph $t \setminus \{e_i\}$ has two connected components. Let $\vec{e_j}$ be the first edge after $\vec{e_i}$, in the order of the cycle $\vec{e_1},\ldots, \vec{e_m}$, that is incident to both connected components and consider the spanning tree  $t' = t \cup \{e_j\} \setminus \{e_i\}$. Note that the unique path in $t'$ from $a$ to $z$ traverses $\cev{e_j}$, so $(t',j) \in T^-_j$. This procedure defines a bijection from $\uplus_i T^+_i$ to $\uplus_j T^-_j$. Indeed, given $(t',j)$ from before, we can erase $e_j$ and go on the cycle in the opposite order until we reach $e_i$ which has to be the first edge incident to the two connected components of $t' \setminus \{e_j\}$. This shows \eqref{eq:usttoshow} and concludes the proof.
\end{proof}

\subsection{Spatial Markov property of the UST}
\index{spatial Markov property}
We would like to study the UST probability measure conditioned on the event that some edges are present in the UST and others not. It turns out that sampling from this conditional distribution amounts to drawing a UST on a modified graph. 

Let $G=(V,E)$ be a finite connected graph and let $A$ and $B$ be two disjoint subsets of edges. We write $(G-B)/A$ for the graph obtained from $G$ by erasing the edges of $B$ and contracting the edges of $A$. We identify the edges of $(G-B)/A$ with the edges $E\setminus B$. Denote by $\T_G$ and $\T_{(G-B)/A}$ a UST on $G$ and $(G-B)/A$, respectively, and assume that 
$$ \PP(A \subset \T_G \, , \, B \cap \T_G = \emptyset) > 0 \, .$$
This assumption is equivalent to $G-B$ being connected and that $A$ contains no cycles.

Then, conditioned on the event that $\T_G$ contains the edges $A$ and does not contain any edge of $B$ the distribution of $\T_G$ is equal to the union of $A$ with $\T_{(G-B)/A}$. In other words, for a set $\sA$ of spanning trees of $G$ we have that
\begin{equation}\label{eq:ustmarkov} \PP(\T_G \in \sA \, \mid \, A \subset \T_G \, , \, B \cap \T_G = \emptyset) = \PP(A \cup \T_{(G-B)/A} \in \sA) \, . \end{equation}
The proof of \eqref{eq:ustmarkov} follows immediately from the observation that the set of spanning trees of $G$ not containing any edge of $B$ is simply the set of spanning trees of $G-B$. Similarly, the set of spanning trees of $G$ containing all the edges of $A$ is simply the union of $A$ to each spanning tree of $G/A$, and \eqref{eq:ustmarkov} follows.

\section{Limits over exhaustions: the free and wired USF} \label{sec:ustlimits}

Let $G$ be an infinite connected graph and let $\{G_n\}$ be a finite exhaustion of it. In this section we will show that \eqref{eq:fusfexists} holds and that the UST measures with \emph{wired} boundary conditions also converge. Let us first explain the latter. Denote by $G_n^*$ the graph obtained from $G$ by identifying the infinite set of vertices $G \setminus G_n$ to a single vertex $z_n$ and erasing the loops at $z_n$ formed by this identification. We say that $\{G_n^*\}$ is a \defn{wired finite exhaustion} of $G$. 

\begin{theorem}[Pemantle \cite{Pem91}] \label{thm:usfsexist} Let $G$ be an infinite connected graph, $\{G_n\}$ a finite exhaustion and $\{G_n^*\}$ the corresponding wired finite exhaustion. Denote by $\T_n$ and $\T_n^*$ USTs on $G_n$ and $G_n^*$, respectively. Then for any two finite disjoint subsets $A, B \subset E(G)$ of edges of $G$ we have that the limits
$$\lim_{n \to \infty} \PP(A \subset \T_n \,\, , \,\, B \cap \T_n = \emptyset) \, ,$$
and 
$$\lim_{n \to \infty} \PP(A \subset \T_n^* \,\, , \,\, B \cap \T_n^* = \emptyset) \, ,$$
exist and do not depend on the exhaustion $\{G_n\}$.
\end{theorem}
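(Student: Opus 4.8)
The plan is to reduce both statements to monotone convergence of effective resistances in contracted subnetworks, and then to obtain exhaustion-independence by the same sandwiching argument used in \cref{claim:sandwiching}.

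\textbf{Reduction to single edge-sets.} By inclusion--exclusion,
\[
\PP(A \subset \T_n,\ B \cap \T_n = \emptyset) = \sum_{B' \subseteq B} (-1)^{|B'|}\,\PP(A \cup B' \subseteq \T_n),
\]
and the analogous identity holds for $\T_n^*$. Since the sum is finite, it suffices to show that $\lim_n \PP(F \subseteq \T_n)$ and $\lim_n \PP(F \subseteq \T_n^*)$ exist and are exhaustion-independent for every finite $F \subseteq E(G)$. If $F$ contains a cycle, both probabilities are $0$ for all sufficiently large $n$, so assume $F = \{e_1,\dots,e_k\}$ is a forest. By the chain rule,
\[
\PP(F \subseteq \T_n) = \prod_{i=1}^k \PP\big(e_i \in \T_n \ \big|\ e_1,\dots,e_{i-1} \in \T_n\big),
\]
and by the spatial Markov property \eqref{eq:ustmarkov} (with empty deletion set) the $i$-th conditional probability equals $\PP(e_i \in \T_{G_n / \{e_1,\dots,e_{i-1}\}})$. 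Writing $e_i = \{x_i,y_i\}$, Kirchhoff's formula \cref{thm:kirchhoff} identifies this with $\reff\big(x_i \lr y_i\,;\, G_n / \{e_1,\dots,e_{i-1}\}\big)$, a quantity in $[0,1]$ since $e_i$ is still an edge of the contracted graph. Exactly the same decomposition applies to $\T_n^*$ with $G_n^*$ in place of $G_n$.

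\textbf{Monotonicity factor by factor.} For the free exhaustion, $G_n \subseteq G_{n+1}$, so $G_n / \{e_1,\dots,e_{i-1}\}$ is obtained from $G_{n+1}/\{e_1,\dots,e_{i-1}\}$ by deleting edges (equivalently, raising their resistances to $+\infty$; deletion and contraction of disjoint edge-sets commute). By Rayleigh's monotonicity law \cref{rayleigh}, each $\reff\big(x_i \lr y_i\,;\, G_n / \{e_1,\dots,e_{i-1}\}\big)$ is non-increasing in $n$, hence convergent. For the wired exhaustion, $G_n^*$ is obtained from $G_{n+1}^*$ by gluing all vertices of $(G_{n+1}\setminus G_n)\cup\{z_{n+1}\}$ into the single vertex $z_n$ and erasing the resulting loops; this gluing commutes with contracting $\{e_1,\dots,e_{i-1}\}$, so $G_n^* / \{e_1,\dots,e_{i-1}\}$ is a gluing of $G_{n+1}^* / \{e_1,\dots,e_{i-1}\}$. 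By \cref{gluing:monotone}, each factor $\reff\big(x_i \lr y_i\,;\, G_n^* / \{e_1,\dots,e_{i-1}\}\big)$ is non-decreasing in $n$, bounded above by $1$, hence convergent. In both cases a finite product of convergent sequences converges, so $\lim_n \PP(F\subseteq \T_n)$ and $\lim_n\PP(F\subseteq\T_n^*)$ exist; by the inclusion--exclusion identity, so do the limits in \eqref{eq:fusfexists} and its wired analogue.

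\textbf{Independence of the exhaustion.} Given two finite exhaustions $\{G_n\}$ and $\{G'_n\}$, choose subsequences with $G_{i_1}\subseteq G'_{j_1}\subseteq G_{i_2}\subseteq G'_{j_2}\subseteq\cdots$. The interleaved sequence is itself a finite exhaustion, so by the previous step the UST-probabilities converge along it; in particular its sub-subsequences indexed by $\{i_k\}$ and by $\{j_k\}$ share a common limit. Since $\{\PP(F\subseteq \T_{G_n})\}_n$ is monotone by the previous step, it converges, and its subsequence along $\{i_k\}$ has the same limit as the full sequence; likewise for $\{G'_n\}$ along $\{j_k\}$. Hence both exhaustions give the same limit for every finite $F$, and therefore for every pair $A,B$. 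The wired case is identical, using that $G_{i_1}\subseteq G'_{j_1}$ makes $G_{i_1}^*$ a gluing of $G_{j_1}'^*$ and invoking \cref{gluing:monotone}.

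\textbf{Main obstacle.} The only real content beyond bookkeeping is the observation that the Markov property turns $\PP(F\subseteq\T_n)$ into a product of effective resistances of contracted (respectively contracted-and-wired) networks, and that these networks are nested under the right operation --- edge-deletion in the free case, vertex-gluing in the wired case --- so that \cref{rayleigh} and \cref{gluing:monotone} apply factor by factor. The mild points requiring a line of care are: that contraction of a forest commutes with edge-deletion and with vertex-gluing (immediate from the definitions); that one should dispose at the outset of the degenerate cases where $F$ contains a cycle or where some $e_i$ becomes a loop after contraction, neither of which disturbs monotonicity; and that the inclusion--exclusion identity passes to the limit because it is a finite sum. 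Everything else is a direct appeal to results already established.
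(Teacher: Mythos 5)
Your proof is correct and follows essentially the same route as the paper: inclusion--exclusion, the chain rule with the spatial Markov property \eqref{eq:ustmarkov} and Kirchhoff's formula (\cref{thm:kirchhoff}) to write $\PP(F\subseteq\T_n)$ as a product of effective resistances in contracted networks, monotone convergence of each factor via \cref{rayleigh} (free) and \cref{gluing:monotone} (wired), and sandwiching of interleaved exhaustions. The only difference is organizational: the paper observes that $\{G_n/\{e_1,\dots,e_{i-1}\}\}$ is an exhaustion of $G/\{e_1,\dots,e_{i-1}\}$ and cites \cref{claim:infinitereff} wholesale, whereas you reprove that claim's monotonicity and exhaustion-independence inline, factor by factor.
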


We postpone the proof for a little longer and first discuss some of its implications. As mentioned earlier, \cref{thm:usfsexist} together with Kolmogorov's extension theorem \cite[Theorem A.3.1]{Durrett} implies that there exists two probability measures $\mu^F$ and $\mu^W$ on infinite subsets of the edges of $E$ arising as the unique limits of the laws $\T_n$ and $\T_n^*$. That is, the samples $\F^f$ and $\F^w$ of $\mu^F$ and $\mu^W$ satisfy
$$ \PP( A \subset \F^f \, \, , \,\, B \cap \F^f = \emptyset ) = \lim_{n \to \infty} \PP(A \subset \T_n \,\, , \,\, B \cap \T_n = \emptyset) \, ,$$
and
$$ \PP( A \subset \F^w \, \, , \,\, B \cap \F^w = \emptyset ) = \lim_{n \to \infty} \PP(A \subset \T_n^* \,\, , \,\, B \cap \T_n^* = \emptyset) \, .$$

We call $\mu^F$ and $\mu^W$ the \defn{free uniform spanning forest} and the \defn{wired uniform spanning forest} and denote them by $\FUSF_G$ and $\WUSF_G$ respectively. We have seen earlier (one paragraph below \eqref{eq:fusfexists}) that both $\F^f$ and $\F^w$ are almost surely spanning forests, that is, spanning graphs of $G$ with no cycles and that every connected component of them is infinite.
Thus $\mu^F$ and $\mu^W$ are supported on what are known as \defn{essential spanning forests} of $G$, that is, spanning forests of $G$ in which every component is infinite. 

Are the probability measures $\FUSF_G$ and $\WUSF_G$ equal? Not necessarily. It is easy to see that on the infinite path $\ZZ$ the $\WUSF_\ZZ$ and the $\FUSF_\ZZ$ are equal and are the entire graph $\ZZ$ with probability $1$. Conversely, it is not very difficult to see that they are different on a $3$-regular tree, see the exercise below \cref{thm:kirchhoffinfinite}. Pemantle \cite{Pem91} has shown that $\FUSF_{\ZZ^d} = \WUSF_{\ZZ^d}$ for any $d\geq 1$ and a very useful criterion for determining whether there is equality was developed in \cite{BLPS}. We refer the reader to \cite[Chapter 10]{LyonsPeres} for further reading.

Before presenting the proof of \cref{thm:usfsexist} let us make a few short observations regarding the effective resistance between two vertices in an infinite graph, extending what we proved in \cref{sec:infinitegraphs}.

\subsubsection{Effective resistance in infinite networks}

Let $G$ be an infinite connected graph. We have seen in \cref{sec:infinitegraphs} that for any vertex $v$ the electric resistance $\reff(v \lr \infty)$ from $v$ to $\infty$ is well defined as the limit of $\reff(a \lr z_n ; G_n^*)$ where $\{G_n^*\}$ is a wired finite exhaustion and $z_n$ is the vertex resulting in the identification of the vertices $G \setminus G_n$. 

To define the electric resistance between two vertices $v,u$ of an infinite graph, one has to take exhaustions and specify boundary conditions since the limits may differ depending on them. 

\begin{claim}\label{claim:infinitereff} Let $G$ be an infinite connected graph, $\{G_n\}$ a finite exhaustion and $\{G_n^*\}$ a wired finite exhaustion. Then for any two vertices $u,v$ of $G$ we have that the limits
$$ \reff^F(u \lr v; G) := \lim_n \reff(u \lr v ; G_n) \, ,$$
and 
$$ \reff^W(u \lr v; G) := \lim_n \reff(u \lr v ; G_n^*) \, ,$$
exist and do not depend on the exhaustion $\{G_n\}$.
\end{claim}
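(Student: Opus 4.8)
The plan is to prove \cref{claim:infinitereff} by the same monotonicity-and-sandwiching strategy used to establish \cref{eq:resistancetoinfinity} and \cref{claim:sandwiching}, but now for the resistance between two fixed vertices $u,v$ rather than from a vertex to $\infty$. The first step is to fix $u,v$ and take an exhaustion $\{G_n\}$ with $u,v \in G_1$. For the \emph{free} side, I would observe that $G_n$ is obtained from $G_{n+1}$ by deleting all edges of $G_{n+1}$ not present in $G_n$ (equivalently, by a combination of edge deletions). Deleting edges can only increase effective resistance by Rayleigh's monotonicity law (\cref{rayleigh}), so $\reff(u \lr v; G_n)$ is monotone non-decreasing in $n$. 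Since it is a bounded-below monotone sequence (it is always $\geq 0$, and in fact $\geq 1/\min(\deg u, \deg v)$ is not needed — monotone and bounded below suffices once we note it need not be bounded above, but a monotone sequence in $[0,\infty]$ always has a limit in $[0,\infty]$), the limit $\reff^F(u \lr v; G)$ exists in $[0,\infty]$.

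For the \emph{wired} side, the key observation is that $G_n^*$ is obtained from $G_{n+1}^*$ by gluing: identifying the vertices of $G_{n+1} \setminus G_n$ together with $z_{n+1}$ into the single vertex $z_n$ (and discarding the resulting loops). By \cref{gluing:monotone}, gluing vertices cannot increase the effective resistance between $u$ and $v$, so $\reff(u \lr v; G_n^*)$ is monotone non-increasing in $n$; being bounded below by $0$, it converges to a limit $\reff^W(u \lr v; G)$ in $[0,\infty)$.

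The final step is independence of the exhaustion, which I would handle exactly as in the proof of \cref{claim:sandwiching}: given two exhaustions $\{G_n\}$ and $\{G'_n\}$, pass to interleaved subsequences $G_{i_1} \subseteq G'_{j_1} \subseteq G_{i_2} \subseteq \cdots$. This interleaved sequence is itself an exhaustion of $G$, so by the monotonicity just established, the corresponding sequence of (free, resp. wired) effective resistances converges; but its limit must coincide with the limits along the two subsequences $\{G_{i_k}\}$ and $\{G'_{j_k}\}$, which in turn — being monotone subsequences of the monotone sequences for the original exhaustions — share their limits. Hence $\reff^F$ and $\reff^W$ do not depend on the choice of exhaustion. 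There is no real obstacle here beyond making sure the monotonicity directions are stated correctly (free: deletions, increasing; wired: gluings, decreasing) and that one allows the value $+\infty$ for $\reff^F$; the argument is a direct adaptation of the infinite-resistance-to-$\infty$ case already in the text.
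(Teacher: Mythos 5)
Your overall strategy is exactly the paper's (monotonicity of the finite resistances plus the interleaving/sandwiching argument for exhaustion-independence), but you have inverted both monotonicity directions, and the conclusions you draw contradict your own correct premises. For the free side: since $G_n \subset G_{n+1}$, passing from $G_n$ to $G_{n+1}$ \emph{adds} edges, so by \cref{rayleigh} we get $\reff(u \lr v; G_{n+1}) \le \reff(u \lr v; G_n)$ — the sequence is non-\emph{increasing} in $n$, not non-decreasing as you claim (indeed, from your own premise ``deleting edges can only increase resistance'' the earlier, smaller graphs have the \emph{larger} resistance). It is non-negative and eventually finite (once $u$ and $v$ lie in a common component of $G_n$), hence converges; there is no need to allow $+\infty$ in the limit. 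Dually, $G_n^*$ is obtained from $G_{n+1}^*$ by gluing, so by \cref{gluing:monotone} $\reff(u \lr v; G_n^*) \le \reff(u \lr v; G_{n+1}^*)$ — the wired sequence is non-\emph{decreasing} in $n$. Consequently your appeal to ``bounded below by $0$'' does nothing for convergence to a finite value: what is needed is an upper bound, and the paper supplies one, e.g.\ $\reff(u \lr v; G_n^*) \le d_G(u,v)$, since keeping only a fixed $u$--$v$ path of unit resistors and applying \cref{rayleigh} and the series law bounds the resistance by the path length (alternatively, the wired resistance is at most the free one by \cref{gluing:monotone}). Your closing parenthetical, ``free: deletions, increasing; wired: gluings, decreasing,'' codifies exactly the backwards directions.

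Once the directions are corrected (free: non-increasing; wired: non-decreasing and bounded above), the interleaving argument you describe — choosing $G_{i_1} \subseteq G'_{j_1} \subseteq G_{i_2} \subseteq \cdots$ and identifying the limits along the subsequences with the limits of the original monotone sequences — is precisely the sandwiching used in \cref{claim:sandwiching} and in the paper's proof of this claim, and it goes through verbatim for both the free and wired cases.
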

\begin{proof} For the first limit we note that by Rayleigh's monotonicity (\cref{rayleigh}), the sequence $\reff(u \lr v ; G_n)$ is non-increasing and non-negative since $G_n \subset G_{n+1}$, hence it converges. A sandwiching argument as in the proof of \cref{claim:infinitereff} shows that the limit does not depend on the exhaustion $\{G_n\}$.   

For the second limit, since $G_{n}$ can be obtained by gluing vertices of $G_{n+1}$ we deduce by \cref{gluing:monotone} that the sequence $\reff(u \lr v ; G_n^*)$ is non-decreasing and bounded (by the graph distance in $G$ between $u$ and $v$ for instance), hence it converges. The limit does not depend on the exhaustion by an identical sandwiching argument. 
\end{proof}

We call $\reff^F(u \lr v; G)$ and $\reff^W(u \lr v; G)$ the \defn{free effective resistance} and \defn{wired effective resistance} between $u$ and $v$ respectively. 

\subsection{Proof of \cref{thm:usfsexist}}

We will prove the assertion regarding the first limit; the second is almost identical. Write $A=\{e_1, \ldots, e_k\}$ and $e_i=(x_i,y_i)$ for each $1 \leq i \leq k$. Assume without loss of generality that $G_n$ contains $A$  for all $n$. As before, denote by $\T_n$ a UST of $G_n$. By \eqref{eq:ustmarkov} and \cref{thm:kirchhoff} we have that
\begin{eqnarray*} \PP(A \subset \T_n) &=& \prod_{i=1}^k \PP(e_i \in \T_n \, \mid \, e_j\in \T_n \quad \forall \, j < i ) 
= \prod_{i=1}^k \reff(x_i \lr y_i ; G_n / \{e_1,\ldots, e_{i-1}\}) \, .
\end{eqnarray*}
Note that $\big \{G_n / \{e_1, \ldots, e_{i-1}\} \big\}$ is a finite exhaustion of the infinite graph $G / \{e_1, \ldots, e_{i-1}\}$ and so by \cref{claim:infinitereff} we obtain that the limit
$$ \lim_n \PP(A \subset \T_n) = \prod_{i=1}^k \reff(x_i \lr y_i ; G / \{e_1,\ldots, e_{i-1}\}) \, ,$$
exists and does not depend on the exhaustion. 

Since we know this limit exists for all finite edge sets $A$, it follows by the inclusion-exclusion formula that $\PP(A \subset \T_n, B \cap \T_n = \emptyset)$ converges for any finite sets $A, B$, concluding our proof. \qed

\medskip


It is now quite pleasant to see that the symbiotic relationship between electric network and UST theories continues to flourish in the infinite setting. Indeed, by combining \cref{thm:usfsexist}, \cref{thm:kirchhoff} and \cref{claim:infinitereff} we obtain the extension of Kirchhoff's formula for infinite connected graphs.

\begin{theorem}\label{thm:kirchhoffinfinite} Let $G$ be an infinite connected graph and denote by $\F^F$ and $\F^W$ a sample from $\FUSF_G$ and $\WUSF_G$ respectively. Then for any edge $e=(x,y)$ of $G$ we have that 
$$ \PP( e \in \F^F) = \reff^F(x \lr y; G) \, ,$$
and 
$$ \PP( e \in \F^W) = \reff^W(x \lr y; G) \, .$$
\end{theorem}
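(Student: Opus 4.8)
The plan is to simply chain together the three ingredients mentioned right before the statement, with a tiny bit of bookkeeping. Fix the edge $e=(x,y)$, a finite exhaustion $\{G_n\}$ of $G$, and the associated wired exhaustion $\{G_n^*\}$. Discard the finitely many $n$ for which $e\notin E(G_n)$; for all remaining $n$ the edge $e$ is a genuine edge of $G_n$, and moreover, since both endpoints $x,y$ lie in $G_n$, they are distinct non-identified vertices of $G_n^*$ and $e$ is not a loop in $G_n^*$. So we are always in the setting where Kirchhoff's formula (\cref{thm:kirchhoff}) applies, both to the finite graph $G_n$ and to the finite multigraph $G_n^*$ — note that neither \cref{thm:kirchhoff} nor its proof via random paths and the cycle law uses simplicity of the graph, so it is valid verbatim for $G_n^*$.

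For the free uniform spanning forest, apply \cref{thm:usfsexist} with $A=\{e\}$ and $B=\emptyset$: by the definition of $\F^F$ (via the Kolmogorov extension) this gives $\PP(e\in\F^F)=\lim_n \PP(e\in\T_n)$. By \cref{thm:kirchhoff} applied to $G_n$ we have $\PP(e\in\T_n)=\reff(x\lr y;G_n)$, and by \cref{claim:infinitereff} the limit of these quantities is exactly $\reff^F(x\lr y;G)$. Combining the three identities yields $\PP(e\in\F^F)=\reff^F(x\lr y;G)$.

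For the wired uniform spanning forest the argument is word-for-word the same with $\T_n$ replaced by $\T_n^*$ and $G_n$ by $G_n^*$: \cref{thm:usfsexist} gives $\PP(e\in\F^W)=\lim_n \PP(e\in\T_n^*)$, \cref{thm:kirchhoff} on $G_n^*$ gives $\PP(e\in\T_n^*)=\reff(x\lr y;G_n^*)$, and \cref{claim:infinitereff} gives $\lim_n \reff(x\lr y;G_n^*)=\reff^W(x\lr y;G)$. There is essentially no obstacle to overcome — the entire content of the statement is already packaged in \cref{thm:usfsexist} and \cref{claim:infinitereff}, and the only thing needing a word of care is the bookkeeping above (passing to large $n$ so that $e$ is a non-loop edge of both finite graphs, and observing that Kirchhoff's theorem tolerates multi-edges).
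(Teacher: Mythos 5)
Your proposal is correct and follows exactly the route the paper itself takes: the text derives \cref{thm:kirchhoffinfinite} precisely by combining \cref{thm:usfsexist}, \cref{thm:kirchhoff} and \cref{claim:infinitereff}, which is your chain of three identities for both the free and wired cases. Your added bookkeeping (passing to $n$ large enough that $e$ lies in $G_n$, and noting that Kirchhoff's formula and its proof tolerate the multi-edges created in $G_n^*$) is a reasonable and correct elaboration of details the paper leaves implicit.
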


\noindent {\bf Exercise:} Use this to show that on the $3$-regular tree $\mathbb{T}_3$ the probability measures $\FUSF_{\mathbb{T}_3}$ and the $\WUSF_{\mathbb{T}_3}$ are distinct.

\section{Planar duality}\label{sec:planarduality}

When $G$ is planar there is a very useful relationship between $\FUSF_G$ and $\WUSF_G$. Recall that given a planar map $G$, the \defn{dual graph} of $G$ is the graph $G^\dagger$ whose vertex set is the set of faces of $G$ and two faces are adjacent in $G^\dagger$ if they share an edge in $G$. Thus, $G^\dagger$ is locally-finite if and only if every face of $G$ has finitely many edges. To each edge $e \in E(G)$ corresponds a dual edge $e^\dagger \in E(G^\dagger)$ which is the pair of faces of $G$ incident to $e$; this is clearly a one-to-one correspondence. 

When $G$ is a finite planar graph, this correspondence induces a one-to-one correspondence between the set of spanning trees of $G$ and the set of spanning trees of $G^\dagger$. Given a spanning tree of $t$ of $G$ we slightly abuse the notation and write $t^\dagger$ for the set of edges $\{ e^\dagger : e \in G \setminus t \}$, that is
$$ e \in t \iff e^\dagger \not \in t^\dagger \, .$$
If $t^\dagger$ has a cycle, then $t$ is disconnected. Furthermore, if there is a vertex $G^\dagger$ not incident to any edge of $t^\dagger$, then all the edges of the corresponding face in $G$ are present in $t$ hence $t$ contains a cycle. We deduce that if $t$ is a spanning tree of $G$, then $t^\dagger$ is a spanning tree of $G^\dagger$. The converse also holds since $(t^\dagger)^\dagger = t$ and $(G^\dagger)^\dagger=G$.

Now assume that $G$ is an infinite planar maps such that $G^\dagger$ is locally finite. Given an essential spanning forest $\F$ of $G$ we similarly define $\F^\dagger$ as the set of edges $\{e ^\dagger : e \in G \setminus \F\}$. A similar argument shows that $\F^\dagger$ is an essential spanning forest of $G^\dagger$. This raises the natural question: when $\F$ is a sample of $\FUSF_G$, what is the law of $\F^\dagger$? The answer in general is an object known as the \emph{transboundary uniform spanning forest} \cite[Proposition 5.1]{HNPlanarUST}. However, when $G$ is additionally assumed to be one-ended (in particular, in the setting of \cref{thm:ustconnected}) it turns out that $\F^\dagger$ is distributed as $\WUSF_{G^\dagger}$:

\begin{proposition} \label{prop:ustduality} Let $G$ be an infinite, one-ended planar map with a locally finite dual $G^\dagger$ and let $\F$ be a sample of $\FUSF_G$. Then the law of $\F^\dagger$ is $\WUSF_{G^\dagger}$.
\end{proposition}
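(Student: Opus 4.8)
The plan is to realize $\F^\dagger$ as a weak limit of uniform spanning trees over a \emph{wired} exhaustion of $G^\dagger$, by pairing up a free (subgraph) exhaustion of $G$ with a wired exhaustion of $G^\dagger$ through planar duality, and then invoking the classical spanning-tree/cotree bijection for finite plane maps. First I would fix a drawing of $G$ in the plane and construct, using that $G$ is one-ended and that $G^\dagger$ is connected and locally finite, an increasing sequence $\Phi_1\subseteq\Phi_2\subseteq\cdots$ of finite sets of faces of $G$ with $\bigcup_n\Phi_n=V(G^\dagger)$ such that, writing $D_n=\bigcup_{f\in\Phi_n}\overline f$ for the union of the corresponding closed faces, each $D_n$ is a closed topological disk. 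One obtains such a sequence by starting from a $G^\dagger$-connected vertex-exhaustion of $V(G^\dagger)$, noting that each bounded complementary region of the corresponding union of closed faces is covered by finitely many faces of $G$ (its closure is compact and the drawing is proper), adjoining those faces, and iterating; one then checks the resulting connected, hole-free union of faces is a disk. I then let $G_n$ be the submap of $G$ spanned by all edges incident to a face of $\Phi_n$ together with their endpoints (the $1$-skeleton of $D_n$). Then $\{G_n\}$ is a finite exhaustion of $G$ in the sense of \cref{sec:ustlimits}: each $G_n$ is connected; every edge of $G$ bounds two faces and hence lies in $G_n$ for $n$ large; the bounded complementary faces of $G_n$ are exactly the faces of $\Phi_n$; and $G_n$ has a single unbounded face $f_n=\RR^2\setminus D_n$ which contains every vertex and edge of $G$ outside $G_n$.

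Next I would identify the dual of $G_n$. Viewing $G_n$ as drawn on the sphere, its dual $G_n^\dagger$ has vertex set $\Phi_n\cup\{f_n\}$ and edge set $\{e^\dagger:e\in E(G_n)\}$. For $e\in E(G_n)$: if both faces of $G$ bounding $e$ are in $\Phi_n$, then $e^\dagger$ joins the same two vertices in $G_n^\dagger$ as in $G^\dagger$; if only one bounding face is in $\Phi_n$, then $e^\dagger$ joins that face to $f_n$ in $G_n^\dagger$ (the other face having merged into $f_n$); and no other edges occur. Hence $G_n^\dagger$ is precisely the graph obtained from $G^\dagger$ by identifying all vertices of $G^\dagger$ outside $\Phi_n$ to the single vertex $f_n$ and deleting the loops so created — that is, in the notation of \cref{sec:ustlimits}, $G_n^\dagger=(G^\dagger)_n^\ast$ — and since $\Phi_n\nearrow V(G^\dagger)$, the sequence $\{(G^\dagger)_n^\ast\}$ is a wired finite exhaustion of $G^\dagger$.

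Finally, let $\T_n$ be a uniform spanning tree of $G_n$. By the spanning-tree/cotree correspondence for finite plane maps recalled just before the statement, $t\mapsto t^\dagger=\{e^\dagger:e\in E(G_n)\setminus t\}$ is a bijection between the spanning trees of $G_n$ and those of $G_n^\dagger=(G^\dagger)_n^\ast$, so $\T_n^\dagger$ is a uniform spanning tree of $(G^\dagger)_n^\ast$. By \cref{thm:usfsexist}, $\T_n\Rightarrow\F$ weakly in $\{0,1\}^{E(G)}$; composing with the homeomorphism $\omega\mapsto(e^\dagger\mapsto 1-\omega(e))$ of $\{0,1\}^{E(G)}$ onto $\{0,1\}^{E(G^\dagger)}$ gives $\T_n^\dagger\Rightarrow\F^\dagger$. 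On the other hand, by \cref{thm:usfsexist} again the uniform spanning trees of the wired exhaustion $(G^\dagger)_n^\ast$ converge weakly to $\WUSF_{G^\dagger}$, and $\T_n^\dagger$ is exactly such a tree for every $n$; uniqueness of the weak limit then yields that $\F^\dagger$ has law $\WUSF_{G^\dagger}$, as claimed.

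I expect the main obstacle to be the first step: the planar-topological work needed to produce the disk exhaustion $\{D_n\}$ from one-endedness of $G$ and local finiteness of $G^\dagger$ — covering bounded complementary regions by finitely many faces, verifying the filled region is a genuine topological disk, and confirming that $G\setminus G_n$ sits entirely in the single outer face $f_n$ — which is precisely where both hypotheses are used essentially. Once this bookkeeping is in place, the identification $G_n^\dagger=(G^\dagger)_n^\ast$, the finite duality bijection, and the definitions of the free and wired uniform spanning forests make the remainder essentially mechanical. (A minor preliminary point to dispatch: $G^\dagger$ is connected, so $\WUSF_{G^\dagger}$ is well defined; and one should note the argument handles loops and parallel edges automatically, since the finite duality bijection does.)
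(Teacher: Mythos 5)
Your proof is correct and follows essentially the same route as the paper: pair a finite exhaustion of $G$ with the corresponding wired exhaustion of $G^\dagger$ via planar duality, apply the finite spanning-tree/cotree bijection, and pass to the limit using \cref{thm:usfsexist}. The paper's own proof is much terser (it takes an exhaustion $G_n$ of $G$ and simply asserts that $G_n^\dagger$ is a wired exhaustion of $G^\dagger$), so your explicit construction of a hole-free disk exhaustion merely supplies the topological bookkeeping the paper leaves implicit.
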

\begin{proof} Let $G_n$ be a finite exhaustion of $G$. Let $F_n$ be a finite exhaustion $G^\dagger$ defined by letting $f\in F_n$ if and only if every vertex of $f$ in $G$ belongs to $G_n$. Then $G_n^\dagger$ is obtained from $G^\dagger$ by contracting $G^\dagger \setminus F_n$ into a single vertex which corresponds to the outer face of $G_n$. Thus, $G_n^\dagger$ is a wired exhaustion of $G^\dagger$ and the statement follows.
\end{proof}

We use to obtain an important criterion of connectivity of $\FUSF_G$ in the planar case.

\begin{proposition} \label{prop:ustduality2} Let $G$ be an infinite, one-ended planar map with a locally finite dual $G^\dagger$. Then a sample of $\FUSF_G$ is connected almost surely if and only if each component of a sample of $\WUSF_G$ is one-ended almost surely.
\end{proposition}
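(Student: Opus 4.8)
The plan is to reduce the statement to a deterministic dictionary about plane duality and then invoke \cref{prop:ustduality}. That proposition states that if $\F$ is a sample of $\FUSF_G$, then its complementary dual $\F^\dagger=\{e^\dagger:e\notin\F\}$ is a sample of $\WUSF_{G^\dagger}$; moreover, as recorded just before that proposition, both $\F$ and $\F^\dagger$ are almost surely essential spanning forests — spanning, acyclic, with every component infinite. I will fix a proper drawing of $G$ with no accumulation points (obtained by triangulating $G$, drawing the triangulation as in \cref{disk:one:ended}, and erasing the added edges), together with the induced drawing of $G^\dagger$ in which each $e^\dagger$ crosses $e$ once and no other primal edge. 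Against this backdrop I will prove the \emph{deterministic} equivalence: for every essential spanning forest $\F$ of $G$, the forest $\F$ is connected if and only if every component of $\F^\dagger$ is one-ended. Both conditions are measurable events of $\F$, so applying the equivalence on each outcome of the full-probability event on which $\F,\F^\dagger$ are essential forests yields at once that $\FUSF_G$ is a.s.\ connected if and only if every component of $\WUSF_{G^\dagger}$ is a.s.\ one-ended, which is the asserted criterion — the pertinent wired spanning forest being, as \cref{prop:ustduality} dictates, that of the dual map $G^\dagger$.

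For the deterministic equivalence I will use that an infinite tree is one-ended exactly when it has no bi-infinite simple path. The forward implication (``$\F$ connected $\Rightarrow$ every component of $\F^\dagger$ one-ended'') I prove by contraposition: if some component of $\F^\dagger$ contains a bi-infinite simple path, that is a bi-infinite sequence of faces $(\dots,f_{-1},f_0,f_1,\dots)$ of $G$ with consecutive faces sharing a primal edge $e_i\notin\F$, then by local finiteness and the absence of accumulation points the union of the arcs of this dual path is a properly embedded copy of $\RR$; it separates the plane into two regions, both meeting infinitely many vertices of $G$, and the primal edges meeting it are precisely $\{e_i\}_{i\in\ZZ}$. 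Since $\F$ uses none of the $e_i$ it carries no edge between the two sides, while it spans every vertex of $G$, so $\F$ is disconnected.

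The converse (``$\F$ disconnected $\Rightarrow$ some component of $\F^\dagger$ has at least two ends'') is the technical heart, and the step I expect to be the main obstacle. The plan there is to invoke the classical plane duality between edge cuts of $G$ and cycle-sums in $G^\dagger$: a disconnection of $\F$ furnishes a vertex set $W$ that is a union of components of $\F$, with $W$ and $V\setminus W$ both infinite and the edge cut $\partial_E W$ disjoint from $\F$; one-endedness of $G$ forces $\partial_E W$ to be infinite (a finite edge cut cannot separate two infinite parts of a one-ended graph), and its dual $(\partial_E W)^\dagger\subseteq\F^\dagger$, being acyclic (because $\F^\dagger$ is a forest) and the full boundary of an infinite interface, is a disjoint union of bi-infinite simple paths. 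The points requiring care are that $V\setminus W$ need not be connected — so one argues interface by interface between $W$ and a neighbouring component of $\F$ — and that no part of $(\partial_E W)^\dagger$ may consist of finite paths alone, which is where one again uses that $\F$ and $\F^\dagger$ are essential (no finite components), together with the Jordan curve theorem and the no-accumulation property of the drawing. Granting this, $\F^\dagger$ contains a bi-infinite simple path, hence one of its components fails to be one-ended; combined with \cref{prop:ustduality} and the measurability remark above, this completes the proof.
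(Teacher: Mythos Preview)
Your proposal is correct and follows essentially the same approach as the paper: reduce via \cref{prop:ustduality} to the deterministic equivalence that an essential spanning forest $\F$ of $G$ is connected if and only if every component of $\F^\dagger$ is one-ended, with one direction using Jordan separation by a bi-infinite dual path and the other using the dual of the edge-boundary of a component of $\F$. You are also right that the wired forest in the statement should be $\WUSF_{G^\dagger}$ rather than $\WUSF_G$; the paper's own proof and its application in \cref{sec:mainproof} confirm this.
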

\begin{proof} By \cref{prop:ustduality} it suffices to show that if $\F$ is an essential spanning forest of $G$, then $\F$ is connected if and only if every component of $\F^\dagger$ is one-ended. Indeed, if $\F$ is disconnected, then the boundary of a connected component of $\F$ induces an bi-infinite path in $\F^\dagger$. Conversely, if $\F^\dagger$ contains a bi-infinite path, then by the Jordan curve theorem $\F$ is disconnected.
\end{proof}

\section{Connectivity of the free forest} \label{sec:mainproof}

\subsection{Last note on infinite networks} \index{effective resistance in an infinite network}

We make two more useful and natural definitions. Given two disjoint finite sets $A$ and $B$ in an infinite connected graph $G$ we define the free and wired effective resistance between them $\reff^W(A \lr B; G)$ and $\reff^F(A \lr B; G)$ as the free and wired effective resistance between $a$ and $b$ in the graph obtained from $G$ by identifying $A$ and $B$ to the vertices $a$ and $b$. 

Lastly, given a graph $G$, a wired finite exhaustion $\{G_n^*\}$ of $G$ and two disjoint finite sets $A$ and $B$ we define
\begin{equation}\label{def:abinftyreff} \reff(A \lr B \cup \{\infty\}; G) := \lim_{n \to \infty} \reff(A \lr B \cup \{z_n\}; G_n^*) \, ,\end{equation}
where the last limit exists since the sequence is non-increasing from $n$ that is large enough so that $G_n$ contains $A$ and $B$. In the proof of \cref{thm:ustconnected} we will require the following estimate.

\begin{lemma}\label{lem:trianglewired} Let $A$ and $B$ be two finite sets of vertices in an infinite connected graph $G$. Then
\[ \reff^W(A \lr B;\, G) \leq 3\max\left\{\reff \left(A \lr B \cup \{\infty\};\, G \right),\, \reff\left(B \lr A \cup \{\infty\};\, G\right)\right\}.\]
\end{lemma}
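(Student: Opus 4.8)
The plan is to reduce the statement to a purely finite, three-terminal inequality and then settle that by the classical fact that, as seen from three marked vertices, a finite connected network is electrically equivalent to a triangle.

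First I would fix a wired finite exhaustion $\{G_n^*\}$ of $G$, with $z_n$ the wired vertex, and for $n$ large enough that $G_n$ contains $A\cup B$ let $H_n$ be the finite connected network obtained from $G_n^*$ by identifying $A$ to a single vertex $a$ and $B$ to a single vertex $b$; then $a,b,z_n$ are three distinct vertices of $H_n$. Unwinding the definition of effective resistance between sets, the definition of $\reff^W$ (\cref{claim:infinitereff}, extended to sets as in the paragraph preceding the lemma), and the definition \eqref{def:abinftyreff}, we have
\begin{align*}
\reff^W(A\lr B;\, G) &= \lim_{n\to\infty}\reff(a\lr b;\, H_n),\\
\reff(A\lr B\cup\{\infty\};\, G) &= \lim_{n\to\infty}\reff(a\lr\{b,z_n\};\, H_n),\\
\reff(B\lr A\cup\{\infty\};\, G) &= \lim_{n\to\infty}\reff(b\lr\{a,z_n\};\, H_n).
\end{align*}
Since the maximum of two convergent sequences converges to the maximum of their limits, it suffices to prove the following constant-free inequality for every finite connected network $H$ and every triple of distinct vertices $a,b,c$:
\begin{equation}\label{eq:plan:triangleineq}
\reff(a\lr b;\, H)\ \le\ \reff(a\lr\{b,c\};\, H)+\reff(b\lr\{a,c\};\, H).
\end{equation}
Applying \eqref{eq:plan:triangleineq} in $H_n$ with $c=z_n$ and letting $n\to\infty$ yields $\reff^W(A\lr B;G)\le \reff(A\lr B\cup\{\infty\};G)+\reff(B\lr A\cup\{\infty\};G)$, and the right-hand side is at most $2\max\{\cdot,\cdot\}\le 3\max\{\cdot,\cdot\}$, which is the claim.

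To prove \eqref{eq:plan:triangleineq} I would invoke the three-terminal reduction: there exist $r_{ab},r_{bc},r_{ca}\in(0,\infty]$, at most one of them infinite, such that the triangle $T$ on $\{a,b,c\}$ with these edge resistances has the same effective resistance as $H$ between every pair of $\{a,b,c\}$ and between each of them and the union of the other two. (All of these quantities are linear-algebraic functionals of the $3\times 3$ Schur complement of the Laplacian of $H$ onto $\{a,b,c\}$ obtained by eliminating the interior vertices; this Schur complement is symmetric, has zero row sums and non-positive off-diagonal entries, hence is itself the Laplacian of such a triangle, and connectivity of $H$ keeps all pairwise effective resistances finite, which forces at most one $r_{ij}=\infty$.) Writing $x=r_{ab}$, $y=r_{bc}$, $z=r_{ca}$, the series and parallel laws (\cref{serieslaw}, \cref{parallellaw}) give, in $T$,
\[
\reff(a\lr b;T)=\frac{x(y+z)}{x+y+z},\qquad \reff(a\lr\{b,c\};T)=\frac{xz}{x+z},\qquad \reff(b\lr\{a,c\};T)=\frac{xy}{x+y},
\]
with the obvious conventions when one of $x,y,z$ equals $\infty$. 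Since $\frac{xz}{x+z}\ge\frac{xz}{x+y+z}$ and $\frac{xy}{x+y}\ge\frac{xy}{x+y+z}$, the sum of the last two is at least $\frac{x(y+z)}{x+y+z}$, which is \eqref{eq:plan:triangleineq}; the cases where one of $x,y,z$ is $\infty$ are checked directly and give equality or an even better bound.

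The only non-routine ingredient is the three-terminal reduction, so the main decision in writing this up is how much of it to justify versus take for granted. If one prefers to stay strictly inside the machinery already developed in these notes, a self-contained substitute is to build a unit flow from $a$ to $b$ as a combination $\lambda\theta_1-\mu\theta_2$ of the unit current flow $\theta_1$ from $a$ to $\{b,c\}$ and the unit current flow $\theta_2$ from $b$ to $\{a,c\}$ — the coefficients $\lambda,\mu\in(0,1]$ being forced by demanding zero net flow at $c$ and unit strength — and to bound its energy via Thomson's principle (\cref{thomson:principle}); using that $\theta\mapsto\sqrt{\energy(\theta)}$ is a norm on flows this gives $\reff(a\lr b;H)\le\bigl(\sqrt{R_1}+\sqrt{R_2}\bigr)^2$, where $R_1,R_2$ are the two resistances on the right of \eqref{eq:plan:triangleineq}, hence $\reff^W(A\lr B;G)\le 2(R_1+R_2)$. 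This is a legitimate bound but has a worse constant than $3$ when $R_1\approx R_2$, because $\lambda\theta_1-\mu\theta_2$ need not be the optimal $a\to b$ flow; recovering the sharp inequality \eqref{eq:plan:triangleineq} essentially requires the three-terminal reduction (or an equivalent exact computation with the optimal flow, e.g.\ evaluating the cross term $\langle\theta_1,\theta_2\rangle$ via the discrete Green identity). Finally, one must keep track of the reduction "$A\mapsto a$, $B\mapsto b$" made at the outset, which is legitimate precisely because Rayleigh monotonicity (\cref{rayleigh}) and \cref{gluing:monotone} are what make the limits defining $\reff^W$ and \eqref{def:abinftyreff} exist and be independent of the exhaustion.
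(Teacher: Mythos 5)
Your proof is correct, and it proves something slightly stronger than the lemma: the sum bound $\reff^W(A\lr B;G)\le \reff(A\lr B\cup\{\infty\};G)+\reff(B\lr A\cup\{\infty\};G)$, hence the constant $2$ in place of $3$. But the route is genuinely different from the paper's. The paper also works in the wired exhaustion and passes to the limit, but its finite-network ingredient is the subadditivity of effective \emph{conductances}, obtained from a union bound on hitting probabilities via \cref{effresprob}, namely $\reff(u\lr\{v,w\})^{-1}\le \reff(u\lr v)^{-1}+\reff(u\lr w)^{-1}$; this is applied with $u$ the glued set $A$ (resp.\ $B$) and $w=z_n$, the limits are taken, the resulting bounds on $\reff(A\lr\infty)$ and $\reff(B\lr\infty)$ are fed into the resistance triangle inequality \eqref{exercise:triangle}, and an algebraic rearrangement yields the constant $3$. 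You instead reduce everything to the exact three-terminal inequality $\reff(a\lr b;H)\le \reff(a\lr\{b,c\};H)+\reff(b\lr\{a,c\};H)$ and prove it by replacing $H$ with its electrically equivalent triangle on $\{a,b,c\}$ (Schur complement of the Laplacian), where it is a one-line computation with the series and parallel laws. What each approach buys: the paper's argument stays entirely inside the toolkit these notes develop (\cref{effresprob}, the triangle inequality exercise, Rayleigh/gluing monotonicity) at the price of a lossier constant; yours gives the cleaner and sharper inequality, but the three-terminal reduction (that the Schur complement of a Laplacian is again a Laplacian with nonnegative conductances, and that it preserves the quantities $\reff(a\lr b)$, $\reff(a\lr\{b,c\})$, $\reff(b\lr\{a,c\})$ via Dirichlet's principle) is standard but not developed in these notes, so a self-contained write-up would have to include that justification — your sketch of it is correct, as is your honest observation that the purely flow-theoretic substitute only yields $(\sqrt{R_1}+\sqrt{R_2})^2$, i.e.\ a worse constant. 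Your identifications of the limits over the wired exhaustion with $\reff^W(A\lr B;G)$ and with the two quantities in \eqref{def:abinftyreff} are also fine, since identifying $A$ and $B$ commutes with wiring the complement of $G_n$ and the limits do not depend on the exhaustion.
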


\begin{proof} 
For any three distinct vertices $u,v,w$ in a finite network we have by the union bound that $\PP_u(\tau_{\{v,w\}} < \tau_u^+) \leq \PP_u(\tau_v < \tau_u^+) + \PP_u(\tau_w < \tau_u^+)$. Hence by \cref{effresprob} we get that
$$ \reff(u \lr \{v,w\})^{-1} \leq \reff(u \lr v)^{-1} + \reff(u \lr w)^{-1} \, .$$
Let $\{G_n^*\}$ be a wired finite exhaustion of $G$ and assume without loss of generality that $A$ and $B$ are contained in $G_n^*$ for all $n$. Then by the previous estimate
$$ \reff(A \lr B \cup \{z_n\}; G_n^*)^{-1} \leq \reff(A \lr B; G_n^*)^{-1} + \reff(A \lr z_n; G_n^*)^{-1} \, .$$
Denote by $M$ the maximum in the statement of the lemma and take $n\to \infty$ in the last inequality. We obtain that
$$ M^{-1} \leq \reff(A \lr B \cup \{\infty\}; G)^{-1} \leq \reff^W(A \lr B; G)^{-1} + \reff(A \lr \infty; G)^{-1} \, .$$
Rearranging gives that 
$$ \reff(A \lr \infty; G) \leq \frac{M \reff^W(A \lr B; G)}{\reff^W(A \lr B; G) - M} \, .$$
By symmetry, the same inequality holds when we replace the roles of $A$ and $B$. We put this together with the triangle inequality for effective resistances \eqref{exercise:triangle} and get that
$$ \reff^W(A \lr B;\, G) \leq \reff(A \lr \infty; G) + \reff(B \lr \infty; G) \leq \frac{2M \reff^W(A \lr B; G)}{\reff^W(A \lr B; G) - M} \, ,$$
which by rearranging gives the desired inequality.
\end{proof}

\subsection{Method of random sets} \index{method of random sets} We present the following weakening of the method of random paths as in \cref{sec:randompaths}. Let $\mu$ be the law of a random subset $W$ of vertices of $G$. Define the \emph{energy} of $\mu$ as
 \[\energy(\mu) = \sum_{v\in V}\mu(v\in W)^2.\]

\begin{lemma}[Method of random sets]\label{lem:randomsets} Let $A, B$ be two disjoint finite sets of vertices in an infinite graph $G$. Let $W$ be a random subset of vertices of $G$ and denote by $\mu$ its law. Assume that the subgraph of $G$ induced by $W$ almost surely contains a simple path starting at $A$ that is either infinite or finite and ends at $B$. Then
\begin{equation}\label{eq:randomsets}\reff(A \leftrightarrow B \cup \{\infty\}; G) \leq \energy(\mu). \end{equation}
\end{lemma}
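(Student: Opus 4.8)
The approach is to run the method of random paths (\cref{claim:randompath}) on a path extracted from the random set $W$, and then to turn the simplicity of that path into the energy bound $\energy(\mu)$. Fix a wired finite exhaustion $\{G_n^*\}$ of $G$, with the vertices of $V\setminus G_n$ identified to a single vertex $z_n$; by \eqref{def:abinftyreff} it suffices to prove that $\reff(A\lr B\cup\{z_n\};\,G_n^*)\le\energy(\mu)$ for every $n$ large enough that $A\cup B\subset G_n$, and then to let $n\to\infty$.

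\textbf{Constructing the flow.} Conditioned on $W$, the induced subgraph on $W$ almost surely contains a simple path starting in $A$ that is either infinite or finite and ends in $B$; using a fixed enumeration of the countable vertex set (together with a canonical greedy rule to pick an infinite path when no finite one exists), select such a path $\gamma$ in a measurable way, so that $\gamma$ is a random variable with $\gamma\subseteq W$. As in \cref{claim:randompath} set $\theta_\gamma(\vec e)=\mathbf{1}[\vec e \text{ traversed by }\gamma]-\mathbf{1}[\cev e\text{ traversed by }\gamma]$; since $\gamma$ is simple each value lies in $\{-1,0,1\}$, $\theta_\gamma$ is antisymmetric, and it obeys the node law at every vertex except those of $A$ and, when $\gamma$ is finite, its endpoint in $B$. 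Put $\theta(\vec e)=\E\theta_\gamma(\vec e)$. A telescoping/excursion argument (the path begins in $A$ and, since $A$ is finite, eventually stays outside $A$) shows the net flow of $\theta$ out of $A$ equals $1$. Contracting $A$ to a single vertex and $B\cup\{z_n\}$ to a single vertex and folding each edge of $G$ leaving $G_n$ onto $z_n$ (keeping resulting parallel edges separate), $\theta$ restricts to a unit flow from $A$ to $B\cup\{z_n\}$ in $G_n^*$, and one checks that no flow is lost at $z_n$ and that its energy is at most $\sum_{e\in E(G)}\theta(e)^2$. Thomson's principle (\cref{thomson:principle}) then gives $\reff(A\lr B\cup\{z_n\};\,G_n^*)\le\sum_{e\in E(G)}\theta(e)^2$.

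\textbf{The energy bound (the crux).} Write $p_e=\pr(e\in\gamma)$ for an undirected edge $e$. Since $\gamma$ traverses $e$ at most once, $|\theta(e)|=|\E\theta_\gamma(e)|\le\E\mathbf{1}[e\in\gamma]=p_e$, so $\theta(e)^2\le p_e^2$. Because $\gamma\subseteq W$, for $e=\{u,w\}$ we have $p_e\le\pr(u\in W,\,w\in W)\le\sqrt{\pr(u\in W)\pr(w\in W)}$, hence by AM--GM $p_e^2\le\tfrac12 p_e\big(\pr(u\in W)+\pr(w\in W)\big)$. Summing over edges and grouping by vertex,
\[
  \sum_{e=\{u,w\}} \theta(e)^2 \;\le\; \sum_{e=\{u,w\}} p_e^2 \;\le\; \tfrac12\sum_{v\in V}\pr(v\in W)\sum_{e\ni v}p_e .
\]
Finally, because $\gamma$ is a simple path it uses at most two edges at each vertex (at most one at an endpoint), so $\sum_{e\ni v}p_e=\E\big[\#\{e\ni v:e\in\gamma\}\big]\le 2\,\pr(v\in\gamma)\le 2\,\pr(v\in W)$. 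Substituting yields $\sum_e\theta(e)^2\le\sum_{v}\pr(v\in W)^2=\energy(\mu)$, and letting $n\to\infty$ proves \eqref{eq:randomsets}.

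\textbf{Main obstacle.} The genuinely substantive point is the energy estimate above, which hinges on exploiting that a simple path meets each vertex at most once; everything else is bookkeeping — the measurable selection of $\gamma$ from $W$ (routine via a fixed vertex ordering and a canonical construction in the infinite case) and the verification that the projection of $\theta$ onto $G_n^*$ is a bona fide unit flow from $A$ to $B\cup\{z_n\}$ with non-increased energy, so that finite Thomson applies.
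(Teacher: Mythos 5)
Your proof is correct and follows essentially the same route as the paper: extract a canonical simple path $\gamma \subseteq W$, use the flow it induces together with Thomson's principle (via the wired exhaustion), and bound the flow's energy by $\energy(\mu)$ by exploiting the simplicity of $\gamma$. The only difference is in how the key counting step is executed: you bound $\sum_e \pr(e\in\gamma)^2$ by pointwise inequalities ($\pr(e\in\gamma)\le \pr(v\in W)$ for each endpoint $v$ of $e$, plus the fact that a simple path uses at most two edges at each vertex), whereas the paper interprets the same sum as the expected number of directed edges shared by two independent copies of $\gamma$ and bounds it by the expected number of shared vertices — both arguments rest on exactly the same simplicity property, so this is a cosmetic rather than structural difference.
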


\begin{proof}
Given $W$ let $\gamma$ be a simple path, contained in $W$, connecting $A$ to $B$ or an infinite path starting at $A$. We choose $\gamma$ according to some prescribed lexicographical ordering. Then, letting $\nu$ be the law of $\gamma$,
\begin{align*}
  \energy(\nu)
  & \leq \sum_{\vec{e}\in E} \nu(\vec{e} \in \gamma)^2 ,
  \end{align*}
where by $\vec{e} \in \gamma$ we mean that the directed edge $\vec{e}$ is traversed (in its direction) by $\gamma$, and by $\energy(\nu)$ we mean the energy of the flow induced by $\gamma$, as in \cref{claim:randompath}.

Let $\gamma'$ be an independent random path having the same law as $\gamma$. Then the sum above is precisely the expected number of directed edges traversed both by $\gamma$ and $\gamma'$. Since these are simple paths, they each contain at most one directed edge emanating from each vertex $v\in W$. Thus, the expected number of directed edges used by both paths is at most the number of vertices used by both paths. Hence, 
$$\energy(\nu)\leq \sum_{v \in V(G)} \nu(v \in \gamma)^2 \leq \sum_{v \in V(G)} \mu(v \in W)^2 = \energy(\mu) \, ,$$
and the proof is concluded by Thomson's principle (\cref{thomson:principle}).
\end{proof}

\subsection{Proof of \cref{thm:ustconnected}}


In \cref{thm:ustconnected} we assume that $G=(V,E)$ is a bounded-degree, one-ended triangulation. Hence $G^\dagger$ is a bounded degree (in fact, $3$-regular), one-ended and transient planar map with faces of uniformly bounded size. We leave this verification as an exercise for the reader. To avoid carrying the $\dagger$ symbol around, and with a slight abuse of notation, let $G=(V,E)$ be a graph satisfying these assumptions on $G^\dagger$, that is, we assume that $G$ is a one-ended, transient, infinite planar map with bounded degrees and face sizes. We will prove under these assumptions that every component of $\WUSF_{G}$ is one ended almost surely which implies \cref{thm:ustconnected} by \cref{prop:ustduality2}.

Let $T$ be the bounded-degree one-ended triangulation obtained from $G$ by adding a vertex inside each face of $G$ and connecting it by edges to the vertices of that face according to their cyclic ordering. By \cref{thm:hs} there exists a circle packing of $T$ in the unit disc $\UU$. We identify the vertices of $T$ as the vertices $V(G)$ and faces $F(G)$ of $G$, and denote this circle packing as $P=\{P(v) : v \in V(G)\} \cup \{P(f) : f \in F(G)\}$. 

Given $z \in \UU$ and $r' \geq r > 0$ denote by $A_z(r,r')$ the annulus $\{w\in \CC : r\leq |w-z| \leq r'\}$.

\begin{definition} \label{def:Vz}
 Write $V_z(r,r')$ for the set of vertices $v$ of $G$ such that either 
\begin{itemize}
\item $P(v) \cap A_z(r,r') \neq \emptyset$, or
\item $P(v) \subset \{ w \in \CC: |w|\leq  r \}$ and there is a face $f$ of $G$ such that $v\in f$ with $P(f) \cap A_z(r,r') \neq \emptyset$.
\end{itemize}
We emphasize that $V_z(r,r')$ contains only vertices of $G$ (that is, no vertices of $T$ that correspond to faces of $G$ belong to it).
\end{definition}

\begin{lemma}\label{lem:sepannuli} There exists a constant $C<\infty$ depending only on the maximal degree such that for any $z \in \UU$ and any positive integer $n$ satisfying $|z| \geq 1 - C^{-n}$ the sets
$$ V_z(C^{-i}, 2C^{-i}) \qquad 1 \leq i \leq n \, ,$$
are disjoint.
\end{lemma}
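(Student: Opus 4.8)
The key geometric fact I would exploit is the Ring Lemma (\cref{lem:ring}): since the triangulation $T$ has bounded degree, every circle $P(v)$ is surrounded by a bounded number of circles whose radii are comparable to $\rad(v)$, and each face $P(f)$ is an interstice between three such circles. This gives a uniform control: if $v \in V_z(r,2r)$, then $P(v)$ (or a face-circle $P(f)$ with $v \in f$) meets the annulus $A_z(r,2r)$, so in either case there is a point of the carrier at distance $\le 3r$ from the annulus with radius $O(r)$; more precisely, the entire ``cluster'' of circles attached to $v$ --- meaning $P(v)$ together with all $P(f)$ for faces $f \ni v$ and all $P(u)$ for neighbours $u$ --- has Euclidean diameter at most $\Lambda r$ for a constant $\Lambda = \Lambda(\mathrm{deg})$. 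Consequently $V_z(r,2r)$ is contained in the Euclidean annulus $A_z(r/\Lambda, 2\Lambda r)$ (a slightly fattened version): any $v \in V_z(r,2r)$ has all its circles within distance $\Lambda r$ of $A_z(r,2r)$, so in particular $P(v) \subset A_z(r-\Lambda r, 2r + \Lambda r)$ --- wait, this needs the lower bound too, which comes from the second bullet of \cref{def:Vz} forcing $P(v)$ to be close to radius $r$ from below as well. Let me restate: I claim there is $\Lambda$ so that $V_z(r,2r) \subseteq \{ v : P(v) \subseteq A_z(r/\Lambda,\, \Lambda r)\}$.

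First I would prove this containment carefully. If $P(v)$ itself meets $A_z(r,2r)$, then since $\rad(v) \le 1$ and (by the Ring Lemma applied in $T$) $\rad(v)$ is comparable to the radius of any adjacent circle, one checks $\rad(v) = O(r)$: indeed if $\rad(v)$ were much larger than $r$, the whole annulus of width $r$ would be swallowed and the construction of $V_z$ would be vacuous there; more honestly, $P(v)$ meeting $A_z(r,2r)$ already gives $\dist(P(v), z) \le 2r$, and combined with $\rad(v) = O(r)$ (which follows because a much larger circle tangent to $P(v)$ would have to reach well past radius $2r$, but its neighbours by the Ring Lemma are comparable, so all of them do, contradicting that the packing fits in $\UU$ near a point only if... ) --- the cleanest route is: the Ring Lemma gives a constant $A$ with $\rad(u)/\rad(v) \le A$ for all $u \sim v$ in $T$; iterating once more, any circle within graph-distance $2$ of $v$ in $T$ has radius within a factor $A^2$. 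Since $P(v)$ meets $A_z(r,2r)$ and the union of circles within graph-distance $1$ of $v$ covers a neighbourhood of $P(v)$ of width $\gtrsim \rad(v)/A$, and all these lie in $\UU$, one gets $\rad(v) \le C r$ for a constant $C$, hence $P(v) \subset B_z(0, (2+C)r) = B_z(0,\Lambda' r)$. The lower bound $P(v) \cap A_z(r,2r) \ne \emptyset$ gives directly $\dist(z, P(v)) \le 2r$ and $\sup_{w \in P(v)}|w - z| \ge r$. In the second-bullet case, $P(v) \subset B_z(0,r)$ and some $P(f)$ with $v \in f$ meets $A_z(r,2r)$; since $f$ is a face of $G$ containing $v$, the face-vertex of $T$ is adjacent to $v$ in $T$, so $\rad(P(f))$ is comparable to $\rad(v)$ by the Ring Lemma, and $P(f)$ meeting $A_z(r,2r)$ while being tangent-or-close to $P(v) \subset B_z(0,r)$ forces $\rad(v) \gtrsim r/C$, hence $P(v) \subset A_z(r/\Lambda, r)$. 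Either way $P(v) \subseteq A_z(r/\Lambda, \Lambda r)$ with $\Lambda := \max(\Lambda', \Lambda'')$.

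Next, given the containment $V_z(C^{-i}, 2C^{-i}) \subseteq \{v : P(v) \subseteq A_z(C^{-i}/\Lambda,\, \Lambda C^{-i})\}$, I would simply choose $C = C(\mathrm{deg})$ large enough that $\Lambda C^{-(i+1)} < C^{-i}/\Lambda$, i.e. $C > \Lambda^2$; then the Euclidean annuli $A_z(C^{-i}/\Lambda, \Lambda C^{-i})$ for $1 \le i \le n$ are pairwise disjoint, and since circles are determined by their centres (distinct vertices have circles with disjoint interiors, hence distinct), a vertex $v$ cannot have $P(v)$ lying inside two disjoint annuli, so the sets $V_z(C^{-i},2C^{-i})$ are disjoint. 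The hypothesis $|z| \ge 1 - C^{-n}$ is only needed to ensure that these annuli (at least the outermost, of radius $\sim C^{-n}$) actually sit near $\partial\UU$ where there are circles to catch --- in fact it is not needed for the disjointness statement itself, only to guarantee the sets are non-empty, which the lemma does not assert; so I would either note the hypothesis is harmless or use it in showing (in the subsequent application) that the sets separate $v$ from $\infty$.

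\textbf{Main obstacle.} The one genuinely delicate point is the lower radius bound in the containment: showing that a vertex $v$ in $V_z(r,2r)$ cannot have $P(v)$ tiny and buried deep inside $B_z(0,r)$. This is exactly why \cref{def:Vz} has the second bullet involving faces: a tiny vertex-circle deep inside only gets included if one of its face-circles reaches the annulus, and then the Ring Lemma (applied to the triangulation $T$, where the face is an honest vertex adjacent to $v$) forces that face-circle --- hence $P(v)$ --- to be of size comparable to $r$. Getting the constants to line up here, and making sure the Ring Lemma is invoked in $T$ rather than in $G$ (which need not be a triangulation), is the crux; everything else is packing-by-the-numbers.
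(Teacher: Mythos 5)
Your overall scheme (sandwich every circle relevant to scale $i$ between an inner and an outer radius comparable to $C^{-i}$ around $z$, then take $C$ large) is structurally the paper's argument, but the central estimate is not established, and the place where this shows is precisely your closing remark that the hypothesis $|z|\geq 1-C^{-n}$ ``is not needed for the disjointness statement itself''. It is needed, and the statement is false without it: if $z$ lies well inside some disc $P(v_0)$ of the packing (say $z$ is the center of a circle of radius $\rho_0$), then $P(v_0)$ intersects $A_z(C^{-i},2C^{-i})$ for every $i$ with $2C^{-i}<\rho_0$, so the single vertex $v_0$ belongs to all of these sets via the first bullet of \cref{def:Vz} and they are far from disjoint. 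This is exactly the failure mode your radius bound must exclude, and your derivation of that bound (``the union of circles within graph-distance $1$ of $v$ covers a neighbourhood of $P(v)$ of width $\gtrsim \rad(v)/A$, and all these lie in $\UU$, one gets $\rad(v)\le Cr$'') does not follow: the fact that this neighbourhood lies in $\UU$ only shows $\dist(P(v),\partial\UU)\ge c\,\rad(v)$ for some $c=c(\Delta)>0$, i.e.\ it bounds $\rad(v)$ by a constant times the distance of $P(v)$ to $\partial\UU$, which has nothing to do with $r$ unless you know the annulus is within $O(r)$ of the boundary. That is what the hypothesis supplies: $\dist(z,\partial\UU)\le C^{-n}\le C^{-i}$, so any circle meeting $A_z(C^{-i},2C^{-i})$ has a point within $3C^{-i}$ of $\partial\UU$, whence $\rad\le (3/c)C^{-i}$, and tangent circles (in particular the face-circles of the second bullet) are comparable by \cref{lem:ring}. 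This gives the outer containment in a disc of radius $O(C^{-i})$ around $z$, as in the paper.

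The inner half of your sandwich is also too strong as stated: $P(v)\subseteq A_z(r/\Lambda,\Lambda r)$ can fail at the bottom scale $i=n$, e.g.\ when $z$ lies inside, or at distance $\eps C^{-n}$ from, a circle of radius comparable to $C^{-n}$ (which is compatible with $\dist(z,\partial\UU)\le C^{-n}$ and with \cref{lem:ring}); moreover in your second-bullet case you only derive a lower bound on $\rad(v)$, which does not bound $\dist(P(v),z)$ from below. What is actually needed, and suffices, is asymmetric: for $i<j\le n$, all circles relevant at scale $j$ lie within $O(C^{-j})\le O(C^{-i-1})$ of $z$, while all circles relevant at scale $i$ keep distance at least $bC^{-i}$ from $z$. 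The latter again uses the hypothesis: a circle meeting $A_z(C^{-i},2C^{-i})$ that came within $2C^{-i-1}$ of $z$ would have radius at least $\tfrac14 C^{-i}$ (it has a point at distance $\ge C^{-i}$ from $z$) yet a point within $3C^{-i-1}$ of $\partial\UU$ (since $\dist(z,\partial\UU)\le C^{-n}\le C^{-i-1}$), contradicting $\dist(P(v),\partial\UU)\ge c\,\rad(v)$ once $C>12/c$. So your identification of the Ring Lemma (applied in $T$, so that face-circles count) is correct, but both halves of the estimate genuinely require the proximity of $z$ to $\partial\UU$ together with the ``circles near the boundary are small'' consequence of \cref{lem:ring}; as written, the key radius bound is unjustified and the dismissal of the hypothesis is incorrect.
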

\begin{proof} By the Ring Lemma (\cref{lem:ring}) there exists a constant $B<\infty$ such that for any $C > 1$, any $z$ satisfying $z \geq 1- C^{-n}$ and any $1 \leq i \leq n$, if a circle of $P$ intersects $A_z(C^{-i}, 2C^{-i})$ or is tangent to a circle that intersects $A_z(C^{-i}, 2C^{-i})$, then its radius is at most $B C^{-i}$. Hence, this set of circles is contained in the disc of radius $(2+4B)C^{-i}$ around $z$. Furthermore, since $|z| \geq 1- C^{-n}$, by the Ring Lemma again there exists $b>0$ such that any such circle must be of distance at least $b C^{-i}$ from $z$. Hence, any fixed $C > {4+4B \over b}$ satisfies the assertion of the lemma.
\end{proof}

\begin{lemma}\label{lem:energyest} Let $z\in \UU$ and $r>0$. Let $U$ be a uniform random variable in $[1,2]$ and denote by $\mu_r$ the law of the random set $V_z(Ur,Ur)$ (as defined in \cref{def:Vz}). Then there exists a constant $C<\infty$ depending only on the maximal degree such that
$$ \energy(\mu_r) \leq C \, .$$
\end{lemma}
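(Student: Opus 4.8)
The plan is to estimate, for each vertex $v$ of $G$, the probability $\mu_r(v\in W)$ that $v$ belongs to the random set $W=V_z(Ur,Ur)$, and then to bound $\energy(\mu_r)=\sum_v\mu_r(v\in W)^2$ by separating the circles that are small relative to $r$ from the boundedly many that are not. All constants are allowed to depend only on the degree and face-size bounds on $G$; recall that these force the maximal degree of the auxiliary triangulation $T$ to be bounded, since a vertex $v\in V(G)$ has degree at most $2\deg_G(v)$ in $T$ and the vertex of $T$ inside a face $f$ has $T$-degree equal to the size of $f$. In particular the Ring Lemma (\cref{lem:ring}) applies to the packing $P$ with a uniform constant $A$.

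First I would bound $\mu_r(v\in W)$. Since the two arguments of $V_z$ coincide, the set $A_z(Ur,Ur)$ is the single circle $\Gamma_U:=\{w\in\CC:|w-z|=Ur\}$, and since $U\sim\unif([1,2])$ the radius $s:=Ur$ is uniform on $[r,2r]$. A circle $P(v)$ meets $\Gamma_U$ exactly when $s$ lies in the interval of length $2\rad(v)$ centred at $|\cent(v)-z|$, so $\pr(P(v)\cap\Gamma_U\neq\emptyset)\le 2\rad(v)/r$. By \cref{def:Vz}, $v\in W$ forces either $P(v)\cap\Gamma_U\neq\emptyset$ or $P(f)\cap\Gamma_U\neq\emptyset$ for one of the at most $\deg_G(v)$ faces $f$ containing $v$; as the vertex of $T$ inside such an $f$ is adjacent to $v$, the Ring Lemma gives $\rad(f)\le A\,\rad(v)$. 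A union bound then yields $\mu_r(v\in W)\le C_1\rad(v)/r$ for a suitable constant $C_1$, and of course $\mu_r(v\in W)\le 1$ as well.

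Next I would split the support $\{v:\mu_r(v\in W)>0\}$ according to whether $\rad(v)<r/C_1$ or $\rad(v)\ge r/C_1$. For a support vertex with $\rad(v)<r/C_1$, the conditions in \cref{def:Vz} force the closed disc bounded by $P(v)$ to lie in $B_{\euc}(z,4r)$: if $P(v)$ meets some $\Gamma_U$ then $|\cent(v)-z|\le 2r+\rad(v)<3r$, while if $v$ enters through the second clause then $P(v)$ already lies in a disc of radius at most $2r$ about $z$. Since these discs are interior-disjoint, $\sum\pi\rad(v)^2\le\area(B_{\euc}(z,4r))=16\pi r^2$, so their total contribution to $\energy(\mu_r)$ is at most $(C_1/r)^2\cdot 16 r^2=16C_1^2$. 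For a support vertex with $\rad(v)\ge r/C_1$ I would simply use $\mu_r(v\in W)^2\le 1$ and argue that there are only $O(1)$ of them: those entering via the second clause have $P(v)\subset B_{\euc}(z,2r)$, hence at most $4C_1^2$ of them by disjointness; those with $P(v)\cap\Gamma_U\neq\emptyset$ for some $U$ and with $\rad(v)\le 2r$ have their disc contained in $B_{\euc}(z,6r)$, giving again $O(C_1^2)$ by disjointness. Combining everything gives $\energy(\mu_r)\le C$ as required.

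The only step needing a genuine geometric argument rather than bookkeeping — and hence the main obstacle — is controlling the circles of $P$ that are much larger than $r$ and still meet $\overline{B_{\euc}(z,2r)}$. I would handle these exactly as in the corresponding step of the proof of \cref{hs:lemma:diam:res} (``there can only be $O(1)$ many such $v$''): a disc $D$ of radius at least $2r$ meeting $\overline{B_{\euc}(z,2r)}$ contributes a region of area at least $c r^2$ to $B_{\euc}(z,3r)$ for a universal $c>0$, because taking a point $p\in D\cap\overline{B_{\euc}(z,2r)}$ and using $\rad(D)\ge 2r$, the disc $D$ fills at least a fixed fraction of the ball $B_{\euc}(p,r)\subset B_{\euc}(z,3r)$; interior-disjointness then bounds the number of such circles by $O(1)$, finishing the estimate.
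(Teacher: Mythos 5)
Your proof is correct and follows essentially the same route as the paper: the pointwise bound $\mu_r(v\in W)\le C\min\{\rad(v),r\}/r$ obtained from the uniform random radius together with the Ring Lemma control of the incident face circles, followed by an area/disjointness estimate for the circles meeting a ball of radius $O(r)$ around $z$. The only cosmetic difference is in the circles of radius larger than $r$: the paper replaces each by a radius-$r$ circle contained in it and still meeting $\overline{B_{\euc}(z,2r)}$ and then applies the same area bound, whereas you count such circles directly; both come down to the identical disjoint-interiors argument.
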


\begin{proof} For each vertex $v$, the event $v \in V_z(Ur,Ur)$ implies that the circle $\{w\in\CC: |w-z|=Ur\}$ intersects the circle $P(v)$ or intersects $P(f)$ for some face $f$ incident to $v$. The union of $P(v)$ and $P(f)$ over all such faces $f$ is contained in the Euclidean ball around the center of $P(v)$ of radius $r(v) + 2 \max_{f : v\in f} r(f)$. Since $T$ has finite maximal degree we have that $r(f) \leq C r(v)$ for all $f$ with $v\in f$ where $C<\infty$ depends only on the maximal degree by the Ring Lemma \cref{lem:ring}. Hence,
\begin{align}
  \mu_r(v \in V_z(Ur, Ur)) &\leq {1 \over r}\min\left(2r(v) + 4\max_{f : f \ni v}r(f),\,r\right) \leq {C \over r}\min\{r(v),r\}.\label{eq:egy1}
\end{align}
We claim that
\begin{equation}\label{eq:egy2}\sum_{v\in V_z(r,2r)} \min\{r(v),\, r\}^2 \leq 16r^2.\end{equation}
Indeed, consider a vertex $v \in V_z(r,2r)$ for which the corresponding circle $P(v)$ has radius larger than $r$. By \cref{def:Vz} this circle must intersect $\{w \in \CC : |w-z| \leq 2r\}$. We replace each such $P(v)$ with a circle of radius $r$ that is contained in the original circle and intersects $\{w \in \CC : |w-z| \leq 2r\}$. The circles in this new set still have disjoint interiors and are contained in $\{w \in \CC : |w-z| \leq 4r\}$. Therefore their area is at most $\pi 16r^2$ and \eqref{eq:egy2} follows. The proof of lemma is now concluded by combining \eqref{eq:egy1} and \eqref{eq:egy2}.
\end{proof}

\begin{proof}[Proof of \cref{thm:ustconnected}] Let $\F$ be a sample of $\WUSF_G$ and given an edge $e=(x,y)$ we define $\sA^e$ to be the event that $x$ and $y$ are in two distinct infinite connected components of $\F\setminus\{e\}$. It is clear that every component of $\F$ is one-ended almost surely if and only if 
\begin{equation}\label{eq:usttoprove}
\PP(e \in \F \, , \, \sA^e ) = 0 
\end{equation}
for every edge $e$ of $G$. Consider the triangulation $T$ described above \cref{def:Vz} and its circle packing $P$ in $\UU$. By choosing the proper M\"obius transformation we may assume that the tangency point between $P(x)$ and $P(y)$ is the origin, and that the centers of $P(x)$ and $P(y)$ lie on the negative and positive real axis, respectively. 

Fix now an arbitrary $\eps>0$ and let $V_\eps$ be all the vertices of $G$ such that the center $z(v)$ of $P(v)$ satisfies $|z(v)|\leq 1-\eps$. Denote by $\sB^e_\eps$ the event that every connected component of $\F \setminus \{e\}$ intersects $V\setminus V_\eps$. Note that $\displaystyle \A^e \subset \cap_{\eps>0} \sB^e_\eps$ but this containment is strict since it is possible that $e \not \in \F$ and $x$ is connected to $y$ in $\F$ inside $V_\eps$.

Assume that $\sB_\eps^e$ holds. Let $\eta^x$ be the rightmost path in $\F\setminus\{e\}$ from $x$ to $V\setminus V_\eps$ when looking at $x$ from $y$, and let $\eta^y$ be the leftmost path in $\F\setminus\{e\}$ from $y$ to $V\setminus V_\eps$ when looking at $y$ from $x$. As mentioned above, the paths $\eta_x$ and $\eta_y$ are not necessarily disjoint. Nonetheless, concatenating the reversal of $\eta^x$ with $e$ and $\eta^y$ separates $V_\eps$ into two sets of vertices, $\cL$ and $\cR$, which are to the left and right of $e$ (when viewed from $x$ to $y$) respectively. See Figure \ref{fig:exploration} for an illustration of the case when $\eta_x$ and $\eta_y$ are disjoint (when they are not, $\cR$ is a ``bubble'' separated from $V \setminus V_\eps$).

On the event $\sB^e_\eps$, let $K$ be the set of edges that are either incident to a vertex in $\cL$ or belong to the path $\eta_x \cup \eta_y$, and set $K=E$ off of this event. Note that the edges of $K$ do not touch the vertices of $\cR$. The condition that $\eta^x$ and $\eta^y$ are the rightmost and leftmost paths to $V\setminus V_\eps$ from $x$ and $y$ is equivalent to the condition that $K$ does not contain any open path from $x$ to $V\setminus V_\eps$ other than $\eta^x$, and does not contain any open path from $y$ to $V\setminus V_\eps$ other than $\eta^y$. We note that $K$ can be explored algorithmically, without querying the status of any edge in $E\setminus K$, by performing a right-directed depth-first search of $x$'s component in $\F$ and a left-directed depth-first search of $y$'s component in $\F$, stopping each search when it first leaves $V_\eps$.

\begin{figure}[t]
\centering
\includegraphics[width=0.44\textwidth]{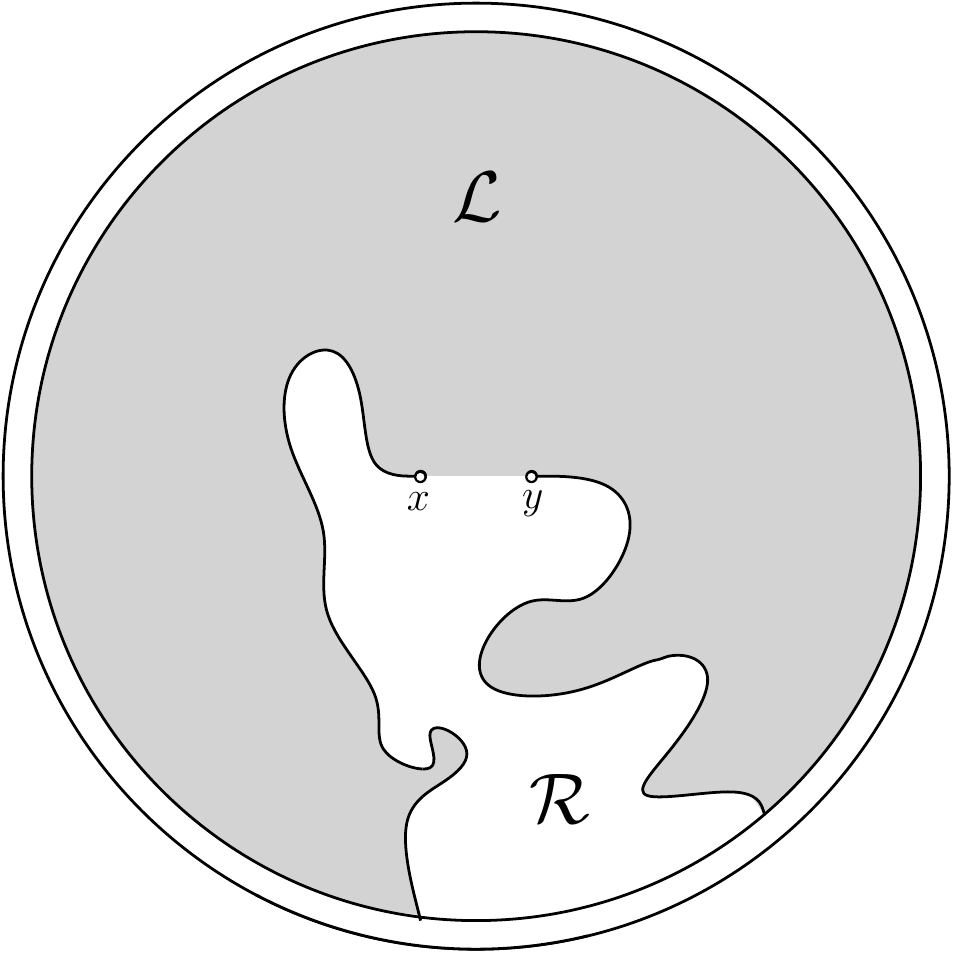} \qquad \includegraphics[width=0.44\textwidth]{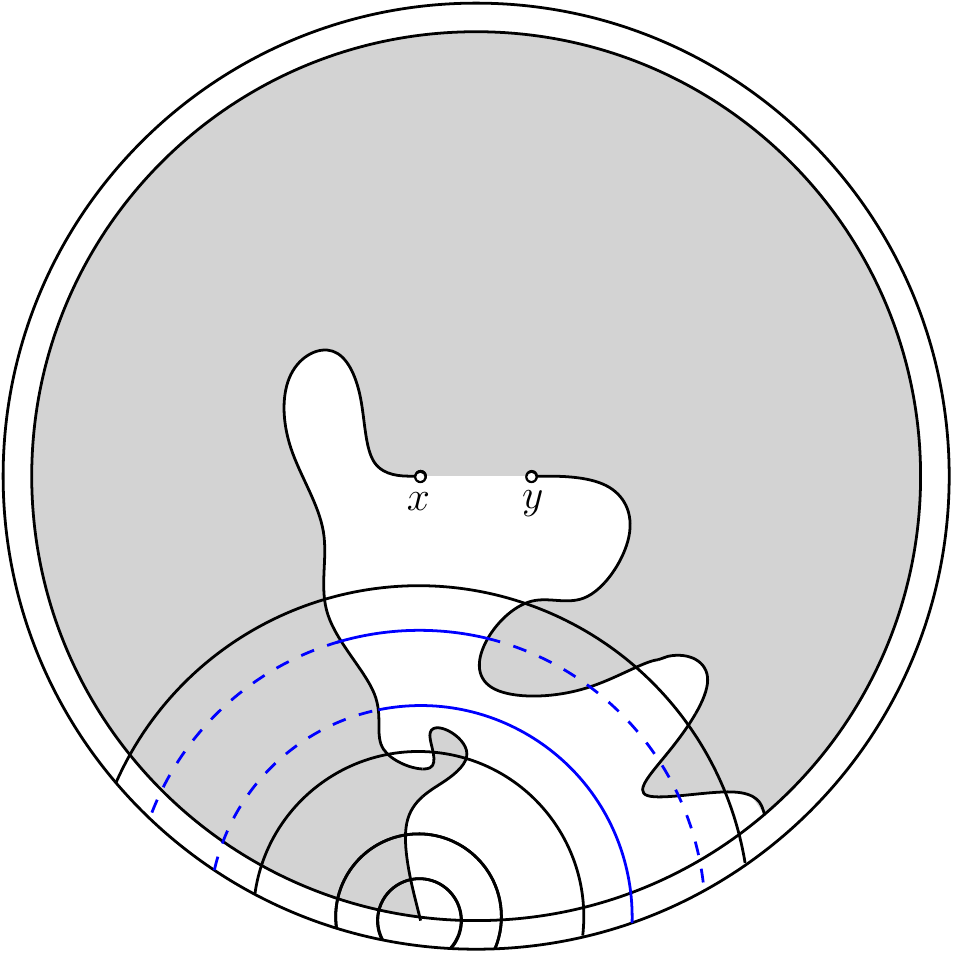}
\caption{Illustration of the proof. Left: On the event $\sA^e_\eps$, the paths $\eta^x$ and $\eta^y$ split $V_\eps$ into two pieces, $\cL$ and $\cR$. Right: We define a random set containing a path (solid blue) from $\eta^x$ to $\eta^y\cup\{\infty\}$ in $G\setminus K_c$ using a random circle (dashed blue). Here we see two examples, one in which the path ends at $\eta^y$, and the other in which the path ends at the boundary (i.e., at infinity).}\label{fig:exploration}
\end{figure}

Denote by $\sA^e_\eps$ the event that $\eta_x$ and $\eta_y$ are disjoint, or equivalently, that $K$ does not contain an open path from $x$ to $y$ (and in particular, no path starting at $\eta_x$ and ending at $\eta_y$). The event $\sA^e_\eps$ is measurable with respect to the random set $K$ and $\sA^e = \cap _{\eps >0} \sA^e_\eps$. Hence
\begin{equation}\label{eq:onebefore} \PP(e \in \F \, , \, \sA^e ) \leq \PP( e\in \F \mid \sA^e_\eps) = \EE [ \PP( e\in \F \mid \sA^e_\eps, K)] \, .\end{equation}
Denote by $K_o$ the open edge of $K$ (that is, the edge of $K$ in $\F$) and by $K_c$ the closed edges of $K$ (that is, the edges of $K$ not belonging to $\F$). In particular, $\eta_x$ and $\eta_y$ are contained in $K_o$. Then by the UST Markov property \eqref{eq:ustmarkov}, conditioned on $K$ and the event $\sA^e_\eps$, the law of $\F$ is equal to the union of $K_o$ with a sample of the $\WUSF$ on $(G-K_c)/K_o$. In particular, by Kirchhoff's formula \cref{thm:kirchhoffinfinite} we have that
\begin{equation}\label{eq:applykirchhoff} \PP( e\in \F \mid \sA^e_\eps, K) \leq \reff^W ( \eta_x \lr \eta_y ; G - K_c) \, ,\end{equation}
where in the last inequality we used the fact that gluing cannot increase the resistance (\cref{gluing:monotone}). 

We will show that the last quantity tends to $0$ as $\eps\to 0$ which gives \eqref{eq:usttoprove}. To that aim, let let $v^x$ be the endpoint of the path $\eta^x$ and let $z_0$ be the center of the $P(v_x)$. 
On the event $\sA_\eps^e$, for each $1-|z_0|\leq r\leq 1/4$, we claim that the set $V_{z_0}(r,r)$, as defined in \cref{def:Vz}, contains a path in $G$ from $\eta^x$ to $\eta^y$ that is contained in $\cR \cup \eta^x \cup \eta^y$ or an infinite simple path starting at $\eta_x$ that is contained in $\cR \cup \eta^x$. Either of these paths are therefore a path in $G - K_c$. 

\newcommand{\fA}{\mathsf{A}}

To see this, consider the arc $\fA'(z_0,r)=\{z\in \overline{\UU} : |z-z_0|=r\}$ viewed in the clockwise direction and let $\fA(z_0,r)$ be the subarc beginning at the last intersection of $\fA'(z_0,r)$ with a circle corresponding to a vertex in the trace of $\eta^x$, and ending at the first intersection after this time of $\fA'(z_0,r)$ with either $\partial \UU$ or a circle corresponding to a vertex in the trace of $\eta^y$ (see \cref{fig:exploration}).
Hence, if $\sA^e_\eps$ holds, then the set of vertices of $T$ whose circles in $P$ intersect $\fA(z_0,r)$ contains a path in $T$ starting at $\eta^x$ and ending $\eta^y$ or does not end at all, for every $1-|z_0|\leq r\leq 1/4$.
To obtain a path in $G$ rather than $T$ we  divert the path counterclockwise around each face of $G$. That is, whenever the path passes from a vertex $u$ of $G$ to a face $f$ of $G$ and then to a vertex $v$ of $G$, we replace this section of the path with the list of vertices of $G$ incident to $f$ that are between $u$ and $v$ in the counterclockwise order. By \cref{def:Vz} this diverted path is in $V_{z_0}(r,r)$ and so this construction shows that the subgraph of $G - K_c$ induced by the set $V_{z_0}(r,r)$ contains a path from $\eta^x$ to $\eta^y$ or an infinite path from $\eta^x$, as claimed.

Let $r_i = C^{-i}$ for $i=1,\ldots,N$ where $C<\infty$ the constant from \cref{lem:sepannuli} and $N=\lfloor \log_C(\eps) \rfloor$. Assume without loss of generality that $C\geq 4$ so that $\eps \leq r_i \leq 1/4$ for all $i=1,\ldots, N$. By \cref{lem:sepannuli} the measures $\mu_{r_i}$ defined in \cref{lem:energyest} are supported on sets that are contained in the disjoint sets $V_z(r_i,2r_i)$. Thus, by \cref{lem:randomsets} and \cref{lem:energyest} we have
 \begin{align*} \reff^W\Big(\eta^x \lr \eta^y \cup \{\infty\};\, G \setminus K_c\Big)  \leq \energy \left ({1 \over N}\sum_{i=1}^N \mu_{r_i} \right )={1 \over N^2}\sum_{i=1}^N \energy (\mu_{r_i}) \leq {B \over \log(1/\eps)} \, ,  \end{align*}
 where $B<\infty$ is a constant depending only on the maximum degree. By symmetry we also have
 \[ \reff^W(\eta^y \leftrightarrow \eta^x \cup \{\infty\};\, G - K_c) \leq {B \over \log(1/\eps)}.  \]
 Applying \cref{lem:trianglewired} and \eqref{eq:applykirchhoff} gives
 $$ \PP(e\in\F \, |\, \cF_K,\,\sA_\eps^e) \leq {3B \over \log(1/\eps) } \, .$$
We plug this estimate into \eqref{eq:applykirchhoff} and take $\eps \to 0$, which together with \eqref{eq:onebefore} shows that \eqref{eq:usttoprove} holds, concluding our proof.
\end{proof}

\chapter{Related topics}\label{chp:related}

In this chapter we briefly review some aspects of the literature on circle packing that unfortunately we do not have space to get into in depth in this course. We hope this will be useful as a guide to further reading. 

\begin{enumerate}
\item \textbf{Double circle packing.} \index{double circle packing}
If one wishes to study planar graphs that are \emph{not} triangulations, it is often convenient to work with \emph{double circle packings}, which enjoy similar rigidity properties to usual circle packings, but for the larger class of \textbf{polyhedral} \index{polyhedral graph} planar graphs. Here, a planar graph is polyhedral if it is both simple and \textbf{$3$-connected}, meaning that the removal of any two vertices cannot disconnect the graph. Double circle packings also satisfy a version of the ring lemma \cite[Theorem 4.1]{HutNach15b}, which means that they can be used to produce good straight-line embeddings of polyhedral planar graphs that have bounded face degrees but which are not necessarily triangulations. 

\begin{figure}[h!]
\centering
\includegraphics[height=0.325\textwidth]{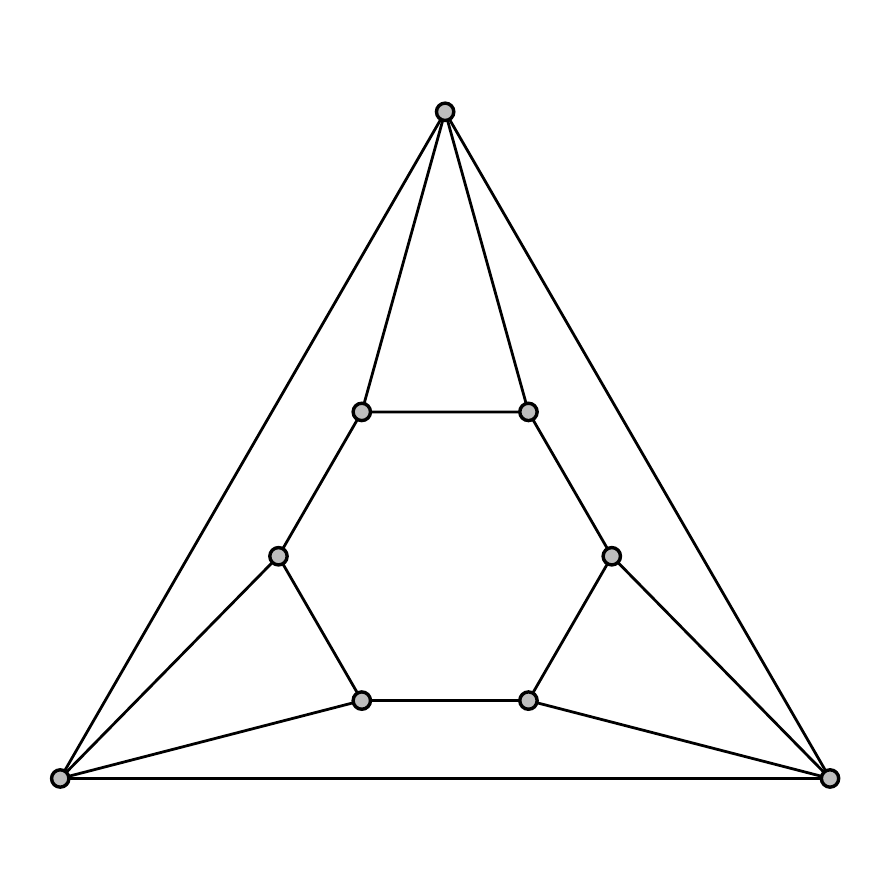} \hspace{1.3cm} \includegraphics[height=0.325\textwidth]{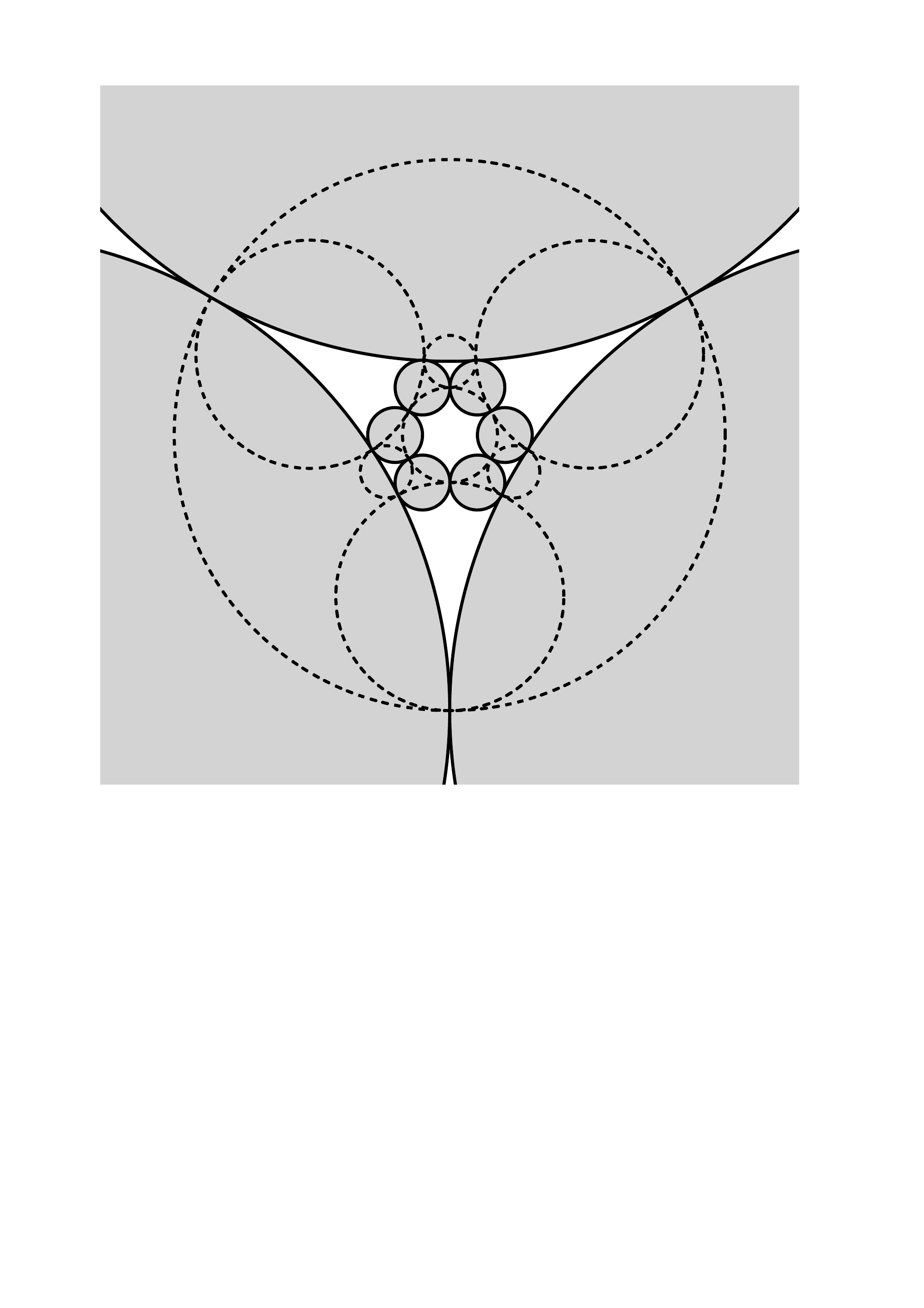}
\caption{
A finite polyhedral plane graph (left) and its double circle packing (right). Primal circles are filled and have solid boundaries, dual circles have dashed boundaries.}
\label{fig.dcp}
\end{figure}

Let $G$ be a planar graph with vertex set $V$ and face set $F$. A double circle packing of $G$ is a pair of circle packings $P=\{ P_v : v \in V\}$ and $P^\dagger=\{P_f:f\in F\}$ satisfying the following conditions:
\begin{enumerate}
\item (\textbf{$G$ is the tangency graph of $P$}.) For each pair of vertices $u$ and $v$ of $G$, the discs $P_u$ and $P_v$ are tangent if and only if $u$ and $v$ are adjacent in $G$.
\item (\textbf{$G^\dagger$ is the tangency graph of $P^\dagger$}.)  For each pair of faces $f$ and $g$ of $G$, the discs $P_f$ and $P_g$ are tangent if and only if $f$ and $g$ are adjacent in $G^\dagger$.
\item (\textbf{Primal and dual circles are perpendicular}.) For each vertex $v$ and face $f$ of $G$, the discs $P_f$ and $P_v$ have non-empty intersection if and only if $f$ is incident to $v$, and in this case the boundary circles of $P_f$ and $P_v$ intersect at right angles.
\end{enumerate}
See \cref{fig.dcp} for an illustration. 

Thurston's proof of the circle packing theorem also implies that every finite polyhedral planar graph admits a double circle packing. This was also shown by Brightwell and Scheinerman \cite{BriSch93}. As with circle packings of triangulations, the double circle packing of any finite polyhedral planar map is unique up to M\"obius transformations or reflections.  The theory of double circle packings in the infinite setting follows from the work of He \cite{he1999rigidity}, and is exactly analogous to the corresponding theory for triangulations. Indeed, essentially everything we have to say in these notes about circle packings of simple triangulations can be generalized to double circle packings of polyhedral planar maps (sometimes under the additional assumption that the faces are of bounded degree).

\item \textbf{Packing with other shapes}. A very powerful generalization of the circle packing theorem known as the \emph{monster packing theorem} was proven by Oded Schramm in his PhD thesis \cite{schramm2007combinatorically}. One consequence of this theorem is as follows: Let $T=(V,E)$ be a finite planar triangulation with a distinguished boundary vertex $\partial$. Specify a bounded, simply connected domain $D \subset \CC$ with smooth boundary, and for each $v\in V \setminus \{\partial\}$ specify a strictly convex, bounded  domain $D_v$ with smooth boundary. Then there exists a collection of homotheties (compositions of translations and dilations) $\{h_v : v\in V\}$ such that 
\begin{itemize}
  \item
If $u,v\in V \setminus \{\partial\}$ are distinct, then $h_v D_v$ and $h_u D_u$ have disjoint interiors, and intersect if and only if $v$ and $u$ are adjacent in $T$.
\item 
If $v\in V \setminus \{\partial\}$, then $h_v D_v$ and $\CC \setminus D$ have disjoint interiors, and intersect if and only if $v$ is adjacent to $\partial$ in $T$.
\end{itemize}
In other words, we can represent the triangulation of $T$ by a packing with arbitrary smooth convex shapes that are specified up to homothety (it is quite surprising at first that rotations are not needed). The full monster packing theorem also allows one to relax the smoothness and convexity assumptions above in various ways. The proof of the monster packing theorem is based upon Brouwer's fixed point theorem, and does not give an algorithm for computing the packing.

\begin{figure}[t]
\centering
\includegraphics[width=0.4\textwidth]{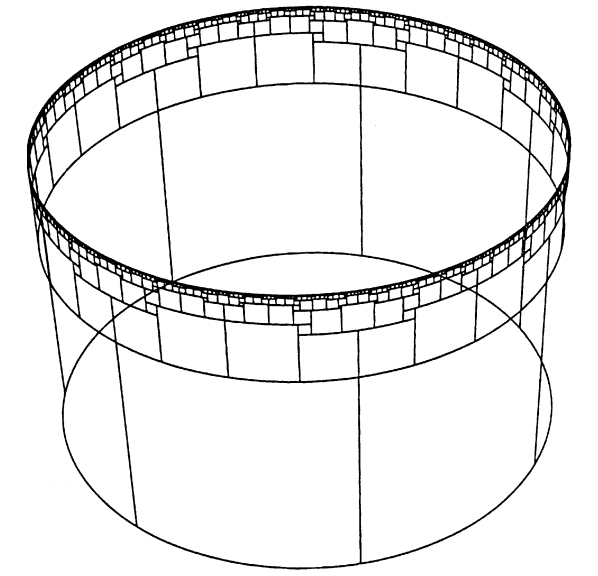}
\caption{The square tiling of the $7$-regular triangulation.}
\end{figure}

\item \textbf{Square tiling.} \index{square tiling} Another popular method of embedding planar graphs is the \emph{square tiling}, in which vertices are represented by horizontal line segments and edges by squares; such square tilings can take place either in a rectangle, the plane, or a cylinder.  Square tiling was introduced by Brooks, Smith, Stone, and Tutte \cite{BSST40}, and generalized to infinite planar graphs by Benjamini and Schramm \cite{BS96b}.  Like circle packing, square tiling can be thought of as a discrete version of conformal mapping, and in particular can be used to approximate the uniformizing map from a simply connected domain with four marked boundary points to a rectangle. For studying the random walk, a very nice feature of the square tiling that is not enjoyed by the circle packing is that the height of a vertex in the cylinder is a harmonic function, so that the height of a random walk is a martingale. Furthermore, Georgakopoulos \cite{G13} observed that if one stops the random walk at the first time it hits some height, then its horizontal coordinate at this time is uniform on the circle (this takes some interpretation to make precise). Further works on square tiling include \cite{G13,MR3498002,hutchcroft2015boundaries}.

Unlike circle packing, however, square tilings do not enjoy an analogue of the ring lemma, and can be geometrically very degenerate. Indeed, it is possible for edges to be represented by squares of zero area, and is also possible for two distinct planar graphs to have the same square tiling. Furthermore, square tilings are typically defined with reference to a specified root vertex, and it is difficult to compare the two different square tilings of the same graph that are computed with respect to different root vertices. These differences tend to mean that square tilings are best suited to quite different problems than circle packing.

 We also remark that a different sort of square tiling in which \emph{vertices} are represented by squares was introduced independently by Cannon, Floyd, and Parry \cite{MR1292901} and Schramm \cite{MR1244661}.

\item \textbf{Multiply-connected triangulations.} 
Several works have studied generalizations of the circle packing theorem to triangulations that are either not simply connected or not planar. Most notably, He and Schramm \cite{HS93} proved that every triangulation of a domain with countably many boundary components can be circle packed in a circle domain, that is, a domain all of whose boundary components are either circles or points: see \cref{fig:multiplyended} for examples. The corresponding statement for a triangulation of an \emph{arbitrary} domain is a major open problem, and is closely related to the Koebe conjecture. 

\begin{figure}[t]
\centering
\includegraphics[
 trim = 3.24em 10em 3em 10em, clip,
width=0.35\textwidth]{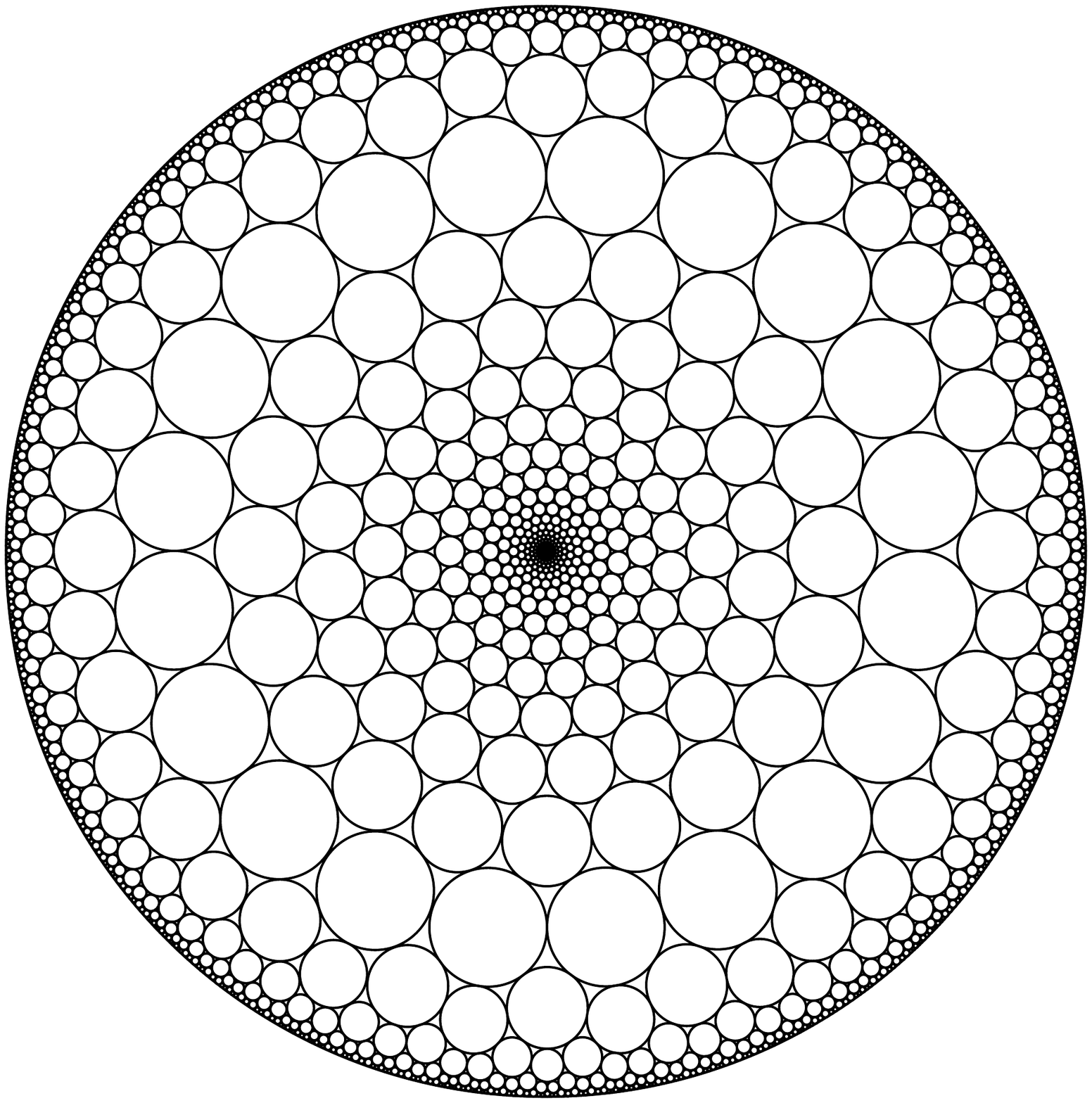}
\hspace{1.5cm}
\includegraphics[trim = 4cm 3.7cm 4cm 3.7cm, clip, height=0.35\textwidth]{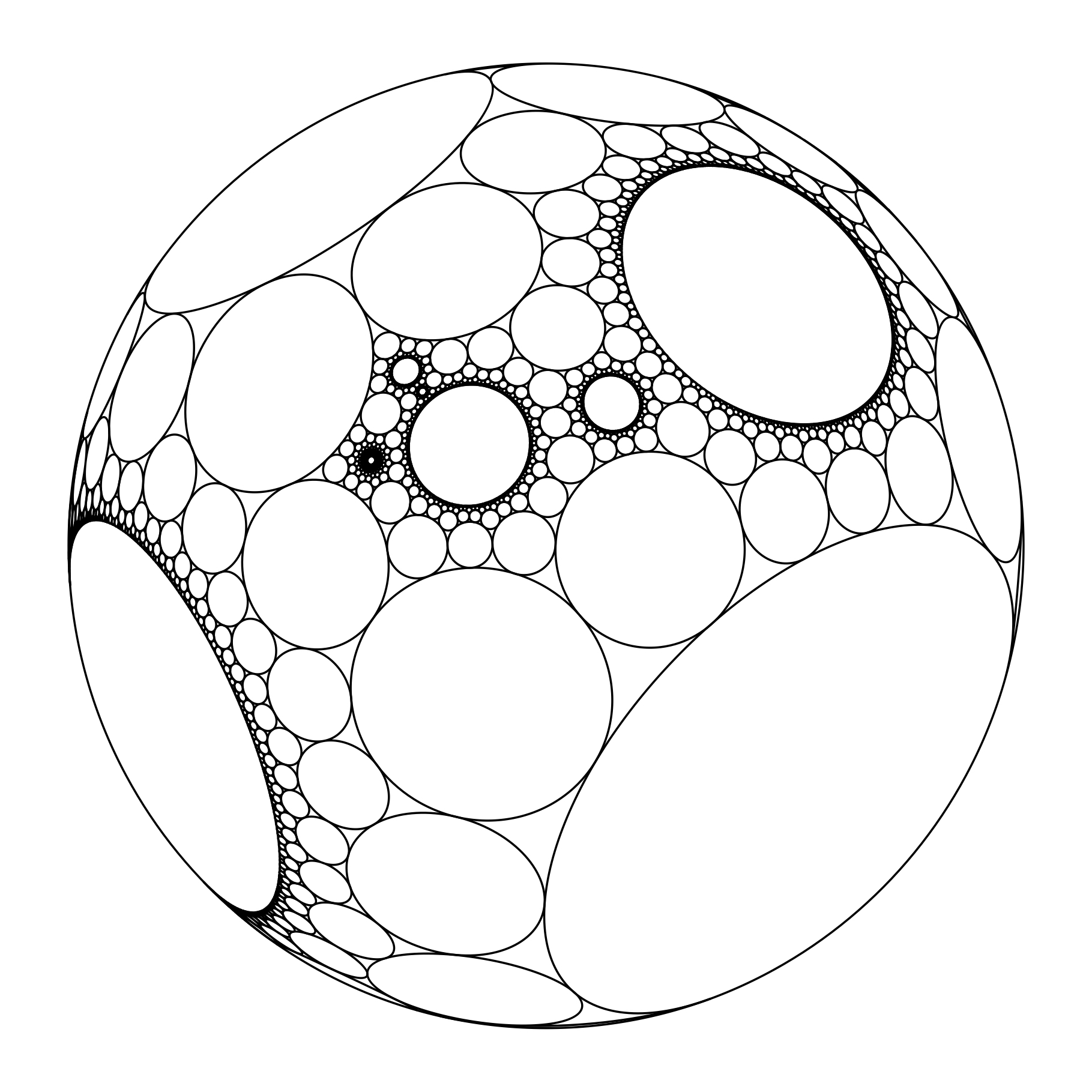}
\caption{Left: A circle packing in the multiply-connected circle domain $\UU \setminus\{0\}$. Right: A circle packing in a circle domain domain  with several boundary components.}
\label{fig:multiplyended}
\end{figure}

Gurel-Gurevich, the current author, and Suoto \cite{MR3607800} generalized the part of the He-Schramm Theorem concerning recurrence of the random walk as follows: A bounded degree triangulation circle packed in a domain $D$ is transient if and only if Brownian motion on $D$ is transient, i.e. leaves $D$ in finite time almost surely. 

\item \textbf{Isoperimetry of planar graphs.} \index{planar separator theorem}
In \cite{MTTV97}, Miller, Teng, Thurston, and Vavasis used circle packing to give a new proof of the \emph{Lipton-Tarjan planar separator theorem} \cite{LT80}, which concerns sparse cuts in planar graphs. Precisely, the theorem states that for any $n$-vertex planar graph, one can find a set of vertices of size at most $O(\sqrt{n})$ such that if this vertex set is deleted from the graph then every connected component that remains has size at most $3 n/4$. More precisely, the authors of \cite{MTTV97} showed that if one circle packs a planar graph in the unit sphere of $\RR^3$, normalizes by applying an appropriate M\"obius transformation, and takes a random plane passing through the origin in $\RR^3$, then the set of vertices whose corresponding discs intersect the plane will have the desired properties with high probability.

A related result of Jonasson and Schramm \cite{JS00} concerns the \emph{cover time} of planar graphs, i.e., the expected number of steps for a random walk on the graph to visit every vertex of the graph. They used circle packing to prove that the cover time of an $n$-vertex planar graph with maximum degree $M$ is always at least $c_M n \log^2 n$ for some positive constant $c_M$ depending only on $M$. This bound is attained (up to the constant) for large boxes $[-n,n]^2$ in $\ZZ^2$. In general, it is possible for $n$-vertex graphs to have cover time as small as $(1+o(1))n \log n$.
\item \textbf{Boundary theory.} 
Benjamini and Schramm \cite{BS96a} proved that if $P$ is a circle packing of a bounded degree triangulation in the unit disc $\UU$, then the simple random walk on the circle packed triangulation converges to a point in the boundary of $\UU$, and that the law of the limit point is non-atomic and has full support. (That is, the walk has probability zero of converging to any specific boundary point, and has positive probability of converging to any positive-length interval.) They used this result to deduce that a bounded degree planar graph admits non-constant bounded harmonic functions if and only if it is transient (equivalently, the invariant sigma-algebra of the random walk on the triangulation is non-trivial if and only if the walk is transient), and in this case it also admits non-constant bounded harmonic functions of finite Dirichlet energy. They also gave an alternative proof of the same result using square tiling instead of circle packing in \cite{BS96b}.

Indeed, given the result of Benjamini and Schramm, one may construct a non-constant bounded harmonic function $h$ on $T$ by taking any bounded, measurable function $f:\partial \UU \to \RR$ and defining $h$ to be the \emph{harmonic extension} of $f$, that is,
\[
h(v)= \mathbf{E}_v\left[ f\left(\lim_{n\to\infty} z(X_n) \right) \right],
\]
where $\mathbf{E}_v$ denotes expectation taken with respect to the random walk $X$ started at $v$, and $z(u)$ denotes the center of the circle in $P$ corresponding to $u$. Angel, Barlow, Gurel-Gurevich, and the current author \cite{ABGN14} proved that, in fact, \emph{every} bounded harmonic function on a bounded degree triangulation can be represented in this way. In other words, the boundary $\partial \UU$ can be identified with the \defn{Poisson boundary} of the triangulation. Probabilistically, this means that the entire invariant $\sigma$-algebra of the random walk coincides with the $\sigma$-algebra generated by the limit point.  They also proved the stronger result that $\partial \UU$ can be identified with the \defn{Martin boundary} of the triangulation. Roughly speaking, this means that every \emph{positive} harmonic function on the triangulation admits a representation as the harmonic extension of some \emph{measure} on $\partial \UU$. A related representation theorem for harmonic functions of \emph{finite Dirichlet energy} on bounded degree triangulations was established by Hutchcroft \cite{Hutch2017a}.

The results of \cite{ABGN14} regarding the Poisson boundary followed earlier work by Georgakopoulos \cite{G13}, which established a corresponding result for square tilings. Both results were revisited in the work of Hutchcroft and Peres \cite{hutchcroft2015boundaries}, which gave a simplified and unified proof that works for both embeddings.

A parallel boundary theory for circle packings of \textbf{unimodular random triangulations} of \emph{unbounded} degree was developed by Angel, Hutchcroft, the current author, and Ray in \cite{AHNR15}. 

\item \textbf{Harnack inequalities, Poincar\'e inequalities, and comparison to Brownian motion.}
The work of Angel, Barlow, Gurel-Gurevich and the current author \cite{ABGN14} also established various quite strong estimates for random walk on circle packings of bounded degree  triangulations. Roughly speaking, these estimates show that the random walk behaves similarly to the image of a Brownian motion under a \emph{quasi-conformal map}, that is, a bijective map that distorts angles by at most a bounded amount (i.e., maps infinitesimal circles to infinitesimal ellipses of bounded eccentricity). These estimates were central to their result concerning the Martin boundary of the triangulation, and are also interesting in their own right. Further related estimates have also been established by Chelkak \cite{Chelkak}. 

Recent work of Murugan \cite{murugan2018quasisymmetric} has built further upon these methods to establish very precise control of the random walk on (graphical approximations of) various deterministic self-similar fractal surfaces.

\item \textbf{Liouville quantum gravity and the KPZ correspondence.} Statistical physics in two dimensions has been one of the hottest areas of probability theory in recent years. The introduction of Schramm's SLE \cite{SchrammSLE} and further breakthrough developments by Lawler, Schramm and Werner (see \cite{LSW1,LSW2} and the references within) on the one hand, and the application of discrete complex analysis, pioneered by Smirnov \cite{Smirnov}, on the other, have led to several breakthroughs and to the resolution of a number of long-standing conjectures. These include the conformally invariant scaling limits of critical percolation \cite{SmirnovPerc} and Ising models \cite{SmirnovIsing}, and the determination of critical exponents and dimensions of sets associated with
planar Brownian motion \cite{LSW1} (such as the frontier and the set of cut points). It is manifest that much progress will follow, possibly including the treatment of self-avoiding walk (the connective constant of the hexagonal lattice was calculated in the breakthrough work \cite{DCSmirnovSAW}), the $O(n)$ loop model and the Potts model. While the bulk of this body of work applies to specific lattices, there are many fascinating problems in extending results to arbitrary
planar graphs.

The next natural step is to study the classical models of statistical physics in the context of random planar maps (see Le Gall's 2014 ICM proceedings \cite{LeG14}). There are deep conjectured connections between the behaviour of the models in the random setting versus the Euclidean setting, most significantly the KPZ formula of Knizhnik, Polyakov and Zamolodchikov \cite{KPZ88} from conformal field theory. This
formula relates the dimensions of certain sets in Euclidean geometry to the dimensions of corresponding sets in the random geometry. It may provide a systematic way to analyze models on the two dimensional Euclidean lattice: first study the model in the random geometry setting, where the Markovian properties of the underlying space make the model tractable; then use the
KPZ formula to translate the critical exponents from the random setting to the Euclidean one. 

Much of this picture is conjectural but a definite step towards this goal was taken in the influential paper of Duplantier and Sheffield \cite{DS}. Let us describe their formulation. Let $G_n$ be a random triangulation on $n$ vertices and consider its circle packing (or any other ``natural'' embedding) in the unit sphere. The embedding induces a random measure $\mu_n$ on the sphere by putting $\mu_n(A)$ to be the proportion of circle centers that are in $A$. The Duplantier-Sheffield conjecture asserts that the measures $\mu_n$ converge in distribution to a random measure $\mu$ on the sphere that has density given by an exponential of the Gaussian free field --- the latter is carefully defined and constructed in \cite{DS}. This measure is what is known as \emph{Liouville quantum gravity} (LQG).

Next, given a deterministic or random set $K$ on the sphere, one can calculate its expected dimension using the random measure given by LQG, and using the usual Lebesgue measure --- one gets two different numbers. Duplantier and Sheffield \cite{DS} obtain a quadratic formula allowing to compute one number from the other in the spirit of \cite{KPZ88}; this is the first rigorous instance of the KPZ correspondence. It allows one to compute the dimension of random sets in the $\ZZ^2$ lattice (corresponding to Lebesgue measure) by first calculating the corresponding dimension in the random geometry setting and then appealing to the KPZ formula. 

Many difficult models of statistical physics are tractable on a random planar map due to the inherent randomness of the space. For instance, it can be shown that the self avoiding walk on the UIPT behaves diffusively, that is, the endpoint of a self avoiding walk of length $n$ is typically of distance $n^{1/2+o(1)}$ from the origin \cite{CurienSAW, GwynneSAW}. A straightforward calculation with the KPZ formula allows one to predict that the typical displacement of the self-avoiding walk of length $n$ on the lattice $\ZZ^2$ is $n^{3/4+o(1)}$ --- a notoriously hard open problem with endless simulations supporting it. 

LQG and the KPZ correspondence thus pose a path to solving many difficult problems in classical two-dimensional statistical physics. We refer the interested reader to Garban's excellent survey \cite{GarbanKPZ} of the topic.


\end{enumerate}

\backmatter
\clearpage
\phantomsection
\addcontentsline{toc}{chapter}{Index}
\printindex

\cleardoublepage
\phantomsection
\addcontentsline{toc}{chapter}{References}
\bibliography{library}

\end{document}